\documentclass[10]{amsart}
\usepackage{caption}
\usepackage{longtable}
\usepackage{latexsym}
\usepackage{amsmath,mathtools}
\usepackage{lscape} 
\usepackage{pdflscape}
\usepackage{float}
\usepackage{color}
\usepackage{amssymb}
\newtheorem{thm}{Theorem}[section]
\newtheorem{con}[thm]{Conjecture}

\newtheorem{lem}[thm]{Lemma}
\newtheorem{cor}[thm]{Corollary}
\newtheorem{pro}[thm]{Proposition}

\theoremstyle{remark}
\newtheorem{rem}[thm]{Remark}

\theoremstyle{definition}
\newtheorem{Def}[thm]{Definition}

\numberwithin{equation}{section}

\newcommand{\e}{\text{e}}               
\newcommand{\R}{\mathbb{R}}           
\newcommand{\C}{\mathbb{C}}           
\newcommand{\N}{\mathbb{N}}           
\newcommand{\Z}{\mathbb{Z}}         
\newcommand{\LL}{\mathbb{L}}
\newcommand{\Q}{{\mathbb{Q}}}           
\newcommand{\HH}{{\mathbb{H}}}           

\newcommand{\Rrank}{\text{rank}_{\mathbb{R}}}

\newcommand{\rank}{\text{rank}}
\newcommand{\End}{\text{End}}
\newcommand{\Tr}{\text{Tr}} 
\newcommand{\Hom}{\mathrm{Hom}}
\newcommand{\id}{\mathrm{id}}
\newcommand{\Ind}{\text{Ind}}
\newcommand{\Ad}{\text{Ad}}
\newcommand{\ad}{\text{ad}}
\newcommand{\supp}{\mathrm{supp}}
\newcommand{\spec}{{\mathrm{spec}}}
\newcommand{\sgn}{{\mathrm{sgn}}}
\newcommand{\vol}{{\mathrm{vol}}}

\newcommand{\ind}{{\chi}}
\newcommand{\Aut}{{\mathrm{Aut}}}

\newcommand{\fa}{{\mathfrak a}}             
\newcommand{\fb}{{\mathfrak b}}
\newcommand{\fc}{{\mathfrak c}}
\newcommand{\fg}{{\mathfrak g}}
\newcommand{\fd}{{\mathfrak d}}
\newcommand{\ff}{{\mathfrak f}}
\newcommand{\fgl}{{\mathfrak{gl}}}
\newcommand{\fsp}{{\mathfrak{sp}}}
\newcommand{\fspin}{{\mathfrak{spin}}}
\newcommand{\fh}{{\mathfrak h}}
\newcommand{\fk}{{\mathfrak k}}
\newcommand{\fl}{{\mathfrak l}}
\newcommand{\fm}{{\mathfrak m}}
\newcommand{\fn}{{\mathfrak n}}

\newcommand{\fo}{{\mathfrak{so}}}
\newcommand{\fq}{{\mathfrak q}}
\newcommand{\ft}{{\mathfrak t}}
\newcommand{\fu}{{\mathfrak u}}
\newcommand{\fr}{{\mathfrak r}}
\newcommand{\fs}{{\mathfrak s}}
\newcommand{\fz}{{\mathfrak z}}

\newcommand{\gt}{\theta}

\newcommand{\vp}{\varphi}
\newcommand{\ve}{\varepsilon}

\newcommand{\Cal}{\mathcal}

\newcommand{\be}{\begin{equation}}
\newcommand{\beu}{\begin{equation*}}

\newcommand{\IP}[2]{\langle#1\,, #2\rangle}     

\newcommand{\defn}{{:=}}
\renewcommand{\Im}{\mathrm{Im}}

\begin{document}

\baselineskip=18pt
\sloppy
\title[Harmonic analysis on standard quotients]
{Harmonic analysis on compact standard quotients of non-Riemannian semisimple symmetric spaces}
\author{S. Mehdi}
\author{M. Olbrich}
\thanks{{\em Mathematics Subject Classification MSC2020}. Primary: 22E46; Secondary: 22E40, 53C30, 58J50, 11F72}
\keywords{Real reductive Lie groups, admissible representations, branching rules, matrix coefficients, distribution vectors, discrete subgroups, automorphic forms, semisimple symmetric spaces, locally symmetric spaces, Clifford-Klein forms, spherical homogenous spaces, invariant differential operators, spectral decompositions}
\address{Université de Lorraine, CNRS, IECL, F-57000 Metz, France}
\email{salah.mehdi@univ-lorraine.fr}
\address{Facult\'e des Sciences, de la Technologie et de la Communication , Universit\'e du Luxembourg, Luxembourg}
\email{martin.olbrich@uni.lu}
\maketitle

\begin{abstract}
Let $Y=\Gamma\backslash G/H$ be a compact standard quotient of a non-Riemannian semisimple symmetric space $X=G/H$. We investigate the spectral decomposition of the algebra ${\bf D}(X)$ of $G$-invariant differential operators on $X$ acting on $L^2(Y)$. The absence of elliptic invariant differential operators makes the spectral theory fundamentally different from the Riemannian case. We first show that standard quotients arise from {\it transitive} actions on $X$ of real reductive subgroups $L$ of $G$ containing the discrete subgroup $\Gamma$. Our approach is based on the geometry of properly transitive triples $(G,H,L)$. We derive explicit formulas expressing the Casimir operator of $G$ in terms of Casimir operators of $L$. Triples fall into two classes: Type I and Type II. For triples of Type I, we prove essential self-adjointness of invariant differential operators and discreteness of the corresponding spectral decomposition. This decomposition is illustrated by a detailed analysis of compact standard quotients of anti-de Sitter spaces. In contrast, Type II triples exhibit genuinely continuous spectral phenomena. A central theme of the paper is the interaction between the representation theories of $G$ and $L$. For Type I triples, we prove $L$-admissibility of $H$-spherical $G$-representations of finite length and establish multiplicity formulas. We show that the resulting correspondence defines a map between irreducible spherical $L$-representations and spherical $G$-representations. For triples of both types, we obtain a representation-theoretic description of eigendistributions via distributional matrix coefficients. As an application, we show that every integrable discrete series representation of $G/H$ contributes an infinite-dimensional family of $L^2$-eigenfunctions on every compact standard quotient of Type I.
\end{abstract}

\tableofcontents

\section{Introduction}\label{Intro}

Throughout this introduction, $G$ will be a non-compact connected semisimple real Lie group with finite center. We fix a maximal compact subgroup $K\subset G$. 

\subsection{A general question.}\label{gq} 
 Let $\Gamma\subset G$ be a uniform lattice. The theory of automorphic forms on 
$\Gamma\backslash G$ can be viewed as the study of admissible representations of $G$ occurring in various functions spaces on $\Gamma\backslash G$. In particular, one wishes to decompose the right regular representation on $L^2(\Gamma\backslash G)$ into irreducible unitary representations. Since $\Gamma\backslash G$ is compact, $L^2(\Gamma\backslash G)$ decomposes discretely with finite multiplicities (see e.g. \cite{GGPS})
\begin{equation}\label{GGPS}
L^2(\Gamma\backslash G)\cong \widehat{\bigoplus_{\pi\in\widehat{G}}}N_\Gamma(\pi)\otimes V_\pi,
\end{equation}
where $\widehat{G}$ is the set of equivalence classes of irreducible unitary representations $(\pi,V_\pi)$ of $G$ and $N_\Gamma(\pi)$
is the (finite dimensional) multiplicity space of $V_\pi$ in $L^2(\Gamma\backslash G)$. Although usually proved using convolution operators, the decomposition could be seen as a consequence of the spectral theorem for the elliptic operator given by the Casimir of $G$ acting on sections of vector bundles over the  compact Riemannian locally symmetric space $\Gamma\backslash G/K$. We mention that even for arbitrary discrete subgroups 
$\Gamma\subset G$ there is a direct integral decomposition involving possible continuous contributions
\begin{equation}\label{GGPSter}
L^2(\Gamma\backslash G)\cong \int^{\oplus}_{\pi\in\widehat{G}}M_\Gamma(\pi)\widehat{\otimes} V_\pi\; d\mu_\Gamma(\pi)
\end{equation}
by the abstract Plancherel Theorem (see e.g. \cite{Di}, or  \cite[Ch. 14]{Wa92}). For some classes of such $\Gamma$'s, for instance non-uniform lattices or convex cocompact subgroups, one can make the continuous contributions quite explicit in terms of Eisenstein series (Langlands \cite{La76}, Bunke-Olbrich \cite{BO00}). Although most of what follows can be formulated for more general discrete subgroups, we will keep the assumption that $\Gamma$ is cocompact in~$G$. 

Classically, one is interested in the joint eigenfunctions with respect to the center ${\Cal Z}({\mathfrak g})$ of the complex enveloping algebra ${\Cal U}({\mathfrak g})$ of the Lie algebra ${\mathfrak g}$ of $G$. More precisely, we fix a character $\chi:{\Cal Z}({\mathfrak g})\rightarrow{\mathbb C}$. The corresponding space of automorphic forms
could be defined as follows (where the subscript $K$ always stands for $K$-finite vectors):
%
\begin{eqnarray*}
{\mathcal A}_{\chi,K}(\Gamma\backslash G)&=&\{f\in C^\infty(\Gamma\backslash G)_{K}\mid\; Df=\chi(D)f,\;\;\forall D\in {\Cal Z}({\mathfrak g})\}.
\end{eqnarray*}
For our purposes it is convenient to drop the $K$-finiteness condition and consider the spaces
\begin{eqnarray*}
{\mathcal A}_{\chi,\infty}(\Gamma\backslash G)&=&\{f\in C^\infty(\Gamma\backslash G)\mid\; Df=\chi(D)f,\;\;\forall D\in{\Cal Z}({\mathfrak g})\},\\
{\mathcal A}_{\chi}(\Gamma\backslash G)&=&\{f\in L^2(\Gamma\backslash G)\mid\; Df=\chi(D)f,\;\;\forall D\in {\Cal Z}({\mathfrak g})\}.
\end{eqnarray*}
Since the number of representations $\pi\in\widehat{G}$ with fixed infinitesimal character $\chi_\pi=\chi$ is finite, we get the following decompositions:
\begin{equation*}
{\mathcal A}_{\chi,\star}(\Gamma\backslash G)\cong\bigoplus_{\{\pi\in\widehat{G}\mid\;\chi_\pi=\chi\}}N_\Gamma(\pi)\otimes V_{\pi,\star}\ ,
\end{equation*}
where $\star=K,\infty,\emptyset$ ($V_{\pi,\infty}$ is the subspace of  smooth vectors in $V_\pi$), and the inclusions
\begin{equation*}
{\mathcal A}_{\chi,K}(\Gamma\backslash G)\subset{\mathcal A}_{\chi,\infty}(\Gamma\backslash G)\subset {\mathcal A}_{\chi}(\Gamma\backslash G)
\end{equation*}
with dense image. In particular, the decomposition (\ref{GGPS}) can be coarsened a bit to 
\begin{equation}\label{GGPSbis}
L^2(\Gamma\backslash G)\cong \widehat{\bigoplus}_{\chi\in\widehat{{\Cal Z}({\mathfrak g})}}{\mathcal A}_{\chi}(\Gamma\backslash G),
\end{equation}
which gives in turn the joint spectral decomposition of the (commutative) algebra ${\Cal Z}({\mathfrak g})$ acting as unbounded operators on $L^2(\Gamma\backslash G)$. Here 
$\widehat{{\Cal Z}({\mathfrak g})}$ denotes the set of $*$-characters, i.e $\displaystyle{\chi(A^\star)=\overline{\chi(A)}}$ for all $A\in {\Cal Z}({\mathfrak g})$. 
The set of eigencharacters 
\begin{equation*}
\{\chi\in\widehat{Z({\mathfrak g})}\mid\;{\mathcal A}_{\chi}(\Gamma\backslash G)\neq\{0\}\} 
\end{equation*}
of $\Gamma\backslash G$ is discrete and closed in $\widehat{Z({\mathfrak g})}$ and therefore coincides with the spectrum $\text{Spec}(\Gamma\backslash G)$ of the action of ${\Cal Z}({\mathfrak g})$ on $L^2(\Gamma\backslash G)$. Moreover, if $G$ has no compact factors, the spaces ${\mathcal A}_{\chi}(\Gamma\backslash G)$ are all infinite-dimensional, except possibly when $\chi$ is the trivial character.

In contrast to (\ref{GGPS}) the decomposition (\ref{GGPSbis}) does not explicitly refer to the group structure of $G$. In fact, only its underlying structure as a pseudo-Riemannian symmetric 
space seems to matter: $\Cal Z(\fg)$ is the algebra of (bi-)invariant differential operators of the symmetric space $G\cong G\times G/\Delta G$, where $\Delta G\subset G\times G$ is the diagonal subgroup, and $\Gamma\backslash G$ is a compact locally symmetric space
covered by $G$. 


Thus one might ask whether the decomposition (\ref{GGPSbis}) is true in the more general setting of compact locally symmetric spaces. Namely, suppose $X=G/H$ is a semisimple symmetric space covering a compact locally symmetric space $Y=\Gamma\backslash G/H$, also called a compact Clifford-Klein form or shortly a compact quotient of $X$. Here we are only interested in the case that $H$ is non-compact, i.e. that $X$ is non-Riemannian. The compact case $H=K$ is already covered by (\ref{GGPSbis}) (one just has to take $K$-invariants). Let ${\bf D}(G/H)$ be the (commutative) algebra of $G$-invariant differential operators which acts naturally on $C^\infty(\Gamma\backslash G/H)$. It can be considered as an algebra of unbounded operators acting on $L^2(\Gamma\backslash G/H)$. 
It contains the pseudo-Riemannian Laplacian $\Delta_Y$, which is induced by the Casimir of $G$, but no elliptic operator. For a character $\chi:{\bf D}(G/H)\rightarrow{\mathbb C}$ of ${\bf D}(G/H)$, define the spaces
\begin{equation}\label{q0}
E_{\chi}^{\pm \infty}(\Gamma\backslash G/H):=\{f\in C^{\pm\infty}(\Gamma\backslash G/H)\mid\; Df=\chi(D)f\;\;\forall D\in {\bf D}(G/H)\},
\end{equation}
where $C^{-\infty}(\Gamma\backslash G/H)$ is the space of distributions on $\Gamma\backslash G/H$, and 
$$E^{(2)}_\chi(\Gamma\backslash G/H):=E^{-\infty}_\chi(\Gamma\backslash G/H)\cap L^2(\Gamma\backslash G/H).$$ 
In this setting, one can ask whether one has a discrete spectral decomposition 
\begin{equation}\label{q1}
L^2(\Gamma\backslash G/H)\cong \widehat{\bigoplus}_{\chi\in\widehat{{\bf D}(G/H)}}E_{\chi}^{(2)}(\Gamma\backslash G/H)
\end{equation}
or at least a direct integral decomposition
\begin{equation}\label{q2}
L^2(\Gamma\backslash G/H)\cong \int^\oplus_{\chi\in\widehat{{\bf D}(G/H)}}{\mathcal M}_\chi\; d\mu(\chi)
\end{equation}
where $\chi\mapsto{\mathcal M}_\chi$ is a measurable field of Hilbert spaces, $\mu$ is a Borel measure on $\widehat{{\bf D}(G/H)}$ and $D\in {\bf D}(G/H)$ acts by $\chi(D)$ on ${\mathcal M}_\chi$. Note that in case (\ref{q2}) holds, ${\mathcal M}_\chi$ can be identified, as a vector space, with a subspace of $E_{\chi}^{-\infty}(\Gamma\backslash G/H)$ ($\mu$-almost everywhere). Of course, one is interested not only in the existence of such a decomposition but also in a more precise description of the involved eigenspaces and spectral measures.

These questions were raised for the first time in the 1990's by Kobayashi \cite{Kos96} and have initiated a wide research program. The most far reaching published results in this direction have been obtained by Kassel and Kobayashi \cite{KK2},\cite{KK25}, see also the announcement papers \cite{KK11},\cite{KK20}. Here we present our own results addressing these questions which were obtained during the last two decades. Some of these results were previously announced in \cite{MO21}. 

In order to understand these questions properly, there are several issues of different nature which should be discussed first.

\subsection{Geometric issues.}\label{gi} 
Besides group manifolds there are apparently not so many non-Riemannian semisimple symmetric spaces admitting compact quotients. For instance, the de Sitter space  
$$SO_e(1,n+1)/SO_e(1,n)\ , \quad n\geq 1,$$ 
does not admit compact quotients (Calabi-Markus phenomenon \cite{CM62}). The same is true for the anti-de Sitter space $SO_e(2,n)/SO_e(1,n)$ with $n$ odd \cite{Ku81}.
The situation changes for even $n$. In this case, a basic standard construction applies. A discrete subgroup $\Gamma\subset G$ defines such a compact quotient if and only if it acts properly, cocompactly and freely on $G/H$.  If we have a (connected) reductively embedded group $L\subset G$ acting properly and cocompactly
on the symmetric space $G/H$, then any torsion free cocompact lattice $\Gamma\subset L$ defines a compact quotient $\Gamma\backslash G/H$. We will show, see Prop.~\ref{cocpt}, that in this situation $L$ has to act transitively (except if $G$ has compact factors).
Moreover, it is a classical result of Borel \cite{Bo63} that every reductive group $L$ admits such lattices. For the anti-de Sitter space with even $n$, we can take $L=U(1,\frac{n}{2})$.
We call compact quotients constructed in this way standard quotients.

In particular, standard quotients arise via triples $(G,H,L)$ with $G/H$ symmetric, $H$ non-compact, and  
\begin{itemize}
\item[(i)] $G=LH$,
\item[(ii)] $L\cap H$ is compact.
\end{itemize}
We will call such triples properly transitive triples, for the precise technical conditions see Definition \ref{defck}. Thanks to the important work of  Oni\v s\v cik
on decompositions of semisimple Lie groups $G=LH$ \cite{Oni62}\cite{Oni69}, all properly transitive triples, in particular the symmetric spaces $G/H$ admitting standard quotients, are known, at least in principle, see 
Theorem~\ref{listck} and the remarks following it.  Note that the quotients $\Gamma\backslash G$ of group manifolds appear naturally among the standard quotients.
Some group manifolds, however, admit also standard quotients that are significantly different from these basic examples, see Table 2, Case~12 and Remark \ref{remindecomp}. If we restrict considerations to symmetric spaces with $G$ simple, then there are - up to local isometry - five series of non-Riemannian symmetric spaces and five sporadic spaces $G/H$
admitting standard quotients, see Table 1.   

It is conjectured in \cite{KY05} that a symmetric space admits compact quotients if and only if it admits a standard one. This does not mean that every compact quotient is standard. Some
standard quotients have deformations which are no longer standard quotients (see Goldman \cite{Gol85}, Ghys \cite{Ghy87}, Kassel\cite{Ka12}, and Kannaka-Kobayashi \cite{KaK25}). Moreover, there
is a recent construction \cite{MST23} of quotients of $SO_e(2,2n)/U(1,n)$ (which admits standard quotients) that are completely different from standard quotients and their deformations. However, for some symmetric spaces one might expect that all compact quotients are standard (see \cite{Ze98}). We will restrict our considerations mainly to standard quotients which makes many things (see for instance the discussion of (\ref{matcoef}) below) more accessible due to the presence of the group $L$.

One could even consider the more general task of spectral theory of compact quotients of homogeneous spaces $G/H$ of reductive type (with $H$ non-compact) with respect to a certain commutative algebra of locally invariant operators containing the Laplacian. What has been said above holds more
or less also in this case. In particular, we can speak of standard quotients. Again, Oni\v s\v cik's results provide a complete list of spaces admitting standard quotients, at least for 
simple $G$. It turns out that this list is not much longer than the corresponding list of symmetric spaces, see again the discussion after  Theorem~\ref{listck}. Therefore we decided to stay
in the framework of locally symmetric spaces, where we can take advantage of the far developed theory of harmonic analysis on symmetric spaces $G/H$ (see e.g. \cite{vdBFJS}).

It is not known whether a compact locally symmetric space which is locally isometric to $G/H$ is of the form $\Gamma\backslash G/H$. This would be true if such spaces were always geodesically complete, as it is the case for Lorentzian spaces with constant curvature (see  Klingler \cite{Kli96}, Carri\`ere \cite{Car89}). But even if there were non-complete examples, a spectral theory for them would be really problematic. Indeed, in this case the Laplacian $\Delta_Y$ is expected not to be essentially self-adjoint and one has to distinguish a self-adjoint extension of $\Delta_Y$ as an additional datum, see the next subsection.

\subsection{Analytic issues.}\label{anal} For a general (compact) 
quotient $Y=\Gamma\backslash G/H$, the algebra ${\bf D}(G/H)$ acts on $L^2(\Gamma\backslash G/H)$ via unbounded 
non-elliptic 
operators. It is not clear whether a decomposition as in (\ref{q2}) (or even as in (\ref{q1})) exists at all, and if it exists whether it is unique. When ${\bf D}(G/H)$ is generated by a single formally self-adjoint operator $D$, as it is the case for rank one symmetric spaces $G/H$ with $D=\Delta_Y$, the question is whether $D$, originally defined on $C^\infty(\Gamma\backslash G/H)$, has a self-adjoint extension and whether this extension is unique, i.e., whether $D$ is essentially self-adjoint. Note that the Laplacian $\Delta_Y$ always has a self-adjoint extension while uniqueness
is not a priori clear. In higher rank, even the existence of a spectral decomposition is a quite subtle problem. But there are sufficient criteria again in terms of essential self-adjointness assuring existence and uniqueness. We will review the relevant general theory behind that in Section \ref{G2}.

This issue is related to a conjecture of Colin de Verdi\`ere and Le Bihan, known as the {\it quantum completeness problem} (see \cite{CL22}, Conjecture 1.2):
\vspace*{0.2cm}

{\it Let $M$ be a closed smooth manifold equipped with a smooth density. Suppose $D$ is a formally self-adjoint operator of degree at most $2$ with smooth coefficients on $M$. Then $D$ is essentially self-adjoint if and only if the Hamiltonian flow of its symbol is complete.}
\vspace*{0.2cm}\\
\noindent 
Colin de Verdi\`ere - Le Bihan established this conjecture in various cases, including Laplacians on some Lorentzian surfaces \cite{CL22}. We observe that in the case where $M$ is a compact  pseudo-Riemannian manifold, the above conjecture says that the Laplacian $\Delta_M$ is essentially self-adjoint if and only if $M$ is geodesically complete.
Since  compact quotients are geodesically complete the conjecture suggests that our problem of finding {\em the} spectral decomposition of ${\bf D}(G/H)$ on $Y$ might be well-posed.
Vice versa, some of our results (see (D) and ({G}) below) will confirm the conjecture for Laplacians on certain pseudo-Riemannian locally symmetric spaces.

Concerning the construction of eigenfunctions or eigendistributions on $Y$ the following strategy seems natural. Though the group $G$ does not act on the double coset space $\Gamma\backslash G/H$, one would like to describe the eigenspaces $E_\chi^\star(\Gamma\backslash G/H)$ defined in (\ref{q0}) in terms of the representation theory of $G$. We first note that for every admissible $H$-(co)spherical representation $(\rho,W_\rho)$ of $G$, distributional matrix coefficients provide a map 
\begin{equation}\label{matcoef}
(W_{\widetilde{\rho},-\infty})^\Gamma\otimes (W_{\rho,-\infty})^{H}_\chi\longrightarrow E_\chi^{-\infty}(\Gamma\backslash G/H),
\end{equation}
where $\displaystyle{(W_{\rho,-\infty})^{H}_\chi}$ denotes of $H$-invariant distributions vectors in $W_\rho$ transforming via the character $\chi$ under ${\bf D}(G/H)$ and 
$\displaystyle{(W_{\widetilde{\rho},-\infty})^\Gamma}$ is the space of $\Gamma$-invariant distribution vectors for the dual representation $W_{\tilde{\rho}}$ of $W_\rho$. 
It turns out that even for general compact quotients, the sum of all of these maps exhausts $E_{\chi}^{-\infty}(\Gamma\backslash G/H)$ (see Section \ref{eigen}). However, it is not clear (and even not true in general) that it is sufficient to restrict to irreducible or even unitary representations $\rho$. While the spaces $(W_{\rho,-\infty})^{H}_\chi$ are finite dimensional and well understood, the primary aim is to gain insight into the spaces $(W_{\widetilde{\rho},-\infty})^\Gamma$. 
We are not aware of general principles or even construction methods providing such an insight - except for two special cases, see below.
For instance, the scattering theoretic approach of \cite{BO00}, to the analoguous problem for (convex) cocompact subgroups $\Gamma$ of real rank one groups $G$, seems
to generalize from groups to symmetric spaces (of real rank one) at a first glance, only. The generalization would involve the embedding of $X$ into a higher dimensional symmetric space and the construction of a kind of Eisenstein series associated to this space. As soon as it comes to their meromorphic continuation the analogy does not persist anymore, i.e.
for a proof of a possible meromorphic continuation completely new ideas are required. Similar remarks apply to the approach \cite{DG16} via resonant states of the geodesic flow. Note, however, that this approach has recently lead to very interesting spectral theoretical results for certain non-compact and incomplete Lorentzian locally symmetric spaces, namely quasi-Fuchsian anti de-Sitter $3$-manifolds \cite{DGM25}.   

The two special cases mentioned above are the following: If $W_\rho$ is an integrable discrete series representation for $X=G/H$, then the corresponding integrable   eigenfunctions on $X$ can be averaged over 
$\Gamma$ providing (at least weak) eigenfunctions on $Y$ (Poincar\' e series). Of course, one has to show that this procedure does not only produce the zero eigenfunction. 
This is the approach of \cite{KK2}. Or one assumes that the quotient $Y=\Gamma\backslash X$ is standard  corresponding to a triple $(G,H,L)$. Then one can describe 
$(W_{\widetilde{\rho},-\infty})^\Gamma$ via representations of $L$ having $\Gamma$-invariants (Lemma~\ref{kirchner}). This will be the main approach of the present paper.

\subsection{Strategy and main results.} 

We now proceed to list and explain the paper's main results, all of which concern compact standard quotients of symmetric spaces. As a starting point we establish
\begin{itemize}
\item[({\bf A})] {\em Assume that $G$ has no compact factor. Let  $L,H\subset G$ be reductively embedded closed subgroups with finitely many components. If $L$ acts properly and cocompactly
on $G/H$, then it acts transitively.} (Prop.~\ref{cocpt})
\end{itemize}
The proof 
rests on some previously known consequences of properness and cocompactness for the non-compact parts of the Lie algebras $\fg,\fl,\fh$ of $G,L,H$, respectively, and a basic fact about the fundamental group of the Lagrangian Grassmannian. The result was already mentioned in \cite{MO21}. Different proofs for some special cases can be found also in \cite{BoTr25}.

This result tells us in particular that every standard quotient $Y$ of a symmetric space $X=G/H$  is of the form
$$   Y=\Gamma\backslash G/H \cong \Gamma\backslash L/L\cap H  $$
for some properly transitive triple $(G,H,L)$ (see (i) and (ii) above)  and some cocompact lattice $\Gamma\subset L$. Since $L\cap H$ is compact, we conclude from (\ref{GGPS}) that
\begin{equation}\label{decompo}  
L^2(Y)\cong L^2(\Gamma\backslash L)^{L\cap H}\cong  \widehat{\bigoplus_{\pi\in\widehat{L}}}N_\Gamma(\pi)\otimes V_\pi^{L\cap H}.
\end{equation}
This is a decomposition of the action of ${\bf D}(G/H)$. In fact, the action of the  larger algebra ${\bf D}(L/L\cap H)\supset {\bf D}(G/H)$ of $L$-invariant differential operators on $X$ can be described
via the  regular representation on $C^\infty(\Gamma\backslash L)$ of $L\cap H$-invariant elements in the universal enveloping algebra ${\Cal U}(\fl)$. Hence the decomposition (\ref{decompo}) even respects the action of ${\bf D}(L/L\cap H)$. It would be already close to a spectral decomposition for ${\bf D}(G/H)$ if the spaces  $V_\pi^{L\cap H}$, $\pi\in\widehat L$, were finite dimensional or, at least, would split into finitely many joint eigenspaces for ${\bf D}(G/H)$. 

Next we introduce geometric conditions on a triple $(G,H,L)$ that will ensure these properties.
We call a triple {\em spherical} if not only $L$, but also the minimal parabolic $P_L$ of $L$ acts transitively on $X=G/H\cong L/L\cap H$ (Def.~\ref{sphertriples}). In other words, the homogeneous space $L/L\cap H$
is a real spherical homogeneous space. Real spherical homogeneous spaces $L/Q$ are natural generalizations of symmetric spaces. Harmonic analysis on them is nowadays one of the most active research fields in representation theory of real reductive groups, see e.g. \cite{DKKS}, \cite{SV}, \cite{KKS}. Of course, this research focuses on the difficult situation where $Q$ is non-compact, while in our situation $Q=L\cap H$ is compact. 

It will be natural to distinguish not too much between triples $(G,H,L)$ and $(G,H,L')$ if $L$ and $L'$ differ only  by compact factors, i.e. we will
consider them as equivalent. Unfortunately, the property of being spherical is not preserved by such equivalences, in general. To remedy the situation, we say that a triple $(G,H,L)$
is of Type~I, if for a maximal properly acting subgroup $L_{max}\supset L$ of $G$ the triple $(G,H,L_{max})$ is spherical. Otherwise, we say that the triple is of Type~II (Def.~\ref{types}). 
For instance, the triple 
\be\label{T1}
(SO_e(2,2n), SO_e(1,2n),U(1,n))\ , 
\end{equation}
which is responsible for standard quotients of anti-de Sitter spaces, is of Type~I, while 
\be\label{T9}
(SO_e(4,4), SO(3)\times SO_e(1,4), Spin(3,4))
\end{equation} 
is of Type~II. Now we have
\begin{itemize}
\item[({\bf B})] {\em A properly transitive triple $(G,H,L)$ is spherical if and only if $\dim V_\pi^{L\cap H}<\infty$ for all $\pi\in\widehat L$} (Cor.~\ref{inv3}). {\em Moreover, for any triple 
$(G,H,L)$ of Type~I the algebra ${\bf D}(G/H)$ acts by a single character $\chi_\pi$ on  $V_\pi^{L\cap H}$, $\pi\in \widehat L$} (Prop. \ref{staun}).
\end{itemize}
While the first statement has a direct proof using Casselman's subrepresentation theorem, the proof of the second is based on the classification of Type~I triples and involves multiplicity one results for many triples (Prop.~\ref{mufree}).
We remark that the first statement could also be viewed as a special case of results for general spherical homogeneous spaces $L/Q$ (\cite{KO13}, \cite{KS16}) (which have a much more involved proof).

Let us now discuss the classification of properly transitive triples $(G,H,L)$. We call a triple indecomposable if it is not equivalent to a product of two triples. 
Certainly, the classification of general triples is reduced to the classification of indecomposable triples. If the underlying symmetric space $G/H$ is irreducible, in which case we also call the triple
$(G,H,L)$ irreducible, then $(G,H,L)$ is indecomposable. Note that the converse is not true. However, we have

\begin{itemize}
\item[({\bf C})] {\em Indecomposable triples of Type~I are irreducible} (Prop.~\ref{productck}). {\em  Up to
equivalence, the list of all irreducible properly transitive triples  (of both types) is given in Tables 1 and 2} (Thm.~\ref{listck}).
\end{itemize}
As a consequence, triples of Type~I are completely classified. The second part of this classification result is an immediate consequence of Oni\v s\v cik's results \cite{Oni69} on decompositions
of semisimple Lie groups, as already mentioned in Subsection~\ref{gi}. With a bit more effort, it is in fact possible to deduce from \cite{Oni69} also a classification of indecomposable non-irreducible triples
(which are necessarily of Type~II). They arise from certain diagonal constructions applied to the triples appearing in Tables 1 and 2. We do not formulate the complete classification result in this paper.
We remark that the underlying symmetric spaces of such indecomposable triples can have arbitrarily many irreducible factors, but at most two, if we would insist (what we don't do here) that there is no group manifold among the irreducible factors of $G/H$. The most interesting indecomposable triples with two
factors are written down at the end of Section \ref{triples}.

Now we return to spectral decompositions of $L^2(Y)$. We first observe that  (\ref{decompo}) combined with ({B}) implies 
\begin{itemize}
\item[({\bf D})] {\em Let $(G,H,L)$ be a triple of Type~I, and let $\Gamma\subset L$ be a cocompact lattice. Then there is a unique and discrete spectral decomposition of the form} (\ref{q1})
{\em which induces corresponding topological decompositions of $C^{\pm\infty}(\Gamma\backslash G/H)$. All formally self-adjoint elements of   ${\bf D}(G/H)$ are essentially self-adjoint on $\Gamma\backslash G/H$.} (Thm.~\ref{selim})
\end{itemize}

To get some information on the spectrum also in case of Type~II triples and more information for Type~I triples
we express the Casimir operator $\Omega_G\in {\bf D}(G/H)$, which acts as $\Delta_Y$ on $L^2(Y)=L^2(\Gamma\backslash G/H)$, in terms of elements of ${\Cal U}(\fl)$.
This should lead to some understanding of the action of  $\Delta_Y$ on the summands of (\ref{decompo}) indexed by $\pi\in\widehat L$,
in particular - in the case of a Type~I triple - to the determination of the corresponding eigenvalues $\chi_\pi(\Omega_G)$. We give a general procedure leading to such expressions
in terms of the principal angles between $\fl$ and $\fh$ in $\fg$ (Lemma \ref{sense}). We make the result completely explicit for all properly transitive triples
$(G,H,L)$ with $G$ simple, i.e. for the triples appearing in Table~1. The result is in terms of the Casimir $\Omega_L$ of $L$ and of Casimir operators of certain subgroups of $L$. Let $K_L\subset L$ be a maximal compact subgroup containing $L\cap H$, and let $\sigma\in\Aut(G)$ be the involution defining the symmetric space structure of $G/H$.
\begin{itemize}
\item[({\bf E})] {\em Let $(G,H,L)$ be a properly transitive triple with $G$ simple. For each such triple, there are explicitly known non-zero rational constants $\alpha,\beta$ and 
$\gamma$ such
that the following identities hold in $D(L/L\cap H)$:  
\begin{eqnarray*} \Omega_G &= &\alpha\Omega_L+\beta\Omega_{K_L},\quad\quad\quad\mbox{ if }(G,H,L)\mbox{ is of Type~I},\\
\Omega_G& =& \alpha\Omega_L+\beta\Omega_{K_L} +\gamma\Omega_{L\cap\sigma(L)},\  \mbox{ if }(G,H,L)\mbox{ is of Type~II}.
\end{eqnarray*}}
(Prop.~\ref{casimir})
\end{itemize}
Note that the group $L\cap\sigma(L)$ is non-compact in the relevant cases. For instance, for the triple (\ref{T9}) it is given by the normal real form of the exceptional group
$G_2$ sitting in $L=Spin(3,4)$. The appearance of such non-compact ``correction terms" like $\Omega_{L\cap\sigma(L)}$  which, moreover, do not commute with $\Omega_{L\cap K}$,
led to the - at least for us -  surprising  discovery that standard quotients associated to Type~II triples can hardly admit a discrete spectral decomposition of the form (\ref{q1}).
On the other hand, the above formulas for triples of Type~I, when combined with knowledge about the representations $\pi\in\widehat L$, $V_\pi^{L\cap H}\ne\{0\}$,  and some properties of the corresponding multiplicities $\dim N_\Gamma(\pi)$, would lead to a rather clear picture of the spectrum of $\Delta_Y$ for standard quotients $Y$ associated to Type~I triples.
We implement this in detail for the Triple (\ref{T1}), i.e. for compact standard quotients of the anti-de Sitter space. Note that the anti-de Sitter space is a symmetric space of rank one,
hence  ${\bf D}(G/H)$ is generated by the Laplacian $\Delta_Y$.

\begin{enumerate}
\item[({\bf F})] {\em Let $X$ be the anti-de Sitter space of dimension $2n+1$, $n\ge1$, $\Gamma\subset L=U(1,n)$ discrete, cocompact and torsion free. Let $Y=\Gamma\backslash X$.
Then the set of eigenvalues $\mu$ of $\Delta_Y$ splits into three disjoint subsets:
\begin{itemize}
\item $B_+:=\{\ell^2-n^2\mid\ell\in\N\}$. All these eigenvalues appear with infinite multiplicity.
\item A (possibly empty) finite set $B_0\subset (1-n^2,0)\setminus B_+$. These eigenvalues appear with finite multiplicity. 
\item An unbounded set $B_-\subset (-\infty,1-n^2)$.
\end{itemize}
}
(Thm.~\ref{something})
\end{enumerate}
Via (\ref{decompo}), these three subsets correspond roughly to the subsets of discrete series, complementary series and unitary principal series representations of  $\{\pi\in\widehat L\mid V_\pi^{L\cap H}\ne\{0\}\}$, respectively. 

In order to show that the eigenspaces for the eigenvalues $\mu\in B_+$ are infinite dimensional, we determine all discrete series
representations $\pi$ of $L$ having an $L\cap H$ invariant vector, observe that infinitely many of them contribute to a given $\mu=\ell^2-n^2$, and establish a non-vanishing result
for almost all of the multiplicity spaces $N_\Gamma(\pi)$ (Prop.~\ref{crux} (c),(d) and Prop.~\ref{pavle}). For $\ell\ge 2$, this non-vanishing result is a direct consequence of known multiplicity formulas for discrete series representations. For $\ell=1$, additional arguments are needed. We remark that the corresponding discrete series representation is integrable if and only if $\ell\ge n+1$. 

We know more about the size of $B_-$ than just that it is unbounded, see 
the proof of Thm.~\ref{something}.
But for $n\ge 2$, we still do not know enough in order
to say something about the possible accumulation points of $B_-$ and eigenvalues $\mu\in B_-$ of infinite multiplicity, i.e. about the negative part of the essential spectrum of $\Delta_Y$.
For $n=1$ this part of the essential spectrum is unbounded as well (Cor.~\ref{late}). One might expect the same also for $n\ge 2$. This is certainly a topic for further research. Recall that in the automorphic form case (\ref{GGPSbis}) all eigenvalues occur with infinite multiplicity, and there are no accumulation points.
Similar questions about accumulation points of eigenvalues of compact standard quotients  remain unanswered also by the spectral theoretic results  ({G}) and ({K}) that follow.

For Type~II triples with $G$ simple, it seems to be very difficult to investigate the action of $\Omega_G$ on $V_\pi^{L\cap H}$ just based on the formulas given in ({E}).
There is, however, a class of indecomposable non-irreducible Type~II triples (cf. Remark~\ref{remindecomp}) which is more accessible by such methods:
\begin{equation}\label{T13} 
(G'\times G'\times G'\times G',\Delta_{12} G'\times \Delta_{34} G', G'\times \Delta_{23} G'),
\end{equation}
where $G'$ is an arbitrary simple non-compact Lie group. In other words, $X=G/H$ is the product of group manifolds $G'\times G'$, and $L\cong G'\times G'$ acts on it in the following way:
The first factor of $L$ acts on the first factor of $X$ from the left, while the second factor of $L$ acts diagonally (from the right on the first factor and from the left on the second). 
We have $\Omega_G=\Omega_1+\Omega_2$, corresponding to the product decomposition of $X$. Both summands belong to ${\bf D}(G/H)$. Let $L_i\subset L$, $i=1,2$,  be the factors
of $L$, and $\Delta\subset L$ be the diagonal subgroup. Then the formulas analogous to ({E}) are  (in a suitable normalization of $\Omega_\Delta$, cf. Lemma~\ref{bur}) :
$$ \Omega_1=\Omega_{L_1},\quad \Omega_2 =\Omega_\Delta\ .$$
Now the actions of $\Omega_i$ on irreducible unitary representations of $L$ which are tensor products of representations
of $L_i\cong G'$ are easily understood if one understands the decomposition of tensor products when restricted to the diagonal $\Delta$.
We work this out when $G'=PSL(2,\R)$.
\begin{itemize}
\item[({\bf G})] {\em Let $(G,H,L)$ be the triple (\ref{T13}) with $G'=PSL(2,\R)$, and let $\Gamma\subset L\cong PSL(2,\R)\times PSL(2,\R)$ be a cocompact torsion free lattice. Then there is a unique spectral decomposition of the form} (\ref{q2}),
{\em where $\widehat{{\bf D}(G/H)}$ identifies with $\R^2$ via $\chi\mapsto (\chi(\Omega_1),\chi(\Omega_2))$, the measure $\mu$ is a countably infinite sum of Dirac measures supported in the half plane $\{\chi(\Omega_2)>-\frac{1}{4}\}$ and one-dimensional Lebesgue measures supported on half lines $\{\chi(\Omega_1)=c, \chi(\Omega_2)\le-\frac{1}{4}\}$, and the
multiplicity spaces ${\mathcal M}_\chi$ are all infinite dimensional.
All formally self-adjoint elements of ${\bf D}(G/H)$ are essentially self-adjoint on $\Gamma\backslash G/H$.
The absolutely continuous spectrum of the Laplacian $\Delta_Y$ is $\R$ and with infinite multiplicity with embedded eigenvalues of infinite multiplicity. Moreover, the set of embedded eigenvalues is bounded neither above nor below.} (Thm.~\ref{surprise}, Figure~2)
\end{itemize}

We remark that the results discussed so far do not rely on the well developed $H$-spherical representation theory of $G$. We only used Casimir like operators on $G$
and representation theory of $L$. We also have not yet discussed the relation between the approaches to a spectral theory for ${\bf D}(G/H)$ on $L^2(Y)$ via (\ref{decompo})
(representation theory of $L$) and via (\ref{matcoef}) (representation theory of $G$). However, at least in the case of spherical triples $(G,H,L)$, the results above - in particular the classification results ({C}) and the formulas ({E})
for $\Omega_G$ -  imply an important property of $H$-spherical $G$-representations, namely $L$-admissibility:
\begin{itemize}
\item[({\bf H})] {\em Let $(G,H,L)$ be a triple of Type~I, and let $W_\rho$ be a non-trivial  irreducible unitary $H$-spherical representation of $G$.  As an $L$-representation, $W_\rho$ decomposes discretely into an infinite Hilbert sum of irreducible unitary representations of $L$. The multiplicity $m_\rho(\pi)$ of a given $\pi\in\widehat L$ in $W_\rho$ is either $0$ or equal to $\dim V_\pi^{L\cap H}$. In particular, the multiplicities are finite if $(G,H,L)$ is spherical.} (Thm.~\ref{mainbranching})
\end{itemize}
Recall that an admissible $G$-representation $W_\rho$ of finite length is called $H$-spherical, if $W_{\rho,\infty}$ embeds into $C^\infty(G/H)$. By Frobenius reciprocity, this
is equivalent to: $(W_{\tilde\rho,-\infty})^H$ contains a vector which is cyclic for the $G$-representation $W_{\tilde\rho,-\infty}$. There is a dual notion: $W_\rho$
is called cospherical, if $(W_{\rho,-\infty})^H$ contains a cyclic vector. For an irreducible admissible, in particular an irreducible unitary, representation $W_\rho$ both notions coincide and are equivalent to $(W_{\rho,-\infty})^H\ne\{0\}$. By $\widehat G_H$ we denote the set of equivalence classes of irreducible unitary $H$-(co)spherical representations of $G$.

In this paper, we present a more geometric proof of the result ({H}) which is essentially classification free and does not use the formulas for $\Omega_G$ in ({E}). In fact, we show 
that for a spherical triple $(G,H,L)$ the maximal compact $K_L\subset L$ (and hence also $L$) acts transitively on $G/P_{\sigma\theta}$, where $P_{\sigma\theta}\subset G$
is a minimal $\sigma\theta$-stable parabolic (Lemma \ref{sigmatheta}). The result (except for the precise multiplicity formula which is related to (I) below) then follows in combination with Delorme's embedding theorem stating that any irreducible admissible
$H$-spherical representation can be embedded into a representation induced from a finite dimensional representation of $P_{\sigma\theta}$ (principal series representations for $G/H$).
Among other things, this proof has the advantage that it gives a way to obtain the $L$-decomposition of $W_\rho$ rather explicitly as soon as $W_\rho$ is realized as
a subrepresentation of a principal series for $G/H$, see e.g. (\ref{schal}) and Tables 3--6 in Section~\ref{eigen}.

From the point of view of spectral theory, with (\ref{decompo}) as a starting point, we are more interested in the reciprocal question: Given $\pi\in\widehat L$ with $V_\pi^{L\cap H}\ne\{0\}$ determine
all $\rho\in \widehat G_H$ such that $m_\rho(\pi)\ne \{0\}$. It will turn out that there is at most one such representation $\rho$, see (I) below. But the difficulty is that there might be none. We have to drop the unitarity and the irreducibility conditions. This is the reason that we treat the branching problem from $G$ to $L$ in Section~\ref{branching} in the framework of general admissible $H$-spherical $G$-representations of finite length. We call such a representation $W_\rho$ $\pi$-minimal, if $V_{\pi,\infty}$ embeds into $W_{\rho,\infty}$ but does not embed into a proper subrepresentation of it. It will be called $L$-minimal, if it is $\pi$-minimal for some irreducible admissible $L$-representation (necessarily with $V_\pi^{L\cap H}\ne\{0\}$). Note that every irreducible admissible $H$-spherical $G$-representation is $L$-minimal.
\begin{itemize}
\item[({\bf I})] {\em Let $(G,H,L)$ be a triple of Type~I. Then for every irreducible admissible $L$-representation $V_\pi$ with $V_\pi^{L\cap H}\ne\{0\}$ there exists a unique
$\pi$-minimal $H$-spherical $G$-representation. Moreover, $L$-minimal $G$-representations $W_\rho$  satisfy $\dim (W_{\tilde\rho,-\infty})^H=1$.} (Thm.~\ref{aal})
\end{itemize}
This result can be viewed as a refinement of the second part of (B). It says that $\pi\in \widehat L$ does not only determine a character of ${\bf D}(G/H)$, it determines
an $H$-spherical $G$-representation. As the second part of (B), it rests on the multiplicity one results of Prop.~\ref{mufree}.

From now on let us swap the roles of $\rho$ and $\tilde\rho$, i.e. $\rho$ will be cospherical, and $\tilde \rho$ will be $H$-spherical. Let us also return for a moment to 
general
properly transitive triples $(G,H,L)$. The multiplicity space for the restriction of $W_{\tilde\rho}$ to $L$ is always contained in $\Hom_L(W_{\rho,\infty}, V_{\pi,\infty})$ and contains $\Hom_L(W_{\rho,-\infty}, V_{\pi,-\infty})$ (intertwining operators that extend to distribution vectors),
even if the decomposition is non-discrete, see Lemma~\ref{sardine}. Note that $(W_{\rho,-\infty})^H$ is a ${\bf D}(G/H)$-module. We have a natural ${\bf D}(G/H)$-equivariant map
$$ E_0: \Hom_L(W_{\rho,-\infty}, V_{\pi,-\infty})\otimes (W_{\rho,-\infty})^H\longrightarrow (V_{\pi,-\infty})^{L\cap H}\ .$$
\begin{itemize}
\item[({\bf J})] {\em Let $(G,H,L)$ be a properly transitive triple. Let $W_\rho$ and $V_\pi$ be continuous representations on reflexive Banach spaces of $G$ and $L$, respectively.
Then there is a natural ${\bf D}(G/H)$-equivariant map 
\be\label{thomas1}\nonumber   E: \Hom_L(W_{\rho,\infty},V_{\pi,\infty})\otimes (W_{\rho,-\infty})^H\longrightarrow  (V_{\pi,-\infty})^{L\cap H}
\end{equation}
extending $E_0$ above. If $(G,H,L)$ is of Type~I, $\pi$ is irreducible admissible, and $\tilde\rho$ is $\tilde\pi$-minimal, then $E$ is a bijection.} (Prop.~\ref{paulklee})
\end{itemize}
We will use the map $E$ for representations $W_\rho$ having an infinitesimal character. We will also assume that the natural map ${\Cal Z}(\fg)\rightarrow {\bf D}(G/H)$ is surjective.
This is justified by the classification of properly transitive triples. These assumptions imply that $(W_{\rho,-\infty})^H=(W_{\rho,-\infty})_\chi^H$, where $\chi$ is the character of ${\bf D}(G/H)$ determined by the infinitesimal character of $W_\rho$. In particular, the image of $E$ consists of eigenvectors for the action of ${\bf D}(G/H)$ on  $(V_{\pi,-\infty})^{L\cap H}$.

We want to advertise the approach to use the map $E$ as the key tool to transform the decomposition (\ref{decompo})
into a spectral decomposition of $L^2(Y)$. Moreover, the resulting decomposition will be even even finer than (\ref{q1}) or (\ref{q2}) since it is indexed by  a set of (co)spherical representations $\rho$ of $G$ instead of eigencharacters $\chi$ of 
${\bf D}(G/H)$ (in other words, we want to produce a decomposition that generalizes (\ref{GGPS}) instead of (\ref{GGPSbis})).

Let us now explain the relation of (\ref{decompo}) to
the matrix coefficient map (\ref{matcoef}) using $E$. The injection $N_\Gamma(\pi)\otimes V_\pi^{L\cap H}\hookrightarrow L^2(Y)$ extends to 
$N_\Gamma(\pi)\otimes (V_{\pi,-\infty})^{L\cap H}\hookrightarrow C^{-\infty}(Y)$, which is given by distributional matrix coefficients $c^L$ for $L$ as follows:
$N_\Gamma(\pi)$ identifies with $(V_{\tilde\pi,-\infty})^\Gamma$ and the map in question is
$$  (V_{\tilde\pi,-\infty})^\Gamma\otimes (V_{\pi,-\infty})^{L\cap H}\ni \tilde v\otimes v\mapsto c^L_{\tilde v, v}\in C^{-\infty}(L)^{\Gamma\times (L\cap H)}\cong C^{-\infty}(Y)\ .$$
We denote matrix coefficients for $G$ by the symbol $c^G$. Now let $\Phi\in \Hom_L(W_{\rho,\infty},V_{\pi,\infty})$, $w\in (W_{\rho,-\infty})^H$, $\tilde v\in (V_{\tilde\pi,-\infty})^\Gamma$. Then we have the following
equality of eigendistributions on $Y$: 
\be\label{wilhelm} c^L_{\tilde v,E(\Phi\otimes w)} =c^G_{\Phi^t\tilde v, w}\ .
\end{equation}
This identity also gives a useful characterization of smooth and $L^2$-eigenfunctions: 
$$c^G_{\Phi^t\tilde v, w}\in C^\infty(Y)\  (\mbox{or }L^2(Y))\ \Leftrightarrow \ E(\Phi\otimes w)\in (V_{\pi,\infty})^{L\cap H}\  (\mbox{or }V_\pi^{L\cap H})\ .$$

Back to triples of Type~I. In order to formulate the spectral decomposition that generalizes (\ref{GGPS}) we need some more notation. Let $\widehat G_H^L\supset \widehat G_H$
be the set of infinitesimal equivalence classes of admissible $H$-cospherical $G$-representations $W_\rho$ of finite length such that the conjugate dual $\rho^*$ is $\pi$-minimal
for some $\pi\in\widehat L$. We obtain a partition of the set $\{\pi\in\widehat L\mid V_\pi^{L\cap H}\ne\{0\}\}$ into disjoint subsets
$$ \widehat L_\rho:=\{  \pi\in\widehat L\mid V_\pi^{L\cap H}\ne\{0\}, \rho^*\mbox{ is }\pi\mbox{-minimal}\},\quad \rho\in \widehat G_H^L.$$
Recall from (I) that $\dim (W_{\rho,-\infty})^H=1$ for $\rho\in \widehat G_H^L$. Moreover, ${\bf D}(G/H)$  acts on this space of invariants by a character $\chi\in \widehat{{\bf D}(G/H)}$, which gives rise to a finite-to-one map $p: \widehat G_H^L\rightarrow \widehat{{\bf D}(G/H)}$.
\begin{itemize}
\item[({\bf K})] {\em Let $(G,H,L)$ be a triple of Type~I. Let $\Gamma\subset L$ be a cocompact lattice. There exist vector subspaces $(W_{\widetilde{\rho},-\infty})^\Gamma_0\subset (W_{\widetilde{\rho},-\infty})^\Gamma$ (dense whenever $\rho$ is irreducible) and dense subspaces $\Hom_L(W_{\rho,\infty},V_{\pi,\infty})_0\subset \Hom_L(W_{\rho,\infty},V_{\pi,\infty})$ carrying
Hilbert space structures (depending on the normalization of the scalar product on the one-dimensional Hilbert space $(W_{\rho,-\infty})^H$) such that we have isomorphisms of Hilbert spaces 
$$  L^2(Y)\cong \widehat{\bigoplus_{\rho\in\widehat G_H^L}} (W_{\tilde \rho,-\infty})_0^\Gamma\otimes  (W_{\rho,-\infty})^H\quad ({\bf D}(G/H)\mbox{-equivariant})$$
and 
$$  (W_{\widetilde{\rho},-\infty})^\Gamma_0\cong  \widehat{\bigoplus_{\pi\in\widehat L_\rho}} \Hom_L(W_{\rho,\infty},V_{\pi,\infty})_0\otimes N_\Gamma(\pi)\ .$$
There are corresponding decompositions of $C^{\pm\infty}(Y)$.}
(Thm.~\ref{gogo})
\end{itemize}
In Thm.~\ref{gogo} we use slightly different notations for the various Hilbert spaces involved in the above decompositions. We also give explicit descriptions of them.
For instance, if $\rho\in\widehat G_H$, then $\Hom_L(W_{\rho,\infty},V_{\pi,\infty})_0=\Hom_L(W_{\rho},V_{\pi})$ equipped with a certain multiple of its natural scalar product.

For certain subsets of $\widehat G_H^L$ we can make quite precise statements about their contribution to $L^2(Y)$.
\begin{itemize}
\item[({\bf L})] {\em Let $(G,H,L)$ be a triple of Type~I, and let $\Gamma\subset L$ be a cocompact torsion free lattice. Let $\rho\in\widehat G_H$ be an integrable discrete series representation for $G/H$. Then $\dim(W_{\widetilde{\rho},-\infty})^\Gamma_0=\infty$.}
(Prop.~\ref{horn} and Thm.~\ref{gogo})
\end{itemize}
Indeed, $\widehat L_\rho$ is an infinite set by (H). Moreover, the analogue of (\ref{wilhelm}) for $\Phi\in \Hom_L(W_{\rho,-\infty},V_{\pi,-\infty})$, $\tilde v\in V_{\tilde\pi,\infty}$ implies that every $\pi$ in $\widehat L_\rho$ is an integrable discrete series representation of $L$.
Therefore, $N_\Gamma(\pi)\ne\{0\}$. Now apply (K).

In the case of the triple (\ref{T1}), i.e. for standard quotients of the anti-de Sitter space, we have the same result for {\em all} discrete series representations $\rho$ for $G/H$,
see the discussion of (F) above.

We expect that the contribution of non-unitary representations $\rho$ of $G$ 
to $L^2(Y)$  is small in some sense. If $G/H$ has rank one, we can make this precise.
We also show that non-unitary representations $\rho$ really contribute to the spectral decomposition of some standard quotients.
The latter is based in part on results of Bergeron and Clozel \cite{BeCl13}, \cite{BeCl17}. 
\begin{itemize}
\item[({\bf M})] {\em Let $(G,H,L)$ be a triple of Type~I. Let $Y$ be a corresponding standard quotient. Let $L^2(Y)_{nu}\subset L^2(Y)$ be the subspace given by the sum of the contributions of all $\rho\in \widehat G_H^L\setminus \widehat G_H$ in (K).  If $G$ is linear and $\rank(G/H)=1$, then  
$\dim L^2(Y)_{nu}<\infty$.
If $G/H$ is not a group manifold, then there exists  a standard quotient $Y$ of $G/H$ such that $L^2(Y)_{nu}\ne\{0\}$.} (Prop.~\ref{wanndenn}, Prop.~\ref{nanu}, Prop.~\ref{nonu}, and Cor.~\ref{nanunana})
\end{itemize}
Moreover, for the triples $(G,H,L)$ with $\rank(G/H)=1$ we determine the sets $\widehat G_H^L$ and $\widehat L_\rho$ completely, see Tables 3--6 in Section~\ref{eigen}.
This is based on the detailed description of the composition series of the principal series for $G/H$ given in \cite{HoTa93} and \cite{Sch87} for these cases. As a consequence, one obtains a description of $\spec(\Delta_Y)$ similar to (F) for all standard quotients associated to these triples.

For standard quotients associated to properly transitive triples $(G,H,L)$ of Type~II we expect a spectral decomposition similar to the one in (K), but the first direct sum
should be replaced by a direct integral, and the sets $\widehat L_\rho$ appearing in the second sum are no longer mutually disjoint (one could replace them just by 
the set $\{\pi\in\widehat L\mid V_\pi^{L\cap H}\neq\{0\}\}$).         
Note that we have not yet defined $\widehat G_H^L$ for Type~II triples. But certainly, this set should contain $\widehat G_H$.

As a starting point we make the following observations.
\begin{enumerate}
\item[({\bf N})] 
{\em Let $Y$ be a standard quotient corresponding to a properly transitive triple $(G,H,L)$. Then every joint eigendistribution for ${\bf D}(G/H)$ on $Y$
can be written as a series of matrix coefficients of the form (\ref{wilhelm}) for some admissible $H$-cospherical $G$-representations $W_\rho$ of finite length with infinitesimal character,
$\pi\in\hat L$ with $N_\Gamma(\pi)\ne\{0\}$, $\Phi\in\Hom_L(W_{\rho,\infty},V_{\pi,\infty})$.} (Lemma \ref{otto}, Lemma \ref{kirchner}, cf. also Prop.~\ref{monika}) 
\end{enumerate}
\begin{itemize}
\item[({\bf O})] {\em For some triples $(G,H,L)$ of Type~II including (\ref{T9}) and (\ref{T13}) with $\rank_\R(G')=1$, we find non-trivial continuous families of representations $\lambda\mapsto\rho_\lambda\in \widehat G_H$ and of intertwining operators $\lambda\mapsto\Phi_\lambda\in \Hom_L(W_{\rho_\lambda,\infty},V_{\pi,\infty})$, $\pi\in\widehat L$, that for every standard quotient $Y$ corresponding to $(G,H,L)$ give rise to non-trivial families
of eigendistributions of the form (\ref{wilhelm}) on $Y$.} (Prop.~\ref{knarz} and Cor.~\ref{knorz})
\end{itemize}
These observations stress again the importance of the spaces $\Hom_L(W_{\rho,\infty},V_{\pi,\infty})$ which appear as multiplicity spaces for the
restriction of $G$-representations to $L$. 
Moreover, we gained further evidence that the spectrum of standard quotients corresponding to Type~II triples should not be purely discrete.
The missing link which will show how the (families of) eigendistributions in (N) and (O) fit together to give a spectral decomposition 
which is similar to (K) 
is the following conjecture. It is independent of the discrete subgroup $\Gamma\subset L$ and gives a spectral decomposition of ${\bf D}(G/H)$ acting on $V_\pi^{L\cap H}$, $\pi\in\widehat L$.

\begin{con}\label{C0}
Let $(G,H,L)$ be a properly transitive triple. There exist
\begin{itemize} 
\item a topological space $\widehat G_H^L\supset \widehat G_H$ whose points are certain (infinitesimal) equivalence classes of $H$-cospherical $G$-representations of finite
length with infinitesimal character which gives rise to a continuous finite-to-one map $p: \widehat G_H^L\rightarrow \widehat{{\bf D}(G/H)}$,
\item for each $\pi\in\widehat L$ with $V_\pi^{L\cap H}\ne\{0\}$ a family of subspaces 
$$\widehat G_H^L\ni\rho\mapsto\left [\Hom_L(W_{\rho,\infty},V_{\pi,\infty})\otimes (W_{\rho,-\infty})^H\right ]_0\subset \Hom_L(W_{\rho,\infty},V_{\pi,\infty})\otimes (W_{\rho,-\infty})^H$$ 
carrying the additional structure of a measurable family of Hilbert spaces,
\item for each $\pi\in\widehat L$ with $V_\pi^{L\cap H}\ne\{0\}$ a $\sigma$-finite Borel measure $\mu_\pi$ on $\widehat G_H^L$
\end{itemize}
such that the natural ${\bf D}(G/H)$-equivariant maps 
$$   E: \Hom_L(W_{\rho,\infty},V_{\pi,\infty})\otimes (W_{\rho,-\infty})^H\longrightarrow  (V_{\pi,-\infty})^{L\cap H}$$
from (J) induce a unitary equivalence
\be\label{D0}   V_\pi^{L\cap H} \cong \int^\oplus_{\widehat G_H^L}\left [\Hom_L(W_{\rho,\infty},V_{\pi,\infty})\otimes (W_{\rho,-\infty})^H\right ]_0\:d\mu_\pi(\rho)
\end{equation}
for all $\pi\in\widehat L$ with $V_\pi^{L\cap H}\ne\{0\}$. \rm{(Conjecture~\ref{C1})}
\end{con}
Indeed, inserting the hypothetical decompositions (\ref{D0}) into (\ref{decompo}) and interchanging the sum over $\widehat L$ with the direct integral over $\widehat G_H^L$ we obtain the desired
spectral decomposition of $L^2(Y)$, see Conjecture~\ref{C2}.

The decompositions (\ref{D0}) could be considered as Frobenius reciprocal to the restriction problem from $G$ to $L$ in the category of $H$-(co)spherical representations
in the same way as the decomposition of $\Ind_L^G(V_\pi)$ is reciprocal to the restriction problem in the category of  ``all'' $G$-representations.
But in contrast to restriction and induction, there is no general abstract Plancherel theorem in the background which ensures the existence of these decompositions.

The above conjecture is true for triples of Type~I by the second part of (J). We also have
\begin{itemize}
\item[({\bf P})] {\em Let $(G,H,L)$ be equivalent to one of the triples (\ref{T13}). Then Conjecture~\ref{C0} is true with $\widehat G_H^L=\widehat G_H$.} (Prop.~\ref{last})
\end{itemize}
Here, as in (G), the problem can be reduced to the decomposition of certain tensor product representations. To establish Conjecture \ref{C0} for {\em irreducible}  triples $(G,H,L)$ of Type~II
(see Tables 1 and 2)
and at least some $\pi\in\widehat L$ with $V_\pi^{L\cap H}\ne\{0\}$, as well as make its ingredients as explicit as possible, is work in progress.

As indicated at several places in this introduction, our results can be generalized in at least two directions.

First, one can weaken the condition that $H\subset G$ is symmetric to just being reductively embedded. As already mentioned in Subsection \ref{gi}, the resulting list of triples $(G,H,L)$,
at least for simple $G$,
would be not much longer than ours. If we would, in addition, insist on the Type~I condition that $L_{max}/L_{max}\cap H$ is a {real} spherical homogeneous space, then the only new cases to
consider would be certain circle bundles over symmetric spaces corresponding to Triples 2, 3 in Table \ref{fix} and the triple 
$$(G,H,L)=(Spin(3,4), G_{2(2)},Spin(2)\cdot Spin(1,4)),$$
which arises form Triple 8 in Table \ref{fix} by interchanging the roles of $L$ and $H$. Note that only in the last case we have that $L_{max}/L_{max}\cap H$ is  complex spherical, c.f. the next subsection. Moreover, in this last
case the space $X=G/H$ is isometric to the symmetric space $SO_e(4,4)/SO_e(3,4)$. In addition, every $Spin(3,4)$-invariant differential operator on $X$ is $SO_e(4,4)$-invariant. In fact,
it is a polynomial in the Laplacian. Thus the analysis on compact standard quotients of $X$ is already covered by the discussion of Triple 5 in Table \ref{fix} for $n=1$.  

Second, we can skip the condition that $\Gamma\subset L$ is cocompact. In fact, our results (and conjectures) give a recipe how to translate a decomposition indexed by $\widehat L$
(as (\ref{decompo})) into a decomposition indexed by $\widehat{{\bf D}(G/H)}$ or even $\widehat G_H^L$ (as in (K)). With some care one can just deal with non-compact
standard quotients in the same way. One just has to replace the direct sum decomposition (\ref{decompo}) by a direct integral decomposition, which comes from (\ref{GGPSter}) applied to
$L^2(\Gamma\backslash L)$, as the starting point.

In contrast to these two directions, a possible generalization of our results to non-standard quotients $Y$ would really require new ideas. 

\subsection{Relations to the work of Kassel and Kobayashi.}\label{kako}

There are some overlaps beween our results discussed in the previous subsection and the results of  \cite{KK2} and \cite{KK25}.

The only overlap with the earlier paper \cite{KK2} is related to the result $(L)$ above. Under the assumption of (L), the main result of  \cite{KK2} says that 
$$ (W_{\widetilde{\rho},-\infty})^\Gamma_0\ne \{0\} $$
provided that the parameter of $\rho$ is sufficiently regular. Thus the conclusion is weaker than in (L). On the other hand, the main result of \cite{KK2} applies to much more general situations. In fact, it ensures the existence of $L^2$-eigenfunctions on $Y$ corresponding to ``very  integrable" discrete series representations
$\rho$ for the symmetric space $X=G/H$
not only for compact standard quotients $Y$ coming from triples of Type~I as in (L), but - if combined with the main result of the recent paper \cite{KT24} - for arbitrary quotients $Y$ of semisimple symmetric spaces $X$ admitting discrete series. We note that the methods of \cite{KK2}, namely the use of Poincar\'e series, are completely different from ours.

The paper \cite{KK25} of Kassel and Kobayashi collects results on spectral theory for (not only compact) standard quotients corresponding to triples $(G,H,L)$ mainly satisfying the following assumptions:
Either $(G,H,L)$ is a group case triple listed under Case 11 in Table~{\ref{foxi} or $G$ is simple with $H\subset G$ reductively embedded and $X\cong L/L\cap H$ is a {\em complex} spherical
homogeneous space. The latter means that a Borel subgroup of the complexification $L_\C$ of $L$ has an open orbit on the complexification of $X$. Note that being complex spherical
is a significantly stronger condition than being real spherical. However, it is an outcome of Oni\v s\v cik's classification that for $G$ simple and $L=L_{max}$, $H=H_{max}$ both notions
for $X=L/L\cap H$ coincide. Taking the discussion of non-symmetric examples from the end of the previous subsection into account we can now say in our language that in \cite{KK25} 
exactly the standard quotients of
(irreducible) Type~I triples are investigated.

The main overlap of our results for Type~I triples with the results in \cite{KK25} consists in the two basic discrete decomposability results (D), cf. \cite{KK25}, Theorems ~1.8 and 1.9,
and (H) (without the explicit multiplicity assertions), cf. \cite{KK25}, Thm.~8.3. The first result, which is relatively easy to obtain as an abstract statement, only becomes really powerful  when
combined with some explicit transfer techniques how to go from the ``$L$-spectrum" to the ``$G$-spectrum"  of the locally symmetric space $Y$. The transfer techniques developed
in our paper, at least the $G$-representation theoretic approach of Sections~\ref{branching} and \ref{eigen}, are quite different from the ones of \cite{KK25} . The latter, which were already developed in \cite{KK19}, rest on the determination of a system of generators of ${\bf D}(L/L\cap H)$ containing the generators of ${\bf D}(G/H)$. Also the proof of (H) we give in the present paper is completely different from the one given in \cite{KK25}. We remark that Theorem 1.9 in \cite{KK25} is formulated for arbitrary not necessarily compact standard quotients.
The reader should be aware that its statement is not entirely correct for noncompact quotients. 

For two reasons we have included both results in the present paper. First, we have obtained the results (D) and (H) independently of the work of Kassel and Kobayashi. In fact, they belong to a group among our results that have been obtained
in the early phase of our work on the subject, i.e. much more than 10 years ago. We have first publicly reported on some of these early results, including the results (D), (E) and (F), 
at a conference in Bonn in 2011 - as mentioned already in the introduction of \cite{KK19}. Second, it is our wish to present here a complete and systematic account of our approach to the spectrum of standard quotients which is based on the clear distinction
between triples of Type~I and Type~II.

In the early phase of our work we were not aware of the importance of complex spherical homogeneous spaces (via the commutativity of ${\bf D}(L/L\cap H)$) for our problem.
In this respect, we benefited and learned a lot from \cite{KK19}. Although complex sphericity still does not play a very decisive role in our paper, the understanding of its meaning helped us
to clarify some multiplicity one phenomena and to simplify some of our proofs. In this respect, we mention Remark \ref{complexsph}, the proof of Prop.~\ref{mufree}, and our comments following the proof of Lemma~\ref{schlips}. We also remark that Cor.~1.7  in  \cite{KK19} is quite close to the result (E) above (for triples of Type~I).

In \cite{KK25} Kassel and Kobayashi also discuss the rather subtle relationship between $L^2$-eigenfunctions on $Y$ and {\em unitary} $G$-representations, i.e. the question whether the $G$-representation generated by such an eigenfunction has to be unitary. They conjecture that this $G$-representation always has at least an irreducible unitary {\em sub}representation (Conj.~11.2).
Our Proposition \ref{nanu}, which also goes into the result (M) above, disproves this and the related Conjecture~11.3 in  \cite{KK25}. 

\subsection{Structure of the paper.}\label{sp} 

Besides the introduction, the paper consists of ten sections of different length and nature, as well as of two appendices. 

Section \ref{triples} provides the geometric basis for our considerations. After a short discussion of reductively embedded subgroups $L\subset G$ we prove Result (A), introduce properly
transitive triples $(G,H,L)$ and discuss their types, relations between real ranks (Prop.~\ref{ranksymm}), and their classification (Result (C)).

The entirely algebraic Section \ref{pbw} is devoted to the relation between the Casimir operators of $G$ and $L$, respectively, for properly transitive triples $(G,H,L)$. In particular,
we prove Result (E). The case by case calculations  necessary for its proof are given in Appendix \ref{A}.

The two intermediate Sections \ref{dist} and \ref{G2} collect essential well-known material which is rather scattered throughout the literature. Thus we found it useful to present it in compact form for later reference. Section~\ref{dist} deals with distributional matrix coefficients and decompositions of spaces of smooth and distribution vectors of unitary representations. The only less known result of independent interest in this section is probably
Prop.~\ref{kunst}, which gives an unusual form of Frobenius reciprocity in terms of distributions on homogeneous spaces. This result will be the key for the construction of the map $E$
appearing in Result (J). In Section~\ref{G2} we present some spectral theory of algebras of commuting unbounded operators on Hilbert spaces. In particular, we fix our definition of a spectral
decomposition (in direct integral form) of such an algebra (Def. \ref{kobold1}) and discuss existence and uniqueness of spectral decompositions.

The investigation of properly transitive triples, in particular those of Type~I, continues in Section \ref{invariants}, which is devoted to the proof of Result (B) and an essential multiplicity one result (Prop.~\ref{mufree}). The cococompact discrete subgroup $\Gamma\subset L$ enters the stage in the short Section~\ref{spectrum}, where the decomposition (\ref{decompo}) and its consequences for Type~I triples, in particular Result (D), are discussed.

In Sections \ref{detail} and \ref{infinitemulti} we study the spectral decomposition of two examples, one of Type I (standard quotients of anti-de Sitter spaces) and one of Type II (Triple (\ref{T13}) with
$G'=PSL(2,\R)$). We establish the results (F) and (G). We already mentioned that (F) involves a non-vanishing result for the multiplicity spaces $N_\Gamma(\pi)$ for certain discrete series represantions $\pi$ of $U(1,n)$. It rests on Prop.~\ref{nichts}, whose proof is given in Appendix~\ref{B}.

It is only in the last two sections, where representation theory of $G$ plays a role. Section \ref{branching} is entirely devoted to the restriction problem of (weakly) $H$-spherical admissible
$G$-representations to $L$, where $(G,H,L)$ is a properly transitive triple of Type I. We treat this problem in a way suitable for applications to the spectral theory for standard quotients.
In particular, we introduce and discuss the notion of $\pi$-minimal spherical $G$-representations and prove the results (I) and (H). We also prove Prop.~\ref{horn}, which essentially gives Result~(L).

The paper culminates in Section \ref{eigen} in which we describe eigenspaces and spectral decompositions in terms of the matrix coefficient map (\ref{matcoef}). The section is divided into three subsections. In the first one, we deal with eigendistributions on compact quotients (not necessarily standard ones) in a representation theoretic way and prove the results (N) and (J).
The second subsection, which contains the major results of the paper, among them (K) and (M),  is devoted to the spectral theory of standard quotients corresponding to Type I triples. As a byproduct, we obtain a complete explicit solution of the restriction problem of Section \ref{branching} and a description of $\spec(\Delta_Y)$ similar to (F) for all properly transitive triples
$(G,H,L)$ with $\rank(G/H)=1$. These explicit results make essential use of the reducibility and unitary results obtained by Howe and Tan \cite{HoTa93} and Schlichtkrull \cite{Sch87}. In the last subsection, we turn to triples of Type II. We prove Result~(O) on the existence of continuous families of eigendistributions with varying eigenvalues. Eventually, we present Conjecture~\ref{C0} and discuss its consequences, e.g. Conjecture~\ref{C2}, for the spectral theory of compact standard quotients. Our reasoning in this last subsection is influenced by the work of Frahm \cite{Mol17} on intertwining operators associated to strongly spherical pairs $(G,L)$.

The paper provides a - hopefully - structured collection of several results,
which are also of independent interest. For a reader, who is mainly interested in the major results of Section~\ref{eigen}, it suffices to study Sections~\ref{triples},
\ref{invariants}, \ref{branching} and the beginning Section~\ref{spectrum} before. A reader, who is interested in the branching problem from $G$ to $L$, may read Sections~\ref{triples},
\ref{invariants}, \ref{branching} (and Tables \ref{fax}--\ref{tauto} in Section~\ref{eigen}) only. Alternatively, just reading Sections~\ref{triples}--\ref{infinitemulti} and leaving out the last two sections completely, the reader would  already
get a rather broad overview over spectral phenomena for compact standard quotients (in particular results (A)--(F)), which, in addition, avoids completely the use of representation theory of $G$.

\bigskip

\noindent{\bf Acknowledgements.} {S. M. was partially supported by the project OpART (ANR-23-CE40-0016) of the Agence Nationale de la Recherche.}


\section*{Notational conventions}

\begin{longtable}{p{3cm}p{10cm}}

$\N$ & the set of positive integers.\\[0.3cm]

$\N_0$ & the set of non-negative integers.\\[0.3cm]

$\Delta _{i_1\dots i_k}G$ & image of the diagonal embedding of $G\hookrightarrow \underbrace{G\times\dots\times G}_{n}$ where non-trivial entries occur at places $\{i_1,\dots,i_k\}\subset\{1,\dots,n\}$.\\[0.3cm]

$\Delta G$ & the diagonal subgroup $\Delta_{12} G$ of $G\times G$
\end{longtable}

\section{Properly transitive triples}\label{triples}

Let $G$ be a connected non-compact real semisimple Lie group with finite center, and let $L\subset G$ be a closed subgroup with finitely many connected components. The terminology ``$L$ is reductive in $G$'' has slightly varying meanings across the literature. First we want to clarify some of these differences for ourselves. The first variant is 

\begin{Def}\label{redem}
Let $\fg$ be a real semisimple Lie algebra, and let $\fl$ be a subalgebra. We say that $\fl\subset \fg$ is reductively embedded, if the adjoint action
of $\fl$ on $\fg$ is semisimple, i.e. completely reducible. Let $G$, $L$ be as above with Lie algebras $\fg$, $\fl$. Then $L\subset G$ is called reductively embedded if $\fl\subset \fg$ is reductively embedded.
\end{Def}
Note that $\fl\subset \fg$ is reductively embedded if and only if $\fl$ is reductive as an abstract Lie algebra and the operators $\ad(Z):\fg\rightarrow\fg$, for $Z$ in the center $\fz(\fl)$ of $\fl$, are diagonalizable (over $\C$). Often we would like to have that $\fl$ is stable under a Cartan involution $\theta$. This would give rise to decompositions
$\fz(\fl)\ni Z=Z_++Z_-$, $Z_\pm\in \fz(\fl)$, such that $\ad(Z_+)$ has purely imaginary and $\ad(Z_-)$ has real eigenvalues. This motivates the construction in the following
paragraph.

Let $Z\in \fz(\fl)$. We consider the eigenspace decomposition   
$$ \fg_\C = \bigoplus_{\lambda\in \C} \fg_\lambda $$
of  $\ad(Z)$. We define an operator $A_+:\fg_\C\rightarrow \fg_\C$ by
$$  A_+\vert_{\fg_\lambda}\defn{i}\Im(\lambda)\id\ .$$
One checks that $A_+$ is a derivation of $\fg_\C$ that commutes with conjugation with respect to $\fg$ and annihilates $\fl$.
We conclude that there is a (unique) $Z_+\in\fz_\fg(\fl)\subset \fg$ such that $\ad(Z_+)=A_+$.  We define the abelian Lie algebra $\tilde\fz_+:=\{Z_+\mid Z\in\fz(\fl)\}$.
The question is whether  $\tilde\fz_+\subset \fl$. On the level of groups we make the following definition.

\begin{Def}\label{qa}
Let $L\subset G$ be reductively embedded.
Let $Q$ be the closure of the analytic subgroup of  $\tilde\fz_+$. We define the strong closure $\bar L$ of $L$ by $\bar L:=LQ$ and say that $L$ is strongly closed,
if $\bar L =L$, i.e. $Q\subset L$. A subgroup  $L\subset G$ is called strongly reductive in $G$ if it is reductively embedded and strongly closed.
\end{Def}
 
Note that the strong closure $\bar L$ of $L$ is intimately related to the Zariski closure of $\Ad(L)$ in $\Aut(\fg)$.
The restriction of the Killing form of $\fg$
to $\tilde\fz_+$ is negative definite, hence $Q$ is compact. We see that $L\subset \bar L$ is cocompact. 
We also see that every closed subgroup $L\subset G$ with finitely many connected components, such that $\fl$ is reductive as an abstract Lie algebra and the center $Z(L_0)$ is compact, is strongly reductive in $\fg$. The following classical lemma makes clear the significance of being strongly reductive in $G$. It implies in particular that for strongly reductive $L\subset G$ the restriction to $\fl$ of the Killing form of $\fg$ is non-degenerate.

\begin{lem} [see e.g.~\cite{Mos55}]\label{knuff}
Let $G$ be a connected non-compact real semisimple Lie group with finite center, and let $L\subset G$ be a closed subgroup with finitely many connected components. 
There exists a Cartan involution $\theta$ of $G$ such that $L$ is $\theta$-stable if and only if $L$ is strongly reductive in $G$.
\end{lem}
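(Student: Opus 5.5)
The plan is to prove the two directions separately. The direction ``$\theta$-stable implies strongly reductive'' is elementary. The converse is Mostow's theorem, which I would reduce to the standard statement about self-adjoint subgroups of $GL(\fg)$.

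\textbf{($\Rightarrow$)} Suppose $\theta(L)=L$. Then $\fl=\theta(\fl)$, and $\fl$ is invariant under the adjoint of $\ad(Z)$, $Z\in\fl$, with respect to the positive definite inner product $B_\theta(X,Y)=-B(X,\theta Y)$. Indeed, the adjoint of $\ad(Z)$ is $-\ad(\theta Z)$. Consequently $\ad(\fl)$ is a self-adjoint family and acts completely reducibly on $\fg$: orthogonal complements of invariant subspaces are invariant. So $L$ is reductively embedded. For $Z\in\fz(\fl)$, write $Z=Z_k+Z_p$ with respect to the Cartan decomposition. Since $\fl$ is $\theta$-stable, both $Z_k$ and $Z_p$ lie in $\fz(\fl)$. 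These two operators commute: $[Z_k,Z_p]=\frac12[Z+\theta Z,Z-\theta Z]=-[Z,\theta Z]$, which vanishes because $\theta Z\in\fz(\fl)$ and $Z\in\fl$. Moreover $\ad(Z_k)$ is skew and $\ad(Z_p)$ is symmetric for $B_\theta$. Hence $\ad(Z_k)$ has imaginary eigenvalues and $\ad(Z_p)$ has real eigenvalues on common eigenspaces, and so $\ad(Z_k)=A_+$. It follows that $Z_+=Z_k\in\fl$, so $\tilde\fz_+\subset\fl$ and $Q\subset L$, using that $L$ is closed.

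\textbf{($\Leftarrow$)} Assume $L$ is strongly reductive. I would pass to $\Ad(G)\subset GL(\fg)$; since the center of $G$ is finite, it suffices to find $\theta$ with $\theta(\Ad L)=\Ad L$. The key step is that $\Ad(L)$ is of finite index in the real points of a reductive algebraic group, up to a compact torus. Concretely:

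\begin{enumerate}
\item Let $M$ be the Zariski closure of $\Ad(L)$. Since $\fl$ acts semisimply, $M$ is reductive.
\item The identity component of $M$ differs from $\Ad(L_0)$ only in the center. The real-split part of the center is already contained in $\Ad(L)$ via the $Z_-$ components. The compact-torus part is the closure of $\exp(\ad\tilde\fz_+)$, which lies in $\Ad(L)$ by strong closedness.
\end{enumerate}

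This is where strong closedness enters, and it is the main technical obstacle: one must show that $\Ad(L)$ is a finite-index open subgroup of $M(\R)$. I would first try to do this via the decomposition $Z=Z_++Z_-$, with $Z_-$ the hyperbolic part, using that the Zariski closure of a one-parameter group $\exp(tZ)$ is the product of the closure of $\exp(tZ_+)$ and $\exp(tZ_-)$. Here the map $Z\mapsto Z_-$ also lands in $\fl$ once $Z_+\in\fl$.

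Having this, I would apply Mostow's theorem for real reductive algebraic groups: there is an inner product on $\fg$ for which $M$ is self-adjoint. Averaging and the compatibility argument of Mostow show that this inner product can be chosen to be $B_\theta$ for a Cartan involution $\theta$ of $\fg$ such that $\Ad(G)$ is self-adjoint as well. To get a common choice, I would use the Cartan decomposition of $M$ and conjugate the Cartan involution of $\Ad(G)$ by an element of $\Ad(G)$, via the fixed point argument on the symmetric space $G/K$ where the maximal compact subgroup of $M$ has a fixed point. Then the transpose $g\mapsto\theta(g)$ preserves $M(\R)$. Preserving the finite-index subgroup $\Ad(L)$ follows because the components of $L$ are represented by elements of $K\cap L$ after this conjugation, using $L=(K\cap L)L_0$. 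Finally, I would lift $\theta$ to $G$.
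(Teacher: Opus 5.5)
Your ($\Rightarrow$) direction is correct (and $[Z_k,Z_p]=0$ is immediate, since both lie in the abelian algebra $\fz(\fl)$). The gap is in ($\Leftarrow$), at the step you yourself call the main obstacle. The claim that $\Ad(L)$ is an open subgroup of finite index in $M(\R)$, with $M$ the Zariski closure of $\Ad(L)$, is false under the hypotheses of the lemma. The fact you want to use for it is also wrong. The Zariski closure of a hyperbolic one-parameter group $\exp(t\,\ad Z_-)$ is in general not the group itself. It is a split torus whose dimension equals the dimension of the $\Q$-span of the eigenvalues of $\ad Z_-$.

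Here is a counterexample. In $G=SL(3,\R)$ let $L=\exp(\R H)$ with $H=\mathrm{diag}(1,\sqrt2,-1-\sqrt2)$. Then $L$ is closed, reductively embedded, and $\tilde\fz_+=\{0\}$, so $L$ is strongly reductive; it is even stable under $g\mapsto (g^t)^{-1}$. But the Zariski closure of $\Ad(L)$ is the two-dimensional group $\Ad$ of the diagonal torus, so $\Ad(L)$ has infinite index in it. Strong closedness controls only the elliptic part, where Zariski closure and topological closure inside a compact torus agree. It gives no algebraicity for the hyperbolic part.

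Your final deduction (``$\theta$ preserves $M(\R)$ and $\Ad(L)$ has finite index, hence $\theta$ preserves $\Ad(L)$'') therefore collapses. Moreover, $\theta(M)=M$ alone cannot replace it. Take $Z=Z_++Z_-$ in $\mathfrak{sl}(3,\R)$ with $Z_+=E_{21}-E_{12}$ and $Z_-=\mathrm{diag}(1,1,-2)$. The closed subgroup $\exp(\R Z)$ has a $\theta$-stable Zariski closure, namely the torus generated by $\exp(\R Z_+)$ and $\exp(\R Z_-)$. Yet by your own ($\Rightarrow$) argument no Cartan involution stabilizes it, because $Z_+\notin\R Z$.

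Here is what you need instead.

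\emph{Conjugation.} This must come from Mostow's conjugacy theorem: a completely reducible algebraic subgroup of the self-adjoint group $\mathrm{Aut}(\fg)$ becomes self-adjoint after conjugation by some $\exp(\ad Y)$, $Y\in\fs$. A fixed point on $G/K$ of the maximal compact subgroup $K_M$ of $M$ only yields $\mathrm{Lie}(K_M)\subset\ad\fk$. It does not put the noncompact part of $\fm:=\mathrm{Lie}(M)$ into $\ad\fs$; for a split torus $M$, $K_M$ is finite and imposes almost no condition.

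\emph{Recovering $\fl$.} Granting $\theta(\fm)=\fm$, you must argue on the Lie algebra. $\theta$ preserves $[\fm,\fm]=\ad[\fl,\fl]$ (Chevalley) and preserves the center of $\fm$, which contains $\ad Z_\pm$ for $Z\in\fz(\fl)$. On that center $\theta$ is $+1$ on elements with imaginary spectrum and $-1$ on elements with real spectrum. Hence $\theta(\ad Z)=\ad(Z_+-Z_-)$, which lies in $\ad\fl$ precisely because $Z_+\in\fl$. This, not finite index, is where strong closedness enters, and it yields $\theta(\fl)=\fl$.

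\emph{Components.} The identity $L=(K\cap L)L_0$ is what has to be proved, so you cannot invoke it. Instead, conjugate a maximal compact subgroup $K_L$ of $L$, which meets every component, into $K$ by some $n$ in the $\theta$-stable group $N_G(\fl)$. Then replace $\theta$ by $\mathrm{Int}(n)^{-1}\circ\theta\circ\mathrm{Int}(n)$. This still stabilizes $\fl$ and now fixes $K_L$, so $\theta(L)=\theta(K_LL_0)=L$.

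For comparison, the paper's sketch avoids algebraic hulls altogether. Strong reductivity makes $\fk_\fl\oplus i\fs_\fl$ a compact subalgebra of $\fg_\C$, which is placed inside a compact real form $\fu$. The conjugation $\tau$ of $\fu$ is then made to commute with that of $\fg$ via $(\sigma\tau)^2=\exp(\ad X)$, where $X$ centralizes $\fl$.
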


Let us indicate the argument for the less obvious direction of the lemma. Being strongly reductive in $G$ implies that there is a
Cartan decomposition $\fl=\fk_\fl\oplus\fs_\fl$ such that $\fu_\fl:=\fk_\fl \oplus i\fs_\fl$ is a compact  subalgebra of $\fg_\C$. The Lie algebra $\fu_\fl$ is therefore contained in a compact real form $\fu$ of $\fg_\C$.
Let $\tau$ be the corresponding conjugate linear involution on $\fg_\C$, and let $\sigma$ be the involution corresponding to our original real form $\fg$.
Then a standard argument shows that there is a unique $X\in i\fu\subset \fg_\C$ such that $(\sigma\tau)^2=\exp(\ad(X))$. Moreover, $\tau_1:= \exp(\frac{1}{4}\ad(X))\tau \exp(-\frac{1}{4}\ad(X))$
commutes with $\sigma$. Then put $\theta\defn\tau_1|_\fg$. Our construction of $\tau$ implies that $\fl$ is $\tau$-stable and that $X$ centralizes $\fl$. Therefore $\fl$ is also $\theta$-stable.

Recall that an action of a locally compact group $R$ on a locally compact Hausdorff space $X$ is called proper if for all compact $C\subset X$ the set
$$   \{r\in R\mid  rC\cap C\ne\emptyset\}\subset R$$
is compact. In particular, for a proper action all stabilizers are compact and all orbits are closed.

We are interested in proper and cocompact actions of reductively embedded groups $L\subset G$ on certain homogeneous spaces
of $G$, in particular on symmetric spaces. The 
following proposition tells us - under natural assumptions - that these actions are always transitive.

\begin{pro}\label{cocpt}
Let $G$ be a non-compact connected semisimple real Lie group with finite center and without compact factors. Let $L,H\subset G$ be  reductively embedded. 
Assume that $L$ acts properly and cocompactly on $G/H$. Then $L$ acts transitively on $G/H$. Moreover, the centers $Z(L_0)$ and $Z(H_0)$ are compact.
\end{pro}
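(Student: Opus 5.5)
The plan is to reduce the statement to dimension counts for the noncompact parts, following Kobayashi's approach to proper actions, and then to show by a topological argument that the (closed) $L$-orbits in $X=G/H$ are also open. Conjugating by Lemma~\ref{knuff}, I will first replace $L$ and $H$ by their strong closures $\bar L,\bar H$. These contain $L,H$ cocompactly, so properness and cocompactness are preserved, and transitivity of $L$ follows from that of $\bar L$ once we know the orbits of $L$ are both open and closed. We may then assume $L$ and $H$ are $\theta$-stable for one Cartan involution $\theta$. Write $\fg=\fk\oplus\fs$, $\fl=\fk_\fl\oplus\fs_\fl$, $\fh=\fk_\fh\oplus\fs_\fh$, and set $d(\cdot)=\dim\fs_{(\cdot)}$. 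By Mostow's fibration, $X$ is a vector bundle over $K/K_H$ with fibre of dimension $d(G)-d(H)$. Similarly, since $L\cap H$ is compact by properness, each orbit $L\cdot x\cong L/(L\cap H)$ is a vector bundle over $K_L/(K_L\cap H)$ with fibre of dimension $d(L)$. For the reduction from $L$ to $\bar L$ I will use that an open and closed orbit in the connected space $X$ is all of $X$.

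Next I will use Borel's theorem to pick a torsion free cocompact lattice $\Gamma\subset L$. Then $M=\Gamma\backslash X$ is a closed manifold of dimension $n=\dim X$, and it is homotopy equivalent to the total space of a fibration over the closed aspherical manifold $B\Gamma=\Gamma\backslash L/K_L$ (of dimension $d(L)$) with fibre $X\simeq K/K_H$. I will compare mod~$2$ top homology, or use cohomological dimension: $\mathrm{cd}\,\Gamma=d(L)$, and $M$ has nonzero $H_n(\,\cdot\,;\Z/2)$. The Serre spectral sequence then forces $n= d(L)+\dim K/K_H$, equivalently Kobayashi's identity $d(L)+d(H)=d(G)$. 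It also shows that the fibre class $[K/K_H]$ survives. Inside $M$ sits the closed submanifold $\Gamma\backslash L\cdot o$, which fibres over the same $B\Gamma$ with fibre $K_L/(K_L\cap H)$. Its fibrewise inclusion into $K/K_H$ is, up to homotopy, the orbit map of $K_L$ on $K/K_H$. Since the two bundles over $B\Gamma$ have base dimensions that agree, transitivity reduces to showing $\dim K_L/(K_L\cap H)=\dim K/K_H$. I will get this by showing that the orbit map $K_L/(K_L\cap H)\to K/K_H$ is nonzero in top degree mod~$2$ homology, so that it is surjective and in particular the dimensions agree. Then $\fl+\fh=\fg$, the orbit $L\cdot o$ is open, and being closed it is all of $X$.

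The main obstacle is this degree statement. It does not follow from the dimension count alone: a priori the compact orbit directions of $L$ could be too small while the noncompact directions compensate in the quotient. I plan to argue by contradiction. Suppose $\dim K_L\cdot o<\dim K/K_H$. Then, deforming along the fibres (the bundles are linear), the $\Gamma$-quotient of the $L$-orbit structure would yield a family of compact orbits in $M$ whose normal directions are parametrized by $\fs$-directions transverse to $\fs_\fl+\fs_\fh$. Making this precise requires knowing, at a point of $X$, the space $\fs\cap(\fl+\fh)^\perp$. Properness gives $\fs_\fl\cap\Ad(k)\fs_\fh=0$ for all $k\in K$ (Kobayashi's criterion), so $\Ad(k)\fs_\fh$ sweeps out a family of complements to $\fs_\fl$ in $\fs$ of complementary dimension. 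Using a suitable symplectic pairing on $\fs_\fl\oplus\fs_\fl^\perp$, I expect these to form a loop of Lagrangian-type subspaces. The nontriviality of $\pi_1$ of the Lagrangian Grassmannian (the Maslov class $\pi_1(U(m)/O(m))\cong\Z$) should then show that such a family cannot close up compatibly with a proper lower dimensional $K_L$-orbit. This is the step I am least sure of in detail, and I will try it first in the rank one model $(SO_e(2,2n),SO_e(1,2n),U(1,n))$ to fix the correct pairing.

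Finally, for compactness of the centers: if $Z(L_0)$ had a noncompact part, it would contain an element $Y\in\fs_\fl$ central in $\fl$. Transitivity gives $L/(L\cap H)\cong X$, so the one-parameter group $\exp(\R Y)$ commutes with $L$ and acts on $X$ by $L$-equivariant diffeomorphisms, i.e. it descends to a flow commuting with $L$. Properness of $L$ together with $G$ having no compact factors and $X$ being $G$-homogeneous should force $\exp(\R Y)\subset L\cap H$ up to a compact group, contradicting compactness of $L\cap H$. The statement for $Z(H_0)$ follows symmetrically, since $G=LH$ makes $H$ act transitively and properly on $G/L$, so the roles of $L$ and $H$ can be interchanged.
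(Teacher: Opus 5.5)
The gap is exactly the step you flag yourself, and that step carries the whole content of the proposition. Your topological set-up (Borel lattice, Serre spectral sequence over $B\Gamma$) can at best reproduce Kobayashi's identity $d(L)+d(H)=d(G)$. Together with $\fl_\fs\cap\Ad(k)\fh_\fs=\{0\}$ this gives $\fs=\fl_\fs\oplus\fh_\fs$; the paper simply cites this. Your reduction of transitivity to $\fk=\fk_L+\fk_H$ (equivalently $\dim K_L/(K_L\cap H)=\dim K/K_H$) is correct, but your proposed proof of it does not work.

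\begin{itemize}
\item ``The orbit map is nonzero in top-degree mod~$2$ homology'' already presupposes the dimension equality you want to prove.
\item The ``$\fs$-directions transverse to $\fs_\fl+\fs_\fh$'' you want to parametrize do not exist, since $\fl_\fs+\fh_\fs=\fs$. The defect lives in $\fk$.
\item No argument built only from properness, cocompactness and dimension counts can succeed. Take $G=G_1\times K_2$ with $K_2$ compact semisimple, $H=H_1\times\{e\}$, $L=L_1\times\{e\}$, where $(G_1,H_1,L_1)$ is properly transitive. Then $L$ acts properly and cocompactly on $G/H$ and Kobayashi's identity holds, but $L$ is not transitive.
\end{itemize}

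So the hypothesis that $G$ has no compact factors must enter the key step, and in your sketch it does not.

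The missing idea is how to turn a defect in $\fk$ into symplectic data on $\fs$. The paper takes $X\in\fr=[\fl_\fs,\fl_\fs]^{\perp_\fk}\cap[\fh_\fs,\fh_\fs]^{\perp_\fk}$ and the $K_X$-invariant form $\omega(Y,Z)=\langle[X,Y],Z\rangle$ on $\fs$.

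\begin{itemize}
\item By the choice of $X$, both $\fl_\fs$ and $\fh_\fs$ are $\omega$-isotropic. They descend to transversal Lagrangians in $\bar\fs=\fs/\ker\ad_{\fs}(X)$, which carries a compatible complex structure from the polar decomposition of $\ad_{\bar\fs}(X)$.
\item For $Y\in\fk_X$ with closed one-parameter group, the loop $t\mapsto\Ad(\exp tY)\fl_\fs$ stays in the contractible set of isotropic subspaces transversal to $\fh_\fs$. This uses $\fl_\fs\cap\Ad(k)\fh_\fs=\{0\}$.
\item Hence its Maslov index vanishes, i.e.\ $\Tr_{\C}\ad_{\bar\fs}(Y)=0$. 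By density this holds for $Y=X$, which reads $\Tr_{\R}|X|=0$. Since $|X|$ is positive definite on $\bar\fs$, we get $\ad_{\fs}(X)=0$.
\item Then $X=0$ because $\fk$ acts faithfully on $\fs$. This is where ``no compact factors'' is used. It follows that $\fk=[\fl_\fs,\fl_\fs]+[\fh_\fs,\fh_\fs]\subset\fl+\fh$.
\end{itemize}

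Note that the relevant loop is the $K_X$-orbit of $\fl_\fs$, for a form depending on $X$. It is not the family $\Ad(k)\fh_\fs$, $k\in K$.

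Two further steps are also unsupported. First, returning from the strong closures to $L,H$ requires knowing that the semisimple parts already give $\fg=\fl'+\fh'$; the paper uses Oni\v s\v cik's Thm.~3.2 for this. Second, it is precisely this fact that yields compactness of the centers: $L'$ is transitive, so $L/L'\cong Z(L)/Z(L')$ is compact by properness. Your central-flow argument produces no contradiction by itself. A noncompact central subgroup commuting with a proper transitive action is unremarkable in general (e.g.\ $L=\R$ acting on itself), so the semisimplicity of $G$ has to enter through something like Oni\v s\v cik's result.
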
 
The last assertion is essentially due to Oni\v s\v cik \cite{Oni69}, cf. the end of the proof of the proposition.

For a reductive Lie algebra $\fl$ we define $\fl_{min}$ as the sum of its simple ideals of non-compact type, i.e. we remove the center and all compact simple
ideals from $\fl$. If $L\subset G$ is a closed subgroup having Lie algebra  $\fl$, then $L_{min}\subset G$ denotes the analytic subgroup corresponding to $\fl_{min}$. 
Now the proposition has the following immediate consequence:
\begin{cor}\label{unfug}
Under the assumptions of Prop.~\ref{cocpt}, we have
\begin{enumerate}
\item[(i)] $L$ and $H$ are strongly reductive in $G$.
\item[(ii)] $L_{min}$ acts transitively on $G/H$ and $G/H_{min}$.
\end{enumerate}
\end{cor}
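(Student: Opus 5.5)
Both parts of Corollary~\ref{unfug} will be derived from Prop.~\ref{cocpt}, combined with Lemma~\ref{knuff} and the description of strongly reductive subgroups given after Definition~\ref{qa}.

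For (i), note that $L$ and $H$ have finitely many components. Their Lie algebras are reductive as abstract Lie algebras, since they are reductively embedded. By Prop.~\ref{cocpt}, the centers $Z(L_0)$ and $Z(H_0)$ are compact. The remark following Definition~\ref{qa} states exactly that a closed subgroup with finitely many components, reductive Lie algebra and compact $Z(L_0)$ is strongly reductive in $G$. So (i) follows at once. By Lemma~\ref{knuff}, we may then choose a Cartan involution $\theta$ with $L$ $\theta$-stable, and similarly for $H$.

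For (ii), I first show that $L_{min}$ acts transitively on $G/H$. Write $\fl=\fz(\fl)\oplus\fl_c\oplus\fl_{min}$, with $\fl_c$ the sum of the compact simple ideals. Since $Z(L_0)$ is compact, the analytic subgroup $C$ of $\fz(\fl)\oplus\fl_c$ is a compact normal subgroup of $L_0$, and $L_0=C\,L_{min}$. The group $L_0$ acts transitively on $G/H$: it has finite index in $L$, so its orbits are open (by dimension) and closed, and $G/H$ is connected. The key step is to show that $L_{min}$ is itself transitive. Its orbits are closed, because $L_{min}$ is closed in $G$ (connected semisimple without compact factors and reductively embedded, hence closed; alternatively, $L_{min}\subset L$ acts properly, and proper actions have closed orbits). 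The compact group $C$ normalizes $L_{min}$, so it permutes the $L_{min}$-orbits. Hence $G/H$ is the image of $C\times L_{min}x$ for any point $x$. I then argue that the orbit $L_{min}x$ must be open. I would do this cleanly by applying Prop.~\ref{cocpt} directly to $L_{min}$. The group $L_{min}$ is reductively embedded and acts properly on $G/H$, being closed in $L$. It also acts cocompactly: the quotient $L_{min}\backslash G/H$ is a quotient of $L_{min}\backslash L_0$ (orbit map), and $L_{min}\backslash L_0\cong C/(C\cap L_{min})$ is compact. Prop.~\ref{cocpt} then yields transitivity directly.

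For $G/H_{min}$, I use the same trick. The group $H_{min}$ is reductively embedded, and $H_0=C_H H_{min}$ with $C_H$ compact. $L$ acts properly on $G/H_{min}$, because $G/H_{min}\to G/H_0$ is a fibration with compact fibers $H_0/H_{min}$, a quotient of $C_H$, and $L$ acts properly on $G/H_0$ (a finite cover of $G/H$). $L$ also acts cocompactly, since $L\backslash G/H_{min}$ maps onto the compact space $L\backslash G/H_0$ with compact fibers. Prop.~\ref{cocpt} gives that $L$ is transitive on $G/H_{min}$. Applying the previous paragraph with $H$ replaced by $H_{min}$, $L_{min}$ is then transitive on $G/H_{min}$. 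The main technical point is verifying properness and cocompactness under passage to these cocompact normal subgroups. These are routine compactness arguments, and I expect no real obstacle.
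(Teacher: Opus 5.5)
Your proof is correct and is essentially the argument the paper intends when it calls the corollary an immediate consequence. Part (i) is the remark after Definition~\ref{qa} combined with the compactness of $Z(L_0)$ and $Z(H_0)$ from Prop.~\ref{cocpt}, and part (ii) follows by re-applying Prop.~\ref{cocpt} to the cocompact subgroups $L_{min}\subset L$ and $H_{min}\subset H$.

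Two points deserve tidying. First, the closedness of $L_{min}$ and $H_{min}$ in $G$ comes from $G$ having finite center: semisimple analytic subgroups of the linear group $\Ad(G)$ are closed, and this lifts through the finite covering $G\to\Ad(G)$. It does not follow from properness of the action, since properness of the $L_{min}$-action presupposes that $L_{min}$ is closed in $L$. Second, cocompactness on $G/H_{min}$ is most easily seen from $G=LH$, which exhibits $L\backslash G/H_{min}$ as a continuous image of the compact space $H/H_{min}$.
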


\noindent
{\it Proof of Proposition \ref{cocpt}.}
We first observe that it suffices to prove the proposition for connected $L$ and $H$ (here it is essential that we assume $G$ to be connected). Let us next assume in addition that
$L$ and $H$ are strongly reductive in $G$. Thus by Lemma \ref{knuff} and conjugating $L$ if necessary we may assume that both $\fl$ and $\fh$ are stable with respect to a Cartan
involution $\theta$ of $\fg$. Let $\fg=\fk\oplus\fs$ be the corresponding Cartan decomposition.
Put $\fl_\fs\defn\fl\cap\fs$ and $\fh_\fs\defn\fh\cap\fs$. The compactness of stabilizers of the $L$-action on $G/H$ implies that for all $k\in K$
\begin{equation}\label{knax}\fl_\fs\cap \Ad(k)\fh_\fs = \{0\}\ .
\end{equation}
Moreover, by a result of Kobayashi (\cite{Kos89}, Thm.~4.7) properness and cocompactness of the $L$-action imply that
$$ \dim \fl_\fs +\dim \fh_\fs =\dim \fs\ .$$
We obtain
\begin{equation} \label{asterix}
\fs=\fl_\fs+ \fh_\fs\ .
\end{equation}
We want to show that
\begin{equation} \label{obelix}
\fr\defn[\fl_\fs,\fl_\fs]^{\perp_\fk}\cap[\fh_\fs,\fh_\fs]^{\perp_\fk}=0,
\end{equation}
where the orthogonal complements in $\fk$ are taken with respect to the Killing form $\IP{}{}$ of $\fg$.
Then (\ref{asterix}) and (\ref{obelix}) would yield
\beu 
\fg=\fl+\fh,
\end{equation*}
which implies that $L$ has an open orbit on the connected manifold $G/H$. By properness this orbit is also closed. Hence $L$ acts transitively.
(We remark that for reductively embedded $\fl,\fh\subset\fg$ the implication $\fg=\fl+\fh \Rightarrow G=LH$ is also valid without the properness assumption (\cite{Oni69}, Thm.~3.1).) 
To prove (\ref{obelix}) it is enough to show that 
\begin{equation}\label{keystep}
\ad_{\fs}(X)=0\text{ for all }X\in\fr.
\end{equation}
Indeed, since $\fg$ is without compact factors, the adjoint representation of $\fk$ on $\fs$ is faithful. 

Now pick $X\in\fr$ and consider the compact group $K_X\defn Z_K(X)\subset K$ with Lie algebra $\fk_X$. The $K_X$-invariant alternating bilinear form 
\beu
\omega(Y,Z)\defn\IP{ [X,Y]}{Z}
\end{equation*}
on $\fs$ induces a $K_X$-invariant symplectic form $\bar\omega$ on 
\beu
\bar\fs\defn\fs/\ker(\ad_{\fs}(X)).
\end{equation*}
The definition (\ref{obelix}) of $\fr$ implies that $[X,\fl_\fs]\subset \fl_\fs^\perp$ and $[X,\fh_\fs]\subset \fh_\fs^\perp$, i.e. the subspaces $\fl_{\fs}$ and $\fh_{\fs}$ are 
isotropic with respect to $\omega$, and, in view of (\ref{asterix}), they induce transversal Lagrangian subspaces $\bar\fl_{\fs}$, $\bar\fh_{\fs}$ of $\bar\fs$. 

If $\ad_{\bar\fs}(X)^{\star}$ denotes the adjoint of $\ad_{\bar\fs}(X)$ with respect to $\IP{}{}$, then 
\begin{equation}\label{knurr} 
\ad_{\bar\fs}(X)^{\star}=-\ad_{\bar\fs}(X)\ .
\end{equation}
In particular, $\ad_{\bar\fs}(X)$ commutes with its adjoint. Moreover, $\ad_{\bar\fs}(X)$ is bijective.  We obtain a polar decomposition
\beu
\ad_{\bar\fs}(X)=|X|J=J|X|\ ,
\end{equation*}
where $|X|=\sqrt{\ad_{\bar\fs}(X)^{\star}\ad_{\bar\fs}(X)}$ is self-adjoint and positive definite, and $J$ is an orthogonal transformation. Moreover, (\ref{knurr}) implies that $J^2=-\id$, i.e.  $J$ induces an orthogonal complex structure on $\bar\fs$.
One checks that 
\beu
(\bar Y,\bar Z)\defn\IP{|X|\bar Y}{\bar Z}-{i}\bar\omega(\bar Y,\bar Z)\;\;\text{ for all }\bar Y,\bar Z\in\bar\fs
\end{equation*}
defines a Hermitian form on $\bar\fs$.
Therefore, letting $n\defn\frac{1}{2}\dim(\bar\fs)$ and choosing an orthonormal basis 
of $\bar\fl_{\fs}$, the above form identifies $\bar\fs$ with $\mathbb{C}^n$ such that $-\bar\omega$ is the imaginary part of the standard Hermitian form. 

Next we consider the $K_X$-action on the manifold 
\beu
\mathcal L\defn\{ V\subset \fs\mbox{ $\omega$-isotropic subspace} \mid \dim V=\dim\fl_{\fs}\}
\end{equation*}
together with a $K_X$-equivariant projection $\mathcal L\rightarrow \bar{\mathcal L}$, where $\bar{\mathcal L}$ is the Lagrangian Grassmannian of $\bar\fs$, 
i.e the $\frac{n(n+1)}{2}$-dimensional smooth manifold of Lagrangian subspaces of $\bar\fs$. It is a standard fact that (see e.g. \cite{GS77}), 
under the above identification of $\bar\fs$ with $\mathbb{C}^{n}$, $\bar{\mathcal L}$ identifies with the homogeneous space $U(n)/O(n)$. In particular, $\Ad_{\bar\fs}(K_X)$ may 
be identified with a subgroup of $U(n)$. Moreover, the determinant function $\det:U(n)\rightarrow S^{1}$ induces a well defined map 
\begin{equation}\label{paula}
U(n)/O(n)\ni [A] \mapsto \det(A)^2\in S^1\subset\mathbb{C},
\end{equation}
the square of the (complex) determinant. In turn this map induces a map of fundamental groups 
\begin{equation}\label{ernst}
\pi_1(\bar{\mathcal L})\rightarrow\pi_1(S^1)=\mathbb{Z}\ ,
\end{equation}
which is an isomorphism for $n>0$. 

On the other hand, by (\ref{knax}) the $K_X$-orbit of $\fl_\fs$ in $\mathcal L$ is contained in the open subset
\beu 
\mathcal L_0\defn\{ V\in\mathcal L\mid V\cap\fh_\fs=\{0\}\}\subset \mathcal L\ 
\end{equation*}
which is diffeomorphic to the vector space 
\beu 
\{\Phi\in\Hom(\fl_\fs,\fh_\fs)\mid \ad_\fs (X)\circ\Phi\text{ self-adjoint w.r.t. }\IP{}{}_{|\fl_\fs} \}
\end{equation*}
via the map
\beu
\Phi\mapsto \{(Y,\Phi(Y))\in\fl_\fs\oplus\fh_\fs=\fs\mid Y\in\fl_\fs\}\in \mathcal L_0.
\end{equation*}
This implies that $\mathcal L_0$ is contractible. 

Given an element $Y\in\fk_{X}$, write $<\exp(tY)>$ for the corresponding one-parameter subgroup in $K_{X}$. Then, since $K_{X}$ is compact, the set 
\beu
\fk_{X}^{\prime}\defn\{Y\in\fk_{X}\mid <\exp(tY)>\text{ is closed in }K_{X}\}
\end{equation*}
is dense in $\fk_{X}$. For a fixed $Y\in\fk_{X}^{\prime}$, consider a real number $t_{1}>0$ such that $\exp(t_{1}Y)=e$ and the closed loop 
\beu
\gamma:\lbrack0;t_{1}\rbrack\ni t\mapsto\Ad_{\fs}(\exp tY)(\fl_{\fs})\subset\mathcal L_0.
\end{equation*}
Since $\mathcal L_0$ is contractible, the projection $\mathcal L\rightarrow \bar{\mathcal L}$ maps $\gamma$  to a contractible loop in $\bar{\mathcal L}$. 
In view of (\ref{paula}) and (\ref{ernst})
the path
\beu
[0,t_1]\ni t\mapsto \big(\det(\Ad_{\bar\fs}(\exp(tY)\big)^2=e^{2t\Tr(\ad_{\bar\fs} (Y))}
\subset S^1
\end{equation*}
has winding number $0$, i.e. $\Tr(\ad_{\bar\fs}(Y))=0$. The trace and determinant are taken over the ground field $\mathbb{C}$. By density of $\fk_{X}^{\prime}$, we deduce that $\Tr(\ad_{\bar\fs} (X))=0$. 
Using the real trace $\Tr_\mathbb{R}$ on $\bar\fs$ we can rewrite this as
\beu 
0=\frac{1}{2}\big(\Tr_\mathbb{R} (\ad_{\bar\fs} X) -{i}\Tr_\mathbb{R} (J\ad_{\bar\fs} (X))\big)=0-\frac{{i}}{2}\Tr_\mathbb{R} (J\ad_{\bar\fs} X)=\frac{{i}}{2}\Tr_\mathbb{R} (|X|)\ .
\end{equation*}
We conclude that $|X|=0$ which implies that $\ad_\fs (X)=0$.  This shows (\ref{keystep}) and hence transitivity of $L$ under the assumption that $L$ and $H$ are strongly reductive in $G$.

For general connected reductively embedded $L,H\subset G$ we apply the above to the strong closures (observe that $\bar L$ still acts properly and cocompactly on $G/\bar H$) and obtain that $G=\bar L\bar H$ as well as $\fg=\bar\fl+\bar\fh$ on the level of Lie algebras. Since $\fg$ is semisimple, a result of Oni\v s\v cik (\cite{Oni69}, Thm.~3.2) tells us that only the
semisimple parts of  $\bar\fl$, $\bar\fh$ are needed for the latter decomposition. But these semisimple parts are contained in $\fl$ and $\fh$, respectively.
We obtain that $\fg=\fl+\fh$, and thus the transitivity of $L$ and $L'$ on $G/H$, where $L'$ is the analytic subgroup corresponding to the semisimple part $\fl'$ of $\fl$.
Again using the properness of the $L$-action we conclude the compactness of the quotients
$$L/L'\cong Z(L)/L'\cap Z(L)=Z(L)/Z(L')\ .$$
Now $Z(L')$ is finite, and therefore $Z(L)$ is compact. Interchanging the roles of $L$ and $H$ we also conclude the compactness of $Z(H)$. 
$\hfill\Box$

From now on we will restrict our considerations to symmetric spaces $G/H$. 
\begin{Def}\label{defck}
Let $G$ be a non-compact connected real semisimple Lie group with finite center, and let $L,H\subset G$ be closed subgroups.
We call $(G,H,L)$ a properly transitive triple if the following conditions hold:
\begin{itemize}
\item[(i)] There exists an involutive automorphism $\sigma: G\rightarrow G$ such that for any $\sigma$-stable normal subgroup $G_1\subset G$  of positive dimension the
fixed point group $G_1^\sigma$ is a proper non-compact subgroup of $G_1$
 and $(G^\sigma)_0\subset H\subset G^\sigma$.
\item[(ii)] $L\subset G$ is connected and reductively embedded.
\item[(iii)] $G=LH$.
\item[(iv)] $L\cap H$ is compact.
\end{itemize}
\end{Def}
One could rephrase the definition roughly as follows: $X\defn G/H$ is a connected semisimple symmetric space without Riemannian factors (where $G$ acts almost effectively by the full
identity component of the isometry group) and the reductively embedded connected group $L\subset G$ acts properly and transitively on $X$. Note that the absence of Riemannian
factors of $X$ implies the absence of compact factors of $G$. Thus in view of Proposition~\ref{cocpt}
the definition would not change if we only would require that $L$ acts cocompactly (instead of transitively) on $X$.

An {\em isomorphism} between two triples $(G,H,L)$ and $(G',H',L')$ is a Lie group isomorphism $\Phi: G\rightarrow G'$ which sends $H$ onto $H'$ and $L$ onto $L'$.  It is natural to do the classification of properly transitive triples $(G,H,L)$  only up to coverings of the underlying symmetric spaces $X=G/H$, i.e. to classify
the underlying triples of Lie algebras $(\fg,\fh,\fl)$. Two triples having isomorphic underlying  triples of Lie algebras are called {\em locally isomorphic}. Moreover, we don't want to distinguish too much between triples that only differ by adding or deleting compact factors of $\fl$.
Before Cor.~\ref{unfug} we introduced $\fl_{min}$ and $L_{min}$. 
Two properly transitive triples $(G,H,L)$ and $(G^{\prime},H^{\prime},L^{\prime})$ are said to be {\em equivalent} if there exists a Lie 
algebra isomorphism $\Phi:\fg\longrightarrow\fg^{\prime}$ such that 
\begin{equation}\label{equiv}\nonumber
\Phi(\fh)=\fh^{\prime}\;\text{ and }\;\Phi(\fl_{min})=\fl^{\prime}_{min}.
\end{equation}
Similarly, we define $\fl_{max}$ to be the sum of $\fl_{min}$ with a maximal compact subalgebra in the centralizer $\fz_{\fg}(\fl_{min})$ that contains $\fl\cap\fz_{\fg}(\fl_{min})$ (in general it is only determined up to certain conjugations).  Let $L_{max}$ be the corresponding analytic subgroup. Then $L_{min}\subset L\subset L_{max}$, $L_{max}/L_{min}$ is compact, $L_{max}$
is maximal subject to the conditions $L\subset L_{max}$, $L_{max}$ acts properly on $G/H$, while $L_{min}$
is minimal subject to the conditions $L_{min}\subset L$, $L_{min}$ acts transitively on $G/H$. Moreover, $(G,H,L)$ and $(G^{\prime},H^{\prime},L^{\prime})$ are equivalent if and only if there exists a Lie 
algebra isomorphism $\Phi:\fg\longrightarrow\fg^{\prime}$ such that 
\begin{equation}\nonumber
\Phi(\fh)=\fh^{\prime}\;\text{ and }\;\Phi(\fl_{max})=\fl^{\prime}_{max}.
\end{equation}

Note that $H\subset G$ is automatically strongly reductive in $G$, since we can always find a Cartan involution $\theta$ that commutes with $\sigma$. $L\subset G$ is then also strongly
reductive by Corollary~\ref{unfug}.
Using that $G=LH$ we can find a Cartan involution that stabilizes $L$ {\it and} commutes with $\sigma$. 

Therefore we can and will tacitly assume - unless stated otherwise - that $L$ (and also $H$) is stable under a given Cartan involution of $G$ that commutes with $\sigma$. Set $\fk_L\defn \fk\cap\fl$ and $\fk_H\defn\fk\cap\fh$.  Then we get the following 
decompositions on the Lie algebra level:
\begin{itemize}
\item[] $\fg=\fk\oplus\fs=\fh\oplus\fq=\fh+\fl$\ ,\quad where $\fq\defn \fg^{-\sigma}$
\item[] $\fk=\fk_H\oplus(\fk\cap\fq)=\fk_H+\fk_L$\hspace*{0.5cm}$\fs=(\fs\cap\fh)\oplus(\fs\cap\fq)=(\fs\cap\fh)\oplus(\fs\cap\fl)$
\item[]$\fh=\fk_H\oplus(\fh\cap\fs)$\hspace*{2.0cm}$\fq=(\fq\cap\fk)\oplus(\fq\cap\fs)$\hspace*{2.0cm}$\fl=\fk_L\oplus(\fl\cap\fs)$
\item[]$\fk=[\fs,\fs]$\hspace*{3.3cm}$\fh=[\fq,\fq]$
\end{itemize}
Requiring $\theta$-stability fixes the choice of $\fl_{max}$: $\fl_{max}=\fl_{min}\oplus (\fz_{\fg}(\fl_{min})\cap\fk)$, provided that already $\fl$ and hence also $\fl_{min}$ were 
$\theta$-stable. We will also use the notation $K_L\defn K\cap L$, $K_H\defn K\cap H$.

Before we come to the classification of properly transitive triples we want  to establish two general structural results.
The first has to do with real ranks.
Let us for a moment return to the more general assumptions of Prop.~\ref{cocpt}. Properness of the action implies (\cite{Kos89}, Cor. 4.2) that 
$$\Rrank(G)\ge\Rrank(H)+\Rrank(L)\ .$$
Using in addition (\ref{knax}) and (\ref{asterix}) it is easy to see that in the cocompact case we must have equality.
We want to reprove and strengthen this result for properly transitive triples $(G,H,L)$. 
Recall that the real rank $\Rrank(G/H)$ of the symmetric space $G/H$ is defined as the dimension of a maximal abelian subspace of $\fq\cap\fs$. 
\begin{pro}\label{ranksymm}
If $(G,H,L)$ is a properly transitive triple, then $$\rank_{\mathbb {R}} (G/H)=\rank_{\mathbb {R}} (L)=\Rrank(G)-\Rrank(H)\ .$$
\end{pro}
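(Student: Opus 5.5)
We prove the two equalities separately, working throughout with a Cartan involution $\theta$ that commutes with $\sigma$ and stabilizes $L$ and $H$. The decompositions listed before the proposition give $\fs=(\fs\cap\fh)\oplus(\fs\cap\fq)=(\fs\cap\fh)\oplus(\fs\cap\fl)$.

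\emph{First equality, $\rank_\R(G/H)=\rank_\R(L)$.} The natural tool is the projection $p:\fs\cap\fl\to\fs\cap\fq$ along $\fs\cap\fh$, i.e.\ $p=\frac12(1-\sigma)$ restricted to $\fs\cap\fl$. By the two decompositions of $\fs$, $p$ is a linear isomorphism. It is not a Lie algebra map, so commutativity of a subspace is not obviously preserved, and I would argue through the orbit structure on both sides.

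For the inequality $\rank_\R(L)\le\rank_\R(G/H)$, take a maximal abelian $\fa_L\subset\fl\cap\fs$. The subgroup $\exp(\fa_L)$ acts properly on $G/H$. The standard properness criterion (\cite{Kos89}: $\fa_L\cap \Ad(K)\fa_H=0$ for a maximal split abelian $\fa_H\subset\fh\cap\fs$, after conjugating into a common $\fa$) together with a $\sigma\theta$-stable maximal split abelian $\fa=\fa_H\oplus\fa_q$, where $\fa_q\subset\fq\cap\fs$ is maximal abelian, gives $\dim\fa_L\le\dim\fa-\dim\fa_H=\dim\fa_q$ when $\fa_H$ is chosen suitably. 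A cleaner route to the same bound is that properness forces $\Rrank(G)\ge\Rrank(H)+\Rrank(L)$, as recalled above.

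The reverse inequality is where transitivity enters. I would compare the $K$-orbit pictures: $\fs=\Ad(K)\fa$, and likewise $\fs\cap\fq=\Ad(K_H)\fa_q$ and $\fl\cap\fs=\Ad(K_L)\fa_L$. Compute dimensions: the generic $K_H$-orbit in $\fs\cap\fq$ has dimension $\dim(\fs\cap\fq)-\dim\fa_q$, and similarly on the $L$ side. I would then use the second equality below to close the argument, since the combination gives $\dim\fa_q=\Rrank(G)-\Rrank(H)\ge\Rrank(L)$.

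\emph{Second equality, $\rank_\R(G/H)=\Rrank(G)-\Rrank(H)$.} Here the plan is to show that $\fa_H:=\fa\cap\fh$ is maximal abelian in $\fh\cap\fs$ for the $\sigma\theta$-stable $\fa=\fa_H\oplus\fa_q$ containing $\fa_q$. Granting this, the equality follows directly from $\dim\fa=\dim\fa_H+\dim\fa_q$. This is a statement about the symmetric space alone, and it fails in general; for instance, for Riemannian-type $H=K$ one has $\Rrank(H)=0$. So the triple must be used. Cocompactness (Kobayashi's dimension identity, already recalled) together with properness gives equality $\Rrank(G)=\Rrank(H)+\Rrank(L)$, as noted before the proposition. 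Combining this with $\Rrank(L)\le\dim\fa_q\le\Rrank(G)-\Rrank(H)$ turns both inequalities into equalities. The middle bound holds because $\fa_q$ extends to a maximal abelian subspace of $\fs$ whose $\fh$-part is abelian in $\fh\cap\fs$.

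\emph{Main obstacle.} I expect the hardest step to be justifying $\Rrank(L)\le\dim\fa_q$ directly. The natural attempt is to find a maximal abelian subspace $\fa_L$ conjugate under $K$ into a $\sigma$-stable maximal abelian $\fa$ with $\fa_L\cap\fa_H=0$, and then project $\fa_L$ injectively into $\fa_q=\fa^{-\sigma}$. I would try a $K$-conjugation that puts $\fa_L$ and $\fa_H$ inside one common maximal abelian subspace of $\fs$, using \eqref{knax} to guarantee trivial intersection, and then argue via \cite{Kos89}, Cor.~4.2, that after this conjugation $\fa_L$ may be taken transverse to $\fa_H$ inside $\fa$.
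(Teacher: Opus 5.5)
The gap is the inequality $\rank_\R(G/H)\le\Rrank(L)$. This is the only direction where transitivity $G=LH$, rather than mere properness, is needed, and your proposal never proves it. What you actually establish holds under properness alone: $\Rrank(L)\le\Rrank(G)-\Rrank(H)$ by Kobayashi, and $\Rrank(G)-\Rrank(H)\le\rank_\R(G/H)$, which holds for every semisimple symmetric space.

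Your justification of the ``middle bound'' $\dim\fa_q\le\Rrank(G)-\Rrank(H)$ runs backwards. If $\fa\supset\fa_q$ is a $\sigma$-stable maximal abelian subspace of $\fs$, then $\fa=(\fa\cap\fh)\oplus\fa_q$ together with $\dim(\fa\cap\fh)\le\Rrank(H)$ gives $\dim\fa_q\ge\Rrank(G)-\Rrank(H)$, not $\le$. The $K$-orbit count is never carried out. It also compares generic orbits of different groups ($K_H$ and $K_L$) on different spaces, so it yields no relation either.

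Moreover, the equality $\Rrank(G)=\Rrank(H)+\Rrank(L)$ that you import is itself part of the proposition (combine its two equalities). The paper only remarks before the statement that it is easy to see; it is the proof that actually establishes it.

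Relatedly, the step you call the main obstacle, $\Rrank(L)\le\dim\fa_q$, is the easy direction. Your example $H=K$ is not a failure either, since $\rank_\R(G/K)=\Rrank(G)-\Rrank(K)$. A genuine example of strict inequality is $SO_e(1,n)/SO_e(1,n-1)$ with $n\ge 2$, where $\rank_\R(G/H)=1>0=\Rrank(G)-\Rrank(H)$.

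The missing idea is that the projection $p_\fh=\tfrac12(1+\sigma)$ is injective on the orthogonal complement of $\fl\cap\fs$. The paper proceeds as follows.
\begin{enumerate}
\item Use $K=K_HK_L$. If $\Ad(k)\fa_L\subset\fa$ with $k=k_Hk_L$, replace $\fa_L$ by $\Ad(k_L)\fa_L\subset\fl$ and $\fa$ by $\Ad(k_H^{-1})\fa$. The new $\fa$ is still $\sigma$-stable and keeps $\fa\cap\fq$ (resp.\ $\fa\cap\fh$) maximal abelian in $\fq\cap\fs$ (resp.\ $\fh\cap\fs$).
\item Now $p_\fq(\fa_L)\subset\fa\cap\fq$ automatically, so your worry that $p_\fq$ does not preserve commutativity disappears. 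Injectivity of $p_\fq$ on $\fl\cap\fs$ gives $\dim\fa_L\le\dim(\fa\cap\fq)$.
\item For the reverse, maximality of $\fa_L$ gives $\fa=\fa_L\oplus\bigl(\fa\cap(\fl\cap\fs)^{\perp_\fs}\bigr)$. The map $p_\fh$ sends $\fa$ into $\fa\cap\fh$. It is injective on $(\fl\cap\fs)^{\perp_\fs}$, because a vector of $\fq\cap\fs$ orthogonal to $\fl\cap\fs$ is orthogonal to $(\fh\cap\fs)+(\fl\cap\fs)=\fs$. Hence $\dim\fa-\dim\fa_L\le\dim(\fa\cap\fh)$, i.e.\ $\dim(\fa\cap\fq)\le\dim\fa_L$.
\end{enumerate}
Thus $\dim\fa_L=\dim(\fa\cap\fq)$ for every such $\fa$. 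Choosing $\fa$ with $\fa\cap\fq$ maximal, resp.\ $\fa\cap\fh$ maximal, gives both equalities at once, with no appeal to Kobayashi's rank inequality.
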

\begin{proof}
Let $\fa_L$ be a maximal abelian subspace of $\fl\cap\fs$, and let
$\fa$ be a $\sigma$-stable maximal abelian subspace of $\fs$. We want to show that
\be\label{murphy}
\dim \fa_L=\dim \fa\cap\fq\ .
\end{equation}
Taking $\fa$ such that $\fa\cap\fq$ is maximal abelian in $\fq\cap\fs$ the first equation in Prop.~\ref{ranksymm} will follow, while for $\fa$ such that $\fa\cap\fh$ is maximal abelian in $\fh\cap\fs$ we will obtain the second. We proceed in several steps.
\begin{itemize}
\item[(i)]  We may assume that $\fa_L\subset\fa$. Indeed, there exists $k\in K$ such that $\Ad(k){\fa_L}\subset\fa$. Write $k=k_Hk_L$ with $k_H\in K_H$ and $k_L\in K_L$. Now we replace $\fa_L$ by $\Ad(k_L)\fa_L$ and $\fa$ by $\Ad(k_H^{-1})\fa$.
\item[(ii)] We consider the orthogonal projection $p_\fq$ to $\fq$ (with kernel $\fh$), 
\be\label{hermann}
p_\fq(X)=\frac{1}{2}(X-\sigma(X))\ .
\end{equation} 
In view of $\sigma$-stability it maps $\fa$ to itself and thus to $\fa\cap \fq$.
Since $\fs=(\fh\cap\fs)\oplus(\fl\cap\fs)$ the map $p_\fq$ maps $\fl\cap\fs$ bijectively to $\fq\cap\fs$. We obtain
$$ \dim \fa_L = \dim p_\fq(\fa_L)\le \dim \fa\cap \fq\ .$$
\item[(iii)] Similarly as in (ii), we obtain that the orthogonal projection $p_\fh$
to $\fh$,  $p_\fh (X)=\frac{1}{2}(X+\sigma(X))$, maps $\fa$ to $\fa\cap \fh$ and
induces an isomorphism of  $(\fl\cap\fs)^{\perp_\fs}$ onto $\fh\cap\fs$. Here $(\fl\cap\fs)^{\perp_\fs}$ denotes the orthogonal complement of $\fl\cap\fs$ in $\fs$ with respect to the Killing form. We obtain
$$ \dim  (\fa\cap(\fl\cap\fs)^{\perp_\fs}) = \dim p_\fh(\fa\cap(\fl\cap\fs)^{\perp_\fs})\le \dim \fa\cap \fh\ .$$
\item[(iv)] 
We have $\fa=\fa_L\oplus (\fa\cap(\fl\cap\fs)^{\perp_\fs})$. Indeed, for $X\in\fa$, write $X=X_\fl+X^\perp$ with $X_\fl\in\fl\cap\fs$ and $X^\perp\in(\fl\cap\fs)^{\perp_\fs}$. Then one has:
\begin{eqnarray*}
\{0\}&=&\lbrack\fa_L,X\rbrack\\
&=&\lbrack\fa_L,X_\fl\rbrack+\lbrack\fa_L,X^\perp\rbrack
\end{eqnarray*}
where, since $\lbrack\fl,\fl^\perp\rbrack\subset\fl^\perp$, the first term is contained in $\fl$ and the second term in $\fl^\perp$. We deduce that both 
$\lbrack\fa_L,X_\fl\rbrack$ and $\lbrack\fa_L,X^\perp\rbrack$ are trivial. In particular, $X_\fl\in\fa_L$ by maximality, and hence also $X^\perp\in\fa$. 
\end{itemize}
Since $\fa=\fa\cap\fh\oplus\fa\cap\fq$ the steps (ii), (iii) and (iv) now imply (\ref{murphy}).
\end{proof}
Note that the equality $\rank_{\mathbb {R}} (G/H)=\Rrank(G)-\Rrank(H)$ already tells us that many symmetric spaces $X=G/H$ cannot appear as the underlying
symmetric space of a properly transitive triple.

\begin{Def}\label{sphertriples}
A properly transitive triple $(G,H,L)$ is said to be {\it spherical} if a minimal parabolic subgroup $P_L$ of $L$ acts transitively on $G/H$.
\end{Def}

Recall that a homogeneous space  of a reductive group $L$ is called (real) spherical if a minimal parabolic subgroup of $P_L$ of $L$ has an open orbit on it (see e.g. \cite{KKPS}).
Under our assumption of properness this is equivalent to the transitivity of $P_L$. In particular, $(G,H,L)$  is a spherical triple if and only if the homogeneous space $L/L\cap H$ is
(real) spherical. We also have the following equivalent characterizations of being spherical:
\be\label{char}  L\cap H\mbox{ acts transitively on } L/P_L \Leftrightarrow  L\cap H\mbox{ acts transitively on } K_L/M_L\ ,
\end{equation}
where $M_L\defn Z_{K_L}(\fa_L)$. It is the last characterization that we will employ most often.

The property of being a spherical triple is invariant under local isomorphisms but not invariant under equivalences of properly transitive triples, in general. This motivates the following definition.
\begin{Def}\label{types}
A properly transitive triple $(G,H,L)$ is said to be:
\begin{itemize}
\item[(i)] of {\it Type~I} if $(G,H,L_{max})$ is spherical.
\item[(ii)]  of {\it Type Ia} if $(G,H,L_{min})$ is spherical.
\item[(iii)] of {\it Type Ib} if $(G,H,L_{max})$ is spherical but $(G,H,L_{min})$ is not spherical.
\item[(iv)] of {\it Type~II} if $(G,H,L_{max})$ is not spherical.
\end{itemize}
\end{Def}
\noindent
Now we come to the second structural result.
The product of two properly transitive triples $(G_1,H_1,L_1)$ and $(G_2,H_2,L_2)$ is the properly transitive triple $(G_1\times G_2, H_1\times H_2, L_1\times L_2)$.
A properly transitive triple $(G,H,L)$ is said to be {\it decomposable} if it is equivalent to a non-trivial product, and {\it indecomposable} otherwise. A properly transitive triple $(G,H,L)$ is said to be {\it irreducible} if the  underlying symmetric 
space $X=G/H$ is irreducible.

\begin{rem}\label{remindecomp}
An irreducible properly transitive triple is indecomposable, but the converse is not true. E.g., the triple $(G^\prime\times G^\prime\times G^\prime\times G^\prime,\Delta_{12}G^\prime\times\Delta_{34}G^\prime,G^\prime\times\Delta_{23}G^\prime)$ 
is an indecomposable properly transitive triple but $G/H\simeq G^\prime\times G^\prime$ is not irreducible, for any real simple Lie group $G^\prime$. However, the following proposition
tells us that a triple of Type~I  is irreducible if and only if it is indecomposable.
\end{rem}
\begin{pro}\label{productck}
Let $(G,H,L)$ be a properly transitive triple of Type $I$. Then it is equivalent to a (finite) product of irreducible triples $(G_i,H_i, L_i)$ of Type~I.
\end{pro}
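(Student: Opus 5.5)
The plan is to decompose $X=G/H$ into irreducible factors and show that sphericity of $(G,H,L_{max})$ forces $L_{min}$ to split along these factors. We may replace $L$ by $L_{max}$, since equivalence only involves $\fl_{min}$ and $\fl_{max}$, and assume $\theta$-stability as in the discussion after Definition~\ref{defck}.

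\textbf{Step 1: setup.} Write $\fg=\bigoplus_i \fg_i$ with $\sigma$-stable ideals $\fg_i$ such that $X_i=G_i/H_i$ is irreducible. Each $\fg_i$ is either simple or of group type $\fg'\oplus\fg'$ with $\sigma$ the flip. Let $p_i:\fg\to\fg_i$ be the projections. Every simple ideal $\fl_j$ of $\fl_{min}$ embeds into $\fg$ through $p_i$ for some set of indices $i$. The goal is to show that, for each $j$, exactly one index $i$ has $p_i(\fl_j)\ne0$. Then $\fl_{min}=\bigoplus_i \fl_{min}\cap\fg_i$, and each triple $(G_i,H_i,L_i)$ with $\fl_i=p_i(\fl_{max})$ is properly transitive, since transitivity and properness pass to factors via $\fg=\fl+\fh$ and $\fl\cap\fh$ compact.

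\textbf{Step 2: reduction to a rank inequality.} Suppose $\fl_j$ projects nontrivially to two factors $\fg_1$ and $\fg_2$, i.e.\ it is embedded "diagonally". Compare the real ranks of the projected triples using Proposition~\ref{ranksymm}: $\Rrank(L)=\Rrank(G/H)=\sum_i\Rrank(X_i)$. Applying transitivity factorwise, $p_i(\fl)$ acts transitively on $X_i$, and properness there follows from properness of $L$ on $X$ modulo the kernel. Hence $\Rrank(p_i(\fl))\ge \Rrank(X_i)$. A diagonal ideal contributes its rank to several $p_i(\fl)$ but only once to $\Rrank(L)$. The tentative conclusion is that the ranks add up consistently only if the diagonal part has compact image somewhere, which does not happen for a noncompact simple ideal. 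In general, however, this only gives an inequality, and Remark~\ref{remindecomp} shows that decomposition genuinely fails for Type~II. So sphericity must enter.

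\textbf{Step 3: using sphericity (the main obstacle).} We use the characterization (\ref{char}): $L\cap H$ acts transitively on $K_L/M_L$, so
\[
\dim K_L-\dim M_L\le \dim(L\cap H)=\dim K_L-\dim(\fl\cap\fs)\cdot 0+\dots ,
\]
or more concretely $\dim(\fl\cap\fh)\ge\dim\fk_L-\dim\fm_L$. Since $\dim\fl\cap\fh=\dim\fl-\dim X$, the inequality reads
\[
\dim\fl_{max}-\dim\fk_L+\dim\fm_L\ge \dim X,
\]
that is, $\dim(\fl\cap\fs)+\dim\fm_L\ge \dim X$, and so $\dim\fm_L\ge\dim(\fq\cap\fk)$. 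We plan to compare this across factors. If $\fl_j$ is diagonal in $\fg_1\oplus\fg_2$, the group $M_{L}$ for the diagonal copy is "too small": its dimension is that of a single copy of $\fm_{\fl_j}$, while the needed quantity $\dim(\fq\cap\fk)$ is additive over the factors. We expect this to contradict the inequality. To make this rigorous, we would bound $\dim \fm_L$ by the sum over simple ideals plus compact centralizer contributions, using $\fl_{max}=\fl_{min}\oplus(\fz_\fg(\fl_{min})\cap\fk)$, and show that the product of the factorwise triples attains the required bound only in the split case. If a uniform dimension count fails, the fallback is to invoke Onishchik's classification of factorizations $\fg=\fl+\fh$ for non-simple $\fg$, namely diagonal constructions from Tables 1--2, and check case by case that the diagonal triples violate (\ref{char}), as in the example of Remark~\ref{remindecomp}, where $M_L$ is too small.

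\textbf{Step 4: the factors are of Type~I.} Once $\fl_{min}$ splits, each $L_{i,max}$ acts transitively on $K_{L_i}/M_{L_i}$ through $L_i\cap H_i$. This holds because $K_L/M_L=\prod_i K_{L_i}/M_{L_i}$ and $L\cap H=\prod_i L_i\cap H_i$ up to compact central pieces, which can be absorbed into $L_{max}$. Hence each factor triple is of Type~I.
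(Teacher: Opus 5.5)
Steps 1, 2 and 4 are only scaffolding. The whole content of the proposition sits in Step 3, and there you give no argument: you \emph{expect} a contradiction from a dimension count, and otherwise defer to a classification. This is a genuine gap, and the proposed count cannot close it. The inequality $\dim\fm_L\ge\dim(\fq\cap\fk)$ that you extract from (\ref{char}) is a correct necessary condition for sphericity. It is, however, far from sufficient and does not detect diagonal embeddings.

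Take the indecomposable triple of Remark~\ref{remindecomp} with $G'=SO_e(1,n)$, $n\ge 4$, and $L_{max}=G'\times\Delta_{23}G'\times K'$, where $K'$ sits in the fourth factor. Then $\dim\fm_{L_{max}}=2\dim\fm'+\dim\fk'$; the compact centralizer part of $\fl_{max}$, which your heuristic ``only one copy of $\fm$'' ignores, compensates. Also $\dim(\fq\cap\fk)=2\dim\fk'$, so your inequality becomes $(n-1)(n-2)\ge\tfrac12 n(n-1)$, which holds. Yet $L_{max}\cap H\cong SO(n)$ acts diagonally on $K_{L_{max}}/M_{L_{max}}\cong S^{n-1}\times S^{n-1}$ and preserves $u\cdot v$. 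So this triple is not of Type~I, and $\fl_{min}$ does not split.

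The same example also refutes the claim in Step 2 that the projected groups act properly on the factors: $p_1(L)$ has noncompact stabilizer $\Delta G'$ on $X_1$. Your fallback is not available off the shelf either. As the paper notes before Theorem~\ref{listck}, Oni\v s\v cik's results presuppose that $G$ acts almost effectively on $G/L$, which is exactly what fails for diagonal triples.

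The missing idea is to use sphericity in orbit form rather than numerically. For $X\in\fl\cap\fs$, the $K_L$-stabilizer of $X$ contains some $kM_Lk^{-1}$. Transitivity of $L\cap H$ on $K_L/M_L$ gives $K_L=(L\cap H)kM_Lk^{-1}$, hence $\Ad(K_L)X=\Ad(L\cap H)X$. Thus every $L\cap H$-invariant subspace of $\fl\cap\fs$ is automatically $K_L$-invariant.

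Now suppose $\fg=\fg_1\oplus\fg_2$ with $\sigma$-stable ideals, and apply this to $E_j:=p_\fq^{-1}(\fq\cap\fs\cap\fg_j)$. Here $p_\fq(X)=\frac12(X-\sigma X)$ is the $L\cap H$-equivariant isomorphism $\fl\cap\fs\to\fq\cap\fs$. The argument then runs as follows:
\begin{itemize}
\item $K_L$-invariance of $E_j$, together with definiteness of the form on $\fk_L$, gives $[E_j^\perp,E_j]=0$, where $E_j^\perp$ is the orthogonal complement in $\fl\cap\fs$. Hence $\fl_j:=E_j+[E_j,E_j]$ are ideals of $\fl$.
\item $\fl_1\cap\fl_2$ lies in $[\fl\cap\fs,\fl\cap\fs]\subset\fk$ and commutes with $\fl\cap\fs$. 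It is therefore orthogonal to itself and vanishes, so $[E_1,E_2]=0$.
\item Splitting elements of $E_1,E_2$ along $\fg_1\oplus\fg_2$ and $\fh\oplus\fq$ then forces $E_j\subset\fg_j$.
\end{itemize}
Since $\fl\cap\fs=E_1\oplus E_2$ generates $\fl_{min}$, you get $\fl_{min}=\fl_1\oplus\fl_2$ with $\fl_j\subset\fg_j$, and no classification is needed.
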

\begin{proof}
Note that a product of triples is of Type~I if and only if each factor is of Type~I. Thus it suffices to prove the following: Suppose $(G,H,L)$ is a triple of Type~I such that $\fg$ is the direct sum $\fg_1\oplus\fg_2$ of $\sigma$-stable ideals. Then there are subalgebras $\fl_j\subset \fg_j$ such that $\fl_{min}=\fl_1\oplus \fl_2$. 

Then $\fq\cap\fs$ splits, as an $L\cap H$-module, into a direct 
sum $\fq_1\oplus\fq_2$, with $\fq_j\subset\fg_j$, $j=1,2$. Write $E_j$ for the inverse image of $\fq_j$ via the isomorphism which sends $X\in\fl\cap\fs$ to the element $\frac{X-\sigma(X)}{2}\in\fq\cap\fs$. 
In particular, $\fl\cap\fs$ is the direct sum $E_1\oplus E_2$ of $L\cap H$-modules. Put ${\fl}_j\defn E_j+\lbrack E_j,E_j\rbrack$ for $j=1,2$. We have $\fl_j\subset \fl$.

{\it Claim 1: ${\fl}_j$ is an ideal in $\fl$ for $j=1,2$.}\\
{\it Proof of Claim 1.} By replacing $L$ by $L_{max}$ if necessary, we may assume that $(G,H,L)$ is spherical, i.e. $L\cap H$ acts transitively on $K_L/M_L$.
The stabilizer in $K_L$ of any $X\in \fl\cap\fs$, in particular of any $X\in E_j$, contains a subgroup conjugate to $M_L$. We conclude that $\Ad(K_L)X=\Ad(L\cap H)X$.
Hence $E_j$ is $K_L$-invariant. 
In particular, one has $\lbrack \fk_L,{\fl}_j\rbrack\subset{\fl}_j$. Now it follows that also
$\lbrack E_j,{\fl}_j\rbrack\subset{\fl}_j$. Moreover, by invariance of the Killing form of $\fg$ restricted to $\fl$, 
one gets $\lbrack E_j^\perp,E_j\rbrack=\{0\}$, where $E_j^\perp$ denotes the orthogonal of $E_j$ in $\fl\cap\fs$ with respect to the Killing form of $\fg$ restricted to $\fl$. The claim now follows. 

{\it Claim 2:  ${\fl}_1\cap{\fl}_2=\{0\}$.}\\
{\it Proof of Claim 2.} By definition of ${\fl}_j$, one has 
${\fl}_1\cap{\fl}_2=\lbrack E_1,E_1\rbrack\cap\lbrack E_2,E_2\rbrack\subset [\fl\cap\fs,\fl\cap\fs]\subset\fk$. Thus $[\fl\cap\fs,\fl_1\cap\fl_2]\subset\fs$.
On the other hand, by Claim 1, $[\fl\cap\fs,\fl_1\cap\fl_2]\subset \fl_1\cap\fl_2\subset\fk$. We obtain $[\fl\cap\fs,\fl_1\cap\fl_2]=0$ and therefore
$\IP{{\fl}_1\cap{\fl}_2}{\lbrack\fl\cap\fs,\fl\cap\fs\rbrack}=0$. Claim 2 follows.

{\it Claim 3:  $E_j\subset\fg_j$ for $j=1,2$.}\\
{\it Proof of Claim 3.} Since $\fg_1$ and $\fg_2$ are both stable under the involution $\sigma$, given $X\in E_1$ and $Y\in E_2$, we may write: $X=A_1+B_1+B_2$ 
and $Y=A_2^\prime+B_1^\prime+B_2^\prime$, with $A_j,A_j^\prime\in\fg_j\cap\fq\cap\fs$ and $B_j, B_j^\prime\in\fg_j\cap\fh\cap\fs$. 
By Claim~2, $\lbrack X,Y\rbrack=0$ which implies that $\lbrack A_1,B_1^\prime\rbrack=\lbrack B_2,A_2^\prime\rbrack=0$. In particular, we have 
$\lbrack B_2,\fq\cap\fs\rbrack=\{0\}$ and therefore $0=\IP{\lbrack B_2,\fq\cap\fs\rbrack}{\fq\cap\fk}=\IP{B_2}{\lbrack\fq\cap\fs,\fq\cap\fk\rbrack}=\IP{B_2}{\fh\cap\fs}$, i.e $B_2=0$. We conclude $E_1\subset\fg_1$. With the same arguments, one proves that $E_2\subset\fg_2$ which implies the claim.

Claim 3 implies that $\fl_j\subset\fg_j$. Since $\fl\cap\fs=E_1\oplus E_2$ and thus, by construction, $\fl_1\oplus\fl_2$ is generated by $\fl\cap\fs$,
we conclude that $\fl_1\oplus\fl_2=\fl_{min}$.
\end{proof}

The following classification of {\em irreducible} properly transitive triples is a rather direct consequence of the general classification results of Oni\v s\v cik \cite{Oni69} on decompositions of semisimple Lie groups
$G$ into products $G=LH$, where $L,H\subset G$ are reductively embedded. Note that Oni\v s\v cik neither requires the compactness condition (iv) nor the symmetry
condition (i) in Definition \ref{defck}. However, he requires that $G$ acts almost effectively not only on $G/H$ but also on $G/L$. It is only this last requirement
which prevents us to read of a full classification of all {\em indecomposable} triples directly from Oni\v s\v cik's results. 
\begin{thm}\label{listck}
Let $(G,H,L)$ be an irreducible properly transitive triple. Then either $G$ is simple and $(G,H,L)$ is equivalent to exactly one of the triples described in Table 1, or $G/H=
G_1\times G_1/\Delta G_1$ is a group manifold and $(G,H,L)$ is equivalent to exactly one of the triples described in Table \ref{foxi}. Strictly speaking, in Case~12 one has to run only over
the Cases 1,3,5,6,7,8 in Table 1 in order to avoid doublings arising by interchanging $L_1$ and $H_1$.  Vice versa, each row of Tables 1 and 2 defines an irreducible
properly transitive triple. In the column $\bf L$ we always list $L_{max}$; $L_{min}$ just arises by removing all compact factors. For $\bf H$ we always list the connected choice $H=(G^\sigma)_0$.
\setcounter{table}{0}
\begin{table}[h]
\caption{$G$ simple}\label{fix}
\begin{center}
\scalebox{1}{
\begin{tabular}{|r||l||c||c||c||c||r|}
  \hline
  &${\bf G}$ & ${\bf H}$ & ${\bf L}$ & ${\bf L\cap H}$ & ${\bf \text{\bf rank}(G/H)}$ & {\bf type}\\
  \hline
    \hline
  \bf 1& $SO_e(2,2n)$, $n\ge 2$ & $SO_e(1,2n)$ & $U(1,n)$ & $U(n)$ & $1$ & Ia \\
   \hline
    \hline
    \bf 2&$SO_e(2,2n)$, $n\ge 3$ & $U(1,n)$ & $SO_e(1,2n)$ & $U(n)$ & $\lbrack\frac{n+1}{2}\rbrack$ & Ia \\
   \hline
    \hline
 \bf 3 &$SU(2,2n)$, $n\ge 1$  & $S(U(1)\times U(1,2n))$ & $Sp(1,n)$ & $U(1)\times Sp(n)$ & $1$ & Ia \\
   \hline
    \hline
    \bf 4& $SU(2,2n)$, $n\ge 2$& $Sp(1,n)$ & $S(U(1)\times U(1,2n))$ & $U(1)\times Sp(n)$ & $n$ & Ia \\
  \hline
    \hline
    \bf 5& $SO_e(4,4n)$, $n\ge 1$ & $SO_e(3,4n)$ & $Sp(1,n)\cdot Sp(1)$ & $\Delta Sp(1)\cdot Sp(n)$ & $1$ & Ia \\ 
\hline
    \hline
  \bf 6    & $SO_e(8,8)$& $SO_e(7,8)$ & $Spin(1,8)$ & $Spin(7)$ & $1$ & Ia \\
   \hline
    \hline
    \bf 7    & $SO(8,{\mathbb C})$& $SO(7,{\mathbb C})$ & $Spin(1,7)$ & $G_2$ & $2$ & Ia \\
     \hline
    \hline
    \bf 8      & $SO_e(3,4)$& $SO(2)\times SO_e(1,4)$ & $G_{2(2)}$ & $U(1)\times SU(2)$& $2$ & II \\   
 \hline
    \hline
     \bf 9   & $SO_e(4,4)$& $SO(3)\times SO_e(1,4)$ & $Spin(3,4)$ &$SU(2)\times SU(2)$ & $3$ & II \\
   
   \hline
    \hline
    \bf 10    & $SO(8,{\mathbb C})$& $SO_e(1,7)$ & $Spin(7,{\mathbb C})$ & $G_2$ & $4$ & II \\
   \hline
\end{tabular}
}
\end{center}
\end{table}
\begin{table}[h]
\caption{Group cases}\label{foxi}
\begin{center}\scalebox{1}{
\begin{tabular}{|r||l||c||c||c||r|}
  \hline
  &${\bf G_1}$  & ${\bf L}$ & ${\bf L\cap H}$ & ${\bf \text{\bf rank}(G/H)}$ & {\bf type}\\
  \hline
    \hline
   \bf 11&   non-compact simple &  $G_1\times K_1$, $K_1\subset G_1$ max. compact & $\Delta K_1$ & $\text{rank}_{{\mathbb C}}(G_1)$ & Ib \\
     \hline
    \hline
  \bf 12& as $G$ in Table 1 & $L_1\times H_1$ with $(L_1,H_1)$ as $(L,H)$  in Table 1& $\Delta(L_1\cap H_1)$ & $\text{rank}_{{\mathbb C}}(G_1)$ & II \\
   \hline
\end{tabular}}
\end{center}
\end{table}
\end{thm}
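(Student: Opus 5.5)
The plan is to derive Theorem~\ref{listck} from Oni\v s\v cik's classification \cite{Oni69} of factorizations $\fg=\fl+\fh$ with $\fl,\fh$ reductively embedded, and then to check properness and types case by case.

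\emph{Reduction to Lie algebras.} By Proposition~\ref{cocpt} and Corollary~\ref{unfug}, a properly transitive triple gives a decomposition $\fg=\fl_{min}+\fh$. Properness means $\fl\cap\fh\subset\fk$. After conjugation we may assume, as in the discussion after Definition~\ref{defck}, that $\theta$ stabilizes $\fl$ and commutes with $\sigma$; properness is then equivalent to (\ref{knax}). Since we classify up to equivalence, it suffices to classify pairs $(\fh,\fl_{min})$ with $\fh=\fg^\sigma$ non-compact and $\fl_{min}$ semisimple without compact ideals. Irreducibility of $X$ leaves two cases: $\fg$ simple, or $\fg=\fg_1\oplus\fg_1$ with $\fh=\Delta\fg_1$.

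\emph{Simple $\fg$.} Here $G$ acts almost effectively on $G/L$ as well, so Oni\v s\v cik's list of factorizations of simple $\fg$ applies. From it we keep those with one factor symmetric and non-compact, and those passing two filters: the rank identity of Proposition~\ref{ranksymm}, $\Rrank(L)=\Rrank(G)-\Rrank(H)$, and compactness of $\fl\cap\fh$. The latter is read off from the known stabilizers (for example $U(1,n)\cap SO_e(1,2n)=U(n)$), which are listed in the $\bf L\cap H$ column. Interchanging the two factors of a factorization gives the pairs 1/2 and 3/4. Non-equivalence of the rows follows from comparing $(\dim,\Rrank(G/H))$ together with the column $L\cap H$.

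\emph{Group case.} For $\fg=\fg_1\oplus\fg_1$ and $\fh=\Delta\fg_1$, transitivity of $L$ on $G_1\times G_1/\Delta G_1\cong G_1$ is the relation $\fg_1=\fl'+\fl''$ for the projections of $\fl$. Two subcases arise.
\begin{itemize}
\item If $\fl$ is a product $\fl_1\oplus\fl_2$, this is exactly a factorization of $\fg_1$ with compact intersection. Either one factor is compact, giving Case 11 with $\fl_2=\fk_1$ (the only proper cocompact choice up to equivalence), or both are non-compact, giving Case 12 built from a Table~1 pair.
\item If $\fl$ is not a product, it contains a diagonal-type ideal. Then $\fl\cap\Delta\fg_1$ contains a non-compact diagonal, contradicting properness.
\end{itemize}
The restriction of Case 12 to rows 1, 3, 5, 6, 7, 8 just removes the symmetric doubling from swapping $\fl_1$ and $\fh_1$, which is allowed because an automorphism of $\fg_1\oplus\fg_1$ swaps the factors.

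\emph{Types.} I determine the types through the criterion (\ref{char}): the triple is spherical iff $L\cap H$ is transitive on $K_L/M_L$. For rows 1--7 this follows from classical transitive actions on spheres: $U(n)$ on $S^{2n-1}$, $Sp(n)Sp(1)$ on $S^{4n-1}$, $Spin(7)$ on $S^7$, and $G_2$ on the flag variety of $Spin(7)\supset G_2$ when the rank is $2$. These already hold with $L_{min}$, giving Type Ia. For rows 8--10 and Case 12, a dimension count $\dim(L\cap H)<\dim K_L/M_L$ shows failure even for $L_{max}$, giving Type II. For Case 11, $K_1$ alone cannot be transitive on $K_1/M_1$ for $L_{min}=G_1$, but $\Delta K_1$ acts transitively inside $K_1\times K_1$ on $(K_1\times K_1)/(M_1\times K_1)$, giving Type Ib. The converse, that each row is a properly transitive triple, follows from the same computations via $\dim\fl+\dim\fh-\dim(\fl\cap\fh)=\dim\fg$ and Oni\v s\v cik's Theorem~3.1.

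I expect the main obstacle to be extracting from \cite{Oni69} precisely the factorizations with a \emph{symmetric} non-compact factor and compact intersection, including the complex case (rows 7, 10) and the real forms of $\fg$. This requires passing carefully between complex factorizations and their real forms, and I would do it by checking, for each complex factorization, which real forms admit compatible $\theta$ and $\sigma$.
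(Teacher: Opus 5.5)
\textbf{Gap 1: the non-product subcase of the group case.} Your treatment of simple $\fg$ and of the product subcase follows the paper, which selects cases from Oni\v s\v cik's lists. The step excluding non-product $\fl$, however, does not work.

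A simple non-compact ideal of $\fl_{min}$ with both projections non-zero has the form $\fd=\{(\phi_1(X),\phi_2(X))\mid X\in\fd_0\}$ for two injective homomorphisms $\phi_1,\phi_2:\fd_0\to\fg_1$. Then $\fd\cap\Delta\fg_1\cong\{X\mid\phi_1(X)=\phi_2(X)\}$, and this can be compact or even zero. Transitivity lets you test properness at every base point, which amounts to replacing $\phi_2$ by $\Ad(h)\circ\phi_2$ with $h\in G_1$. That only helps when $\phi_1$ and $\phi_2$ are $G_1$-conjugate.

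They need not be. Take the standard and the principal embedding $\mathfrak{sl}(2,\R)\hookrightarrow\mathfrak{sl}(3,\R)$. The images of a non-zero $X$ are never conjugate: semisimple $X$ gives eigenvalues $(a,-a,0)$ versus $(2a,0,-2a)$, and nilpotent $X$ gives rank one versus rank two. So such a $\fd$ meets every conjugate of $\Delta\fg_1$ trivially.

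Such $\fl$ can only be ruled out by using transitivity globally, and this is what the paper takes from Oni\v s\v cik. Either $\fl$ contains one of the two simple ideals, which gives Case~11. Or $G$ acts almost effectively on $G/L$; then Thm.~4.3 of \cite{Oni69} says $\fg=\fl+\fh$ is primitive, i.e.\ built from decompositions $\fg_1=\fl_1+\fh_1$ of the simple factor.

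Even in your product subcase you need one more input, read off from Oni\v s\v cik's lists. Every decomposition of a simple $\fg_1$ into two non-compact reductive subalgebras with compact intersection has a symmetric factor. Without this you cannot conclude that you land in Table~1.

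\textbf{Gap 2: the type of Case~12.} The dimension count $\dim(L_{max}\cap H)<\dim K_{L_{max}}/M_{L_{max}}$ is not available in general. For Case~12 built on row~1 you have $L_{max}=U(1,n)\times SO_e(1,2n)$ and $L_{max}\cap H=\Delta U(n)$, of dimension $n^2$. But $K_{L_{max}}/M_{L_{max}}\cong S^{2n-1}\times S^{2n-1}$ has dimension $4n-2\le n^2$ for $n\ge 4$. Row~3 fails in the same way for $n\ge 3$, where $2n^2+n+1\ge 8n-2$.

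The paper argues differently, using the splitting $K_{L_{max}}/M_{L_{max}}=K_{L_1}/M_{L_1}\times K_{H_1}/M_{H_1}$. If $(G_1,H_1,L_1)$ is of Type~II, the first factor is already not $L_1\cap H_1$-homogeneous. In the spherical cases there is an $L_1\cap H_1$-equivariant map $K_{H_1}/M_{H_1}\to K_{L_1}/M_{L_1}$, and its graph is a proper invariant subset.

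\textbf{A simplification for complex simple $\fg$.} You do not need to analyze real forms of complex factorizations. From Thm.~4.2 of \cite{Oni69} it suffices to note that $L$ and $H$ cannot both be complex. Otherwise $L\cap H$ would be a non-trivial complex reductive group, hence non-compact. This leaves exactly rows~7 and~10.
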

\begin{proof}
We have to distinguish between 3 cases: $\fg$ is simple and does not admit the structure of a complex Lie algebra, $\fg$ is a complex simple Lie algebra, or $G/H$ is a group manifold
(we call it the group case). In the first case we apply Thm.~4.1 (Table 2) in  \cite{Oni69}. We have just to select those cases from Oni\v s\v cik's list, where $L\cap H$ is compact and one of the two subgroups
is symmetric. This is easily done because the description of $L\cap H$ is also given in this list. We get precisely (up to equivalence) Cases 1--6, 8, 9 of our Table 1.
We made a choice of excluding certain spaces with small parameter $n$ in order to avoid doublings in the table caused by special isomorphisms in small dimensions.
Note that the possible abelian factors of $H$ and $L$ are not written down in Oni\v s\v cik's list. However, they can be found in his earlier list of decompositions of compact groups
(\cite{Oni62}, Thm.~4.1, Table 7). For the second case we use \cite{Oni69}, Thm.~4.2, instead and observe that in this case not both $L$ and $H$ can be complex. Indeed, if they
were, then the intersection $L\cap H$ would be a non-trivial (which follows from the one-dimensionality of the third cohomology of simple Lie algebras - one may also consult the classification \cite{Oni62}, Thm.~4.1)  complex reductive linear group, i.e. $L\cap H$ would be non-compact.
We obtain the cases 7 and 10 of our Table~1.

We now discuss the group case: Either $L$ contains $G_1$, then we obtain Case 11, or $G$ acts almost effectively on $G/L$, which means in Oni\v s\v cik's language: the
triple $(\fg,\fh,\fl)$ is an effective decomposition. Now Thm.~ 4.3 in \cite{Oni69} tells us that this triple is what Oni\v s\v cik calls primitive, i.e. it can be constructed in an explicit
way (described on pp.~564/565 in \cite{Oni69}) from decompositions of simple Lie algebras (i.e. in our situation from decompostions $\fg_1=\fl_1+\fh_1$ with $\fl_1\cap\fh_1$ compact). Observing that all these decompositions
already appear in our Table 1 we obtain the description given as Case~12.

It remains to determine the types for all the triples $(G,H,L)$ appearing in Table 1 and 2. We just have to check whether $L_{*}\cap H$ acts transitively on $K_{L_*}/M_{L_*}$,
where $*=min$ or $max$. For Cases 1--7 we just employ the transitivity of the groups $SU(n)$, $Sp(n)$, $G_2$,  and $Spin(7)$ on the spheres $S^{2n-1}$, $S^{4n-1}$, $S^6$ and
$S^7$, respectively. In Cases 8--10 we have $\dim (L_{max}\cap H)<\dim (L_{max}/P_{L_{max}})$. Note that in Case 11 we have $L_{min}=G_1$ and $L_{min}\cap H =\{e\}$. Thus $L_{min}\cap H$
does not act transitively on $K_{L_{min}}/M_{L_{min}}=K_1/M_1$, where $M_1$ is the $M$-component of a minimal parabolic of $G_1$. Note that $K_{L_{max}}/M_{L_{max}}=K_1/M_1\times K_1/K_1$. Thus $L_{max}\cap H=\Delta K_1$ acts transitively on this manifold. 

There are several ways to show that all the triples in Case~12 are of Type~II.
One possibility is the following: $K_{L_{max}}/M_{L_{max}}=K_{L_1}/M_{L_1}\times K_{H_1}/M_{H_1}$, where $(G_1, H_1,L_1)$ runs through the triples in Cases 1,3,5,6,8,10.
In the last 2 cases $(G_1, H_1,L_1)$ is of Type~II, i.e. $L_1\cap H_1$ does not act transitively on $K_{L_1}/M_{L_1}$. Thus $\Delta(L_1\cap H_1)$ does not act transitively on 
$K_{L_1}/M_{L_1}\times K_{H_1}/M_{H_1}$. In the remaining cases one can find an $L_1\cap H_1$-equivariant map $\Phi: K_{H_1}/M_{H_1}\rightarrow K_{L_1}/M_{L_1}$.
Then the graph of $\Phi$ is a proper $\Delta(L_1\cap H_1)$-invariant subset of $K_{L_{max}}/M_{L_{max}}$, so $\Delta(L_1\cap H_1)$ does not act transitively.
\end{proof}
We conclude this section by a list of comments on Theorem~\ref{listck}.
\begin{itemize}
\item Combining the theorem with Proposition \ref{productck} gives a full classification of properly transitive triples of Type~I.
\item For a full classification of all properly transitive triples one would still need a classification of indecomposable non-irreducible triples $(G,H,L)$ (which are necessarily of Type~II).
It is possible to give such a classification based on a generalization of Thm.~ 4.3 in \cite{Oni69}
to certain non-effective triples. We will not formulate the result here.
All such triples arise by certain diagonal constructions applied to irreducible triples. The simplest example of such a diagonal construction is given in Remark \ref{remindecomp}. It can be generalized to $k$-fold
products of group manifolds for all $k\ge 2$. 
\item A similar class of examples is given by triples of the form $(L_1\times L_1\times G_1, \Delta_{12}L_1\times H_1, L_1\times \Delta_{23} L_1)$,
where $(G_1, H_1, L_1)$ runs through the triples in Table~1 (with $L_1=L_{min}$). If we require in addition that $G_1/L_1$ is symmetric (but now $L_1=L_{max}$), we can also form the indecomposable triple
$(G_1\times G_1, L_1\times H_1, \Delta G_1)$, i.e. the triple formed by interchanging the roles of $L$ and $H$ in Case~12, Table 2.
\item Another interesting indecomposable triple arises when connecting Cases 4 and 5 in Table 1 by a diagonal construction: $(SU(2,2n)\times SO_e(4,4n), Sp(1,n)\times SO_e(3,4n),\Delta SU(2,2n))$.
\item Inspecting Oni\v s\v cik's  lists (Thm.~4.1 (Table 2) and  Thm.~4.2 in  \cite{Oni69}) of decompositions $G=LH$, $L\cap H$ compact, with $G$ simple, one observes that either $L$ or $H$ is
symmetric (up to compact factors). It follows that Table~1, if we allow to interchange the roles of $H$ and $L$, also gives a classification of all triples $(G,H,L)$ satisfying the assumptions of Prop.~\ref{cocpt}
with $G$ simple up to a natural equivalence relation.
\item It is worth noting that for a properly transitive triple $(G,H,L)$ with $G$ simple (see Table 1) the following properties are equivalent: (i) the triple is of Type~I; (ii) it is of Type Ia; 
(iii) $\rank_{\mathbb {R}} (L)=1$.
\item Note that the triple in Table~1, Case~1 also makes sense for $n=1$. But in this case $G$ is not simple. In fact, the triple is equivalent to Case 11 with $G_1=SL(2,\R)$.
It is the properly transitive triple of smallest dimension $d$, namely  $d=3$. By dimension we mean the dimension of the underlying symmetric space $X=G/H$.
The triple of smallest possible dimension with $G$ simple is given by Case 1, $n=2$. Here $d=5$. The smallest irreducible triple of Type~II is given by Case 8, here $d=10$.
Note that there is a smaller non-irreducible triple of Type~II with $d=6$: take $G'=SL(2,\R)$ in Remark \ref{remindecomp}.
\item All the triples listed in the above tables also appear in several papers of Kobayashi and his coauthors, e.g. in  \cite{KY05}. In their earlier papers, classification is not discussed.
The classification given in the later paper \cite{KK25} also rests on Oni\v s\v cik's results, but is given under slightly different assumptions. See also the paper by Tojo (\cite{To19}), where a classification of irreducible symmetric spaces admitting compact standard quotients is announced. Tojo works without the knowledge of Prop.~\ref{cocpt} and therefore did not build on the the work of  Oni\v s\v cik.
\end{itemize}


\section{Embedding of Casimir operators}\label{pbw}

Let $G$ be a connected non-compact semisimple real Lie group with finite center, $K$ a maximal compact subgroup of $G$ associated with a Cartan involution $\theta$ 
and $H\subset G$ a reductively embedded closed subgroup such that $G/H$ is a symmetric space with respect to an involution $\sigma$ commuting with $\theta$. 
We first recall the description of the algebra ${\bf D}(G/H)$ of left $G$-invariant differential operators on $G/H$ in terms of the universal enveloping algebra ${\Cal U}(\fg)$ of $\fg^{\mathbb C}$ of $G$. The right translation $r$ of $G$ on 
$G$
%
%
induces an action, still denoted $r$, of the enveloping algebra ${\Cal U}(\fg)$ on $C^\infty(G)$
%
%
by left-invariant differential operators. On the other hand, $H$ acts on ${\Cal U}(\fg)$ via the adjoint 
action, we write ${\Cal U}(\fg)^{H}$ for the subalgebra of $H$-invariant elements. This induces an action of ${\Cal U}(\fg)^{H}$ on the vector space $C^{\infty}(G/H)$ of smooth complex functions on $G/H$, viewed as right $H$-invariant smooth functions on $G$. One obtains a homomorphism of algebras
\begin{equation}\label{homo1}
r: {\Cal U}(\fg)^{H}\rightarrow {\bf D}(G/H),\;A\mapsto R(A).
\end{equation}
%
Define $Z(G/H)$ to be the subalgebra $r({\Cal Z}(\fg))$ of ${\bf D}(G/H)$, where ${\Cal Z}(\fg)$ is the center of ${\Cal U}(\fg)$. Then it is known that (see Proposition 4.1, Theorem 4.3 and Lemma 4.4 of \cite{HS94}):
\begin{itemize}
\item[(i)] $r$ induces an isomorphism of algebras: 
$${\bf D}(G/H)\cong{\mathcal U}(\fg)^{H}/({\Cal U}(\fg)^{H}\cap {\Cal U}(\fg)\fh)\cong\left [{\mathcal U}(\fg)/{\Cal U}(\fg)\fh\right ]^H. $$
These isomorphisms hold for
all homogeneous spaces $G/H$ with $H\subset G$ reductively embedded.
\item[(ii)] ${\bf D}(G/H)$ is isomorphic to the algebra $S(\mathfrak{g})^H$, of $H$-invariant polynomials on $\mathfrak{g}_{\mathbb C}$, with $\rank(G/H)$ independent generators. In particular, ${\bf D}(G/H)$ is a commutative algebra. 
\item[(iii)]  If $G$ is a classical Lie group, or if the rank of $G/H$ is one, then $Z(G/H)={\bf D}(G/H)$.
\end{itemize}

Assume now that $(G,H,L)$ is a properly transitive triple. Then, as above, write ${\bf D}(L/L\cap H)$ for the algebra of left $L$-invariant differential operators on $L/L\cap H$ which we will
identify with 
$${\mathcal U}(\fl)^{L\cap H}/({\Cal U}(\fl)^{L\cap H}\cap {\Cal U}(\fl)(\fl\cap \fh)). $$
By the transitivity of the $L$-action on $G/H$, the algebra ${\bf D}(L/L\cap H)$ can be viewed as the algebra of $L$-invariant  differential operators on $G/H$ which contains ${\bf D}(G/H)$. Thus we have an embedding
\begin{equation}\label{embed1}
{\bf D}(G/H)\stackrel{\imath}\hookrightarrow {\bf D}(L/L\cap H).
\end{equation}

As always, we consider a Cartan involution stabilizing $L$ and commuting with $\sigma$. We fix a $G$-, $\theta$- and $\sigma$-invariant nondegenerate bilinear form $\IP{}{}$ on $\fg$ (not necessarily the Killing form to allow for more flexibility). In the remainder of this section we will compute the image of the corresponding Casimir operator of $G$ via the embedding (\ref{embed1}), i.e. we want to represent its image by an element of  ${\mathcal U}(\fl)^{L\cap H}$. Note that in the group case 11 (see Table 2) the corresponding embedding problem for the full algebra ${\bf D}(G/H)$  is already completely solved by the isomorphism 
\be\label{center}
{\bf D}(G/H)\cong {\Cal Z}(\fg_1),
\end{equation}
which sends the Casimir of $G$ to twice the Casimir of $G_1=L_{min}$.

The restrictions of the bilinear form to ${\mathfrak h}$ and ${\mathfrak l}$, still denoted $\IP{}{}$, are non-degenerate. Since ${\mathfrak q}$ is orthogonal to ${\mathfrak h}$, with respect to $\IP{}{}$, and ${\mathfrak g}={\mathfrak l}+{\mathfrak h}$, observe that 
\begin{equation}\label{qperpl}
{\mathfrak q}\cap{\mathfrak l}^\perp=\{0\}.
\end{equation}
Let $F$ be the orthogonal complement of $\fl\cap\fh$ in $\fl\cap\fk$ with respect to $\IP{}{}$, so that 
$$E\defn F\oplus(\fl\cap\fs)$$ 
is the 
orthogonal complement of $\fl\cap\fh$ in $\fl$ and
\begin{equation}\label{defE}\nonumber
\fl=(\fl\cap\fh)\oplus E.
\end{equation} 
Note that both summands $F$ and $\fl\cap\fs$ of $E$  are non-zero vector spaces invariant under the (adjoint) action of $L\cap H$. 
We want to construct convenient bases for ${\mathfrak q}$ and $E$. For this, we consider the additional symmetric bilinear form $(\;,\;)$ on $\fl$ defined by 
\beu
(X,Y)\defn\IP{ \sigma (X)}{Y} .
\end{equation*}
It is not necessarily non-degenerate.
For $\beta\in\mathbb{R}$, we consider the vector subspace $\fl(\beta)$ 
of $\fl$ defined by
\beu
\fl(\beta)=\{X\in\fl\mid (X,Y)=\beta\IP{X}{Y}\;\;\forall Y\in\fl\}.
\end{equation*}
The next lemma collects useful properties of the $\fl(\beta)$'s.
\begin{lem}\label{basislem}
\begin{itemize}
\item[(1)] $\fl(\beta)$ is  $L\cap H$-invariant and $\theta$-stable. In particular,
$$\fl(\beta)=(\fl(\beta)\cap\fk)\oplus (\fl(\beta)\cap\fs). $$
\item[(2)] $\fl(1)=\fl\cap\fh$ and $\fl(-1)=\fl\cap\fq$.
\item[(3)] If $\fl(\beta)\ne \{0\}$, then $-1\leq \beta\leq 1$.
\item[(4)] There is a decomposition 
\beu
\fl=\bigoplus_{\beta}\fl(\beta)
\end{equation*}
which is orthogonal with respect to $\IP{}{}$, i.e $\fl(\beta)\perp\fl(\beta^\prime)$ if $\beta\neq\beta^\prime$. \\
\item[(5)]  One has:
\begin{eqnarray*}
E&=&\bigoplus_{\beta\ne 1}\fl(\beta)\\
F&=&\bigoplus_{\beta\ne 1} (\fl(\beta)\cap\fk)\\
\fl\cap\fs&=&\bigoplus_{\beta\ne 1} (\fl(\beta)\cap\fs)
\end{eqnarray*}
Define $\text{Spec}(F):=\{\beta\in{\mathbb R\setminus\{1\}}\mid \fl(\beta)\cap \fk\neq\{0\}\}$, $\text{Spec}(\fl\cap\fs):=\{\beta\in{\mathbb R}\mid \fl(\beta)\cap\fs\neq\{0\}\}\subset [-1,1)$ and 
$\text{Spec}(G,H,L):=\text{Spec}(F)\cup \text{Spec}(\fl\cap\fs)$.
\item[(6)] We have $[\fl(-1),\fl(\beta)]\subset \fl(-\beta)$. If $L\cap K$ is $\sigma$-stable, then $\text{Spec}(F)=\{-1\}$. If moreover $\fl\cap\fs$ is irreducible, under the adjoint action of $L\cap H$, then $\text{Spec}(\fl\cap\fs)=\{0\}$.
\item[(7)] Fix $\beta\in\mathbb{R}$. If $X\in \fl(\beta)$, then $\lbrack X,\sigma(X)\rbrack=0$.
\item[(8)] If $G/L$ is a symmetric and $\IP{}{}$ is invariant under the corresponding involution, then 
$$\text{Spec}(G,L,H)\setminus\{-1\}=\text{Spec}(G,H,L)\setminus\{-1\}. $$
\end{itemize}
\end{lem}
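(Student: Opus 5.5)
The natural approach is to encode $(\;,\;)$ by an operator on $\fl$ and reduce everything to linear algebra. Since $\IP{}{}$ is non-degenerate on $\fl$, there is a unique linear map $S:\fl\to\fl$ with $(X,Y)=\IP{S X}{Y}$. Writing $p_\fl$ for the $\IP{}{}$-orthogonal projection $\fg\to\fl$, we have $S=p_\fl\circ\sigma|_\fl$. Because $(\;,\;)$ is symmetric ($\sigma$ preserves $\IP{}{}$ and is an involution), $S$ is $\IP{}{}$-symmetric, and $\fl(\beta)$ is just its $\beta$-eigenspace. Since $\sigma$, $\theta$ and $\Ad(L\cap H)$ preserve $\IP{}{}$ and $\fl$, and the latter two commute with $\sigma$, the operator $S$ commutes with $\theta|_\fl$ and with $\Ad(L\cap H)$. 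This gives (1).

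For (4) I want $S$ to be diagonalizable with real eigenvalues and orthogonal eigenspaces. The form $\IP{}{}$ is indefinite, so instead I use the positive definite form $B_\theta(X,Y)=-\IP{X}{\theta Y}$, after normalizing the sign on simple factors. Since $S$ commutes with $\theta$ and is $\IP{}{}$-symmetric, it is $B_\theta$-symmetric. Hence $S$ is diagonalizable over $\R$, its eigenspaces are $B_\theta$-orthogonal, and they are $\theta$-stable. $\IP{}{}$-orthogonality for $\beta\neq\beta'$ then follows from symmetry: $\beta\IP{X}{Y}=\IP{SX}{Y}=\IP{X}{SY}=\beta'\IP{X}{Y}$.

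For (3), take $X\in\fl(\beta)$ and compute $\beta B_\theta(X,X)=B_\theta(SX,X)=-\IP{\sigma X}{\theta X}$, which by Cauchy–Schwarz for $B_\theta$ is at most $B_\theta(X,X)$ in absolute value. For (2), equality $\beta=\pm1$ forces $\sigma X=\pm X$, and conversely $X\in\fl\cap\fh$ or $\fl\cap\fq$ lies in $\fl(\pm1)$. Statement (5) then follows from (2) and (4): $E$ is the orthogonal complement of $\fl(1)=\fl\cap\fh$, and by (1) each $\fl(\beta)$ splits into its $\fk$- and $\fs$-parts. The inclusion $\mathrm{Spec}(\fl\cap\fs)\subset[-1,1)$ comes from $\fl\cap\fs\cap\fh=0$, which holds because $L\cap H$ is compact.

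For (6), take $Z\in\fl(-1)=\fl\cap\fq$ and $X\in\fl(\beta)$. Then $(\,[Z,X],Y)=\IP{\sigma[Z,X]}{Y}=-\IP{[Z,\sigma X]}{Y}=\IP{\sigma X}{[Z,Y]}=\beta\IP{X}{[Z,Y]}=-\beta\IP{[Z,X]}{Y}$. If $K_L$ is $\sigma$-stable, then $\fl\cap\fk$ splits into $\pm1$-eigenspaces, and $F$, being orthogonal to the $+1$ part, equals $\fl\cap\fk\cap\fq\subset\fl(-1)$. If $\fl\cap\fs$ is $L\cap H$-irreducible, then by (1) and (5) exactly one $\beta$ occurs on it. Since $\fl\cap\fk\cap\fq\neq0$ in the relevant cases, the relation $[\fl(-1),\fl(\beta)]\subset\fl(-\beta)$, together with $[\fk\cap\fq,\fl\cap\fs]\neq0$ (faithfulness), forces $\beta=-\beta$, so $\beta=0$. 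I expect this nonvanishing to be the delicate point. I would try to justify it from the fact that $\fl\cap\fs$ generates $\fl_{min}$, so that $[\fl\cap\fs,\fl\cap\fs]\not\subset\fk\cap\fh$; otherwise $\fl_{min}$ would be $\sigma$-stable with $\fl_{min}\cap\fs\subset\fq$, which is impossible.

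For (7), set $X_\pm=p_{\fh/\fq}X$. The condition $SX=\beta X$ means $\sigma X-\beta X\perp\fl$, that is, $(1-\beta)X_+-(1+\beta)X_-\in\fl^\perp$. Now $[X,\sigma X]=-2[X_+,X_-]$. The plan is to show this bracket vanishes by taking $\IP{\cdot}{W}$ against arbitrary $W$ and using invariance together with the relation $X_-=\tfrac{1-\beta}{1+\beta}X_++\ell^\perp$ (for $\beta\neq-1$; the cases $\beta=\pm1$ are trivial). This reduces the claim to $[X_+,\ell^\perp]=0$, which looks like the real obstacle. I would first try the alternative route via $[X,\sigma X]\in\fl(?)$ obtained from (6)-type computations combined with $\theta$-symmetry, and fall back on an argument through the polar decomposition if needed.

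Finally, (8) follows from an explicit correspondence. If $\tau$ is the involution for $L$, the roles of $\fl$ and $\fh$ are swapped, and the nonzero-angle parts of $S$ and of the corresponding operator on $\fh$ are conjugate through the principal-angle correspondence $p_\fh\circ p_\fl$ versus $p_\fl\circ p_\fh$. Indeed, $S=2p_\fl p_\fh|_\fl-1$, and $p_\fl p_\fh$ and $p_\fh p_\fl$ share their nonzero spectra.
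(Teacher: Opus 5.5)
Your (1)--(5) are sound. Encoding $(\;,\;)$ by the $\IP{}{}$-symmetric operator $S=p_\fl\sigma|_\fl$ and diagonalizing with $B_\theta$ is equivalent to the paper's separate diagonalization on $\fl\cap\fk$ and $\fl\cap\fs$. Your $AB$/$BA$ argument for (8), via $S=2p_\fl p_\fh|_\fl-1$, is a clean repackaging of the paper's check that $p_\fh$ maps $\fl(\beta)$ into $\fh(\beta)$.

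\textbf{The genuine gap is (7), which you do not prove.} Reducing to $[X_+,\ell^\perp]=0$ merely restates the claim, and pairing $[X,\sigma X]$ with arbitrary $W\in\fg$ throws away the structure that makes this a two-line argument. The missing observation is that you only need to pair with $W\in\fl$:
\begin{itemize}
\item $[X,\sigma X]\in\fq$, since $\sigma[X,\sigma X]=[\sigma X,X]=-[X,\sigma X]$;
\item $\fq\cap\fl^\perp=\{0\}$, since $\fq\perp\fh$ and $\fg=\fl+\fh$.
\end{itemize}
For $Y\in\fl$ one has $[X,Y]\in\fl$, so invariance and $X\in\fl(\beta)$ give
\[
\IP{[X,\sigma X]}{Y}=-\IP{\sigma X}{[X,Y]}=-\beta\IP{X}{[X,Y]}=\beta\IP{[X,X]}{Y}=0 .
\]
This matters beyond the lemma: (7) is exactly what yields $p_\fq(X)^2\equiv X^2$ modulo ${\Cal U}(\fg)\fh$ in Lemma~\ref{sense}.

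\textbf{In (6), the point you flagged also needs a real argument.} Your reduction is correct: by invariance, $[F,\fl\cap\fs]=\{0\}$ would force $[\fl\cap\fs,\fl\cap\fs]\perp F$, i.e.\ $\fl_{min}\cap\fk=[\fl\cap\fs,\fl\cap\fs]\subset\fl\cap\fh$. However, your claim that $\fl_{min}$ would then be $\sigma$-stable with $\fl_{min}\cap\fs\subset\fq$ is unsupported. Also, ``faithfulness'' of $\ad$ on $\fl\cap\fs$ fails when $\fl$ has compact ideals. What closes the argument is the transitivity of $L_{min}$ (Cor.~\ref{unfug}). It gives $\fg=\fl_{min}+\fh$, hence $\fk=(\fl_{min}\cap\fk)+\fk_H$. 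Under your hypothesis this would lie in $\fh$, forcing $F=\fl\cap\fk\cap\fq=\{0\}$ and contradicting $F\neq\{0\}$. The paper uses the same input in a slightly different form: it shows $F\cap\fc=\{0\}$, where $\fc$ is the kernel of $\ad_{\fl\cap\fs}$ on $\fl\cap\fk$.
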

\begin{proof}
Assertion (1) follows from the fact that the operators $\Ad(l)$, $l\in L\cap H$, and $\theta$ commute with $\sigma$ and preserve $\fl$ and $\IP{}{}$.
For (2) we first observe the obvious inclusions $\fl\cap\fh\subset\fl(1)$ and $\fl\cap\fq\subset\fl(-1)$. To show the opposite inclusion and Assertion (3) we take $0\ne X\in\fl(\beta)\cap\fk$.
Then $\sigma(X)=\beta X+Y$ for some $Y\in\fl^\perp\cap\fk$. Since $\IP{}{}$ is $\sigma$-invariant we obtain
$$ (1-\beta^2)\IP{X}{X}=\IP{Y}{Y}\ .$$
Using the definiteness of $\IP{}{}$ on $\fk$ we conclude $1-\beta^2\ge 0$ and, for $\beta=\pm 1$, that $Y=0$, i.e. $\sigma(X)=\pm X$. The same argument
works for $0\ne X\in\fl(\beta)\cap\fs$. We conclude by $\theta$-stability of $\fl(\beta)$, see (1).
Next, for (4), the decomposition $\fl=(\fl\cap \fk)\oplus(\fl\cap\fs)$ is orthogonal with respect to both forms $\IP{}{}$ and $(\;,\;)$. Moreover the restriction of $\IP{}{}$ to either summand is definite. Then we diagonalize $(\;,\;)$ with respect to $\IP{}{}$ 
on both summands separately. Now (5) follows directly from (1), (2) and (4). 
A simple computation using $\fl(-1)=\fl\cap\fq$, see (2), shows the first part of (6).
To prepare the proof of the last statement we claim that in general 
$[F,\fl\cap \fs]\ne\{0\}$. To see this, we look at the
orthogonal decomposition into ideals $\fl=\fl_{min}\oplus \fc$, with $\fl_{min}$ semisimple of non-compact type and $\fc\subset\fk$. Then
$$ \ker\Big (\ad_{\fl\cap\fs} :\fl\cap \fk \rightarrow \fgl(\fl\cap\fs)\Big )=\fc\ .$$
Since $L_{min}\cap K$ still acts transitively on $K/H\cap K\cong L\cap K/L\cap H$ the natural inclusion
$$   \fl_{min}\cap\fk/\fl_{min}\cap \fh \rightarrow ((\fl_{min}\cap\fk)\oplus\fc )/\fl\cap\fh $$
is surjective, which implies: For every $X\in\fc$ there is an $Y$ in $\fl_{min}$ such that $X-Y\in \fl\cap\fh$. If $X\in\fc\cap F$, then
$\langle X,X\rangle=\langle X,X-Y\rangle=0$. The claim follows. Now we assume that $L\cap K$ is $\sigma$-stable. Then $\fl\cap\fk=(\fl\cap\fh)\oplus(\fl\cap\fk\cap\fq)$ (orthogonal decomposition). Hence, $F=\fl\cap\fk\cap\fq$ and $\text{Spec}(F)=\{-1\}$. 
Now suppose in addition that $\fl\cap\fs$ is irreducible. Then $\fl\cap\fs\subset\fl(\mu)$ for some $\mu$ and we obtain using the first assertion 
$\{0\}\neq [F,\fl\cap\fs]\subset\fl\cap\fs\cap [F,\fl(\mu)]\subset\fl(\mu)\cap\fl(-\mu)$. Thus $\mu=0$. For (7), let $X\in\fl(\beta)$ and $Y\in \fl$. Then
\begin{eqnarray*} 
\IP{\lbrack X,\sigma(X)\rbrack}{Y}&=&-\IP{ \sigma(X)}{\lbrack X,Y\rbrack}
=-\beta\IP{X}{\lbrack X,Y\rbrack}\\&=&\beta\IP{\lbrack X,X\rbrack}{Y}
=0.
\end{eqnarray*}
Since $\lbrack X,\sigma(X)\rbrack\in\fq$ and $\fq\cap\fl^{\perp}=\{0\}$, we deduce that $\lbrack X,\sigma(X)\rbrack=0$. 
Here $\fl^{\perp}$ denotes, as in (\ref{qperpl}), the orthogonal complement of $\fl$ in $\fg$ with respect to $\IP{}{}$. Finally, for (8), suppose that $G/L$ is symmetric with respect to an involution $\tau$. Consider the orthogonal projections 
$$p_\fh:\fl\rightarrow\fh,\; X\mapsto\frac{1}{2}(X+\sigma(X))\;\;\text{ and }\;\;p_\fl:\fh\rightarrow\fl,\; X\mapsto\frac{1}{2}(X+\tau(X)).$$ 
In particular, the eigenspaces $\fl(\beta)$ and $\fh(\beta)$ can be written as follows:
\begin{eqnarray*}
\fl(\beta)&=&\Big\{X\in\fl\mid\langle p_\fh(X),Y\rangle=\frac{1+\beta}{2}\langle X,Y\rangle\;\forall Y\in\fl\Big\}\\
\fh(\beta)&=&\Big\{X\in\fh\mid\langle p_\fl(X),Y\rangle=\frac{1+\beta}{2}\langle X,Y\rangle\;\forall Y\in\fh\Big\}.
\end{eqnarray*}
For $\beta\in\text{Spec}(G,H,L)$, $X\in\fl(\beta)$ and $Z\in\fh$, we have successively
\begin{eqnarray*}
\langle p_\fl(p_\fh(X)),Z\rangle&=&\langle p_\fh(X),p_\fl(Z)\rangle\;\;\text{ by self-adjointness}\\
&=&\frac{1+\beta}{2}\langle X,p_\fl(Z)\rangle\;\;\text{ since }X\in\fl(\beta)\\
&=&\frac{1+\beta}{2}\langle X,p_\fh(Z)\rangle\;\;\text{ since }X\in\fl\text{ and }Z\in\fh\\
&=&\frac{1+\beta}{2}\langle p_\fh(X),Z\rangle\;\;\text{ by self-adjointness}
\end{eqnarray*}
which implies that $p_\fh(X)\in\fh(\beta)$ and proves (8).
\end{proof}

The decomposition (4) in Lemma \ref{basislem} behaves well with respect to Cartan subalgebras. Namely, let ${\mathfrak t}$ denote either a $\sigma$-stable Cartan subalgebra of ${\mathfrak k}$ (resp. ${\mathfrak g}$) that contains a Cartan subalgebra of ${\mathfrak l}\cap{\mathfrak k}$ (resp. ${\mathfrak l}$), or a $\sigma$-stable maximal abelian subspace of ${\mathfrak s}$ containing ${\mathfrak l}\cap{\mathfrak s}$. If, as above, one {\it diagonalizes} $(\ ,\,)$ on ${\mathfrak l}\cap{\mathfrak t}$, then one can see that
\begin{equation}\label{cartanprinciple}
{\mathfrak l}\cap{\mathfrak t}=\bigoplus_\beta({\mathfrak l}\cap{\mathfrak t})(\beta)
\end{equation}
with $({\mathfrak l}\cap{\mathfrak t})(\beta)\subset{\mathfrak l}(\beta)$. We will use this property in Appendix A for the explicit computation of $\text{Spec}(F)$ and $\text{Spec}({\mathfrak l}\cap{\mathfrak s})$ for Triples 8--10 in Table~1.

We return to invariant differential operators. Let us recall the definition of the Casimir operator $\Omega_G$. There is a sequence of $G$-equivariant maps
\begin{equation}\label{casimir}
\End(\fg)\stackrel{\text{canonical}}\longrightarrow\fg\otimes\fg^{\star}\stackrel{\IP{\;}{\;}}\longrightarrow\fg\otimes\fg
\stackrel{\text{injection}}\hookrightarrow T(\fg)\stackrel{\text{quotient}}\hookrightarrow {\Cal U}(\fg),
\end{equation}
where $\fg^\star$ denotes the vector space dual of $\fg$ and $T(\fg)$ the tensor algebra of $\fg_{\mathbb C}$. The Casimir operator $\Omega_{G}\in{\Cal Z}(\fg)={\Cal U}(\fg)^G\subset {\Cal U}(\fg)^H$ is the image of the identity through (\ref{casimir}). We will denote its image in ${\bf D}(G/H)$ via (\ref{homo1}) by the same symbol. 
This invariant differential operator
coincides with the pseudo-Riemannian Laplacian on $G/H$ corresponding to the $G$-invariant metric induced by the restriction of $\IP{}{}$ to $\fq$. 
In a similar way, one defines the Casimir 
operator $\Omega_{L}\in {\Cal Z}(\fl)$ 
with respect to the restriction of $\IP{}{}$ to $\fl$. 
For vector subspaces $W\subset\fg$, $V\subset \fl$ that are non-degenerate with respect to $\IP{}{}$ we define $\Omega_W\in{\Cal U}(\fg)$, $\Omega_V\in{\Cal U}(\fl)$
as the images of the orthogonal projections to $V$, $W$ via (\ref{casimir}). In particular, if $V$ is $L\cap H$-invariant, then $\Omega_V$ defines an element of ${\bf D}(L/L\cap H)$.
In this way we obtain $L$-invariant differential operators $\Omega_{\fl(\beta)}$ for all $\beta\in \mathrm{Spec}(G,H,L)$ and $\Omega_{L_1}:=\Omega_{\fl_1}$ for each $\theta$-stable
subgroup $L_1\subset L$ with Lie algebra $\fl_1$ normalized by $L\cap H$.
In any case, $\Omega_V$ can be expressed in terms of an orthogonal basis $\{X_j\}$ of $V$ as $\sum_{j}\frac{1}{\IP{X_{j}}{X_{j}}}X_{j}^{2}$.

The relation of the decomposition (4) in Lemma \ref{basislem} to $i(\Omega_G)\in {\mathcal U}(\fl)^{L\cap H}/({\Cal U}(\fl)^{L\cap H}\cap {\Cal U}(\fl)(\fl\cap \fh))$ is 
given by the following lemma.
\begin{lem}\label{sense}
$$ i(\Omega_G)=\sum_{\beta\in \mathrm{Spec}(G,H,L)} \frac{2}{1-\beta}\: \Omega_{\fl(\beta)}\ .$$
\end{lem}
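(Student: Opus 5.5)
Since $\fq$ is orthogonal to $\fh$, the Casimir splits as $\Omega_G=\Omega_\fh+\Omega_\fq$. Modulo the left ideal ${\Cal U}(\fg)\fh$, which acts by zero on right $H$-invariant functions, $\Omega_\fh$ vanishes. Hence $\Omega_G$ and $\Omega_\fq$ define the same element of ${\bf D}(G/H)\cong[{\Cal U}(\fg)/{\Cal U}(\fg)\fh]^H$. The plan is to rewrite $\Omega_\fq$ using an orthogonal basis of $\fq$ that is adapted to $E$. Every element of $\fq$ should then be expressed as an element of $E\subset\fl$ plus a correction in $\fh$. When squared, the correction produces terms lying in ${\Cal U}(\fg)\fh$, plus commutator terms, and these must be controlled.

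The first step is to check that $p_\fq:X\mapsto\frac12(X-\sigma X)$ restricts to an isomorphism $E\to\fq$. It is injective because $\ker p_\fq\cap\fl=\fl\cap\fh=\fl(1)$, and dimensions match since $\fg=\fl+\fh$ gives $\dim E=\dim\fl-\dim(\fl\cap\fh)=\dim\fq$. Next I compute the form on the image. For $X\in\fl(\beta)$ and $Y\in\fl$,
\[
\IP{p_\fq X}{p_\fq Y}=\tfrac12\big(\IP{X}{Y}-\IP{\sigma X}{Y}\big)=\tfrac{1-\beta}{2}\IP{X}{Y}.
\]
So if $\{X_j\}$ is an orthogonal basis of $E$ adapted to the decomposition of Lemma~\ref{basislem}(4), with $X_j\in\fl(\beta_j)$, then $\{p_\fq X_j\}$ is an orthogonal basis of $\fq$ and
\[
\Omega_\fq=\sum_j\frac{2}{(1-\beta_j)\IP{X_j}{X_j}}\,(p_\fq X_j)^2 .
\]

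Now write $p_\fq X_j=X_j-h_j$ with $h_j=p_\fh X_j=\frac12(X_j+\sigma X_j)\in\fh$. Then
\[
(X_j-h_j)^2=X_j^2-X_jh_j-h_jX_j+h_j^2=X_j^2-2X_jh_j+h_j^2+[X_j,h_j].
\]
The terms $X_jh_j$ and $h_j^2$ lie in ${\Cal U}(\fg)\fh$, so it remains to show $[X_j,h_j]=0$. This is exactly Lemma~\ref{basislem}(7): $[X_j,\sigma X_j]=0$, and therefore $[X_j,h_j]=\frac12[X_j,\sigma X_j]=0$.

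We conclude that $\Omega_G\equiv\sum_j\frac{2}{1-\beta_j}\frac{X_j^2}{\IP{X_j}{X_j}}=\sum_\beta\frac{2}{1-\beta}\Omega_{\fl(\beta)}$ modulo ${\Cal U}(\fg)\fh$. The right-hand side lies in ${\Cal U}(\fl)^{L\cap H}$ by Lemma~\ref{basislem}(1). The main delicate point, which I expect to be the real obstacle, is the identification of the two quotients. One must justify that an element of ${\Cal U}(\fl)^{L\cap H}$ congruent to $\Omega_G$ modulo ${\Cal U}(\fg)\fh$ represents $i(\Omega_G)$ in ${\bf D}(L/L\cap H)$. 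I would argue this through the actions on $C^\infty(G)^H$. Both elements act by right differentiation, and the ${\Cal U}(\fg)\fh$-part kills right $H$-invariant functions. Restricting to $L$ then identifies $C^\infty(G)^H$ with $C^\infty(L)^{L\cap H}$ by transitivity, and under this identification right differentiation by elements of ${\Cal U}(\fl)$ is compatible with the restriction. So the two operators agree as differential operators on $X$.
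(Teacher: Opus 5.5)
Your proof is correct and matches the paper's argument: the same projection $p_\fq\colon E\to\fq$ with the factor $\frac{1-\beta}{2}$ on $\fl(\beta)$, the same adapted orthogonal basis, and the same use of Lemma~\ref{basislem}(7) to show $(p_\fq X)^2\equiv X^2$ modulo ${\Cal U}(\fg)\fh$. Your last paragraph, which identifies the two quotients by restricting right-$H$-invariant functions to $L$, spells out a step the paper leaves implicit; that step is routine rather than a real obstacle.
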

\begin{proof} 
We consider the orthogonal projection
\beu
p_{\fq}:\fl\rightarrow\fq,\;X\mapsto\frac{1}{2}(X-\sigma(X))=X-p_\fh(X).
\end{equation*}
Its restriction to $E\subset \fl$ is an isomorphism of vector spaces. For $\beta,\beta'\in \mathrm{Spec}(G,H,L)$, $\beta\ne\beta'$, we have $p_\fq(\fl(\beta))\perp p_\fq(\fl(\beta'))$.
Moreover,
$$ \langle p_\fq(X),p_\fq(Y)\rangle=\frac{1-\beta}{2}\langle X,Y\rangle\quad\mbox{for }X,Y\in\fl(\beta)\ .$$
Now let $\{X_{\beta,j}\}$ be an orthogonal basis of $E$ adapted to the orthogonal decomposition of $E$ in Lemma~\ref{basislem}(5).
The above considerations show that $\{p_\fq(X_{\beta,j})\}$ is an orthogonal basis of $\fq$ and that we have the following equalities in ${\Cal U}(\fg)$:
\begin{eqnarray*}
 \Omega_\fq&=&\sum_{\beta,j} \frac{1}{\langle p_\fq(X_{\beta,j}), p_\fq(X_{\beta,j})\rangle} p_\fq(X_{\beta,j})^2\\
&=& \sum_\beta \frac{2}{1-\beta}\:\sum_j \frac{1}{\langle X_{\beta,j},X_{\beta,j}\rangle} p_\fq(X_{\beta,j})^2\ .
\end{eqnarray*}
By Lemma~\ref{basislem}(7) we have $[X_{\beta,j},\sigma (X_{\beta,j})]=0$ which implies $[X_{\beta,j},p_\fh(X_{\beta,j})]=0$.
It follows that
$$ p_\fq(X_{\beta,j})^2=(X_{\beta,j}-p_\fh(X_{\beta,j}))^2=X_{\beta,j}^2-2 X_{\beta,j}\,p_\fh(X_{\beta,j})+p_\fh(X_{\beta,j})^2
\equiv X_{\beta,j}^2\quad \mod {\Cal U}(\fg)\fh\ .$$
We continue the computation in the quotient space  ${\Cal U}(\fg)/{\Cal U}(\fg)\fh$.
\begin{eqnarray*}
\Omega_G=\Omega_\fq
&=& \sum_\beta \frac{2}{1-\beta}\:\sum_j \frac{1}{\langle X_{\beta,j},X_{\beta,j}\rangle}X_{\beta,j}^2\\
&=&  \sum_\beta \frac{2}{1-\beta} \: \Omega_{\fl(\beta)}\ .
\end{eqnarray*}
The lemma follows.
\end{proof}
We want to make the formula for $i(\Omega_G)$ explicit for triples $(G,H,L)$ with $G$ simple. For that we would have to compute $\mathrm{Spec}(G,H,L)$
(and the corresponding eigenspaces $\fl(\beta)$) for all triples equivalent to one of the triples listed in Table 1. If $\fl_{min}\ne\fl_{max}$, i.e. in Cases 1,4,5 in Table 1, then
$\mathrm{Spec}(F)$ (in contrast to $ \mathrm{Spec}(\fl\cap\fs)$) is not invariant under equivalences of triples. Nevertheless, we restrict our computations to one particular convenient triple in each equivalence class. This will be sufficient for applications.

The following observations will simplify these computations significantly.
\begin{itemize}
\item The cardinality of $\mathrm{Spec}(\fl\cap\fs)$ (or $\mathrm{Spec}(F)$) is bounded by the number of irreducible components of the $L\cap H$-representation $\fl\cap\fs$ (or $F$).
For $L=L_{min}$ (or $L_{max}$) the same is true if we consider $\fl\cap\fs$ (or $F$) as a representation of $L_{max}\cap H$.
\item Assume that $(G,H,L)$ is spherical and that $L$ has real rank one. Then $\fl\cap\fs$ is an irreducible $L\cap H$-representation. Indeed, under the rank one assumption the homogeneous space
$K_L/M_L$ identifies with the unit sphere in $\fl\cap\fs$.  $(G,H,L)$ being spherical means that $L\cap H$ acts transitively on it.
\item In case of irreducibility of $\fl\cap\fs$ (or $F$) the corresponding eigenvalue is just given by $\frac{\IP{\sigma(X)}{X}}{\IP{X}{X}}$ for $0\ne X\in\fl\cap\fs$ ( or $F$) arbitrary.
\end{itemize}
Note that the second observation, combined Lemma~\ref{basislem}(6), already gives the result for Cases 1--4 in Table~1. For the remaining triples the necessary computations
are done in Appendix A. It turns out that $F$ is always an irreducible $L_{max}\cap H$-representation while $\fl\cap\fs$ has at most two irreducible components.
Here we list the results. Elements of $\mathrm{Spec}(F)$ are denoted by $\lambda$, and elements of $\mathrm{Spec}(\fl\cap\fs)$ by $\mu$.
\vspace*{0.2cm}\\
\begin{equation*}
 \left.
 \begin{array}{ll}
     (SO_e(2,2n),SO_e(1,2n),U(1,n))\\
     \\
     (SO_e(2,2n),U(1,2n),SO_e(1,2n))\\
     \\
     (SU(2,2n),S(U(1)\times U(1,2n)),Sp(1,n))\\
     \\
     (SU(2,2n),Sp(1,n),S(U(1)\times U(1,2n)))
    \end{array}
   \right \}\hspace*{0.2cm}L\cap K\text{ $\sigma$-stable, $\fl\cap\fs$ irreducible, $\lambda=-1$, $\mu=0$}
  \end{equation*}
\vspace*{0.1cm}\\
 \begin{equation*}
 \left.
 \begin{array}{ll}
   (SO_e(4,4n),SO_e(3,4n),Sp(1,n)) & \text{ with }\; \lambda=0,\;\mu=\frac{1}{2}\\
     \\
     (SO_e(8,8),SO_e(7,8),Spin(1,8)) & \text{ with }\; \lambda=0,\;\mu=\frac{3}{4}\\
     \\
     (SO(8,{\mathbb C}),SO(7,{\mathbb C}),Spin(1,7))& \text{ with }\; \lambda=-\frac{1}{2},\;\mu=\frac{1}{2}
         \end{array}
   \right \}\hspace*{0.2cm}L\cap K\text{ not $\sigma$-stable, }\fl\cap\fs\text{ irreducible}
  \end{equation*}
\vspace*{0.1cm}\\
 \begin{equation*}
 \left.
 \begin{array}{ll}
  (SO(8,{\mathbb C}),SO_e(1,7),Spin(7,{\mathbb C})) & \text{ with }\; \lambda=-\frac{1}{2},\;\mu=\frac{1}{2},\;-1\\
  \\
   (SO_e(4,4),SO(3)\times SO_e(1,4),Spin(3,4)) & \text{ with }\; \lambda=-\frac{1}{2},\;\mu=\frac{1}{2},\;-1\\
    \\
     (SO_e(3,4),SO(2)\times SO_e(1,4),G_{2(2)}) & \text{ with }\; \lambda=-\frac{1}{3},\;\mu=\frac{1}{3},\;-1\\
       \end{array}
   \right \}\hspace*{0.2cm}\fl\cap\fs\text{ splits into two irreducible components}
  \end{equation*}
\vspace*{0.5cm}

The next proposition gives the image under the embedding (\ref{embed1}) of $\Omega_{G}$ in terms of Casimir operators of subgroups of $L$ for one representative
of each equivalence class of  properly transitive triples $(G,H,L)$ with $G$ simple.
\begin{pro}\label{Casimir}${}$
%
\begin{itemize}
\item[(1)] $\displaystyle{\imath(\Omega_{G})=2\Omega_{L}-\Omega_{L\cap K}}$, when
\beu (G,H,L)=\left\{\begin{array}{l} 
(SO_{e}(2,2n),SO_{e}(1,2n),U(1,n))\\
(SO_{e}(2,2n),U(1,n),SO_{e}(1,2n))\\
(SU(2,2n),S(U(1)\times U(1,2n)),Sp(1,n))\\
(SU(2,2n),Sp(1,n),S(U(1)\times U(1,2n)))\\
\end{array}\right.
\end{equation*}
\item[]
\item[(2)] $\displaystyle{\imath(\Omega_{G})=4\Omega_{L}-2\Omega_{L\cap K}}$ for $(G,H,L)=(SO_{e}(4,4n),SO_{e}(3,4n),Sp(1,n))$.
\item[(3)] $\displaystyle{\imath(\Omega_{G})=8\Omega_{L}-6\Omega_{L\cap K}}$ for $(G,H,L)=(SO_{e}(8,8),SO_{e}(7,8),Spin(1,8))$.
\item[(4)] $\displaystyle{\imath(\Omega_{G})=4\Omega_{L}-\frac{8}{3}\Omega_{L\cap K}}$ for $(G,H,L)=(SO(8,\mathbb{C}),SO(7,\mathbb{C}),Spin(1,7))$.
\item[(5)] $\displaystyle{\imath(\Omega_{G})=3\Omega_{L}-2\Omega_{L\cap\sigma(L)}-\frac{3}{2}\Omega_{L\cap K}}$ for $(G,H,L)=(SO_{e}(3,4),SO(2)\times SO_{e}(1,4),G_{2(2)})$.
\vspace*{0.1cm}
\item[(6)] $\displaystyle{\imath(\Omega_{G})=4\Omega_{L}-3\Omega_{L\cap\sigma(L)}-\frac{8}{3}\Omega_{L\cap K}}$, when
\beu (G,H,L)=\left\{\begin{array}{l} 
(SO(8,\mathbb{C}),SO_{e}(1,7),Spin(7,\mathbb{C}))\\
\\
(SO_{e}(4,4),SO(3)\times SO_{e}(1,4),Spin(3,4)).
\end{array}\right.\\
\end{equation*}
\end{itemize}
\end{pro}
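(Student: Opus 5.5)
The plan is to start from Lemma~\ref{sense},
$$ i(\Omega_G)=\sum_{\beta\in\mathrm{Spec}(G,H,L)}\frac{2}{1-\beta}\,\Omega_{\fl(\beta)},$$
and to rewrite the right-hand side in terms of $\Omega_L$, $\Omega_{L\cap K}$ and, if needed, $\Omega_{L\cap\sigma(L)}$. By Lemma~\ref{basislem}(4),(5) the decomposition of $\fl$ into the $\fl(\beta)$ is orthogonal. Hence
$$\Omega_L=\Omega_{\fl\cap\fh}+\Omega_F+\Omega_{\fl\cap\fs},\qquad \Omega_{L\cap K}=\Omega_{\fl\cap\fh}+\Omega_F .$$
Since $\Omega_{\fl\cap\fh}\in{\Cal U}(\fl)(\fl\cap\fh)$, it vanishes in ${\bf D}(L/L\cap H)$. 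So modulo ${\Cal U}(\fl)(\fl\cap\fh)$ we may use
$$\Omega_L\equiv\Omega_F+\Omega_{\fl\cap\fs},\qquad \Omega_{L\cap K}\equiv\Omega_F .$$

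In the cases where $F$ and $\fl\cap\fs$ are each eigenspaces, with eigenvalues $\lambda$ and $\mu$ taken from the list above, the formula becomes
$$ i(\Omega_G)=\tfrac{2}{1-\lambda}\Omega_F+\tfrac{2}{1-\mu}\Omega_{\fl\cap\fs}\equiv \tfrac{2}{1-\mu}\,\Omega_L+\Big(\tfrac{2}{1-\lambda}-\tfrac{2}{1-\mu}\Big)\Omega_{L\cap K}.$$
Plugging in the values gives the stated coefficients:
\begin{itemize}
\item[(1)] $\lambda=-1,\mu=0$ gives $2\Omega_L-\Omega_{L\cap K}$;
\item[(2)] $\lambda=0,\mu=\tfrac12$ gives $4\Omega_L-2\Omega_{L\cap K}$;
\item[(3)] $\lambda=0,\mu=\tfrac34$ gives $8\Omega_L-6\Omega_{L\cap K}$;
\item[(4)] $\lambda=-\tfrac12,\mu=\tfrac12$ gives $4\Omega_L-\tfrac83\Omega_{L\cap K}$.
\end{itemize}
For Cases 1--4 the eigenvalues come from Lemma~\ref{basislem}(6) together with the rank-one sphericity observation. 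For the other triples they come from the computations in Appendix A, which I take as input. One also has to check that $F$ is an $L_{max}\cap H$-irreducible single eigenspace for the chosen representative.

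For (5) and (6), $\fl\cap\fs=\fl(\mu)\cap\fs\,\oplus\,\fl(-1)\cap\fs$. The key structural step is to identify $\fl\cap\sigma(\fl)$. I claim it equals $(\fl\cap\fh)\oplus\fl(-1)$, i.e. $\fl(1)\oplus\fl(-1)$. Indeed, $X\in\fl\cap\sigma(\fl)$ implies $\sigma X\in\fl$, so its $\sigma$-components $p_\fh X$ and $p_\fq X$ lie in $\fl$; the $\fh$-part then lies in $\fl(1)$ and the $\fq$-part in $\fl(-1)$. Conversely $\fl(\pm1)$ is $\sigma$-stable. This is a subalgebra; in the relevant cases it is the $G_2$-type or $Spin$-type subgroup, and it is $\theta$-stable and normalized by $L\cap H$. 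Hence
$$\Omega_{L\cap\sigma(L)}\equiv\Omega_{\fl(-1)}=\Omega_{\fl(-1)\cap\fk}+\Omega_{\fl(-1)\cap\fs}.$$
From the list, $\mathrm{Spec}(F)=\{\lambda\}$ with $\lambda\neq-1$, so $\fl(-1)\cap\fk=0$ and $\Omega_{L\cap\sigma(L)}\equiv\Omega_{\fl(-1)\cap\fs}$.

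Writing $\Omega_{\fl\cap\fs}=\Omega_{\fl(\mu)\cap\fs}+\Omega_{\fl(-1)\cap\fs}$, we get
$$i(\Omega_G)=\tfrac{2}{1-\lambda}\Omega_F+\tfrac{2}{1-\mu}\Omega_{\fl(\mu)\cap\fs}+\Omega_{\fl(-1)\cap\fs}
\equiv \tfrac{2}{1-\mu}\Omega_L+\Big(1-\tfrac{2}{1-\mu}\Big)\Omega_{L\cap\sigma(L)}+\Big(\tfrac{2}{1-\lambda}-\tfrac{2}{1-\mu}\Big)\Omega_{L\cap K}.$$
With $\lambda=-\tfrac13,\mu=\tfrac13$ this is $3\Omega_L-2\Omega_{L\cap\sigma(L)}-\tfrac32\Omega_{L\cap K}$. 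With $\lambda=-\tfrac12,\mu=\tfrac12$ it is $4\Omega_L-3\Omega_{L\cap\sigma(L)}-\tfrac83\Omega_{L\cap K}$. These match (5) and (6).

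The main obstacle is the input data in the non-$\sigma$-stable and Type~II cases, which the proof takes from Appendix A. For these triples one must verify that $F$ is irreducible and compute the eigenvalues $\lambda,\mu$, e.g. via the Cartan-subalgebra principle (\ref{cartanprinciple}) and explicit models of $G_{2(2)}\subset SO_e(3,4)$ and of $Spin(3,4)$, $Spin(7,\C)$ via triality. One must also confirm that $\fl(-1)$ is exactly the Lie algebra of $L\cap\sigma(L)$ and that $\fl(-1)\cap\fk=0$. The rest is bookkeeping modulo ${\Cal U}(\fl)(\fl\cap\fh)$.
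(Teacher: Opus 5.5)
Your proposal is correct and follows the paper's proof: Lemma~\ref{sense} with the listed values of $\mathrm{Spec}(F)$ and $\mathrm{Spec}(\fl\cap\fs)$, together with the identities $\Omega_{\fl(\lambda)}\equiv\Omega_{L\cap K}$, $\Omega_{\fl(-1)}\equiv\Omega_{L\cap\sigma(L)}$ and $\Omega_{\fl(\mu)}\equiv\Omega_L-\Omega_{L\cap K}$ (resp.\ $\Omega_L-\Omega_{L\cap\sigma(L)}-\Omega_{L\cap K}$) in ${\bf D}(L/L\cap H)$. Your check that $\fl\cap\sigma(\fl)=\fl(1)\oplus\fl(-1)$ makes explicit a step the paper uses tacitly; only your parenthetical naming of this subalgebra is off (for Triple~8 it is of type $\fs\fu(1,2)$, and for Triple~10 it is the complexification of the compact $\fg_2$), which does not affect the argument.
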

\begin{proof}
The proposition is an immediate consequence of the above description of $\text{Spec}(F)$ and $\text{Spec}(\fl\cap\fs)$ combined with Lemma \ref{sense}. Let us give some
more detail. For the triples of Type~I in the above list, i.e. in Cases (1)--(4), we obtain
$$ \imath(\Omega_G)=\frac{2}{1-\mu}\: \Omega_{\fl(\mu)}+\frac{2}{1-\lambda}\: \Omega_{\fl(\lambda)}\ .$$
Moreover, in ${\bf D}(L/L\cap H)$ we have
$$   \Omega_{\fl(\mu)}=\Omega_L-\Omega_{L\cap K}\quad\mbox{and}\quad \Omega_{\fl(\lambda)}=\Omega_{L\cap K}\ .$$
For the remaining three triples let $\mu$ be the element of $\text{Spec}(\fl\cap\fs)\setminus \{-1\}$. Then 
$$ \imath(\Omega_G)=\frac{2}{1-\mu}\: \Omega_{\fl(\mu)}+\Omega_{\fl(-1)}+\frac{2}{1-\lambda}\: \Omega_{\fl(\lambda)}$$
and
$$ \Omega_{\fl(-1)}=\Omega_{L\cap\sigma(L)},\  \Omega_{\fl(\mu)}=\Omega_L-\Omega_{L\cap\sigma(L)} -\Omega_{L\cap K},\ \Omega_{\fl(\lambda)}=\Omega_{L\cap K}.$$
\end{proof}

It is also possible to obtain an explicit formula for $\imath(\Omega_G)$ for remaining irreducible triples, i.e. for the triples of the form $(G_1\times G_1,\Delta G_1,G_1\times H_1)$ appearing as Case~12 in Table 2, see the end of the section. We  mention that for the Cases 1,5,6  of Table 1, a similar formula for $\imath(\Omega_G)$ is given in \cite{STV18}. A thorough discussion of the embedding $\imath:{\bf D}(G/H)\longrightarrow {\bf D}(L/L\cap H)$ for most
spherical triples in a different setting can be found in \cite{KK19}, cf. Remark~\ref{complexsph} below. 

We also mention that analogous embedding formulas were obtained in the case of (cubic) twisted Dirac operators on $G/H$ and $L/L\cap H$ in \cite{MP21}.

The following result is a direct consequence of the above computations. It has decisive consequences for the spectral analysis of ${\bf D}(G/H)$ on compact standard quotients of  $G/H$
corresponding to a triple of Type~I, see Thm.~\ref{selim} and the remark following it.
\begin{cor}\label{commutation}
Suppose $(G,H,L)$ is a triple of Type~I. Then, as  $L$-invariant differential operators acting on $C^\infty(L/L\cap H)$, 
$\imath(D)$ and  $\Omega_{L\cap K}$ commute whenever $D$ belongs to ${\bf D}(G/H)$. In particular, $\imath(D)$ commutes with the $L$-invariant elliptic operator $\Omega_L-2\Omega_{L\cap K}$ acting on $C^\infty(L/L\cap H)$.
\end{cor}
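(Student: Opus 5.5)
The plan is to reduce to the case where $(G,H,L)$ is spherical and irreducible, and then to use the explicit formulas of Prop.~\ref{Casimir}. By Prop.~\ref{productck}, a triple of Type~I is equivalent to a product of irreducible triples of Type~I. Passing from $L$ to $L_{max}$ only enlarges the algebra ${\bf D}(L/L\cap H)$. The operator $\Omega_{L\cap K}$ differs from $\Omega_{L_{max}\cap K}$ by Casimirs of compact central factors, and these are central in ${\mathcal U}(\fl_{max})$. So it suffices to treat each irreducible factor separately with $L=L_{max}$, taking whichever representative is convenient in the equivalence class. By Theorem~\ref{listck}, the irreducible Type~I triples are Cases 1--7 of Table~1 and the group Case~11. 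In Case~11, (\ref{center}) identifies ${\bf D}(G/H)$ with ${\Cal Z}(\fg_1)$ acting through the non-compact factor $G_1$ of $L=G_1\times K_1$. Since $L\cap K=K_1\times K_1$, the operator $\Omega_{L\cap K}$ is a sum of Casimirs of $K_1$ acting through the two factors. Each of these commutes with the ${\Cal Z}(\fg_1)$-action, because the first is the image of a central element of ${\mathcal U}(\fl)$ in the second factor, and the second lies in ${\mathcal U}(\fk_1)$ for the first factor while ${\Cal Z}(\fg_1)$ is central there.

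For Cases 1--7, the rank of $G/H$ is one except in Case~7, and $G$ is classical. Hence by (ii)/(iii) in Section~\ref{pbw}, ${\bf D}(G/H)$ is generated by $\Omega_G$, i.e. ${\bf D}(G/H)=Z(G/H)$. In Case~7 the rank is $2$ and $SO(8,\C)$ is classical, so again ${\bf D}(G/H)=r({\Cal Z}(\fg))$. In the rank one cases it therefore suffices to show that $\imath(\Omega_G)$ commutes with $\Omega_{L\cap K}$. Prop.~\ref{Casimir}(1)--(4) gives $\imath(\Omega_G)=\alpha\Omega_L+\beta\Omega_{L\cap K}$. Here $\Omega_L$ is central in ${\mathcal U}(\fl)$, and $\Omega_{L\cap K}$ trivially commutes with itself. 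Commutation in ${\mathcal U}(\fl)^{L\cap H}$ then passes to the quotient ${\bf D}(L/L\cap H)$, because right multiplication by $L\cap H$-invariant elements preserves ${\Cal U}(\fl)(\fl\cap\fh)$ modulo invariants, and the identification with differential operators is multiplicative.

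The main obstacle is Case~7, $(SO(8,\C),SO(7,\C),Spin(1,7))$, where ${\bf D}(G/H)$ has a second generator beyond $\Omega_G$. There I would try first to use the triality automorphism. Triality carries the triple to one where $H$ becomes $Spin(7,\C)$-like, and the higher-degree generator of ${\Cal Z}(\fg)$ (say a Pfaffian-type or quartic element) can be chosen so that its image in ${\bf D}(L/L\cap H)$ is expressible via ${\Cal Z}(\fl)$ and ${\Cal Z}(\fl\cap\fk)$. An alternative is a representation-theoretic argument. By (B) and sphericity, every $\pi\in\widehat L$ has finite-dimensional $V_\pi^{L\cap H}$, and it suffices to show that $\Omega_{L\cap K}$ acts on each $V_\pi^{L\cap H}$ by scalars on the pieces preserved by ${\bf D}(G/H)$. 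Since $L\cap H=G_2$ acts transitively on $K_L/M_L=S^7$, each $K_L$-type occurring in $\Ind_{P_L}^L$ has a $G_2$-fixed space of dimension at most one. Thus $V_\pi^{L\cap H}$ decomposes into lines indexed by $K_L$-types, on which $\Omega_{L\cap K}$ acts by a scalar. The remaining step is to show that each generator $\imath(D)$ preserves this decomposition. For this I would use that $\imath(D)$ commutes with the Casimir operators of $\fl\cap\fk$, as in the Casimir formula applied to the second generator, and verifying this is where the real work lies. The final statement, that $\imath(D)$ commutes with $\Omega_L-2\Omega_{L\cap K}$, is then immediate because $\Omega_L$ is central.
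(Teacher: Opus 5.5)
Your reduction steps are sound and agree with the paper's. The commutator $[\imath(D),\Omega_{L\cap K}]$ does not depend on the representative $L$, because $\Omega_{L\cap K}$ differs from $\Omega_{L_{min}\cap K}$ by the Casimir of the compact ideal complementing $\fl_{min}$ in $\fl$, and that Casimir is central in ${\Cal U}(\fl)$. Comparing with $L_{min}$, as the paper does, is cleaner than comparing with $L_{max}$; note also that ${\bf D}(L_{max}/L_{max}\cap H)\subset{\bf D}(L/L\cap H)$, not the reverse. The product reduction via Prop.~\ref{productck} is also fine, and so is Case~11, where in fact $\imath(D)\in{\Cal Z}(\fl)$.

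The gap is in the simple-$G$ cases. Your claim that $\rank(G/H)=1$ for Triples 1--6 is false. Table~\ref{fix} gives rank $[\frac{n+1}{2}]\ge 2$ for Triple~2 and rank $n\ge 2$ for Triple~4. So in those cases, as in Triple~7, ${\bf D}(G/H)$ is not generated by $\Omega_G$, and your argument only covers $D\in\C[\Omega_G]$.

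For the remaining generators you give no actual proof. The triality route is only a hope. The representation-theoretic route ends by needing $\imath(D)$ to commute with $\Omega_{L\cap K}$, or equivalently to preserve its eigenspaces in $V_\pi^{L\cap H}$; that is exactly the statement to be proved. (Incidentally, for Triple~7 one has $K_L/M_L=Spin(7)/Spin(6)\cong S^6$, not $S^7$.)

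The missing observation is that no information about generators is needed. ${\bf D}(G/H)$ is commutative and $\imath$ is an algebra homomorphism, so every $D\in{\bf D}(G/H)$ satisfies $[\imath(D),\imath(\Omega_G)]=\imath([D,\Omega_G])=0$. Prop.~\ref{Casimir}(1)--(4) covers all of Triples 1--7, with Triples 2 and 4 inside item (1). It gives $\imath(\Omega_G)=a\Omega_L+b\Omega_{L\cap K}$ with $b\neq 0$, and $\Omega_L$ is central. Hence
\[
0=[\imath(D),\imath(\Omega_G)]=b\,[\imath(D),\Omega_{L\cap K}]
\]
for every $D$, whatever the rank of $G/H$. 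This two-line argument is the paper's proof, and it makes both your analysis of generators and the difficulty you locate in Case~7 unnecessary.
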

\begin{proof}
Let us check first that the property to prove is invariant under equivalences of triples. We consider the decomposition into ideals $\fl=\fl_{min}\oplus \fc$ for some   $\fc\subset 
\fz_{\fk}(\fl_{min})$. The Casimir $\Omega_\fc$ belongs to the center of ${\Cal U}(\fl)$, hence 
$$ [\imath(D),\Omega_{L\cap K}]=[\imath(D),\Omega_{L_{min}\cap K}+\Omega_\fc]=[\imath(D),\Omega_{L_{min}\cap K}]\ .$$
This shows the invariance.
Now using Prop.~\ref{productck}, the assertion is easily reduced to irreducible triples. For the group case 11 of Table 2 the assertion is obvious, see (\ref{center}). By the classification result Thm.~\ref{listck} it remains to consider the Cases (1)-(4) in Prop.~\ref{Casimir}.
The latter proposition tells us that $\imath(\Omega_G)=a\Omega_L+b\Omega_{L\cap K}$ for non-zero rational numbers $a$ and $b$. In particular, one has:
\begin{eqnarray*}
[\imath(D),\Omega_{L\cap K}]&=&[\imath(D),\frac{a}{b}\Omega_L+\Omega_{L\cap K}]\;\;\;\text{since $\Omega_L$ belongs to the center of ${\Cal U}(\fl)$}\\
&=&\frac{1}{b}\imath([D,\Omega_G])\\
&=&0.
\end{eqnarray*}
\end{proof}
\begin{rem}\label{complexsph}
We want to put Cor.~\ref{commutation} into a different perspective that we have learned from \cite{KK19}. A reductive homogeneous space $L/Q$ is called {\em complex spherical} if its
complexification $L_\C/Q_\C$ is spherical, i.e. the Borel subgroup of $L_\C$ has an open orbit on $L_\C/Q_\C$. Complex spherical spaces $L/Q$ have a commutative algebra ${\bf D}(L/Q)$
of invariant differential operators (see e.g. \cite{Vi01}). It is now a nice accident coming out of Oni\v s\v cik's classification (see Table 1 in Thm.~\ref{listck} and compare with Table 1.1  in \cite{KK19}), that $L/L\cap H$ is complex spherical for all triples $(G, H, L)$ of Type~I with $G$ simple
and $L=L_{max}$. Thus in these cases the corollary  just follows from the commutativity of ${\bf D}(L/L\cap H)$. Note that a complex spherical space is necessarily (real) spherical (see e.g.
\cite{KKS}, Lemma 2.1), but not vice versa.
\end{rem}

\begin{rem}\label{noncomm}
Cor.~\ref{commutation} is not true for triples of Type~II. 
For instance, for the triples of Type~II with $G$ simple, it is the additional non-compact Casimir
$\Omega_{L\cap\sigma(L)}$ appearing in the formulas for $\imath(\Omega_G)$ which causes $[\imath(\Omega_G),\Omega_{L\cap K}]\ne 0$.
\end{rem}
%

Given a triple $(G^\prime,H^\prime,L^\prime)$, where $G^\prime/H^\prime$ is symmetric with respect to an involution $\sigma^\prime$, we consider the new triple 
$(G,H,L):=(G^\prime\times G^\prime,\Delta G', L^\prime\times H^\prime)$, where the new involution $\sigma$ is given by switching the factors. It is possible to express $\text{Spec}(G,H,L)$ in terms of $\text{Spec}(G^\prime,H^\prime,L^\prime)$. Indeed, from Lemma \ref{basislem} (4), one has: ${\mathfrak l}^\prime=\bigoplus_{\beta^\prime}{\mathfrak l}^\prime(\beta^\prime),\;\;\text{ with }\;{\mathfrak l}^\prime(1)={\mathfrak l}^\prime\cap{\mathfrak h}^\prime,\;\;\;{\mathfrak l}^\prime(-1)={\mathfrak l}^\prime\cap{\mathfrak q}^\prime\;\text{ and }\;-1\leq\beta^\prime\leq 1$. We consider the orthogonal projection 
$$p_{\fh^\prime}:{\mathfrak l}^\prime\rightarrow{\mathfrak h}^\prime,\;X\mapsto\frac{1}{2}(X+\sigma^\prime(X)).$$ 
Then one can check that
\begin{equation*}
p_{\fh^\prime}({\mathfrak l}^\prime)=p_{\fh^\prime}(\bigoplus_{\beta^\prime}{\mathfrak l}^\prime(\beta^\prime))=\bigoplus_{\beta^\prime\neq -1}p_{\fh^\prime}({\mathfrak l}^\prime(\beta^\prime))
\end{equation*}
since $\text{ker}(p_{\fh^\prime})=\fl'(-1)$ and the restriction of $p_{\fh^\prime}$ to $\bigoplus_{\beta^\prime\neq -1}{\mathfrak l}^\prime(\beta^\prime)$ is injective, and that
\begin{equation*}
{\mathfrak h}^\prime=({\mathfrak h}^\prime\cap{\mathfrak l}^{\prime\perp})\oplus\bigoplus_{\beta^\prime\neq -1}p_{\fh^\prime}({\mathfrak l}^\prime(\beta^\prime))
\end{equation*}
where ${\mathfrak l}^{\prime\perp}$ denotes the orthogonal complement of ${\mathfrak l}^\prime$ in ${\mathfrak g}^\prime$. In particular, we have an orthogonal decomposition
$\fl=\bigoplus_{\beta'}\fl_{\beta'}$ with
$$ \fl_{\beta'}:= \fl'(\beta')\oplus p_{\fh'}( \fl'(\beta'))\subset \fl'\oplus \fh'\mbox{ for }\beta'\ne -1,\quad \fl_{-1}:=\fl'(-1)\oplus ({\mathfrak h}^\prime\cap{\mathfrak l}^{\prime\perp}).$$
%
%
Since we also have $\sigma(\fl_{\beta_1'})\perp \fl_{\beta_2'}$ for $\beta_1'\ne\beta_2'$ both decompositions of $\fl$ are compatible: $\fl=\bigoplus_{\beta,\beta'}\fl(\beta)\cap \fl_{\beta'}$.
Observe that $\fl_{-1}\subset \fl(0)$. Now let $\beta'\ne-1$,
$X\in{\mathfrak l}^\prime(\beta^\prime)$, and $\lambda\in\R$. Then the element $(X,\lambda p_{\fh^\prime}(X))\in \fl_{\beta'}$ belongs to $\fl(\beta)$ if and only if for
all $Y\in {\mathfrak l}^\prime(\beta^\prime)$
\begin{eqnarray*}
\beta\langle X,Y\rangle&=&\langle \sigma(X,\lambda p_{\fh^\prime}(X)),(Y,0)\rangle=\frac{\lambda(1+\beta^\prime)}{2}\langle X,Y\rangle\\
\beta\lambda\frac{(1+\beta^\prime)}{2}\langle X,Y\rangle&=&\langle \sigma(X,\lambda p_{\fh^\prime}(X)),(0,p_{\fh^\prime}(Y))\rangle=\frac{(1+\beta^\prime)}{2}\langle X,Y\rangle,
\end{eqnarray*}
i.e.  $\beta=\frac{1}{\lambda}=\frac{\lambda(1+\beta^\prime)}{2}$, which implies $\beta=\pm\sqrt{\frac{1+\beta^\prime}{2}}$. 
We set
$$ {\Cal B}:=\Big\{\beta=\pm\sqrt{\frac{1+\beta^\prime}{2}}\mid\beta^\prime\in\text{Spec}(G^\prime,H^\prime, L^\prime)\Big\} .$$
Recall (Lemma \ref{basislem}, (2) and (5)) that the eigenvalue $1$ is never an element of the spectrum of a properly transitive triple. We deduce that: 
\begin{equation*}
{\Cal B}\subset\text{Spec}(G,H,L)\subset{\Cal B}\cup\{-1,0\}.
\end{equation*}
%
The corresponding eigenspaces $\fl(\beta)$ are:
\begin{eqnarray*}
\Big\{(X,\pm\sqrt{\frac{2}{1+\beta^\prime}}\; p_{\fh^\prime}(X))\mid X\in{\mathfrak l}^\prime(\beta^\prime)\Big\} &\text{ if }& \beta^\prime\neq -1,1\\
\Big\{(X,Y)\mid X\in{\mathfrak l}^\prime(-1),\; Y\in{\mathfrak h}^\prime\cap{\mathfrak l}^{\prime\perp}\Big\} &\text{ if }& \beta=0\\
\Big\{(X,-X)\mid X\in{\mathfrak l}^\prime\cap \fh'\Big\} &\text{ if }& \beta= -1\: .
\end{eqnarray*}
Now an explicit formula for the embedding of the Casimir operator for the triples appearing as Case 12 in Table~2 can be easily obtained from  the above  computation of $\mathrm{Spec}(G',H',L')$
for $G'$ simple and Lemma~\ref{sense}. However, it is no longer possible to express the result as a linear combination of Casimir operators of subgroups of $L$.

Eventually, we remark that the theory developed in this section also works for non-symmetric $G/H$, i.e. for triples $(G,H,L)$ only satisfying the assumptions of Prop.~\ref{cocpt}.
We have not essentially used that the reflection $\sigma=2p_\fh-\id$, which is defined in general, is an automorphism of $\fg$. Only Assertion (6) of Lemma~\ref{basislem} is affected.
The crucial property $[X,p_\fh (X)]=0$ for $X\in\fl(\beta)$ holds in general.


\section{Generalities I: On distribution vectors, distributional matrix coefficients and decompositions}\label{dist}

The reader may skip the following two sections in first reading and use them only as a source of references for definitions and more or less well-known
results of more general nature. The real story will continue in Section \ref{invariants}.

In this section we collect some facts about distribution vectors, distributional matrix coefficients, and discrete decompositions of spaces of smooth and distribution vectors of representations. Only Proposition~\ref{kunst} below might probably not have
been observed before. The reason for inserting this section is the lack of appropriate references, for instance the existing treatment \cite{He14} of distributional matrix coefficients does not really fit our needs. 

In this section we allow $G$ to be a general Lie group which, for simplicity, we assume to be unimodular with fixed Haar measure $dg$.
Let $\rho$ be a representation of $G$ on a reflexive Banach space $W_\rho$. We form its space of smooth vectors $W_{\rho,\infty}$ and equip it
with its natural Fr\'echet topology. The latter is generated by the seminorms $w\mapsto\| \rho(X)w\|$, $X\in {\Cal U}(\fg)$. Let $W_{\tilde \rho}=(W_\rho)'$ be the dual Banach space
representation. Reflexivity ensures that also $\tilde\rho$ is a continuous representation, i.e. that the action $G\times W_{\tilde \rho}\ni (g,\tilde w)\mapsto \tilde\rho(g)\tilde w=\rho(g^{-1})^t\tilde w\in W_{\tilde \rho}$
is continuous. 

By definition, the space of distribution vectors $W_{\rho,-\infty}$ of $W_\rho$ is the topological
dual of $W_{\tilde \rho,\infty}$: $W_{\rho,-\infty}:=(W_{\tilde \rho,\infty})'$ equipped with the strong topology, i.e. the topology of uniform convergence on bounded sets in $W_{\tilde \rho,\infty}$. It also carries actions of $G$ and ${\Cal U}(\fg)$.
We get continuous inclusions with dense images 
$$W_{\rho,\infty}\subset W_{\rho}\subset W_{\rho,-\infty}\ .$$

Note that  $W_{\rho,\infty}$ also carries an action of the convolution algebra $C_c^{-\infty}(G)$ of compactly supported distributions on $G$. It extends the natural actions of
$G$, $ {\Cal U}(\fg)$, and $C_c(G)$ on $W_{\rho,\infty}$ and is uniquely characterized by

$$ \rho(\vp)\circ\rho(T)=\rho(\vp*T)\,\quad \vp\in C_c^{\infty}(G),\ T\in    C_c^{-\infty}(G)\ .$$
Note that $\vp*T \in C_c^{\infty}(G)$.

In order to construct this action one may use that
\be
 C_c^{-\infty}(G) =l({\Cal U}(\fg)) C_c^{k}(G)=r({\Cal U}(\fg))  C_c^{k}(G) \label{hut}
\end{equation}
for $k=0$ (or any $k\in$ $\mathbb N_0$). Here $l$ and $r$ denote the left and right regular representation of  ${\Cal U}(\fg)$ on  the space of distributions $C^{-\infty}(G)$, respectively. This action can be extended to $ W_{\rho,-\infty}$ by duality: $\rho(T):=\tilde\rho(\check{T})^t$, where $T\mapsto\check T$ is the involution induced by inversion
in $G$.

Note that the notion of reflexivity extends to locally convex Hausdorff topological vector spaces $W$. Such a space is called reflexive if the natural inclusion $W\hookrightarrow W''$
(duals are equipped with the strong topology) is a topological isomorphism. It is called semi-reflexive if this inclusion is just bijective. Semi-reflexive Fr\'echet spaces are automatically
reflexive. For a continuous $G$-representation $\rho$ on a semi-reflexive space $W$, the natural action of $G$ on $W'$ is again a continuous representation.
In particular, Lemma \ref{stock} (iv) below shows that $W_{\rho,-\infty}$ is a continuous $G$-representation.

We have the following basic properties of the space of distribution vectors $W_{\rho,-\infty}$:

\begin{lem}\label{stock}
Let $W_\rho$ be a continuous $G$-representation on a reflexive Banach space. Then
\begin{enumerate}
\item[(i)] For every $w\in W_{\rho,-\infty}$ there exists $k\in\N_0$ such that  $\rho(f) w\in W_\rho$ for all $f\in C_c^k(G)$.
\item[(ii)] $W_{\rho,\infty}=\{w\in W_{\rho,-\infty}\mid \rho(X)w\in W_\rho \mbox{ for all } X\in {\Cal U}(\fg)\}.$
\item[(iii)] For all  $w\in W_{\rho,-\infty}$ and $\vp\in C_c^\infty(G)$ we have $\rho(\vp)w\in W_{\rho,\infty}$. Moreover, 
the map
$$  C^\infty_c(G)\ni\vp \mapsto \rho(\vp) w\in  W_{\rho,\infty} $$
is continuous.
\item[(iv)] $W_{\rho,\infty}$ is reflexive.
\end{enumerate}
\end{lem}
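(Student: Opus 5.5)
The whole argument will rest on the Dixmier--Malliavin type identity (\ref{hut}) and on pairing with the dual representation $\tilde\rho$. Take $w\in W_{\rho,-\infty}=(W_{\tilde\rho,\infty})'$ and $f\in C_c^k(G)$. We have $\langle\rho(f)w,\tilde v\rangle=\langle w,\tilde\rho(\check f)\tilde v\rangle$, so everything reduces to controlling how $\tilde\rho(\check f)$ acts on $W_{\tilde\rho}$.

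For (i), $w$ is continuous on the Fréchet space $W_{\tilde\rho,\infty}$. Hence there are finitely many $X_1,\dots,X_m\in{\Cal U}(\fg)$ of degree $\le k$ with
\[
|\langle w,\tilde u\rangle|\le C\sum_j\|\tilde\rho(X_j)\tilde u\|.
\]
For $f\in C_c^k(G)$ and $\tilde v\in W_{\tilde\rho,\infty}$, we move derivatives onto $f$: $\tilde\rho(X_j)\tilde\rho(\check f)\tilde v=\tilde\rho(l(X_j)\check f)\tilde v$. This gives $|\langle\rho(f)w,\tilde v\rangle|\le C'\|f\|_{C^k}\|\tilde v\|$. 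So $\rho(f)w$ extends to a continuous functional on $W_{\tilde\rho}$, and it lies in $(W_{\tilde\rho})'=W_\rho$ by reflexivity. Density of $W_{\tilde\rho,\infty}$ in $W_{\tilde\rho}$ makes this extension unique.

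For (iii), write $\rho(X)\rho(\vp)w=\rho(l(X)\vp)w$. By (i) this lies in $W_\rho$, with norm bounded by a $C^k$ norm of $l(X)\vp$ on a fixed compact support. That gives continuity of $\vp\mapsto\rho(\vp)w$ into $W_\rho$ together with all its derivatives. To conclude $\rho(\vp)w\in W_{\rho,\infty}$ I still need smoothness of the orbit map. I would get it from $\rho(g)\rho(\vp)w=\rho(\lambda_g\vp)w$, since $g\mapsto\lambda_g\vp$ is a smooth map into $C_c^k(G)$ and the map $f\mapsto\rho(f)w$ is continuous linear. The Fréchet seminorm estimates then give continuity of $C_c^\infty(G)\to W_{\rho,\infty}$.

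For (ii), the inclusion $\subset$ is clear. For the reverse, suppose $\rho(X)w\in W_\rho$ for all $X$. Take a Dirac sequence $\vp_n$. Then $\rho(\vp_n)w\to w$ in $W_{\rho,-\infty}$, and $\rho(X)\rho(\vp_n)w=\rho(\vp_n)\rho(X)w$ converges in $W_\rho$ when $X$ lies in the center. That argument does not work for a general $X$, so I would argue differently. Use (\ref{hut}) to write $\delta=\sum_i l(X_i)f_i$ with $f_i\in C_c^k$. Then $w=\sum_i\rho(f_i)\rho(X_i')w$, and I would iterate the same decomposition with regularity bootstrapping. Cleaner still, the condition says the distributional derivatives of the orbit map lie in $W_\rho$, and then Sobolev embedding on $G$ shows the orbit map $g\mapsto\rho(g)w$ is $C^\infty$ into $W_\rho$.

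For (iv), a Fréchet space is reflexive as soon as it is semi-reflexive, i.e.\ bounded sets are weakly relatively compact. $W_{\rho,\infty}$ embeds as a closed subspace of the countable product $\prod_{X}W_\rho$ via $v\mapsto(\rho(X)v)_X$. The product is reflexive, and closed subspaces of reflexive Fréchet spaces are reflexive. The main obstacle is (ii), which is essentially a local elliptic-regularity statement; the cleanest route is the Sobolev embedding argument there.
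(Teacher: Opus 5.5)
Your proposal is correct. Part (i) is essentially the paper's argument, but in (ii)--(iv) you take a different route.

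\textbf{Part (ii).} In the paper, (ii) is the hub, and it takes two lines. Use the right-derivative form of (\ref{hut}), $\delta_e=\sum_j r_{X_j}f_j$ with $f_j\in C_c^k(G)$. Then $w=\sum_j\rho(f_j)\rho(X_j^\top)w$. Each $\rho(X_j^\top)w$ lies in $W_\rho$ and $f_j\in C_c^k$, so $w$ is a $C^k$-vector. Since $k$ is arbitrary, $w$ is smooth.

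So your first idea already finishes the proof in one step, with no iteration or bootstrapping. You do need the $r$-form, though. With your own convention $\rho(l_Xf)=\rho(X)\rho(f)$ from (iii), writing $\delta=\sum_i l(X_i)f_i$ puts the derivatives outside $\rho(f_i)$, not on $w$.

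Your preferred Sobolev route also works. The weak derivatives of $g\mapsto\rho(g)w$, tested against $W_{\tilde\rho,\infty}$, are the continuous $W_\rho$-valued maps $g\mapsto\rho(g)\rho(X)w$. Even a fundamental-theorem-of-calculus argument along one-parameter subgroups then suffices. It is, however, heavier than needed.

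\textbf{Parts (iii) and (iv).} The paper derives both from (ii):
\begin{itemize}
\item For (iii), it notes $\rho(X)\rho(\vp)w=\rho(l_X\vp)w\in W_\rho$ and applies (ii).
\item For (iv), it shows directly that every $F\in(W_{\tilde\rho,-\infty})'$ is pairing with some $w\in W_\rho$ satisfying $\rho(X)w\in W_\rho$ for all $X$. By (ii), $w\in W_{\rho,\infty}$.
\end{itemize}
Your arguments avoid (ii) entirely:
\begin{itemize}
\item In (iii), $g\mapsto\rho(\lambda_g\vp)w$ is smooth as the composition of a smooth map into $C^k_c$ with the continuous linear map $f\mapsto\rho(f)w$ from (i).
\item In (iv), you use a closed embedding into a countable product of copies of $W_\rho$, indexed by a PBW basis. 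You then invoke permanence of reflexivity under products and under closed subspaces of Fr\'echet spaces.
\end{itemize}
Both are valid. Your route costs the completeness of $W_{\rho,\infty}$ and some general topological-vector-space facts. The paper's route is shorter and self-contained once the characterization (ii) is in hand.
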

\begin{proof} Let $w\in W_{\rho,-\infty}$. By definition, $w$ is a continuous linear functional on $W_{\tilde \rho,\infty}$. Hence there exist $X_1,\dots,X_r\in {\Cal U}(\fg)$
such that for all $\tilde w\in W_{\tilde \rho,\infty}$
$$  |\langle w,\tilde w\rangle |\le \sum_{j=1}^{r}\|\tilde\rho(X_j)\tilde w\|\ .$$
Let $k$ be the maximal filtration order of the elements $X_1,\dots,X_r\in  {\Cal U}(\fg)$. In the following we will write $l_{X_j}$ and $r_{X_j}$ (instead of $l(X_j)$ and $r(X_j)$) for left and right differentiation by $X_j$. Then for $f\in C^k_c(G)$ and $\tilde w\in W_{\tilde \rho,\infty}$
\begin{eqnarray}
|\langle \rho(f)w,\tilde w\rangle|&=&|\langle w,\tilde\rho(\check f)\tilde w\rangle| \nonumber\\
&\le &\sum_{j=1}^{r}\|\tilde\rho(X_j)\tilde\rho(\check f)\tilde w\|
= \sum_{j=1}^{r}\|\tilde\rho(l_{X_j}\check f)\tilde w\|\nonumber\\
&\le& C \left ( \sum_{j=1}^{r}\| r_{X_j}f\|_\infty\right )\|\tilde w\| \label{korbinian}
\end{eqnarray}
for some constant $C$ depending only on $\supp\, f$. The symbol $\|.\|_\infty$ denotes the supremum norm. 
We see that $\rho(f)w$ extends to a continuous linear functional on $W_{\tilde \rho}$
which is by reflexivity given by some element of $W_\rho$. This shows that $\rho(f)w\in W_\rho$ and proves (i).

Concerning (ii) we consider the delta distribution $\delta_e\in C_c^{-\infty}(G)$ supported at the unit $e\in G$. Using (\ref{hut}) we find for any given $k\in\N_0$ finitely
many elements  $X_1,\dots,X_r\in {\Cal U}(\fg)$ and $f_1,\dots f_r\in C_c^k(G)$ such that
\be \label{rums}
\delta_e = \sum_{j=1}^{r} r_{X_j}f_j
\end{equation}
Let now $w\in W_{\rho,-\infty}$ be such that $\rho(X)w\in W_\rho$ for all $X\in {\Cal U}(\fg)$. Then
$$ w=\rho(\delta_e)w= \sum_{j=1}^{r}\rho(f_j)\rho(X_j^\top) w\ ,$$
where $X\mapsto X^\top$ is the antihomomorphism of  ${\Cal U}(\fg)$ induced by $X\mapsto -X$ on $\fg$. Since $\rho(X_j^\top) w\in W_\rho$ the
elements  $\rho(f_j)\rho(X_j^\top) w$, and therefore also $w$, are $k$-times continuously differentiable vectors in $W_\rho$. This holds for all $k\in\N_0$, hence $w\in W_{\rho,\infty}$.
Thus the right hand side of the equation in (ii) is contained in the left. The opposite inclusion is trivial.

We now show (iii). Let  $w\in W_{\rho,-\infty}$. By (i) we have $\rho(\vp)w\in W_{\rho}$ for all $\vp\in C_c^\infty(G)$.
Then $\rho(X)\rho(\vp)w=\rho(l_X\vp)w\in W_\rho$
for all $X\in  {\Cal U}(\fg)$. Assertion (ii) implies that $\rho(\vp)w\in W_{\rho,\infty}$.
Moreover, the inequality (\ref{korbinian}) implies that 
$$ \|\rho(X)\rho(\vp)w\|\le C \sum_{j=1}^{r}\| r_{X_j}l_X\vp\|_\infty\ .$$
This gives the desired continuity.

Since $W_{\rho,\infty}$ is a Fr\'echet space, for (iv) it suffices to show that the inclusion $W_{\rho,\infty}\hookrightarrow (W_{\tilde\rho,-\infty})'$ is surjective.
Let $F$ be a continuous linear functional on $W_{\tilde\rho,-\infty}$. Its restriction to $W_{\tilde\rho}$ is  continuous. Reflexivity of $W_\rho$ implies that there
exists $w\in W_\rho$ such that $F(\tilde w)=\langle w,\tilde w\rangle$ for all $\tilde w\in W_{\tilde\rho}$. The same argument applies to the continuous linear 
functional $\tilde\rho(X^\top)^t F$, $X\in  {\Cal U}(\fg)$:  there
exists $w_X\in W_\rho$ such that 
$$\tilde\rho(X^\top)^tF(\tilde w)=\langle w_X,\tilde w\rangle \mbox{ for all }\tilde w\in W_{\tilde\rho}\ .$$
Let now $\tilde w\in W_{\tilde \rho,\infty}$. Then
\begin{eqnarray*}
\langle \rho(X)w,\tilde w\rangle&=&\langle w,\tilde\rho(X^\top)\tilde w\rangle= F(\tilde\rho(X^\top)\tilde w)\\
&=& \tilde\rho(X^\top)^tF(\tilde w)=\langle w_X,\tilde w\rangle\ .
\end{eqnarray*}
Since $W_{\tilde \rho,\infty}\subset W_{\tilde \rho}$ is dense we conclude that $\rho(X)w=w_X\in W_\rho$. Now (ii) implies that $w\in W_{\rho,\infty}$.
Eventually we conclude that $F(\tilde w)=\langle w,\tilde w\rangle$ for all $\tilde w\in W_{\tilde\rho,-\infty}$.
%
\end{proof}

For $\tilde w\in W_{\tilde\rho,-\infty}$ and $w\in W_{\rho,\infty}$ we have the matrix coefficient $c_{\tilde w,w}\in C^\infty(G)$,
$$ c_{\tilde w,w}(g):=\langle \tilde w, \rho(g)w\rangle\ .$$
Because of Lemma \ref{stock} (iii) we can extend the definition to two distribution vectors $\tilde w\in W_{\tilde\rho,-\infty}$ and $w\in W_{\rho,-\infty}$
giving a continuous linear functional $c_{\tilde w,w}:C_c^\infty(G)\rightarrow \C$ by 
$$ c_{\tilde w,w}(\vp):=\langle \tilde w, \rho(\vp)w\rangle\ ,$$
 i.e. $c_{\tilde w,w}\in C^{-\infty}(G)$.
\begin{lem}\label{stein}
\begin{enumerate}
\item[(i)]  For $g_1,g_2\in G$ we have $c_{\tilde\rho(g_1)\tilde w,\rho(g_2)w}=l(g_1)r(g_2) c_{\tilde w,w}$.
\item[(ii)] We equip $ C^{-\infty}(G)$ with the strong dual topology. For fixed $w\in W_{\rho,-\infty}$ the map
$$  W_{\tilde\rho,-\infty}\ni\tilde w\mapsto c_{\tilde w,w}\in C^{-\infty}(G)$$
is continuous.
\end{enumerate}
\end{lem}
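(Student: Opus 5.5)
For (i) I will test against an arbitrary $\vp\in C_c^\infty(G)$ and move the group elements across the pairing. By definition $c_{\tilde\rho(g_1)\tilde w,\rho(g_2)w}(\vp)=\langle\tilde\rho(g_1)\tilde w,\rho(\vp)\rho(g_2)w\rangle$. The dual action on distribution vectors is the transpose of the action on smooth vectors, so this equals $\langle\tilde w,\rho(g_1^{-1})\rho(\vp)\rho(g_2)w\rangle$. Unimodularity and left/right invariance of $dg$ then give the operator identity
\[
\rho(g_1^{-1})\rho(\vp)\rho(g_2)=\rho(\vp'),\qquad \vp'(g)=\vp(g_1 g g_2^{-1}).
\]
I will check this first on $W_{\rho,\infty}$, where it is an ordinary change of variables, and then transfer it to $W_{\rho,-\infty}$ by duality via $\rho(T)=\tilde\rho(\check T)^t$. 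It then remains to see that $\vp\mapsto\vp'$ is exactly the transpose of $l(g_1)r(g_2)$ acting on distributions, with the paper's conventions $l(g)f=f(g^{-1}\cdot)$ and $r(g)f=f(\cdot g)$ extended to distributions by duality. This is purely convention bookkeeping and involves no analysis.

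For (ii), the strong topology on $C^{-\infty}(G)$ is uniform convergence on bounded subsets $B\subset C_c^\infty(G)$. Such a $B$ is supported in a fixed compact set and bounded in every $C^k$ seminorm. I therefore need to show that $\tilde w\mapsto\sup_{\vp\in B}|\langle\tilde w,\rho(\vp)w\rangle|$ is a continuous seminorm on $W_{\tilde\rho,-\infty}=(W_{\rho,\infty})'$, which carries the strong dual topology. By Lemma~\ref{stock}(iii), $\vp\mapsto\rho(\vp)w$ is continuous and linear from $C_c^\infty(G)$ into $W_{\rho,\infty}$, so it maps the bounded set $B$ to a bounded set $\rho(B)w\subset W_{\rho,\infty}$. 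The supremum above is then exactly the strong-dual seminorm $p_{\rho(B)w}(\tilde w)$, which is continuous by definition. Here I use that $W_{\rho,-\infty}=(W_{\tilde\rho,\infty})'$ and, by reflexivity of $W_{\rho,\infty}$ (Lemma~\ref{stock}(iv)), $W_{\tilde\rho,-\infty}=(W_{\rho,\infty})'$ is paired with $W_{\rho,\infty}$, so that the pairing $\langle\tilde w,\rho(\vp)w\rangle$ is the natural one.

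The only delicate point is in (ii): continuity of a linear map into an LF-space of test functions does not automatically send bounded sets to bounded sets unless it is taken in the right sense. Since $C_c^\infty(G)$ is bornological, continuity implies boundedness, and bounded sets of $C_c^\infty(G)$ lie in some $C_c^\infty(\Omega)$ with $\Omega$ compact, so Lemma~\ref{stock}(iii) applies directly. I therefore expect no real obstacle, only careful tracking of the duality conventions in (i).
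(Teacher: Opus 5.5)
Your proposal is correct and follows the paper's proof essentially verbatim. Part (i) is the direct change-of-variables computation, and part (ii) is exactly the argument that Lemma~\ref{stock}(iii) sends a bounded $B\subset C_c^\infty(G)$ to a bounded set $B_1=\rho(B)w\subset W_{\rho,\infty}$, so that $\sup_{\vp\in B}|c_{\tilde w,w}(\vp)|$ is the strong-dual seminorm attached to $B_1$. Two small corrections, neither of which affects the argument: the identification $W_{\tilde\rho,-\infty}=(W_{\rho,\infty})'$ comes from the definition together with reflexivity of the Banach space $W_\rho$, not from Lemma~\ref{stock}(iv). Also, continuous linear maps always send bounded sets to bounded sets, so no bornological argument is needed.
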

\begin{proof} 
Assertion (i) can be easily verified by a direct computation. For (ii) we consider a bounded subset $B\subset C_c^\infty(G)$. By Lemma~\ref{stock} (iii)
its image $B_1=\{\rho(\vp)w\mid \vp\in B\}$ is bounded in $W_{\rho,\infty}$. The desired continuity now follows from
$$ \sup_{\vp\in B} |c_{\tilde w,w}(\vp)|= \sup_{w_1\in B_1}|\langle \tilde w,w_1\rangle|\ .$$
\end{proof}

We denote the space of continuous intertwining operators between two $G$-representations by $\Hom_G(.,.)$.  Let $H\subset G$ be a closed subgroup. Usual Frobenius reciprocity gives an isomorphism 
\be\label{plum} \Hom_G(W_{\tilde\rho,\infty},C^\infty(G/H))\cong (W_{\rho,-\infty})^H
\end{equation}
given by evaluation at $1\in G$: $\Phi\mapsto w_\Phi$, where $\langle w_\Phi,\tilde w\rangle=\Phi(\tilde w)(1)$.
Its inverse is given by the matrix coefficient map $w\mapsto \Phi_w$, $\Phi_w(\tilde w)=c_{\tilde w,w}$.
We want to obtain an alternative version of (\ref{plum}) for distributions, namely
$$  \Hom_G(W_{\tilde\rho,-\infty},C^{-\infty}(G/H))\cong (W_{\rho,-\infty})^H\ .$$
Here again, we consider the strong dual topology on $C^{-\infty}(G/H)$. (The result would not change if we consider the weak$^*$ topology instead.)
We also note that in case of non-unimodular $H$ (which will be not of importance for our purposes) one has to define $C^{-\infty}(G/H)$ as the dual of the space
of compactly supported smooth densities on $G/H$ instead of $C_c^\infty(G/H)$ so that $C^{-\infty}(G/H)$ is really the space of generalized functions
on $G/H$.

\begin{pro}\label{kunst}
The matrix coefficient map
$  (W_{\rho,-\infty})^H\ni w\mapsto c_{.,w} $
induces a bijection
$$  (W_{\rho,-\infty})^H\stackrel{\cong}{\longrightarrow} \Hom_G(W_{\tilde\rho,-\infty},C^{-\infty}(G/H)) .$$
\end{pro}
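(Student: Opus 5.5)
We split the argument into three steps: the matrix coefficient map is well defined and lands in the right space, it is injective, and it is surjective.

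\emph{Well-definedness.} Fix $w\in(W_{\rho,-\infty})^H$. By Lemma~\ref{stein}(i), $r(h)c_{\tilde w,w}=c_{\tilde w,\rho(h)w}=c_{\tilde w,w}$ for $h\in H$. Hence $c_{\tilde w,w}\in C^{-\infty}(G)^H$, where $H$ acts by right translation. This space identifies with $C^{-\infty}(G/H)$ via pull-back: a distribution $T$ on $G/H$ corresponds to the functional $\vp\mapsto T(\vp^H)$, with $\vp^H(gH)=\int_H\vp(gh)\,dh$. Since $\vp\mapsto\vp^H$ is a surjection $C_c^\infty(G)\to C_c^\infty(G/H)$, this identification is a topological isomorphism for the strong topologies. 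By Lemma~\ref{stein}(i) again, $\tilde w\mapsto c_{\tilde w,w}$ is $G$-equivariant for the left action, and by Lemma~\ref{stein}(ii) it is continuous. So the map indeed lands in $\Hom_G(W_{\tilde\rho,-\infty},C^{-\infty}(G/H))$.

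\emph{Injectivity.} Suppose $c_{\tilde w,w}=0$ for all $\tilde w\in W_{\tilde\rho,-\infty}$. Restrict to $\tilde w\in W_{\tilde\rho,\infty}$. Then $\langle\tilde w,\rho(\vp)w\rangle=0$ for all $\vp\in C_c^\infty(G)$. Letting $\vp$ run through a Dirac sequence, $\rho(\vp)w\to w$ weakly in $W_{\rho,-\infty}$, so $w=0$.

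\emph{Surjectivity.} Given $\Phi$, I will reduce to the smooth Frobenius reciprocity (\ref{plum}). Restrict $\Phi$ to $W_{\tilde\rho,\infty}$; this gives a continuous intertwiner into $C^{-\infty}(G/H)$, and the task is to show it actually lands in $C^\infty(G/H)$. For $\tilde w\in W_{\tilde\rho,\infty}$ and $X\in{\Cal U}(\fg)$, equivariance gives $l(X)\Phi(\tilde w)=\Phi(\tilde\rho(X)\tilde w)$. Moreover, for $\psi\in C_c^\infty(G)$ we have $\Phi(\tilde\rho(\psi)\tilde w)=\psi*\Phi(\tilde w)$, which is smooth. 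Here I would use the decomposition (\ref{hut}) in the form of (\ref{rums}): $\delta_e=\sum_j r_{X_j}f_j$ with $f_j\in C^k_c$. Writing $\tilde w=\sum_j\tilde\rho(f_j)\tilde\rho(X_j^\top)\tilde w$ shows that $\Phi(\tilde w)$ is $C^k$, and since $k$ is arbitrary it is $C^\infty$. Then (\ref{plum}) yields $w\in(W_{\rho,-\infty})^H$ with $\Phi(\tilde w)=c_{\tilde w,w}$ for all smooth $\tilde w$. Both $\Phi$ and $c_{\cdot,w}$ are continuous on $W_{\tilde\rho,-\infty}$, and $W_{\tilde\rho,\infty}$ is dense in $W_{\tilde\rho,-\infty}$: for instance $\tilde\rho(\vp_n)\tilde w\to\tilde w$, using Lemma~\ref{stock}(iii) and the reflexivity in Lemma~\ref{stock}(iv). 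Hence the two maps agree everywhere.

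The main obstacle is the regularity step. The convolution $\psi*\Phi(\tilde w)$ only makes sense once continuity of $\Phi$ has been used to pass $\tilde\rho(\psi)$, an integral of $W_{\tilde\rho,\infty}$-valued functions, through $\Phi$. In addition, $C^k$-functions $f_j$ give only finitely differentiable $\Phi(\tilde\rho(f_j)\cdots)$. To handle this, I would first treat $\psi\in C_c^\infty$ directly, which gives smoothness of $\Phi(\tilde\rho(\psi)\tilde w)$. I would then approximate by $C_c^\infty$-functions and estimate via (\ref{korbinian}) in the $C^k$ topology.
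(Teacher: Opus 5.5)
Your overall route is viable: show that $\Phi$ maps $W_{\tilde\rho,\infty}$ into $C^\infty(G/H)$, then invoke (\ref{plum}). The well-definedness, injectivity and final density steps are fine. But the regularity step, which you yourself flag as the main obstacle, is not established, and the repair you propose does not work.

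\emph{Why the repair fails.} For $f\in C^k_c(G)$, the identity $\Phi(\tilde\rho(f)\tilde u)=f*\Phi(\tilde u)$ yields a $C^{k-m}$-function only if $\Phi(\tilde u)$ is locally a distribution of some order $m\le k$. Nothing in your argument bounds the order of $\Phi(\tilde u)$. Approximating $f_j$ by $\psi_n\in C_c^\infty(G)$ and invoking (\ref{korbinian}) gives $\tilde\rho(\psi_n)\tilde u\to\tilde\rho(f_j)\tilde u$ in $W_{\tilde\rho}$. After applying $\Phi$, this only gives $\psi_n*\Phi(\tilde u)\to\Phi(\tilde\rho(f_j)\tilde u)$ in $C^{-\infty}(G)$, and a $C^{-\infty}$-limit of smooth functions need not be $C^k$. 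The estimate (\ref{korbinian}) lives on the representation side. What is missing is an estimate on the distribution side: a uniform bound on the order of $\Phi(\tilde u)$ over the unit ball of $W_{\tilde\rho}$. Concretely, for some neighbourhood $U$ of $e$, you need
\[
|\langle\Phi(\tilde u),\vp\rangle|\le C\,\|\tilde u\|\sum_{i=1}^{l}\|l_{X_i}\vp\|_\infty,\qquad \tilde u\in W_{\tilde\rho},\ \supp\,\vp\subset U .
\]
This is a uniform boundedness statement, and it is exactly the key step (\ref{art}) of the paper. There, $\Phi$ sends the unit ball of $W_{\tilde\rho}$ to a bounded subset of $C^{-\infty}(G)$. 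After multiplying by a cut-off, this is a bounded, hence equicontinuous, subset of $C^{-\infty}_c(G)$.

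\emph{How the paper uses it.} With (\ref{art}) in hand, the paper never needs smoothness of $\Phi(\tilde w)$. It writes $\delta_e=\sum_i l_{X_i}f_i$ with $f_i\in C^k_c(U)$. It then defines the would-be evaluation at $e$ by $F(\tilde w)=\sum_i\langle\Phi(\tilde\rho(X_i^\top)\tilde w),f_i\rangle$. This is continuous on $W_{\tilde\rho,\infty}$ by (\ref{art}), hence given by some $w\in W_{\rho,-\infty}$, and the paper finally takes $H$-invariants.

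\emph{A fix within your own route.} Alternatively, get smoothness from the Dixmier--Malliavin theorem. Every $\tilde w\in W_{\tilde\rho,\infty}$ is a finite sum $\sum_j\tilde\rho(\psi_j)\tilde u_j$ with $\psi_j\in C_c^\infty(G)$ and $\tilde u_j\in W_{\tilde\rho,\infty}$. Then $\Phi(\tilde w)=\sum_j\psi_j*\Phi(\tilde u_j)$ is smooth by the part of your argument that does work, and no $C^k$ test functions are needed.

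\emph{One more point in either version.} Before applying (\ref{plum}), you must know that $\Phi|_{W_{\tilde\rho,\infty}}$ is continuous into $C^\infty(G/H)$, not merely into $C^{-\infty}(G/H)$, since (\ref{plum}) evaluates at the base point. This follows from the closed graph theorem: both spaces are Fr\'echet, and the graph is closed because $\Phi$ is continuous into $C^{-\infty}(G/H)$.
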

\begin{proof}
We first discuss the case $H=\{1\}$. We observe that by Lemma \ref{stein} (ii), (iii)
$$ \Phi(\tilde w):=c_{\tilde w,w}\in C^{-\infty}(G), \quad \tilde w\in W_{\tilde\rho, -\infty},$$
is well-defined, continuous, and $G$-equivariant (with respect to the left regular representation on  $C^{-\infty}(G)$), i.e. $\Phi\in \Hom_G(W_{\tilde\rho,-\infty},C^{-\infty}(G))$.
Therefore the matrix coefficient map induces a map
$$ W_{\rho,-\infty}\ni w\mapsto \Phi\in \Hom_G(W_{\tilde\rho,-\infty},C^{-\infty}(G))\ .$$
We want to define an inverse map. The idea is to take $\Phi\in \Hom_G(W_{\tilde\rho,-\infty},C^{-\infty}(G))$, restrict it to $W_{\tilde\rho,\infty}$,  and to show
that 
$$W_{\tilde\rho,\infty}\ni \tilde w\mapsto \Phi(\tilde w)(e)\in\C$$
is well-defined and continuous.

Formally we proceed as follows. Let $B\subset W_{\tilde\rho}$ be the closed unit ball. Then $B$ is a bounded subset of $W_{\tilde\rho,-\infty}$, hence also $\Phi(B)\subset C^{-\infty}(G)$
is bounded. Let $\chi\in C^{-\infty}_c(G)$ with $\chi\equiv 1$ in a neighborhood $U$ of $1\in G$. Then $\chi\Phi(B)\subset  C^{-\infty}_c(G)$ is bounded, too.
Bounded subsets of $C^{-\infty}_c(G)$ are equicontinuous (see e.g. \cite{Sch71}, p. 141), i.e. there exists a continuous seminorm $\nu$ on $C^\infty(G)$ with
$$|\langle \chi\Phi(\tilde w),f\rangle|\le \nu(f)\ \mbox{for all }\tilde w\in B,\ f\in C^\infty(G).$$
We may assume that 
$$\nu(f)=\sum_{i=1}^{l} \ \sup_{g\in\Omega}|l_{X_i} f(g)|$$
for some $\Omega\subset G$ compact, $X_i\in {\Cal U}(\fg)$. Let now $\vp\in C_c^\infty(G)$ with $\supp\, \vp\subset U$. We obtain for $\tilde w\in W_{\tilde \rho}$
\begin{equation}
\label{art}
|\langle \Phi(\tilde w),\vp\rangle| = |\langle \chi\Phi(\tilde w),\vp\rangle| \le \|\tilde w\| \sum_{i=1}^{l} \|l_{X_i} \vp\|_\infty\ .
\end{equation}
In particular, $\Phi(\tilde w)$ extends continuously to $C^k_c (U)\ni\vp$ for $k$ large enough such that (\ref{art}) still holds. We find a representation similar to (\ref{rums}) of the delta distribution
even if we require that the $f_i$ have support in a prescribed neighborhood of $1\in G$ (making $r$ possibly larger):
$$  \delta_e=\sum_{i=1}^m l_{X_i}f_i,\  f_i\in C^k_c(U)\subset C^k_c(G). $$
We define a linear functional $F: W_{\tilde \rho, \infty}\rightarrow \C$ by
\be \label{gnu} F(\tilde w)= \sum_{i=1}^m \langle \Phi(\tilde\rho(X_i^\top)\tilde w), f_i\rangle\ .
\end{equation}
Inequality (\ref{art}) shows that $F$ is continuous and therefore given by some $w\in W_{\rho,-\infty}$.

It is now a straightforward computation to show that $\Phi\mapsto w$ inverts the matrix coefficient map $w\mapsto \Phi=c_{.,w}$ considered at the beginning of the proof.
For the convenience of the reader we also present this computation:

For $\Phi=c_{.,w}$ we obtain
\begin{eqnarray*}
 F(\tilde w)&=& \sum_{j=1}^m \langle c_{\tilde\rho(X_i^\top)\tilde w,w}, f_i\rangle\\
&=& \sum_{i=1}^m \langle l_{X_i^\top}c_{\tilde w,w}, f_i\rangle\\
&=& \langle c_{\tilde w,w}, \sum_{i=1}^m l_{X_i}f_i\rangle = c_{\tilde w,w}(e)\\
&=& \langle \tilde w, w\rangle\ .
\end{eqnarray*}
Vice versa, let $\Phi\in \Hom_G(W_{\tilde\rho,-\infty},C^{-\infty}(G))$. Let $F$ be given as in (\ref{gnu}), and let $w\in W_{\rho,-\infty}$ be such that 
$\langle \tilde w, w\rangle = F(\tilde w)$
for all $\tilde w \in W_{\tilde\rho,\infty}$. Then for $\vp\in C_c^\infty(G)$
\begin{eqnarray*}
c_{\tilde w,w}(\vp)&=&\langle \tilde w,\rho(\vp)w\rangle =\langle \tilde\rho(\check\vp)\tilde w,w\rangle\\
&=& F( \tilde\rho(\check\vp)\tilde w)\\
&=&\sum_{i=1}^m \langle \Phi(\tilde\rho(l_{X_i^\top}\check\vp)\tilde w, f_i\rangle\\
&=& \langle \Phi(\tilde w),\vp*\delta_e\rangle\\
&=& \langle \Phi(\tilde w),\vp\rangle\ .
\end{eqnarray*}
We now have established that the matrix coefficient map gives an isomorphism 
\be \label{rims} W_{\rho,-\infty}\stackrel{\cong}{\longrightarrow} \Hom_G(W_{\tilde\rho,-\infty},C^{-\infty}(G))\ .
\end{equation}
By Lemma \ref{stein} (ii) it is $G$-equivariant if we equip the right hand side by the $G$-action induced with the right regular representation on $C^{-\infty}(G)$.
There is a canonical isomorphism $C^{-\infty}(G)^H\cong C^{-\infty}(G/H)$. The proposition now follows by taking $H$-invariants in (\ref{rims}).
\end{proof}

Let $I$ be an index set, and let $H$ and $H_\alpha$, $\alpha\in I$, be a collection of Hibert spaces. Then the Hilbert space direct sum $\widehat{\bigoplus}_{\alpha\in I} H_\alpha$ is
a well-defined new Hilbert space. An isomorphism
$$    H\cong \widehat{\bigoplus_{\alpha\in I}} H_\alpha $$
of Hilbert spaces is seen as a decomposition of $H$ into the Hilbert spaces $H_\alpha$. Of particular importance is the situation that all the Hilbert spaces carry unitary representations
of $G$ and the isomorphism is a unitary equivalence of representations.

Now let $V$, $V_\alpha$, $\alpha\in I$, be locally convex Hausdorff topologically vector spaces. We want to make sense of decompositions written as
$$    V\cong \overline{\bigoplus_{\alpha\in I}} V_\alpha\ . $$
A natural choice of requirements for this would be: There exist continuous linear maps $i_\alpha: V_\alpha \rightarrow V$ and $p_\alpha: V\rightarrow V_\alpha$ such that
\begin{enumerate}
\item $p_\alpha i_\alpha=\id_{V_\alpha}$ for all $\alpha\in I$ and $p_\alpha i_\beta=0$ for all $\alpha\ne\beta\in I$.
\item $\sum_{\alpha} i_\alpha(V_\alpha)\subset V$ is dense.
\item Let $v\in V$ such that $p_\alpha (v)=0$ for all $\alpha\in I$. Then $v=0$.
\end{enumerate}

In the examples we have in mind stronger conditions are satisfied which involve the convergence of (partial) Fourier series. We restrict our considerations to countable decompositions.
\begin{Def}\label{kobold}
Let $I$ be an at most countable index set, and let $V$, $V_\alpha$, $\alpha\in I$, be locally convex Hausdorff topologically vector spaces. We write 
$$    V\cong \overline{\bigoplus_{\alpha\in I}} V_\alpha $$
if there exist continuous linear maps $i_\alpha: V_\alpha \rightarrow V$, $p_\alpha: V\rightarrow V_\alpha$, $\alpha\in I$, satisfying (a) above, and for all subsets $J\subset I$
continuous linear maps $P_J: V\rightarrow V$ such that:
\begin{enumerate}
\item[(i)] $P_{\{\alpha\}}=i_\alpha p_\alpha$, $P_I =\id$.
\item[(ii)] For every sequence $(J_n)$ of mutually disjoint subsets of $I$ the series $\sum_{n=1}^{\infty}P_{J_n}(v) $
converges in $V$ to $P_J(v)$, where $J=\bigcup_{n=1}^\infty J_n$.
\end{enumerate}
\end{Def}
As a special case of the series appearing in (ii) we get the convergence of the full Fourier series $$v=\sum_{\alpha\in I} i_\alpha p_\alpha (v)\mbox{  for all }v\in V$$
(independent of the ordering of $I$).
This implies Properties (b) and (c) above. Note that given the collection of embeddings $i_\alpha$ the other structural maps $p_\alpha$, $P_J$ are uniquely determined by
the conditions in the definition (if they exist). This also holds with the roles of $i_\alpha$ and $p_\alpha$ interchanged.
We have included the existence of the partial projections $P_J$ into the definition in order to be able to make easily conclusions of the following kind: 
Assume we have a countable index set $K$, subspaces $V_\chi\subset V$, $\chi\in K$, and mutually disjoint subsets $J_\chi\subset I$ with $\bigcup_{\chi\in K} J_\chi =I$. Then
\be\label{matisse}
V\cong \overline{\bigoplus_{\alpha\in I}} V_\alpha, \ V_\chi\cong \overline{\bigoplus_{\alpha\in J_\chi}} V_\alpha\ \Longrightarrow\ V\cong \overline{\bigoplus_{\chi\in K}} V_\chi\ .
\end{equation}
Here it is understood that the structural maps for the decomposition of $V_\chi$ are inherited from the decomposition of $V$.

We also note that  
$$  H\cong \widehat{\bigoplus_{\alpha\in I}} H_\alpha\quad \mbox{implies}\quad  H\cong \overline{\bigoplus_{\alpha\in I}} H_\alpha\ . $$
Observe however, that in contrast to  $\widehat{\bigoplus}_{\alpha\in I} H_\alpha$ the symbol $\overline{\bigoplus}_{\alpha\in I} V_\alpha$ has no intrinsic meaning
as a topological vector space (even not as a set).

The decomposition also makes sense for an uncountable index set $I$, if  $V_\alpha\ne\{0\}$ for at most countably many
$\alpha\in I$. In case that we work with representations of groups or algebras we require the maps $i_\alpha$, $p_\alpha$ to
be equivariant.

Recall that, by definition, a Fr\'echet space is a Montel space if each of its closed and bounded subsets is compact. Montel spaces have the property that weak* convergent sequences in its dual
are strongly convergent. Spaces of smooth sections of  vector bundles over manifolds equipped with their standard Fr\'echet topology are Montel.
The prototype of the decompositions we are interested in is given by the following

\begin{lem}\label{deko}
Let $W_\rho$ be a unitary representation of $G$ on a separable Hilbert space. We assume that there is a decomposition
$$  W_\rho\cong \widehat{\bigoplus_{\alpha \in I}} V_\alpha$$
with some unitary $G$-representations  $V_\alpha$. Then
$$  W_{\rho,\infty}\cong \overline{\bigoplus_{\alpha \in I}} V_{\alpha,\infty}\ ,$$
and, if $W_{\rho,\infty}$ is Montel,
$$  W_{\rho,-\infty}\cong \overline{\bigoplus_{\alpha \in I}} V_{\alpha,-\infty}\ .$$
In particular, if   $W_\rho$
decomposes discretely with finite multiplicities into irreducibles, i.e.
$$  W_\rho\cong \widehat{\bigoplus_{\pi\in\hat G}}M(\pi)\otimes V_\pi\ ,$$
where $M(\pi)$ is a finite dimensional Hilbert space (called multiplicity space), and $W_{\rho,\infty}$ is Montel, then
$$  W_{\rho,\pm\infty}\cong \overline{\bigoplus_{\pi\in\hat G}}M(\pi)\otimes V_{\pi,\pm\infty}\ .$$
\end{lem}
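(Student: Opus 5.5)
Let $i_\alpha: V_\alpha\to W_\rho$ be the isometric embeddings and $p_\alpha=i_\alpha^*$ the orthogonal projections. For $J\subset I$ let $P_J$ be the orthogonal projection onto $\widehat{\bigoplus}_{\alpha\in J}i_\alpha(V_\alpha)$. All of these are $G$-equivariant bounded operators, so they commute with $\rho(g)$. By differentiation they also commute with $\rho(X)$, $X\in{\Cal U}(\fg)$, on smooth vectors.

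\textbf{Smooth vectors.} First, $i_\alpha$ maps $V_{\alpha,\infty}$ into $W_{\rho,\infty}$, $p_\alpha$ maps $W_{\rho,\infty}$ into $V_{\alpha,\infty}$, and $P_J$ preserves $W_{\rho,\infty}$. All three restrictions are continuous for the Fr\'echet topologies, since each seminorm $\|\rho(X)\cdot\|$ is intertwined and the operators have norm $\le 1$. Condition (a) and (i) of Definition~\ref{kobold} then hold trivially. For (ii), fix $v\in W_{\rho,\infty}$, disjoint $J_n$ and $J=\bigcup J_n$. In the Hilbert space $\sum_n P_{J_n}\rho(X)v=P_J\rho(X)v$ by orthogonality. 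Since $P_{J_n}\rho(X)v=\rho(X)P_{J_n}v$, the partial sums converge to $P_Jv$ in every seminorm. This gives $W_{\rho,\infty}\cong\overline{\bigoplus}V_{\alpha,\infty}$.

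\textbf{Distribution vectors.} Here I will use duality. The dual representation $\tilde\rho$ is unitary, being the conjugate representation, and it decomposes correspondingly as $\widehat{\bigoplus}\tilde V_\alpha$. The previous step applied to $\tilde\rho$ gives maps $\tilde i_\alpha,\tilde p_\alpha,\tilde P_J$ on $W_{\tilde\rho,\infty}$. I then set $i_\alpha:=\tilde p_\alpha^{\,t}$, $p_\alpha:=\tilde i_\alpha^{\,t}$ and $P_J:=\tilde P_J^{\,t}$ on $W_{\rho,-\infty}=(W_{\tilde\rho,\infty})'$. Transposes of continuous maps are continuous for the strong topologies, and these maps extend the Hilbert space ones. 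The algebraic identities (a) and (i) dualize directly.

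The main obstacle is (ii): the convergence of $\sum_n P_{J_n}w$ to $P_Jw$ in the strong topology of $W_{\rho,-\infty}$. Weak$^*$ convergence is immediate, since $\langle\sum_{n\le N}P_{J_n}w,\tilde v\rangle=\langle w,\sum_{n\le N}\tilde P_{J_n}\tilde v\rangle\to\langle w,\tilde P_J\tilde v\rangle$ by the smooth case. To upgrade this, I will invoke the Montel hypothesis, using that $W_{\tilde\rho,\infty}$ is Montel as the conjugate of $W_{\rho,\infty}$. In the dual of a Montel space, weak$^*$ convergent sequences converge strongly, as recalled above. Alternatively, bounded sets are relatively compact, and the equicontinuous family of partial sums converges uniformly on compacts.

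\textbf{Multiplicity version.} The last assertion follows from the previous two by taking $V_\alpha=M(\pi)\otimes V_\pi$, with $I=\hat G$. The index set can be taken countable by separability, since only countably many $\pi$ occur. Moreover, $(M(\pi)\otimes V_\pi)_{\pm\infty}=M(\pi)\otimes V_{\pi,\pm\infty}$ because $M(\pi)$ is finite dimensional.
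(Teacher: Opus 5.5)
Your proposal is correct and follows the paper's proof essentially step by step. You restrict the Hilbert space maps to smooth vectors and get convergence via ${\Cal U}(\fg)$-equivariance, you extend to distribution vectors by duality to obtain weak$^*$ convergence, and you upgrade to strong convergence using that $W_{\tilde\rho,\infty}$ is Montel as the conjugate of $W_{\rho,\infty}$. Your remarks on the countability of the index set and on $(M(\pi)\otimes V_\pi)_{\pm\infty}=M(\pi)\otimes V_{\pi,\pm\infty}$ simply make explicit what the paper leaves implicit in its final ``in particular''.
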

\begin{proof} We choose a unitary equivalence $U: W_\rho\cong \widehat{\bigoplus}_{\alpha \in I} V_\alpha$. It is given
by a collection of $G$-equivariant maps $p_\alpha: W_\rho\rightarrow V_\alpha$, while its inverse is given by a collection $i_\alpha:  V_\alpha\rightarrow W_\rho$.
For $J\subset I$ let $P_J: W_\rho\rightarrow W_\rho$ be the orthogonal projection to the subspace $\{w\in W_\rho\mid p_\alpha(w)=0\mbox{ for all }\alpha\not\in J\}$. It is also
$G$-equivariant.
These maps satisfy the conditions of Def.~\ref{kobold}. In particular, by Hilbert space theory, the partial Fourier series
\be\label{muck} \sum_n P_{J_n}(w) 
\end{equation}
converge in $W_\rho$ to  $P_J(w)$. Restricting to smooth vectors we obtain $G$- and $ {\Cal U}(\fg)$-equivariant maps $p_\alpha: W_{\rho,\infty}\rightarrow  V_{\alpha,\infty}$ and
$i_\alpha: V_{\alpha,\infty} \rightarrow W_{\rho,\infty}$, $P_J: W_{\rho,\infty}\rightarrow W_{\rho,\infty}$. It remains to check that for $w\in W_{\rho,\infty}$  the series (\ref{muck}) converges in $W_{\rho,\infty}$-topology. But this
follows easily from the convergence in  $W_\rho$ and the  ${\Cal U}(\fg)$-equivariance of $ P_{J_n}$, $P_J$.

By duality we can extend the maps $i_\alpha$, $p_\alpha$, $P_J$ to the corresponding spaces of distribution vectors and obtain immediately that for $w\in W_{\rho,-\infty}$ the Fourier
series (\ref{muck}) converges to $P_J(w)$ in the weak* topology. If $W_{\rho,\infty}$ is Montel, then so is $W_{\tilde\rho,\infty}$ (the conjugate representation by unitarity). By the above mentioned
property of Montel spaces we deduce the convergence of the Fourier series in (the strong dual topology of)  $W_{\rho,-\infty}$.
\end{proof}

\section{Generalities II: Spectral decompositions}\label{G2}

In this section we collect some facts on the spectral theory of unbounded operators that will be useful later.

Let $\Cal A$ be a finitely generated commutative $*$-algebra over $\C$ with unit acting on a dense subspace  $H^\infty$ of a separable Hilbert space $H$. We view it as an algebra
of unbounded operators on $H$ with common invariant domain $H^\infty$.

We will assume in addition that $H^\infty$ is equipped with a locally convex topology such that the linear maps $D: H^\infty \rightarrow H^\infty$, $D\in\Cal A$, and
$H^\infty\hookrightarrow H$ are continuous. Let $H^{-\infty}$ be the conjugate topological dual of $H^\infty$. We get dense embeddings
$$ H^\infty\subset H \subset H^{-\infty}$$
as well as an extension of the $\Cal A$-action to $H^{-\infty}$: For $D\in\Cal A$, $\tilde v\in H^{-\infty}$, $v\in H^\infty$ we put
$$\langle D\tilde v, v\rangle\defn \langle \tilde v, D^* v\rangle\ .$$
We are interested in spectral decompositions of this action. The main examples we have in mind are of the following form:
\begin{enumerate}
\item $H=L^2(Y)$, where $Y$ is a smooth manifold equipped with a smooth density, and $\Cal A$ consists of differential operators acting on
$H^\infty=C^\infty_c(Y)$, $H^{-\infty}=C^{-\infty}(Y)$. The star operation corresponds to taking formal adjoints. In our examples $Y$ will be compact while
$\Cal A$ will not contain any elliptic operator which makes the situation sufficiently complicated.
\item $H=V_\pi^Q$, where $V_\pi$ is a unitary representation of a reductive Lie group $L$, and $Q\subset L$ is a compact subgroup. We consider 
$H^{\pm\infty}=( V_{\pi,\pm\infty})^Q$. The algebra $\Cal A$ will be an appropriate commutative subquotient of ${\Cal U}(\fl)^Q$.
\end{enumerate}

For any character $\chi\in\Hom(\Cal A, \C)$ we consider the joint eigenspaces for $\Cal A$ 
$$ H_\chi^{\pm\infty}\defn\{v\in H^{\pm\infty}\mid Dv=\chi(D)v\mbox{ for all }D\in\Cal A\}\ .$$
We set $H_\chi\defn H_\chi^{-\infty}\cap H$.

Similarly, for $\lambda\in \C$ we have the eigenspaces $H_{D,\lambda}^\infty\subset H_{D,\lambda}\subset H_{D,\lambda}^{-\infty}$ of a single operator $D\in\Cal A$.
The spaces $H_\chi$ and $H_{D,\lambda}$ are closed subspaces of $H$, in particular Hilbert spaces.

Let $\Xi\subset \Hom(\Cal A, \C)$ be the subset of $*$-homomorphisms,  i.e. the set of characters $\chi$  satisfying $\chi(D^*)=\overline{\chi(D)}$ for all $D\in\Cal{A}$.
If $H_\chi^{\infty}\ne \{0\}$, then $\chi\in\Xi$. Moreover, for $\chi\ne\chi'$ the spaces $H_\chi^{\infty}$ and $H_{\chi'}$ are mutually orthogonal.
Note that these assertions do not hold in general if we remove the superscript $\infty$.

We want to decompose the action of $\Cal A$  into eigenspaces. The simplest case would be a discrete decomposition: There are mutually orthogonal closed subspaces
$H_\chi^0\subset H_\chi$, $\chi\in\Xi$, such that
\begin{equation}\label{disc}
H\cong \widehat{\bigoplus_{\chi\in\Xi}} {H}_\chi^0\ .
\end{equation}
In general, we will need direct integrals and distributional eigenspaces. For the basic facts about direct integrals we refer to Dixmier \cite{Di} or Wallach \cite{Wa92}, Section 14.8.

Let $D_1,D_2,\dots, D_r$ be a set of (formally) self-adjoint generators of $\Cal A$. They give rise to an embedding $\Xi\hookrightarrow \R^r$,
$$ \Xi\ni\chi\mapsto (\chi(D_1),\dots,\chi(D_r))\in\R^r\ ,$$
as a closed subset,
and therefore induce a topology on $\Xi$. This topology is independent of the choice of generators.

\begin{Def}\label{kobold1} A spectral decomposition of $(\Cal A, H^\infty, H)$ (or shortly of $\Cal A$ if $H^\infty$ and $H$ are understood) is given by a unitary
bijection
$$\Cal F: H\stackrel{\sim}{\longrightarrow} \int_\Xi^\oplus W_\chi\:d\mu(\chi)$$
such that for all $D\in\Cal A$, $v\in H^\infty$
\be\label{IQ} \Cal F(Dv)(\chi)=\chi(D) \Cal F(v)(\chi)  \quad(\mu\mbox{-almost everywhere}).
\end{equation}
Here $\{W_\chi\mid\chi\in\Xi\}$ is a measurable family of Hilbert spaces, and $\mu$ is a $\sigma$-finite Borel measure on $\Xi$.
We also require that $\mu(\{\chi\in\Xi\mid W_\chi=\{0\}\})=0$.
Two such spectral decompositions $\Cal F_1$, $\Cal F_2$ are equivalent if there exists a decomposable unitary bijection
$$ B:  \int_\Xi^\oplus W^1_\chi\:d\mu_1(\chi)\longrightarrow  \int_\Xi^\oplus W^2_\chi\:d\mu_2(\chi) $$
such that $B\circ \Cal F_1=\Cal F_2$. Decomposable means that $B$ is of the form
$$  Bf(\chi)=B(\chi)f(\chi) \quad(\mu_1\mbox{-almost everywhere}) $$
for some measurable field of bounded operators $B(\chi): W_\chi^1\rightarrow W_\chi^2$.
We say that a spectral decomposition $\Cal F$ is unique if all other spectral decompositions of $\Cal A$ are equivalent to $\Cal F$.
\end{Def}
\begin{rem} The measure class of $\mu$ (with respect to absolute continuity) as well as the multiplicity function (modulo changes on sets of measure zero)
$$ \Xi \in \chi\mapsto \dim W_\chi\in \N_0\cup \infty $$
are invariant under equivalences of spectral decompostions.
\end{rem}

\begin{rem}\label{bernstein} 
Often the embedding $H^\infty\subset H$ is Hilbert-Schmidt, i.e. it factors through an intermediate Hilbert space $H^1$ such that the embedding $H^1\hookrightarrow H$
is Hilbert-Schmidt. For instance in our main examples (a) (for $Y$ compact) and (b) (for $V_\pi$ irreducible) one can take appropriate Sobolev spaces for $H^1$. By a theorem of
Gelfand-Kostyuchenko (see Bernstein \cite{Ber88}) any spectral decomposition is then pointwise defined on $H^\infty$, i.e. for $\mu$-almost all $\chi\in\Xi$ there exists a continuous
linear operator $\Cal F_\chi: H^\infty\rightarrow W_\chi$ with dense image such that for $v\in H^\infty$ and $D\in\Cal A$ we have $\Cal F_\chi(Dv)=\chi(D)\Cal F_\chi(v)$  and
$\Cal F(v)(\chi)=\Cal F_\chi(v)$. The adjoint $\Cal F_\chi^*: W_\chi\hookrightarrow H^{-\infty}$ identifies $W_\chi$ with a subspace $H_\chi^0\subset H_\chi^{-\infty}$
and equips $H_\chi^0$ with a (non-obvious) Hilbert space structure. This point of view supports the philosophy that a spectral decomposition is a (continuous) decomposition into (distributional)
joint eigenvectors. Note that a discrete spectral decomposition, i.e. $\mu$ is equivalent to the counting measure of an at most countable subset of $\Xi$ and therefore
the direct integral is really a Hilbert space direct sum, is always pointwise defined on $H$ (not only on $H^\infty$) which implies that $H_\chi^0\subset H_\chi$ with the usual Hilbert space structure. Thus we are in the situation of (\ref{disc}).
\end{rem} 

\begin{rem}\label{plums} There is a one-to-one correspondence between equivalence classes of spectral decompositions of $(\Cal A, H^\infty, H)$ and $P(H)$-valued
Borel measures $E$ on $\Xi$ such that for any $v\in H^\infty$ and any $D\in\Cal A$ we  have
$$ Dv = \int_\Xi \chi(D)\: dE_\chi(v) \ .$$
Here $P(H)\subset \End(H)$ denotes the set of orthogonal projections. Indeed, each spectral decomposition defines such a measure by setting $E(A)\defn \Cal F^{-1}m_A\Cal F$ for a Borel set $A\subset \Xi$. Here $m_A$ denotes the multiplication by the characteristic function $1_A$ of $A$. The correspondence in the other direction is most easily described
when $E$ admits a cyclic vector $v\in H$, i.e. the set $\{E(A)v\mid A\subset \Xi\mbox{ Borel}\}$ is linearly dense in $H$. In this case the map
$$ E(A)v\mapsto 1_A\in L^2(\Xi, d\mu)\ ,$$
where $\mu$ is the finite measure given by $\mu(A)\defn \langle E(A)v,v\rangle$, extends uniquely to a unitary bijection
$$ H\stackrel{\sim}{\longrightarrow} L^2(\Xi, d\mu)$$
(note that $L^2(\Xi, d\mu)$ is a direct integral with $W_\chi=\C$ for all $\chi\in\Xi$). In general, one constructs the corresponding direct integral by decomposing first $H$ into a Hilbert sum of at
most countably many $E$-invariant subspaces admitting a cyclic vector.
\end{rem} 

In the generality discussed so far, it may happen that a given $(\Cal A, H^\infty, H)$ has no spectral decomposition or several inequivalent ones. We want to give
some criteria for existence and uniqueness. 

Let us start with the case $r=1$, i.e. that $\Cal A$ is generated by a single formally self-adjoint operator $D\ne 0,\id$.
Note that then $\Hom(\Cal A, \C)=\C$, $\Xi=\R$. Here we are dealing just with the spectral theory of the symmetric operator $D$ with initial domain $H^\infty\subset H$.
For basic notions and facts about unbounded operators on Hilbert spaces we refer e.g. to Reed-Simon I \cite{RSI}, Ch. VIII, and II \cite{RSII}, Ch. X.
Note that in spectral theory unbounded operators always come together with their domains. In order to associate a spectrum to $D$ we need a closed extension
$\tilde D$ of $D$ with domain $H^{\tilde D}\supset H^\infty$. Then
$$ \spec(\tilde D)\defn\{\lambda\in\C\mid D-\lambda : H^{\tilde D}\rightarrow H\mbox{ is not bijective}\}\ .$$
If, in addition $\tilde D$ is self-adjoint, i.e. $  \langle \tilde Dv,w\rangle= \langle  v,\tilde Dw\rangle$ for all $v,w\in  H^{\tilde D}$ and
$$ H^{\tilde D^*}\defn \{w\in H\mid H^{\tilde D}\ni v\mapsto \langle \tilde Dv,w\rangle\in \C\mbox{ is continuous}\}=H^{\tilde D}\ ,$$
then $\spec(\tilde D)\subset\R$, and $\tilde D$ has a spectral resolution
\be\label{sr}\Cal F: H\stackrel{\sim}{\longrightarrow} \int_\R^\oplus W_\lambda\:d\mu(\lambda)
\end{equation}
such that
$$\Cal F(\tilde Dv)(\lambda)=\lambda \Cal F(v)(\lambda)  \quad(\mu\mbox{-almost everywhere}) $$
not only for $v\in H^\infty$, but for $v\in H^{\tilde D}$. This is a consequence the spectral theorem for unbounded self-adjoint operators in  projection valued measure form combined with Remark \ref{plums}.
It also follows that $\spec(\tilde D)=\supp\, \mu$ and that
$$ H^{\tilde D}=H_{\Cal F}^D\defn \{v\in H\mid \int_\R \lambda^2 \|\Cal F(v)(\lambda)\|_\lambda^2\: d\mu(\lambda)<\infty\}\ .$$
In particular, (\ref{sr}) gives a spectral decomposition for $(\C[D], H^\infty, H)$. Vice versa, any spectral decomposition of  $(\C[D], H^\infty, H)$ defines
a self-adjoint extension $D_\Cal F$ with domain  $H_{\Cal F}^D$. We just have seen

\begin{pro}\label{stark} There is a one-to-one correspondence between equivalence classes of spectral decompositions of $\Cal A=\C[D]$ and self-adjoint extensions $\tilde D$ of $D$.
\end{pro}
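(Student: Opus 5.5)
The plan is to construct the two maps of the correspondence explicitly, from self-adjoint extensions to spectral decompositions and back, and then check that they are mutually inverse up to equivalence. Both directions rest on the spectral theorem for unbounded self-adjoint operators in projection-valued-measure form, together with the dictionary of Remark~\ref{plums} between equivalence classes of spectral decompositions of $\C[D]$ and $P(H)$-valued Borel measures $E$ on $\Xi=\R$ satisfying $Dv=\int_\R\lambda\,dE_\lambda(v)$ for $v\in H^\infty$.

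\textbf{From extensions to decompositions.} Let $\tilde D$ be a self-adjoint extension of $D$ with domain $H^{\tilde D}\supset H^\infty$. The spectral theorem gives a projection-valued measure $E^{\tilde D}$ on $\R$ with $\tilde D=\int\lambda\,dE^{\tilde D}_\lambda$ on $H^{\tilde D}$. Since $\tilde D$ extends $D$, the identity $Dv=\int\lambda\,dE_\lambda(v)$ holds for $v\in H^\infty$. Remark~\ref{plums} then produces a spectral decomposition $\Cal F$ as in (\ref{sr}).

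The last condition in Definition~\ref{kobold1}, that $\mu(\{W_\lambda=0\})=0$, is arranged by construction. Any self-adjoint extension $\tilde D'$ unitarily equivalent via an equivalence fixing $H^\infty$ is in fact equal to $\tilde D$, since the extension is a subset of $H\times H$. Hence the map $\tilde D\mapsto[\Cal F]$ is well defined.

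\textbf{From decompositions to extensions.} Given a spectral decomposition $\Cal F$, define $D_{\Cal F}$ as multiplication by $\lambda$ pulled back through $\Cal F$, with domain $H^D_{\Cal F}$. The steps are:
\begin{itemize}
\item Multiplication by the real measurable function $\lambda$ on the direct integral, with its maximal domain, is self-adjoint. This is a standard fact: it is symmetric, and the adjoint computation reduces to fibrewise pointwise multiplication.
\item Unitary conjugation preserves self-adjointness, so $D_{\Cal F}$ is self-adjoint.
\item Condition (\ref{IQ}) shows that $H^\infty\subset H^D_{\Cal F}$ (because $Dv\in H$) and that $D_{\Cal F}v=Dv$ there. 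So $D_{\Cal F}$ is an extension of $D$.
\item If $\Cal F_2=B\circ\Cal F_1$ with $B$ decomposable and unitary, then $B(\lambda)$ is unitary for $\mu$-almost every $\lambda$. Moreover $\mu_1$ and $\mu_2$ must be equivalent measures, because $\mu(\{W_\lambda=0\})=0$ lets us read the measure class off the spectral projections. Therefore $B$ commutes with multiplication by $\lambda$ and preserves maximal domains, so $D_{\Cal F_1}=D_{\Cal F_2}$.
\end{itemize}

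\textbf{Inverse bijection.} Starting from $\tilde D$, the decomposition built from $E^{\tilde D}$ returns $\tilde D$, by the spectral theorem statement $H^{\tilde D}=H^D_{\Cal F}$ noted before the proposition. Starting from $\Cal F$, the extension $D_{\Cal F}$ has projection-valued measure $E(A)=\Cal F^{-1}m_A\Cal F$, because spectral measures of multiplication operators are multiplications by indicator functions. By Remark~\ref{plums} this measure determines the equivalence class of $\Cal F$.

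\textbf{Main obstacle.} The only delicate step is the direction "equal projection-valued measures imply equivalent decompositions" from Remark~\ref{plums}. This amounts to the uniqueness, up to decomposable unitaries, of the direct integral realisation of a projection-valued measure, which is the multiplicity theory via cyclic subspaces (Dixmier). I would cite it as standard and not reprove it.
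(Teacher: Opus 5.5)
Your proposal is correct and follows the paper's own argument. The spectral theorem in projection-valued-measure form, combined with Remark~\ref{plums}, produces $\Cal F$ from $\tilde D$; conversely, the multiplication operator $D_{\Cal F}$ with domain $H^D_{\Cal F}$ gives the self-adjoint extension, and $H^{\tilde D}=H^D_{\Cal F}$ closes the loop. Your only blemish is that well-definedness of $\tilde D\mapsto[\Cal F]$ rests on the uniqueness part of Remark~\ref{plums}, which you correctly flag at the end, and not on your sentence about unitarily equivalent extensions.
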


The operator $D$ has two natural closed extensions: its closure $\bar D$ and its adjoint $D^*$. The domain $H^{\bar D}$ of $\bar D$ is (by definition) the completion of $H^\infty$ with respect
to the graph norm, while it is easy to see that $D^*$ has domain
\be\label{lab} H^{D^*}=\{v\in H\subset H^{-\infty}\mid Dv\in H\}\ .\end{equation}
Note that $H^{\bar D}\subset  H^{D^*}$.
One says that $D$ is essentially self-adjoint if $\bar D$ is self-adjoint. Equivalent characterizations of essential self-adjointness are
\begin{enumerate}
\item $\bar D=D^*$.
\item $ D^*$ is symmetric.
\item \label{quark} $D$ has a unique self-adjoint extension.
\item For some (equivalently for all) $\lambda\in \C\setminus\R$ we have $H_{D,\lambda}=H_{D,\bar\lambda}=\{0\}$. 
\end{enumerate}
Combining (\ref{quark}) with Proposition~\ref{stark} we obtain

\begin{cor}\label{uwe} $\Cal A=\C[D]$ has a unique spectral decomposition if and only if $D$ is essentially self-adjoint.
\end{cor}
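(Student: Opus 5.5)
The statement is Corollary~\ref{uwe}: $\Cal A=\C[D]$ has a unique spectral decomposition if and only if $D$ is essentially self-adjoint. The plan is to reduce it entirely to Proposition~\ref{stark} together with characterization (c) of essential self-adjointness. Proposition~\ref{stark} gives a bijection between equivalence classes of spectral decompositions of $(\C[D],H^\infty,H)$ and self-adjoint extensions $\tilde D$ of $D$. Under this bijection:
\begin{itemize}
\item a spectral decomposition $\Cal F$ is sent to the operator $D_{\Cal F}$ with domain $H^D_{\Cal F}$;
\item a self-adjoint extension $\tilde D$ is sent to its spectral resolution (\ref{sr}).
\end{itemize}
By Definition~\ref{kobold1}, a spectral decomposition is unique precisely when the set of equivalence classes has exactly one element. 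So uniqueness of the spectral decomposition is equivalent to $D$ having exactly one self-adjoint extension.

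The remaining point is to match ``exactly one self-adjoint extension'' with ``unique self-adjoint extension'' in (c). For this I would note that a self-adjoint extension always exists in the situations considered. In general, though, (c) should be read as ``$D$ has one and only one self-adjoint extension''. With that reading the equivalence is immediate, and (c) is equivalent to essential self-adjointness.

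If one also wants to make the existence explicit, the argument goes as follows. When $D$ is essentially self-adjoint, $\bar D$ is a self-adjoint extension, and its spectral resolution gives a spectral decomposition. Any other spectral decomposition yields a self-adjoint extension $D_{\Cal F}$ of $D$. Since $D\subset D_{\Cal F}$ with $D_{\Cal F}$ closed, we get $\bar D\subset D_{\Cal F}$. Taking adjoints gives $D_{\Cal F}=D_{\Cal F}^*\subset \bar D^*=\bar D$, so $D_{\Cal F}=\bar D$. By the bijection, $\Cal F$ is then equivalent to the spectral resolution of $\bar D$.

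The only mildly delicate step is checking that equivalence of spectral decompositions corresponds exactly to equality of the associated extensions, including their domains. This holds because a decomposable unitary $B$ intertwines multiplication by $\lambda$ and preserves the $L^2$-norm condition defining $H^D_{\Cal F}$. I expect no real obstacle beyond this bookkeeping.
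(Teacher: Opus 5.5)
Your proposal is correct and follows the paper's argument exactly: the corollary is obtained by combining the bijection of Proposition~\ref{stark} with characterization (c) of essential self-adjointness. As you rightly insist, (c) must be read as existence together with uniqueness of a self-adjoint extension, since for a general $D$ with unequal deficiency dimensions no self-adjoint extension exists, so your remark that one ``always exists'' should be dropped. Your explicit check $\bar D\subset D_{\Cal F}=D_{\Cal F}^*\subset \bar D^*=\bar D$ is a correct, optional supplement to citing (c).
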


We also remark that $D$ has a self-adjoint extension $\tilde D$ if and only if
\begin{enumerate}
\item[(e)] for some (equivalently for all) $\lambda\in \C\setminus\R$ we have $\dim H_{D,\lambda}=\dim H_{D,\bar\lambda}$. 
\end{enumerate}
Moreover, the self-adjoint extensions of $\tilde D$ of $D$ are in one-to-one correspondence to unitary bijections $I: H_{D,\lambda}\rightarrow H_{D,\bar\lambda}$, and they
satisfy $\bar D\subset\tilde D\subset D^*$  (see  Reed-Simon II \cite{RSII}, Ch. X).

Now we are going to discuss existence and uniqueness of spectral decompositions for general $\Cal A$. For that we fix a set of formally self-adjoint generators $D_1,\dots,D_r$
of $\Cal A$. Note that existence and uniqueness of a spectral decomposition for $\Cal A$ is equivalent to existence and uniqueness of a unitary representation $\pi$ of the
abelian group $(\R^r,+)$ on $H$ such that $H^\infty$ is contained in the smooth vectors of $\pi$ and for all $v\in H^\infty$, $j=1,\dots, r$,
$$ \frac{\partial}{\partial x_j} \pi(x) v|_{x=0}={i} D_j v\ ;$$
i.e. we want to integrate the representation of the Lie algebra ${i}\cdot\mbox{span}_\R(D_1,\dots,D_r)$ on $H^\infty\subset H$ to a unitary representation on $H$ of the corresponding
simply connected Lie group. Such questions (even for non-abelian Lie algebras) have been investigated for a long time (see e.g. Nelson \cite{Nel59}, Flato \cite{FSSS72}, Powers \cite{Pow74}).
Nevertheless, there is no such simple necessary and sufficient condition as in Corollary \ref{uwe}. However, we have a complete analogue to Proposition \ref{stark},
see Proposition~\ref{schwach} below.

We consider two unbounded self-adjoint operators $C_1$, $C_2$ on $H$ with corresponding projector-valued measures $E_1$, $E_2$ provided
by the spectral theorem. One says that $C_1$ and $C_2$ strongly commute if all the projectors $E_1(A)$, $E_2(B)$, $A,B\subset \R$ Borel, commute.

\begin{pro}\label{schwach} Equivalence classes of spectral decompositions of $\Cal A$ are in one-to-one correspondence to choices of mutually strongly commuting  self-adjoint extensions $\tilde D_1,\dots,\tilde D_r$ of $D_1,\dots,D_r$.
\end{pro}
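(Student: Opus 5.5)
The proof runs through the spectral theorem for $r$ commuting operators in projection-valued-measure form, with Remark~\ref{plums} translating between spectral decompositions and $P(H)$-valued Borel measures on $\Xi$.

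\emph{From a decomposition to extensions.} Given a spectral decomposition $\Cal F: H\to\int_\Xi^\oplus W_\chi\,d\mu(\chi)$, define $\tilde D_j$ for $j=1,\dots,r$ as $\Cal F^{-1}$ composed with multiplication by the real measurable function $\chi\mapsto\chi(D_j)$. Its domain is
$$H^{\tilde D_j}=\Big\{v\in H\;\Big|\;\int_\Xi|\chi(D_j)|^2\|\Cal F(v)(\chi)\|^2\,d\mu(\chi)<\infty\Big\}.$$
These multiplication operators are self-adjoint, and all their spectral projections are of the form $\Cal F^{-1}m_A\Cal F$, so they commute; hence the $\tilde D_j$ strongly commute. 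By (\ref{IQ}), $H^\infty\subset H^{\tilde D_j}$ and $\tilde D_j$ agrees with $D_j$ on $H^\infty$, so each $\tilde D_j$ extends $D_j$. Equivalent decompositions differ by a decomposable unitary $B$, which commutes with multiplication by scalar functions of $\chi$, so they yield the same extensions.

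\emph{From extensions to a decomposition.} Start from strongly commuting self-adjoint extensions $\tilde D_1,\dots,\tilde D_r$ with spectral measures $E_1,\dots,E_r$ on $\R$. Because these measures commute, the product-measure construction gives a joint projection-valued measure $E$ on $\R^r$ with $E(A_1\times\dots\times A_r)=E_1(A_1)\cdots E_r(A_r)$, and $\tilde D_j=\int x_j\,dE(x)$.

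\emph{Main step: $E$ is supported on the closed set $\Xi\subset\R^r$, and every $D\in\Cal A$ acts on $H^\infty$ as $\int\chi(D)\,dE$.} Write $D\in\Cal A$ as a polynomial $D=P(D_1,\dots,D_r)$ and set $\tilde P\defn\int P(x)\,dE(x)$. I would show by induction on degree that $\tilde P v=P(D_1,\dots,D_r)v$ for $v\in H^\infty$. This works because $H^\infty$ is invariant under $\Cal A$ and contained in each $H^{\tilde D_j}$, and for $w$ in the domain of $\tilde D_j\tilde Q$ the joint functional calculus gives $\tilde D_j\tilde Q w=\widetilde{x_jQ}\,w$. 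Next, suppose $P$ lies in the kernel of $\C[x_1,\dots,x_r]\to\Cal A$. Then $\tilde P$ vanishes on the dense subspace $H^\infty$. For the closed set $Z=\{P=0\}$ I then need $E(\R^r\setminus Z)=0$. Here $\tilde P$ is closed, but a closed operator vanishing on a dense set need not vanish, so I would instead test against $\tilde P^*=\tilde{\bar P}$. For $w$ in the domain of $\tilde{\bar P}$ and all $v\in H^\infty$,
$$0=\langle\tilde Pv,w\rangle=\langle v,\tilde{\bar P}w\rangle .$$
Density of $H^\infty$ gives $\tilde{\bar P}w=0$ on a dense domain, hence $\bar P=0$ $E$-almost everywhere, hence $P=0$ $E$-almost everywhere. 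This is the step I expect to be most delicate. Intersecting the zero sets of a generating set of the finitely generated kernel ideal then puts $\supp E$ inside the real points of the spectrum of $\Cal A$. These points are exactly $*$-characters, i.e.\ points of $\Xi$, since each $\chi(D_j)=x_j$ is real.

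With $\supp E\subset\Xi$ established, $E$ defines a $P(H)$-valued Borel measure on $\Xi$ satisfying $Dv=\int_\Xi\chi(D)\,dE_\chi(v)$. Remark~\ref{plums} turns $E$ into a spectral decomposition. Finally, the two constructions are mutually inverse. The spectral measure attached to the multiplication operators of the first construction is exactly $\Cal F^{-1}m_A\Cal F$, which is the measure of Remark~\ref{plums}. Conversely, a self-adjoint operator is determined by its spectral measure, which is the marginal of $E$ in the $j$-th coordinate.
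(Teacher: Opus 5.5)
Your proof follows the paper's route: multiplication by $\chi\mapsto\chi(D_j)$ in one direction, and the product of the commuting spectral measures combined with Remark~\ref{plums} in the other. Your main step also supplies what the paper's proof leaves implicit, namely that the joint measure on $\R^r$ is concentrated on $\Xi$ and represents every $D\in\Cal A$ on $H^\infty$, not only the generators; without this the paper's definition $E(A_1\cdot\dots\cdot A_r)\defn E_1(A_1)\cdots E_r(A_r)$ is not even well defined.

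One correction: the caution that a closed operator vanishing on a dense set need not vanish is false. The kernel of a closed operator is closed, so $\tilde P$ vanishing on the dense subspace $H^\infty$ already forces $\tilde P=0$ on all of $H$. The detour through $\tilde{\bar P}$ can therefore be dropped.
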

\begin{proof} Let $\Cal F$ be a spectral decomposition, and let $D\in\Cal A$ be a formally self-adjoint element. As in the discussion preceding Prop.~\ref{stark}, we obtain
a self-adjoint extension $\tilde D=D_\Cal F$ with domain  
$$H_{\Cal F}^D\defn \{v\in H\mid \int_\Xi\chi(D)^2 \|\Cal F(v)(\chi)\|_\chi^2\: d\mu(\chi)<\infty\}\ .$$
In particular, we obtain self-adjoint extensions  $\tilde D_1,\dots,\tilde D_r$ of $D_1,\dots,D_r$. The corresponding projector-valued measures are given
by $E_j(A)=\Cal F^{-1} m_{A_j}\Cal F$, $A\subset \R$ Borel, where $A_j\defn\{\chi\in\Xi\mid\chi(D_j) \in A\}$. It follows that $\tilde D_1,\dots,\tilde D_r$  mutually strongly commute.

Vice versa, given self-adjoint extensions $\tilde D_1,\dots,\tilde D_r$ of $D_1,\dots,D_r$ with commuting spectral resolutions $E_1,\dots,E_r$, there is
a corresponding $P(H)$-valued measure on $\Xi$, the ``product measure'' of $E_1,\dots, E_r$. More precisely, given Borel sets $A_1,\dots, A_r\subset \R$
we define
$$ A_1\cdot A_2\cdot\dots\cdot A_r\defn\{\chi\in\Xi\mid \chi(D_j)\in A_j, j=1,\dots,r\}\subset \Xi$$
and set
$$ E( A_1\cdot A_2\cdot \dots \cdot A_r)\defn E_1(A_1)E_2(A_2)\dots E_r(A_r)\ .$$
Since the sets of the form $A_1\cdot\dots\cdot A_r$ generate the Borel $\sigma$-algebra of $\Xi$ and are stable under intersections, the last formula determines uniquely
a $P(H)$-valued Borel measure on $\Xi$. $E$ provides us with a spectral decomposition $\Cal F$, see Remark~\ref{plums}.
\end{proof} 
As consequences, we obtain immediately the following criteria.
\begin{cor}\label{karl} Assume that $\Cal A$ has a spectral decomposition and that $D_1,\dots,D_r$ are essentially self-adjoint. Then the spectral decomposition is unique.
\end{cor}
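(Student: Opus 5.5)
The plan is to reduce the claim to the correspondence of Proposition~\ref{schwach} together with the single-operator case, Corollary~\ref{uwe}. Suppose $\Cal F_1$ and $\Cal F_2$ are two spectral decompositions of $\Cal A$. By Proposition~\ref{schwach}, each $\Cal F_k$ determines a family $\tilde D_1^{(k)},\dots,\tilde D_r^{(k)}$ of mutually strongly commuting self-adjoint extensions of $D_1,\dots,D_r$. The extension $\tilde D_j^{(k)}$ is the operator $D_{j,\Cal F_k}$ with domain $H^{D_j}_{\Cal F_k}$ and projection-valued measure $E^{(k)}_j(A)=\Cal F_k^{-1}m_{A_j}\Cal F_k$. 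Moreover, the correspondence in that proposition is a bijection on equivalence classes. It therefore suffices to show that the $r$-tuples of extensions attached to $\Cal F_1$ and $\Cal F_2$ coincide.

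That step is immediate from the hypothesis. Each $D_j$ is essentially self-adjoint, so by characterization (c) in the list preceding Corollary~\ref{uwe} it has exactly one self-adjoint extension, namely its closure $\bar D_j$. Hence $\tilde D_j^{(1)}=\tilde D_j^{(2)}=\bar D_j$ for all $j$, including equality of domains. By the spectral theorem, the projection-valued measures $E_j^{(1)}$ and $E_j^{(2)}$ then agree, since both are the spectral measure of $\bar D_j$.

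Next I will pass from the factors to the joint measure. The proof of Proposition~\ref{schwach} reconstructs the $P(H)$-valued measure $E$ on $\Xi$ from the product sets $A_1\cdot\ldots\cdot A_r$ via $E(A_1\cdot\ldots\cdot A_r)=E_1(A_1)\cdots E_r(A_r)$. These sets form an intersection-stable generator of the Borel $\sigma$-algebra of $\Xi$. So the joint measures $E^{(1)}$ and $E^{(2)}$ coincide on a $\pi$-system generating the $\sigma$-algebra, and by the uniqueness theorem for measures (applied to $A\mapsto\langle E(A)v,v\rangle$ for each $v$ and then polarized) they are equal.

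Finally, Remark~\ref{plums} gives a one-to-one correspondence between equivalence classes of spectral decompositions and such $P(H)$-valued measures. Therefore $\Cal F_1$ and $\Cal F_2$ are equivalent. Since $\Cal F_2$ was arbitrary and a decomposition exists by assumption, the spectral decomposition is unique.

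The only point needing a little care is checking that the measure attached to $\Cal F_k$ really is the product of the single-operator spectral measures of the $\tilde D_j^{(k)}$, which is exactly what the first half of the proof of Proposition~\ref{schwach} shows. No genuine obstacle is expected.
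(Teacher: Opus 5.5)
Your proof is correct and is essentially the paper's argument: the paper obtains this corollary immediately from Proposition~\ref{schwach}, since essential self-adjointness leaves each $D_j$ with only one self-adjoint extension, namely $\bar D_j$. Your extra step, matching the joint measures on product sets and then invoking Remark~\ref{plums}, just spells out the injectivity half of that correspondence and is harmless.
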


\begin{cor}\label{klaus} Assume that $D_1,\dots,D_r$ are essentially self-adjoint and that their closures mutually strongly commute. Then $\Cal A$ has a unique spectral decomposition.
\end{cor}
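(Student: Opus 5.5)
The statement to prove is Corollary \ref{klaus}: if $D_1,\dots,D_r$ are essentially self-adjoint and their closures mutually strongly commute, then $\Cal A$ has a unique spectral decomposition. The plan is to get existence from Proposition~\ref{schwach} and uniqueness from Corollary~\ref{karl}.

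\emph{Existence.} Put $\tilde D_j\defn\bar D_j$. By essential self-adjointness each $\bar D_j$ is a self-adjoint extension of $D_j$, and by hypothesis these extensions mutually strongly commute. The reverse direction of Proposition~\ref{schwach} builds from the commuting projection-valued measures $E_1,\dots,E_r$ of the $\bar D_j$ the product measure $E$ on $\Xi$. Remark~\ref{plums} then turns $E$ into a spectral decomposition $\Cal F$.

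The one point to check is that $E$ really lives on $\Xi$ and that (\ref{IQ}) holds for every $D\in\Cal A$, not only for the generators. First, the joint spectral measure is a priori supported on $\R^r$, and it must be supported on the closed image of $\Xi$. Any polynomial relation $p(D_1,\dots,D_r)=0$ in $\Cal A$ gives $\int p(x)\,dE(x)\,v=0$ for all $v\in H^\infty$. Since $H^\infty$ is dense, the support of $E$ lies in the zero set of every such relation, and that zero set is exactly $\Hom(\Cal A,\C)\cap\R^r=\Xi$.

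Second, for $D=p(D_1,\dots,D_r)$ and $v\in H^\infty$ we have $\Cal F(D_jv)(\chi)=\chi(D_j)\Cal F(v)(\chi)$ for each $j$. Because $H^\infty$ is invariant under $\Cal A$, we can apply this repeatedly and take linear combinations, which gives $\Cal F(Dv)(\chi)=\chi(D)\Cal F(v)(\chi)$.

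\emph{Uniqueness.} Now that a spectral decomposition exists and the $D_j$ are essentially self-adjoint, Corollary~\ref{karl} applies directly. Spelled out: any spectral decomposition yields, via Proposition~\ref{schwach}, self-adjoint extensions of $D_1,\dots,D_r$. Since each $D_j$ has a unique self-adjoint extension, namely $\bar D_j$, these extensions coincide for any two decompositions. The bijective correspondence of Proposition~\ref{schwach} then forces the two decompositions to be equivalent.

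The main obstacle is the support-and-multiplicativity check in the existence step. It is routine but uses the density and $\Cal A$-invariance of $H^\infty$.
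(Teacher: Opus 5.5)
Your proposal is correct and follows the paper's route. The paper presents Corollary~\ref{klaus} as an immediate consequence of Proposition~\ref{schwach}, with existence coming from choosing the extensions $\bar D_j$ and uniqueness from the argument of Corollary~\ref{karl}. Your extra check — that the product of the $E_j$ is concentrated on $\Xi$ and that (\ref{IQ}) passes from the generators to all of $\Cal A$ — fills in a detail the paper's proof of Proposition~\ref{schwach} leaves implicit. Your argument for it is sound: $\|E(\{p\neq 0\})v\|=0$ for all $v$ in the dense subspace $H^\infty$, and $E(\{p\neq 0\})$ is bounded, so $E(\{p\neq 0\})=0$.
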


\begin{cor}\label{heinz}
 Assume that $D_1^2+D_2^2+\dots+D_r^2$ is essentially self-adjoint. Then $\Cal A$ has a unique spectral decomposition.
\end{cor}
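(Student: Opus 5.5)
The plan is to reduce Corollary~\ref{heinz} to Corollary~\ref{klaus}, using the standard Nelson-type argument that essential self-adjointness of the sum of squares controls each generator. Put $\Delta\defn D_1^2+\dots+D_r^2\in\Cal A$. It is formally self-adjoint and non-negative on $H^\infty$, since $\langle \Delta v,v\rangle=\sum_j\|D_jv\|^2$. By assumption its closure $\bar\Delta$ is self-adjoint, hence a non-negative self-adjoint operator.

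\textbf{Step 1: each $D_j$ is essentially self-adjoint.} By criterion (d) before Cor.~\ref{uwe}, it suffices to show $H_{D_j,\pm i}=\{0\}$. Take $w\in H$ with $D_jw=\pm i w$ in $H^{-\infty}$. Because $\Cal A$ is commutative, $(\id+\Delta)w=(1-\sum_{k\ne j}D_k^2+1)w$ is not immediately helpful, so I would argue with the resolvent instead. For $v\in H^\infty$ we have
\[
\langle w,(D_j\mp i)v\rangle=0 .
\]
The key point is that $(\id+\Delta)H^\infty$ is dense in $H$, which follows from essential self-adjointness of $\Delta$ and $\Delta\ge 0$. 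On $H^\infty$ we also have the estimate $\|D_jv\|^2\le\langle(\id+\Delta)v,v\rangle$. Hence $D_j$ extends continuously from the form domain of $\bar\Delta$, that is, from $\mathrm{dom}(\bar\Delta^{1/2})$ with its graph norm, and the closure $\bar D_j$ contains $\mathrm{dom}(\bar\Delta)$.

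To show that the range of $\bar D_j\mp i$ is dense, I would use the commutation $D_j\Delta=\Delta D_j$ on $H^\infty$. Given $u\in H$, approximate $(\id+\bar\Delta)^{-1}u$ by elements $v_n\in H^\infty$ in the $\bar\Delta$-graph norm. One then checks that $w$ is orthogonal to $(\id+\bar\Delta)^{-1}$ applied to the dense set $(D_j\mp i)H^\infty$, and a symmetric argument moving $(\id+\Delta)$ across the pairing gives $w\perp H$.

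This is the step I expect to be the main obstacle. A cleaner route, which I would try first, is the known theorem (Nelson; see also Powers \cite{Pow74}) that if $\sum D_j^2$ is essentially self-adjoint on a common invariant domain and the $D_j$ commute there, then each $D_j$ is essentially self-adjoint and the closures strongly commute. The tool behind it is that $H^\infty$ is a core on which the analytic/commutator estimates $\|D_jv\|\le\|(\id+\Delta)v\|$ and $\|[D_j,\Delta]v\|=0$ hold.

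\textbf{Step 2: the closures $\bar D_j$ strongly commute.} Since each $D_j$ commutes with $\Delta$ on $H^\infty$, which is a core for $\bar\Delta$, I would show that $\bar D_j$ commutes with the bounded resolvent $(\id+\bar\Delta)^{-1}$. It then commutes with all bounded Borel functions of $\bar\Delta$. On the spectral subspaces $E_\Delta([0,N])H$, every $\bar D_j$ is bounded, because $\|D_jv\|^2\le N\|v\|^2$ there. These bounded self-adjoint operators commute pairwise, since they commute on the dense set obtained by applying spectral cutoffs to $H^\infty$. Their spectral projections therefore commute, and letting $N\to\infty$ gives strong commutativity of $\bar D_1,\dots,\bar D_r$.

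\textbf{Step 3: conclude.} Steps 1 and 2 verify the hypotheses of Cor.~\ref{klaus}, so $\Cal A$ has a unique spectral decomposition.
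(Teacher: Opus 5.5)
Your proposal follows the paper's proof, which consists precisely of citing Nelson's theorem (Theorem~5 in \cite{Nel59}) to pass from essential self-adjointness of $D_1^2+\dots+D_r^2$ to the hypotheses of Cor.~\ref{klaus}. Your Steps~1--2 are an attempt to reprove that theorem and are not needed; if you do want them, note that your Step~1 as written is circular, since orthogonality of $w$ to $(\id+\bar\Delta)^{-1}(D_j\mp i)H^\infty$ helps only if that set is dense, which is the claim itself. A direct argument works instead: pairing against the dense set $(\id+\Delta)H^\infty$ gives $(\id+\bar\Delta)^{-1}D_j^*\subset\bar D_j(\id+\bar\Delta)^{-1}$, so for a deficiency vector $w$ the vector $(\id+\bar\Delta)^{-1}w$ is an eigenvector of the symmetric operator $\bar D_j$ with non-real eigenvalue and hence vanishes.
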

\begin{proof} It is a result of Nelson (Theorem 5 in \cite{Nel59}) that the assumption of Cor.~\ref{heinz} implies the one of Cor.~\ref{klaus}.
\end{proof}
In contrast to Cor.~\ref{uwe}, the criteria in Cor.~\ref{klaus} and Cor.~\ref{heinz} are not necessary. Moreover, all these criteria are in terms of essential self-adjointness.
However, in practice it is often difficult to decide a priori whether a given operator is essentially self-adjoint. Even the Criterion (d) above might be difficult
to check. Sometimes it is easier to construct one spectral decomposition of $\Cal A$ directly and discuss essential self-adjointness (and therefore uniqueness) ``a  posteriori" using this
spectral decomposition. The simplest instance for this is the following

\begin{pro}\label{schwoch} Assume that $H$ has a complete orthonormal system consisting of joint eigenvectors for $\Cal A$ belonging to $H^\infty$.
In other words, we assume that
$$ \bigoplus_{\chi\in\Xi} H_\chi^\infty $$
is dense in $H$. Then every formally self-adjoint operator $D\in \Cal A$ is essentially self-adjoint, we have  $\overline{H_\chi^\infty}= H_\chi$  for all $\chi\in\Xi$,
and 
$$  H=\widehat{\bigoplus_{\chi\in\Xi}} H_\chi$$
is the unique spectral decomposition of $\Cal{A}$.
\end{pro}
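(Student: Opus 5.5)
The plan is to work inside $H$ with an orthonormal basis of joint eigenvectors and reduce everything to the fact that a symmetric operator with a dense set of eigenvectors in its domain is essentially self-adjoint.

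First, mutual orthogonality. For $\chi\neq\chi'$ in $\Xi$ the spaces $H_\chi^\infty$ and $H_{\chi'}$ are orthogonal, as noted above. Choosing an orthonormal basis of each $\overline{H_\chi^\infty}$, the hypothesis says that their union $\{e_k\}$, with $e_k\in H^\infty_{\chi_k}$, is a complete orthonormal system in $H$. Since $H$ is separable, only countably many $\chi$ have $H_\chi^\infty\neq\{0\}$.

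Next, fix a formally self-adjoint $D\in\Cal A$; then $\chi_k(D)\in\R$. I would verify criterion (d): if $v\in H$ with $Dv=\lambda v$ in $H^{-\infty}$ and $\lambda\notin\R$, pair with $e_k\in H^\infty$:
$$\lambda\langle v,e_k\rangle=\langle Dv,e_k\rangle=\langle v,De_k\rangle=\chi_k(D)\langle v,e_k\rangle.$$
Hence $\langle v,e_k\rangle=0$ for all $k$, so $v=0$. The same argument works for $\bar\lambda$, so $D$ is essentially self-adjoint. Concretely, $\bar D$ is the diagonal multiplication operator $e_k\mapsto\chi_k(D)e_k$ on its maximal domain.

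Now the equality $\overline{H^\infty_\chi}=H_\chi$. Take $v\in H_\chi$. The pairing computation above, applied to each generator $D_j$, gives $\chi(D_j)\langle v,e_k\rangle=\chi_k(D_j)\langle v,e_k\rangle$. So $\langle v,e_k\rangle=0$ unless $\chi_k$ agrees with $\chi$ on all generators, i.e. unless $\chi_k=\chi$. Expanding $v$ in the basis then shows $v\in\overline{H_\chi^\infty}$; the reverse inclusion is immediate because $H_\chi$ is closed. Therefore $H=\widehat{\bigoplus}_\chi H_\chi$, and this is a discrete spectral decomposition, since (\ref{IQ}) holds for $v\in H^\infty$ again by pairing with the $e_k$.

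Finally, uniqueness. The closures $\bar D_j$ are all diagonal in the common basis $\{e_k\}$, so their spectral projections commute and they strongly commute. Corollary~\ref{klaus} then gives uniqueness; alternatively, Corollary~\ref{karl} applies directly since existence is already established.

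The only delicate point is ensuring that the eigen-equation for $v\in H$ may be tested against $e_k$. This is legitimate because $e_k\in H^\infty$ and the action of $\Cal A$ on $H^{-\infty}$ was defined by duality with $D^*$ on $H^\infty$.
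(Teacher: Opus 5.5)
Your proof is correct and follows the paper's argument: criterion (d) via pairing a putative eigenvector with non-real eigenvalue against joint eigenvectors in $H^\infty$, orthogonality plus density to get $\overline{H_\chi^\infty}=H_\chi$ and the Hilbert sum decomposition, and Corollary~\ref{karl} for uniqueness. You make explicit the steps the paper summarizes as ``easily imply'', and your alternative route via Corollary~\ref{klaus} works equally well.
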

\begin{proof} Let $D\in\Cal A$ be formally self-adjoint, and let $0\ne v\in H_{D,\lambda}$ for some $\lambda\in \C$. By our assumptions there is a $\chi\in\Xi$ and some
$e\in  H_\chi^\infty$ satisfying $\langle v,e\rangle\ne 0$.
We get $\lambda \langle v,e\rangle=\langle Dv,e\rangle=\langle v,De\rangle=\langle v,\chi(D)e\rangle$. Since $\chi\in\Xi$ we obtain $\chi(D)\in\R$ and thus
$\lambda=\chi(D)\in\R$. Using the Criterion (d) above we see that $D$ is essentially self-adjoint.

The density of $\bigoplus_{\chi\in\Xi} H_\chi^\infty$ in $H$ and the mutual orthogonality of the spaces  $H_\chi^\infty$ easily imply that  $\overline{H_\chi^\infty}= H_\chi$ 
for all $\chi\in\Xi$ and that $H=\widehat{\bigoplus}_{\chi\in\Xi} H_\chi$. The latter is a spectral decomposition of $\Cal A$. Uniqueness follows from Cor.~\ref{karl}.
\end{proof}

However, in the context of Example (b), there is a class of operators, where essential self-adjointness can be established a priori. 

\begin{pro}\label{spezi} In the situation of Example (b) above, let $N$ be a closed and connected subgroup of $L$ containing $Q$ with Lie algebra by $\fn$.
Let $Z\in{\Cal Z}(\fn)\subset {\Cal U}(\fl)^Q$ be formally self-adjoint. Then the operator $D:=\pi(Z)$ with domain $(V_{\pi,\infty})^Q$ is essentially self-adjoint on $H=V_\pi^Q$.
\end{pro}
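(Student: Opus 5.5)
We would prove this by realizing $Z$ inside the representation theory of $N$ and then using the fact that, restricted to $N$, the unitary representation $V_\pi$ decomposes into irreducibles on which ${\Cal Z}(\fn)$ acts by scalars.

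First restrict $\pi$ to $N$ and take a direct integral decomposition into irreducible unitary representations,
$$V_\pi|_N\cong\int^\oplus_{\widehat N} M(\tau)\widehat\otimes V_\tau\,d\nu(\tau),$$
which exists by the abstract Plancherel theory for the unimodular (reductive, or in any case type I on the relevant part) group $N$. Taking $Q$-invariants, and using that $Q\subset N$ is compact so that the projection onto $Q$-invariants is decomposable, we obtain
$$V_\pi^Q\cong\int^\oplus M(\tau)\widehat\otimes V_\tau^Q\,d\nu(\tau).$$
Since $Z\in{\Cal Z}(\fn)$, it acts on the smooth vectors of each irreducible $V_\tau$ by the scalar $\chi_\tau(Z)$ (Segal/Dixmier version of Schur's lemma). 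Formal self-adjointness of $Z$ and unitarity force $\chi_\tau(Z)\in\R$. This decomposition gives a self-adjoint multiplication operator $\tilde D$ on $V_\pi^Q$.

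The cleaner route, which we would actually follow to avoid measurability issues, is to avoid the direct integral and use criterion (d): we show that $H_{D,\lambda}=\{0\}$ for $\lambda\notin\R$. Take $v\in V_\pi^Q$ with $D^*v=\lambda v$, i.e. $\pi(Z)v=\lambda v$ in the distributional sense, i.e. in $(V_{\pi,-\infty})^Q$. Here the key tool is a Nelson-type argument on $N$. The element $\Delta_N:=\Omega_N-2\Omega_{N\cap K}$, with $N$ chosen $\theta$-stable after conjugation, or simply $1-\sum X_i^2$ for a basis of $\fn$, is an elliptic element of ${\Cal U}(\fn)$. By Nelson's theorem its closure is essentially self-adjoint on smooth vectors of $V_\pi|_N$. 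Since $Z$ is central in ${\Cal U}(\fn)$, it commutes with the heat semigroup $e^{-t\bar\Delta}$, which maps $V_{\pi,-\infty}$ (with respect to $N$) into $N$-smooth vectors and commutes with $Q$ as well (we average $\Delta$ over $Q$ if needed; $\Omega$-type elements are already $\Ad(Q)$-invariant). Then $v_t:=e^{-t\bar\Delta}v$ lies in the $N$-smooth, $Q$-invariant vectors, satisfies $\pi(Z)v_t=\lambda v_t$, and formal self-adjointness gives
$$\lambda\|v_t\|^2=\langle Zv_t,v_t\rangle=\langle v_t,Zv_t\rangle=\bar\lambda\|v_t\|^2.$$
Hence $v_t=0$, and letting $t\to0$ gives $v=0$.

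The main obstacle is the domain issue: $D$ is defined on $L$-smooth vectors $(V_{\pi,\infty})^Q$, while $v_t$ is only $N$-smooth. We therefore have to check that $\pi(Z)$ on $(V_{\pi,\infty})^Q$ and on the $N$-smooth $Q$-invariant vectors have the same closure. We would handle this by showing that $L$-smooth vectors form a core for the closure of $\pi(Z)$ on $N$-smooth vectors. To do this we approximate using $\pi(\varphi)$, $\varphi\in C_c^\infty(L)$ bi-$Q$-averaged and $\Ad(N)$-conjugation-compatible, or more simply via the Gårding-type fact that $\pi(Z)$ with $N$-smooth domain is essentially self-adjoint, by the computation above applied on $N$. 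Then essential self-adjointness of $D$ on the smaller core $(V_{\pi,\infty})^Q$ follows because the graph closure of $D$ contains the graph of the $N$-version. That last point, approximating in the graph norm of $\pi(Z)$, is where we expect the real work to lie. We would try mollifying with $\pi(\varphi_n)$ for an approximate identity $\varphi_n$ on $L$ that is $\Ad(N)$-invariant near $e$, so that it commutes with $\pi(Z)$.
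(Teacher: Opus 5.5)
There is a genuine gap, exactly at the step you call the main obstacle, and neither of your proposed remedies closes it. Your deficiency-index strategy is otherwise sound.

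The hypothesis $D^*v=\lambda v$ only gives $\langle v,\pi(Z)w\rangle=\lambda\langle v,w\rangle$ for $L$-smooth $w$, i.e.\ an identity among $L$-distribution vectors. Your heat-semigroup step needs it among $N$-distribution vectors. At the very least it must hold when tested against $e^{-t\bar\Delta}w$, which is not known to be $L$-smooth because the heat kernel on $N$ is not compactly supported. Bridging this is precisely the core statement you leave open.

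Your suggested mollifiers do not provide it. Invariance of $\varphi_n$ under conjugation by $N$ only near $e$ does not make $\pi(\varphi_n)$ commute with $\pi(Z)$. Global invariance, on the other hand, is incompatible with being a compactly supported approximate identity once $N$ is non-compact. Already for the diagonal subgroup $N=A$ of $L=SL(2,\R)$, a continuous function invariant under conjugation by $A$ equals $\varphi(e)$ at every upper unipotent $u_t$, $t>0$, since $a_su_ta_s^{-1}=u_{s^2t}$ for $a_s=\mathrm{diag}(s,s^{-1})$. Your other alternative, essential self-adjointness of $\pi(Z)$ on $Q$-invariant $N$-smooth vectors, only helps if you already know that the closure of $D$ contains that operator. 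That is exactly what has to be shown.

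The missing step is easy if you replace the heat semigroup by compactly supported mollifiers on $N$. For $\varphi\in C_c^\infty(N)$, the operators $\pi(\varphi)$ and $\pi(\varphi)^*$ preserve $V_{\pi,\infty}$: the map $n\mapsto\pi(n)w$ is continuous into the Fr\'echet space $V_{\pi,\infty}$, and the integral has compact support. They also commute with $\pi(Z)$ there, since $Z$ is $\Ad(N)$-invariant ($N$ connected).

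First extend the hypothesis from $Q$-invariant to all $w\in V_{\pi,\infty}$ by averaging over $Q$; this averaging is an orthogonal projection commuting with $\pi(Z)$. Testing against such $w$ then gives $\pi(Z)\pi(\varphi)v=\lambda\pi(\varphi)v$ for the $N$-smooth vector $\pi(\varphi)v$. Your symmetry computation yields $\pi(\varphi)v=0$, and an approximate identity gives $v=0$. No comparison of closures is needed.

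The paper argues differently. It mollifies a vector in the domain of $D^*$ with a $Q$-invariant delta sequence in $C_c^\infty(N/Q)$, showing that $\pi(Z)$ on $(V_{\pi,\infty_N})^Q$ has closure $D^*$. It then invokes Poulsen's observation: an $N$-invariant subspace of $N$-smooth vectors that is dense in $V_\pi$ (here $V_{\pi,\infty}$) gives the same closure for $\pi(X)$, $X\in{\Cal U}(\fn)$. This yields $\bar D=D^*$, and Poulsen's observation is exactly the core statement you were missing.
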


\begin{proof} We want to show that $\bar D=D^*$. Note that $D^*$ has domain $\{v\in V_\pi^Q\mid \pi(Z)v\in V_\pi^Q\subset (V_{\pi,-\infty})^Q\}$, see (\ref{lab}). For $v$ in that space
we have to find $v_i \in (V_{\pi,\infty})^Q$ such that $v_i\to v$ {and} $\pi(Z)v_i\to \pi(Z)v$ in $V^Q_{\pi}$. As a first step we consider a $Q$-invariant delta sequence $(\vp_i)$
in $C_c^\infty (N/Q)$. We set $w_i:=\pi(\vp_i)v$. This vector is smooth with respect to $N$: $w_i\in (V_{\pi,\infty_N})^Q$. Moreover, $w_i\to v$ and, since $Z\in Z(\fn)$,
$$ \pi(Z)w_i=\pi(Z)\pi(\vp_i)w=\pi(\vp_i)\pi(Z)w\to \pi(Z)w\ .$$
In other words, if $D'$ denotes the operator $\pi(Z)$ with domain $(V_{\pi,\infty_N})^Q$, then $\overline{D'}=D^*$.

An observation due to Poulsen (\cite{Pou72}, Cor.~1.2) is the following: \\
Let $W\subset V_{\pi,\infty_N}$ be an $N$-invariant subspace which is dense in $V_\pi$, and let $X\in {\Cal U}(\fn)$. Then the two operators given by $\pi(X)$ with domains
$W$ and $V_{\pi,\infty_N}$, respectively, have the same closure.

Using this observation for $X=Z$ and $W=V_{\pi,\infty}$ and going over to $Q$-invariants we obtain $\bar D= \overline{D'}$ and thus $\bar D=D^*$.
\end{proof} 

We conclude this section by a few remarks on the spectral theory of Laplacians $\Delta_Y$  on pseudo-Riemannian manifolds $Y$. Since $\Delta_Y$ has real coefficients
we see that Property (e) above holds. Thus $\Delta_Y$ always has a self-adjoint extension. If the metric tensor of $Y$ is not definite, then the spectrum
of any self-adjoint extension of  $\Delta_Y$ is unbounded from above and below. This is easily seen using Rayleigh quotients.

Concerning essential self-adjointness we can say the following. If $Y$ is Riemannian and complete (in particular, if it is Riemannian and compact), then $\Delta_Y$ is essentially self-adjoint. Compact pseudo-Riemannian manifolds of indefinite signature are not necessarily geodesically complete. It has been conjectured recently by Colin de Verdi\`ere and Le Bihan \cite{CL22}, that for compact pseudo-Riemannian manifolds, $\Delta_Y$ is essentially self-adjoint if and only if $Y$ is geodesically complete, cf. Subsection~\ref{anal} of the introduction. 
\section{The space of $L\cap H$-invariants}\label{invariants}

The first goal of the present section section is to give a characterization of {\em spherical} properly transitive triples $(G,H,L)$  in terms of the finite-dimensionality of the spaces of invariants
$V_\pi^{L\cap H}$ in irreducible unitary representations $V_\pi$ of $L$, see Corollary~\ref{inv3} below. This will be a consequence of Casselman's subrepresentation theorem
combined with the characterization (\ref{char}). We remark that this result is a special case of the multiplicity bounds for general spherical homogeneous spaces $L/Q$
due to \cite{KO13} and \cite{KS16}, which are more complicated to obtain. Since for compact $Q$ (in our situation $Q=L\cap H$) a direct proof is quite simple and short, we feel that it is
adequate to present it here. 
We also show that for many triples the spaces $V_\pi^{L\cap H}$ are in fact at most one-dimensional, see Proposition~\ref{mufree}. This allows us to prove the second main result (Prop.~\ref{staun}) of this chapter asserting that ${\bf D}(G/H)$ acts on $V_\pi^{L\cap H}$ by a {\em single} character - this even holds for all triples of Type~I.

Let $L$ be a connected reductive Lie group with compact center with fixed maximal compact subgroup $K_L$. Let $P_{L}=M_{L}A_{L}N_{L}$ be the Langlands decomposition of a minimal parabolic subgroup of $L$, 
$\sigma$ a (necessarily finite dimensional) irreducible representation of $M_{L}$ and $\nu$ a complex linear form on the complexified Lie algebra of $A_{L}$. 
Then we form the unitarily induced representation 
$$ H^{\sigma,\lambda}:=\Ind_{P_{L}}^{L}\sigma\otimes\e^{\nu}\otimes 1$$
of $L$, also called principal series representation. Let $Q\subset L$ be a compact subgroup. We may and will assume that $Q\subset K_L$.
For an admissible representation $V_\pi$ of $L$ on a Hilbert space such that $K_L$ acts unitarily we consider the spaces of invariants
$$  ( V_{\pi,K_L})^Q \subset ( V_{\pi,\infty})^Q \subset V_{\pi}^Q\subset (V_{\pi,-\infty})^Q\ .$$
Here, as usual, $V_{\pi,K_L}$ denotes the $(\fl,K_L)$-module of $K_L$-finite vectors in $V_\pi$. Because of the compactness of $Q$ all inclusions have dense image, and thus all these four spaces have the same dimension (viewed as an element of $\N_0\cup\{\infty\}$).
We are particularly interested in the cases that $V_\pi$ is unitary or that it is a principal series representation $ H^{\sigma,\lambda}$. Note that in the latter case
the space of invariants is independent of $\lambda$. Indeed,  $H^{\sigma,\lambda}\cong\Ind_{M_{L}}^{K_L}\sigma$ as $K_L$- and hence as $Q$-representation.

For later reference we first express these spaces in terms of the $K_L$-type decomposition of  $V_\pi$. We use the notation for decompositions introduced at the end of Section \ref{dist}, in particular in Lemma \ref{deko} (here the role of $G$ is played by $K_L$).
\begin{lem}\label{frob}
Let $V_\pi$ an admissible representation of $L$ as above. For $*=\infty,\emptyset,-\infty$ we have
\beu
(V_{\pi,*})^{Q}\cong\overline{\bigoplus_{\gamma\in\widehat K_L}}M_\pi(\gamma)\otimes V_\gamma^Q\ ,
\end{equation*}
in particular
\beu
(H^{\sigma,\lambda}_{*})^{Q}\cong\overline{\bigoplus_{\gamma\in\widehat K_L}}\Hom_{M_L}(V_\gamma,V_\sigma)\otimes V_\gamma^Q.
\end{equation*}
\end{lem}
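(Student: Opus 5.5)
The plan is to reduce everything to the $K_L$-type decomposition of $V_\pi$ and then take $Q$-invariants summand by summand.

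\emph{Step 1: the $K_L$-type decomposition.} Since $K_L$ is compact and acts unitarily on $V_\pi$, we have a Hilbert sum decomposition
\[
V_\pi\cong\widehat{\bigoplus_{\gamma\in\widehat K_L}} M_\pi(\gamma)\otimes V_\gamma,\qquad M_\pi(\gamma)=\Hom_{K_L}(V_\gamma,V_\pi).
\]
By admissibility each $M_\pi(\gamma)$ is finite dimensional.

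\emph{Step 2: passing to smooth and distribution vectors.} We want the analogous decompositions of $V_{\pi,\pm\infty}$. Lemma~\ref{deko} is formulated for a fixed group, and smoothness there is with respect to that group. We therefore do not apply it with $G=K_L$ directly. Instead we use the isotypic projections
\[
P_\gamma=d_\gamma\int_{K_L}\overline{\chi_\gamma(k)}\,\pi(k)\,dk .
\]
These preserve $V_{\pi,\infty}$ and extend to $V_{\pi,-\infty}$ by duality.

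The needed convergence of Fourier series in the $L$-Fr\'echet topology is the standard rapid decay of $K_L$-isotypic components of $L$-smooth vectors. It follows by applying powers of $1+\Omega_{K_L}$, which is a polynomial in $\pi(\fl)$ and hence continuous on $V_{\pi,\infty}$. This gives
\[
\|P_\gamma v\|_X\le C_N\,(1+|\gamma|)^{-N}\,\|(1+\Omega_{K_L})^{N'}v\|_{X'},
\]
together with the polynomial growth of $\dim M_\pi(\gamma)\otimes V_\gamma$ in $|\gamma|$ that admissibility of a finitely generated Harish-Chandra module provides. Dually, on $V_{\pi,-\infty}$ we get weak$^*$ convergence. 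Strong convergence then follows from the Montel property, exactly as in the proof of Lemma~\ref{deko}.

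This step is the main technical point, because $\pi$ need not be unitary for $L$ (e.g.\ a nonunitary principal series). We only use unitarity of $K_L$, and the estimates above depend only on that. Also, the multiplicity spaces $M_\pi(\gamma)$ may be infinite dimensional only if admissibility fails, and that is excluded by hypothesis.

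\emph{Step 3: taking $Q$-invariants.} Since $Q\subset K_L$, the projections $P_\gamma$ commute with $Q$, and $Q$ acts only on the factor $V_\gamma$. Hence the projections restrict to the $Q$-invariants, with
\[
\bigl(M_\pi(\gamma)\otimes V_\gamma\bigr)^Q=M_\pi(\gamma)\otimes V_\gamma^Q .
\]
The structural maps $i_\gamma,p_\gamma,P_J$ of Def.~\ref{kobold} restrict to $Q$-invariants, which gives the first formula for $*=\infty,\emptyset,-\infty$. Note that $V_{\pi,K_L}\cap V_\gamma$-components coincide for all three spaces, because isotypic components are finite dimensional.

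\emph{Step 4: the principal series.} For $H^{\sigma,\lambda}$ we use the compact picture, $H^{\sigma,\lambda}\cong\Ind_{M_L}^{K_L}\sigma$ as a $K_L$-representation. Frobenius reciprocity for compact groups gives
\[
M_\pi(\gamma)=\Hom_{K_L}\bigl(V_\gamma,\Ind_{M_L}^{K_L}\sigma\bigr)\cong\Hom_{M_L}(V_\gamma,V_\sigma).
\]
Inserting this into the first formula yields the second.
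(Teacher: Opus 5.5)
Your proof is correct, but Step~2 follows a different route from the paper.

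\textbf{The paper's route.} The paper applies Lemma~\ref{deko} with $K_L$ in the role of $G$; only unitarity of $V_\pi$ restricted to $K_L$ is needed there. This decomposes $V_{\pi,\pm\infty_{K_L}}$. The paper then quotes the fact that for admissible representations the $L$-smooth and $K_L$-smooth vectors coincide, hence also $V_{\pi,-\infty}=V_{\pi,-\infty_{K_L}}$. Your Steps~3 and~4 (taking $Q$-invariants, then the compact picture and Frobenius reciprocity) are exactly what the paper does afterwards.

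\textbf{Your route.} You avoid that regularity fact and prove convergence of the $K_L$-Fourier series directly in the $L$-Fr\'echet topology. The ingredients are:
\begin{itemize}
\item $P_\gamma v=c(\gamma)^{-N}P_\gamma(1+\Omega_{K_L})^Nv$, where $c(\gamma)$ is the scalar by which $1+\Omega_{K_L}$ acts on $V_\gamma$;
\item $\pi(X)\pi(k)=\pi(k)\pi(\Ad(k^{-1})X)$, to move $\mathcal U(\fl)$ past $P_\gamma$;
\item $|\chi_\gamma|\le d_\gamma$, with $d_\gamma$ polynomial in $|\gamma|$.
\end{itemize}

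\textbf{Comparison.} The paper's route is shorter and gives more: every $K_L$-rapidly decreasing Fourier series is an $L$-smooth vector. But it rests on a nontrivial input of Harish-Chandra/Casselman--Wallach type. Yours is elementary and uses only unitarity of $K_L$ and finite-dimensionality of the isotypic components. It gives exactly the convergence that Definition~\ref{kobold} requires, and no more.

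\textbf{Two small points.}
\begin{itemize}
\item The growth of $\dim M_\pi(\gamma)$ plays no role. Your bound on $\|P_\gamma v\|_X$ involves only $d_\gamma$ and $c(\gamma)$.
\item Lemma~\ref{deko} assumes the Montel property rather than proving it. Since $\pi$ is not unitary for $L$, you also cannot transfer it to $V_{\tilde\pi,\infty}$ as is done there. You do not need it, however. Applied to $\tilde\pi$, your tail estimates are uniform on bounded subsets of $V_{\tilde\pi,\infty}$. This gives strong convergence of the Fourier series on $V_{\pi,-\infty}$ directly, and even shows that $V_{\tilde\pi,\infty}$ is Montel.
\end{itemize}
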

\begin{proof}

The first formula just arises by taking $Q$-invariants in the three decompositions in Lemma~\ref{deko}. Here we use that $L$-smooth and $K_L$-smooth vectors coincide for admissible representations.  In the case $V_\pi=H^{\sigma,\lambda}$, we obtain using Frobenius reciprocity
$$M_\pi(\gamma)\cong \Hom_{K_L}(V_\gamma, \Ind_{M_{L}}^{K_L}\sigma)\cong \Hom_{M_L}(V_\gamma,V_\sigma)\ .$$
\end{proof}

We will need the following well-known consequence of the slice theorem and the principal orbit theorem for compact Lie group actions (see e.g. \cite{Ja} or Theorem 2.1 of \cite{Mos57}):
\begin{lem}\label{prince}
Let $C$ be a compact Lie group acting on a connected manifold $S$. There exists a non-empty open $C$-invariant subset $U$ of $S$, a closed subgroup 
$C^{\prime}$ of $C$, and a non negative integer $m$ such that, as $C$-manifolds, $U\simeq (C/C^{\prime})\times{\R^m}$, 
where the $C$-action on ${\R^m}$ is the trivial action.
\end{lem}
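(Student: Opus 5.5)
The plan is to derive Lemma~\ref{prince} from the slice theorem and the principal orbit type theorem for smooth actions of compact Lie groups. The argument has three steps. First, locate a principal orbit. Second, build a $C$-invariant tube around it from a slice. Third, show that near a principal orbit the isotropy acts trivially on the slice, so the tube is a product.

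First, fix a $C$-invariant Riemannian metric on $S$, obtained by averaging an arbitrary metric with Haar measure on the compact group $C$. By the principal orbit type theorem (\cite{Mos57}, Thm.~2.1, or \cite{Ja}), connectedness of $S$ gives a principal isotropy type: there is a closed subgroup $C'\subset C$ with the following property. The set $S_{(C')}$ of points whose stabilizer is conjugate to $C'$ is open, dense and $C$-invariant in $S$. Moreover, every stabilizer $C_y$, $y\in S$, contains a conjugate of $C'$. I would cite this rather than reprove it.

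Second, pick $x\in S_{(C')}$ with $C_x=C'$, and apply the slice theorem at $x$. Let $N_x\cong\R^m$ be the normal space to the orbit $Cx$ at $x$, with $m=\dim S-\dim C/C'$. The exponential map of the invariant metric, restricted to a small $C'$-invariant ball $B\subset N_x$, yields a $C$-equivariant diffeomorphism
\begin{equation*}
C\times_{C'} B\longrightarrow U_0,
\end{equation*}
where $U_0$ is an open $C$-invariant neighbourhood of $Cx$. The stabilizer of a point $[e,v]$ with $v\in B$ is the isotropy subgroup $C'_v\subset C'$ of $v$ under the linear $C'$-action on $N_x$.

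Third, show that $C'$ acts trivially on $N_x$; this is the only real point. Suppose $C'_v\subsetneq C'$ for some $v\in B$. The stabilizer of $[e,v]$ is $C'_v$, and by principality it must contain a conjugate $cC'c^{-1}$. Then $cC'c^{-1}\subset C'_v\subsetneq C'$, which is impossible for compact Lie groups: they cannot be conjugate to a proper subgroup, since equal dimension and equal number of components would force equality. Hence $C'$ fixes every point of $B$. Since $B$ spans $N_x$ and the action is linear, $C'$ acts trivially on $N_x$.

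Therefore $C\times_{C'}B\cong (C/C')\times B$ as $C$-manifolds, with $C$ acting trivially on the $B$ factor. Finally, identify the open ball $B$ with $\R^m$ by a radial diffeomorphism, which is compatible with the trivial action. Setting $U:=U_0$ gives the lemma.
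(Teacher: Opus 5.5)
Your proof is correct and follows the route the paper indicates. The paper gives no separate argument; it cites the lemma as a standard consequence of the slice theorem and the principal orbit type theorem (\cite{Ja}, Thm.~2.1 of \cite{Mos57}). Your three steps are exactly that derivation: choose a principal isotropy group $C'$, build the tube $C\times_{C'}B$, and show the slice representation is trivial because a compact Lie group cannot be conjugate into a proper subgroup of itself.
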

\noindent
Now the main observation is the following
\begin{pro}\label{translem}
In the situation described above the following assertions are equivalent:
\begin{itemize}
\item[(i)] The space $(H^{\sigma,\lambda})^{Q}$
is finite dimensional for all $\sigma$.
\item[(ii)] The space $(H^{1,\lambda})^{Q}$
is finite dimensional. Here $1$ denotes the trivial one dimendional representation of $M_L$.
\item[(iii)] The space $V_\pi^{Q}$
is finite dimensional for all irreducible admissible representations of $L$.
\item[(iv)] The space $V_\pi^{Q}$
is finite dimensional for all $\pi\in \widehat L$.
\item[(v)] $Q$ acts transitively on $L/P_{L}\cong K_L/M_L$.
\end{itemize}
\end{pro}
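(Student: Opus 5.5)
The plan is to prove the cycle of implications (v)$\Rightarrow$(i)$\Rightarrow$(iii)$\Rightarrow$(iv)$\Rightarrow$(ii)$\Rightarrow$(v), in that order, with (iii)$\Rightarrow$(iv) trivial since unitary irreducibles are admissible.

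For (v)$\Rightarrow$(i), I would use the compact picture $H^{\sigma,\lambda}\cong\Ind_{M_L}^{K_L}\sigma$, i.e.\ $L^2$-sections of the homogeneous bundle $K_L\times_{M_L}V_\sigma$ over $K_L/M_L$. If $Q$ acts transitively on $K_L/M_L$, then a $Q$-invariant section is determined by its value at the base point $eM_L$. That value must be fixed by the stabilizer $Q\cap M_L$ acting through $\sigma$. Hence $\dim(H^{\sigma,\lambda})^Q\le\dim V_\sigma<\infty$, and the same bound holds for the smooth and distributional versions. Strictly, one should argue with $Q$-invariant distributions on the transitive $Q$-space, but $Q$-invariance together with transitivity forces smoothness, so the argument goes through.

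For (i)$\Rightarrow$(iii), I would use Casselman's subrepresentation theorem. Every irreducible admissible $V_\pi$ has Harish-Chandra module embedding into some principal series $H^{\sigma,\lambda}_{K_L}$. Since $Q\subset K_L$, taking $Q$-invariants of $K_L$-finite vectors gives $\dim (V_{\pi,K_L})^Q\le \dim (H^{\sigma,\lambda}_{K_L})^Q<\infty$. By the density remarks before Lemma \ref{frob}, this dimension equals $\dim V_\pi^Q$.

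For (iv)$\Rightarrow$(ii), I would take $\lambda$ imaginary, so that $H^{1,\lambda}$ is unitary. It has finite length with irreducible unitary constituents, so it is a finite direct sum of elements of $\widehat L$. The $Q$-invariants are additive, which gives finiteness. Since the space of invariants is independent of $\lambda$, this suffices.

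The main step is (ii)$\Rightarrow$(v). Here $(H^{1,\lambda})^Q\cong L^2(K_L/M_L)^Q$, which contains the space of $Q$-invariant continuous functions on the compact connected manifold $S=K_L/M_L$; connectedness of $S$ holds because $L$ is connected, so $K_L/M_L\cong L/P_L$ is connected. Suppose $Q$ is not transitive. I would apply Lemma \ref{prince} to $C=Q$ acting on $S$, which gives an open $U\simeq Q/Q'\times\R^m$. If $m\ge1$, then pulling back arbitrary $\vp\in C_c^\infty(\R^m)$ yields infinitely many linearly independent $Q$-invariant functions in $L^2(S)$, a contradiction. If $m=0$, then $U$ is a single open orbit, which is also closed by compactness, so by connectedness $U=S$ and $Q$ is transitive. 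The delicate point is exactly this dichotomy, namely making sure that the principal orbit type lemma yields $m\ge1$ whenever the action is not transitive; the open-and-closed argument handles this.
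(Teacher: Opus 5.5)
Your proposal is correct and follows the paper's proof essentially step by step. It uses the same cycle (v)$\Rightarrow$(i)$\Rightarrow$(iii)$\Rightarrow$(iv)$\Rightarrow$(ii)$\Rightarrow$(v), with Frobenius reciprocity in the transitive case, Casselman's subrepresentation theorem, and Lemma~\ref{prince} with the embedding of $C_c^\infty(\R^m)$ into $C^\infty(K_L/M_L)^Q$. The only deviations are minor: you use unitarity and finite length of $H^{1,\lambda}$ where the paper uses its irreducibility for imaginary $\lambda$, and you spell out the open--closed--connected argument for $m=0$, which the paper leaves implicit.
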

\begin{proof}
We prove the following chain of implications: (v) $\Rightarrow$ (i) $\Rightarrow$ (iii ) $\Rightarrow$ (iv)$\Rightarrow$ (ii) $ \Rightarrow$ (v).
 
Assume (v). Then we obtain using Frobenius reciprocity
\be\label{karl-hermann}
(H^{\sigma,\lambda})^{Q}\cong
\Big(\Ind_{Q\cap M_L}^{Q}(\sigma|_{Q\cap M_L})\Big)^Q\cong
V_\sigma^{Q\cap M_L}\ .
\end{equation}
The space on the right hand side is finite dimensional. We obtain (i). The implication (i) $\Rightarrow$ (iii) is a consequence of Casselman's subrepresentation theorem
(strictly speaking we first conclude the finite dimensionality of  $(V_{\pi,K_L})^Q$). Assertion (iii) implies (iv) (irreducible unitary representations are admissible). For purely imaginary $\lambda$ the representation $H^{1,\lambda}$ is irreducible and unitary.
This shows the implication  (iv) $\Rightarrow$ (ii). Now we assume (ii). We apply Lemma \ref{prince} to $S=K_L/M_L$ und $C=Q$. Then the $Q$-equivariant diffeomorphism 
$U\cong Q/C'\times \R^m$ induces a linear embedding
$$ C_c^\infty(\R^m)\hookrightarrow C^\infty(K_L/M_L)^Q\cong (H^{1,\lambda}_\infty)^Q\ .$$
Thus $C_c^\infty(\R^m)$ is finite dimensional which implies $m=0$. We conclude $U=S$ and the transitivity of $Q$.
\end{proof}

\begin{rem}
We could have added $Q$-admissibility as a sixth equivalent condition: All $Q$-isotypic components of  irreducible admissible representations $V_\pi$ of $L$ are finite dimensional.
\end{rem}
\begin{cor}\label{inv3} Let $(G,H,L)$ be a properly transitive triple. Then $(G,H,L)$ is spherical if and only if $V_\pi^{L\cap H}$
is finite dimensional for all $\pi\in \widehat L$.
\end{cor}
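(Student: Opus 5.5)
The plan is to reduce Corollary~\ref{inv3} directly to Proposition~\ref{translem}, applied with the compact subgroup $Q:=L\cap H$. First we verify the hypotheses of that proposition. By Definition~\ref{defck}, $L$ is connected and reductively embedded. By Corollary~\ref{unfug} and Proposition~\ref{cocpt}, it is strongly reductive with compact center $Z(L_0)$, so $L$ is a connected reductive group with compact center, which is the standing assumption of this section. As agreed after Definition~\ref{defck}, we take $L$ to be $\theta$-stable and $K_L=K\cap L$. The group $Q=L\cap H$ is compact by (iv) of Definition~\ref{defck}. Since $\theta$ commutes with $\sigma$ and stabilizes both $L$ and $H$, $Q$ is $\theta$-stable and compact, hence contained in $K_L$. (Alternatively, one could conjugate $Q$ into $K_L$, which does not affect any of the conditions.)

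Next we match the two sides of the corollary with conditions of Proposition~\ref{translem}. The right-hand side, finite-dimensionality of $V_\pi^{L\cap H}$ for all $\pi\in\widehat L$, is condition (iv) verbatim.

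For the left-hand side, we use Definition~\ref{sphertriples} and the characterization (\ref{char}). The triple is spherical iff $P_L$ acts transitively on $G/H\cong L/L\cap H$. Transitivity of $P_L$ on $L/Q$ is equivalent to transitivity of $Q$ on $L/P_L$, since both say $L=P_LQ$ (equivalently $L=QP_L$ after inversion). Via the Iwasawa/Langlands identification $L/P_L\cong K_L/M_L$ with $M_L=Z_{K_L}(\fa_L)$, this becomes exactly condition (v) of Proposition~\ref{translem}. The equivalence (iv)$\Leftrightarrow$(v) established there then yields the corollary.

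I expect no real obstacle, since all the analytic work is contained in Proposition~\ref{translem}. The only point needing care is that the hypotheses there (connected $L$ with compact center, $Q\subset K_L$) are genuinely met for an arbitrary properly transitive triple, rather than only for $L_{min}$ or $L_{max}$. This is handled by Proposition~\ref{cocpt} and the $\theta$-stability convention above. If the compact-center hypothesis were ever in doubt, one would fall back on Corollary~\ref{unfug}(i) together with the remark after Definition~\ref{qa}.
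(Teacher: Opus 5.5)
Your proposal is correct and follows the paper's own proof: the paper likewise uses the characterization (\ref{char}) of sphericity and applies Proposition~\ref{translem} to $Q=L\cap H$ via the equivalence (iv)$\Leftrightarrow$(v). Your additional check of the standing hypotheses is accurate and only makes explicit what the paper leaves implicit, namely that $L$ is connected with compact center (Proposition~\ref{cocpt}) and that $Q\subset K_L$ (by $\theta$-stability, or after conjugation).
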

\begin{proof} Use the characterization (\ref{char}) for being spherical and apply Prop.~\ref{translem} to the compact group $Q=L\cap H$.
\end{proof}

\noindent
%
%

\begin{rem} 
Assume that $(G,H,L)$ is not spherical. By a refinement of the argument of the proof of Proposition \ref{translem}, one can show  that then
$$ \dim (H^{\sigma,\lambda})^{L\cap H}\in \{0,\infty\}\ .$$
Nevertheless, there might be infinite dimensional irreducible unitary representations $V_\pi$ of $L$ such that $$0<\dim V_\pi^{L\cap H}<\infty$$ 
(take for example a triple
of Case~12 in Thm.~\ref{listck} with $(G_1,H_1,L_1)$ spherical and $V_\pi=V_{\pi_1}\otimes \C$ for certain $\pi_1\in \widehat L_1$).
\end{rem}

Back to spherical triples: it is natural to ask in which cases $V_\pi^{L\cap H}$ is not only finite dimensional but is of dimension at most one.
This happens to be the case for many spherical triples.

\begin{pro}\label{mufree}
Assume that $(G,H,L)$ is a spherical triple satisfying one of the following conditions:
\begin{itemize}
\item[(i)] $G$ is simple and $(G,H,L)$ is not locally isomorphic to $(SO_e(4,4n), SO_e(3,4n), Sp(1,n))$, if $n\geq 1$.
\item[(ii)] $L=L_{max}$ and all group manifolds appearing in the decomposition of $X=G/H$ into irreducible symmetric spaces are locally isomorphic to $SO_e(1,n)$ or $SU(1,n)$, $n\geq 2$.
\end{itemize} 
Then for all irreducible admissible representations $V_\pi$ of $L$ we have $\dim V_\pi^{L\cap H}\le 1$.

Now let $(G,H,L)=(SO_e(4,4n), SO_e(3,4n), Sp(1,n))$, and let $V_\pi$ be an irreducible admissible representation of $L$. Since $L\cap H=Sp(n)$, the $Sp(1)$-factor
of $K_L=Sp(1)\times Sp(n)$ acts on $V_\pi^{L\cap H}$. This action is irreducible.
\end{pro}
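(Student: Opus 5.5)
We reduce the statement to a multiplicity-one assertion for principal series, using Casselman's subrepresentation theorem. An irreducible admissible $V_\pi$ embeds, on the level of Harish-Chandra modules, into some $H^{\sigma,\lambda}$, so $\dim V_\pi^{L\cap H}\le\dim (H^{\sigma,\lambda})^{L\cap H}$. Since the triple is spherical, $Q=L\cap H$ acts transitively on $K_L/M_L$. By (\ref{karl-hermann}) this gives $(H^{\sigma,\lambda})^{Q}\cong V_\sigma^{Q\cap M_L}$, where $Q\cap M_L$ is the stabilizer in $Q$ of the base point of $K_L/M_L$. It therefore suffices to show that, for every irreducible $\sigma\in\widehat{M_L}$, the space $V_\sigma^{Q\cap M_L}$ has dimension at most one. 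Equivalently, $(M_L,Q\cap M_L)$ should be a Gelfand-type pair for finite-dimensional representations, which in good cases is an honest compact Gelfand pair. An alternative route is to use complex sphericity (Remark~\ref{complexsph}) via Vinberg--Kimelfeld, but the explicit route is more concrete.

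For case (i) we go through Table~1, using Thm.~\ref{listck} and the fact that for $G$ simple Type~I triples are Type Ia, namely Cases 1--7. For each case we identify $M_L$ and the stabilizer $Q\cap M_L$ from the transitive actions on spheres.
\begin{itemize}
\item Case 1: $U(n)$ acts on $S^{2n-1}$ with stabilizer $U(n-1)$, inside $M_L=U(1)\times U(n-1)$. Here $Q\cap M_L$ is $U(n-1)$, embedded so as to meet $U(1)$ appropriately.
\item Case 2: $U(n)$ acts on $S^{2n-1}$ inside $K_L=SO(2n)$, and $M_L=SO(2n-1)\supset U(n-1)$. This is the classical pair $(SO(2n-1),U(n-1))$. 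It is spherical (via $SO(2n)/U(n)$) but not obviously multiplicity free for arbitrary $\sigma$, so it needs a separate check.
\item Case 3: $Sp(n)\times U(1)$ acts on $S^{4n-1}$ with stabilizer $Sp(n-1)\times U(1)$, inside $M_L=Sp(1)\times Sp(n-1)$. This reduces to the pair $(Sp(1),U(1))$, which is multiplicity one.
\item Cases 4, 6, 7 are analogous. In Case 6, $Spin(7)$ acts on $S^7$ with stabilizer $G_2\subset M_L=Spin(7)$, and $(Spin(7),G_2)$ is a Gelfand pair. In Case 7, $G_2$ acts on $S^6$ with stabilizer $SU(3)\subset Spin(6)=SU(4)$, again a Gelfand pair.
\end{itemize}
Where $Q\cap M_L$ is not a symmetric subgroup of $M_L$, we invoke the known classification of compact spherical pairs (Krämer). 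For the few pairs that fail to be Gelfand pairs, we use the finer fact that only those $\sigma$ which occur in the $K_L$-types of an $H^{\sigma,\lambda}$ with nonzero $Q$-invariants matter, and handle these by branching rules. The excluded case $(SO_e(4,4n),SO_e(3,4n),Sp(1,n))$ shows why this is necessary: there $M_L=Sp(1)\times Sp(n-1)$ and $Q\cap M_L=\Delta Sp(1)\cdot Sp(n-1)$ modulo the extra compact factor. This yields $V_\sigma^{Q\cap M_L}\cong \Hom_{Sp(1)}(\sigma_1,\sigma_1)$ type spaces, which have dimension up to $\dim$ of the irreducible.

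The final assertion for $Sp(1,n)$ is handled by the same computation. $K_L=Sp(1)\times Sp(n)$ and $Q=Sp(n)$, and the extra $Sp(1)$ commutes with $Q$, so it acts on $V_\pi^{Q}$. By the display above, $(H^{\sigma,\lambda})^Q$ is identified with $V_\sigma^{Sp(n-1)}$ for $\sigma=\sigma_1\boxtimes\sigma_2\in\widehat{Sp(1)\times Sp(n-1)}$. This is $\sigma_1$ if $\sigma_2$ is trivial and $0$ otherwise, and it is irreducible as an $Sp(1)$-module. For an irreducible subquotient we then need irreducibility to persist. We plan to argue that $V_\pi^{Q}$ is a module over ${\bf D}(L/Q)$-type algebras commuting with the $Sp(1)$-action, and that its $Sp(1)$-isotypic structure is forced to be a single irreducible. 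The latter is the step we expect to be the main obstacle, since subrepresentations of an irreducible $Sp(1)$-module can be proper only if zero. Using the embedding $V_\pi\hookrightarrow H^{\sigma,\lambda}$, $V_\pi^Q$ is an $Sp(1)$-submodule of the irreducible $\sigma_1$, hence either zero or all of it.

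For case (ii), Prop.~\ref{productck} lets us factor the triple into irreducible Type~I factors, and $V_\pi$ factors as an outer tensor product. Non-group factors are covered by (i). The group factors are Case 11 with $G_1=SO_e(1,n)$ or $SU(1,n)$, where $L_{max}=G_1\times K_1$ and $Q=\Delta K_1$. For $\pi=\pi_1\boxtimes\tau$ we get $V_\pi^{Q}\cong\Hom_{K_1}(\tau^*,V_{\pi_1})$, and multiplicity at most one follows from the classical fact that $K$-types of irreducible admissible representations of $SO_e(1,n)$ and $SU(1,n)$ occur with multiplicity at most one. The main obstacle overall is the case-by-case verification in (i), especially Case 2, that $V_\sigma^{Q\cap M_L}$ has dimension at most one for the $\sigma$ that actually occur.
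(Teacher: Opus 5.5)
Your proposal is correct and is essentially the paper's own argument. Casselman's embedding together with (\ref{karl-hermann}) reduces everything to $\dim V_\sigma^{(L\cap H)\cap M_L}\le 1$, which is checked case by case through Triples 1--7. The last claim follows from the $K_L$-equivariance of the embedding into the irreducible $Sp(1)$-module $(H^{\sigma,\lambda})^{Sp(n)}$. Part (ii) follows from Prop.~\ref{productck} together with multiplicity-one $K$-types for $SO_e(1,n)$ and $SU(1,n)$.

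Two small corrections. First, your Triple~5 remark conflates two different choices of $L$:
\begin{itemize}
\item For $L_{min}$, $Q\cap M_L=Sp(n-1)$ and the invariants are all of $V_{\sigma_1}$.
\item For $L_{max}$, and for the intermediate group $Sp(1,n)\cdot U(1)$, which (i) does cover and which you should check explicitly, one has $Q\cap M_L\cong\Delta Sp(1)\times Sp(n-1)$ and $\Delta U(1)\times Sp(n-1)$ respectively. Schur's lemma, respectively the multiplicity-free weights of $Sp(1)$, then give $\dim\le 1$.
\end{itemize}
Second, your fallback for pairs that are not Gelfand pairs could never help. Since $H^{\sigma,\lambda}$ is irreducible for generic $\lambda$, the condition $\dim V_\sigma^{Q\cap M_L}\le 1$ for all $\sigma$ is in fact necessary.
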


\begin{proof} We first assume that $G$ is simple. By Casselman's embedding theorem it suffices to check the assertion for principal series representations $H^{\sigma,\lambda}$.
This can be done by explicit computations where $(G,H,L_{max})$ runs through the Triples 1 -- 7 in Table 1 and $L_{min}\subset L\subset L_{max}$ (strictly speaking through all
isomorphism classes of triples which are locally isomorphic to them, but this leads essentially to the same computations).  One can use
either Lemma \ref{frob} or Formula (\ref{karl-hermann}). For Triple 1 cf. Lemma \ref{bummi} and its proof. For the higher rank triples 2 and 4 related computations can be found in the proof of Lemma \ref{schlips}. We obtain the desired multiplicity one result except for Triple 5 with $L=L_{min}$. In the latter case $L\cap H=Sp(n)$ and $(H^{\sigma,\lambda})^{Sp(n)}$
is an irreducible $Sp(1)$-module. The argument is again as in the proof of Lemma~\ref{bummi}. A slightly more conceptual approach to the proof under Assumption (i) would be the following: One checks that for all cases under consideration the homogeneous
space $L/L\cap H$ is {\em complex} spherical which implies that ${\bf D}(L/L\cap H)$ is commutative, see Remark \ref{complexsph} and  Table 1.1 in \cite{KK19}.
For a compact $Q\subset L$ and an irreducible admissible $L$-representation $V_\pi$ such that  $V_\pi^Q$ is finite dimensional the representation of ${\Cal U}(\fl)^Q$ on $V_\pi^Q$ is irreducible (or zero).
This representation descends to ${\bf D}(L/Q)$. In our cases, for $Q=L\cap H$, the algebra ${\bf D}(L/Q)$ is commutative and hence has only one-dimensional irreducible representations. 

Now assume (ii). By Prop.~\ref{productck}, one can reduce the assertion to irreducible triples with $L=L_{max}$. If $G$ is simple, then (i) holds and we are done.
If $G/H\cong G_1$ is a group manifold and $L=L_{max}$, then the assertion is equivalent to: the restriction of irreducible $G_1$-representations to the maximal
compact subgroup is multiplicity free. The two mentioned groups have this property (and are the only simple non-compact Lie groups with this property - up to local isomorphisms).
\end{proof}

As discussed in the above proof, the algebra ${\bf D}(L/L\cap H)$ acts on $V_\pi^{L\cap H}$ (in case that the latter space is infinite dimensional as an algebra of  a  priori unbounded operators
with initial domain of definition $(V_{\pi,\infty})^{L\cap H}$). Via the embedding $\imath: {\bf D}(G/H)\rightarrow {\bf D}(  L/L\cap H)$ also ${\bf D}(G/H)$ does so.

\begin{pro}\label{staun}
Let $(G,H,L)$ be a triple of Type~I, and let $V_\pi$ be an irreducible admissible representation  of $L$ with $V_\pi^{L\cap H}\ne\{0\}$. Then the algebra ${\bf D}(G/H)$ acts by a single character on  $V_\pi^{L\cap H}$:
there exists $\chi_\pi\in\Hom({\bf D}(G/H),\C)$ such that the action of $D\in {\bf D}(G/H)$ on $V_\pi^{L\cap H}$ is given by $\chi_\pi(D)\cdot\id$.
\end{pro}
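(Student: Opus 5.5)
We reduce to irreducible triples and then handle each case either through the multiplicity one result or through the Casimir formulas.

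First, we pass from $L$ to $L_{max}$. Since $L_{min}\subset L\subset L_{max}$ and $L_{max}/L_{min}$ is compact, an irreducible admissible $V_\pi$ of $L$ does not directly give one of $L_{max}$. So we instead use the structure inside ${\bf D}(L/L\cap H)$. By Prop.~\ref{productck}, $(G,H,L)$ is equivalent to a product of irreducible Type~I triples $(G_i,H_i,L_i)$. Then ${\bf D}(G/H)=\bigotimes_i {\bf D}(G_i/H_i)$, and $\fl_{min}=\bigoplus \fl_{i,min}$. An irreducible admissible representation of $L$, restricted to the finite-index or compact-extension pieces, is controlled by tensor products of irreducibles of the factors up to the compact central part. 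The compact part $\fc$ commutes with everything and acts on $V_\pi^{L\cap H}$ through finitely many characters. This reduces the claim to each irreducible factor, provided we show that each $D\in{\bf D}(G_i/H_i)$ acts by a scalar on the relevant invariant spaces.

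For an irreducible Type~I triple, Thm.~\ref{listck} leaves three possibilities: $G$ simple (Cases 1--7 of Table~1), or the group case 11.

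\textbf{Group case 11.} By (\ref{center}), ${\bf D}(G/H)\cong{\Cal Z}(\fg_1)$, embedded into ${\Cal U}(\fl)$ via the factor $G_1=L_{min}$. Hence it acts by the infinitesimal character of $\pi|_{G_1}$ on all of $V_\pi$. Here we need $\pi|_{L_{min}}$ to have an infinitesimal character, which follows because $L/L_{min}$ is compact central up to finite index and $V_\pi$ is irreducible.

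\textbf{$G$ simple.} ${\bf D}(G/H)$ has rank one in Cases 1, 3, 5, 6 and is generated by Casimir-type elements otherwise. By Prop.~\ref{mufree}(i), $\dim V_\pi^{L\cap H}\le1$ except for Triple~5 with $L=L_{min}$.
\begin{itemize}
\item When the dimension is at most one, the commuting algebra ${\bf D}(G/H)$ preserves the line $(V_{\pi,\infty})^{L\cap H}=V_\pi^{L\cap H}$, so it acts by a character.
\item In the exceptional Triple~5 with $L=Sp(1,n)$, Prop.~\ref{Casimir}(2) gives $\imath(\Omega_G)=4\Omega_L-2\Omega_{L\cap K}$. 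On $V_\pi^{Sp(n)}$, $\Omega_{L\cap K}$ reduces to the Casimir of the $Sp(1)$-factor, which acts by a scalar because that $Sp(1)$-action is irreducible (Prop.~\ref{mufree}). Also $\Omega_L$ acts by a scalar, and $\Omega_G$ generates ${\bf D}(G/H)$ since the rank is one.
\end{itemize}

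The main obstacle is the bookkeeping between $L$ and the product decomposition, and the claim that $V_\pi^{L\cap H}$ factorizes accordingly. To handle it, we argue directly that ${\Cal U}(\fl)^{L\cap H}$ acts irreducibly on the finite-dimensional $V_\pi^{L\cap H}$ (as noted in the proof of Prop.~\ref{mufree}). Since $\imath({\bf D}(G/H))$ is central in the algebra generated by $\imath({\bf D}(G/H))$ and $\Omega_{L\cap K}$ (Cor.~\ref{commutation}), a Schur-type argument then applies on each factor. Within a factor, the finite-dimensionality of $V_\pi^{L\cap H}$ comes from Cor.~\ref{inv3} applied to $L_{max}$. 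This is the step we would verify most carefully.
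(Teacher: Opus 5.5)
The overall strategy matches the paper's, but there is a genuine gap in the reduction from a general Type~I triple to irreducible ones. You flag this step yourself, and the arguments you give for it do not work.

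\emph{Where the reduction fails.} The compact part $\fc$ does commute with $\fl_{min}$, but not with $\fl\cap\fh$. For Triple~5 with $L=L_{max}=Sp(1,n)\cdot Sp(1)$ one has $L\cap H=\Delta Sp(1)\cdot Sp(n)$. So $V_\pi^{L\cap H}$ is not $\fc$-stable, and since $\fc$ can be non-abelian there are no ``characters'' of $\fc$ to speak of. In a decomposable triple, $C$ can also sit diagonally across the factors, so neither $L\cap H$ nor $V_\pi^{L\cap H}$ factorizes.

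Your fallback Schur argument is circular. Cor.~\ref{commutation} only gives $[\imath(D),\Omega_{L\cap K}]=0$, and the algebra generated by $\imath({\bf D}(G/H))$ and $\Omega_{L\cap K}$ is commutative. To apply Schur's lemma on an irreducible ${\Cal U}(\fl)^{L\cap H}$-module, you would need $\imath(D)$ to commute with all of ${\Cal U}(\fl)^{L\cap H}$ there. That is essentially the claim itself, and it is not automatic when ${\bf D}(L/L\cap H)$ is non-commutative, as for Triple~5 with $L=L_{min}$. Moreover, $V_\pi^{L\cap H}$ need not be finite dimensional for Type~I triples: in the group case with $L=L_{min}$ it equals $V_\pi$. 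Cor.~\ref{inv3} applied to $L_{max}$ says nothing about representations of $L$.

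\emph{The missing idea} is to reduce to $L_{min}$, not to $L_{max}$. Write $L=L_{min}\cdot C$ with $C\subset Z_{K_L}(L_{min})$ and $V_\pi=V_{\pi_1}\otimes F_\tau$. Frobenius reciprocity gives ${\bf D}(G/H)$-compatible maps
$(V_{\pi,-\infty})^{L\cap H}\cong\Hom_L(V_{\tilde\pi,\infty},C^\infty(X))\subset\Hom_{L_{min}}(V_{\tilde\pi,\infty},C^\infty(X))\cong(V_{\pi_1,-\infty})^{L_{min}\cap H}\otimes F_\tau$,
with ${\bf D}(G/H)$ acting on the first factor. The paper proves the sharper identification with the $C$-invariants, (\ref{fidel}). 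Hence a character on $V_{\pi_1}^{L_{min}\cap H}$ yields the same character on $V_\pi^{L\cap H}$. With $L=L_{min}$, Prop.~\ref{productck} gives an honest product (up to finite central quotients), so the invariants and ${\bf D}(G/H)$ factor. Your case analysis then applies to each factor with $L=L_{min}$; for $G$ simple, Type~I means Type~Ia, so Prop.~\ref{mufree} is available.

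\emph{The rest is fine.} For Triple~5 with $L=Sp(1,n)$ you take a valid route that differs from the paper's. You use Prop.~\ref{Casimir}(2), note that on $V_\pi^{Sp(n)}$ the operator $\Omega_{L\cap K}$ reduces to the $\fsp(1)$-Casimir (its $\fsp(n)$-part lies in ${\Cal U}(\fl)(\fl\cap\fh)$), and finish with irreducibility and $\rank(G/H)=1$. The paper instead identifies this $Sp(1)$-action with the left-translation action of the compact factor $C\subset G$, which commutes with ${\bf D}(G/H)$; that avoids Appendix~A and any rank assumption. In the group case, the correct reason ${\Cal Z}(\fl_{min})$ acts by scalars is that ${\Cal Z}(\fl_{min})\subset{\Cal Z}(\fl)$. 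It is not that $L/L_{min}$ is central, which fails for $C=K_1$.
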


\begin{proof} For the cases with $\dim V_\pi^{L\cap H}=1$ covered by Prop.~\ref{mufree} the assertion is obvious. It is also easy to see that the assertion is true if $X=G/H\cong G_1$
is a group manifold. Let us check this for the case $L=L_{min}=G_1$ (the other cases are similar, see also the discussion of non-minimal $L$ below). Now $L\cap H=\{e\}$ and ${\bf D}(G/H)$
acts on an irreducible admissible representation $V_\pi=V_\pi^{L\cap H}$ of $L$ via ${\Cal Z}(\fl)$, hence by the infinitesimal character of $\pi$.

In order to cover general triples $(G,H,L)$ we take a closer look on the relation of $V_\pi^{L\cap H}$ with $V_{\pi_1}^{L_1\cap H}$, where $L_1:=L_{min}$ and
$\pi$, $\pi_1$ are irreducible admissible representations of $L$ and $L_1$, respectively. By construction of $L_{min}=L_1$ there is a (compact) connected subgroup $C\subset Z_{K_L}(L_1)$
such that $L=L_1\cdot C$. Consequently, irreducible admissible representations of $L$ are always of the form $V_{\pi_1}\otimes F_\tau$, where $\pi_1$ is as above and $\tau\in\hat C$ (and
the characters of $L_1\cap C$ defined by $\pi_1$ and $\tau$ coincide). Let $X=G/H\cong L/L\cap H\cong L_1/L_1\cap H$. By Frobenius reciprocity
$$ (V_{\pi_1,-\infty})^{L_1\cap H}\cong \Hom_{L_1}(V_{\widetilde{\pi_1},\infty},C^\infty (X))\ .$$
The natural action of ${\bf D}(G/H)$ comes from its action on $C^\infty(X)$ on the right hand side. The action of $C\subset L$ on $C^\infty(X)$ commutes with the ${\bf D}(G/H)$-action as well as
with the $L_1$-action, and thus defines a $C$-action on $(V_{\pi_1,-\infty})^{L_1\cap H}$ commuting with the ${\bf D}(G/H)$-action. 
We claim that as ${\bf D}(G/H)$-modules 
\be\label{fidel}
(V_{\pi,-\infty})^{L\cap H}=\left [ V_{\pi_1,-\infty}\otimes F_\tau\right ] ^{L\cap H}\cong \left[  (V_{\pi_1,-\infty})^{L_1\cap H}\otimes F_\tau\right ]^C\ .
\end{equation}
Here the action of ${\bf D}(G/H)$ on the right hand side is induced by its action on the first factor. Indeed, again by Frobenius reciprocity we have
\begin{eqnarray*}
\left[ V_{\pi_1,-\infty}\otimes F_\tau\right ] ^{L\cap H}&\cong& \Hom_{L}(V_{\widetilde{\pi_1},\infty}\otimes F_{\tilde\tau},C^\infty (X))\\
&\cong& \Hom_C\left(F_{\tilde\tau}, \Hom_{L_1}(V_{\widetilde{\pi_1},\infty},C^\infty (X))\right)\\
&\cong&  \left[  (V_{\pi_1,-\infty})^{L_1\cap H}\otimes F_\tau\right ]^C\ .
\end{eqnarray*}
The isomorphism (\ref{fidel}) tells us that the assertion of the corollary  is true for $(G,H,L)$ whenever it is true for $(G,H,L_{min})$. Using Prop.~\ref{productck} we can
further reduce the assertion to the case that either $G$ is simple or that $G/H$ is a group manifold (and still $L=L_{min})$. The group case is discussed above, while the case of simple $G$
(with one exception) follows from Prop.~\ref{mufree}. It remains to prove the assertion for the triple $(SO_e(4,4n), SO_e(3,4n), Sp(1,n))$. Indeed, since $X$ is simply connected in this case
the assertion will also follow for all other triples locally isomorphic to it. We have $L_{max}=Sp(1,n)\cdot Sp(1)$. Thus the corresponding group $C$ considered above is given
by the $Sp(1)$-factor. It is different from the $Sp(1)$-factor considered in the last assertion of Prop.~\ref{mufree}, which is contained in $Sp(1,n)$. Let us call it $C_1$. The group 
$\{(p,p)\in C_1\times C\mid p\in Sp(1)\}$ is contained in $L_{max}\cap H$, see Table 1, Triple 5. In fact, it coincides with $(C_1\times C)\cap H$.
Thus for every $c\in C$ there is a unique $p(c)\in C_1$ such that $p(c)c=cp(c)\in H$, and $p: C\rightarrow C_1$ is an isomorphism of groups. Now we compute the action of $c\in C$ on
$V_\pi^{Sp(n)}$
defined before (\ref{fidel}) in terms of $p(c)\in C_1$. For that let $v\in V_\pi^{Sp(n)}=V_{\pi,-\infty}^{Sp(n)}$. Let  $\Phi_v\in \Hom_{Sp(1,n)}(V_{\tilde\pi,\infty}, C^\infty(X))$  be the element
corresponding to $v$ by Frobenius reciprocity, and let $\tilde v \in   V_{\tilde\pi,\infty}$. We obtain
\begin{eqnarray*}
\langle c\cdot v,\tilde v\rangle&=&\Phi_v(\tilde v)(c^{-1})=\Phi_v(\tilde\pi(p(c))^{-1}\tilde v)(p(c)^{-1}c^{-1})\\
&=& \Phi_v(\tilde\pi(p(c))^{-1}\tilde v)(e)=\langle v,\tilde\pi(p(c))^{-1}\tilde v\rangle\\
&=& \langle\pi(p(c))v,\tilde v\rangle\ .
\end{eqnarray*} 
Thus the two $Sp(1)$-actions on $V_\pi^{Sp(n)}$ coincide. In particular, the irreducible $Sp(1)$-action in Prop.~\ref{mufree} commutes with ${\bf D}(G/H)$. We
conclude that ${\bf D}(G/H)$ acts by a character.
\end{proof}

\section{The spectrum of compact Clifford-Klein forms: the Type~I situation}\label{spectrum}

Let $(G,H,L)$ be a properly transitive triple. Now we fix a cocompact lattice $\Gamma\subset L$. Then $\Gamma$ acts properly and cocompactly
on the symmetric space $X=G/H$. 
We assume in addition that $\Gamma$ is without non-trivial elements conjugate to $L\cap H$. Then $\Gamma$ also acts freely and the quotient 
$Y=\Gamma\backslash X$ is a compact locally symmetric {\em manifold} (and not only an orbifold). At least if we choose $G$ (and hence $L$) to be linear we can achieve the latter condition always by going
over to a finite index subgroup of the original lattice (in fact, we even have a torsion free finite index subgroup by Selberg's lemma, see \cite{Rat06}).
The results of the present section hold, suitably interpreted in the setting of orbifolds, also for general cocompact lattices $\Gamma\subset L$.

The commutative algebra  ${\bf D}(G/H)$ of $G$-invariant differential operators on $X$ acts on $C^{\pm\infty}(Y)$. We also view it as an algebra of unbounded operators (with common domain $C^{\infty}(Y)$) acting on the Hilbert space $L^2(Y)$. We now turn to the spectral properties of these actions.
Of particular interest are the spectral properties of the pseudo-Riemannian Laplacian $\Delta_Y$ which is induced by the Casimir operator $\Omega_G$.

Following the discussion in Section \ref{G2},
for any character $\chi\in\Hom( {\bf D}(G/H),\C)$
we consider the corresponding spaces of joint eigenfunctions and eigendistributions
$$ {E}_\chi^{\pm\infty}(Y):=\{f\in C^{\pm\infty}(Y)\mid Df=\chi(D)\cdot f\mbox{ for all } D\in {\bf D}(G/H)\}$$
as well as the space ${E}_\chi^{(2)}(Y):={E}_\chi^{-\infty}(Y)\cap L^2(Y)$.
${E}_\chi^{(2)}(Y)
\subset L^2(Y)$
 is closed (and hence a Hilbert space).

The description of the algebra ${\bf D}(L/L\cap H)$ of $L$-invariant differential operators on $X$ in terms of the universal enveloping algebra ${\Cal U}(\fl)$
shows that  ${\bf D}(L/L\cap H)$ acts on $(V_{\pi,\pm\infty})^{L\cap H}$.
Via the embedding $\imath: {\bf D}(G/H)\rightarrow {\bf D}(L/L\cap H)$ (see (\ref{embed1})) the algebra ${\bf D}(G/H)$ also does so.
We denote the corresponding joint eigenspaces by $(V_{\pi,\pm\infty})_\chi^{L\cap H}$, and set 
$(V_{\pi})^{L\cap H}_\chi:=(V_{\pi,-\infty})^{L\cap H}_\chi\cap V_\pi^{L\cap H}$.

Note that ${\bf D}(L/L\cap H)$ also acts on $C^{\pm\infty}(Y)$. We equip $C^\infty(Y)$ with its standard Fr\' echet topology and $C^{-\infty}(Y)$ with the strong dual topology.
Recall the meaning of the topological decompositions indicated by the symbol $\overline{\bigoplus}$ from Def.~\ref{kobold}.

\begin{lem}\label{bassa}
For $\pi\in\hat L$, let $N_\Gamma(\pi)= \Hom_L(V_\pi, L^2(\Gamma\backslash L))$ be the (finite dimensional) multiplicity space of $V_\pi$ in $L^2(\Gamma\backslash L)$.
Then we obtain the following decompositions:
\begin{eqnarray}
L^2(Y)&\cong &\widehat{\bigoplus_{\pi\in\widehat L}} N_\Gamma(\pi)\otimes V_{\pi}^{L\cap H}\ ,\nonumber\\
C^{\pm\infty}(Y)&\cong& \overline{\bigoplus_{\pi\in\widehat L}} N_\Gamma(\pi)\otimes (V_{\pi,\pm\infty})^{L\cap H}\label{abc}\ ,
\end{eqnarray}
and for $\chi\in\Hom( {\bf D}(G/H),\C)$
\begin{eqnarray*}
{E}_\chi^{\pm\infty}(Y)&\cong& \overline{\bigoplus_{\pi\in\widehat L}} N_\Gamma(\pi)\otimes (V_{\pi,\pm\infty})^{L\cap H}_\chi\ ,\nonumber\\
{E}_\chi^{(2)}(Y)&\cong &\widehat{\bigoplus_{\pi\in\widehat L}} N_\Gamma(\pi)\otimes (V_{\pi})^{L\cap H}_\chi\ .\nonumber
\end{eqnarray*}
 The second decomposition (\ref{abc}) is a decomposition of ${\bf D}(G/H)$-modules, and even of ${\bf D}(L/L\cap H)$-modules.
\end{lem}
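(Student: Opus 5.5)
Our plan is to start from the decomposition (\ref{GGPS}) for the cocompact lattice $\Gamma\subset L$, and then take $L\cap H$-invariants. First, since $L$ acts transitively on $X$ with compact stabilizer $L\cap H$, we have $X\cong L/L\cap H$. The quotient map $\Gamma\backslash L\to\Gamma\backslash L/L\cap H\cong Y$ then identifies $C^{\pm\infty}(Y)$ and $L^2(Y)$ with the right $L\cap H$-invariants in $C^{\pm\infty}(\Gamma\backslash L)$ and $L^2(\Gamma\backslash L)$. We normalize the invariant measure on $Y$ so that the $L^2$ identification is unitary. Averaging over the compact group $L\cap H$ gives continuous projections onto these invariant subspaces, which commute with everything that commutes with the right $L\cap H$-action.

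Next we apply Lemma~\ref{deko} to the right regular representation $W_\rho=L^2(\Gamma\backslash L)$ of $L$. It decomposes as $\widehat{\bigoplus}_{\pi}N_\Gamma(\pi)\otimes V_\pi$ with finite multiplicities. Its smooth vectors form exactly $C^\infty(\Gamma\backslash L)$: smooth vectors of the regular representation on a compact manifold are the smooth functions, by the usual Sobolev argument. This space is Montel. Its distribution vectors form $C^{-\infty}(\Gamma\backslash L)$, as the dual of the smooth vectors of the conjugate representation, which is again $C^\infty(\Gamma\backslash L)$. Lemma~\ref{deko} therefore yields $C^{\pm\infty}(\Gamma\backslash L)\cong\overline{\bigoplus}N_\Gamma(\pi)\otimes V_{\pi,\pm\infty}$ with $L$-equivariant structure maps $i_\pi,p_\pi,P_J$.

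We then take $L\cap H$-invariants, with $L\cap H$ acting on the $V_\pi$ factor. Equivariance of $i_\pi,p_\pi,P_J$ means these maps commute with the averaging projection, so they restrict to the invariant subspaces. This gives the first two decompositions, and the conditions of Def.~\ref{kobold} are inherited. The isomorphism $C^\infty(\Gamma\backslash L)\cong\overline{\bigoplus}N_\Gamma(\pi)\otimes V_{\pi,\infty}$ intertwines the right action of $\Cal U(\fl)$, because the structure maps are $L$-equivariant and hence $\Cal U(\fl)$-equivariant. So elements of $\Cal U(\fl)^{L\cap H}$ act compatibly on both sides. On $(V_{\pi,\pm\infty})^{L\cap H}$ and on the invariant functions, the left ideal generated by $\fl\cap\fh$ acts by zero. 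The action therefore descends to ${\bf D}(L/L\cap H)$, and through $\imath$ to ${\bf D}(G/H)$. The main point to check is that this right $\Cal U(\fl)^{L\cap H}$-action on $C^\infty(\Gamma\backslash L)^{L\cap H}$ is indeed the action of ${\bf D}(L/L\cap H)$ as differential operators on $Y$. This holds because $L$-invariant differential operators on $L/L\cap H$ are given by right differentiation, as in the description (i) of Section~\ref{pbw}, and they descend to $\Gamma\backslash X$ since they commute with the left $\Gamma$-action. For distributions, the ${\bf D}$-action on both sides is defined by duality via formal adjoints, and the structure maps are duals of equivariant maps, so equivariance persists.

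Finally, for the eigenspaces we use that each structure map commutes with every $D\in{\bf D}(G/H)$. A vector $f$ lies in $E_\chi^{\pm\infty}(Y)$ if and only if every component $p_\pi(f)$ is a $\chi$-eigenvector. The forward direction follows by applying $p_\pi$. For the converse, each $P_J(f)$ and the convergent Fourier series $f=\sum_\pi i_\pi p_\pi(f)$ consist of eigenvectors, $D$ is continuous on $C^{\pm\infty}(Y)$, and the eigenspaces are closed. Hence the decomposition restricts to the eigenspaces, which gives the third formula. The $L^2$ statement follows by intersecting with $L^2(Y)$: an element of the Hilbert sum is in $L^2$ exactly when its components are $L^2$ with square-summable norms, and the components are compatible with the distributional components. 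We expect the only real obstacle to be the bookkeeping that identifies the abstract smooth and distribution vectors with $C^{\pm\infty}$ and matches the differential-operator actions; everything else is formal from Lemma~\ref{deko}.
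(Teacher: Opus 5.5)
Your proposal is correct and follows the paper's proof. The paper decomposes $L^2(\Gamma\backslash L)$ discretely, passes to $C^{\pm\infty}(\Gamma\backslash L)$ via Lemma~\ref{deko}, takes right $L\cap H$-invariants, and restricts the ${\bf D}(G/H)$-equivariant decomposition to joint eigenspaces, exactly as you do. You additionally spell out bookkeeping the paper leaves implicit: identifying the smooth and distribution vectors with $C^{\pm\infty}(\Gamma\backslash L)$, the Montel property, the descent of the $\mathcal{U}(\fl)^{L\cap H}$-action to ${\bf D}(L/L\cap H)$, and the $L^2$-intersection argument.
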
 

\begin{proof} Since $\Gamma\subset L$ is cocompact, the lemma is an immediate consequence of the well-known discrete decomposition with finite multiplicities of the 
unitary $L$-representation 
\be\label{monet} 
L^2(\Gamma\backslash L)\cong\widehat{\bigoplus_{\pi\in\widehat L}} N_\Gamma(\pi)\otimes V_{\pi}\ .
\end{equation}
Going to smooth or distribution vectors (see Lemma \ref{deko}) we obtain
\be\label{vangogh}
C^{\pm\infty}(\Gamma\backslash L)\cong \overline{\bigoplus_{\pi\in\widehat L}} N_\Gamma(\pi)\otimes V_{\pi,\pm\infty}\ .
\end{equation}
%
Taking $L\cap H$-invariants we obtain the first two decompositions. The last two follow by restricting the ${\bf D}(G/H)$-equivariant decomposition (\ref{abc}) to eigenspaces.
\end{proof}
\noindent
The lemma reduces the spectral theory of the action ${\bf D}(G/H)$ on 
$$ C^{\infty}(Y)\subset L^2(Y)\subset C^{-\infty}(Y)$$
to the understanding of its action on 
$$(V_{\pi,\infty})^{L\cap H}\subset V_{\pi}^{L\cap H}\subset(V_{\pi,-\infty})^{L\cap H}$$
for $\pi\in\widehat L$. 
By Prop.~\ref{staun} this action is the simplest possible, namely given by a single character of ${\bf D}(G/H)$, for triples of Type~I, while for Type~II triples the action remains complicated in general.

\begin{cor}\label{hussa} Let $(G,H,L)$ be a triple of Type~I. Let $\pi\in \widehat L$ be such that $V_\pi^{L\cap H}\ne\{0\}$. \\ 
Let $\chi_\pi\in \Hom({\bf D}(G/H),\C)$ be as in Prop.~\ref{staun}.
For $\chi\in \Hom({\bf D}(G/H),\C)$ we set 
$$\widehat L_\chi:=\{\pi\in \widehat L\mid V_\pi^{L\cap H}\ne \{0\}, \chi_\pi=\chi\}\ .$$ 
Then we have
\begin{eqnarray*}
{E}_\chi^{\pm\infty}(Y)&\cong& \overline{\bigoplus_{\pi\in\widehat L_\chi}} N_\Gamma(\pi)\otimes (V_{\pi,\pm\infty})^{L\cap H}\ ,\nonumber\\
{E}_\chi^{(2)}(Y)&\cong &\widehat{\bigoplus_{\pi\in\widehat L_\chi}} N_\Gamma(\pi)\otimes V_{\pi}^{L\cap H}.\nonumber
\end{eqnarray*}
Note that for $\chi\notin \Xi$ we have $\widehat L_\chi=\emptyset$.
\end{cor}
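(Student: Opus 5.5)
The plan is to specialize the eigenspace decompositions of Lemma~\ref{bassa} by feeding in Prop.~\ref{staun}. Lemma~\ref{bassa} already gives
$$E_\chi^{\pm\infty}(Y)\cong \overline{\bigoplus_{\pi\in\widehat L}} N_\Gamma(\pi)\otimes (V_{\pi,\pm\infty})^{L\cap H}_\chi,\qquad E_\chi^{(2)}(Y)\cong \widehat{\bigoplus_{\pi\in\widehat L}} N_\Gamma(\pi)\otimes (V_{\pi})^{L\cap H}_\chi .$$
So the only task is to identify the summands $(V_{\pi,*})^{L\cap H}_\chi$ for $*\in\{\infty,\emptyset,-\infty\}$ in the Type~I situation.

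First, if $V_\pi^{L\cap H}=\{0\}$, then all three spaces $(V_{\pi,*})^{L\cap H}$ vanish, because the inclusions $(V_{\pi,\infty})^{L\cap H}\subset V_\pi^{L\cap H}\subset (V_{\pi,-\infty})^{L\cap H}$ have dense image ($L\cap H$ is compact). Such $\pi$ therefore contribute nothing.

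Second, suppose $V_\pi^{L\cap H}\neq\{0\}$. By Prop.~\ref{staun}, each $D\in{\bf D}(G/H)$ acts on $V_\pi^{L\cap H}$ as $\chi_\pi(D)\cdot\id$. This action is given by $\pi(\imath(D))$ on smooth vectors and by the dual extension on distribution vectors.

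I want to transfer this scalar action to $(V_{\pi,-\infty})^{L\cap H}$. The action of $\imath(D)$ on $(V_{\pi,-\infty})^{L\cap H}$ is continuous, and $(V_{\pi,\infty})^{L\cap H}$ is dense there. The operator $\imath(D)-\chi_\pi(D)$ is continuous and vanishes on this dense subspace, so it vanishes identically. Hence $D$ acts by $\chi_\pi(D)$ on all of $(V_{\pi,*})^{L\cap H}$.

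Consequently $(V_{\pi,*})^{L\cap H}_\chi$ equals $(V_{\pi,*})^{L\cap H}$ when $\chi=\chi_\pi$ and is $\{0\}$ otherwise. Substituting this into the decompositions of Lemma~\ref{bassa} restricts the sums to $\widehat L_\chi$ and yields both claimed formulas. The structural maps are inherited from those of (\ref{abc}), so the topological meaning of $\overline{\bigoplus}$ is preserved.

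For the final remark, take $\pi\in\widehat L_\chi$ and a nonzero $v\in(V_{\pi,\infty})^{L\cap H}$. Then $v$ lies in $H_\chi^\infty$ for $H=V_\pi^{L\cap H}$. Formal adjoints in ${\bf D}(G/H)$ correspond to adjoints for the unitary representation, so the observation in Section~\ref{G2} that $H_\chi^\infty\neq\{0\}$ implies $\chi\in\Xi$ applies. Thus $\widehat L_\chi=\emptyset$ for $\chi\notin\Xi$.

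The only mildly delicate point is the density and continuity step extending the scalar action from $V_\pi^{L\cap H}$ to distribution vectors, and even that is routine.
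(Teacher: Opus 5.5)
Your proposal is correct and is the argument the paper intends. The paper states the corollary without proof, as an immediate consequence of inserting Prop.~\ref{staun} into the eigenspace decompositions of Lemma~\ref{bassa}. Then $(V_{\pi,*})^{L\cap H}_\chi$ is all of $(V_{\pi,*})^{L\cap H}$ when $\chi=\chi_\pi$ and is zero otherwise. Your density/continuity extension to distribution vectors and your $*$-character argument for the remark on $\Xi$ simply make explicit details the paper leaves unstated.
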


Now we easily conclude the discrete spectral decomposition for Type~I triples.

\begin{thm}\label{selim}
Let $(G,H,L)$ be of Type~I, and let $\Gamma\subset L$ be a cocompact lattice as described at the beginning of the section. Then we have a unique spectral decompositon
\be\label{else}
L^2(Y)\cong \widehat{\bigoplus_{\chi\in\Xi}} {E}_\chi^{(2)}(Y)\ ,
\end{equation}
which in addition induces decompositions of smooth functions and distributions on $Y$:
\be\label{ilse}
C^{\pm\infty}(Y)\cong\overline{\bigoplus_{\chi\in\Xi}} {E}_\chi^{\pm\infty}(Y)\ .
\end{equation}
In particular, we have
\begin{enumerate}
\item For all $\chi\in\Hom({\bf D}(G/H),\C)$, the inclusions ${E}_\chi^{\infty}(Y)\subset {E}_\chi^{(2)}(Y)\subset {E}_\chi^{-\infty}(Y)$ have dense images.
\item $L^2(Y)$ has a complete orthonormal system consisting of smooth joint eigenfunctions.
\item There are only countably many $\chi\in\Hom({\bf D}(G/H),\C)$ with ${E}_\chi^{-\infty}(Y)\ne\{0\}$. If the latter condition holds, then $\chi\in\Xi$.
\item All formally self-adjoint operators $D\in {\bf D}(G/H)$, in particular the Laplacian $\Delta_Y$, are essentially self-adjoint.
\end{enumerate}
\end{thm}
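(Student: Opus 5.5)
The plan is to combine Lemma~\ref{bassa} with Prop.~\ref{staun} and then invoke the abstract criterion of Prop.~\ref{schwoch}. First, by Lemma~\ref{bassa}, $L^2(Y)\cong\widehat{\bigoplus}_{\pi\in\widehat L}N_\Gamma(\pi)\otimes V_\pi^{L\cap H}$, and $C^{\pm\infty}(Y)\cong\overline{\bigoplus}_{\pi}N_\Gamma(\pi)\otimes(V_{\pi,\pm\infty})^{L\cap H}$ compatibly with the ${\bf D}(G/H)$-action. By Prop.~\ref{staun}, for each $\pi$ with $V_\pi^{L\cap H}\ne\{0\}$ the algebra ${\bf D}(G/H)$ acts on $(V_{\pi,\pm\infty})^{L\cap H}$ by the single character $\chi_\pi$. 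Strictly speaking, Prop.~\ref{staun} is stated for $V_\pi^{L\cap H}$. Since $(V_{\pi,\infty})^{L\cap H}$ is dense in $(V_{\pi,-\infty})^{L\cap H}$ and the action is continuous, the scalar action extends to distribution vectors. Consequently each summand $N_\Gamma(\pi)\otimes(V_{\pi,\infty})^{L\cap H}$ consists of smooth joint eigenfunctions with character $\chi_\pi$. Moreover, $\chi_\pi\in\Xi$, because smooth joint eigenvectors force $*$-characters (the remark in Section~\ref{G2}).

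Next, group the index set: $\widehat L$ is countable, since $L^2(\Gamma\backslash L)$ is separable and only $\pi$ with $N_\Gamma(\pi)\ne 0$ matter. Partition it into the sets $\widehat L_\chi$ of Cor.~\ref{hussa}. Applying the regrouping principle (\ref{matisse}) to the decomposition (\ref{abc}) yields (\ref{ilse}), with $E_\chi^{\pm\infty}(Y)$ identified via Cor.~\ref{hussa}. The Hilbert version regroups in the same way to give (\ref{else}). This shows that only the countably many $\chi_\pi$ have nonzero eigenspaces, and that they lie in $\Xi$: a distributional eigenvector for some other $\chi$ has all Fourier components zero, hence vanishes by property (c) in Section~\ref{dist}. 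That settles (3).

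For (2), pick orthonormal bases of the dense subspaces $(V_{\pi,\infty})^{L\cap H}$, or even of the $K_L$-finite invariants. These consist of smooth joint eigenfunctions, and their images give a complete orthonormal system of $L^2(Y)$. Then Prop.~\ref{schwoch} applies directly. It gives the essential self-adjointness (4), the density $\overline{E^\infty_\chi}=E^{(2)}_\chi$, and the uniqueness of the spectral decomposition (\ref{else}). The density $E^{(2)}_\chi\subset E^{-\infty}_\chi$ in (1) follows from Cor.~\ref{hussa}, since $V_\pi^{L\cap H}$ is dense in $(V_{\pi,-\infty})^{L\cap H}$ summand-wise.

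The only delicate point I expect is the topological bookkeeping. One has to check that the Montel hypothesis of Lemma~\ref{deko} holds for $C^\infty(\Gamma\backslash L)$, which is standard. One also has to check that passing to $L\cap H$-invariants and regrouping preserves the $\overline{\bigoplus}$ structure; averaging over the compact group $L\cap H$ commutes with the maps $P_J$. Everything else is formal.
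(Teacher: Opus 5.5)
Your proposal is correct and follows the paper's proof. Like the paper, you combine Lemma~\ref{bassa} with Prop.~\ref{staun} (i.e.\ Cor.~\ref{hussa}), regroup via (\ref{matisse}) to obtain (\ref{else}), (\ref{ilse}) and the first three listed consequences, and then invoke Prop.~\ref{schwoch} for essential self-adjointness and uniqueness. One small slip: $\widehat L$ itself is not countable (it contains continuous families such as the unitary principal series); what you actually need, and what your separability argument gives, is that only countably many $\pi$ have $N_\Gamma(\pi)\ne\{0\}$, which is exactly the convention stated after Def.~\ref{kobold}.
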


\begin{proof} Assertions (a)--(c), (\ref{else}) and (\ref{ilse}) are  direct consequences  of Cor.~\ref{hussa} and Lemma \ref{bassa}. Note that for the proof of  (\ref{ilse})
we need a conclusion of the form (\ref{matisse}). For (d) and the uniqueness of the spectral decomposition we  employ Prop.~\ref{schwoch}. 
\end{proof}

We remark that Thm.~\ref{selim} can be proved without representation theory (in particular without the use of Prop.~\ref{staun}) and only based on the existence of an elliptic
differential operator on $Y$ commuting with all elements of ${\bf D}(G/H)$.  This existence is ensured by Cor. \ref{commutation}.

Observe that $Y$, as a quotient of the symmetric space $G/H$, is geodesically complete. Thus, Thm.~\ref{selim}~(d) confirms the conjecture of Colin de Verdi\`ere and Le Bihan on the essential self-adjointness of $\Delta_Y$ \cite{CL22} mentioned at the end of Section~\ref{G2} for compact Type~I quotients $Y$. 

Theorem \ref{selim} shows that every $D\in {\bf D}(G/H)$, in particular the Laplacian $\Delta_Y$,  has a discrete spectral decomposition. Note that in case $\rank(G/H)=1$ the spectral decomposition of $\Delta_Y$ is equivalent to the one of ${\bf D}(G/H)$. We also mention that discrete here does not mean that the eigenvalues/eigencharacters do not have accumulation points nor that the corresponding eigenspaces are finite dimensional (as it would be the case for elliptic
differential operators). Combining Thm.~\ref{selim} with Cor.~\ref{hussa}, properties of the multicities $N_\Gamma(\pi)$, $\pi\in \widehat L$, and the formulas for $i(\Omega_G)$ in Prop.~\ref{Casimir}, one can get a rather clear picture of $\spec(\Delta_Y)$ in case $(G,H,L)$ is of Type~I. In the next section we will do this in detail for $(G,H,L)=(SO_{e}(2,2n),SO_{e}(1,2n),U(1,n))$. In Section \ref{infinitemulti}, we demonstrate by an example that a discrete spectral decomposition as in Theorem \ref{selim} cannot be expected for triples of Type~II. All these computations rest on Lemma~\ref{bassa} and Prop.~\ref{Casimir} and make no use of the representation theory of $G$ which will be brought into play in the last two sections. 

\section{The spectrum of compact Lorentzian manifolds of constant curvature}\label{detail}

In this section, unless otherwise stated, we consider the irreducible spherical triple 
\beu
(G,H,L)\defn(SO_{e}(2,2n),SO_{e}(1,2n),U(1,n))\text{ with }n\geq 1. 
\end{equation*}
The symmetric space $G/H$ is Lorentzian of constant negative sectional curvature. Let $\Gamma\subset L=U(1,n)$ be a discrete 
cocompact subgroup without non-trivial elements conjugate to $L\cap H\simeq U(n)$. Then 
\beu
Y=\Gamma\backslash G/H \cong \Gamma\backslash L/L\cap H
\end{equation*}
is a compact Lorentzian manifold of dimension $2n+1$ and constant negative sectional curvature. In order to get more precise formulas for multiplicities of discrete series representations
we make the slightly stronger assumption that $\Gamma$ is torsion free. By Selberg's lemma
\cite{Rat06} this can always be achieved by replacing $\Gamma$ by a finite index subgroup. We normalize the metric on $Y$ such that the curvature 
equals $-1$. Let $\Delta_Y$ be the corresponding Laplace operator. It is given by the Casimir operator $\Omega_G$ with respect to the bilinear form 
$\langle X,Y\rangle = \frac{1}{2}\Tr XY$ on $\fg$. The induced bilinear form on $\fl=\fu(1,n)$ is $\langle X,Y\rangle = \mathrm{Re} (\Tr XY)$.
Since the rank of the symmetric space $G/H$ equals $1$, the algebra ${\bf D}(G/H)$ of invariant differential operators on $G/H$ is generated by the 
Laplacian. By Thm.~\ref{selim}, we know that $\Delta_Y$ is essentially self-adjoint. 
We denote its self-adjoint extension 
also by $\Delta_Y$. Let $\spec(\Delta_Y)\subset\mathbb{R}$ be its spectrum. In our situation, it will be just the closure of the set of eigenvalues. For $\mu\in\mathbb{R}$, we consider 
the eigenspaces 
\beu
E_\mu:=E_\mu^{(2)}(Y)=\{f\in L^2(Y)\mid \Delta_Yf=\mu f\}. 
\end{equation*}
The aim of this section is to prove the following theorem.
\begin{thm}\label{something}\mbox{}
\begin{enumerate}
\item $\displaystyle\spec(\Delta_Y)\subset (-\infty,0]\cup\bigcup_{\ell=n+1}^\infty \{\ell^2-n^2\}\ .$
\item $\spec(\Delta_Y)\cap [1-n^2,\infty)$ is a discrete subset of $\mathbb{R}$.
\item $\dim E_{\ell^2-n^2} =\infty$ for all integers $\ell \geq 
1$.
\item $\displaystyle \sum_{\mu\in (1-n^2,0)\setminus \bigcup_{\ell=2}^{n-1} \{\ell^2-n^2\}} \dim E_\mu <\infty\ .$
\\
\item $-\infty$ is an accumulation point of $\spec(\Delta_Y)$.
\end{enumerate}
\end{thm}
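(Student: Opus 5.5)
We will work entirely on the $L$-side, via Lemma~\ref{bassa} and Cor.~\ref{hussa}:
$$E_\mu\cong\widehat{\bigoplus}_{\pi\in\widehat L_\mu} N_\Gamma(\pi)\otimes V_\pi^{U(n)},$$
where $\widehat L_\mu$ consists of those $\pi\in\widehat{U(1,n)}$ with $V_\pi^{U(n)}\ne\{0\}$ (one-dimensional by Prop.~\ref{mufree}) and $\chi_\pi(\Omega_G)=\mu$. By Prop.~\ref{Casimir}(1) we have $\imath(\Omega_G)=2\Omega_L-\Omega_{L\cap K}$. Hence the eigenvalue attached to $\pi$ is $2\chi_\pi(\Omega_L)-c(\gamma)$, where $\gamma$ is the unique $K_L=U(1)\times U(n)$-type of $\pi$ containing a $U(n)$-fixed vector. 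Such types are $\gamma=(\text{character } e^{ik\theta}\text{ of }U(1))\otimes\text{trivial}$, so $c(\gamma)=-k^2$ up to our normalization $\langle X,Y\rangle=\mathrm{Re}\Tr XY$. The first step is therefore to list, via Langlands classification and the known $K$-type structure of $U(1,n)$ (equivalently Lemma~\ref{frob}), all $\pi\in\widehat{U(1,n)}$ having a $K$-type $(k,0,\dots,0)$ with $U(n)$ acting trivially. These are as follows.

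\begin{enumerate}
\item Unitary principal series $\Ind_{P_L}(\sigma_k\otimes e^{i\nu})$ with $\nu\in\R$ and $\sigma_k$ a character of $M_L\cong U(1)\times U(n-1)$ determined by $k$.
\item Complementary series, with $\nu$ in a bounded real interval depending on $k$, appearing only for small $|k|$.
\item Holomorphic and antiholomorphic discrete series (scalar type) and their limits, whose lowest $K$-type is one of these characters.
\end{enumerate}

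In each case we compute $\chi_\pi(\Omega_L)$ by Harish-Chandra's formula and combine it with $k$. We expect the following.
\begin{itemize}
\item For principal series the $k$-dependence cancels, giving $\mu=-\nu^2-n^2\le -n^2<1-n^2$.
\item Complementary series give $\mu\in(1-n^2,0)$; the trivial representation gives $\mu=0$.
\item Discrete series with Harish-Chandra parameter $\ell$ give $\mu=\ell^2-n^2$, $\ell\in\N$, and for fixed $\ell$ there are infinitely many of them, indexed by $k$ and the holomorphic or antiholomorphic choice.
\end{itemize}
This yields (a) and (b) once we note the following:
\begin{itemize}
\item For $\ell\le n$ we have $\ell^2-n^2\le 0$.
\item Only finitely many $\pi$ contribute to any bounded window above $1-n^2$, and each has finite multiplicity $N_\Gamma(\pi)$.
\item Cocompactness of $\Gamma$ makes the set of $\pi$ with $N_\Gamma(\pi)\ne0$ discrete in $\widehat L$. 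In particular, only finitely many complementary series and trivial-type representations occur, which gives (d).
\end{itemize}

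For (e) we use Weyl's law for $L^2(\Gamma\backslash L)^{U(n)}$ restricted to the spherical-type $K$-types. For each fixed $k$, the $\pi$ of principal series type with character $\sigma_k$ and $N_\Gamma(\pi)\ne 0$ correspond to eigenvalues of an elliptic operator (a Laplacian on a line bundle over the compact locally symmetric space $\Gamma\backslash L/K_L$). There are infinitely many of them, with $\nu\to\infty$, so $\mu=-\nu^2-n^2\to-\infty$.

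The main obstacle is (c): we need $N_\Gamma(\pi)\ne0$ for infinitely many discrete series $\pi$ with a given $\ell$. For $\ell\ge2$ we invoke the Langlands/Hotta--Parthasarathy multiplicity formula: for torsion-free cocompact $\Gamma$ and sufficiently regular discrete series, $\dim N_\Gamma(\pi)=\mathrm{vol}(\Gamma\backslash L)\,d_\pi$, which is positive. Varying $k$ at fixed $\ell$, the parameters are far from the walls for $|k|$ large, so infinitely many nonvanishing multiplicities follow. For $\ell=1$, and more generally for near-singular parameters, the formula is unavailable. There we would first try an Euler-characteristic or Riemann--Roch argument: the scalar holomorphic discrete series of weight $k$ appear in $L^2$ with multiplicity $\dim H^0(\Gamma\backslash\mathbb B^n,\mathcal L^{k})$, which grows polynomially in $k$ by ampleness of the canonical-type bundle. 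This gives infinitely many $\pi$ for each fixed $\ell$ as $k$ varies. We would also carefully verify the matching of $\ell$ with these bundles.
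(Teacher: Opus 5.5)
Your framework is the paper's: Cor.~\ref{hussa}, $\imath(\Omega_G)=2\Omega_L-\Omega_{L\cap K}$, eigenvalue $\lambda^2-n^2$, and discreteness of spectra of elliptic operators on the $\gamma_k$-line bundles over $Z=\Gamma\backslash L/K_L$ for (b), (d), (e). However, the classification of $\widehat L_k$ you expect is wrong, and this breaks (c). The discrete series containing $\gamma_k=(k,0,\dots,0)$ are not of scalar type. For $k>n$ and $\lambda=k-n-2j$, the relevant holomorphic discrete series has minimal $K_L$-type $(k-j,j,0,\dots,0)$, and $\gamma_k$ occurs in it only as a higher $K_L$-type (with multiplicity one by Blattner's formula, as in the proof of Prop.~\ref{crux}). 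Only $j=0$, i.e.\ $\lambda=|k|-n$, is scalar. So your two claims (scalar type with lowest $K$-type $\gamma_k$, and infinitely many for each fixed $\ell$) are incompatible.

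In particular your fallback for $\ell=1$ fails. Up to normalization of the weight, $\dim H^0(Z,\mathcal L^k)$ is the multiplicity of the scalar discrete series $\pi^{\pm k}_{|k|-n}$, whose eigenvalue $(|k|-n)^2-n^2$ moves with $k$. At $\ell=1$ the only scalar ones are $\pi^{\pm(n+1)}_1$, exactly those excluded in Prop.~\ref{pavle} for $n\ge 2$. What is needed is $N_\Gamma(\pi^k_1)\ne\{0\}$ for infinitely many $k$, for vector-valued discrete series outside the Hotta--Parthasarathy range. The paper gets this from the Euler characteristic of the Dolbeault complex of the bundle $E$ attached to $(|k|-j,j,0,\dots,0)$. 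Proportionality gives $\chi(Z,E)=d_\pi\vol(\Gamma\backslash L)>0$. The $\fu$-cohomology index computation of Prop.~\ref{nichts} (Appendix~\ref{B}) gives $\chi(Z,E)=\dim N_\Gamma(\pi^k_1)-\dim N_\Gamma(\pi')$, with $\pi'$ an end of complementary series; the sign of this correction term is what makes the argument work.

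For $\ell\ge 2$ your conclusion is right but the reason is not. The Harish-Chandra parameter satisfies $\Lambda_0-\Lambda_1=\ell$ for every $k$, so it does not move away from the walls, and it never becomes integrable when $\ell\le n$. The Hotta--Parthasarathy formula applies because its regularity condition amounts to $\ell\ge 2$, uniformly in $k$.

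Second, complementary series do not occur only for small $|k|$. For every $|k|>n$ with $k\not\equiv n\pmod 2$ there is a complementary series with $\lambda\in(0,1)$, giving $\mu\in(-n^2,1-n^2)$. So complementary series do not all land in $(1-n^2,0)$. Your claim that only finitely many complementary series occur is unsupported: discreteness of $\{\pi\mid N_\Gamma(\pi)\ne\{0\}\}$ in $\widehat L$ does not give finiteness. These are exactly the contributions to $(-n^2,1-n^2)$ whose size the paper cannot control.

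The correct argument for (d) is this. A complementary parameter $\lambda>1$ forces $|k|\le n-2$, so only finitely many $k$ are involved. For each such $k$, the elliptic operator on the $\gamma_k$-bundle has only finitely many eigenvalues, each of finite multiplicity, in the bounded window $[1-n^2,0]$.

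Similarly, in (b) your claim that only finitely many $\pi$ contribute to any bounded window above $1-n^2$ contradicts (c). What is true is that, apart from the discrete series (which sit exactly at the points $\ell^2-n^2$), only finitely many $\pi$ contribute.
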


Let us first explain the relationship between the structure of $\spec(\Delta_Y)$ given by the theorem and the unitary dual of $L=U(1,n)$ appearing in 
Lemma \ref{bassa} (see Proposition \ref{crux} below).
\begin{itemize}
\item $\spec(\Delta_Y)\cap (-\infty,-n^2]$ comes from unitary principal series and, at $-n^2$, also from limits of discrete series of $L$. 
\item $\spec(\Delta_Y)\cap (-n^2,0]$ consists of contributions from complementary series, ends of complementary series and non-integrable discrete series.
The discrete series contributions are responsible for the infinite dimension of the eigenspaces $E_{\ell^2-n^2}$, $\ell=1,\dots, n$.
\item $\spec(\Delta_Y)\cap (0,\infty)=\displaystyle\bigcup_{\ell=n+1}^\infty \{\ell^2-n^2\}$ is the contribution of integrable discrete series.
\end{itemize}
\vspace*{-1.5cm}
\begin{figure}[H]
\includegraphics[scale=0.56]{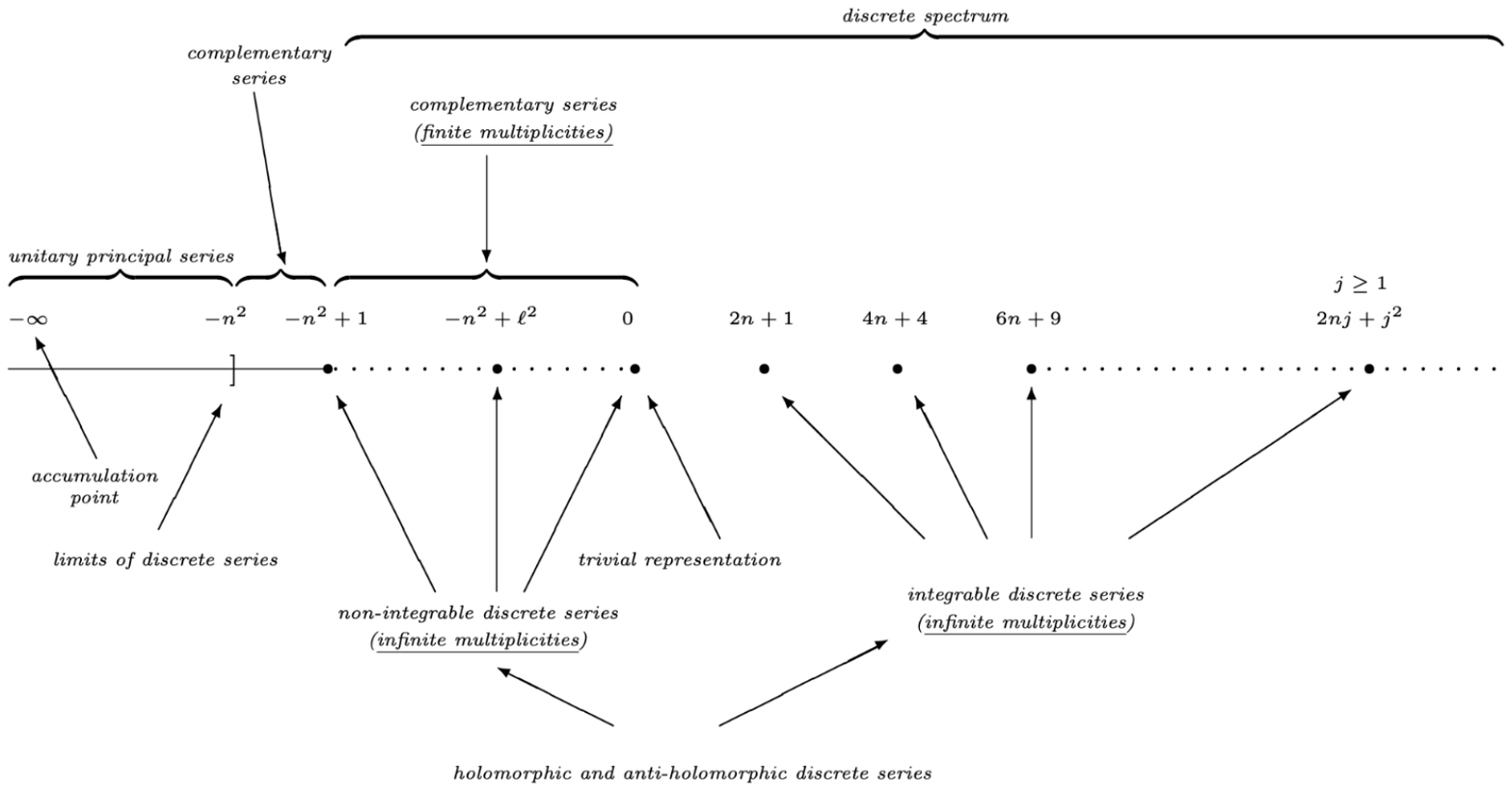}
\vspace*{-2cm}
\caption{Spectrum of $\Delta_{Y}$.}
\end{figure}

\noindent
Note that the space $E_0$ of solutions of the relativistic wave equation is infinite dimensional.

Of course, it would be highly desirable to uncover some structural properties at least of the {\it essential} spectrum (i.e. eigenvalues of infinite multiplicity and 
accumulation points of eigenvalues) in the half-line $(-\infty,1-n^2)$, in particular in $(-\infty,-n^2]$ (contributions of unitary principal series). However, at the moment we cannot exclude the extreme cases that the latter part of the essential spectrum 
is either empty or the full half-line, except for $n=1$, where it turns out that this part of the spectrum is always infinite, see Cor.~\ref{late}. 
Note that for $n=1$ we are in the group manifold situation $(G,H,L)\cong (G^{\prime}\times G^{\prime},\Delta G^{\prime},G^{\prime}\times K')$ with $G'=SU(1,1)$.

In order to prove Theorem \ref{something}, we use Lemma \ref{bassa} combined with Proposition \ref{Casimir}(2). For $k\in\mathbb{Z}$, we consider the one-dimensional representation of $L\cap K$ given by
\beu
\gamma_k:L\cap K =U(1)\times U(n)\rightarrow \mathbb{C}^*,\;(z,A)\mapsto z^k
\end{equation*}
and set
\beu
\widehat L_k\defn\{\pi\in\widehat L\mid \gamma_k\mbox{ occurs in }\pi_{|L\cap K}\}.
\end{equation*}
%
Let $\Omega_L$, $\Omega_{L\cap K}$ be the  Casimir operators corresponding to the bilinear form 
$$\langle X,Y\rangle = \mathrm{Re} (\Tr XY)$$ 
on $\fu(1,n)$. By Proposition \ref{Casimir}, the Laplacian $\Delta_Y$ is given by the action of $2\Omega_L-\Omega_{L\cap K}$. Next, we let $\fa$ be the image 
of the natural embedding 
\beu
j:\fo(1,1)\rightarrow\fu(1,n)
\end{equation*} 
into the left upper corner. For $\lambda\in\mathbb{C}$, we define $\nu_\lambda\in\fa^*$ by 
\beu
\nu_\lambda (j(H_t))\defn \lambda t
\end{equation*}
where $H_t\in\fo(1,1)$ is given by
$\begin{pmatrix}
0&t\cr
t&0
\end{pmatrix}$. 
Let $P=MAN$ be the associated (minimal) parabolic subgroup of $L$ and let $\sigma_k$ be the restriction of $\gamma_k$ to 
\beu
M\simeq\Delta U(1)\times U(n-1)\subset L\cap K.
\end{equation*}
%
We then form the principal series representations of $L$
$$ H^{k,\lambda}\defn\Ind_P^L\sigma_k\otimes e^{\nu_\lambda}\otimes 1\  ,\qquad k\in\mathbb{Z},\lambda\in\mathbb{C}\ .$$

\begin{lem}\label{bummi}
Let $\pi\in\widehat L$ be such that $V_\pi^{L\cap H}\ne\{0\}$. Then there is a unique $k\in\mathbb{Z}$ such that $\pi\in\widehat L_k$. 
For $\pi\in\widehat L_k$ the space $V_{\pi}^{L\cap H}$ is one-dimensional and coincides with the isotypic component 
$V_{\pi}(\gamma_k)$. There is a unique parametrization of $\widehat L_k$ by $\lambda\in Q_k\cup i[0,\infty)\subset \mathbb{C}$ 
for a certain subset $Q_k\subset (0,\infty)$ which is explicitly described in Proposition \ref{crux} below. Moreover, this parametrization is 
characterized by the existence of an $L$-equivariant embedding of $V_{\pi,\infty}$ into $H^{k,\lambda}$ or $H^{k,-\lambda}$. 
In particular, the operator $2\Omega_L-\Omega_{L\cap K}$ acts on $V_{\pi}^{L\cap H}$ 
by the scalar $\lambda^2-n^2$.
\end{lem}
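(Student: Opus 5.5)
The plan is to work throughout with the $K_L$-type picture of Lemma~\ref{frob}, with $K_L=L\cap K=U(1)\times U(n)$ and $Q=L\cap H=U(n)$.

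\emph{Step 1: one $K_L$-type.} An irreducible $K_L$-representation has the form $\chi_k\boxtimes\tau$ with $\tau\in\widehat{U(n)}$, and $U(n)$-invariants in it are nonzero only for $\tau$ trivial. In that case the type is $\gamma_k$ and its $U(n)$-invariants are one-dimensional. So Lemma~\ref{frob} gives
\[V_\pi^{L\cap H}\cong\overline{\bigoplus_k}M_\pi(\gamma_k).\]
It remains to show that at most one $k$ occurs and that it occurs with multiplicity at most one. Since the triple is spherical, Prop.~\ref{mufree}(i) already gives $\dim V_\pi^{L\cap H}\le1$. This immediately yields uniqueness of $k$, one-dimensionality, and $V_\pi^{L\cap H}=V_\pi(\gamma_k)$.

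\emph{Step 2: the parametrization.} For $\pi\in\widehat L_k$ we use Casselman's subrepresentation theorem to embed $V_{\pi,\infty}$ into some $H^{\sigma,\nu}$. By Frobenius reciprocity, $\gamma_k$ occurs in $H^{\sigma,\nu}$ if and only if $\sigma=\gamma_k|_M=\sigma_k$; for this one checks that $\gamma_k|_M$ is the character $(z,A)\mapsto z^k$ of $\Delta U(1)\times U(n-1)$. Hence $V_{\pi,\infty}\hookrightarrow H^{k,\lambda}$ for some $\lambda\in\C$.

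The Weyl group of the rank one group $L$ acts by $\lambda\mapsto-\lambda$ and fixes $\sigma_k$, because $\sigma_k$ extends to $K_L$. Moreover, $H^{k,\lambda}$ and $H^{k,-\lambda}$ have the same composition factors. Since $\gamma_k$ has multiplicity one in $H^{k,\lambda}$, exactly one composition factor contains it. Therefore $\pi$ is the unique subquotient of $H^{k,\lambda}$ containing $\gamma_k$, and it determines $\pm\lambda$.

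Unitarity then forces $\lambda\in i\R$ (unitary principal series) or $\lambda$ real. After normalizing the sign, $\lambda\in i[0,\infty)$ or $\lambda\in(0,\infty)$, and the real values form the set $Q_k$. Making $Q_k$ explicit (complementary series, discrete series, endpoints) is deferred to Proposition~\ref{crux}. The value $\lambda=0$ belongs to the imaginary branch, so the parametrization is unique.

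\emph{Step 3: the eigenvalue.} By Prop.~\ref{Casimir}(1), the Laplacian acts as $2\Omega_L-\Omega_{L\cap K}$. We compute the two pieces separately.

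On $V_\pi$, $\Omega_L$ acts by the infinitesimal character of $H^{k,\lambda}$. With the form $\mathrm{Re}\,\Tr$ one expects
\[\Omega_L=\tfrac12(\lambda^2-n^2)+c_k,\]
using $\rho=n$ in the normalization $\nu_\lambda(j(H_t))=\lambda t$; the constant $c_k$ comes from the $M$-part and the center.

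On the $K_L$-type $\gamma_k$, $\Omega_{L\cap K}$ acts by a scalar $d_k$ coming only from the $\fu(1)$-center, since $U(n)$ acts trivially. We must verify that
\[2c_k-d_k=0,\]
which gives the scalar $\lambda^2-n^2$. The cleanest way to do this is to evaluate $2\Omega_L-\Omega_{L\cap K}$ directly on the $K_L$-fixed-type vector in the induced picture. Using the Iwasawa decomposition, $\Omega_L=\Omega_{\fa}+\Omega_{\fm}+(\text{terms in }\fn)$ acting on the function $e^{\nu_\lambda-\rho}\sigma_k$, and the $k^2$ contributions from the center and from $\fm\cap\fu(1)$ should cancel.

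This normalization bookkeeping (the scaling of $\Omega$ on $\fa$ with respect to $\mathrm{Re}\,\Tr$, and the cancellation of the $k$-dependence) is the main obstacle. I would check it first for $n=1$, where $L=U(1,1)$ and the result should reproduce the $SL(2)$ Casimir in the form $\lambda^2-1$.
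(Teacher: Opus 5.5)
\textbf{Gap in Step~3.} Your route is the paper's: Casselman's embedding into some $H^{\sigma,\nu}$, Frobenius reciprocity (Lemma~\ref{frob} with $Q=U(n)$) forcing $\sigma=\sigma_k$, and the Casimir eigenvalue of $H^{k,\lambda}$. The gap is that Step~3 is not carried out. You reduce the final assertion to $2c_k-d_k=0$ and then only say the $k^2$ terms ``should cancel'', calling this the main obstacle. That cancellation is the whole content of the claim that $2\Omega_L-\Omega_{L\cap K}$ acts by $\lambda^2-n^2$, so as written the proposal does not prove it.

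It is a short check with $\IP{X}{Y}=\mathrm{Re}\,\Tr(XY)$:
\begin{itemize}
\item On $H^{k,\lambda}$, $\Omega_L$ acts by $\IP{\nu_\lambda}{\nu_\lambda}-\IP{\rho}{\rho}+\sigma_k(\Omega_M)$. Since $\IP{j(H_1)}{j(H_1)}=2$, $\nu_\lambda(j(H_1))=\lambda$ and $\rho(j(H_1))=n$ (restricted roots $\alpha,2\alpha$ with multiplicities $2(n-1),1$), the first two terms give $\frac{\lambda^2-n^2}{2}$.
\item We have $\fm=\{\mathrm{diag}(ia,ia,B)\mid a\in\R,\ B\in\fu(n-1)\}$. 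The character $\sigma_k$ is trivial on $\fu(n-1)$ and sends $X_\fm:=\mathrm{diag}(i,i,0,\dots,0)$, with $\IP{X_\fm}{X_\fm}=-2$, to $ik$. Hence $c_k=\sigma_k(\Omega_M)=(ik)^2/(-2)=k^2/2$.
\item In $\fk_L=\fu(1)\oplus\fu(n)$, $\gamma_k$ is trivial on $\fu(n)$ and sends $X_0:=\mathrm{diag}(i,0,\dots,0)$, with $\IP{X_0}{X_0}=-1$, to $ik$. Hence $d_k=\gamma_k(\Omega_{L\cap K})=k^2$.
\end{itemize}
So $\sigma_k(\Omega_M)=\frac12\gamma_k(\Omega_{L\cap K})$, which is exactly the identity the paper's proof rests on. It gives the scalar $\lambda^2-n^2$ on $V_\pi(\gamma_k)=V_\pi^{L\cap H}$.

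\textbf{Two smaller points.} First, in Step~1 you rely on Prop.~\ref{mufree}(i) for $\dim V_\pi^{L\cap H}\le 1$. That part does not cover $n=1$, where $G=SO_e(2,2)$ is not simple. Moreover, the paper's proof of Prop.~\ref{mufree} for Triple~1 simply refers back to this lemma. The self-contained route is the one you already have in Step~2. After embedding $V_{\pi,\infty}\hookrightarrow H^{\sigma,\nu}$, Lemma~\ref{frob} gives $(H^{\sigma,\nu})^{U(n)}\cong\bigoplus_{k\in\mathbb Z}\Hom_M(\sigma_k,\sigma)$. Since $k\mapsto\sigma_k$ is injective (the $\Delta U(1)$-factor of $M$ detects $k$), this space is nonzero for at most one $k$, and then it is one-dimensional. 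That yields uniqueness of $k$, $\dim V_\pi^{L\cap H}=1$ and $V_\pi^{L\cap H}=V_\pi(\gamma_k)$ at once.

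Second, in Step~2 your multiplicity-one observation (only one composition factor of $H^{k,\lambda}$ contains $\gamma_k$) shows that $\pm\lambda$ determines $\pi$. This injectivity is a useful supplement, which the paper leaves implicit. It does not by itself show the converse, that $\pi$ determines $\pm\lambda$. For that you need the Casimir (or infinitesimal character) with $k$ fixed by $\pi$, as the paper argues: $\Omega_L$ acts by $\frac{\lambda^2-n^2}{2}+\frac{k^2}{2}$.
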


\begin{proof} Combining Casselman's embedding theorem with the Frobenius reciprocity formula in Lemma \ref{frob} (with $Q=L\cap H=U(n)$), 
one gets a unique integer $k$ such that $V_{\pi,\infty}$ embeds into $H^{k,\lambda}$ for some $\lambda\in\mathbb{C}$ (in fact, $\lambda\in\mathbb{R}\cup i\mathbb{R}$ by unitarity) 
and the asserted formula for the space of invariants follows. Since $\Omega_L$ acts on $H^{k,\lambda}$ by 
$$\frac{\lambda^2-n^2}{2}+\sigma_k(\Omega_M),$$ 
we see that actually $\lambda$ is uniquely determined (up to sign). The last assertion now follows from the computation
$$\sigma_k(\Omega_M)=\frac{k^{2}}{4}+\frac{k^{2}}{4}=\frac{k^2}{2}=\frac{1}{2}\gamma_k(\Omega_{L\cap K})\ .$$
\end{proof}
\noindent
We mention that the part of $\widehat L_k$ parametrized by $\lambda\in i(0,\infty)$ corresponds to the irreducible unitary principal series representations $H^{k,\lambda}$. 
The principal series representation $H^{k,0}$ is sometimes irreducible, sometimes it decomposes into two {\it limit of discrete series} representations, and exactly one of 
these belongs to $\widehat L_k$. The latter situation occurs precisely when $|k|\geq n$, $k\equiv n \mod 2$. For these facts we refer to, e.g., \cite{Kn86}, or Theorem 5 and Proposition 51 
in \cite{KnS71}. 
We now determine the sets $Q_k$, $k\in\mathbb{Z}$, introduced in Lemma~\ref{bor}.
By general theory (\cite{Kn86}, \cite{KnS71}) they correspond to {\it discrete series} representations and non-tempered unitary representations, which are exhausted for $L=U(1,n)$ by the {\it complementary series} 
(including their ends). Recall that a discrete series representation of a group $G$ is {\it integrable} if its $K\times K$-finite matrix coefficients belong to $L^1(G)$.

\begin{pro}\label{crux}\mbox{}
\begin{enumerate}
\item If $|k|<n$, then $Q_k=(0,n-|k|]$ (complementary series including the end).
\item If $|k|=n$, then $Q_k=\emptyset$.
\item If $|k|> n$, $k\equiv n \mod 2$, then $Q_k=\{2,4,\dots,|k|-n\}$ (holomorphic discrete series if $k>0$ and anti-holomorphic discrete series if $k<0$).
\item If $|k|> n$, $k\not\equiv n \mod 2$, then $Q_k=(0,1)\cup\{1,3,\dots,|k|-n\}$ (complementary series without end and discrete series).
\end{enumerate}
Moreover a discrete series representation corresponding to $\lambda\in Q_k$ is integrable if and only if
$\lambda>n$ (this occurs if and only if $|k|>2n$).
\end{pro}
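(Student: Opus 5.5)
This is essentially a bookkeeping exercise in the known unitary dual of $L=U(1,n)$. By Lemma~\ref{bummi}, $\widehat L_k$ is parametrized by those $\lambda$ for which an irreducible unitary $\pi$ containing the $K_L$-type $\gamma_k$ embeds into $H^{k,\pm\lambda}$. We may restrict to real $\lambda>0$, since the unitary principal series and limits of discrete series account for $\lambda\in i[0,\infty)$. So the task is to read off, for each $k$, which real $\lambda>0$ give an irreducible unitary subquotient of $H^{k,\lambda}$ that contains $\gamma_k$. For this we use the explicit $K_L$-type structure of the principal series of $U(1,n)$. The reducibility points and the unitarity of the complementary series are both in Knapp--Stein \cite{KnS71} and Knapp \cite{Kn86}.

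First we describe $H^{k,\lambda}|_{K_L}$. By Lemma~\ref{frob} it equals $\Ind_M^{K_L}\sigma_k$. The relevant $K_L=U(1)\times U(n)$-types are indexed by pairs $(p,q)\in\N_0^2$: harmonic polynomials of bidegree $(p,q)$ on $\C^n$, tensored with the appropriate $U(1)$-character, with $p-q$ shifted by $k$. The type $\gamma_k$ is $(0,0)$. Next we compute the action of $\fs_L$ on this $K_L$-type lattice. The standard way is Schmid's/Knapp's $\fk$-type shifting: $\fs_L\otimes\C=\fp^+\oplus\fp^-$ moves $(p,q)$ to $(p\pm1,q)$ and $(p,q\pm1)$. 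The coefficients are affine functions of $\lambda$ of the shape $\lambda\pm(n+2p+k')$, where $k'$ depends on $k$; this is obtained e.g. by restricting to the $SU(1,1)$ through $\fa$ or by Casimir eigenvalue differences. The module is reducible exactly when some transition coefficient vanishes. This happens for $\lambda\equiv n+k \pmod 2$ integral, at the lines where $\lambda=\pm(n+2p+k)$ or $\pm(n+2q-k)$.

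We then carry out the case analysis on the subquotient containing $(0,0)$. When $|k|<n$, the module is irreducible for $0<\lambda<n-|k|$. There the invariant Hermitian form, normalized to be positive on $(0,0)$, stays positive, since it is positive at $\lambda=0^+$ and no coefficient vanishes in between. At $\lambda=n-|k|$ the $(0,0)$-type generates a unitary finite-length quotient, the end of the complementary series. Beyond that point the form changes sign on neighbouring types, which gives (a). When $|k|=n$, the first reducibility is already at $\lambda=0$, so positivity fails immediately; this gives (b). When $|k|>n$, the reducibility walls cut the lattice so that $(0,0)$ lies in a quadrant-type constituent. For $\lambda\in\{|k|-n,|k|-n-2,\dots\}>0$ this constituent is a holomorphic or antiholomorphic discrete series, according to the sign of $k$. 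This is identified by its lowest $K$-type and Harish-Chandra parameter, using \cite{KnS71}. The parity of $k-n$ decides whether $\lambda=0$ is a reducibility point. If it is, as in (c), there is no complementary series. If it is not, as in (d), we get a complementary series on $(0,1)$, and a sign argument as before rules out $\lambda\in(1,3)$ and similar intervals. Finally, integrability: by Trombi--Varadarajan / Hecht--Schmid the criterion is that the Harish-Chandra parameter $\Lambda$ satisfy $|\langle\Lambda,\alpha\rangle|>\tfrac12\sum_\beta|\langle\beta,\alpha\rangle|$ for noncompact roots. In terms of our parameter this reads $\lambda>n$, equivalently $|k|-n\ge\lambda>n$, i.e. $|k|>2n$.

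The main obstacle is the signature analysis of the Hermitian form in cases (a) and (d): we must show there is unitarity exactly on the stated intervals and nowhere else. We would first try to cite the complete classification of the unitary dual of $U(1,n)$ from \cite{Kn86}, Ch.~XVI, and \cite{KnS71}, checking only that our $\lambda$-normalization matches theirs via the Casimir value $\frac{\lambda^2-n^2}{2}+\frac{k^2}{2}$.
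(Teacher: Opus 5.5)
Your route is genuinely different from the paper's and is viable in outline, but its load-bearing steps are asserted rather than carried out.

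\textbf{Your route.} You work inside the multiplicity-free $K_L$-type lattice $\{(p,q)\}$ of $H^{k,\lambda}$, in the style of \cite{HoTa93}. Zeros of the $\mathfrak p^\pm$ transition coefficients give the composition series. You then locate the constituent through $(0,0)=\gamma_k$ and decide its unitarity by signatures. Integrability comes from the Hecht--Schmid criterion, which for Harish-Chandra parameters in $C_0$ does reduce to $\lambda>n$.

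\textbf{The paper's route.} The paper cites \cite{KnS71} only for the complementary series and proves the discrete series part directly. If a discrete series with Harish-Chandra parameter $\Lambda\in C_r$ lies in $\widehat L_k$, then $\gamma_k=\nu_\Lambda+R_r$ with $R_r\in\N\Sigma_r$, so $\|\Lambda\|<\|\gamma_k+\rho_c\|$. Since $\Lambda$ is Weyl conjugate to the infinitesimal character of $H^{k,\lambda}$, this gives $\lambda<|k|+n-1$. Regularity and integrality then force $\lambda=|k|-n-2\ell$. Conversely, an explicit $\Lambda\in C_0$ together with the full Blattner formula gives $[\pi_\Lambda:\gamma_k]=1$. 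Integrability is read off from leading exponents: $\lambda>\rho_\fa=n$. This needs no transition coefficients at all, and it excludes every chamber $C_r$ and every wrong parameter in one stroke.

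\textbf{What yours buys, and what is missing.} If executed, your approach gives more: the full composition series of $H^{k,\lambda}$, treating complementary series, their ends and discrete series uniformly. As written, however, three things are missing.
\begin{enumerate}
\item The transition coefficients, on which everything rests, are only guessed ``of the shape $\lambda\pm(n+2p+k')$''.
\item You do not argue that the $(0,0)$-constituent at $\lambda=|k|-n-2\ell$ is a holomorphic discrete series.
\item ``The form changes sign on neighbouring types'' does not exclude a unitary constituent containing $\gamma_k$ at the reducibility points beyond the first wall. Indefiniteness of the normalized form for nearby $\lambda$ says nothing about its restriction to the constituent through $(0,0)$ at the reducibility point. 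Case (d) at $\lambda=1,3,\dots$ shows this: the form is indefinite on the intervening intervals, yet that constituent is a unitary discrete series.
\end{enumerate}
Falling back on the unitary dual of $U(1,n)$ closes point 3 only once you know which constituent carries $\gamma_k$, that is, once the lattice computation has actually been done.
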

\begin{proof}
The form of the complementary series part of $Q_k$ follows from \cite{KnS71}, Section II. 
By the theory of leading exponents of matrix coefficients of admissible representations (see \cite{Wa88}, Chapters 4 and 5 or \cite{Kn86}, Theorem 8.48) a 
discrete representation with a unique embedding to principal series representation with $\fa^*$-parameter $\lambda$ is integrable if and only if 
$\lambda>\rho_\fa=n$. It remains to determine the parameters of the discrete series representations in $\widehat L_k$. For this we could simply refer to \cite{Sh94}, 
where the discrete spectrum (even the full spectral decomposition) of Laplacians acting on line bundles over Hermitian symmetric spaces is determined. Instead, we prefer 
to give a more representation theoretic argument which is, in principle, applicable to much more general situations.

First recall the description of discrete series representations by their {\it Harish-Chandra parameters} together with information coming from the 
{\it Blattner formula} concerning their $K$-types (see e.g. \cite{Wa88}, Chapters 6 and 8, in particular Theorem. 6.7.6). Let $T$ be the torus of diagonal 
matrices in $L=U(1,n)$. It is a Cartan subgroup of $L$ as well as of $L\cap K$. Irreducible $L\cap K$-representations are parametrized by integral 
$L\cap K$-dominant weights of the form (with respect to the standard basis $e_0,e_1,\dots, e_n$ of ${i}\ft^*$) 
$$ (m_0,m_1,\dots,m_n)\ ,\quad m_i\in\mathbb{Z},\  m_1\geq m_2\geq\dots\geq m_n\ .$$
Note that $\gamma_k$ has highest weight $(k,0,\dots,0)$. Discrete series representations of $L$ are parametrized via their Harish-Chandra parameter
by a similar set of regular dominant weights 
$$ (m_0,m_1,\dots,m_n)\ ,\quad m_i+\frac{n}{2}\in\mathbb{Z},\  m_0\ne m_i, i=1,\dots n,\, m_1> m_2>\dots> m_n\ .$$
In particular, Harish-Chandra parameters fall into $n+1$ disjoint open Weyl chambers $C_0,C_1,\dots,C_n$, where $C_r$ is characterized by 
$m_{r}\geq m_0>m_{r+1}$. The Weyl chamber $C_r$ corresponds to the system of positive noncompact roots 
$$\Sigma_r=\{e_0-e_{r+1},\dots,e_0-e_{n}, e_1-e_0,\dots, e_r-e_0\}$$
with half sum
$$\rho_r =\big (\frac{n}{2}-r,\underbrace{\frac{1}{2},\dots,\frac{1}{2}}_r,\underbrace{-\frac{1}{2},\dots,-\frac{1}{2}}_{n-r}\big )\ .$$
Compact positive roots are independent of $r$ with half sum
$$\rho_c =\big (0,\frac{n-1}{2},\frac{n-3}{2},\dots,\frac{1-n}{2}\big )\ .$$
A discrete series representation with Harish-Chandra parameter $\Lambda\in C_r$ has infinitesimal character $\Lambda$ and {\it minimal $L\cap K$-type} 
\beu
\nu_\Lambda\defn\Lambda+\rho_r-\rho_c. 
\end{equation*}
For any other $L\cap K$-type $\nu$, one has that $\nu-\nu_\Lambda$ is a non-negative integer linear combination 
of elements of $\Sigma_r$. 
If we choose the complex structure of the Hermitian symmetric space $L/(L\cap K)$ such that its holomomorphic tangent space corresponds to
$$ \bigoplus_{\alpha\in \Sigma_0} \fl_{-\alpha}\subset (\fl\cap\fs)_\C\ ,$$
then discrete series representations with Harish-Chandra parameter $\Lambda\in C_0$ are holomorphic, 
while discrete series representations with Harish-Chandra parameter $\Lambda\in C_n$ are anti-holomorphic (Theorem 6.6 in \cite{Kn86}).

Now let $\pi\in\widehat L_k$ be a discrete series representation corresponding to $\lambda\in Q_k\subset (0,\infty)$ with Harish-Chandra parameter 
$\Lambda\in C_r$ for some $r\in\{0,1,\dots,n\}$ and minimal $(L\cap K)$-type $\nu_\Lambda$. Since 
$$\Lambda=\nu_\Lambda+\rho_c-\rho_r=\gamma_k+\rho_c-\rho_r-R_r,$$ 
where $R_r$ lies in the cone spanned by $\Sigma_r$, it follows that
\begin{eqnarray*}
\|\Lambda\|^2&=&\langle \Lambda,\gamma_k+\rho_c-\rho_r-R_r\rangle\\
&<&\langle \Lambda,\gamma_k+\rho_c\rangle\\
&\le& \|\Lambda\| \|\gamma_k+\rho_c\|
\end{eqnarray*}
and hence $\|\Lambda\|< \|\gamma_k+\rho_c\|$. The infinitesimal character of $\pi$ coincides with the one of the principal series representation $H^{k,\lambda}$ which is
\beu
\big (\frac{k+\lambda}{2},\frac{k-\lambda}{2},\frac{n}{2}-1,\frac{n}{2}-2,\dots,1-\frac{n}{2}\big )
\end{equation*}
and $\Lambda$ is a permutation (Weyl conjugate) of this vector. We restrict our attention to the case $k\geq 0$, since the arguments are analogous for $k<0$. 
We obtain
\begin{eqnarray*}
0&<&  \|\gamma_k+\rho_c\|^2-\|\Lambda\|^2 
\le k^2 +2\left (\frac{n-1}{2}\right )^2 -\left (\frac{k+\lambda}{2}\right )^2 -\left (\frac{k-\lambda}{2}\right )^2\\
&=&\frac{1}{2}\left (k^2-\lambda^2+(n-1)^2 \right ),
\end{eqnarray*}
and therefore $\lambda< k+n-1$. The regularity and integrality conditions on $\Lambda$ imply that either $\frac{k-\lambda}{2}\geq\frac{n}{2}$, i.e.,
\be\label{aupoint}
\lambda\in\Big\{k-n-2\ell\mid \ell=0,1,\dots,\left [ \frac{k-n-1}{2}\right ]\Big\}\ ,
\end{equation}
or $\frac{k+\lambda}{2}\geq\frac{n}{2}$ and $\frac{k-\lambda}{2}\le-\frac{n}{2}$, i.e.
$
\lambda\geq k+n
$, which is impossible. Thus any $\lambda\in Q_k$ satisfies (\ref{aupoint}). Conversely, assume that $\lambda$ satisfies (\ref{aupoint}).
We define a Harish-Chandra parameter belonging to $C_0$ by 
$$\Lambda\defn\big (k-\ell-\frac{n}{2},\frac{n}{2}+\ell,\frac{n}{2}-1,\frac{n}{2}-2,\dots,1-\frac{n}{2}\big ).$$
The corresponding discrete series representation has minimal $L\cap K$-type
$$ \nu_{\Lambda}=\Lambda+\rho_0-\rho_c=(k-\ell,\ell,0,\dots,0).$$
Note that $\gamma_k-\nu_{\Lambda}=(\ell,-\ell,0,\dots,0)$ belongs to the cone spanned by $\Sigma_0$.
Therefore $\gamma_k$ may occur in  $\pi_{\Lambda}$. To check that it does occur we use the full Blattner formula (\cite{Wa88}, Theorem~6.7.6)
which gives the precise multiplicity of a $(L\cap K)$-type as an alternating sum over the compact Weyl group $W_{\fl\cap\fk}$
\begin{eqnarray*} [\pi_{\Lambda}:\nu]&=&\sum_{w\in W_{\fl\cap\fk}}\det(w) p(w(\nu+\rho_c)-(\Lambda+\rho_0))\\
&=&\sum_{s\in S_n}\sgn(s) p(s(\nu+\rho_c)-(\Lambda+\rho_0))\ ,
\end{eqnarray*}
where 
\beu
p:{i}\ft^\star\rightarrow\mathbb{N}_0
\end{equation*}
is the partition function associated with $\Sigma_0$, defined as the number of ways in that $\tau$ can be written 
as a non-negative integer linear combination of elements of $\Sigma_0$. Now $\gamma_k+\rho_c=\big (k,\frac{n-1}{2},\frac{n-3}{2},\dots,\frac{1-n}{2}\big )$ 
and $\Lambda+\rho_0=\big (k-\ell,\frac{n-1}{2}+\ell,\frac{n-3}{2},\dots,\frac{1-n}{2}\big )$. Hence for $s\in S_n\setminus\{1\}$ we have:
$$ s(\gamma_k+\rho_c)-(\Lambda+\rho_0)=(\ell,m_1,\dots,m_n)$$
for some $m_i\in\mathbb{Z}$ with $m_i>0$ for at least one $i$. Therefore, $s(\gamma_k+\rho_c)-(\Lambda+\rho_0)$ is not in the cone spanned by $\Sigma_0$, i.e., 
\beu
p(s(\gamma_k+\rho_c)-(\Lambda+\rho_0))=0.
\end{equation*}
We obtain that
$$ [\pi_\Lambda:\gamma_k]=p((\gamma_k+\rho_c)-(\Lambda+\rho_0))=p(\ell,-\ell,0,\dots,0)=1\ .$$
Thus $\pi_{\Lambda}\in L_k$. It follows that $\lambda\in Q_k$.
\end{proof}
For 
$k\in \Z$ 
and $\lambda\in Q_{k}\cup i\lbrack 0,\infty)$, let $\pi_\lambda^k$ be the corresponding element of $\widehat L_k$. The following proposition tells
us that almost all discrete series representations among them really occur in $L^2(\Gamma\backslash L)$. The only possible exceptions are the two representations $\pi^{\pm(n+1)}_1$, $n\ge 2$.
Recall our standing assumption that $\Gamma$ is torsion free.

\begin{pro}\label{pavle}Let $|k|\ge n+2$ or $n=1$.
If $\pi_\lambda^k$ is a discrete series representation (see Prop.~\ref{crux}), then we have
\be\label{kusmin} 
\dim N_\Gamma(\pi_{\lambda}^{k})\ge d_{\pi_{\lambda}^{k}}\vol(\Gamma\backslash L)>0\ ,
\end{equation}
where $d_\pi$ is the formal dimension of $\pi$. For $\lambda\ge 2$ we have equality in (\ref{kusmin}).
\end{pro}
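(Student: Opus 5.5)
We would prove Proposition~\ref{pavle} by computing $\dim N_\Gamma(\pi)$ with the Selberg trace formula for the compact quotient $\Gamma\backslash L$, using a pseudo-coefficient of the discrete series representation $\pi=\pi_\lambda^k$. By Prop.~\ref{crux}, $\pi$ has a Harish-Chandra parameter $\Lambda=(k-\ell-\frac n2,\frac n2+\ell,\frac n2-1,\dots,1-\frac n2)\in C_0$ (for $k>0$; the case $k<0$ is symmetric), with $\lambda=k-n-2\ell$. So $\pi$ is holomorphic, and its minimal $K$-type is $(k-\ell,\ell,0,\dots,0)$. Let $f_\pi$ be a pseudo-coefficient, e.g.\ the Euler--Poincar\'e function attached to the minimal $K$-type. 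Since $\Gamma$ is torsion free, only the identity and hyperbolic elements contribute. Orbital integrals of pseudo-coefficients vanish on regular non-elliptic elements, so the geometric side reduces to $\vol(\Gamma\backslash L)f_\pi(e)=\vol(\Gamma\backslash L)\,d_\pi$. The spectral side is $\sum_{\sigma\in\widehat L}\dim N_\Gamma(\sigma)\,\Theta_\sigma(f_\pi)$.

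\textbf{Equality for $\lambda\ge 2$.} For the spectral side we use the classical Langlands/Hotta--Parthasarathy/Wallach vanishing result. With the Euler--Poincar\'e choice, $\Theta_\sigma(f_\pi)=\sum_q(-1)^q\dim H^q(\fl,K_L;\sigma\otimes F^*)$ for the finite-dimensional $F$ with the same infinitesimal character as $\pi$, up to a sign $(-1)^{q_0}$. For unitary $\sigma$ this is nonzero only if $\sigma$ has infinitesimal character $\Lambda$. If $\Lambda$ is sufficiently far from the walls, the only such unitary $\sigma$ are discrete series. The unitary representations with the same infinitesimal character as $\pi$ are Vogan--Zuckerman modules $A_\fq(\mu)$, and one must check that for $\lambda\ge 2$ every one other than $\pi$ itself either has vanishing Euler characteristic against $f_\pi$ or does not occur. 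This is exactly the case where $\pi$ is not a "first" element next to the complementary series. Concretely, we would invoke the multiplicity formula $\dim N_\Gamma(\pi)=d_\pi\vol(\Gamma\backslash L)$ for discrete series with sufficiently regular parameter, which is valid as long as $\Lambda$ has distance $\ge1$ from walls in a suitable sense (DeGeorge--Wallach, Hotta--Parthasarathy). Checking that $\lambda\ge 2$ is this condition is the computational core. The "first" discrete series $\lambda=1$ lies next to the complementary series end at $\lambda\to1$, where non-tempered $A_\fq$ representations share the infinitesimal character.

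\textbf{The inequality in general.} For general $|k|\ge n+2$ (including $\lambda=1$ when $k\not\equiv n$), and for $n=1$, we would only show $\ge$. The remaining unitary representations with infinitesimal character $\Lambda$ contribute with a definite sign: for holomorphic discrete series the cohomological degree is extremal, so the other $A_\fq(\mu)$ contribute terms with the sign making $\dim N_\Gamma(\pi)$ larger. Alternatively, for $n=1$ this is classical via Riemann--Roch: $\dim N_\Gamma(\pi)$ equals the dimension of the space of holomorphic $k$-differential-type forms, which is $\ge$ the Riemann--Roch value. For $n\ge2$, we would try first the Hirzebruch proportionality route. $N_\Gamma(\pi)$ is identified with $L^2$ holomorphic sections of an automorphic line bundle (twisted by the $\ell$-part) over $\Gamma\backslash L/K_L$. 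Kodaira vanishing then applies, since the bundle is positive for $|k|\ge n+2$. It follows that $\dim H^0$ equals the holomorphic Euler characteristic, which by proportionality is $d_\pi\vol(\Gamma\backslash L)$. At the boundary one still obtains $\dim H^0\ge\chi$ when only $H^1$ can fail to vanish. This positivity/vanishing step at $|k|=n+2$ is the main obstacle, and Appendix~\ref{B}'s Prop.~\ref{nichts} presumably supplies it. Positivity of $d_\pi$ is automatic for a discrete series.
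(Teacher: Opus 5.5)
Your overall strategy matches the paper's. There, $d_{\pi_\lambda^k}\vol(\Gamma\backslash L)$ appears as the Dolbeault Euler characteristic $\chi(Z,E)$ of the bundle on $Z=\Gamma\backslash L/K_L$ defined by the minimal $K_L$-type $\nu_0=(|k|-\ell,\ell,0,\dots,0)$, computed by Hirzebruch proportionality. That is the index-theoretic version of your trace formula with a pseudo-coefficient of $\pi=D_0$. Two corrections are needed along the way.
\begin{enumerate}
\item The Euler--Poincar\'e function of $(\fl,K_L)$-cohomology with coefficients in $F$ is \emph{not} a pseudo-coefficient of $\pi$. All discrete series $D_0,\dots,D_n$ with that infinitesimal character have cohomology only in degree $n$, so this function pairs with each of them by $(-1)^n$ and cannot isolate $\dim N_\Gamma(\pi)$. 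What isolates $D_0$ is the $\fu$-cohomology index $\ind_\mu$ at the $K_L$-type $\nu_0$ (Lemma~\ref{hansi}).
\item For $\lambda\ge2$, the regularity check you leave open takes one line (for $k>0$). One has $\Lambda-\rho_0-\rho_c=(k-\ell-n,\ell+1,1,\dots,1)$, which pairs with $e_0-e_1$ to $\lambda-1$ and with $e_0-e_j$, $j\ge 2$, to $\lambda+\ell-1$. So the Hotta--Parthasarathy condition is exactly $\lambda\ge 2$.
\end{enumerate}

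The genuine gap is the spectral side of the inequality, i.e.\ the case $\lambda=1$, which is the part not covered by the literature. You must determine, for every irreducible unitary $\sigma$ with the infinitesimal character of $\pi$, the value of your functional on $\sigma$ together with its sign, and you give no argument for this.
\begin{itemize}
\item The heuristic that ``the other $A_\fq(\mu)$ contribute with the favourable sign because holomorphic discrete series sit in extremal degree'' never uses $|k|\ge n+2$, and it is false without it. For $k=n+1$ one has $\lambda=1$ and $\mu=(1,\dots,1)$. Then every Langlands quotient $I_{0,q}$, $0\le q\le n-1$, is unitarizable, and $\ind_\mu(I_{0,q})=(-1)^{n-q}$ takes both signs once $n\ge 2$.
\item The Kodaira route fails exactly where it matters. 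At $\lambda=1$ the end of complementary series $\pi'$ enters $\chi(Z,E)$ with sign $-1$, so odd Dolbeault cohomology of $E$ is nonzero whenever $N_\Gamma(\pi')\ne\{0\}$. For instance, for $n=1$, $E$ is $K_Z$ twisted by a flat line bundle, and $H^1(Z,E)\ne 0$ when that flat bundle is trivial.
\item Your fallback ``$H^0(Z,E)\cong N_\Gamma(\pi)$ and only $H^1$ can fail to vanish'' is asserted, not proved. Moreover, there are further unitarizable $I_{p,q}$ with $p\ge 1$ (whenever $m_p=m_{n-q}$), which any such argument would have to control.
\end{itemize}

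What is missing is precisely the content of Appendix~\ref{B}:
\begin{itemize}
\item $\ind_\mu$ vanishes on principal series and equals $\delta_{r0}$ on $D_r$.
\item Downward induction on $p+q$ through the composition series of $H_{p,q}$ gives $\ind_\mu(I_{p,q})=0$ for $p>0$ and $(-1)^{n-q}$ for $p=0$.
\item The unitarity criterion $m_p=m_{n-q}$ of Lemma~\ref{riedel}, applied to $\mu=(k-\ell-n,\ell+1,1,\dots,1)$ with $|k|\ge n+2$, leaves only $I_{0,n-1}=\pi'$ among the $I_{0,q}$, and only when $\lambda=1$.
\end{itemize}
Together these give $\chi(Z,E)=\dim N_\Gamma(\pi)-\dim N_\Gamma(\pi')$ (Prop.~\ref{nichts}), and hence the inequality. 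Deferring this to ``presumably Prop.~\ref{nichts}'' leaves the statement unproved.
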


Let us mention that for a discrete series representation of a connected reductive group with compact center with sufficiently regular Harish-Chandra parameter $\Lambda$
the Formula (\ref{kusmin}) with equality is already known. In particular, for integrable discrete series (in our situation $\lambda> n$) the formula was first observed by Langlands \cite{La66},
see also \cite{Wa76}, and is a quite direct consequence of the Selberg trace formula. Under the weaker regularity condition (in the notation of the proof of Prop.~\ref{crux})
$$ \langle \Lambda-\rho_r-\rho_c,\alpha\rangle >0 \:,\quad \alpha\in \Sigma_r, $$
the formula was established in \cite{HuPa06}, Ch.~8. For the group $U(1,n)$ (and $Spin(1,n)$) the result was already shown in Proposition 9.2 of \cite{HoPa74}. In our situation, this regularity
condition amounts to $\lambda\ge 2$. Both proofs use index theoretic arguments. Our proof of (\ref{kusmin}), which also covers $\lambda=1$, will be similar in spirit. 

{\em Proof of Proposition \ref{pavle}}. 
Let $\ell$ be given by $2\ell=|k|-n-\lambda$. We consider the $K_L$-representation with highest weight $\nu=(|k|-\ell,\ell,0,\dots,0)$. (It is the minimal $K$-type of $\pi_\lambda^{|k|}$, see the proof of Prop.~\ref{crux}.)
It defines a complex vector bundle $E$ over the complex hyperbolic manifold $Z=\Gamma\backslash L/K_L$. We consider the Euler characteristic $\chi(Z,E)$ for its
Dolbeaut cohomology groups. It can be computed explicitly by the index theorem of Atiyah-Singer (which is in this case Hirzebruch-Riemann-Roch) or rather by Hirzebruch's proportionality principle (see e.g. \cite{HuPa06}, Ch. 8) coupled with the Theorem of Borel-Weil-Bott. The result is
\be \label{brecht}  \chi(Z,E)=  d_{\pi_{\lambda}^{k}}\vol(\Gamma\backslash L)\:.\end{equation}
On the other hand, $\chi(Z,F_\nu)$ can be expressed by the multiplicities $N_\Gamma(\pi)$, where $\pi\in \widehat L$ has the same infinitesimal character
as $\pi_\lambda^{k}$ (or $\pi_\lambda^{-k}$). This leads to Proposition \ref{nichts} below, which will be shown in Appendix B. Proposition~\ref{nichts} combined with (\ref{brecht}) implies Prop.~\ref{pavle}.   
\hfill$\Box$

\begin{pro}\label{nichts} Under the assumptions of Prop.~\ref{pavle} and in the notation of its proof we have 
$$  \chi(Z,E)=\left\{\begin{array}{ll} \dim N_\Gamma(\pi_{\lambda}^{k})&\lambda\ge 2\\ \dim N_\Gamma(\pi_{\lambda}^{k})-\dim N_\Gamma(\pi')&\lambda=1\end{array}\right.  \ ,$$
where $\pi'$ is the end of complementary series representation that is excluded from $Q_k$ in Prop.~\ref{crux} (d).
\end{pro}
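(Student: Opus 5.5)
The plan is to compute $\chi(Z,E)=\sum_q(-1)^q\dim H^{0,q}(Z,E)$ via Matsushima--Murakami type vanishing and the relative Lie algebra cohomology description of Dolbeault cohomology of locally Hermitian symmetric spaces. By (\ref{monet}) and the standard identification
$$H^{0,q}(Z,E)\cong\bigoplus_{\pi\in\widehat L}N_\Gamma(\pi)\otimes H^{0,q}(\fp^-,K_L;V_{\pi,K_L}\otimes F_\nu),$$
where $\fp^-$ is the antiholomorphic part of $(\fl\cap\fs)_\C$ (with respect to the complex structure fixed in the proof of Prop.~\ref{crux}), I get
$$\chi(Z,E)=\sum_{\pi\in\widehat L}\dim N_\Gamma(\pi)\,\chi(\pi,\nu),\qquad \chi(\pi,\nu)=\sum_q(-1)^q\dim H^{q}(\fp^-,K_L;V_{\pi,K_L}\otimes F_\nu).$$
The first step is the Kodaira--Laplacian/Casimir argument: the $\bar\partial$-Laplacian on $E$-valued forms acts on the $\pi$-isotypic piece by a scalar given by $\chi_\pi(\Omega_L)$ minus a fixed quantity depending on $\nu$. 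Hence only $\pi$ whose infinitesimal character equals that of $\pi^{k}_\lambda$, namely that of $H^{k,\lambda}$, i.e.\ $\Lambda$ up to Weyl conjugation, contribute. This leaves a finite list of $\pi\in\widehat L$.

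The second step classifies those finitely many unitary $\pi$ with infinitesimal character $\Lambda$, using the Knapp--Stein/Kraljević description of $\widehat{U(1,n)}$ already invoked in the proof of Prop.~\ref{crux}. These are the discrete series in the various chambers $C_r$, possibly some ends of complementary series, and possibly Vogan--Zuckerman modules $A_\fq(\mu)$ (for $U(1,n)$ these are essentially the ends of complementary series and the trivial-type representations). For each I compute $\chi(\pi,\nu)$. For discrete series I use the Schmid/Griffiths--Schmid result: $H^q(\fp^-,K_L;\pi\otimes F_\nu)$ is nonzero only for the unique $\pi$ whose Harish-Chandra parameter and chamber match $\nu$ through $\nu=\Lambda+\rho_0-\rho_c$ (here $\Lambda\in C_0$ and $\nu$ is the minimal $K$-type from the proof of Prop.~\ref{crux}). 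In that case it is one-dimensional in degree $0$ (holomorphic case) up to the sign convention, giving $+\dim N_\Gamma(\pi^k_\lambda)$; for $k<0$ the anti-holomorphic analogue applies. Discrete series in other chambers have no relative cohomology with this coefficient by Blattner/Vogan--Zuckerman. For non-tempered $\pi$ I use the Vogan--Zuckerman formula: $H^q(\fp^-,K_L;A_\fq(\mu)\otimes F_\nu)$ is nonzero only if $\nu$ is compatible with $\fq$. A parameter count shows this can only happen when $\lambda=1$: then $\Lambda$ lies on the wall adjacent to the end of the complementary series $\pi'$ of Prop.~\ref{crux}(d), which is an $A_\fq$-module. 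Its $\bar\partial$-cohomology is one-dimensional in degree $1$, contributing $-\dim N_\Gamma(\pi')$. For $\lambda\ge2$, $\Lambda$ is far enough from the walls that no such module has matching infinitesimal character and $K$-types, and the formula reduces to $\dim N_\Gamma(\pi^k_\lambda)$.

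The main obstacle is this second step, namely the exact bookkeeping of which non-tempered unitary representations share the infinitesimal character and their precise $\bar\partial$-cohomology degrees, for $\lambda=1$. I would do it concretely by realizing $\pi'$ as the Langlands quotient of $H^{k,1}$. I would compare $K_L$-types of $H^{k,1}$ with those of $\pi^k_1$ and $\pi'$ (and possibly a second discrete series in another chamber), and use the Euler characteristic's additivity on the composition series of $H^{k,1}$. Since $\chi(H^{k,1},\nu)=0$, being induced from a parabolic with noncompact split part, the contributions of constituents are forced to cancel, yielding $\chi(\pi',\nu)=-\chi(\pi_1^k,\nu)=-1$. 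The hypotheses $|k|\ge n+2$ or $n=1$ serve to exclude the degenerate case where a further constituent (e.g.\ a one-dimensional or another $A_\fq$-module) would also appear.
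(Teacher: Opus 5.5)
Your skeleton is the paper's. You decompose the Dolbeault complex along $L^2(\Gamma\backslash L)$, keep a single infinitesimal character, and compute a $\bar\partial$-Euler characteristic $\chi_\mu$ module by module, using that it is additive and vanishes on principal series. Two small corrections. The $\bar\partial$-Laplacian only fixes the Casimir eigenvalue; to pin down the full infinitesimal character you need Casselman--Osborne, as in Lemma~\ref{hansi}(b). For the discrete series, Blattner's lower bound on $K_L$-types already gives $C^j_\mu(D_r)=0$ unless $j=r=0$ (Lemma~\ref{hansi}(c)); Vogan--Zuckerman computes $(\fg,K)$-cohomology, not $\bar\partial$-cohomology, so it is not a quotable tool here. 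Your plan to get $\chi_\mu(\pi')=-1$ from additivity along the composition series $0\to D_0\oplus D_1\to H^{k,1}\to\pi'\to 0$ is exactly the base step of Lemma~\ref{fritzsch}.

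The gap is the rest of the non-tempered unitary spectrum at this infinitesimal character, which your ``parameter count'' does not handle. Take the notation of Appendix~\ref{B} with $\mu=(k-\ell-n,\ell+1,1,\dots,1)$. The irreducible modules are the $D_r$, $0\le r\le n$, and the Langlands quotients $I_{p,q}$ of $H_{p,q}$, $p+q\le n-1$. By Lemma~\ref{riedel}, $I_{p,q}$ is unitarizable iff $m_p=m_{n-q}$. So for every $\lambda$, unitary non-tempered modules with exactly this infinitesimal character exist: all $I_{p,q}$ with $p\ge 2$ when $n\ge 3$, and the $I_{1,q}$ when $\ell=0$. Since $\Lambda$ is regular for every $\lambda$, no ``distance from the walls'' argument can dispose of them, and the $\bar\partial$-version of Vogan--Zuckerman you invoke is not available.

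Two facts are missing:
\begin{enumerate}
\item[(i)] $\chi_\mu(I_{p,q})=0$ for $p\ge 1$. The paper proves this, together with $\chi_\mu(I_{0,q})=(-1)^{n-q}$, by iterating your additivity argument downward in $p+q$ along Collingwood's composition series. Alternatively, for $p\ge1$ the $U(n-1)$-part of $\sigma_{p,q}$ begins with $m_0+1$, so by interlacing every $K_L$-type of $H_{p,q}$ has $l_1\ge m_0+1>m_1$. All $K_L$-types of $F_{\nu_0}\otimes\Lambda^j\fu$ have $l_1\le m_1$, so even the cochains vanish.
\item[(ii)] Among the $I_{0,q}$, only $I_{0,n-1}=\pi'$ is unitary, and only when $\lambda=1$. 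Indeed $m_0=m_1$ iff $\lambda=1$, while $m_0=m_{n-q}=1$ for $q<n-1$ forces $k=n+1$ (Cor.~\ref{heidler}).
\end{enumerate}
Fact (ii) is where $\lambda=1$ and the hypothesis $|k|\ge n+2$ actually enter. With (i) and (ii) supplied, your single composition-series computation finishes the proof.
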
 

{\em Proof of Theorem \ref{something}}.
The theorem follows from Lemma \ref{bassa}, Lemma \ref{bummi}, and Proposition \ref{crux} combined with the following facts:
\begin{itemize}
\item One has
\beu
L^2(Y)\cong\widehat{\bigoplus_{k}}\;\widehat{\bigoplus_{\lambda\in Q_{k}\cup i\lbrack 0,\infty)}}N_\Gamma(\pi_{\lambda}^{k})\otimes V_{\pi_{\lambda}^{k}}(\gamma_k)
\end{equation*}
so that the multiplicities of the eigenvalues are given by:
\be\label{klumpfuss}
\dim E_{\lambda^{2}-n^{2}}=\sum_{k}\dim N_\Gamma(\pi_{\lambda}^{k}).
\end{equation}
Thus in view of Prop.~\ref{pavle} the discrete series contributions make the eigenspaces infinite-dimensional for $\lambda\in\N$.
\item For fixed $k$, the set $\big\{\lambda^2-n^2\mid \lambda\in Q_{k}\cup i\lbrack 0,\infty), N_\Gamma(\pi_{\lambda}^{k})\ne\{0\}\big\}\subset\mathbb{R}$ is closed, discrete and infinite as the spectrum 
of an elliptic self-adjoint operator on the compact manifold $Z=\Gamma\backslash L/L\cap K$ (actually, as a set with multiplicities it obeys Weyl's asymptotics for $\lambda\to i\infty$).
\hfill$\Box$
\end{itemize}

It should be noted that the above mentioned Weyl type asymptotics and related classical results do not seem to have serious implications for the structure 
of the negative part of $\spec(\Delta_Y)$. For this, one would like to understand other types of ``semi-classical limits'':
for a fixed bounded interval $I\subset i\mathbb{R}$ study the behavior of the numbers
$N_\Gamma(I,k)\defn\sum_{\{\pi\in \widehat L_k\mid \lambda_\pi\in I\} }\dim N_\Gamma(\pi)$, when $|k|$ goes to $\infty$. For instance, if for all sufficiently large $I$ these numbers
tend to infinity for $|k|\to\infty$, then the negative part of the {\em essential} spectrum of $\Delta_Y$ will be infinite.

For $n=1$ we can clarify the situation rather completely. We have $L=U(1,1)=SU(1,1)\cdot M$ with $SU(1,1)\cap M=\{\pm\id\}$. From this, one deduces (using that the projection of $\Gamma$ to $PU(1,1)$ can be lifted to $SU(1,1)$) that any torsion-free cocompact lattice $\Gamma \subset L=U(1,1)$ is of the form
$$\Gamma_{0,\chi}:=\{(\gamma\chi(\gamma)\mid \gamma\in\Gamma_0\}\ ,$$
where $\Gamma_0\subset SU(1,1)$ is a cocompact torsion-free lattice and $\chi:\Gamma_0\rightarrow M
\cong U(1)$ is a character. Note that the group ${\Cal X}$ of all such characters
is a torus of dimension $2g$, where $g\ge 2$ is the genus of the Riemann surface $Z\cong\Gamma_0\backslash H^2$, $H^2$ being the real hyperbolic plane.
Let ${\Cal X}_\Gamma\subset {\Cal X}$ be the closure of the subgroup $\{\chi^{2k}\mid k\in\Z\}$. For any $\vp\in {\Cal X}$ we consider the flat line bundle $E_\vp\rightarrow Z$ defined by $\vp$. We also consider the (non-flat) locally homogeneous line bundle $F_1\rightarrow Z$ defined
by the restriction of the representation $\gamma_1$ to $S(U(1)\times U(1))$. We set $E_{1,\vp}:=F_1\otimes E_\vp$. We look at the discrete spectral decompositions  of
the action of the operator $2\Omega_L-\Omega_{L\cap K}=2\Omega'$, where $\Omega'$ is the Casimir operator of $SU(1,1)$, on the spaces of sections $L^2(Z,E_\vp)$ and  $L^2(Z,E_{1,\vp})$.
Ordering the eigenvalues by size and repeating them according to their finite multiplicity we obtain functions
$\mu_i,\nu_i: {\Cal X}\rightarrow (-\infty, 0]$, $i\in\N$, such that for every $\vp\in{\Cal X}$ 
$$ \dots\le \mu_3(\vp)\le\mu_2(\vp)\le \mu_1(\vp)\le 0\ ,\qquad \dots\le \nu_3(\vp)\le\nu_2(\vp)\le \nu_1(\vp)\le -1 .$$
It is well-known that these functions are continuous. Moreover,  they converge pointwise to $-\infty$ for $i\to\infty$, and $\mu_k(\vp)=0$ if and only if $k=1$ and $\vp$ is trivial.
This follows from the fact that $-\Omega'$ acts on sections of $E_\vp$ as the Bochner-Laplace operator associated to the canonical flat connection. The upper bound for $\nu_1$ comes from unitary representation theory of $SU(1,1)\cong SL(2,\R)$, see the proof of the following proposition.

\begin{pro}\label{nottoolate}
Let $n=1$, and let $\Gamma=\Gamma_{0,\chi}$ be as above. We consider the subsets ${\Cal X}_\Gamma, \chi {\Cal X}_\Gamma\subset {\Cal X}$. Then the set $\spec (\Delta_Y)\cap (-\infty,0]$ coincides with the union of the images $\mu_i({\Cal X}_\Gamma)$ and $\nu_i(\chi {\Cal X}_\Gamma)$, $i\in \N$.
\end{pro}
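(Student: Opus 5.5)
We start from Lemma~\ref{bassa} together with Lemma~\ref{bummi}: $L^2(Y)$ splits over $\pi\in\widehat L_k$, $k\in\Z$, into the lines $N_\Gamma(\pi)\otimes V_\pi(\gamma_k)$, and $\Delta_Y=2\Omega_L-\Omega_{L\cap K}$ acts on each of them by $\lambda_\pi^2-n^2$. The nonpositive part of the spectrum therefore comes from $\pi\in\widehat L_k$ with $\lambda_\pi\in i[0,\infty)\cup(0,n]$. The idea is to realise, for each fixed $k$, the whole $\gamma_k$-contribution $\bigoplus_{\pi\in\widehat L_k}N_\Gamma(\pi)\otimes V_\pi(\gamma_k)$ as $L^2$-sections of a line bundle on the compact Riemann surface $Z=\Gamma_0\backslash H^2$. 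Indeed, $\bigl(L^2(\Gamma\backslash L)\otimes\C_{\gamma_{-k}}\bigr)^{K_L}$ is this $\gamma_k$-isotypic part, and on it $2\Omega_L-\Omega_{L\cap K}$ acts as $2\Omega'$.

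The first step is to compute this bundle explicitly. Write $L=SU(1,1)\cdot M$ with $M\cong U(1)$ central and $\Gamma=\Gamma_{0,\chi}$. Then $\Gamma\backslash L/K_L\cong\Gamma_0\backslash SU(1,1)/SO(2)$. A function on $\Gamma\backslash L$ that transforms by $\gamma_k$ under $K_L=U(1)\times U(1)$ restricts to $SU(1,1)\cdot M$, and the $M$-component is determined by $\gamma_k|_M$, which is $z\mapsto z^{k}$ up to the identification $M\cong\{\mathrm{diag}(z,z)\}$. The $\Gamma$-invariance then twists by $\chi(\gamma)^{k}$, or by $\chi^{\pm k}$ according to the normalisation. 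The $SO(2)$-weight of $\gamma_k$ restricted to $S(U(1)\times U(1))$ is $k$ modulo the centre.

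The outcome should be that the $\gamma_k$-part is $L^2(Z,E_{\chi^k})$ for $k$ even, twisted by $F_1^{\otimes k}$. Using $F_1^{\otimes 2}\cong$ the flat bundle part (the weight-$2$ part of $S(U(1)\times U(1))$ factors through the centre $\{\pm1\}$), I would reorganise this into two cases. For $k=2m$ the part is $L^2(Z,E_{\chi^{2m}})$, with the factor absorbed by adjusting the character. For $k=2m+1$ the part is $L^2(Z,E_{1,\chi^{2m+1}})=L^2(Z,E_{1,\chi\cdot\chi^{2m}})$. This is where ${\Cal X}_\Gamma$ and $\chi{\Cal X}_\Gamma$ arise. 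I expect this bookkeeping of centres and characters, getting $\chi^{2k}$ and not $\chi^{k}$, to be the main obstacle, and I would verify it carefully against the central element $-\id\in SU(1,1)\cap M$.

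Once this identification is in place, $\spec(\Delta_Y)\cap(-\infty,0]$ is the closure of the union over $k$ of the nonpositive spectra of $2\Omega'$ on these bundles, i.e.\ of $\bigcup_i\mu_i(\{\chi^{2m}\})\cup\nu_i(\{\chi\chi^{2m}\})$. Positive eigenvalues come only from discrete series with $\lambda>n$ and are discarded. The inclusion "$\supseteq$" then follows from the continuity of $\mu_i,\nu_i$ and the closedness of the spectrum, since $\{\chi^{2m}\}$ is dense in ${\Cal X}_\Gamma$. For "$\subseteq$" we need the union $\bigcup_i\mu_i({\Cal X}_\Gamma)\cup\nu_i(\chi{\Cal X}_\Gamma)$ to be closed. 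I would prove this using compactness of ${\Cal X}_\Gamma$ and the fact that $\mu_i,\nu_i\to-\infty$ locally uniformly, by a uniform Weyl law or min-max comparison with the untwisted Laplacian plus a bounded perturbation. Then any bounded interval meets only finitely many of the compact images $\mu_i({\Cal X}_\Gamma)$, and the union is closed. Finally, the bound $\nu_1\le-1$ holds because odd $k$ excludes the trivial representation and the relevant complementary-series range. It is consistent with the statement but not needed for the set equality.
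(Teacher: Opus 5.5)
There is a genuine gap in your identification of the $\gamma_k$-isotypic part. Restricting to $L'=SU(1,1)$ correctly gives $L^2(Z,F_1^{\otimes k}\otimes E_{\chi^{\mp k}})$, with $\Delta_Y$ acting as $2\Omega'$. Your next step is false: $F_1^{\otimes 2}$ is not flat and cannot be ``absorbed by adjusting the character''. It is the bundle attached to the character $z\mapsto z^2$ of $S(U(1)\times U(1))$, which is the isotropy character on the tangent space of $H^2$. So $F_1^{\otimes 2}$ is, up to duality, the holomorphic tangent bundle of $Z$; it has degree $\pm(2g-2)\neq 0$ and admits no flat structure. That $z\mapsto z^2$ is trivial on $\{\pm\id\}$ only means the bundle already exists for $PSU(1,1)$. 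Even on your own terms you would get twists $\psi^m\chi^{2m}$ for some fixed $\psi$, not $\chi^{2m}$.

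The spectra genuinely differ. For $k=2m\neq 0$, the discrete series $\pi^k_1$ of Prop.~\ref{crux} satisfies $N_\Gamma(\pi^k_1)\neq\{0\}$ by Prop.~\ref{pavle}. Hence $0$ is an eigenvalue of $2\Omega'$ on $L^2(Z,F_1^{\otimes 2m}\otimes E_{\chi^{\mp 2m}})$, whereas on $L^2(Z,E_\vp)$ it is an eigenvalue only for $\vp=1$; further discrete series eigenvalues also appear in the positive range. So the ``i.e.'' in your third paragraph does not follow from what you have set up.

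What the argument actually needs is that the parts of these spectra in $(-\infty,0)$ agree with those on $E_\vp$ (for even $k$) or on $E_{1,\vp}$ (for odd $k$). The value $0$ is covered separately, by the trivial representation and $\mu_1(1)=0$. This agreement is a fact about representations of $SU(1,1)$, not about bundles. For $\lambda\in i[0,\infty)$, or $k$ even and $\lambda\in(0,1)$, the restriction $\pi^k_\lambda|_{L'}$ is an even or odd principal or complementary series; this follows from $H^{k,\lambda}\cong H^{\pm,\lambda}\otimes\C_k$, with a limit of discrete series at $\lambda=0$, $k$ odd. Such a representation contains every $SO(2)$-type of the right parity (and of one sign in the limit case).

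The paper therefore never uses the $SO(2)$-type $k$. It evaluates $\Phi\in N_\Gamma(\pi^k_\lambda)$ on a vector of $SO(2)$-type $0$ when $k$ is even, or $-1$ when $k$ is odd. This gives an isomorphism of $N_\Gamma(\pi^k_\lambda)$ onto the $(\lambda^2-1)$-eigenspace in $C^\infty(Z,E_{\chi^{-k}})$, respectively $C^\infty(Z,E_{1,\chi^{-k}})$. Surjectivity holds because these are all the unitary representations of $SL(2,\R)$ containing that type in the relevant eigenvalue range. At $\lambda=0$ with $k$ odd only $\pi_0^{-|k|}$ contains type $-1$, and one uses $\dim N_\Gamma(\pi^k_0)=\dim N_\Gamma(\pi^{-k}_0)$.

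If you prefer to stay in weight $k$, you could instead transport eigenspaces to weight $0$ or $\pm1$ with Maass raising and lowering operators. These are isomorphisms between consecutive $SO(2)$-types of the principal and complementary series, with the usual care at the limits of discrete series.

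The rest of your plan is fine. The locally uniform divergence $\mu_i,\nu_i\to-\infty$ follows immediately from Dini's theorem, since $\mu_{i+1}\le\mu_i$ pointwise and ${\Cal X}_\Gamma$ is compact, so no Weyl law is needed. Your closedness argument is then a harmless variant of the paper's interval argument.
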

\begin{proof} 
We first check that the above union of images is closed. Since ${\Cal X}_\Gamma$ has only finitely may connected components it suffices to
discuss the closedness of sets of the form
$$ \tilde A:=\bigcup_{i=1}^\infty f_i(A)\ ,$$
where $A\subset {\Cal X}_\Gamma$ (or $A\subset \chi{\Cal X}_\Gamma$) is a connected component and $f_i=\mu_i$ (or $f_i=\nu_i$) for all $i$. Then  $f_i(A)=[a_i,b_i]$
for some $a_{i+1}\le a_i\le b_i\le b_{i-1}$. We have to allow degenerate `intervals' $[a,a]:=\{a\}$. Set $a_0:=0$. Now let $c$ be in the complement of $\tilde A$.
Let $i$ be maximal subject to the condition $c<a_i$. Then $a_{i+1}<c$, and hence also $b_{i+1}<c$. By the above properties the intersection of $\tilde A$ with the open interval 
$(b_{i+1}, a_i)\ni c$ is empty.
Hence the complement of $\tilde A$ is open.

Every representation of $L=U(1,1)=SU(1,1)\cdot M$  defines a representation of $SU(1,1)\times M$. As such, we have
\be\label{furunkel} 
H^{k,\lambda}\cong H^{\pm,\lambda}\otimes \C_k\ ,
\end{equation}
where $\pm$ stands for the parity of $k$, $H^{\pm,\lambda}$ is the even/odd principal series for $SU(1,1)\cong SL(2,\R)$,  and $\C_k$ is the one-dimensional representation space of the representation $\sigma_k$ of $M$.
In particular, as representations of $L':=SU(1,1)$, the representations $\pi_{\lambda}^{k_1}$ and $\pi_{\lambda}^{k_2}$ are equivalent, whenever $k_1-k_2$ is even and $H^{k_1,\lambda}$ is irreducible. 

By (\ref{klumpfuss}) and Prop.~\ref{crux} the eigenspaces of $\Delta_Y$ for negative eigenvalues are described by the spaces $N_\Gamma(\pi^k_\lambda)$ for $\lambda$ imaginary or,
if $k$ is even, $\lambda\in (0,1)$. With the exception of $\lambda=0$, $k$ odd, these representations are irreducible principal series representations. In order to get the eigenvalue
$\mu=0$ into the picture we also consider the trivial representation $\pi^0_1$.

Let us assume for a monent that $H^{k,\lambda}$ is irreducible. Then $N_\Gamma(\pi^k_\lambda)=\Hom_L(H^{k,\lambda}_\infty,C^\infty(\Gamma\backslash L))$.
Let first $k$ be odd. We fix a non-zero vector $v\in H^{k,\lambda}$ transforming with respect to $\gamma_{-1}$
under $L'\cap K=S(U(1)\times U(1))$. Now we consider the following map
$$ \Hom_L(H^{k,\lambda}_\infty,C^\infty(\Gamma\backslash L))\ni\Phi\mapsto F_\Phi\in C^\infty(L'),\quad F_\Phi(l):=\Phi(v)(l),\ l\in L'.$$ 
Then we have for $l\in L'$, $k'\in L'\cap K$
$$ F_\Phi(lk')=r_{k'}\circ \Phi(v)(l)=\Phi(k'v)(l)=\gamma_{-1}(k') \Phi(v)(l)=\gamma_1(k')^{-1}F_\Phi(l)$$
and for $\gamma_0\in\Gamma_0$, $\gamma=\gamma_0\chi(\gamma_0)\in \Gamma$
$$ F_\Phi (\gamma_0l)=F_\Phi(\gamma l\chi(\gamma_0)^{-1})=\Phi(\chi(\gamma_0)^{-1}v)(l)=\Phi(\sigma_k(\chi(\gamma_0)^{-1})v)(l)=\chi^{-k}(\gamma_0)F_\Phi(l).$$
These transformation rules show that $F_\Phi\in C^\infty(Z, E_{1,\chi^{-k}})$. It is clear that $F_\Phi$ is an eigensection with corresponding eigenvalue $\lambda^2-1$.
On the other hand, these eigenspaces have a representation theoretic description similarly as in Lemma \ref{hussa} coming from the decomposition of $L^2(\C_\vp\times_{\Gamma_0} L')$ as a representation of $L'$. This together with (\ref{furunkel}) shows that for $\lambda\ne 0$ the map $\Phi\mapsto F_\Phi$ induces an isomorphism
between $N_\Gamma(\pi^k_\lambda)$ and the $\lambda^2-1$-eigenspace in $C^\infty(Z, E_{1,\chi^{-k}})$.
For $\lambda=0$ this map still induces an isomorphism between $N_\Gamma(\pi^{-|k|}_0)$ and the $-1$-eigenspace in $C^\infty(Z, E_{1,\chi^{-k}})$.
Since $\pi^k_0$ and $\pi^{-k}_0$ are conjugate representations we have $\dim N_\Gamma(\pi^{k}_0)=\dim N_\Gamma(\pi^{-k}_0)$.
Moreover, we have captured all eigenspaces, since we have already used all unitary representations of $L'$  containing the $L'\cap K$-type $\gamma_{-1}$.

For even $k$ we proceed in a completely analogous way, but now we choose $v$ to be $L'\cap K$-invariant. We obtain isomorphisms between $N_\Gamma(\pi^k_\lambda)$ and the $\lambda^2-1$-eigenspaces in $C^\infty(Z, E_{\chi^{-k}})$. This isomorphism persists for $(k,\lambda)=(0,1)$.

We obtain: A number $\mu\in (-\infty,0]$ is an eigenvalue of $\Delta_Y$ if and only if there exist $k\in\Z$ and $i\in\N$ such that
$\mu=\mu_i(\chi^{2k})$ or $\mu=\nu_i(\chi^{2k+1})$. By continuity of $\mu_i$, $\lambda_i$ and the definition of ${\Cal X}_\Gamma$ the set of eigenvalues of $\Delta_Y$ is dense in the union described in the proposition. Since the latter set is closed, it coincides with $\spec(\Delta_Y)$.
\end{proof}

The description of the negative part of $\spec(\Delta_Y)$ in Prop.~\ref{nottoolate} together with Thm.~\ref{something} has the following consequences.

\begin{cor}\label{late}
Let $n=1$. Then the essential spectrum of $\Delta_Y$ coincides with $\spec(\Delta_Y)$. In particular, the essential spectrum is also not bounded from below. Moreover, the following assertions are
equivalent:
\begin{itemize}
\item[(i)] The intersection $\Gamma\cap SU(1,1)$ has infinite index in $\Gamma$.
\item[(ii)] The set of eigenvalues of $\Delta_Y$ has an accumulation point in $\R$.
\item[(iii)] There are real numbers $a<b$ such that $[a,b]\subset \spec(\Delta_Y)$.
\end{itemize}
\end{cor}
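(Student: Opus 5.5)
We will derive all three parts of Corollary~\ref{late} from the explicit description in Prop.~\ref{nottoolate}, combined with Thm.~\ref{something} for the non-negative part of the spectrum.

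\textbf{Essential spectrum.} For $n=1$, Thm.~\ref{something} shows that the positive part of $\spec(\Delta_Y)$ consists of the eigenvalues $\ell^2-1$, $\ell\ge 2$, and that each of them has infinite multiplicity. Next we treat $\mu\in\spec(\Delta_Y)\cap(-\infty,0]$. By Prop.~\ref{nottoolate} we can write $\mu=\mu_i(\vp)$ with $\vp\in{\Cal X}_\Gamma$, or $\mu=\nu_i(\vp)$ with $\vp\in\chi{\Cal X}_\Gamma$. The proof of that proposition shows that the eigenvalue $\mu_i(\chi^{2k})$ is contributed by the summand $N_\Gamma(\pi^k_\lambda)$ for every $k\in\Z$. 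We distinguish two cases.
\begin{itemize}
\item If $\chi$ has finite order, then $\chi^{2k}=\vp$ for infinitely many $k$. Hence $\mu$ is an eigenvalue that receives contributions from infinitely many $k$, and by (\ref{klumpfuss}) it has infinite multiplicity.
\item If $\chi$ has infinite order, then $\chi^{2k}$ accumulates at $\vp$ along infinitely many distinct $k$. By continuity of $\mu_i$, infinitely many distinct eigenvalues, or the single value $\mu$ repeated, accumulate at $\mu$.
\end{itemize}
In either case $\mu$ lies in the essential spectrum, so $\spec_{ess}=\spec$. Since $\mu_i(1)\to-\infty$, the spectrum, and hence the essential spectrum, is unbounded below.

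\textbf{Equivalences.} Condition (i) holds exactly when $\Gamma\cap SU(1,1)=\{\gamma_0\mid\chi(\gamma_0)=1\}=\ker\chi$ has infinite index, i.e.\ when $\chi$ has infinite image, i.e.\ when $\chi$ has infinite order in ${\Cal X}$. The implications then go as follows.
\begin{itemize}
\item (i)$\Rightarrow$(iii): ${\Cal X}_\Gamma$ is then an infinite closed subgroup of the torus, so its identity component is a subtorus of positive dimension. Its image $\mu_1(({\Cal X}_\Gamma)_0)$ is a compact interval $[a_1,b_1]$ containing $0=\mu_1(1)$. We must show this interval is non-degenerate. Since $\mu_1(\vp)=0$ only for $\vp$ trivial and $({\Cal X}_\Gamma)_0$ is connected with more than one point, $\mu_1$ is not constant on it. Thus $[a,b]=[a_1,0]$ with $a_1<0$ lies in $\spec(\Delta_Y)$.
\item (iii)$\Rightarrow$(ii): By Thm.~\ref{selim} the spectrum is the closure of the countable set of eigenvalues. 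An interval of positive length inside it therefore contains infinitely many eigenvalues, which accumulate.
\item (ii)$\Rightarrow$(i): We argue by contraposition. If $\chi$ has finite order, ${\Cal X}_\Gamma$ is finite, so the spectrum is a finite union of sequences $\mu_i(\vp)$ and $\nu_i(\vp)$. Each such sequence diverges to $-\infty$, together with the discrete positive part. Hence the set of eigenvalues has no finite accumulation point.
\end{itemize}

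The main point requiring care is the essential-spectrum claim in the finite-order case: we need the identification of $\mu_i(\chi^{2k})$ with contributions of distinct $L$-representations $\pi^k_\lambda$, so that the multiplicities genuinely add up over $k$ in (\ref{klumpfuss}). This should follow directly from the isomorphisms constructed in the proof of Prop.~\ref{nottoolate}.
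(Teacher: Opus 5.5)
Your proposal is correct and is essentially the paper's proof. For the essential spectrum, Prop.~\ref{nottoolate} together with the contributions of infinitely many distinct $k$ (hence mutually orthogonal components in Lemma~\ref{bassa}) does the work, and your finite-order/infinite-order case split is just the paper's Weyl-sequence argument unpacked. The equivalences go through (i) $\Leftrightarrow$ $\chi$ of infinite order $\Leftrightarrow$ ${\Cal X}_\Gamma$ infinite, with the identity component of ${\Cal X}_\Gamma$ giving (iii) and finiteness of ${\Cal X}_\Gamma$ giving the converse.

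Only small corrections are needed:
\begin{itemize}
\item $\Gamma\cap SU(1,1)$ corresponds to $\chi^{-1}(\{\pm 1\})$, not $\ker\chi$; the index conclusion is unchanged.
\item The summand carrying $\mu_i(\chi^{2k})$ is $N_\Gamma(\pi^{-2k}_\lambda)$, not $N_\Gamma(\pi^{k}_\lambda)$.
\item For $\mu=0$ you should cite Thm.~\ref{something}(c) with $\ell=1$. The identification in the proof of Prop.~\ref{nottoolate} at eigenvalue $0$ is only available for $k=0$, so your finite-order argument does not justify this case as written.
\end{itemize}
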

\begin{proof} 
In view of  Thm.~\ref{something} we have to verify the first assertion only for the negative part of the spectrum. Let $\chi$ be such that $\Gamma=\Gamma_{0,\chi}$, and let $\mu\in\spec (\Delta_Y)\cap (-\infty,0]$.
 By Prop.~\ref{nottoolate} 
there exist $\vp\in{\Cal X}_\Gamma$ and $i\in\N$ such that
$\mu=\mu_i(\vp)$ or $\mu=\nu_i(\chi\vp)$. To save notation, we discuss only the first case. We can choose a sequence of integers $(k_j)$ without repetitions such that
$\chi^{-2k_j}\to\vp$ in ${\Cal X}_\Gamma$. We have seen in the proof of  Prop.~\ref{nottoolate} that the $\mu_i(\chi^{-2k_j})$-eigenspace in $L^2(Z,E_{\chi^{-2k_j}})$, which is non-zero, is isomorphic to $N_\Gamma(\pi_{\lambda_j}^{2k_j})$ with $\lambda_j^2-1=\mu_i(\chi^{-2k_j})$. Now we choose a unit vector $w_j$ in the $\pi_{\lambda_j}^{2k_j}$-component of $L^2(Y)$, see Lemma \ref{bassa}.  It is an eigenvector of $\Delta_Y$. Now $(w_j)$ is a Weyl sequence for $\mu=\mu_i(\vp)$. Hence $\mu$ belongs to the
essential spectrum.

In order to establish the equivalence between (i), (ii) and (iii) we first observe the following equivalences:
$$
(i) \mbox{ holds for } \Gamma=\Gamma_{0,\chi} \Leftrightarrow  \mathrm{im} (\chi)\mbox{ is infinite}  \Leftrightarrow {\Cal X}_\Gamma \mbox{ is infinite}\ .$$
Assume that Assertion (i) holds. Since ${\Cal X}_\Gamma$  is infinite, it  has a non-trivial identity component. Let $\vp$ be a non-trivial character belonging to this component. Then 
$\mu_1(\vp)<0$, see the remark  before Prop.~\ref{nottoolate}.
Thus $[\mu_1(\vp),0]\subset\spec(\Delta_Y)$ by  Prop.~\ref{nottoolate}, in particular (iii) holds. The implication (iii) $\Rightarrow$ (ii) is trivial. Now we assume that (i) does not hold. 
In view of the above equivalence and Prop.~\ref{nottoolate}, this implies that the negative part of $\spec(\Delta_Y)$ is a finite union of spectra of elliptic operators on compact manifolds.
The absence of accumulation points follows for the negative part, the positive part of the spectrum was already covered by Thm.~\ref{something}. 
\end{proof}
What makes the case $n=1$ special is that $G/H$ is isomorphic to a group manifold $G_1\times G_1/\Delta G_1$ with $G_1=SU(1,1)$.
We remark that similar arguments as above can be used to exhibit accumulation points of eigencharacters for certain standard quotients of more general group manifolds corresponding to Case 11 in Table 2, namely
for $Y=  \Gamma\backslash G_1\times G_1/\Delta G_1$ with 
$|\Gamma/\Gamma\cap(G_1\times\{e\})|=\infty$ and $\Gamma\subset G_1\times T_1$, where $T_1\subset K_1$ is a maximal torus. Note that such $\Gamma$ exist if and only if $G_1$ is locally isomorphic to $SO(1,n)$ or $SU(1,n)$.

\section{The spectrum of a Type~II example}\label{infinitemulti}

In this section we study the spectral decomposition of ${\bf D}(G/H)$ on $L^2(\Gamma\backslash G/H)$, where $\Gamma\subset L$ is a cocompact lattice and 
$(G,H,L)$ is as in Remark \ref{remindecomp} with $G^\prime=PSL(2,\mathbb{R})$:

$G=G^\prime\times G^\prime\times G^\prime\times G^\prime$, $H=\Delta_{12} G^\prime \times\Delta_{34} G^\prime$, $L=G^\prime\times\Delta_{23} G^\prime$.\\
Observe that $L=L_{min}$ and $L\cap H=\{e\}$. As already remarked at the end of Section 2, $(G,H,L)$ is the triple of Type~II of smallest dimension (although it is not irreducible). We will see that ${\bf D}(G/H)$ has a unique spectral decomposition which in contrast to the Type~I situation (see Thm.~\ref{selim}) is not purely discrete. We expect that the same holds for all
triples of Type~II.

Note that $X=G/H$ is a group manifold. In particular,
$$ {\bf D}(G/H)\cong{\Cal Z}(\fg'\oplus\fg')\cong \mathbb{C} [j_1(\Omega'),j_3(\Omega')]\ ,$$
where $\Omega'$ is the Casimir operator of $G'=PSL(2,\mathbb{R})$ and $j_k$ denotes the embedding of  ${\Cal U}(\fg ')\hookrightarrow {\Cal U}(\fg)$ induced by the embeeding
of $G'$ into $G$ at the $k$-th position. We have $\Omega_G= j_1(\Omega')+j_3(\Omega')$ as elements of ${\bf D}(G/H)$.

As a first step we determine the embedding $\imath: {\bf D}(G/H)\rightarrow {\bf D}(L/L\cap H)$. Since $j_1(\Omega')\in{\Cal U}(\fl)\subset{\Cal U}(\fg)$ we have $\imath(j_1(\Omega'))=j_1(\Omega')$.
Concerning the second generator $j_3(\Omega')$ we have

\begin{lem}\label{bur}$ \imath(j_3(\Omega'))=\Delta_{123}(\Omega')$.\\
 Here we view $\Delta_{123}$ as the embedding ${\Cal U}(\fg ')\hookrightarrow {\Cal U}(\fl)$ induced by the corresponding embedding $G'\hookrightarrow G$ whose image is contained in $L$ .
\end{lem}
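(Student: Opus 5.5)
We apply Lemma~\ref{sense}, or equivalently redo its argument directly, since $\fl\cap\fh=0$ here. Concretely, the task is to show that $j_3(\Omega')\equiv \Delta_{123}(\Omega') \mod {\Cal U}(\fg)\fh$, where $\fh=\Delta_{12}\fg'\oplus\Delta_{34}\fg'$. We use the bilinear form on $\fg$ given by the sum of the forms on the four factors that define $\Omega'$.

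First fix a basis $\{Y_j\}$ of $\fg'$ that is orthogonal for $\langle\,,\rangle$, with $c_j:=\langle Y_j,Y_j\rangle$, so that $\Omega'=\sum_j c_j^{-1}Y_j^2$. Write $Y^{(k)}$ for $Y$ placed in the $k$-th factor, so that $\Delta_{123}(Y)=Y^{(1)}+Y^{(2)}+Y^{(3)}$. The key observation is that modulo the left ideal ${\Cal U}(\fg)\fh$ we have $Y^{(2)}\equiv -Y^{(1)}$ and $Y^{(4)}\equiv -Y^{(3)}$, because $Y^{(1)}+Y^{(2)}\in\fh$ and $Y^{(3)}+Y^{(4)}\in\fh$. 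This alone is not enough for squares, since $Y^{(2)}Y^{(2)}$ is reduced by multiplying on the right, which introduces ordering issues. So we compute $\Delta_{123}(Y)^2$ carefully. Set $A=Y^{(1)}$, $B=Y^{(2)}$, $C=Y^{(3)}$. These commute pairwise because they lie in different factors. Then
\[(A+B+C)^2=(A+B)^2+2C(A+B)+C^2.\]
Both $(A+B)^2=(A+B)(A+B)$ and $C(A+B)$ end with the factor $A+B\in\fh$, so they lie in ${\Cal U}(\fg)\fh$. Hence $\Delta_{123}(Y)^2\equiv C^2=(Y^{(3)})^2=j_3(Y)^2$.

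Summing with the weights $c_j^{-1}$ then gives $\Delta_{123}(\Omega')\equiv j_3(\Omega')$ in ${\Cal U}(\fg)/{\Cal U}(\fg)\fh$. Since $\Delta_{123}(\Omega')\in {\Cal U}(\fl)$ (its image lies in $\fg'\oplus\Delta_{23}\fg'$), and ${\bf D}(L/L\cap H)={\Cal U}(\fl)$ here because $L\cap H=\{e\}$, this identifies $\imath(j_3(\Omega'))$ with $\Delta_{123}(\Omega')$ via the isomorphism (i) of Section~\ref{pbw}. Alternatively, one checks that $\fl(\beta)$ for this triple consists of the first factor (with $\beta=0$) and the diagonal pieces, and then reads the result off Lemma~\ref{sense}. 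The direct computation is cleaner, though.

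The only real care needed is the noncommutativity, namely keeping the $\fh$-factor on the right. This works because $A+B$ commutes with $C$, so that term can be moved to the right. There is no genuine obstacle beyond this.
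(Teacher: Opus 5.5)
Your direct computation is correct and is exactly the paper's proof. With $Y=\Delta_{12}(X)\in\fh$ and $Z=j_3(X)$ commuting with $Y$, one has $\Delta_{123}(X)^2=Y^2+2ZY+Z^2\equiv Z^2 \bmod {\Cal U}(\fg)\fh$, and summing over an orthogonal basis of $\fg'$ gives the claim. Your side remark about reading the result off Lemma~\ref{sense} is not right as stated: for this triple $\mathrm{Spec}(G,H,L)=\{\pm 1/\sqrt{2}\}$, with eigenspaces $\{(X,\pm X/\sqrt{2},\pm X/\sqrt{2},0)\mid X\in\fg'\}$ rather than the first factor with $\beta=0$ plus a diagonal piece, and that lemma only computes $\imath(\Omega_G)$. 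Your proof does not use that remark, though.
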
 

\begin{proof} Let $X\in\fg'$. We consider $Y:=\Delta_{12}(X)\in \fh\subset {\Cal U}(\fg)$ and $Z=i_3(X)\in \fg\subset {\Cal U}(\fg)$. Then $\Delta_{123}(X)=Y+Z$ and
\begin{eqnarray*}\Delta_{123}(X)^2&=&(Y+Z)^2=Y^2+Z^2+YZ+ZY\\
&=&Y^2+Z^2+2ZY\\
&\equiv& Z^2 \mod {\Cal U}(\fg)\fh\ .
\end{eqnarray*}
Now we let $X$ run over an orthonormal basis of $\fg'$ and sum up. The result follows.
\end{proof} 
Now we can use Lemma \ref{bassa}:
\be\label{bor} L^2(\Gamma\backslash G/H)\stackrel{{\bf D}(G/H)}{\cong} \widehat{\bigoplus_{\pi\in\widehat L}} N_\Gamma(\pi)\otimes V_\pi\ ,
\end{equation}
which reduces our task to the study of the action of  $\imath ({\bf D}(G/H))$ on $V_\pi$, $\pi\in \widehat L$. Recall that in this case $L\cap H=\{e\}$. We have
$$ \widehat L\cong \{\eta_1\hat\otimes \eta_2\mid \eta_i\in\widehat{PSL(2,\mathbb{R})}\}\ ,$$
and
$$ \widehat{PSL(2,\mathbb{R})}=\{\pi_\lambda, \pi_k, 1\mid \lambda\in i[0,\infty)\cup(0,\frac{1}{2}), k\in\mathbb{Z}  \mbox{ odd}\}$$
consists of unitary principal series, complementary series, discrete series, and the trivial representation. We denote the corresponding representation spaces by $H^\lambda$, $D_k$, and 
$\C$.
The Casimir operator $\Omega'$ (in suitable normalization) has corresponding eigenvalues $\lambda^2-\frac{1}{4}$, $\frac{k^2-1}{4}$, $0$.

The operator $j_1(\Omega')$ acts on $V_\pi=V_1\hat\otimes V_2$ with the eigenvalue corresponding to $V_1$. In order to understand the action of $j_3(\Omega')\equiv \Delta_{123}(\Omega')$,
which is essentially the Casimir of the diagonal subgroup of $L\cong G'\times G'$,
we can use the well-known branching rules for tensor product representations of  $PSL(2,\mathbb{R})$, see \cite{Rep1}, \cite{Rep2}.

Since $N_\Gamma(\pi)=N_\Gamma(\bar\pi)$, where $\bar\pi$ is the conjugate representation, we list the braching rules for self-conjugate tensor products and for sums $V_\pi\oplus V_{\bar\pi}$, if $\pi\not\cong\bar\pi$.
Note that $\overline{ \pi_\lambda}\cong \pi_\lambda$ and $\overline{\pi_k}\cong \pi_{-k}$:
\begin{itemize}
\item[(a)] $\displaystyle H^{\lambda_1}\hat\otimes H^{\lambda_2}\cong (D_l\oplus D_{-l})\hat\otimes H^\mu\cong \int_{i[0,\infty)}^\oplus \C^2\otimes H^\lambda d\lambda \oplus \widehat{\bigoplus_{k}}\: D_k\ \left(\oplus H^{\lambda_1+\lambda_2-\frac{1}{2}}\right)$ .\\
The term in parentheses  only appears in the decomposition of a tensor product of two complementary series representations $H^{\lambda_1}$ and $H^{\lambda_2}$  
with $\lambda_1+\lambda_2>\frac{1}{2}$. 
\item[(b)] If $l_1$, $l_2$ have opposite signs, then\\
$\displaystyle  (D_{l_1}\hat\otimes D_{l_2})\oplus  (D_{-l_1}\hat\otimes D_{-l_2})\cong \int_{i[0,\infty)}^\oplus \C^2\otimes  H^\lambda d\lambda \oplus{\bigoplus_{|k|\le|l_1+l_2|-1}} D_k$ .
\item[(c)] If $l_1$, $l_2$ have the same sign, then\\
$\displaystyle  (D_{l_1}\hat\otimes D_{l_2})\oplus  (D_{-l_1}\hat\otimes D_{-l_2})\cong \widehat{\bigoplus_{|k|\ge |l_1+l_2|+1}} D_k$.
\item[(d)] $V_{\pi_1}\otimes \C\cong V_{\pi_1}$, $\pi_1\in  \widehat{PSL(2,\mathbb{R})}$.
\end{itemize}

Now we can derive the desired spectral decomposition. We use the notation of Section 7 and identify the character space $\Xi$ with $\R^2$ via $\chi\mapsto (\chi(j_1(\Omega')),\chi(j_3(\Omega')))$. In particular, we can consider the eigenspaces $E^{(2)}_{(\nu_1,\nu_2)}(Y)$ for $(\nu_1,\nu_2)\in\R^2$.  We recall that a lattice $\Gamma\subset L\cong G'\times G'$ is called reducible, if it contains a finite index subgroup of the form 
\be\label{gummi}\Gamma_1\times \Gamma_2\ ,\,\Gamma_i\subset G',
\end{equation}
and irreducible otherwise. In order to be able to make quite precise statements on the appearance and dimension of certain eigenspaces we assume that $\Gamma$ is torsion
free and, if reducible, of the form (\ref{gummi}). Both properties can be achieved by going over to a finite index subgroup of a given general cocompact lattice of~$L$.

\begin{thm}\label{surprise}
Let $(G,H,L)$ and $\Gamma\subset L$ be as above. We set $Y=\Gamma\backslash G/H$. Then we have a spectral decomposition of ${\bf D}(G/H)$ of the following form:
\begin{eqnarray}\label{wums}
L^2(Y)&\cong& \widehat{\bigoplus_{\nu_1\in S_+\cup S_-}}\left (\int_{\nu_2\in (-\infty,-\frac{1}{4}]}^\oplus  W_{(\nu_1,\nu_2)} d\nu_2  \oplus \widehat{\bigoplus_{\nu_2\in\mathrm{sm}(\nu_1)\cup S_+}}
E^{(2)}_{(\nu_1,\nu_2)}(Y)    \right )\\
\nonumber&& \oplus\, \widehat{\bigoplus_{\nu\in S^1_-}} E^{(2)}_{(\nu,\nu)}(Y)  \oplus \widehat{\bigoplus_{\nu\in S^2_-}} E^{(2)}_{(0,\nu)}(Y)  \ , 
\end{eqnarray}
where
\begin{itemize}
\item $S_+:=\{\frac{k^2-1}{4}\mid k\in\N\mbox{ odd}\}=\{0,2,6,12,20,\dots\}$,
\item $S_-:=\{\lambda^2-\frac{1}{4}\mid \lambda\in i[0,\infty)\cup(0,\frac{1}{2}), \exists \pi\in \widehat{PSL(2,\mathbb{R})}\mbox{ s.th. } N_\Gamma(\pi_\lambda\hat\otimes\pi)\ne\{0\}\}$
,
\item  $\mathrm{sm}(\nu_1):=\emptyset$, if $\nu_1\not\in (-\frac{1}{4}, 0)$,  and \\
$\mathrm{sm}(\nu_1):=\{(\lambda_1+\lambda_2)(\lambda_1+\lambda_2-1)\mid \lambda_i\in (0,\frac{1}{2}), \lambda_1^2-
\frac{1}{4}=\nu_1, \lambda_1+\lambda_2>
\frac{1}{2} , 
N_\Gamma(\pi_{\lambda_1}\hat\otimes\pi_{\lambda_2})\ne\{0\}\} $, if $\nu_1\in (-\frac{1}{4}, 0)$,
\item For $i=1,2$, $S^i_-:=\emptyset$, if $\Gamma$ is irreducible, and \\
$S^i_-:=\{\lambda^2-\frac{1}{4}\mid \lambda\in i[0,\infty)\cup(0,\frac{1}{2}), N_{\Gamma_i}(\pi_\lambda)\not=\{0\}\}$, if $\Gamma=\Gamma_1\times \Gamma_2$.
\end{itemize}
In particular, the spectral decomposition is not purely discrete.

For reducible $\Gamma$, we have that $S_-=S^1_-$ and that $S^i_-$ are countably infinite subsets of $(-\infty,0)$ without accumulation points in $(-\infty,0]$. For irreducible $\Gamma$
the subset $S_-\subset (-\infty,0)$ is also countably infinite and unbounded, but it may have accumulation points in $(-\infty,-\frac{5}{36}]$. The sets $\mathrm{sm}(\nu_1)\subset (-\frac{1}{4}, 0)$ are finite, and they are non-empty for at most finitely many $\nu_1$.

Moreover, all eigenspaces $E^{(2)}_{(\nu_1,\nu_2)}(Y)$ and multiplicity spaces $W_{(\nu_1,\nu_2)}$ appearing in (\ref{wums}) are infinite dimensional. 
The spectral decomposition is unique. All formally self-adjoint operators in ${\bf D}(G/H)$, in particular $\Omega_G$, $j_1(\Omega')$, and $j_3(\Omega')$, are essentially self-adjoint.

\begin{figure}[H]
\includegraphics[scale=0.5]{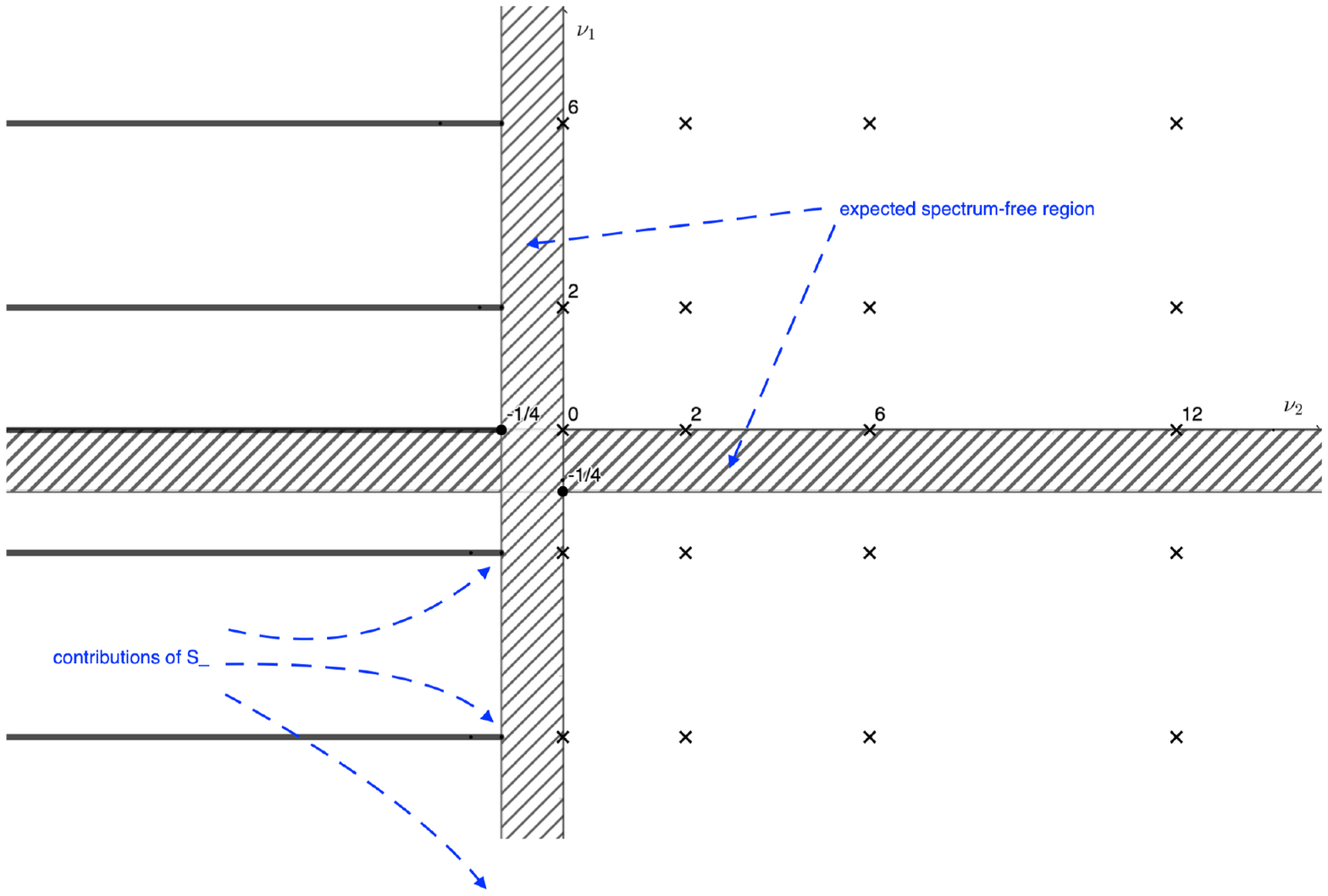}
\caption{Expected spectrum for irreducible $\Gamma$.}
\label{I1}
\end{figure}
\end{thm}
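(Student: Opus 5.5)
The decomposition will be obtained by combining Lemma~\ref{bassa}, Lemma~\ref{bur} and the tensor product branching rules (a)--(d). Since $L\cap H=\{e\}$, Lemma~\ref{bassa} gives (\ref{bor}). On a summand $V_\pi=V_1\hat\otimes V_2$ the operator $j_1(\Omega')$ acts by the Casimir eigenvalue $\nu_1$ of $V_1$, while $j_3(\Omega')$ acts as the Casimir of the diagonal copy $\Delta G'\subset L$ by Lemma~\ref{bur}. Restricting $V_1\hat\otimes V_2$ to $\Delta G'$ via (a)--(d) therefore produces a spectral decomposition of $\C[j_1(\Omega'),j_3(\Omega')]$ on $V_\pi$. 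Its shape is as follows: a direct integral over $\nu_2=\lambda^2-\frac14\in(-\infty,-\frac14]$, $\lambda\in i[0,\infty)$, with multiplicity $\C^2\otimes H^\lambda$; discrete points $\nu_2=\frac{k^2-1}{4}\in S_+$ from the discrete series $D_k$; the exceptional complementary point $\nu_2=(\lambda_1+\lambda_2)(\lambda_1+\lambda_2-1)$; and, for case (d), a single point. Conjugate pairs are grouped as in the list, since $N_\Gamma(\pi)=N_\Gamma(\bar\pi)$.

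Summing over $\pi$ with $N_\Gamma(\pi)\neq0$ and regrouping by $(\nu_1,\nu_2)$ gives (\ref{wums}). The terms are accounted for as follows.
\begin{itemize}
\item $\nu_1\in S_-$ comes from $V_1=H^{\lambda}$.
\item $\nu_1\in S_+$ comes from $V_1=D_{\pm l}$.
\item The pure point terms $E_{(\nu,\nu)}$ come from $V_\pi=V_1\otimes\C$.
\item The terms $E_{(0,\nu)}$ come from $V_\pi=\C\otimes V_2$.
\end{itemize}
The trivial factors can only occur with $N_\Gamma\neq0$ for reducible $\Gamma=\Gamma_1\times\Gamma_2$. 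For irreducible $\Gamma$, the constant functions on one factor do not appear; this is the standard fact that irreducible lattices have no nonconstant $L^2$ functions invariant under a factor (Moore's ergodicity). In the reducible case $N_\Gamma(\pi_1\hat\otimes\pi_2)=N_{\Gamma_1}(\pi_1)\otimes N_{\Gamma_2}(\pi_2)$.

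Each piece is built from spectral decompositions of Casimirs of $G'$ and of $\Delta G'$ acting on unitary representations. Hence the operators $j_1(\Omega')$ and $\Delta_{123}(\Omega')$, with domain $(V_{\pi,\infty})$, are essentially self-adjoint on each $V_\pi$ by Prop.~\ref{spezi}: take $N=G'\times\{e\}$ or $N=\Delta G'$, each with $Q=\{e\}$. The closures strongly commute, because one is a function of the first factor's Casimir and the other lies in the commutant of that factor's action. Summing over $\pi$ (a Hilbert direct sum of reducing subspaces), the same holds on $L^2(Y)$. Cor.~\ref{klaus} then yields existence and uniqueness of the spectral decomposition. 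Essential self-adjointness of every formally self-adjoint $D$ follows similarly, since on each $V_\pi$ such a $D$ is a real polynomial in two strongly commuting self-adjoint operators and is therefore diagonalized by the joint decomposition; essential self-adjointness passes to the orthogonal sum.

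The finer statements form the main obstacle.
\begin{itemize}
\item \emph{Infinite dimensionality.} Each $W_{(\nu_1,\nu_2)}$ and each eigenspace contains a full $\Delta G'$-irreducible $H^\lambda$ or $D_k$, which is infinite dimensional.
\item \emph{$S_-$ is infinite and unbounded.} This follows from the Weyl law for $\Gamma\backslash L$ (or for $\Gamma_i\backslash\HH$ in the reducible case).
\item \emph{$S^i_-$ and $\mathrm{sm}(\nu_1)$.} $S^i_-$ is the discrete Laplace spectrum of the compact surface $\Gamma_i\backslash\HH$. Finiteness of $\mathrm{sm}$ comes from the finiteness of small complementary series eigenvalues on compact quotients, i.e.\ the finiteness of the spectrum of an elliptic operator in a bounded interval.
\item \emph{The bound $-\frac{5}{36}$ for accumulation points.} Here I expect to invoke Kazhdan-type or known spectral gap results for irreducible lattices in $PSL(2,\R)^2$. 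Alternatively, I would use the fact that the joint spectrum accumulates only where a factor is tempered, with $-\frac{5}{36}$ arising from $\lambda=\frac13$. This is the step I am least sure how to prove directly.
\end{itemize}
Non-discreteness follows from the presence of $\nu_1\in S_+$, for instance $\pi=D_l\hat\otimes H^\mu$ with $N_\Gamma\neq0$. That such $\pi$ occurs would be shown by a multiplicity estimate for discrete series of $L$, analogous to Prop.~\ref{pavle} (limit multiplicity / $d_\pi\vol(\Gamma\backslash L)$ for the factor $D_l$ tensored with all of $L^2$ of the other factor).
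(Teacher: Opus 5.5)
Your skeleton is the paper's: Lemma~\ref{bassa} with $L\cap H=\{e\}$, Lemma~\ref{bur}, and the branching rules (a)--(d). You also use ergodicity (the paper uses density of the projections of $\Gamma$) to remove the type~(d) summands for irreducible $\Gamma$, \cite{DKV79} for $S_-$, discreteness of the Laplace spectrum of $Z=\Gamma\backslash L/K_L$ for $\mathrm{sm}(\nu_1)$, and Prop.~\ref{spezi} for essential self-adjointness. Two of the finer assertions, however, are not established by what you propose.

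\emph{Occurrence of every $\nu_1\in S_+$.} The theorem asserts that each $\nu_1\in S_+$, including $\nu_1=0$, actually contributes, with non-zero $W_{(\nu_1,\nu_2)}$ and non-zero $E^{(2)}_{(\nu_1,\nu_2)}$ for all $\nu_2\in S_+$. Your route through $\pi=D_l\hat\otimes H^\mu$ fails. This $\pi$ is not a discrete series of $L$, so there is no formula $d_\pi\vol(\Gamma\backslash L)$ for it. Moreover, nothing controls which $\mu$ accompany a given $D_l$ when $\Gamma$ is irreducible.

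The paper works instead with the discrete series $\pi_{l_1}\hat\otimes\pi_{l_2}$ of $L$. It proves $N_\Gamma(\pi_{l_1}\hat\otimes\pi_{l_2})\ne\{0\}$ for all odd $l_1,l_2$ with $|l_1l_2|>1$, and then reads off the continuous part and all $\nu_2\in S_+$ from rules (b) and (c). For $|l_1|,|l_2|\ge 3$ this is Langlands' result for integrable discrete series. But $\pi_{\pm1}$ is not integrable, so the cases $l_i=\pm1$ need an index argument, the analogue of Prop.~\ref{nichts}, which your plan lacks. Hirzebruch proportionality on $Z$ gives $\chi(Z,K_1\otimes K_2^{\otimes\frac{k+1}{2}})>0$ and $\chi(Z,K_2^{\otimes\frac{k+1}{2}})<0$. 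For $k>1$ one then identifies $H^0(Z,K_1\otimes K_2^{\otimes\frac{k+1}{2}})\cong N_\Gamma(\pi_{-1}\hat\otimes\pi_{-k})$, $H^2(Z,K_1\otimes K_2^{\otimes\frac{k+1}{2}})=\{0\}$ and $H^1(Z,K_2^{\otimes\frac{k+1}{2}})\cong N_\Gamma(\pi_{1}\hat\otimes\pi_{-k})$.

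Non-discreteness alone is easier than you make it. $S_-\neq\emptyset$ by \cite{DKV79}, and any occurring $\pi_\lambda\hat\otimes\pi$ with $\pi\ne\C$ already carries an absolutely continuous part by rule~(a).

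\emph{The constant $-\frac{5}{36}$.} This is an external input: the strong spectral gap theorem of Kelmer and Sarnak \cite{KSa09} for irreducible cocompact lattices in products of $PSL(2,\R)$. A Kazhdan-type argument is unavailable, since $PSL(2,\R)\times PSL(2,\R)$ lacks property (T). The claim that accumulation happens only at tempered parameters is not known. It would give $(-\infty,-\frac14]$ and is part of the Ramanujan-type expectation discussed after the theorem; $\lambda=\frac13$ is exactly the threshold in Kelmer--Sarnak's result.

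A minor point on self-adjointness: a real polynomial in two strongly commuting self-adjoint operators need not have $V_{\pi,\infty}$ as a core. Argue as the paper does. On an irreducible $V_\pi$ the central element $j_1(\Omega')$ acts by a real scalar, which also makes strong commutativity trivial; note that $\Delta_{123}(\Omega')$ does not commute with the first factor. Hence any formally self-adjoint $D$ acts as a formally self-adjoint element of ${\Cal Z}(\Delta\fg')$, so Prop.~\ref{spezi} applies, and criterion~(d) passes essential self-adjointness to the orthogonal sum.
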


\begin{proof}
That $L^2(Y)$ has a spectral decomposition of the form (\ref{wums}) is a direct consequence of  Lemma \ref{bur}, the decomposition (\ref{bor}), and the branching rules (a)--(d). It remains to verify the more specific claims about the terms involved in the decomposition. 

For irreducible $\Gamma$, we have $N_\Gamma(\pi)=\{0\}$
for all non-trivial $\pi$ of the form (d) (where we allow to switch the factors). Indeed, in the irreducible case the projection of $\Gamma$ to either factor of $L$ is dense in $G'$ (see e.g.
\cite{Rag72}, Ch.~V). Thus a continuous function on $\Gamma\backslash L$ invariant under one factor is constant. We conclude that $S^i_-=\emptyset$ for irreducible $\Gamma$.

We remark that in the reducible case $N_\Gamma(\pi_1\hat\otimes\pi_2)\cong N_{\Gamma_1}(\pi_1)\otimes N_{\Gamma_2}(\pi_2)$ which implies that $S_-=S^1_-$.

That all $\nu_1\in S_+$ really contribute non-trivially to (\ref{wums}) in the way described follows from the fact that for all odd integers $l_1$, $l_2$ with $|l_1l_2|>1$ we have $N_\Gamma(\pi_{l_1}\hat\otimes\pi_{l_2})\ne\{0\}$. For $l_i\ne \pm 1$, $i=1,2$, this is a consequence of the integrability of the discrete series representation $\pi_{l_1}\hat\otimes\pi_{l_2}$, see the remark after Prop.~\ref{pavle}. As in the proof of that proposition, we will employ an index theoretic argument in the remaining cases. 
We consider the complex two-dimensional compact manifold $Z=\Gamma\backslash L/K_L$. It  is locally biholomorphic to the product of two upper half planes. This induces
a splitting of the holomorphic cotangent bundle into two line bundles $K_1$ and $K_2$. Let $k\in\N$ be odd. We consider the Dolbeaut cohomology groups $H^i(Z,K_1\otimes K_2^{\otimes\frac{k+1}{2}})$, $H^i(Z, K_2^{\otimes\frac{k+1}{2}})$ and the corresponding Euler characteristics
$$ \chi(Z,.)=\dim H^0(Z,.)-\dim H^1(Z,.)+\dim H^2(Z,.)\ .$$
One checks, e.g. using Hirzebruch's proportionality principle, that $\chi(Z,K_1\otimes K_2^{\otimes\frac{k+1}{2}})>0$ and $\chi(Z,K_2^{\otimes\frac{k+1}{2}})<0$. On the other hand, the cohomology groups can be expressed in terms of the
multiplicities $N_\Gamma(\pi)$ of those $\pi\in \widehat L$ having the same infinitesimal character as the discrete series representation $\pi_1\hat\otimes\pi_k$. For $k>1$ (i.e. non-trivial infinitesimal character) we obtain 
$$H^0(Z,K_1\otimes K_2^{\otimes\frac{k+1}{2}})\cong N_\Gamma(\pi_{-1}\hat\otimes\pi_{-k}),\  H^2(Z,K_1\otimes K_2^{\otimes\frac{k+1}{2}})=\{0\}, \   H^1(Z, K_2^{\otimes\frac{k+1}{2}})\cong N_\Gamma(\pi_{1}\hat\otimes\pi_{-k}).$$
We conclude $N_\Gamma(\pi_{\pm 1}\hat\otimes\pi_{\pm k})\ne\{0\}$.

The unboundedness of $S_-$ also in the irreducible case follows from the spectral asymptotics result \cite{DKV79}, Thm.~8.8. The restriction on the possible accumulation points of $S_-$
comes from a result of Kelmer and Sarnak (\cite{KSa09}, Thm.~2) on a strong spectral gap for $L^2(\Gamma\backslash L)$.

Let $\Delta\fg'$ be the diagonal subalgebra of $\fl\cong \fg'\oplus\fg'$. Then $j_1(\Omega')\in {\Cal Z}(\fl)$, and, by Lemma \ref{bur}, $j_3(\Omega')\equiv \Delta_{123}(\Omega')\in {\Cal Z}(\Delta\fg')$.
The essential self-adjointness of these two operators follows directly from Proposition \ref{spezi} with $V_\pi=L^2(\Gamma\backslash L)$ and $Q=\{e\}$. Now Cor.~\ref{karl}
implies that the spectral decomposition is unique.

Let $D\in {\bf D}(G/H)\cong \C[j_1(\Omega'),j_3(\Omega')]$ be an arbitrary formally self-adjoint element. Since $j_1(\Omega')\in {\Cal Z}(\fl)$ the operator $D$ acts on each
summand of the decomposition (\ref{bor}) by a polynomial in $j_3(\Omega')$, i.e. by an element of ${\Cal Z}(\Delta\fg')$. Thus by Proposition \ref{spezi} the operator $D$
is essentially self-adjoint on each summand. It follows (e.g. from Criterion (d) in Section \ref{G2}) that $D$ is essentially self-adjoint on the full space $L^2(\Gamma\backslash L)$.
\end{proof}

We conclude this section with a few remarks.

An important problem related to the spectral decomposition is to understand the joint eigenspaces
$$ E^{\infty}_\chi(Y)\subset E^{(2)}_\chi(Y)\subset E^{-\infty}_\chi(Y)\: ,\quad\chi\in\Hom({\bf D}(G/H),\C)$$
(for an abstract description in terms of $\widehat L$ see Lemma \ref{bassa}).
As a consequence of the essential self-adjointness of $j_1(\Omega')$ and $j_3(\Omega')$ all non-zero eigenspaces $E^{(2)}_\chi(Y)$ satisfy $\chi\in\Xi$ and appear in
the spectral decomposition (\ref{wums}) as a discrete summand. This gives a satisfying understanding of the $L^2$-eigenspaces. However, we have not made any claim on the size of their subspaces $E^{\infty}_\chi(Y)$ of smooth eigenfunctions.
A better understanding of them would require additional information on the behaviour of the branching rules (a), (b) with respect to $L$-smooth vectors. The determination of   $E^{-\infty}_\chi(Y)$ is easier.
Using the meromorphic continuation (see e.g. \cite{BSKZ14}, \cite{Mol17}) of invariant trilinear forms on principal series (which are responsible for the continuous contributions in (a),(b)) it is not difficult to see that for any $\nu\in \C$ the space of distribution vectors in the representations appearing in (a), (b) that are $\nu$-eigenvectors for the Casimir of the diagonal is infinite-dimensional.
We obtain for $(\nu_1,\nu_2)\in\C^2$:
$$ E^{-\infty}_{(\nu_1,\nu_2)}(Y)\ne \{0\} \Leftrightarrow \dim E^{-\infty}_{(\nu_1,\nu_2)}(Y)=\infty \Leftrightarrow \nu_1\in S_+\cup S_-\ .$$ 

By Margulis' arithmeticity theorem irreducible lattices $\Gamma\subset L$ are arithmetic. If some standard conjectures in the theory of automorphic forms were true, this would imply
that complementary series representations of $G'$ would not contribute to (\ref{bor}), see \cite{KSa09}, pp.~285--286 for more information. In other words, one expects that 
$S_-\subset (-\infty,-\frac{1}{4}]$,
and therefore also $sm(\nu_1)=\emptyset$ for all $\nu_1$ in the irreducible case, cf. Figure \ref{I1}. That there are at least no accumulation points of $S_-$ in the interval 
$(-\frac{5}{36},0]\subset (-\frac{1}{4},0]$, which comes from
the main result of \cite{KSa09}, could be considered as a provable part of this expectation. Note that there are known examples of reducible $\Gamma$ with  
$S_-\not\subset (-\infty,-\frac{1}{4}]$,  $sm(\nu_1)\not =\emptyset$ for some $\nu_1$. They come from Riemann surfaces with small eigenvalues. Concerning the latter the reader may
consult \cite{Buser}, Ch.~8, and \cite{BMM18} for more recent developments.

Theorem \ref{surprise} also implies a spectral decomposition of the pseudo-Riemannian Laplacian $\Delta_Y\equiv \Omega_G$ (which is essentially self-adjoint). We obtain that
$\spec(\Delta_Y)=\R$ and consists entirely of continuous spectrum with embedded eigenvalues.

If we replace $L=L_{min}$ by $L_{max}=G'\times\Delta_{23}(G')\times PSO(2)$, i.e. we allow for slightly more general $\Gamma\subset G$, then the same methods produce
a very similar spectral decomposition with the difference that some of the appearing eigenspaces and multiplicity spaces might be finite dimensional.


\section{A branching rule for spherical triples}\label{branching}

A classical problem in representation theory is to find the decomposition of  an irreducible unitary representation of $G$ into irreducibles when restricted to a subgroup $L$.
Of particular interest are those situations where this decomposition is discrete with finite multiplicities. In this case the $G$-representation is called $L$-admissible. The main result
of this section (Thm.~\ref{mainbranching}) says that $H$-spherical irreducible unitary representations of $G$ are $L$-admissible provided that $(G,H,L)$ is a {\em spherical} properly transitive triple.

It will turn out (see Prop.~\ref{horn} and Section~\ref{eigen}) that the study of restrictions to $L$ is helpful for the understanding of the spectral decomposition of
the locally symmetric space $Y=\Gamma\backslash G/H\cong \Gamma\backslash L/(L\cap H)$ in terms of representations of $G$. It is this application that forces us to consider also
non-unitary representations of $G$.   

Let $G/H$ be a semisimple symmetric space. As before, we assume $G$ to be connected with finite center. Let $W_\rho$ be an admissible representation of $G$ of finite length on a reflexive Banach space. The representation $W_\rho$ is called
$H$-spherical if there is a $G$-equivariant embedding
$$ W_{\rho,\infty}\hookrightarrow C^{\infty}(G/H)\ .$$
By Frobenius reciprocity (\ref{plum}) this is equivalent to the existence of an element
$$ \tilde w \in ( W_{\tilde\rho,-\infty})^H\mbox{ such that  }\tilde w_{|W_0}\ne 0 $$
for every non-zero closed $G$-invariant subspace $W_0\subset W_\infty$. Equivalently, the $G$-representation $W_{\tilde\rho,-\infty}$ has an $H$-invariant cyclic vector. For $W_\rho$ irreducible the condition is simply $$ (W_{\tilde\rho,-\infty})^H\ne\{0\}\ .$$
Note that by Casselman-Wallach theory \cite{Cas89b}, \cite{Wa92}, Ch.~11, see also \cite{BK14}, the $G$-representations $W_{\rho,\pm\infty}$ only depend on the underlying Harish-Chandra module $W_{\rho,K}$ and not
on the particular globalization $W_\rho$. Thus the same is also true for the property of being $H$-spherical. Moreover, it is known that $\dim (W_{\tilde\rho,-\infty})^H<\infty$ \cite{Ban87}.

Situations where such $H$-invariant distribution vectors are smooth or even $K$-finite are very special. For instance we have

\begin{lem}\label{almosttrivial}
Let $\ff\subset \fg$ be a 
subalgebra such that its projection to every non-compact simple ideal $\fd$ of $\fg$ is not contained in $\fd\cap \fk$. Let $W$ be a $(\fg,K)$-module, and let $w\in W$ be an
${\Cal U}(\ff)$-finite element. Then $w$ is ${\Cal U}(\fg)$-finite.
\end{lem}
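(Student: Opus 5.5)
The plan is to show that the subspace $\ff^\perp$-type obstruction forces the ideal $\fa:=\{X\in\fg\mid \dim{\Cal U}(\fg)w<\infty \text{ along } X\}$ to be everything. Concretely, let $U:={\Cal U}(\ff)w$, which is finite dimensional by assumption, and consider the subspace
$$\fn:=\{X\in\fg\mid X\cdot {\Cal U}(\fg)w \text{ is contained in a finite-dimensional } {\Cal U}(\fk+\ff)\text{-stable subspace}\}.$$
Rather than working with this abstractly, I will proceed through the $K$-types. Since $W$ is a $(\fg,K)$-module, $w$ is $K$-finite, so $U':={\Cal U}(\fk){\Cal U}(\ff)w$ is still finite dimensional? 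Not immediately, since $\fk$ and $\ff$ do not commute. So the first step is instead to take $V:={\Cal U}(\fg)w$ and note that it is a $(\fg,K)$-module generated by a vector $w$ which is both $K$-finite and ${\Cal U}(\ff)$-finite; it suffices to show $V$ is finite dimensional.

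The key step: let $\fg_0\subset\fg$ be the Lie subalgebra generated by $\fk$ and $\ff$. I claim $\fg_0=\fg$. Indeed $\fg_0$ contains $\fk$, so it is $\Ad(K_0)$-stable, hence of the form $\fk\oplus(\fg_0\cap\fs)$ with $\fg_0\cap\fs$ a $\fk$-submodule of $\fs$. For each non-compact simple ideal $\fd$, $\fd\cap\fs$ is either $\fk$-irreducible or (Hermitian case) a sum of two irreducibles $\fp^\pm$ with $[\fp^+,\fp^-]\subset\fk$ generating, and in any case the subalgebra generated by $\fk\cap\fd$ and any nonzero $\fk$-submodule of $\fd\cap\fs$ is all of $\fd$ (for $\fp^+$ alone one gets a parabolic, so I must use that the projection of $\ff$ to $\fd\cap\fs$ is nonzero — which is the hypothesis — and then argue that the $\fk$-module generated also meets both $\fp^\pm$, or else pass through finiteness on $\fp^+$ only). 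Then, on a module where both $\fk$- and $\ff$-finiteness of the cyclic vector hold, the set of elements $X\in\fg$ such that $w$ is ${\Cal U}(\mathbb{R}X+\ldots)$-finite is exactly the issue.

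The cleaner route I will actually take: use that $V$ is a finitely generated $(\fg,K)$-module with $\dim V(\gamma)$ bounded appropriately, and use the Casselman--Osborne / Harish-Chandra argument that a ${\Cal Z}(\fg)$-finite (generated by $w$, which is ${\Cal Z}(\fg)$-finite once we reduce to an irreducible subquotient) irreducible $(\fg,K)$-module containing a nonzero vector whose ${\Cal U}(\ff)$-span is finite dimensional is finite dimensional. For this: reduce via a finite composition series (the $(\fg,K)$-module ${\Cal U}(\fg)w$ has finite length after replacing $w$ by components of its ${\Cal Z}(\fg)$-generalized eigendecomposition, using ${\Cal U}(\fg)$-finite generation and $K$-finiteness), then in an irreducible subquotient the image of $U$ is a nonzero finite-dimensional $\ff$-module, so the annihilator ideal $J\subset{\Cal U}(\ff)$ of that image has finite codimension; then the finite-dimensional $\ff$-module structure combined with $K$-finiteness shows the associated variety of the irreducible module is contained in the nilcone intersected with $\ff^\perp$ and $\fk^\perp$, which is zero in each non-compact simple factor by the hypothesis and irreducibility of $K$-orbits considerations; hence the module is finite dimensional.

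The main obstacle is precisely this last associated-variety step: showing that an irreducible Harish-Chandra module whose associated variety lies in $(\fk+\ff)^\perp$ must be finite dimensional, i.e. that $(\fk+\ff)^\perp\cap\mathcal N=0$ factorwise. Here I would use that $(\fk+\ff_\fd)^\perp\cap\fd$ is a $K$-stable subspace of $\fd\cap\fs$ (dualized) orthogonal to the nonzero projection of $\ff$; $K$-stability plus orthogonality to a nonzero vector of a $\fk$-module forces it into a proper submodule, and in the Hermitian case one must check that associated varieties of infinite-dimensional modules are not contained in $\fp^\pm$ alone unless they are highest weight, which are then excluded by the same orthogonality applied to the $K$-stable closure. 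I expect this Hermitian sub-case to need the most care.
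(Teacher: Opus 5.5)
\textbf{There are genuine gaps in the route you commit to.}

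First, the reduction to irreducible subquotients is not available. $W$ is an arbitrary $(\fg,K)$-module, so nothing makes $w$ ${\Cal Z}(\fg)$-finite; that is essentially part of what is to be proved. Moreover, a finitely generated $(\fg,K)$-module such as ${\Cal U}(\fg)\otimes_{{\Cal U}(\fk)}\C$ need not have finite length. So you cannot pass to irreducible Harish-Chandra modules, which is where you invoke the nilcone.

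Second, the decisive claim that $(\fk+\ff)^\perp\cap\mathcal N=0$ in each factor is false, because $(\fk+\ff)^\perp$ is not $K$-stable. Take $\fg=\mathfrak{sl}_3(\R)$ and $\ff=\R\,\mathrm{diag}(1,0,-1)$. The matrix $N=E_{12}+E_{21}+i(E_{23}+E_{32})$ is symmetric, hence orthogonal to $\fk=\mathfrak{so}(3)$. It is trace-orthogonal to $\ff$, and it satisfies $N^3=0$.

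What you are missing is the linear-algebra fact that drives the argument. Each $\fd\cap\fs$ is an irreducible \emph{real} $K$-module; this holds in the Hermitian case too, since $\mathfrak p^\pm$ are only complex submodules, so your highest-weight worry is a red herring. These modules are pairwise non-isomorphic. Hence the $K$-translates of the projection of $\ff$ span $\fs$, and there are finitely many $k_1,\dots,k_r\in K$ with $\fg=\ff+\ff_1+\dots+\ff_r+\fk$ as vector spaces, where $\ff_i=\Ad(k_i)\ff$. Your observation that $\fk$ and $\ff$ generate $\fg$ as a Lie algebra is weaker and does not suffice. It only controls arbitrarily long alternating products of ${\Cal U}(\fk)$ and ${\Cal U}(\ff)$ applied to $w$.

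With the spanning statement, the paper's proof is elementary:
\begin{itemize}
\item PBW gives ${\Cal U}(\fg)={\Cal U}(\ff){\Cal U}(\ff_1)\cdots{\Cal U}(\ff_r){\Cal U}(\fk)$.
\item The ${\Cal U}(\ff)$-finite vectors of $W$ form a $\fg$-submodule. It is $K$-stable because $K$ is connected, so its vectors are also ${\Cal U}(\ff_i)$-finite.
\item Applying the finitely many factors successively to $w$ stays finite dimensional.
\end{itemize}

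Your associated-variety idea can also be made to work, but directly for $V={\Cal U}(\fg)w$, without irreducibility and without the nilcone. $V$ has good filtrations generated by ${\Cal U}(\fk)w$, which is $K$-stable, and by ${\Cal U}(\ff)w$. By independence of the good filtration, $\mathcal V(V)$ is a $K$-stable subset of $\fk^\perp\cap\ff^\perp$, with orthogonals taken in $\fg_\C^*$. It is therefore contained in $\bigl(\fk+\sum_{k\in K}\Ad(k)\ff\bigr)^\perp=\{0\}$, so $V$ is finite dimensional. Either way the spanning fact is the key step, and the PBW argument needs nothing beyond it.
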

\begin{proof}
Note that $K$ is connected. The irreducible components of the $K$-representation on $\fg/\fk\cong\fs$
are in one-to-one correspondence to the non-compact simple ideals of $\fg$. Our assumption on $\ff$ now implies that the image of $\ff$ in $\fg/\fk$ is not contained
in any proper $K$-submodule of $\fg/\fk$. It follows that there exist $k_1, k_2,\dots, k_r\in K$ such that for $\ff_i:= \Ad(k_i)\ff$ we have 
$\fg=\ff+\ff_1+\dots+\ff_r+\fk$ and hence, by Poincar\' e-Birkhoff-Witt,
\be\label{ralf} {\Cal U}(\fg)={\Cal U}(\ff){\Cal U}(\ff_1)\dots{\Cal U}(\ff_r){\Cal U}(\fk)\ .\end{equation}
We consider the subspace $W_\ff\subset W$ of all ${\Cal U}(\ff)$-finite elements of $W$.  It is an ${\Cal U}(\fg)$-submodule. Since $K$ is connected  $W_\ff\subset W$  is  a $(\fg,K)$-submodule. It follows
that all elements of $W_\ff$  are ${\Cal U}(\ff_i)$-finite for all $i$. Equation (\ref{ralf}) now implies that all elements of $W_\ff$ are ${\Cal U}(\fg)$-finite.
\end{proof}

\begin{cor}\label{frank} Assume that $G/H$ has no non-compact Riemannian factors. Let $W_\rho$ be an $H$-spherical admissible representation of finite length on a reflexive Banach space. Then the following assertions are equivalent:
\begin{itemize}
\item[(i)] $W_\rho$ is finite dimensional.
\item[(ii)]$(W_{\rho,K})^Q$ is finite dimensional for some subgroup $Q$ of $K\cap H$.
\item[(iii)] $(W_{\tilde\rho,-\infty})^H\subset W_{\tilde\rho,K}$.
\end{itemize}
\end{cor}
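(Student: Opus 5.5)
The plan is to prove the cycle (i) $\Rightarrow$ (iii) $\Rightarrow$ (ii) $\Rightarrow$ (i). Lemma~\ref{almosttrivial} supplies the non-trivial implications, applied with $\ff=\fh$ and, for (ii), with $\ff=\fq_Q$ (the Lie algebra of $Q$ enlarged appropriately, see below). Throughout we use that $W_{\rho,K}$ has finite length, so a $(\fg,K)$-module all of whose elements are ${\Cal U}(\fg)$-finite is finite dimensional. We also use that the hypothesis ``no non-compact Riemannian factors'' means: for every non-compact simple ideal $\fd$ of $\fg$, the projection of $\fh$ to $\fd$ is not contained in $\fd\cap\fk$. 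To check this, note that if the projection of $\fh$ to $\fd$ lay in $\fk$, then the $\sigma$-stable ideal generated by $\fd$ would give a factor of $G/H$ on which the isotropy is compact, i.e.\ a non-compact Riemannian factor.

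\emph{(i) $\Rightarrow$ (iii).} If $W_\rho$ is finite dimensional, then so is $W_{\tilde\rho}$, and all its vectors are smooth, distributional and $K$-finite at once. Hence $(W_{\tilde\rho,-\infty})^H=W_{\tilde\rho}^H\subset W_{\tilde\rho,K}$.

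\emph{(iii) $\Rightarrow$ (i).} Let $\tilde w\in(W_{\tilde\rho,-\infty})^H$ be an $H$-invariant cyclic vector, which exists because $W_\rho$ is $H$-spherical. By (iii), $\tilde w\in W_{\tilde\rho,K}$, and it is annihilated by $\fh$, so it is ${\Cal U}(\fh)$-finite. Lemma~\ref{almosttrivial} with $\ff=\fh$ then shows that $\tilde w$ is ${\Cal U}(\fg)$-finite. So ${\Cal U}(\fg)\tilde w$ is finite dimensional and $K$-stable; it is the $(\fg,K)$-module generated by $\tilde w$. This module is dense in $W_{\tilde\rho,-\infty}$ by cyclicity, so $W_{\tilde\rho,K}$, and hence $W_{\rho,K}$, is finite dimensional. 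Therefore $W_\rho$ is finite dimensional.

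\emph{(i) $\Rightarrow$ (ii).} This is trivial, for instance with $Q=\{e\}$.

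\emph{(ii) $\Rightarrow$ (i).} This is the main obstacle, because an element of $(W_{\rho,K})^Q$ is only invariant under a compact group, and Lemma~\ref{almosttrivial} needs finiteness under a subalgebra that is not compact in any simple factor. The idea is to enlarge the algebra. Consider $\fh\cap\fs$ acting on $W_{\rho,K}$: it does not preserve $K$-finiteness, but ${\Cal U}(\fg)^{Q}$ preserves $(W_{\rho,K})^Q$. Since $(W_{\rho,K})^Q$ is finite dimensional by assumption, every element of ${\Cal U}(\fg)^Q$ acts on it through a finite-dimensional quotient. Take $Y\in\fs$ that is $Q$-invariant with nonzero projection to each non-compact simple ideal. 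Such a $Y$ is available when $Q$ is trivial, which is the relevant case since (ii) for some $Q$ implies (ii) for $Q=\{e\}$ only if the dimension condition is monotone in $Q$. The dimension of $Q$-invariants decreases as $Q$ grows, so the plan is to use the ``some $Q$'' version as stated: pick $Y\in(\fh\cap\fs)^{Q}$, where the centralizer of a compact subgroup in $\fh\cap\fs$ should still project nontrivially onto each factor. Then every $v\in(W_{\rho,K})^Q$ is ${\Cal U}(\R Y)$-finite, and with $\ff=\R Y\oplus\fq$-type completions we apply Lemma~\ref{almosttrivial} to conclude that $v$ is ${\Cal U}(\fg)$-finite. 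Since some nonzero $Q$-invariant $K$-finite vector generates a nonzero finite-dimensional submodule, finite length lets us iterate through the composition factors and conclude (i). The delicate point I expect is guaranteeing $(\fh\cap\fs)^Q$ projects nontrivially onto each simple ideal; if this fails in general, I would instead derive (ii) $\Rightarrow$ (iii) directly, using Casselman--Wallach to compare $H$-invariant distribution vectors with $Q$-invariant $K$-finite ones via $K$-type decompositions as in Lemma~\ref{frob}.
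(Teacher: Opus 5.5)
Your implications (i)$\Rightarrow$(ii), (i)$\Rightarrow$(iii) and (iii)$\Rightarrow$(i) are fine. The last one is essentially the paper's argument: Lemma~\ref{almosttrivial} with $\ff=\fh$, applied to an $H$-invariant cyclic vector of $W_{\tilde\rho,-\infty}$.

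The gap is (ii)$\Rightarrow$(i). Your route needs a $Q$-invariant element of $\fg$ whose projection to every non-compact simple ideal $\fd$ is not contained in $\fd\cap\fk$. In general no such element exists. For $G=SO_e(2,2n)$, $H=SO_e(1,2n)$ and $Q=K\cap H=SO(2n)$ one has $\fs\cong\R^2\otimes\R^{2n}$ as a $Q$-module. Hence $\fs^Q=\{0\}$ and $\fg^Q=\fk^Q\subset\fk$. The same holds for $Q=L\cap H=U(n)$, and that is exactly how the corollary is used in the proof of Prop.~\ref{hering}(iv). You cannot shrink $Q$ to $\{e\}$ either, since finite dimensionality of $Q$-invariants is not inherited by smaller groups.

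Your closing step is also unjustified. Knowing that the $Q$-invariant $K$-finite vectors generate a finite-dimensional submodule says nothing about the remaining composition factors, which a priori need not contain $Q$-invariants. What ties the $Q$-invariants to all of $W_\rho$ is the $H$-invariant cyclic vector in the dual, and you never use it in this step.

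The missing idea is the one you only gesture at in your last sentence. It needs neither an auxiliary $Y$ nor Casselman--Wallach.

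First, the $\gamma$-isotypic component of $W_{\tilde\rho,K}$ is dual to the $\tilde\gamma$-isotypic component of $W_{\rho,K}$. Also $\dim U^Q=\dim (U^*)^Q$ for finite-dimensional representations $U$ of the compact group $Q$. So assumption (ii) gives $\dim (W_{\tilde\rho,K})^Q=\dim (W_{\rho,K})^Q<\infty$.

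Second, averaging over $Q\subset K$ is a continuous projection of $W_{\tilde\rho,-\infty}$ onto $(W_{\tilde\rho,-\infty})^Q$, and it maps $K$-finite vectors to $K$-finite vectors. Hence density of $W_{\tilde\rho,K}$ in $W_{\tilde\rho,-\infty}$ gives density of $(W_{\tilde\rho,K})^Q$ in $(W_{\tilde\rho,-\infty})^Q$.

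Third, a dense finite-dimensional subspace is closed, so
\[
(W_{\tilde\rho,-\infty})^H\subset (W_{\tilde\rho,-\infty})^Q=(W_{\tilde\rho,K})^Q\subset W_{\tilde\rho,K},
\]
which is (iii). Your (iii)$\Rightarrow$(i) then completes the proof. This is precisely how the paper argues: it proves (ii)$\Rightarrow$(iii)$\Rightarrow$(i).
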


\begin{proof} The implication $(i)\Rightarrow (ii)$ is trivial. We show $(ii)\Rightarrow (iii)\Rightarrow (i)$. 

 Let $Q\subset L\cap H$. We assume that $(W_{\rho,K})^Q$ is finite dimensional. By duality and compactness of $Q$ we conclude that also $(W_{\tilde\rho,K})^{Q}$ is finite dimensional. By density it follows
that $(W_{\tilde\rho,K})^{Q}=(W_{\tilde\rho,-\infty})^{Q}\supset (W_{\tilde\rho,-\infty})^{H}$. We conclude $(W_{\tilde\rho,-\infty})^{H}\subset W_{\tilde\rho,K}$.

Now we assume $(W_{\tilde\rho,-\infty})^H\subset W_{\tilde\rho,K}$. 
Let $w\in (W_{\tilde\rho,-\infty})^H$ be $G$-cyclic for $W_{\tilde\rho,-\infty}$, see the discussion above. Since $K$ is connected this implies ${\Cal U}(\fg)w=W_{\tilde\rho,K}$. The absence of non-compact Riemannian factors of $G/H$ ensures that $\ff:=\fh$ satisfies the assumption of Lemma~\ref{almosttrivial}.  The lemma  
now tells us that $W_{\tilde\rho,K}$  is finite dimensional. Hence $W_\rho$ is finite dimensional as well.
\end{proof}

Subrepresentations of $H$-spherical representations are again $H$-spherical. This is not true for quotients of $H$-spherical representations. We call such quotients weakly
$H$-spherical.

Now we look at spherical properly transitive triples $(G,H,L)$.
For a subgroup $N\subset G$ we denote the space of $N$-smooth and $N$-distribution vectors of $W_\rho$, respectively, by $W_{\rho,\pm\infty_N}$.

\begin{pro}\label{hering}
Let $(G,H,L)$ be a spherical triple, and let $W_\rho$ be a weakly $H$-spherical admissible representation of finite length on a reflexive Banach space. Then
\begin{enumerate}

\item[(i)] ($K_L$-admissibility) For every irreducible $K_L$-representation $V_\gamma$ we have $\dim \Hom_{K_L}(V_\gamma,W_\rho)<\infty$. In particular, for any globalization $W$
of  the Harish-Chandra module $W_{\rho,K}$, i.e. for any continuous $G$-representation  on a Hausdorff locally convex vector space $W$ with $W_K=W_{\rho,K}$, we have
$W_{K}=W_{K_L}$.
\item[(ii)] There is an increasing filtration $(V_i)_{i\in\N_0}$  of $W_{\rho,K}$ by Harish-Chandra modules for $L$
such that $V_{i+1}/V_i$ is irreducible or zero. The multiplicity of any irreducible $(\fl,K_L)$-module as a subquotient of $W_{\rho,K}$ is finite.
\item[(iii)] The inclusions $W_{\rho,\infty}\hookrightarrow W_{\rho,\infty_L}\hookrightarrow W_{\rho,\infty_{K_L}}$ and $W_{\rho,-\infty_{K_L}}\hookrightarrow W_{\rho,-\infty_L}\hookrightarrow W_{\rho,-\infty}$ are topological
isomorphisms.
\item[(iv)] Assume that $W_\rho$ is $H$-spherical. Then a filtration as in (ii) stablizes, i.e. $W_{\rho,K}$ has finite length as $(\fl,K_L)$-module, if and only if $W_\rho$ is finite dimensional.
\end{enumerate}
\end{pro}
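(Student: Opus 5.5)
We reduce everything to (i), which is proved by reduction to principal series of $G$. Since $W_\rho$ is weakly $H$-spherical, it is a quotient of an $H$-spherical representation of finite length. By Delorme's embedding theorem, every irreducible $H$-spherical Harish-Chandra module embeds into a principal series for $G/H$, i.e. a representation induced from a finite dimensional representation of a minimal $\sigma\theta$-stable parabolic $P_{\sigma\theta}$. It is therefore enough to show $K_L$-admissibility for such induced representations $I=\Ind_{P_{\sigma\theta}}^G(\xi)$ and then pass to subquotients and finite extensions, since admissibility is inherited by both. Here the key geometric input is the claim, announced in the introduction as Lemma~\ref{sigmatheta}, that for a spherical triple the group $K_L$ acts transitively on $G/P_{\sigma\theta}=K/K\cap P_{\sigma\theta}$. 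We would prove it by combining the transitivity of $L\cap H$ on $K_L/M_L$, i.e. (\ref{char}), with $K=K_LK_H$ and the fact that $K_H$ normalizes the relevant $\sigma\theta$-parabolic data. Granting this, as a $K_L$-representation we get $I\cong \Ind_{K_L\cap P_{\sigma\theta}}^{K_L}(\xi|)$, which is $K_L$-admissible by Frobenius reciprocity because $\xi$ is finite dimensional.

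Admissibility implies that each $K_L$-isotypic component of $W_{\rho,K}$ is finite dimensional. A $K_L$-finite vector of any globalization then lies in a finite-dimensional $K_L$-isotypic space; this space is contained in the $K$-finite vectors, because the $K$-finite vectors are dense in the isotypic component and the latter is finite dimensional. Hence $W_K=W_{K_L}$, which is (i).

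For (ii), $W_{\rho,K}$ is an $(\fl,K_L)$-module with finite $K_L$-multiplicities, and we use the standard argument. Enumerate the $K_L$-types $\gamma_1,\gamma_2,\dots$ and take $V_i$ to be an increasing chain obtained by successively adding irreducible $(\fl,K_L)$-subquotients generated by the lowest remaining $K_L$-types. The $\mathcal Z(\fl)$-finiteness needed for this comes from the finiteness of $K_L$-multiplicities: $\mathcal Z(\fl)$ preserves each finite-dimensional isotypic component. Finiteness of the multiplicity of a given irreducible subquotient then follows by counting its minimal $K_L$-type.

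For (iii), $K_L$-admissibility together with a Sobolev argument, comparing the seminorms of Casimir powers of $K$ and of $K_L$ on isotypic components, shows that $K_L$-smooth vectors are $K$-smooth. The comparison uses that the $K$-types in a fixed $K_L$-isotypic component are finite in number and of polynomially controlled size; this is the main obstacle, and we would handle it via the explicit model $\Ind_{K_L\cap P}^{K_L}$ and bounds on branching from $K$ to $K_L$. The distribution statement then follows by duality.

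For (iv), a finite-length $(\fl,K_L)$-module is $\mathcal U(\fl)$-finitely generated, so $(W_{\rho,K})^{L\cap H}$ is finite dimensional. Applying Cor.~\ref{frank} with $Q=L\cap H\subset K\cap H$ gives $\dim W_\rho<\infty$. The converse direction is trivial.
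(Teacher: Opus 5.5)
Your architecture for (i), (ii) and (iv) matches the paper's: Delorme's embedding into principal series for $G/H$, Lemma~\ref{sigmatheta}, $\Ind_{K\cap P_{\sigma\theta}}^{K}\delta\cong\Ind_{K_L\cap P_{\sigma\theta}}^{K_L}\delta$ with Frobenius reciprocity, then Cor.~\ref{frank}. Part (iii), however, is left genuinely open. You name the comparison of $K$- and $K_L$-Sobolev norms via branching bounds as the main obstacle and do not resolve it.

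No branching estimates are needed. The missing step is to upgrade Delorme's map to a \emph{topological}, $K$-equivariant embedding $\Phi\colon W_{\rho,\pm\infty}\hookrightarrow(\Ind_{K\cap P_{\sigma\theta}}^{K}\delta)_{\pm\infty}$. Casselman--Wallach theory gives closed range onto a topological direct summand, and $K$-invariant closed complements let you induct on the length. You also need that $G$-smooth and $K$-smooth vectors coincide for admissible representations of finite length, another Casselman--Wallach input your sketch never invokes. With these, $w\in W_\rho$ is smooth if and only if $\Phi(w)$ is a smooth section of the homogeneous bundle over the compact manifold $K/K\cap P_{\sigma\theta}$. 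Since $K_L$ acts transitively on this manifold, the Sobolev embedding theorem shows that $K_L$-smooth sections are already smooth. The same argument applied to distribution sections gives the distributional half directly. Your ``by duality'' would instead require the smooth statement for $\tilde\rho$, which is $H$-cospherical rather than weakly $H$-spherical, so it is not covered by what you prove.

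There are three further points.

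First, your reduction applies Delorme's theorem only to irreducible $H$-spherical modules. The composition factors of an $H$-spherical (let alone weakly $H$-spherical) representation need not be $H$-spherical, so ``passing to subquotients and extensions'' does not reach them. The paper applies Delorme's theorem to an arbitrary $H$-spherical $W_\rho$ of finite length, which yields a non-zero intertwiner into some $\Ind_{P_{\sigma\theta}}^{G}\delta_1$. Its kernel is again $H$-spherical, so one inducts on the length. Weakly spherical $W_\rho$ are then handled as quotients of spherical ones; for (iii) this is done by realizing $W_{\rho,\pm\infty}$ as a closed $K$-submodule of $W_{\rho_1,\pm\infty}$.

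Second, in (iv), finite generation is not why $(W_{\rho,K})^{L\cap H}$ is finite dimensional; the same inference would apply to non-spherical triples, where the conclusion fails. The reason is that irreducible admissible $L$-representations have finite-dimensional $L\cap H$-invariants by sphericity (Cor.~\ref{inv3} together with Prop.~\ref{translem}). Since taking invariants of the compact group $L\cap H$ is exact, finite length then suffices.

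Third, your sketch of Lemma~\ref{sigmatheta} does not work as stated, because $K_H$ does not normalize $\fa_{\fq\cap\fs}$. The actual argument shows that $L\cap H$ acts transitively on the maximal abelian subspaces of $\fq\cap\fs$. These are transported from $\fl\cap\fs$ via $p_\fq$, using $\Rrank(L)=\rank_{\mathbb R}(G/H)$ from Prop.~\ref{ranksymm}. Hence $K_H=(L\cap H)N_{K_H}(\fa_{\fq\cap\fs})$, and $K=K_LZ_K(\fa_{\fq\cap\fs})$ follows.
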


For the proof of the proposition, the following lemma which has its own interest is crucial. 

\begin{lem}\label{sigmatheta}
Let $(G,H,L)$ be a spherical triple. Let $P_{\sigma\gt}$ be a minimal $\sigma\gt$-stable parabolic subgroup in $G$. 
Then $K_L$ acts transitively on $G/P_{\sigma\gt}$.
\end{lem}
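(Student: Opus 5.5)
We need to prove that $K_L$ acts transitively on $G/P_{\sigma\theta}$ when $(G,H,L)$ is spherical. The plan is to exploit two transitivity statements already at hand: $G=LH$ with $L\cap H$ compact, and (via (\ref{char})) the transitivity of $L\cap H$ on $K_L/M_L$.

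The first step is a reduction to $K$. The minimal $\sigma\theta$-stable parabolic $P_{\sigma\theta}$ can be chosen with Langlands decomposition $M_{\sigma\theta}A_{\sigma\theta}N_{\sigma\theta}$, where $\fa_{\sigma\theta}$ is maximal abelian in $\fq\cap\fs$. Then $K$ acts transitively on $G/P_{\sigma\theta}$, so $G/P_{\sigma\theta}\cong K/(K\cap P_{\sigma\theta})$. Moreover $K_H$ acts on $K/(K\cap P_{\sigma\theta})$ with open orbit, and in fact transitively; this is the standard fact that $H$ (even $K_H$) has an open orbit through the base point, since $\fk=\fk_H+(\fk\cap\fm_{\sigma\theta}+\dots)$. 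It therefore suffices to understand $K$-orbits.

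The second step uses the decomposition $K=K_LK_H$. This holds because $L$ acts transitively on $G/H$ and hence $K_L$ acts transitively on $K/K_H$ (the Riemannian symmetric parts). It gives
\[
K_L\backslash K/(K\cap P_{\sigma\theta})\;\cong\;(L\cap H)\backslash K_H/(K_H\cap P_{\sigma\theta}),
\]
so the statement reduces to showing that $L\cap H=K_L\cap K_H$ acts transitively on $K_H/(K_H\cap P_{\sigma\theta})$.

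The third step is the identification of $K_H/(K_H\cap P_{\sigma\theta})$ with $K_L/M_L$. Here $K_H\cap P_{\sigma\theta}=Z_{K_H}(\fa_{\sigma\theta})$. The projection $p_\fq$ gives an $L\cap H$-equivariant linear isomorphism $\fl\cap\fs\to\fq\cap\fs$, and by Proposition \ref{ranksymm} the two spaces have the same rank. The plan is to show that, for a regular $X\in\fa_L$, the element $p_\fq(X)$ is $\fq\cap\fs$-regular and $K_H$-conjugate into $\fa_{\sigma\theta}$. The principal orbits $\Ad(K_L)X\cong K_L/M_L$ and $\Ad(K_H)p_\fq(X)\cong K_H/Z_{K_H}(\fa_{\sigma\theta})$ should then correspond $L\cap H$-equivariantly. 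Transitivity of $L\cap H$ on $K_L/M_L$ gives $\Ad(K_L)X=\Ad(L\cap H)X$. Applying $p_\fq$, the set $\Ad(L\cap H)p_\fq(X)$ is a compact orbit contained in $\Ad(K_H)p_\fq(X)$. The last step is a dimension count, using $\dim\fl\cap\fs=\dim\fq\cap\fs$ and the equal ranks, to show the two orbits have equal dimension. Connectedness of $K_H$-orbits then yields equality.

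I expect the main obstacle to be the third step, namely proving that $p_\fq$ carries $\Ad(K_L)X$ onto a full $K_H$-orbit rather than into some other subset of $\fq\cap\fs$. If the direct comparison fails, I would instead work inside $\fs=(\fh\cap\fs)\oplus(\fl\cap\fs)$ and use the $K$-invariant polynomials on $\fs$ together with the identity $\Ad(K)X=\Ad(K_H)\Ad(K_L)X$.
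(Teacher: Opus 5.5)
Your argument works and is a close variant of the paper's, but two auxiliary assertions in your first two steps are false and should be dropped.

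\emph{The claim about $K_H$.} $K_H$ does not act transitively on $K/(K\cap P_{\sigma\theta})$, not even with an open orbit. For Triple~1, $K/Z_K(\fa_{\fq\cap\fs})$ is $2n$-dimensional, while every orbit of $K_H=SO(2n)$ has dimension $2n-1$. What is true is that $H$ has an open orbit, since $\fg=\fh+\mathfrak{p}_{\sigma\theta}$; this does not descend to $K_H$.

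\emph{The double coset identification.} Your identification of $K_L\backslash K/(K\cap P_{\sigma\theta})$ with $(L\cap H)\backslash K_H/(K_H\cap P_{\sigma\theta})$ is unjustified, because $K\cap P_{\sigma\theta}=Z_K(\fa_{\fq\cap\fs})$ is in general not contained in $K_H$. You only use the implication $K_H=(L\cap H)Z_{K_H}(\fa_{\fq\cap\fs})\Rightarrow K=K_LK_H\subset K_L(K\cap P_{\sigma\theta})$, and that is immediate.

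\emph{Regularity in Step~3.} You do not need a separate proof that $p_\fq(X)$ is regular. Since $\Ad(K_L)X=\Ad(L\cap H)X$ and $p_\fq$ is an $L\cap H$-equivariant isomorphism, the orbit $\Ad(L\cap H)p_\fq(X)$ has dimension $\dim(\fl\cap\fs)-\Rrank(L)=\dim(\fq\cap\fs)-\Rrank(G/H)$. This is the maximal dimension of a $K_H$-orbit in $\fq\cap\fs$. Hence $\Ad(K_H)p_\fq(X)$ is a principal orbit, with stabilizer conjugate to $Z_{K_H}(\fa_{\fq\cap\fs})$. The compact $L\cap H$-orbit is open in it, hence equal to it once $K_H$ is connected. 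Replacing $H$ by $H_0$ to arrange this is harmless, since the triple stays spherical and $P_{\sigma\theta}$ is unchanged.

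\emph{Comparison with the paper.} The paper makes the same transfer along $p_\fq$, but on the manifolds $\mathcal F_{\fl\cap\fs}$ and $\mathcal F_{\fq\cap\fs}$ of maximal abelian subspaces rather than on principal orbits. Sphericity makes $L\cap H$ transitive on $\mathcal F_{\fl\cap\fs}$. By the proof of Prop.~\ref{ranksymm} and equivariance, $\fb\mapsto p_\fq(\fb)$ is an injective map $\mathcal F_{\fl\cap\fs}\to\mathcal F_{\fq\cap\fs}$, and the same dimension count makes it surjective. This gives $K_H=(L\cap H)N_{K_H}(\fa_{\fq\cap\fs})$. Finally $K=K_LN_{K_H}(\fa_{\fq\cap\fs})=K_LZ_K(\fa_{\fq\cap\fs})$, using finiteness of the Weyl group and connectedness of $K$. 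Your route through regular elements bypasses two things: the fact that $p_\fq$ maps some maximal abelian subspace onto a maximal abelian subspace, and the passage from normalizer to centralizer. It also yields the slightly sharper intermediate statement that $L\cap H$ acts transitively on $K_H/Z_{K_H}(\fa_{\fq\cap\fs})$.
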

\begin{proof}
The group $K_L$  acts transitively on the set 
\beu
{\mathcal F}_{{\mathfrak l}\cap{\mathfrak s}}:=\{{\mathfrak b}\subset{\mathfrak l}\cap{\mathfrak s}\;\text{ maximal abelian subspace}\}.
\end{equation*}
Similarly, if one considers the Lie algebra $({\mathfrak h}\cap{\mathfrak k})\oplus({\mathfrak q}\cap{\mathfrak s})$, one sees that the group $K_H$ acts transitively on the set
\beu
{\mathcal F}_{{\mathfrak q}\cap{\mathfrak s}}:=\{{\mathfrak b}\subset{\mathfrak q}\cap{\mathfrak s}\;\text{ maximal abelian subspace}\}.
\end{equation*}
Since $(G,H,L)$ is a spherical triple and ${\mathcal F}_{{\mathfrak l}\cap{\mathfrak s}}$ is $K_L$-equivariantly finitely-covered by $L/P_L\simeq K_L/M_L$, already the smaller group $L\cap H$ acts transitively on ${\mathcal F}_{{\mathfrak l}\cap{\mathfrak s}}$. We claim that $L\cap H$ acts transitively on ${\mathcal F}_{{\mathfrak q}\cap{\mathfrak s}}$ as well. Indeed, consider the $L\cap H$-equivariant isomorphism (\ref{hermann}): 
\beu
p_{{\mathfrak q}}:{\mathfrak l}\cap{\mathfrak s}\rightarrow{\mathfrak q}\cap{\mathfrak s},\;X\mapsto\frac{1}{2}(X-\sigma(X)).
\end{equation*}
In the proof of Proposition \ref{ranksymm}, we have seen that there exists a maximal abelian subspace ${\mathfrak a}_{{\mathfrak l}\cap{\mathfrak s}}$ of ${\mathfrak l}\cap{\mathfrak s}$ such that $p_{{\mathfrak q}}({\mathfrak a}_{{\mathfrak l}\cap{\mathfrak s}})$ is a maximal abelian subspace of ${\mathfrak q}\cap{\mathfrak s}$. By $L\cap H$-equivariance and transitivity of the $L\cap H$-action, this assertion is actually true for every maximal abelian subspace ${\mathfrak b}$ of ${\mathfrak l}\cap{\mathfrak s}$. In particular, we obtain a well-defined injective $L\cap H$-equivariant map ${\mathcal F}_{{\mathfrak l}\cap{\mathfrak s}}\rightarrow{\mathcal F}_{{\mathfrak q}\cap{\mathfrak s}}$. But we have:
\begin{align*}
\text{dim }{\mathcal F}_{{\mathfrak l}\cap{\mathfrak s}}&=\text{dim }{\mathfrak l}\cap{\mathfrak s}-\text{rk}_{\mathbb R}L\;\text{ by Bruhat and Iwasawa decompositions}\\
&=\text{dim }{\mathfrak q}\cap{\mathfrak s}-\text{rk}_{\mathbb R}L\;\text{ by (\ref{hermann})}\\
&=\text{dim }{\mathfrak q}\cap{\mathfrak s}-\text{rk}_{\mathbb R}(G/H)\;\text{ by Proposition \ref{ranksymm}}\\
&=\text{dim }{\mathcal F}_{{\mathfrak q}\cap{\mathfrak s}}\;\text{ by Bruhat and Iwasawa decompositions,}
\end{align*}
i.e., the map ${\mathcal F}_{{\mathfrak l}\cap{\mathfrak s}}\rightarrow{\mathcal F}_{{\mathfrak q}\cap{\mathfrak s}}$ is also surjective. This implies that ${\mathcal F}_{{\mathfrak l}\cap{\mathfrak s}}$ and ${\mathcal F}_{{\mathfrak q}\cap{\mathfrak s}}$ are $L\cap H$-equivariantly isomorphic, and hence $L\cap H$ acts transitively on ${\mathcal F}_{{\mathfrak q}\cap{\mathfrak s}}$. Using
\beu
{\mathcal F}_{{\mathfrak q}\cap{\mathfrak s}}\simeq K_H/N_{K_H}({\mathfrak a}_{{\mathfrak q}\cap{\mathfrak s}})
\end{equation*}
we obtain that
\beu
K_H=(L\cap H)N_{K_H}({\mathfrak a}_{{\mathfrak q}\cap{\mathfrak s}}).
\end{equation*}
Finally, we deduce the following:
\begin{align*}
K=K_L K_H&=K_L N_{K_H}({\mathfrak a}_{{\mathfrak q}\cap{\mathfrak s}})\\
&=K_L Z_{K_H}({\mathfrak a}_{{\mathfrak q}\cap{\mathfrak s}})\;\text{ since $Z_{K_H}({\mathfrak a}_{{\mathfrak q}\cap{\mathfrak s}})$ has finite index in }N_{K_H}({\mathfrak a}_{{\mathfrak q}\cap{\mathfrak s}})\\
&=K_L Z_{K}({\mathfrak a}_{{\mathfrak q}\cap{\mathfrak s}}).
\end{align*}
Since $K\cap P_{\sigma\theta}=Z_{K}({\mathfrak a}_{{\mathfrak q}\cap{\mathfrak s}})$, we conclude that $K_L$ acts transitively on $K/K\cap P_{\sigma\theta}\simeq G/P_{\sigma\theta}$.
\end{proof}
For later reference we also observe
\begin{cor}\label{renoir}
Under the assumptions of Lemma~\ref{sigmatheta} there is a minimal parabolic $P_L\subset L$ such that $P_{\sigma\gt}\cap L\subset P_L$.  In particular, there is an $L$-equivariant fibration
$G/P_{\sigma\gt}\rightarrow L/P_L$. We can even be more precise: There is a minimal parabolic $P_L=M_LA_LN_L$ such that $P_{\sigma\gt}\cap L= Z_{K_L}({\mathfrak a}_{{\mathfrak q}\cap{\mathfrak s}})A_LN_L$ and $Z_{K_L}({\mathfrak a}_{{\mathfrak q}\cap{\mathfrak s}})\subset M_L$.
\end{cor}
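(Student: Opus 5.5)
The natural route is to extract everything from the proof of Lemma~\ref{sigmatheta}, in particular the choice of maximal abelian subspaces made there. Fix a maximal abelian subspace $\fa_L\subset\fl\cap\fs$ such that $\fa_{\fq}:=p_\fq(\fa_L)$ is maximal abelian in $\fq\cap\fs$. This is possible by the proof of Prop.~\ref{ranksymm}. Extend it to a $\sigma$-stable maximal abelian $\fa\subset\fs$ containing $\fa_L$, as in step (i) of that proof. Then $\fa=(\fa\cap\fh)\oplus(\fa\cap\fq)$, and by the dimension count $\fa\cap\fq=p_\fq(\fa_L)=\fa_{\fq}$. Take $P_{\sigma\gt}=Z_G(\fa_\fq)N_{\sigma\gt}$, where $N_{\sigma\gt}$ comes from a positive system of the restricted roots of $\fa_\fq$ on $\fg$. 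We will choose this positive system compatible with a generic element $X\in\fa_L$. Concretely, pick $X\in\fa_L$ regular in $\fl$, i.e.\ with $Z_\fl(X)=\fm_L\oplus\fa_L$. Set $Y:=p_\fq(X)\in\fa_\fq$; since $\fa_L\subset\fa$ is abelian and $X-Y=p_\fh(X)\in\fa\cap\fh$ commutes with $\fa$, the elements $X$ and $Y$ commute. We would like $Y$ to be regular in $\fa_\fq$ and to define $P_{\sigma\gt}$ as the parabolic attached to $Y$, i.e.\ the sum of the eigenspaces of $\ad(Y)$ with nonnegative eigenvalues, with Levi component $Z_G(Y)=Z_G(\fa_\fq)$.

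The core step is to compute $P_{\sigma\gt}\cap L$. The Lie algebra of $P_{\sigma\gt}$ is $\fp_Y=\bigoplus_{c\ge0}\fg_c(\ad Y)$. Since $\fl$ is $\theta$-stable and $\ad(X)$ and $\ad(Y)$ commute and are both semisimple with real eigenvalues, the key claim is this: on $\fl$, the sign pattern of $\ad Y$ agrees with that of $\ad X$. For this I would use the commuting element $Z:=X-Y\in\fa\cap\fh$. On a joint eigenspace of $(\ad X,\ad Z)$ in $\fg$ with eigenvalues $(a,c)$, the operator $\ad Y$ has eigenvalue $a-c$. To control $c$ relative to $a$ on $\fl$, use properness. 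If $U\in\fl$ has $\ad X$-eigenvalue $a>0$ but $\ad Y U\notin$ the positive part, then $\exp(tY)$-conjugation arguments produce noncompact intersections of $L$ with conjugates of $H$. Concretely, the limit of $\Ad(\exp tY)\fl$ would meet $\fh$ noncompactly, contradicting (\ref{knax})-type transversality. A cleaner alternative is to proceed at the group level. $L\cap P_{\sigma\gt}$ is the stabilizer in $L$ of the base point of $G/P_{\sigma\gt}$. By Lemma~\ref{sigmatheta}, $K_L$ is transitive there, so $L\cap P_{\sigma\gt}$ is cocompact in $L$, i.e.\ $L/(L\cap P_{\sigma\gt})$ is a compact $L$-orbit. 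A closed subgroup $R\subset L$ with $L/R$ compact contains a conjugate of $A_LN_L$, by Iwasawa: $L=K_LA_LN_L$, so the solvable group $A_LN_L$ has a fixed point on the compact space $L/R$, because the parabolic's split solvable part fixes a point on any compact homogeneous space, equivalently $R$ contains a minimal parabolic's $AN$ up to conjugation by standard theory of cocompact algebraic subgroups. Since $L\cap P_{\sigma\gt}$ is (Zariski) algebraic in $L$, being an intersection of algebraic groups, and cocompact, it contains $A_LN_L$ for some minimal parabolic $P_L=M_LA_LN_L$ after conjugation. Then $L\cap P_{\sigma\gt}=(K_L\cap P_{\sigma\gt})A_LN_L$, and $K_L\cap P_{\sigma\gt}=K_L\cap Z_K(\fa_\fq)=Z_{K_L}(\fa_{\fq\cap\fs})$, using $K\cap P_{\sigma\gt}=Z_K(\fa_{\fq\cap\fs})$ from the proof of Lemma~\ref{sigmatheta}.

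It remains to show $Z_{K_L}(\fa_\fq)\subset M_L$, which gives $P_{\sigma\gt}\cap L\subset P_L$ and the fibration $G/P_{\sigma\gt}\cong L/(L\cap P_{\sigma\gt})\to L/P_L$. Since $Z_{K_L}(\fa_\fq)$ normalizes $L\cap P_{\sigma\gt}$, hence normalizes its unipotent radical and $A_LN_L$, it lies in $N_{K_L}(A_LN_L)=M_L$. Here one must check that $A_LN_L$ is characteristic in $(K_L\cap P_{\sigma\gt})A_LN_L$, e.g.\ as its maximal connected split solvable normal subgroup. The main obstacle I expect is the cocompactness-implies-contains-$A_LN_L$ step in the non-algebraic setting. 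If that is awkward, I would fall back on the Lie-algebra sign comparison via the element $Y=p_\fq(X)$ sketched in the second paragraph.
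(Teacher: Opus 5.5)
There is a genuine gap, and it sits in the step that carries the real content of the corollary, namely $Z_{K_L}(\fa_{\fq\cap\fs})\subset M_L$.

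In your group-level route, the subgroup $A_LN_L\subset L\cap P_{\sigma\theta}$ is produced abstractly, after an unspecified conjugation. So its Lie algebra $\fa_L$ has no stated relation to $\fa_{\fq\cap\fs}$. You then argue that $C:=Z_{K_L}(\fa_{\fq\cap\fs})$ normalizes $A_LN_L$ because $A_LN_L$ is characteristic in $CA_LN_L$. This is circular. A group of the form $C'A_LN_L$ with $C'\subset K_L$ compact need not have $A_LN_L$ as a normal subgroup: every non-minimal parabolic $P'_L=(K_L\cap P'_L)A_LN_L$ of $L$ has this form. Moreover, normality of $A_LN_L$ in $CA_LN_L$ is essentially equivalent to $C\subset M_L$, which is what you are trying to prove.

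Two further points.
\begin{itemize}
\item Your first justification for the fixed point is false: split solvable groups need not fix a point on a compact homogeneous space (take $L/\Gamma$ with $\Gamma$ a cocompact lattice). You genuinely need a theorem on cocompact real algebraic subgroups, with some care because $G$ need not be linear.
\item Your Lie-algebra alternative is only heuristic. The operator $\ad(Y)$ does not preserve $\fl$, and nothing shows that a chamber containing $Y=p_\fq(X)$ is compatible with the one containing $X$.
\end{itemize}

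The missing idea is to tie $\fa_L$ to $\fa_{\fq\cap\fs}$. The paper chooses a $\sigma$-stable maximal abelian $\fa\subset\fs$ such that $\fa\cap\fq=\fa_{\fq\cap\fs}$ is maximal abelian in $\fq\cap\fs$ and $\fa\cap\fl=\fa_L$ is maximal abelian in $\fl\cap\fs$. It also chooses a positive system $\Phi^+\subset\Phi(\fg,\fa)$ compatible with both subspaces. Such a pair is obtained by picking $(\fa_1,\Phi_1^+)$ adapted to $\fq\cap\fs$ and $(\fa_2,\Phi_2^+)$ adapted to $\fl\cap\fs$ separately, and then using $K=K_LK_H$ to move one onto the other. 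From this:
\begin{itemize}
\item $A_LN_L\subset P\subset P_{\sigma\theta}$ directly, with no fixed-point theorem.
\item Compatibility of the Iwasawa decompositions of $G$ and $L$ gives $P_{\sigma\theta}\cap L=Z_{K_L}(\fa_{\fq\cap\fs})A_LN_L$.
\item $Z_{K_L}(\fa_{\fq\cap\fs})\subset M_L$ holds because the $K_L$-equivariant orthogonal projection $\fs\to\fl\cap\fs$ maps $\fa_{\fq\cap\fs}$ onto $\fa_L$, as in step (iv) of the proof of Prop.~\ref{ranksymm}.
\end{itemize}
Sphericity is needed only for the fibration.

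Your route can be repaired along the same lines. Any $\fa_L$ with $A_L\subset P_{\sigma\theta}$ lies in $\mathfrak{p}_{\sigma\theta}\cap\theta(\mathfrak{p}_{\sigma\theta})=\fz_\fg(\fa_{\fq\cap\fs})$. Hence $\fa_L+\fa_{\fq\cap\fs}$ is abelian. The same projection argument applied to this abelian space, together with $\dim\fa_L=\dim\fa_{\fq\cap\fs}$, then gives $C\subset Z_{K_L}(\fa_L)=M_L$.
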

\begin{proof} The corollary is a direct consequence of Lemma~\ref{sigmatheta} and the following

{\it Claim: Let $(G,H,L)$ be a properly transitive triple with $L$ stable under a Cartan involution $\theta$ commuting with $\sigma$. Then there exist a minimal $\sigma\theta$-stable parabolic $P_{\sigma\gt}$ of $G$ and a minimal parabolic $P_L=M_LA_LN_L$ of $L$ such that $P_{\sigma\gt}\cap L= Z_{K_L}({\mathfrak a}_{{\mathfrak q}\cap{\mathfrak s}})A_LN_L$ and $Z_{K_L}({\mathfrak a}_{{\mathfrak q}\cap{\mathfrak s}})\subset M_L$.}\\
We will prove the claim by a refinement of the arguments of the proof of Prop.~\ref{ranksymm}. Let $\fa\subset\fs$ be an arbitrary maximal abelian subspace. We say that a system of positive roots $\Phi^+\subset\Phi(\fg,\fa)$ and a subspace $\fa_0\subset\fa$ are compatible if the following implication holds:
$$ \alpha,\beta\in\Phi^+, \alpha_{|\fa_0}=-\beta_{|\fa_0}\ \Rightarrow\  \alpha_{|\fa_0}=\beta_{|\fa_0}=0\ .$$
Note that for every $\fa_0\subset\fa$ there exists a compatible $\Phi^+$ (consider a lexicographic order taking $\fa_0$ first). Now let $\fb_\fq\subset \fq\cap\fs$ be maximal abelian.
We extend $\fb_\fq$ to a $\sigma$-stable maximal abelian subspace $\fa_1\subset\fs$ and choose a system of positive roots $\Phi_1^+\subset\Phi(\fg,\fa_1)$ compatible with $\fb_\fq$.
Similarly, we extend a maximal abelian subspace $\fb_L\subset\fl\cap\fs$ to a maximal abelian $\fa_2\subset \fs$ and choose a system of positive roots $\Phi_2^+\subset\Phi(\fg,\fa_2)$
compatible with $\fb_L$. There exists a $k\in K$ conjugating the pair $(\fa_1,\Phi_1^+)$ to $(\fa_2,\Phi_2^+)$. We have $k=lh$ for some $l\in K_L$, $h\in K_H$.
Now we consider the pair $(\fa,\Phi^+):=\Ad(h)(\fa_1,\Phi_1^+)=\Ad(l^{-1})(\fa_2,\Phi_2^+)$ and the subspaces $\fa_{\fq\cap\fs}:=\fa\cap\fq=\Ad(h)\fb_\fq$, 
$\fa_L:=\fa\cap\fl=\Ad(l^{-1})\fb_L$. Then $\fa$ is a $\sigma$-stable maximal abelian subspace of $\fs$, and $\Phi^+$ is compatible with $\fa_{\fq\cap\fs}$ {\it and} $\fa_L$.

The pair $(\fa,\Phi^+)$ defines a minimal parabolic $P\subset G$ and, together with $\theta$, an Iwasawa decomposition $G=KAN$. Since $\Phi^+$ is compatible with $\fa_L$
the set 
$$ \{\alpha_{|\fa_L}\mid \alpha\in \Phi^+\}\cap \Phi(\fl, \fa_L)$$
is a positive root system for $\fl$. It defines a minimal parabolic $P_L=M_LA_LN_L$ of $L$ satisfying $A_LN_L\subset P$. The resulting Iwasawa decomposition for $L$ is compatible
with the one for $G$: Let $g=kan\in G$. Then
$$  g\in L\ \Leftrightarrow k\in K_L, a\in A_L, n\in N_L\ .$$
Now let $\displaystyle N_{\sigma\gt}:=\exp\left ( \bigoplus_{\{\alpha\in\Phi^+\mid \alpha_{|\fa_{\fq\cap\fs}\ne 0}\}} \fg_\alpha\right )$. Since $\Phi^+$ is compatible with $\fa_{\fq\cap\fs}$ this is really a subgroup of $G$, and
$$ P_{\sigma\gt}:=Z_G(\fa_{\fq\cap\fs})\cdot N_{\sigma\gt}$$
is a minimal $\sigma\theta$-stable parabolic of $G$ containing $P$.

Now let $g=kan\in P_{\sigma\gt}\cap L$. Then $an\in A_LN_L$. Since $A_LN_L\subset P\subset P_{\sigma\gt}$ we conclude that $k\in P_{\sigma\gt}\cap L\cap K= Z_{K_L}(\fa_{\fq\cap\fs})$. In order to finish the proof of the claim it remains to see that $Z_{K_L}({\mathfrak a}_{{\mathfrak q}\cap{\mathfrak s}})\subset M_L(=Z_{K_L}({\mathfrak a}_L))$.
But this follows from the fact that the ($K_L$-equivariant) orthogonal projection $\fs\rightarrow\fl\cap\fs$ maps $\fa_{\fq\cap\fs}$ onto $\fa_L$, see the proof of Prop.~\ref{ranksymm}.
\end{proof}
{\it Proof of Proposition~\ref{hering}.} Let us first assume that $W_\rho$ is $H$-spherical. Delorme's embedding theorem (see Section 4 in \cite{Del85}) provides a non-zero 
$(\fg,K)$-intertwining operator
\beu
\Phi_1: W_{\rho,K}\longrightarrow\big(\text{Ind}_{P_{\sigma\gt}}^{G}\delta_1\big)_{K}
\end{equation*}
to a principal series representation induced from a finite dimensional representation $(\delta_1,E_{\delta_1})$ of $P_{\sigma\gt}$ (of a very special form). By Casselman-Wallach
theory (see e.g. \cite{Wa92}, Thm.~11.6.7)  it extends to a $G$-intertwining operator  between smooth and distribution vectors, respectively,
\beu
\Phi_1: W_{\rho,\pm\infty}\longrightarrow\big(\text{Ind}_{P_{\sigma\gt}}^{G}\delta_1\big)_{\pm\infty}
\end{equation*}
with closed range that  is a topological direct summand of $\big(\text{Ind}_{P_{\sigma\gt}}^{G}\delta_1\big)_{\pm\infty}$.
This implies the existence of a finite dimensional $P_{\sigma\gt}$-representation  $(\delta,E_{\delta})$ and a topological  embedding (i.e. a topological isomorphism onto its image)
\beu
\Phi: W_{\rho,\pm\infty}\hookrightarrow\big(\text{Ind}_{P_{\sigma\gt}}^{G}\delta\big)_{\pm\infty}\cong\big(\text{Ind}_{K\cap P_{\sigma\gt}}^{K}\delta\big)_{\pm\infty}\ ,
\end{equation*}
which is at least $K$-equivariant. Indeed, if $W_\rho$ is irreducible, then we can take $\Phi=\Phi_1$. In general, we proceed by induction on the length of $W_\rho$. As a subrepresentation
of $W_{\rho,\pm\infty}$ the kernel $W_0$ of $\Phi_1$ is also $H$-spherical. By induction hypothesis $W_0$ embeds topologically and $K$-equivariantly into an induced representation $\big(\text{Ind}_{P_{\sigma\gt}}^{G}\delta_2\big)_{\pm\infty}$. There exists a $K$-invariant closed subspace $W_1$ of $W_{\rho,\pm\infty}$ such that $W_{\rho,\pm\infty}=W_0\oplus W_1$ (topologically).
It follows that $W_{\rho,\pm\infty}$ embeds into $\big(\text{Ind}_{P_{\sigma\gt}}^{G}(\delta_2\oplus\delta_1)\big)_{\pm\infty}$. The existence of $W_1$, say for smooth vectors,  is established as follows: Since $W_{\rho,\infty}$
only depends on its underlying Harish-Chandra module we can assume that $W_\rho$ is a Hilbert space on which $K$ acts unitarily. Moreover, again by Casselman-Wallach theory,
the $G$-smooth and $K$-smooth vectors of $W_\rho$ coincide. Thus we can take $W_1$ as the $K$-smooth vectors in the orthogonal complement of $W_0$.

By the above discussion and the coincidence of $K$- and $G$-smooth vectors we see that a vector $w\in W_\rho\subset W_{\rho,-\infty}$ is smooth if and only if
$\Phi(w)\in \big(\text{Ind}_{K\cap P_{\sigma\gt}}^{K}\delta\big)_{\infty}$. By the Sobolev embedding theorem and the compactness of $K/K\cap P_{\sigma\gt}$ the latter space coincides with the space smooth sections
of the corresponding $K$-homogeneous vector bundle over $K/K\cap P_{\sigma\gt}\cong G/ P_{\sigma\gt}$ and therefore also with the space of smooth vectors for any subgroup $C\subset K$ acting transitively on $K/K\cap P_{\sigma\gt}$, see also \cite{BW80} (Chapter III, Theorem 7.5). Now Lemma~\ref{sigmatheta} implies the part of Assertion (iii) concerning smooth vectors.
Analogously, we find that $w\in W_{\rho,-\infty}$ is a distribution vector for the $K_L$-action if $\Phi(w)$ is a distribution section of the above vector bundle which holds for all
$w\in W_{\rho,-\infty}$. This finishes the proof of Assertion (iii).

Again by Lemma \ref{sigmatheta}, 
we get the following isomorphims of $K_L$-modules:
$$
\text{Ind}_{K\cap P_{\sigma\gt}}^{K}\delta
\simeq\text{Ind}_{K_L\cap P_{\sigma\gt}}^{K_L}\delta\ .
$$
In particular, if $V_\gamma$ is an irreducible representation of $K_L$, we obtain by the above discussion and Frobenius reciprocity that:
\beu
\text{Hom}_{K_L}(V_\gamma,W_\rho)\hookrightarrow\text{Hom}_{K_L}(V_\gamma,\text{Ind}_{K_L\cap P_{\sigma\gt}}^{K_L}\delta)\simeq\text{Hom}_{K_L\cap P_{\sigma\gt}}(V_\gamma,E_\delta)\ .
\end{equation*}
The latter space is finite dimensional. The first part of Assertion (i) follows. It says that $W_{\rho,K}$ and all its $(\fl, K_L)$-submodules are admissible $(\fl, K_L)$-modules. In particular, all
finitely generated $(\fl, K_L)$-submodules of $W_{\rho,K}$ are Harish-Chandra modules for $L$. Since the dimension of $W_{\rho,K}$ is at most countable we conclude that also Assertion~(ii) holds. Concerning the second part of  Assertion (i) we first remark that the $K_L$-isotypic components of $W_{\rho, K}=W_K$ are dense in the ones of $W$.
By the already established finite dimensionality they coincide. Thus the  $K_L$-isotypic components of $W$ are contained in $W_K$. Taking the algebraic direct sum of these components
we find $W_{K_L}\subset W_K$. Equality follows.

We now show the non-trivial direction of (iv). Finite length of $W_{\rho,K}$ together with Cor.~\ref{inv3} and the equivalence (iii)$\Leftrightarrow$(iv) in Prop.~\ref{translem} tells us that
$(W_{\rho,K})^{L\cap H}$ is finite dimensional. We now apply Cor.~\ref{frank}.
This finishes the proof of the proposition for $H$-spherical representations $W_\rho$.

Now let $W_\rho$ be only weakly spherical. We choose an admissible $H$-spherical representation $W_{\rho_1}$ that surjects onto $W_\rho$. It is now easy to see
that $W_\rho$ inherits Properties (i) and (ii) from $W_{\rho_1}$. To establish Assertion (iii) for $W_\rho$  we use the coincidence of $G$- and $K$-smooth vectors and that $W_{\rho,\pm\infty}$ can be realized as a closed $K$-submodule of $W_{\rho_1,\pm\infty}$, see the discussion above.
$\hfill\Box$


We also need some information on non-spherical triples $(G,H,L)$ of Type~I. It is easy to  see that Assertions (i) and (iii) of Proposition~\ref{hering} are no longer true, in general.

\begin{cor}\label{makrele}
Let $(G,H,L)$ be a triple of Type~I, and let $W_\rho$ be a weakly $H$-spherical admissible representation of finite length on a reflexive Banach space. Then
there is an increasing filtration $(V_i)_{i\in\N_0}$  of $W_{\rho,K}$ by Harish-Chandra modules for $L$
such that $V_{i+1}/V_i$ is irreducible or zero. If $W_\rho$ is spherical and $W_{\rho,K}$ has finite length as $(\fl,K_L)$-module, then $W_\rho$ is finite dimensional.
\end{cor}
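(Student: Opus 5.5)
The plan is to reduce to the spherical case handled in Prop.~\ref{hering}. By Def.~\ref{types}, the triple $(G,H,L_{max})$ is spherical, and $L_{min}\subset L\subset L_{max}$ with $L_{max}=L_{min}\cdot C$ for a compact connected group $C$ centralizing $L_{min}$ (after choosing $\theta$ so that all of these are $\theta$-stable). Applying Prop.~\ref{hering}(ii) to $(G,H,L_{max})$ gives an increasing filtration of $W_{\rho,K}$ by Harish-Chandra modules for $L_{max}$ with irreducible or zero successive quotients, and each irreducible $(\fl_{max},K_{L_{max}})$-module occurs with finite multiplicity. Note that $W_{\rho,K}=W_{\rho,K_{L_{max}}}$ by Prop.~\ref{hering}(i), so these are genuine $(\fl_{max},K_{L_{max}})$-submodules of $W_{\rho,K}$.

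Next I restrict to $L$. An irreducible Harish-Chandra module of $L_{max}=L\cdot C'$, where $C'$ is a compact central piece with $L_{max}/L$ compact, restricts to $L$ as a finite direct sum of irreducible Harish-Chandra modules. Concretely, irreducible modules of $L_{max}$ have the form $V_{\pi_1}\otimes F_\tau$ as in the proof of Prop.~\ref{staun}, and $F_\tau$ is finite dimensional. Also, $K_L$-finite and $K_{L_{max}}$-finite vectors coincide because $K_{L_{max}}=K_L\cdot C'$ with $C'$ compact and commuting. Each step $V_i\subset V_{i+1}$ can therefore be refined into finitely many steps with irreducible $(\fl,K_L)$-quotients, and the refined filtration is still indexed by $\N_0$. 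There is one point to check here: the successive $L_{max}$-submodules must be admissible $(\fl,K_L)$-modules and finitely generated, so that they are Harish-Chandra modules for $L$. This follows from the finite-length refinement.

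For the last assertion, suppose $W_\rho$ is spherical and $W_{\rho,K}$ has finite length as an $(\fl,K_L)$-module. I want to show that it then has finite length as an $(\fl_{max},K_{L_{max}})$-module. Every $L_{max}$-submodule is in particular an $L$-submodule, so a strictly increasing chain of $L_{max}$-submodules is a strictly increasing chain of $L$-submodules, and its length is bounded by the $L$-length. Prop.~\ref{hering}(iv), applied to the spherical triple $(G,H,L_{max})$, then forces $W_\rho$ to be finite dimensional. The step I expect to need the most care is the compatibility of $K$-finiteness between $L$ and $L_{max}$, in the case where $L$ is not $\theta$-stable in the chosen normalization. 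I would first fix $\theta$ stabilizing $L_{max}$, $L$ and $H$ simultaneously, as in Section~\ref{triples}, so that $K_L\subset K_{L_{max}}$.
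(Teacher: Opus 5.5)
Your proposal is correct and is essentially the paper's proof. Like the paper, you apply Prop.~\ref{hering} to the spherical triple $(G,H,L_{max})$ and refine the resulting filtration, using that an irreducible Harish-Chandra module $V_{\pi_1}\otimes F_\tau$ of $L_{max}=L_{min}\cdot C_{max}$ splits into finitely many irreducible Harish-Chandra modules for $L=L_{min}\cdot C$. Note that $C_{max}$ only centralizes $L_{min}$ and need not be central or commute with $K_L$; your argument never needs more, since all vectors of $W_{\rho,K}$ are $K$-finite anyway. For the second assertion, you spell out a step the paper leaves implicit: $(\fl_{max},K_{L_{max}})$-submodules are $(\fl,K_L)$-submodules, so finite $L$-length forces finite $L_{max}$-length, and Prop.~\ref{hering}(iv) then applies.
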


\begin{proof} We apply Prop.~\ref{hering} to the spherical triple $(G,H,L_{max})$ and obtain an increasing filtration $(V^+_j)_{j\in\N_0}$  of $W_{\rho,K}$ by Harish-Chandra modules for $L_{max}$
such that $V^+_{j+1}/V^+_j$ is irreducible or zero. Since $L_{max}=L_{min}\cdot C_{max}$ and $L=L_{min}\cdot C$ for some closed subgroups $C\subset C_{max}\subset Z_K(L_{min})$ every
irreducible Harish-Chandra module for $L_{max}$ is a finite direct sum of irreducible Harish-Chandra modules for $L$. Thus we can refine the filtration $(V_j^+)$ in order to obtain the
desired filtration $(V_i)$.
\end{proof}

Let us discuss {\em multiplicities} for branching problems in a quite general context. Let $L\subset G$ be strongly reductive.
Let $V_\pi$ be an irreducible admissible $L$-representation, and let $W_\rho$ be an admissible $G$-representation of finite length (both on reflexive Banach spaces). We want to describe the multiplicity of ${V_\pi}$ as a subrepresentation of $W_\rho$ (which might be smaller than the multiplicity as a subquotient) by the dimension of certain spaces of intertwining operators.
In the situation of Prop.~\ref{hering} and Cor.~\ref{makrele} the natural space is 
$$ \Hom_{(\fl,K_L)}(V_{\pi,K_L},W_{\rho,K})\ .$$
We would like to relate this space to spaces of intertwining operators between certain {\em group} representations {\em canonically} attached to the Harish-Chandra modules $V_{\pi,K_L}$ and $W_{\rho,K}$, namely
$$  \Hom_{L}(V_{\pi,\infty},W_{\rho,\infty}),\  \Hom_{L}(V_{\pi,-\infty},W_{\rho,-\infty}), $$
and, if $V_\pi$ and $W_\rho$ are unitary, also
$$ \Hom_{L}(V_{\pi},W_{\rho})\ .$$
These spaces carry the topology of uniform convergence on bounded subsets. For the latter space this topology comes from a Hilbert space structure given by 
\be\label{cezanne} \langle A, B\rangle\cdot\id_{V_\pi}\defn  B^*A,\quad A,B\in  \Hom_{L}(V_{\pi},W_{\rho})\ .
\end{equation}
Moreover, in the unitary case we have a direct integral decomposition (see e.g. \cite{Di} or \cite{Wa92}, Ch.~14)
\begin{equation}\label{lachs}  W_\rho\cong \int^\oplus_{\widehat L} M_\rho(\pi)\hat\otimes V_\pi\:  d\mu_\rho(\pi)\ ,\end{equation}
where the multiplicity space $M_\rho(\pi)$ is a Hilbert space uniquely determined up to isomorphism for almost all  $\pi\in \widehat L$ (w.r.t. $\mu_\rho$).

\begin{lem}\label{sardine}
Under the general assumptions of the previous paragraph we have canonical inclusions
$$
 \Hom_{(\fl,K_L)}(V_{\pi,K_L},W_{\rho,K})\subset  \Hom_{L}(V_{\pi,\infty},W_{\rho,\infty})\subset\  \Hom_{L}(V_{\pi,-\infty},W_{\rho,-\infty}).
$$
Moreover, if $V_\pi$ and $W_\rho$ are unitary, we have:
$$ \Hom_{L}(V_{\pi,\infty},W_{\rho,\infty})\subset\Hom_{L}(V_{\pi},W_{\rho})\stackrel{(*)}{\subset}M_\rho(\pi)\subset  \Hom_{L}(V_{\pi,-\infty},W_{\rho,-\infty})\ .$$
If $\Hom_{L}(V_{\pi},W_{\rho})\ne\{0\}$, then $(*)$ is an equality. Under the assumptions of Cor.~\ref{makrele} all inclusions have dense images, while they are equalities
of finite dimensional vector spaces under the assumptions of Prop.~\ref{hering}.
\end{lem}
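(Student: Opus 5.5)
The argument splits into three layers: the purely algebraic chain, the unitary chain, and the finiteness/density statements coming from Prop.~\ref{hering} and Cor.~\ref{makrele}.

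\textbf{The general chain.} Let $T\in\Hom_{(\fl,K_L)}(V_{\pi,K_L},W_{\rho,K})$. Its image is a finitely generated admissible $(\fl,K_L)$-submodule of $W_{\rho,K}$; by irreducibility it is either zero or isomorphic to $V_{\pi,K_L}$. Casselman--Wallach theory for $L$ then gives a canonical continuous extension of $T$ to $V_{\pi,\infty}\to W_{\rho,\infty_L}$. In general $W_{\rho,\infty}\subset W_{\rho,\infty_L}$, so I have to check that the image actually consists of $G$-smooth vectors. I would verify this via the moderate-growth characterization: $K$-finite vectors of $W_\rho$ are $G$-smooth, and the closure in $W_{\rho,\infty}$ of the $L$-module they generate is the smooth Fr\'echet globalization of a Harish-Chandra module for $L$. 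By uniqueness of that globalization, it coincides with the image of the extension, and the extension is continuous for the $G$-topology.

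For the second inclusion, $\Phi\in\Hom_L(V_{\pi,\infty},W_{\rho,\infty})$ has a transpose $\Phi^t:W_{\tilde\rho,-\infty}\to V_{\tilde\pi,-\infty}$. Restricting $\Phi^t$ to $W_{\tilde\rho,\infty}\subset W_{\tilde\rho,-\infty}$ and transposing again gives $V_{\pi,-\infty}\to W_{\rho,-\infty}$; reflexivity (Lemma~\ref{stock}~(iv)) makes this well defined. Injectivity of both inclusions follows from density of $K_L$-finite vectors in $V_{\pi,\pm\infty}$.

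\textbf{The unitary case.} Smooth intertwiners $V_{\pi,\infty}\to W_{\rho,\infty}$ extend to bounded operators $V_\pi\to W_\rho$ because of Schur-type arguments: for $\Phi$ smooth, $\Phi^*\Phi$ is an $L$-invariant form on $V_{\pi,\infty}$ and hence, by irreducibility, a multiple of the inner product. This shows $\Phi$ is bounded, which gives the inclusion into $\Hom_L(V_\pi,W_\rho)$.

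A nonzero element $A$ of $\Hom_L(V_\pi,W_\rho)$ forces $\pi$ to be an atom of $\mu_\rho$. The $\pi$-isotypic discrete part is then $M_\rho(\pi)\hat\otimes V_\pi$, and $A\mapsto A(v)$ for a fixed unit vector $v$ identifies $\Hom_L(V_\pi,W_\rho)$, with the product (\ref{cezanne}), with $M_\rho(\pi)$. This proves equality in $(*)$. For the remaining cases I would define $M_\rho(\pi)$ through the $\mu_\rho$-almost everywhere defined maps from the Gelfand--Kostyuchenko theorem (Remark~\ref{bernstein}). Each $m\in M_\rho(\pi)$ then gives $V_{\pi,-\infty}\to W_{\rho,-\infty}$ as the adjoint of the pointwise projection $W_{\rho,\infty}\to M_\rho(\pi)\otimes V_{\pi,\infty}$. \textbf{I expect this step to be the main obstacle}, since the measurable-field definition only fixes $M_\rho(\pi)$ up to null sets, and the inclusion has to be read as holding for $\mu_\rho$-almost every $\pi$.

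\textbf{Finiteness and density.} Under Prop.~\ref{hering}~(i),(iii), $W_{\rho,\pm\infty}$ coincides with $W_{\rho,\pm\infty_{K_L}}$. An $L$-intertwiner $\Phi:V_{\pi,-\infty}\to W_{\rho,-\infty}$ maps $K_L$-isotypic components to $K_L$-isotypic components, and the latter are finite-dimensional and lie in $W_{\rho,K}$. Hence $\Phi$ restricts to an element of $\Hom_{(\fl,K_L)}(V_{\pi,K_L},W_{\rho,K})$, so all spaces coincide. They are finite-dimensional by the finite-multiplicity statement in Prop.~\ref{hering}~(ii).

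Under Cor.~\ref{makrele} I would pass to $L_{max}$, where the equalities just proved hold. The $C$-action (with $L_{max}=L\cdot C$, $C$ compact) decomposes each space of $L$-intertwiners as a Hilbert direct sum of $C$-isotypic parts. Each part is finite-dimensional and comes from $\Hom_{(\fl,K_L)}$, and the algebraic sum of these parts is dense by Lemma~\ref{deko} applied to $C$. This gives density of all the inclusions in the Type~I case.
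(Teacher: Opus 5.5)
The first inclusion, the unitary chain and the equalities under Prop.~\ref{hering} follow the paper's lines. Your argument for the second inclusion $\Hom_L(V_{\pi,\infty},W_{\rho,\infty})\subset\Hom_L(V_{\pi,-\infty},W_{\rho,-\infty})$, however, has a genuine gap.

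Put $R:=\Phi^t|_{W_{\tilde\rho,\infty}}$. It takes values in $V_{\tilde\pi,-\infty}$. Hence $R^t$ is defined on $(V_{\tilde\pi,-\infty})'=(V_{\pi,\infty})''=V_{\pi,\infty}$, which is what Lemma~\ref{stock}(iv) gives you, and not on $V_{\pi,-\infty}=(V_{\tilde\pi,\infty})'$. In fact $R^t$ is just $\Phi$ followed by $W_{\rho,\infty}\hookrightarrow W_{\rho,-\infty}$, so nothing has been extended. What you actually need is $\Phi^t(W_{\tilde\rho,\infty})\subset V_{\tilde\pi,\infty}$, and equivariance plus reflexivity do not give this.

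The paper supplies it by a finite-order argument:
\begin{itemize}
\item Since $\Phi:V_{\pi,\infty}\to W_\rho$ is continuous, it is bounded by a $C^N$-norm. So it extends to a bounded intertwiner $V_{\pi,N}\to W_\rho$ of Banach $L$-representations, where $V_{\pi,N}$ is the space of $C^N$-vectors.
\item Bounded intertwiners between Banach representations pass to distribution vectors: their transposes are again bounded intertwiners and hence preserve smooth vectors. This gives a map $(V_{\pi,N})_{-\infty}\to W_{\rho,-\infty_L}$.
\item One concludes using $(V_{\pi,N})_{-\infty}=V_{\pi,-\infty}$ and $W_{\rho,-\infty_L}\subset W_{\rho,-\infty}$.
\end{itemize}
In your transposed language: $\Phi^t$ maps $W_{\tilde\rho}$ boundedly into the Banach representation $(V_{\pi,N})'$. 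It therefore maps $W_{\tilde\rho,\infty}\subset W_{\tilde\rho,\infty_L}$ into the $L$-smooth vectors of $(V_{\pi,N})'$, and these are exactly $V_{\tilde\pi,\infty}$. Your equality argument under Prop.~\ref{hering} also depends on this step. Surjectivity of the restriction $\Hom_L(V_{\pi,-\infty},W_{\rho,-\infty})\to\Hom_{(\fl,K_L)}(V_{\pi,K_L},W_{\rho,K})$ requires extending $(\fl,K_L)$-maps all the way to distribution vectors.

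The density argument under Cor.~\ref{makrele} also needs repair. Letting $C$ with $L_{max}=L\cdot C$ act on spaces of $L$-intertwiners requires $C$ to centralize $L$. This fails for intermediate $L$ when the compact part is non-abelian. Examples are $L=Sp(1,n)\cdot U(1)$ with $U(1)$ a torus in the compact factor of $L_{max}=Sp(1,n)\cdot Sp(1)$ (Case~5), or $L=G_1\times T$ with $T$ a non-central torus of $K_1$ (Case~11).

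The paper first reduces to $L=L_{min}$ via $\Hom_L(V_{\pi_1}\otimes F_\tau,Z)\cong\Hom_C(F_\tau,\Hom_{L_{min}}(V_{\pi_1},Z))$. Only then does it let $C_2$ act on the $L_{min}$-multiplicity spaces, where $L_{max}=L_{min}\cdot C_2$ and $C_2\subset Z_K(L_{min})$. The $C_2$-isotypic parts are $L_{max}$-multiplicity spaces, which coincide by sphericity of $(G,H,L_{max})$. So the $C_2$-finite parts of all the spaces coincide, and density follows.

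These Hom-spaces are not Hilbert spaces, so the right tool is density of $C_2$-finite vectors for a continuous compact-group action on a locally convex space, not Lemma~\ref{deko}. Finally, the null-set issue you flag for $M_\rho(\pi)\subset\Hom_L(V_{\pi,-\infty},W_{\rho,-\infty})$ is not a real obstacle. The paper handles that inclusion exactly as you propose, via Gelfand--Kostyuchenko and adjoints.
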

\begin{proof}
Let $\Phi\in   \Hom_{(\fl,K_L)}(V_{\pi,K_L},W_{\rho,K})$. Let $W_0$ be the closure of the image of $\Phi$ in $W_{\rho,\infty}$. Using $L$-analyticity we see that $W_0$ is $L$-invariant.
Since  $W_{\rho,\infty}$ is a smooth $G$-representation of moderate growth in the sense of Casselman-Wallach (\cite{Wa92}, Ch. 11) we find that $W_0$ is a smooth $L$-representation of moderate growth and finite length. Casselman-Wallach globalization theory implies that $\Phi$ extends to an element $\tilde\Phi\in   \Hom_{L}(V_{\pi,\infty},W_0)\subset
 \Hom_{L}(V_{\pi,\infty},W_{\rho,\infty})$. This provides the first inclusion. Concerning the second we consider the space of $V_{\pi,N}$ of $C^N$-vectors in $V_\pi$ with its natural Banach space topology. It is again a continuous $L$-representation on a reflexive Banach space. Let $\Phi\in\Hom_{L}(V_{\pi,\infty},W_{\rho,\infty})$. Continuity
of $\Phi$ implies that it extends to an element $\tilde \Phi\in  \Hom_{L}(V_{\pi,N},W_{\rho})$ for $N$ sufficiently large. Taking $L$-distribution vectors we obtain a further extension
$\overline \Phi\in \Hom_{L}((V_{\pi,N})_{-\infty},W_{\rho,-\infty_L})$. We have $(V_{\pi,N})_{-\infty}=V_{\pi,-\infty}$ (which follows
from Casselman-Wallach but can be shown also in an elementary way)  and $W_{\rho,-\infty_L}\subset W_{\rho,-\infty}$. Thus we can view $\overline\Phi$ as an element of $\Hom_{L}(V_{\pi,-\infty},W_{\rho,-\infty})$.

Moreover, going over to $K_L$-finite elements, we get a further inclusion $\Hom_{L}(V_{\pi,-\infty},W_{\rho,-\infty})\subset  \Hom_{(\fl,K_L)}(V_{\pi,K_L},(W_{\rho,-\infty})_{K_L})$.
Under the assumptions of Prop.~\ref{hering} the latter space is equal to $\Hom_{(\fl,K_L)}(V_{\pi,K_L},W_{\rho,K})$, see Prop.~\ref{hering} (i). It follows that all inclusions
are equalities (of finite dimensional vector spaces) in this case. We assume now that the assumptions of Cor.~\ref{makrele} hold. We consider the inclusions
$$L_1:=L_{min}\subset L\subset L_{max}=:L_2\ .$$
We have $L=L_1 \cdot C$, $L_2=L_1\cdot C_2$ for some $C\subset C_2\subset Z_{K}(L_1)$. Irreducible admissible representations of $L$ are of the form
$V_\pi=V_{\pi_1}\otimes F_\tau$, cf. the proof of Prop.~\ref{staun}. We obtain
$$ \Hom_{L}(V_{\pi,\infty},W_{\rho,\infty})\cong \Hom_C(F_\tau, \Hom_{L_1}(V_{\pi_1,\infty},W_{\rho,\infty}))\ .$$
The analogous isomorphisms hold for the other multiplicity spaces occuring in the lemma. This shows that it suffices to establish density of the images of the inclusions for $L=L_1$.
Since $(G,H,L_2)$ is spherical we already know that all multiplicity spaces for $L_2$ coincide.
We consider the multiplicity spaces for $L_1$ as $C_2$-modules. The above isomorphisms for $L=L_2$ and $\tau$ running over all of $\hat C_2$ now imply that the $C_2$-finite elements of all multiplicity spaces for $L_1$ coincide.
Density follows.

Now let  $V_\pi$ and $W_\rho$ be unitary. By uniqueness of an invariant Hermitian form on $V_{\pi,\infty}$ any element in $\Hom_{L}(V_{\pi,\infty},W_{\rho,\infty})$
extends continuously to an element in $\Hom_{L}(V_{\pi},W_{\rho})$.  The space $\Hom_{L}(V_{\pi},W_{\rho})$ describes the multiplicity of $V_\pi$ as a subrepresentation
of $W_\rho$. In particular, if it is non-zero, then $\mu_\rho(\{\pi\})\ne 0$, and $M_\rho(\pi)$ has to be equal to this space. It remains to discuss the last
inclusion $M_\rho(\pi)\subset\Hom_L(V_{\pi,-\infty},W_{\rho,-\infty})$. By the theorem of Gelfand-Kostyuchenko already discussed in Remark~\ref{bernstein} the isomorphism
(\ref{lachs}) can be pointwise defined on $W_{\rho,\infty}$. We obtain an element ${\Cal F}_\pi\in \Hom_L(W_{\rho,\infty},  M_\rho(\pi)\hat\otimes V_\pi)$ with dense range. For $m\in M_\rho(\pi)$
we define an element $i(m)\in \Hom_L(W_{\rho,\infty},V_\pi)= \Hom_L(W_{\rho,\infty},V_{\pi,\infty})$ by 
$$ \langle i(m)w,v\rangle_{V_\pi}:= \langle {\Cal F}_\pi (w), m\otimes v\rangle_{M_\rho(\pi)\hat\otimes V_\pi}\ , w\in W_{\rho,\infty}, v\in V_\pi .$$
Let $j(m)\in  \Hom_{L}(V_{\pi,-\infty},W_{\rho,-\infty})$ be the adjoint of $i(m)\in \Hom_L(W_{\rho,\infty},V_{\pi,\infty})$. Then 
$j: M_\rho(\pi)\rightarrow \Hom_{L}(V_{\pi,-\infty},W_{\rho,-\infty})$ is the desired inclusion.
\end{proof}

We return to the discussion of Type~I triples.

\begin{Def}\label{spass}
Let $(G,H,L)$ be a triple of Type~I. Let $V_\pi$ be an irreducible admissible representation of $L$, and let $W_\rho$ be a weakly $H$-spherical admissible representation of $G$ of finite length (both on reflexive Banach spaces). By $m_\rho(\pi)\in \{0,\infty\}\cup \N$ we denote the common dimension of the multiplicity spaces appearing in 
Lemma~\ref{sardine}. 
We say that $\rho$ is $\pi$-minimal if
\begin{enumerate}
\item[(i)] it is $H$-spherical,
\item[(ii)] $m_\rho(\pi)\ne 0$, and
\item[(iii)] for every subrepresentation on a proper closed subspace $W_{\rho'}\subset W_\rho$ we have $m_{\rho'}(\pi)=0$.
\end{enumerate}
We say that $W_\rho$ is $L$-minimal, if it is $\pi$-minimal for some $V_\pi$ as above.
\end{Def}

Note that every irreducible admissible $H$-spherical representation is $\pi$-minimal for all $\pi$ with $m_\rho(\pi)\ne 0$. In particular, it is $L$-minimal. We will see later (Prop.~\ref{kopp}) that, for some triples
$(G,H,L)$, not every $L$-minimal representation is irreducible. However, $L$-minimal representations are always indecomposable, have an infinitesimal character as well as a unique irreducible quotient. 

\begin{thm}\label{aal}
Let $(G,H,L)$ be a triple of Type~I.
\begin{enumerate}
\item[(i)] Let $W_\rho$ be an $L$-minimal $H$-spherical representation of $G$. Then $\dim (W_{\tilde\rho,-\infty})^H=1$.
\item[(ii)] Let $W_\rho$ be an admissible $H$-spherical $G$-representation of finite length, and let $V_\pi$ be an irreducible admissible $L$-representation with $m_\rho(\pi)\ne 0$.
Then $m_\rho(\pi)=\dim V_\pi^{L\cap H}$.
\item[(iii)]For every irreducible admissible $L$-representation $V_\pi$ with $V_\pi^{L\cap H}\ne\{0\}$ there is a unique $\pi$-minimal $H$-spherical $G$-representation $W_\rho$.
Here uniqueness is up to equivalence of the $G$-representations on smooth vectors (or equivalently of the underlying $(\fg,K)$-modules).
\end{enumerate}
\end{thm}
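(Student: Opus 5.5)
The plan is to reduce everything to a comparison map between multiplicity spaces and $L\cap H$-invariants, built from Frobenius reciprocity on $X=G/H\cong L/L\cap H$. Let $W_\rho$ be $H$-spherical, and fix an $H$-invariant cyclic vector $\tilde w\in(W_{\tilde\rho,-\infty})^H$, so that matrix coefficients give a $G$-embedding $W_{\rho,\infty}\hookrightarrow C^\infty(X)$ by (\ref{plum}). Given $\Phi\in\Hom_L(V_{\pi,\infty},W_{\rho,\infty})$, compose with this embedding. This yields an element of $\Hom_L(V_{\pi,\infty},C^\infty(L/L\cap H))$, which by Frobenius reciprocity for $L$ is the same as $(V_{\tilde\pi,-\infty})^{L\cap H}=(V_{\tilde\pi,K_L})^{L\cap H}$; the last equality holds because $L\cap H$ is compact and, for Type~I, these spaces are finite dimensional (Prop.~\ref{translem} together with the reduction to $L_{max}$ via (\ref{fidel})). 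Concretely, the map is $\Phi\mapsto \Phi^t\tilde w|_{L\cap H\text{-inv}}$, i.e.
$$\Hom_L(V_{\pi,\infty},W_{\rho,\infty})\otimes(W_{\tilde\rho,-\infty})^H\longrightarrow (V_{\tilde\pi,-\infty})^{L\cap H}.$$
I would first prove that this pairing is injective in the first variable once $\tilde w$ is cyclic. If $\Phi^t\tilde w=0$ (as an $L\cap H$-invariant functional), then the $L$-equivariant composite $V_{\pi,\infty}\to C^\infty(X)$ vanishes at the base point. By $L$-equivariance and transitivity of $L$ on $X$, it then vanishes identically, so $\Phi=0$. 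This gives $m_\rho(\pi)\le\dim V_\pi^{L\cap H}$ for any $H$-spherical $\rho$.

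For (ii), I need the reverse inequality whenever $m_\rho(\pi)\ne0$. Here I would use Prop.~\ref{staun}, which says that ${\bf D}(G/H)$ acts on $V_\pi^{L\cap H}$ by a single character $\chi_\pi$, together with the multiplicity-one results of Prop.~\ref{mufree}. In the cases where $\dim V_\pi^{L\cap H}=1$, nonvanishing of $m_\rho(\pi)$ already forces equality. In the exceptional Triple 5 with $L=L_{min}$, the space $V_\pi^{Sp(n)}$ is an irreducible $Sp(1)$-module. The image of the injection above is stable under $C_1\cong Sp(1)$, because $C_1$ normalizes $L\cap H$ and, as shown in the proof of Prop.~\ref{staun}, acts through the centralizing factor $C$. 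An irreducible module has no proper nonzero stable subspace, so the image is everything. For general $L$ between $L_{min}$ and $L_{max}$, I would reduce using the $C$-isotypic decomposition as in (\ref{fidel}) and Lemma~\ref{sardine}, and handle products via Prop.~\ref{productck}.

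For (i) and (iii), I would show that the pairing is a perfect pairing in the second variable when $\rho$ is $\pi$-minimal. Suppose $\tilde w_1,\tilde w_2\in(W_{\tilde\rho,-\infty})^H$. Take the $G$-subrepresentation of $W_\rho$ annihilated by a suitable combination of them. Minimality forces this subrepresentation to have $m=0$ unless it is all of $W_\rho$, and a dimension count against $\dim V_\pi^{L\cap H}$ forces $\dim(W_{\tilde\rho,-\infty})^H=1$. This dimension count is the step I expect to be the main obstacle, because $W_\rho$ need not be irreducible, and I have to argue via the cyclic vector and via the ${\bf D}(G/H)$-character $\chi_\pi$, which fixes the infinitesimal character.

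Existence in (iii) comes from Frobenius reciprocity run backwards. A nonzero $v\in(V_{\tilde\pi,-\infty})^{L\cap H}$ gives an $L$-map $V_{\pi,\infty}\to C^\infty(X)$, whose image lies in the $\chi_\pi$-eigenspace. I take the $G$-span of the image in $C^\infty(X)$; this is a finite-length $H$-spherical representation, using finite dimensionality of $H$-spherical representations with a fixed ${\bf D}(G/H)$-character. I then pass to a minimal closed $G$-subrepresentation that still contains the image of $\pi$. Uniqueness follows because any $\pi$-minimal $\rho$ embeds into $C^\infty(X)$ through its unique $H$-invariant (by (i)), and its image must equal this minimal $G$-span.
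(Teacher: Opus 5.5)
Your injectivity step is correct: the composite $V_{\pi,\infty}\to W_{\rho,\infty}\hookrightarrow C^\infty(X)$ is determined by its value at the base point because $L$ is transitive. This is exactly the paper's embedding $A\hookrightarrow B$ below. In the cases where $\dim V_\pi^{L\cap H}\le 1$, and in Triple~5 via Schur's lemma for the compact $Sp(1)$, your arguments for (ii), (i) and uniqueness in (iii) can be completed.

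There is nevertheless a genuine gap. Everything rests on Prop.~\ref{mufree}, which does not cover the group-manifold triples (Case~11 of Table~\ref{foxi}, and Type~I products containing such factors). These triples are Type~I but not Type~Ia. For $L=L_{min}=G_1$ one has $L\cap H=\{e\}$, so $V_\pi^{L\cap H}=V_\pi$ is infinite dimensional. In particular, your claim that $(V_{\tilde\pi,-\infty})^{L\cap H}$ is finite dimensional, hence $K_L$-finite, for every Type~I triple is false here. Even for $L=L_{max}=G_1\times K_1$ one gets $V_\pi^{L\cap H}\cong\Hom_{K_1}(F_{\tilde\tau},V_{\pi_1})$, which has dimension $>1$ unless $G_1$ is locally $SO_e(1,n)$ or $SU(1,n)$. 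No argument using only the compact centralizing groups $C\subset C_2$ yields, in this case, the reverse inequality in (ii), the bound in (i), or the identification needed for uniqueness in (iii).

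The missing idea is to let the full identity component $C_3$ of $Z_G(L_{min})$ act. In the group case this is the non-compact second factor $G_1$ acting by right translations. Put
\[
A=\Hom_{L_{min}}(V_{\pi_1,\infty},W_{\rho,\infty}),\qquad B=\Hom_{L_{min}}(V_{\pi_1,\infty},C^\infty(X))\cong(V_{\tilde\pi_1,-\infty})^{L_{min}\cap H}.
\]
The key fact is that $B$ is an irreducible admissible $C_3$-representation. For simple $G$ this is Prop.~\ref{mufree} plus the argument of Prop.~\ref{staun}; in the group case $B\cong V_{\tilde\pi_1,-\infty}$ as a $G_1$-representation. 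The consequences are as follows.
\begin{enumerate}
\item[(a)] Your embedding $A\hookrightarrow B$ is $C_3$-equivariant, so $A\ne\{0\}$ forces $A_{C_2}=B_{C_2}$.
\item[(b)] By density, $\dim\Hom_C(F_\tau,A)=\dim\Hom_C(F_\tau,B)$, which is (ii).
\item[(c)] Point (a) also supplies what your uniqueness sentence tacitly assumes. A fixed $C_2$-finite element of $B$ lies in $\Hom_L(V_{\pi,\infty},W_{\rho,\infty})$ for \emph{every} $H$-spherical $W_\rho\subset C^\infty(X)$ with $m_\rho(\pi)\ne0$. Hence all of them contain the $G$-module it generates.
\item[(d)] For (i), composition maps $(W_{\tilde\rho,-\infty})^H\cong\Hom_G(W_{\rho,\infty},C^\infty(X))$ into $\Hom_{C_3}(A,B)$. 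This map is injective when $\rho$ is $\pi$-minimal, since a kernel containing all images of elements of $A$ has nonzero multiplicity and is therefore $W_\rho$.
\item[(e)] $\Hom_{C_3}(A,B)$ is one dimensional by Schur's lemma, because $A$ is then irreducible and dense in $B$. Together with (d), this is the precise form of the ``dimension count'' you flag as the main obstacle.
\end{enumerate}

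For existence there are two further points.
\begin{enumerate}
\item[(f)] You need finite \emph{length} of $E_\chi^\infty(X)_K$ (\cite{Ban87}), not finite dimensionality.
\item[(g)] You must check that the image of $V_{\pi,\infty}$ lands in the smooth globalization of that finite-length Harish-Chandra module. The closure of a $G$-span in $C^\infty(X)$ is not automatically that globalization. The paper arranges this by choosing $\Phi$ with $C_2$-finite values and applying Prop.~\ref{hering}(i) to the spherical triple $(G,H,L_{max})$, so that $K_{L_{max}}$-finite eigenfunctions are $K$-finite.
\end{enumerate}
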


\begin{proof} Let $V_\pi$ be an irreducible admissible representations of $L$, and set $L_1:=L_{min}$, $L_2:=L_{max}$. As in the proof of Lemma \ref{sardine} we consider
the corresponding connected subgroups $C\subset C_2$ with $L=L_1 \cdot C$, $L_2=L_1 \cdot C_2$, and the corresponding decomposition  $V_\pi=V_{\pi_1}\otimes F_\tau$.
We also consider the identity component $C_3$ of $Z_G(L_1)$.
Its maximal compact subgroup is $C_2$.
  
 For any $G$-representation $Z$ the space $\Hom_{L_1}(V_{\pi_1,\infty},Z)$ carries a 
$C_3$-representation such that $\Hom_{L}(V_{\pi,\infty},Z)\cong \Hom_C(F_\tau, \Hom_{L_1}(V_{\pi_1,\infty},Z))$. Let $X=G/H\cong L/L\cap H\cong L_1/L_1\cap H$, and let $W_\rho$ be an admissible 
$H$-spherical $G$-representation of finite length. We consider the $C_3$-representations 
$$ A:= \Hom_{L_1}(V_{\pi_1,\infty},W_{\rho,\infty})\mbox{ and } B:=  \Hom_{L_1}(V_{\pi_1,\infty},C^\infty(X))\ .$$
As vector spaces we have $B\cong (V_{\tilde\pi_1,-\infty})^{L_1\cap H}$ by Frobenius reciprocity. 

The key observation is that $B$ is an irreducible admissible $C_3$-representation (or zero). Indeed, by Prop.~\ref{productck} it suffices to check this for irreducible Type~I triples with $L=L_{min}$.
If $G$ is simple the observation follows from Prop.~\ref{mufree} combined with the argument of the proof of Prop.~\ref{staun}. In the group case with $G=G_1\times G_1\cong L_1\times C_3$
we have $L_1\cap H=\{e\}$ and $B\cong V_{\tilde\pi_1,-\infty}$ as $C_3\cong L_1$-representation. Here again one uses a Frobenius reciprocity argument similar as in the proof of  Prop.~\ref{staun}.

Any $\Phi\in \Hom_G(W_{\rho,\infty}, C^\infty(X))$ defines an element $i(\Phi)\in \Hom_{C_3}(A,B)$ by $i(\Phi)(a):=\Phi\circ a$, $a\in A$.
We fix an injective $\Phi_0\in \Hom_G(W_{\rho,\infty}, C^\infty(X))$. It exists since $W_\rho$ is $H$-spherical. Then also $i(\Phi_0)$ is injective. We obtain an embedding of
$C_3$-representations 
$A\hookrightarrow B$. 

We now assume that $m_\rho(\pi)=\dim \Hom_C(F_\tau,A)\ne 0$. In particular, $A\ne\{0\}$. Then $B$ is non-zero, hence irreducible. It follows that $A$ is irreducible,
$A\hookrightarrow B$ has dense image, and that $\Hom_{C_3}(A,B)$ is one-dimensional.
Strictly speaking, these assertions follow by considering the underlying Harish-Chandra modules $A_{C_2}$ and $B_{C_2}$, and we obtain that:
\be\label{weird}A_{C_2}\cong B_{C_2}\  .\end{equation}
The density of $A$ in $B$ now implies Assertion (ii):
$$ m_\rho(\pi)=\dim \Hom_C(F_\tau,A)=\dim\Hom_C(F_\tau, B)=\dim\, (V_{\tilde\pi,-\infty})^{L\cap H}=\dim V_\pi^{L\cap H}\ .$$

Let $W'\subset W_\rho$ be a closed $G$-invariant subspace. Then $A':=\Hom_{L_1}(V_{\pi_1,\infty},(W')_{\infty})\subset A$ is a closed $C_3$-invariant subspace. Since $A$ is irreducible, 
$A'=\{0\}$ or $A'=A$ 
which corresponds to the alternative 
$$\Hom_{L}(V_{\pi,\infty},(W')_{\infty})=\{0\}\quad\mbox{ or }\quad\Hom_{L}(V_{\pi,\infty},(W')_{\infty})=\Hom_{L}(V_{\pi,\infty},W_{\rho,\infty})\ .$$
If $W_\rho$ is $\pi$-minimal and $W'\ne W_\rho$, this implies $A'=\{0\}$. 
We reconsider the linear map $i: \Hom_G(W_{\rho,\infty}, C^\infty(X))\cong (W_{\tilde\rho,-\infty})^H \rightarrow \Hom_{C_3}(A,B)$ constructed above.
It follows that $i$  is injective for $\pi$-minimal $W_\rho$.
We conclude that in this case $0<\dim\, (W_{\tilde\rho,-\infty})^H\le \dim   \Hom_{C_3}(A,B)=1$. Assertion (i) follows.

Now we assume $V_\pi^{L\cap H}\cong  \Hom_{L}(V_{\pi,\infty},C^\infty(X))\ne\{0\}$ and discuss existence and uniqueness of a $\pi$-minimal  $W_\rho$. The assumption can
be expressed as $\Hom_C(F_\tau, B)\ne\{0\}$. Therefore also the dense subspace $\Hom_C(F_\tau, B_{C_2})$ is non-zero. On the other hand,
by Prop.~\ref{staun} there exists a character $\chi$ such that $ \Hom_{L}(V_{\pi,\infty},C^\infty(X))=\Hom_{L}(V_{\pi,\infty}, E_{\chi}^\infty(X))$. 
We conclude that 
$$\Hom_C(F_\tau, B_{C_2})\cong \{\Phi\in\Hom_{L}(V_{\pi,\infty}, E_{\chi}^\infty(X))\mid \Phi(V_{\pi,K_L})\subset E_{\chi}^\infty(X)_{K_{L_2}}\}\ .$$
It is well-known (\cite{Ban87}, Cor.~3.10) that  $E_{\chi}^\infty(X)_K$
is a Harish-Chandra module for $G$. Moreover, it has a reflexive Banach globalization $W_{\rho_1}$ such that $W_{\rho_1,\infty}\subset  E_{\chi}^\infty(X)$. These facts are recalled as Prop.~\ref{englisch}
at the beginning of the next session. Since $(G,H,L_2)$ is a spherical triple, Prop.~\ref{hering} (i) now implies that $E_{\chi}^\infty(X)_{K_{L_2}}=E_{\chi}^\infty(X)_K$.
Thus 
$$\{\Phi\in\Hom_{L}(V_{\pi,\infty}, E_{\chi}^\infty(X))\mid \Phi(V_{\pi,K_L})\subset E_{\chi}^\infty(X)_{K}\}$$
is non-zero. We pick a non-zero element $\Phi$ of the latter space and a non-zero $v\in V_{\pi,K_L}$. We consider $0\ne \Phi(v)=:f\in E_{\chi}^\infty(X)_K=W_{1,K}$.
Let $W_{\rho_0}\subset W_{\rho_1}$ be the (closed) $G$-subrepresentation generated by $f$. By construction, we have $m_{\rho_0}(\pi)\ne 0$.

Now let $W_\rho$ be an arbitrary $H$-spherical admissible $G$-representation of finite length on a reflexive Banach space such that $m_\rho(\pi)\ne 0$. We may assume that
$W_{\rho,\infty}\subset C^\infty(X)$. By (\ref{weird}) we then have $A_{C_2}= B_{C_2}$ which implies $\Phi\in\Hom_{L}(V_{\pi,\infty},W_{\rho,\infty})$, $f\in W_{\rho,K}$.
It follows that $W_{\rho_0,K}\subset  W_{\rho,K}$. This shows that $W_{\rho_0}$ is the unique $\pi$-minimal $G$-representation.
\end{proof}

Let us mention two properties of $L$-minimal representations which are also shown implicitly in the above proof. The first is that the notion of $L$-minimality is independent of the
choice of $L$, $L_{min}\subset L\subset L_{max}$. The second is that for a $\pi$-minimal representation $W_\rho$ the natural map
\be\label{guru}\Hom_{L}(V_{\pi,\infty},W_{\rho,\infty})\otimes  (W_{\tilde\rho,-\infty})^H\rightarrow (V_{\tilde\pi,-\infty})^{L\cap H}
\end{equation}
is injective and has dense image.

We collect the specializations of the results obtained so far to the particularly important case  of restriction of {\em unitary} representations 
in the following theorem. 

\begin{thm}\label{mainbranching}
Let $(G,H,L)$ be a triple of Type~I, and let $W_\rho$ be a weakly $H$-spherical irreducible unitary representation of $G$. Then
its restriction to $L$ decomposes discretely:
$$ W_\rho\stackrel{L}{\cong}\widehat{\bigoplus_{\pi\in\widehat L}} M_\rho(\pi)\hat\otimes V_\pi\ .$$
Moreover, we have:
\begin{itemize}
\item[(a)] If $(G,H, L)$ is spherical, then the multiplicities $m_\rho(\pi)=\dim M_\rho(\pi)$ are finite.
\item[(b)] Assume that $W_\rho$ is $H$-spherical. Then 
$$m_\rho(\pi)\ne 0\Rightarrow \left\{\begin{array}{rcl}
m_\rho(\pi)&=&\dim V_\pi^{L\cap H}\\
m_{\rho'}(\pi)&=&\quad 0\qquad \mbox{ for all irreducible $H$-spherical unitary }\rho'\not\cong \rho
\end{array}\right. \ .$$
In particular, if $(G,H,L)$ is one of the triples described in Prop.~\ref{mufree}
(i) or (ii), then the restriction of $W_\rho$ to $L$ is multiplicity free.
\item[(c)]Assume that $W_\rho$ is $H$-spherical and not trivial. Then 
$$ \sum_{\pi\in\widehat L} m_\rho(\pi)=\infty\ .$$
\end{itemize}
\end{thm}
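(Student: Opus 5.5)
The plan is to deduce the theorem from Prop.~\ref{hering}, Cor.~\ref{makrele}, Lemma~\ref{sardine} and Thm.~\ref{aal}, with the discreteness of the decomposition as the main issue.

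\emph{Discreteness.} By Cor.~\ref{makrele}, $W_{\rho,K}$ carries an increasing exhaustive filtration $(V_i)$ by Harish-Chandra modules for $L$ whose successive quotients are irreducible or zero. Since $W_\rho$ is unitary, the closures $\overline{V_i}\subset W_\rho$ are closed $L$-invariant subspaces, and $\overline{V_{i+1}}\ominus\overline{V_i}$ is a unitary $L$-representation. Its $K_L$-finite part is a quotient of $V_{i+1}/V_i$, so it is irreducible or zero. Exhaustiveness and density of $W_{\rho,K}$ give $W_\rho=\widehat\bigoplus_i(\overline{V_{i+1}}\ominus\overline{V_i})$, which is the required discrete decomposition into irreducible unitary $L$-representations.

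The step I expect to be delicate is identifying the $K_L$-finite vectors of the closures. Under Prop.~\ref{hering}(i) this is clean, because $K_L$-isotypic components are finite dimensional and lie in $W_{\rho,K}$, so $(\overline{V_i})_{K_L}=V_i$. For non-spherical Type~I triples I would first pass to $L_{max}$, decompose there, and then restrict each irreducible $L_{max}$-summand to $L=L_{min}\cdot C$; this restriction is a finite sum of irreducibles of the form $V_{\pi_1}\otimes F_\tau$. The multiplicity space $M_\rho(\pi)$ equals $\Hom_L(V_\pi,W_\rho)$ by Lemma~\ref{sardine}.

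\emph{Part (a).} In the spherical case Lemma~\ref{sardine} shows that all multiplicity spaces coincide with the finite dimensional $\Hom_{(\fl,K_L)}(V_{\pi,K_L},W_{\rho,K})$, so $m_\rho(\pi)<\infty$.

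\emph{Part (b).} The equality $m_\rho(\pi)=\dim V_\pi^{L\cap H}$ is Thm.~\ref{aal}(ii). For the second statement, an irreducible $H$-spherical $\rho$ with $m_\rho(\pi)\ne0$ is $\pi$-minimal, so by the uniqueness in Thm.~\ref{aal}(iii) it is determined up to infinitesimal equivalence. Unitary representations that are infinitesimally equivalent are unitarily equivalent, hence $\rho'\cong\rho$. Multiplicity freeness for the triples of Prop.~\ref{mufree} then follows from $\dim V_\pi^{L\cap H}\le1$.

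\emph{Part (c).} Suppose $\sum_\pi m_\rho(\pi)<\infty$. Then the discrete decomposition has only finitely many summands, so $W_{\rho,K}$ has finite length as an $(\fl,K_L)$-module. By Cor.~\ref{makrele} (or Prop.~\ref{hering}(iv)), $W_\rho$ is then finite dimensional. A finite dimensional irreducible unitary representation of $G$ without compact factors is trivial, which contradicts the assumption.
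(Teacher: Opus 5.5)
Your proposal is correct and follows the paper's route. The paper's proof is a one-line citation of Prop.~\ref{hering}(ii), Cor.~\ref{makrele} and Thm.~\ref{aal}(ii),(iii), and you fill in the same deductions: closing up the filtration, passing through $L_{max}$ in the non-spherical case, and using Lemma~\ref{sardine} to identify $M_\rho(\pi)$.

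One point in (c) deserves an explicit sentence. Finitely many irreducible $L$-summands $U_j$ give finite length of $W_{\rho,K}$ because $W_{\rho,K}$ is an $(\fl,K_L)$-submodule of $(W_\rho)_{K_L}=\bigoplus_j (U_j)_{K_L}$. More simply, the same finiteness holds for $L_{max}$, where Prop.~\ref{hering}(i),(iv) apply directly.
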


\begin{proof}
All assertions 
are immediate consequences of the preceding results Prop.~\ref{hering}~(ii), Cor.~\ref{makrele}, and Thm.~\ref{aal}~(ii),(iii).
\end{proof}

Let us mention that our initial proof of  Theorem~\ref{mainbranching}, in particular of Assertion (a),  was completely different.
It rested on the classification of spherical triples in Thm.~\ref{listck} (that $L$ has real rank one in the relevant cases is crucial) and the Casimir operator computations in 
Section~\ref{pbw}.
These results imply that every $H\cap L$-invariant vector in $W_{\rho,K_L}$  
generates a Harish-Chandra module for $L$ and that these modules span
the whole space. 
This together with the easy multiplicity bound $m_\rho(\pi)\le\dim V_\pi^{L\cap H}<\infty$ (see Cor. \ref{inv3}) implies $L$-admissibility.
Moreover, our original proof of part (c) was based on Moore's ergodicity theorem (\cite{Moo66}) valid only for {\em unitary} representations instead of the more elementary Lemma \ref{almosttrivial}. 
We prefer the proof written here resting on Lemma \ref{sigmatheta}
and Delorme's embedding theorem because it is completely independent of the classification. However, the more refined multiplicity assertions (Thm.~\ref{aal}, (i) and (ii), and Thm.~\ref{mainbranching}, (b))  still depend on the classification. A proof 
which is similar in spirit to our first proof has been given in \cite{KK25}.

Recall that an admissible representation $V_\pi$ of finite length of a reductive group with compact center $L$ is said to be of class $L^p$, $p\in [1,\infty)$, if all its 
$K_L\times K_L$-finite matrix coefficients $c_{v,\tilde v}$ belong to $L^p(L)$. If $V_\pi$ is irreducible, it suffices to check this for one non-zero matrix coefficient. 
Of particular importance are the classes 
$L^1$ (integrable discrete series represesentations), $L^2$ (discrete series), and representations that are of class $L^{2+\ve}$ for all $\ve>0$ (tempered representations).
Now let $G/H$ be a semisimple symmetric space, and let $W_\rho$ be an $H$-spherical admissible $G$-representation of finite length on a reflexive Banach space. 
We say that an element $\tilde w\in   ( W_{\tilde\rho,-\infty})^H$ is of relative class $L^p$,  $p\in [1,\infty)$, if $c_{w,\tilde w}\in L^p(G/H)$ for all $w\in W_{\rho,K}$.
Because of Thm.~\ref{aal} (i), we are interested in the case $\dim ( W_{\tilde\rho,-\infty})^H =1$. In this case, being of relative class $L^p$ is really a property of the representation
$W_\rho$. In particular, we can speak of (integrable) relative discrete series and of relatively tempered representations.

\begin{pro}\label{fishy}
Let $(G,H,L)$ be a triple of Type~I, and let $W_\rho$ be an $L$-minimal  $H$-spherical $G$-representation. Then the following assertions are equivalent:
\begin{itemize}
\item[(i)] $W_\rho$ is of relative class $L^p$.
\item[(ii)] All irreducible admissible representations $V_\pi$ of $L$ with $m_\rho(\pi)\ne \{0\}$ are of class $L^p$.
\item[(iii)] There exists an irreducible admissible representation $V_\pi$ of class $L^p$ such that $W_\rho$ is $\pi$-minimal.
\end{itemize}
\end{pro}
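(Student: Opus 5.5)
The plan is to compare matrix coefficients on $G$ (for $W_\rho$, paired with the unique $H$-invariant distribution vector) with matrix coefficients on $L$, using $X=G/H\cong L/L\cap H$ with $L\cap H$ compact.

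Fix $\pi$ with $m_\rho(\pi)\neq 0$. By Thm.~\ref{aal}(i) there is a unique (up to scalar) $0\neq\tilde w\in(W_{\tilde\rho,-\infty})^H$; denote by $c_w(g)=\langle \tilde w,\rho(g^{-1})w\rangle$ the corresponding function on $X$. Pick a nonzero $\Phi\in\Hom_{(\fl,K_L)}(V_{\pi,K_L},W_{\rho,K})$, which is nonzero by Lemma~\ref{sardine}. Then $c_{\Phi(v)}$ restricted to $L$ is an $(L\cap H)$-invariant matrix coefficient of $V_\pi$:
\[
c_{\Phi(v)}(l)=\langle \Phi^t\tilde w,\pi(l^{-1})v\rangle ,
\]
with $u:=\Phi^t\tilde w\in (V_{\tilde\pi,-\infty})^{L\cap H}$. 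By (\ref{guru}), $u\neq 0$. Since $\pi$ is admissible, $V_{\tilde\pi}^{L\cap H}$ consists of $K_L$-finite vectors whenever it is finite dimensional; in general the $L\cap H$-invariants are dense. The compactness of $L\cap H$ lets us average, so $u$ may be taken $K_L$-finite after projecting to a $K_L$-type. The $L$-invariant measure on $X$ is the pushforward of Haar measure of $L$ (up to the finite volume of $L\cap H$). Hence $c_{\Phi(v)}\in L^p(X)$ if and only if the $L$-matrix coefficient $l\mapsto\langle u,\pi(l^{-1})v\rangle$ lies in $L^p(L)$.

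For (i)$\Rightarrow$(ii): if $W_\rho$ is of relative class $L^p$, every $c_{\Phi(v)}$ lies in $L^p$. This gives a nonzero $K_L$-finite matrix coefficient of $\pi$ in $L^p(L)$; by irreducibility of $\pi$ this is enough, provided we can arrange that a $K_L$-finite component of $u$ is nonzero. That holds because $u\neq 0$ and its $K_L$-isotypic projections, obtained by averaging $c$ over $K_L$ on the right, remain in $L^p$.

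(ii)$\Rightarrow$(iii) is immediate: take $\pi$ with $W_\rho$ $\pi$-minimal, which exists by $L$-minimality, and note $m_\rho(\pi)\neq0$.

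For (iii)$\Rightarrow$(i): the subspace $W':=\{w\in W_{\rho,K}: c_w\in L^p(X)\}$ is a $(\fg,K)$-submodule, since $L^p$ is $G$-invariant and stable under derivatives. Justifying this for $\fg$-derivatives is the delicate point. I would instead use the $L$-structure: $W'$ is an $(\fl,K_L)$-submodule, containing $\Phi(V_{\pi,K_L})$ by the computation above. To upgrade it to a $G$-submodule, I would use that each $w\in W_{\rho,K}$ lies in a finite sum of $L$-subquotients (Cor.~\ref{makrele}), and that the $G$-action is by finitely many translates. A cleaner route is to note that $L^p$-ness of $c_w$ is invariant under $G$-translation, so $\{w\in W_{\rho,\infty}: c_w\in L^p\}$ is $G$-stable, and to control $\fg$-derivatives via Casselman--Wallach moderate-growth estimates (convolution with $C_c^\infty(G)$). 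Its closure is then a $G$-subrepresentation with $m(\pi)\neq 0$, so by $\pi$-minimality it is all of $W_\rho$, hence contains $W_{\rho,K}$.

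The main obstacle is precisely this step: showing that the $L^p$ property passes from the closure back to every $K$-finite vector, i.e.\ that the set of $w$ with $c_w\in L^p$ is closed, or at least contains $W_{\rho,K}$ once it is dense. For this I would use the decomposition of $W_{\rho,K}$ into finitely many $L$-subquotients and the fact that, by (iii), each one occurring must be of class $L^p$. That fact would in turn follow from the equivalence (i)$\Leftrightarrow$(ii) for the subquotients, via leading exponents (\cite{Wa88}, \cite{Kn86}): the growth exponents of $c_w$ along $A_L$ are governed by the $\fa_L$-exponents of $W_\rho$ restricted to $L$.
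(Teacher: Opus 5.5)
Your arguments for (i)$\Rightarrow$(ii) and (ii)$\Rightarrow$(iii) are fine. In particular, projecting $u=\Phi^t\tilde w$ to a $K_L$-type and bounding the right $K_L$-average by Minkowski's inequality legitimately replaces the paper's check that $\Phi^t\tilde w$ is already $K_L$-finite, and it works uniformly. One correction: (\ref{guru}) only concerns the $\pi$ for which $W_\rho$ is $\pi$-minimal. For an arbitrary $\pi$ with $m_\rho(\pi)\neq 0$, $u\neq 0$ follows instead from injectivity of $w\mapsto c_{w,\tilde w}$, which holds because $W_\rho$ is $H$-spherical and $(W_{\tilde\rho,-\infty})^H$ is one-dimensional.

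The genuine gap is (iii)$\Rightarrow$(i). You leave open exactly the step that carries the implication, and the remedy you sketch fails for three reasons.
\begin{itemize}
\item It assumes $W_{\rho,K}$ has finitely many $L$-subquotients. This is false in general: by Prop.~\ref{hering}(iv), Cor.~\ref{makrele} and Thm.~\ref{mainbranching}(c), $W_{\rho,K}$ has infinite length as an $(\fl,K_L)$-module unless $W_\rho$ is finite dimensional.
\item It is circular. (iii) provides a single $\pi$ of class $L^p$, while ``every $L$-constituent is of class $L^p$'' is assertion (ii), which you may only derive from (i), the statement being proved.
\item Even granting that all composition factors are of class $L^p$, this would not directly control coefficients of vectors lying in non-split extensions.
\end{itemize}

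The missing observation is elementary, and it is all the paper's one-line argument uses. Put $S:=\{w\in W_{\rho,\infty}\mid c_{w,\tilde w}\in L^p(X)\}$.
\begin{itemize}
\item $S$ is $G$-stable, because $L^p(X)$ is $G$-invariant.
\item $S$ is stable under every $K$-isotypic projection $P_\gamma$, since $c_{P_\gamma w,\tilde w}$ is an average over $K$, with bounded weight, of $G$-translates of $c_{w,\tilde w}$; Minkowski's inequality applies again.
\item By your own computation $S\supset\Phi(V_{\pi,K_L})$. So the closure of $S$ is a closed $G$-invariant subspace with non-zero $\pi$-multiplicity, hence equals $W_\rho$ by $\pi$-minimality.
\item Then $P_\gamma(S)\subset S$ is dense in the finite-dimensional space $W_\rho(\gamma)$ (admissibility), hence equals it. Therefore $W_{\rho,K}\subset S$.
\end{itemize}
No closedness of $S$, no control of $\fg$-derivatives, and no leading-exponent analysis is needed.

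Equivalently, Harish-Chandra's lemma gives, for each $w\in W_{\rho,K}$, some $f\in C_c^\infty(G)$ with $w=\rho(f)w$. Hence $\rho(X)w=\rho(l_Xf)w$, and its coefficient is again a compactly supported average of translates of $c_{w,\tilde w}$. So $S\cap W_{\rho,K}$ is a $(\fg,K)$-submodule, and the correspondence between $(\fg,K)$-submodules and closed $G$-invariant subspaces finishes the argument.
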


\begin{proof}
We fix $0\ne\tilde w\in (W_{\tilde\rho,-\infty})^H$. Let $\Phi\in  \Hom_{(\fl,K_L)}(V_{\pi,K_L},W_{\rho,K})\subset  \Hom_{L}(V_{\pi,\infty},W_{\rho,\infty})$, see Lemma~\ref{sardine}.
Then for $l$ in $L$ we have the following identity of matrix coefficients
\be\label{dadamax} 
 c^G_{\Phi(v),\tilde w}(l)=c^L_{v,\Phi^t(\tilde w)}(l) \ .
\end{equation}
Since $X=G/H \cong L/L\cap H$, restriction to $L$ determines the matrix coefficient $c_{\Phi(v),\tilde w}$ completely. Note that integrability over $L$ and over $X$
are equivalent. We also observe that $\Phi^t(\tilde w)$ which a priori belongs to  $(V_{\tilde\pi,-\infty})^{L\cap H}$ is $K_L$-finite.
This is clear, if $(G,H,L)$ is spherical: by finite-dimensionality we have $(V_{\tilde\pi,-\infty})^{L\cap H}=(V_{\tilde\pi,K_L})^{L\cap H}$. The $K_L$-finiteness
is also easily checked in the group case. So it holds in general. The implication (i)$\Rightarrow$(ii) is now immediate. (ii)$\Rightarrow$(iii) holds by definition of $L$-minimality.
It remains to discuss (iii)$\Rightarrow$(i). Let $W_\rho$ be $\pi$-minimal. We assume that $V_\pi$ is of class $L^p$. Hence $c_{v,\Phi^t(\tilde w)}=c_{\Phi(v),\tilde w}\in L^p(X)$.
Since $L^p(X)$ is $G$-invariant and the vectors $\Phi(v)$ generate $W_\rho$ by $\pi$-minimality we conclude that $c_{w,\tilde w}\in L^p(X)$ for all $w\in W_{\rho,K}$. 
Thus $W_\rho$ is of relative class $L^p$.
\end{proof}

As a first consequence of Prop.~\ref{fishy}
we obtain information on the contribution of integrable relative discrete series of $G$ to eigenspaces of ${\bf D}(G/H)$ on $Y=\Gamma\backslash G/H$.
It could be seen as a certain generalization of Thm.~\ref{something}(c) to general Type~I triples. 

\begin{pro}\label{horn}
Let $(G,H,L)$ be a triple of Type~I, and let $\Gamma\subset L$ be a torsion free uniform lattice. Let $W_\rho$ be an integrable relative discrete series for $G/H$.
Let $\chi$ be the character of ${\bf D}(G/H)$ determined by its action on $(W_{\rho,-\infty})^H$ (note that by unitarity of $W_\rho$ there is a conjugate linear isomorphism $(W_{\rho,-\infty})^H\cong (W_{\tilde\rho,-\infty})^H\ne\{0\}$). Then the eigenspace $E^\infty_\chi(Y)$ is infinite dimensional.
\end{pro}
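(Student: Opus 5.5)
The plan is to produce infinitely many $\pi\in\widehat L$ that lie in the relevant eigenspace decomposition and occur in $L^2(\Gamma\backslash L)$, and then apply Cor.~\ref{hussa}/Lemma~\ref{bassa}. Since $W_\rho$ is irreducible unitary and $H$-spherical, it is $L$-minimal: it is $\pi$-minimal for every $\pi$ with $m_\rho(\pi)\neq0$. By Thm.~\ref{mainbranching}(c), applied to the nontrivial unitary $W_\rho$ (a discrete series for $G/H$ is infinite dimensional), the restriction of $W_\rho$ to $L$ is a Hilbert sum $\widehat{\bigoplus}M_\rho(\pi)\hat\otimes V_\pi$ with $\sum_\pi m_\rho(\pi)=\infty$. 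The key step is to show that the set $\widehat L_\rho:=\{\pi\in\widehat L\mid m_\rho(\pi)\ne0\}$ is infinite.

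To see this, I would use Thm.~\ref{aal}(ii): $m_\rho(\pi)=\dim V_\pi^{L\cap H}$, which is finite when $(G,H,L)$ is spherical. In the general Type~I case I would pass to the $L_{max}$ level, or use the finiteness of $\dim V_\pi^{L\cap H}$ coming from the decomposition $V_\pi=V_{\pi_1}\otimes F_\tau$, to get $m_\rho(\pi)<\infty$. Once every multiplicity is finite, infinitude of the sum forces $\widehat L_\rho$ to be infinite. In the group case (where $\dim V_\pi^{L\cap H}$ could be infinite), I would argue separately, again via the irreducible $C_3$-module structure from the proof of Thm.~\ref{aal}. That case reduces to restricting the $G_1\times G_1$-representation $\tilde\pi_1\otimes\pi_1$-type module to $G_1\times K_1$, which has infinitely many $K_1$-types.

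Next, by Prop.~\ref{fishy} with $p=1$, every $\pi\in\widehat L_\rho$ is of class $L^1$, i.e.\ an integrable discrete series of $L$ (the center of $L$ is compact by Prop.~\ref{cocpt}). For integrable discrete series and torsion-free uniform $\Gamma$, Langlands' formula (cf.\ the remark after Prop.~\ref{pavle}) gives $\dim N_\Gamma(\pi)=d_\pi\vol(\Gamma\backslash L)>0$. Moreover $V_\pi^{L\cap H}\ne\{0\}$, since $m_\rho(\pi)=\dim V_\pi^{L\cap H}\neq 0$.

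Finally, each such $\pi$ satisfies $\chi_\pi=\chi$. The map $E$ of (\ref{guru}) is ${\bf D}(G/H)$-equivariant and has nonzero image in $(V_{\tilde\pi,-\infty})^{L\cap H}$, so the character of ${\bf D}(G/H)$ on $(W_{\tilde\rho,-\infty})^H$, transported via the conjugate-linear identification, coincides with $\chi_\pi$ from Prop.~\ref{staun} (up to the harmless passage between $\pi$ and $\tilde\pi$, which are related by conjugation since everything is unitary). Then Cor.~\ref{hussa} gives
\[
E^\infty_\chi(Y)\supset\bigoplus_{\pi\in\widehat L_\rho}N_\Gamma(\pi)\otimes(V_{\pi,\infty})^{L\cap H},
\]
an infinite sum of nonzero spaces. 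The main obstacle I expect is this character bookkeeping between $\rho$, $\tilde\rho$, $\pi$ and $\tilde\pi$, together with the infinitude of $\widehat L_\rho$ in the non-spherical (group) case.
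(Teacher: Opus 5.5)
Your overall route is the paper's: discrete $L$-decomposition of $W_\rho$, Prop.~\ref{fishy} to see that every occurring $\pi$ is an integrable discrete series of $L$, Langlands' result for $N_\Gamma(\pi)\ne\{0\}$, then Cor.~\ref{hussa}. But the step you single out as the key one is false in general, and your fallback for it does not work. You want $\widehat L_\rho$ to be infinite, which requires every $m_\rho(\pi)$ to be finite. For Type~I triples with a group-manifold factor this fails as soon as $L$ is strictly smaller than $L_{max}$ on that factor.

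Take $G=G_1\times G_1$, $H=\Delta G_1$, $L=L_{min}=G_1\times\{e\}$, so $L\cap H=\{e\}$. An integrable relative discrete series is $W_\rho\cong V\hat\otimes V'$ with $V$ an integrable discrete series of $G_1$. Restricted to $L$ it is a multiple of the single representation $V$. So $\widehat L_\rho$ is a singleton, and $m_\rho(V)=\dim V^{L\cap H}=\infty$. Neither ``passing to the $L_{max}$ level'' nor the decomposition $V_\pi=V_{\pi_1}\otimes F_\tau$ gives finiteness here: for $L=G_1\times C$ one has $L\cap H=\Delta C$, and $(V_{\pi_1}\otimes F_\tau)^{\Delta C}$ is infinite dimensional when $C$ is small. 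Your separate group-case argument, restricting to $G_1\times K_1$, presupposes $K_1\subset L$, i.e.\ $L=L_{max}$ on that factor.

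Infinitude of the index set is never needed. Use Cor.~\ref{hussa}, $N_\Gamma(\pi)\ne\{0\}$, density of $(V_{\pi,\infty})^{L\cap H}$ in $V_\pi^{L\cap H}$, and Thm.~\ref{mainbranching}(b),(c):
\[
\dim E^\infty_\chi(Y)\ \ge\ \sum_{\pi\in\widehat L_\rho}\dim N_\Gamma(\pi)\,\dim (V_{\pi,\infty})^{L\cap H}\ \ge\ \sum_{\pi\in\widehat L_\rho}\dim V_\pi^{L\cap H}=\sum_{\pi\in\widehat L}m_\rho(\pi)=\infty .
\]
This handles uniformly both infinitely many $\pi$ and a single $\pi$ with infinite-dimensional invariants, and it is exactly how the paper concludes.

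Your character bookkeeping via the ${\bf D}(G/H)$-equivariant map $E$ is fine; the paper simply asserts that $M_\rho(\pi)\ne\{0\}$ implies $\pi\in\widehat L_\chi$. Applying Prop.~\ref{paulklee}(iii) directly to $\Hom_L(W_\rho,V_\pi)\ne\{0\}$ avoids the detour through $\tilde\pi$.
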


\begin{proof} Let $\widehat L_\chi$ be as in Cor.~\ref{hussa}. We consider the discrete decomposition
\be\label{h1}W_{\rho}\stackrel{L}{\cong}\widehat{\bigoplus_{\pi\in\widehat L}} M_{\rho}(\pi)\hat\otimes V_\pi\  . \end{equation}
Then $ M_{\rho}(\pi)\ne\{0\}$ implies that $\pi\in\widehat L_\chi$ and, by Prop.~\ref{fishy}, that $\pi$ is an integrable discrete series of $L$.
By the result of Langlands discussed after Prop.~\ref{pavle}, we have $N_\Gamma(\pi)\ne\{0\}$. 
On the other hand we have by Cor.~\ref{hussa} that
$${E}_\chi^{\infty}(Y)\cong \overline{\bigoplus_{\pi\in\widehat L_\chi}} N_\Gamma(\pi)\otimes (V_{\pi,\infty})^{L\cap H}\ .$$
Now we employ Thm.~\ref{mainbranching}, (b),(c) to conclude that already the contribution of the representations of $L$ occuring in (\ref{h1}) to the above direct sum is infinite dimensional.
\end{proof}
The proposition is related to the main result of 
Kassel and Kobayashi in \cite{KK2}, see the beginning of Subsection~\ref{kako}.
The proposition is relevant for all Type~I triples such that there are integrable relative discrete series for $G/H$. Note that this is equivalent to the existence of a relative discrete series
for $G/H$ which in turn is equivalent  to $\rank(G/H)=\rank (K/K_H)$. The latter condition is satisfied for all Type~I triples with $G$ simple except for Case 2 with $n$ odd and Case 7 (see Table~\ref{fix}).

Before we come to further consequences of Prop.~\ref{fishy} - for instance the  non-unitarity of certain $L$-minimal representations which is part of Prop.~\ref{kopp} below -
we return to Cor.~\ref{renoir}. In particular, if $W_\rho$ is induced from $P_{\sigma\gt}$ with generic induction parameter, it gives a way to compute the decomposition of 
Thm.~\ref{mainbranching} explicitly: One just has to do Fourier decomposition along the fibres of $G/P_{\sigma\theta}\rightarrow L/P_L$. These fibres are compact homogeneous
spaces of the form $M_L/Z_{K_L}({\mathfrak a}_{{\mathfrak q}\cap{\mathfrak s}})$.

For a moment, let $G/H$ be a general semisimple symmetric space. Let us describe its standard minimal $\sigma\theta$-stable parabolic $P_{\sigma\theta}$ in more detail. We consider
$Z_G(\fa_{\fq\cap\fs})$. It has a decomposition $Z_G(\fa_{\fq\cap\fs})=M_{\sigma\theta}A_\fq$, where $A_\fq:=\exp(\fa_{\fq\cap\fs})$ and $M_{\sigma\theta}:=\bigcap_\vp \ker\vp^2$. Here the intersection runs over all $\sigma\theta$-invariant characters $\vp: Z_G(\fa_{\fq\cap\fs})\rightarrow \R^*$. Let $N_{\sigma\theta}$ be associated to a system
of positive roots $\Phi^+$ compatible with $\fa_{\fq\cap\fs}$ as in the proof of Cor.~\ref{renoir}. Then we have $P_{\sigma\theta}=M_{\sigma\theta}A_\fq N_{\sigma\theta}$.

$M_{\sigma\theta}$ has the compact subgroups $Z:=Z_K(\fa_{\fq\cap\fs})$, $Z_H:=Z_{K_H}(\fa_{\fq\cap\fs})$, and in the presence of a properly transitive triple $(G,H,L)$ also $Z_L:=Z_{K_L}(\fa_{\fq\cap\fs})$. They were already considered in the proof of Lemma~\ref{sigmatheta}. Since $\fm_{\sigma\theta}\cap\fs\subset\fh$, any irreducible
admissible $M_{\sigma\theta}\cap H$-spherical representation $\delta$ of $M_{\sigma\theta}$ comes from an irreducible representation of $Z$ posessing an $Z_H$-invariant vector.
In particular, $\delta$ is unitary and finite dimensional. For any subgroup $Q\subset K$ we set
$$ \widehat Q_s:=\{\pi\in\widehat Q\mid V_\pi^{Q\cap H}\ne\{0\}\}\ .$$
Here the subscript $s$ stands for spherical. For $\delta\in\widehat Z_s$ and $\lambda\in \fa_{\fq\cap\fs,\C}^*$ we view $\delta$ as an $M_{\sigma\theta}\cap H$-spherical representation  of $M_{\sigma\theta}$ and  set
$$  I^{\delta,\lambda}:=  \Ind_{P_{\sigma\theta}}^{G}\delta\otimes\e^{\lambda}\otimes 1\ .$$
As a $K$-representation it is independent of $\lambda$: $I^\delta=\Ind_{Z}^{K}\delta$. The important features of $I^{\delta,\lambda}$ are: for generic $\lambda$ it is an irreducible
$G$-representation; $(I^{\delta,\lambda}_{-\infty})^H\ne\{0\}$ (see e.g. \cite{Ban88}, Thm.~5.10); Delorme's embedding theorem \cite{Del85} saying that for any irreducible admissible
$H$-spherical $G$-representation $W_\rho$ there exist $(\delta,\lambda)$ as above such that  $W_{\rho,\infty}$ embeds into $I^{\delta,\lambda}_\infty$.

Now let $(G,H,L)$ be a spherical triple. Choose $P_L=M_LA_LN_L$ as in Cor.~\ref{renoir}. Then we obtain immediately the following isomorphisms of $L$-representations:
\begin{eqnarray*}
I^{\delta,\lambda}&\cong& \Ind_{Z_LA_LN_L}^{L}\delta\otimes\e^{\lambda}\otimes 1 \\
&\cong& \bigoplus_{\sigma\in \widehat M_{L,s}} \Hom_{Z_L}(V_\sigma,U_\delta) \otimes \Ind_{M_LA_LN_L}^{L}\sigma\otimes\e^{\lambda}\otimes 1\ ,
\end{eqnarray*}
or shortly
\be\label{schal}
I^{\delta,\lambda}\cong \bigoplus_{\sigma\in \widehat M_{L,s}} \Hom_{Z_L}(V_\sigma,U_\delta) \otimes H^{\sigma,\lambda}\ .
\end{equation}
Here we have identified $ \fa_{\fq\cap\fs,\C}^*$ with  $\fa_{L,\C}^*$ by extending $\lambda\in  \fa_{\fq\cap\fs,\C}^*$ trivially on $ \fa_{H}$ to  $\fa$
and then restricting to $\fa_L$. Note that for generic $\lambda$, (\ref{schal}) already gives the full decomposition of $I^{\delta,\lambda}$ into irreducible $L$-representations.
There is some a priori information on the multiplicity spaces $\Hom_{Z_L}(V_\sigma,U_\delta)$.

\begin{cor}\label{wels}
Let $(G,H,L)$ be a spherical triple.
\begin{itemize}
\item[(a)] For all $\sigma\in\widehat M_{L,s}$ there is a unique $\delta\in \widehat Z_{s}$ such that $\Hom_{Z_L}(V_\sigma,U_\delta)\ne\{0\}$.
\item[(b)] If $\Hom_{Z_L}(V_\sigma,U_\delta)\ne\{0\}$, then 
$\dim\Hom_{Z_L}(V_\sigma,U_\delta)=\dim V_\sigma^{M_L\cap H}$.
\item[(c)] We assume in addition that all group factors of $X=G/H$ are locally isomorphic to $SO_e(1,n)$ or $SU(1,n)$ and that $L$ contains the maximal compact subgroups of the
motion groups of the group factors. Let $V_\pi$ be an irreducible admissible representation of $L$ with $V_\pi^{L\cap H}\ne\{0\}$. Then there is a unique $K_L$-type $\gamma_\pi$ with $V_\pi^{L\cap H}=V_\pi(\gamma_\pi)^{L\cap H}$.
$\gamma_\pi$ occurs in $V_\pi$ with multiplicity one. For $V_\pi=H^{\sigma,\lambda}$ ($\sigma\in\widehat M_{L,s}$, $\lambda$ generic) we denote $\gamma_\pi$ also by $\gamma_\sigma$.
Then the dimension appearing in (b) is also equal to $\dim V_{\gamma_\sigma}^{L\cap H}$.
\end{itemize}
\end{cor}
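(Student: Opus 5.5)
The plan is to compare the decomposition (\ref{schal}) with the one obtained by taking $L\cap H$-invariants, and then apply the representation-theoretic results of Sections~\ref{invariants} and \ref{branching}. First, use Frobenius reciprocity on the $K_L$-level. Since $K_L$ acts transitively on $K/Z$ (Lemma~\ref{sigmatheta}), we have $I^\delta|_{K_L}\cong\Ind_{Z_L}^{K_L}(\delta|_{Z_L})$. Taking $L\cap H$-invariants and using that $L\cap H$ acts transitively on $K_L/M_L$ (sphericity, (\ref{char})), we get two computations of the same space. On the one hand, as in (\ref{karl-hermann}),
$$(I^{\delta,\lambda})^{L\cap H}\cong U_\delta^{Z_L\cap H}\supset U_\delta^{Z_H}\ne\{0\}.$$
On the other hand, from (\ref{schal}),
$$(I^{\delta,\lambda})^{L\cap H}\cong\bigoplus_{\sigma}\Hom_{Z_L}(V_\sigma,U_\delta)\otimes V_\sigma^{M_L\cap H},$$
again by (\ref{karl-hermann}) applied to $H^{\sigma,\lambda}$.

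For (a) and (b), take $\lambda$ generic, so that $I^{\delta,\lambda}$ is irreducible, $H$-spherical, and (\ref{schal}) is its full $L$-decomposition. By Thm.~\ref{aal}(ii), every $H^{\sigma,\lambda}$ occurring has multiplicity $\dim (H^{\sigma,\lambda})^{L\cap H}=\dim V_\sigma^{M_L\cap H}$, which gives (b). For uniqueness in (a), suppose $\sigma$ occurs in both $I^{\delta,\lambda}$ and $I^{\delta',\lambda}$ with $\delta\neq\delta'$. By Thm.~\ref{mainbranching}(b) (or Thm.~\ref{aal}(iii)), $H^{\sigma,\lambda}$ would then lie in two inequivalent irreducible $H$-spherical representations, which is a contradiction; inequivalence for generic $\lambda$ follows from the $K$-types $\Ind_Z^K\delta$. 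For existence, note that $H^{\sigma,\lambda}$ has $L\cap H$-invariants. Embed it via Thm.~\ref{aal}(iii) into its $\pi$-minimal $G$-representation, which is irreducible for generic $\lambda$. Delorme's theorem places that representation in some $I^{\delta,\lambda'}$, and comparing infinitesimal characters and $\fa_L$-parameters forces $\delta\in\widehat Z_s$ with nonzero multiplicity. I expect this existence step, and in particular the matching of the parameter $\lambda'$ with $\lambda$, to be the main obstacle; failing that, I would fall back on a direct $Z_L$-branching argument: $V_\sigma^{M_L\cap H}\ne0$ should force $V_\sigma$ into some $\Ind_{Z_H}^{Z}$.

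For (c), the hypotheses are exactly those of Prop.~\ref{mufree}(ii) after reduction by Prop.~\ref{productck}, so $\dim V_\pi^{L\cap H}\le1$. The one-dimensional invariant space then lies in a single $K_L$-isotypic component $\gamma_\pi$. Multiplicity one of $\gamma_\pi$ is checked via Lemma~\ref{frob} on $H^{\sigma,\lambda}$: we have $\sum_\gamma\dim\Hom_{M_L}(V_\gamma,V_\sigma)\dim V_\gamma^{L\cap H}=\dim V_\sigma^{M_L\cap H}\le1$, which forces both uniqueness of $\gamma$ and multiplicity one. Case by case this reduces to the rank one triples and to the group cases $SO_e(1,n)$, $SU(1,n)$, where $K$-multiplicity freeness is known. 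Finally, $\dim V_{\gamma_\sigma}^{L\cap H}=1=\dim V_\sigma^{M_L\cap H}$, which gives the last equality.
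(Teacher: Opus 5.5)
Your proof of (b) is the paper's argument. Part (c) is wrong in one covered case, and part (a) has an unfilled existence step plus two faulty justifications.

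\textbf{Part (c).} The hypotheses of (c) constrain only the group factors of $X$. On the simple factors $L$ may be $L_{min}$, so Prop.~\ref{mufree}(ii), which needs $L=L_{max}$, does not apply. In particular (c) covers Triple~5 with $L=Sp(1,n)$, and the paper relies on this case in Lemma~\ref{schlips}(b) and in the proof of Prop.~\ref{wanndenn}. There $M_L\cap H=Sp(n-1)$ and $\dim V_{\sigma_k}^{M_L\cap H}=k+1$. So the bound $\dim V_\pi^{L\cap H}\le 1$ fails. Your Lemma~\ref{frob} count gives no uniqueness or multiplicity-one conclusion, because it needs the total to be at most $1$. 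Your closing identity $\dim V_{\gamma_\sigma}^{L\cap H}=1=\dim V_\sigma^{M_L\cap H}$ is false.

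The missing ingredient is the last assertion of Prop.~\ref{mufree}: $V_\pi^{L\cap H}$ is an irreducible module for the $Sp(1)$-factor of $K_L=Sp(1)\times Sp(n)$. This factor commutes with $L\cap H=Sp(n)$ and preserves every $K_L$-isotypic component, so $V_\pi^{L\cap H}$ lies in a single $V_\pi(\gamma_\pi)$. Write $V_{\gamma_\pi}\cong F\otimes\C$, with $F$ an irreducible $Sp(1)$-module and $\C$ trivial for $Sp(n)$. Then $V_\pi(\gamma_\pi)^{L\cap H}$ is a sum of copies of $F$, one per occurrence of $\gamma_\pi$, and irreducibility forces multiplicity one. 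Via (\ref{karl-hermann}), the last claim of (c) then reads $\dim V_\sigma^{M_L\cap H}=\dim V_{\gamma_\sigma}^{L\cap H}$, not $1=1$.

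\textbf{Existence in (a).} This is not proved. Your main route, Delorme's theorem plus matching $\lambda'$ with $\lambda$, which only works up to $W(\fl,\fa_L)$, is left open. Your fallback lacks the fact that makes it work: $Z=Z_LZ_H$, which follows from $K=K_LZ_H$ at the end of the proof of Lemma~\ref{sigmatheta}. With it the argument is short.
\begin{itemize}
\item Since $0\ne V_\sigma^{M_L\cap H}\subset V_\sigma^{Z_L\cap H}$, the representation $V_\sigma$ contains an irreducible $Z_L$-subrepresentation $U_{\delta_0}$ with a $Z_L\cap H$-fixed vector.
\item Hence $U_{\delta_0}\hookrightarrow C^\infty(Z_L/Z_L\cap H)\cong C^\infty(Z/Z_H)|_{Z_L}$, using $Z=Z_LZ_H$.
\item So some $Z$-irreducible $U_\delta\subset C^\infty(Z/Z_H)$, i.e.\ $\delta\in\widehat Z_s$, maps $Z_L$-equivariantly onto $U_{\delta_0}$, which gives $\Hom_{Z_L}(V_\sigma,U_\delta)\ne\{0\}$.
\end{itemize}

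\textbf{Two further errors.} Neither is fatal.
\begin{itemize}
\item Your first display $(I^{\delta,\lambda})^{L\cap H}\cong U_\delta^{Z_L\cap H}$ is wrong. $L\cap H$ is transitive on $K_L/M_L$, but not on $K_L/Z_L\cong K/Z$. For Triple~1, $I^{[k],\lambda}|_L$ contains every $H^{m,\lambda}$ with $m\equiv k\ (2)$, so these invariants are infinite-dimensional. You never use this display.
\item In the uniqueness step, $K$-types cannot distinguish $I^{\delta,\lambda}$ from $I^{\delta',\lambda}$. Indeed $\Ind_Z^K\delta\cong\Ind_Z^K(w\delta)$ for $w\in N_K(\fa_{\fq\cap\fs})$, and $w\delta\not\cong\delta$ does occur, e.g.\ in the group case with $G_1=SO_e(1,2m+1)$, $M_1=SO(2m)$. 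As in the paper, conclude from Thm.~\ref{aal}(iii) that $I^{\delta,\lambda}\cong I^{\delta',\lambda}$ for generic $\lambda$. Then the character formula for induced representations gives $\delta\cong\delta'$.
\end{itemize}
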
 

\begin{proof} Let $\sigma\in\widehat M_{L,s}$. Since $Z_L\subset M_L$ and therefore also $Z_L\cap H\subset M_L\cap H$ (in fact we have equality, but this is not needed here) there is a $\delta_0\in\widehat Z_{L,s}$ appearing as an irreducible subrepresentation of $V_\sigma$. The spherical representation $U_{\delta_0}$ can be realized in $C^\infty(Z_L/Z_L\cap H)\cong C^\infty(Z/Z_H)$.
For the latter isomorphism we have used
the equality $K=K_LZ_H$ (established at the end of the proof of Lemma~\ref{sigmatheta}) which implies 
\be\label{stand}Z=Z_LZ_H\ .
\end{equation} Therefore there is a $\delta\in \widehat Z_s$ having non-zero $Z_L$-intertwining operator to $U_{\delta_0}$. This proves the existence of $\delta$. For the uniqueness of $\delta$ we employ the uniqueness of minimal $H$-spherical representations (Thm.~\ref{aal}). Indeed, let $\delta_1,\delta_2$ be such that $\Hom_{Z_L}(V_\sigma,U_{\delta_i})\ne\{0\}$, $i=1,2$. By 
the decomposition (\ref{schal}) for generic $\lambda$ both $I^{\delta_1,\lambda}$ and $I^{\delta_2,\lambda}$ are minimal $H$-spherical for the $L$-representation $H^{\sigma,\lambda}$.
Thus $I^{\delta_1,\lambda}\cong I^{\delta_2,\lambda}$. It follows that the representations $I^{\delta_1,\lambda}$ and $I^{\delta_1,\lambda}$ have the same character for all $\lambda\in\fa_{\fq\cap\fs,\C}^*$. From the character formula for parabolically induced representations (see e.g. \cite{Wa92}, Prop.~12.1.1) we deduce that $\delta_1\cong\delta_2$.
The argument is similar to the proof of Thm.~12.1.4 in \cite{Wa92}, but even more elementary (the difference is that we induce a finite dimensional representation while that Thm. treats
the case of induction of a discrete series).

Moreover, the multiplicity formula in Thm.~\ref{aal}, (ii) tells us that $\dim\Hom_{Z_L}(V_\sigma,U_\delta)=\dim (H^{\sigma,\lambda})^{L\cap H}=\dim V_\sigma^{M_L\cap H}$.

Assertion (c) is a direct consequence of Prop.~\ref{mufree}.
\end{proof}

For a finer discussion of (\ref{schal}), in particular in case of reducibility of $I^{\delta,\lambda}$, we need some information on the spherical $K$- and $K_L$-types of $I^\delta$.

\begin{lem}\label{schlips}
Let $(G,H,L)$ be a spherical triple satisfying the assumptions of Cor.~\ref{wels}(c). Let $\delta\in\widehat Z_s$. 
\begin{itemize}
\item[(a)]For any $\tau\in\widehat K_s$ we have $\dim \Hom_K(W_\tau,I^\delta)\le 1$.
\item[(b)]For any $\gamma\in\widehat K_{L,s}$ with $\Hom_{K_L}(V_\gamma,I^\delta)\ne\{0\}$  we have $\dim \Hom_{K_L}(V_\gamma,I^\delta)=\dim V_\gamma^{L\cap H}$.
Moreover, for $L=L_{max}$ the space $\Hom_{K_L}(V_\gamma,I^\delta)$ is one-dimensional while for $L=L_{min}$ (here minimal with respect to our assumptions) it is an irreducible
$C_2$-representation, where $C_2$ is as in the proofs of Lemma~\ref{sardine} and Thm.~\ref{aal}.
\item[(c)] Let $\gamma\in\widehat K_{L,s}$. Then $\Hom_{K_L}(V_\gamma,I^\delta)$  factors through a unique $K$-type $\tau_\gamma$, i.~e.
$$  \Hom_{K_L}(V_\gamma,I^\delta)\cong \Hom_{K_L}(V_\gamma, W_{\tau_\gamma})\otimes \Hom_K(W_{\tau_\gamma},I^\delta)\ .$$
Moreover, $\tau_\gamma\in\widehat K_s$.
\end{itemize}
\end{lem}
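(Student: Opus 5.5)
The plan is to reduce everything to Frobenius reciprocity on the compact homogeneous spaces $K/Z$ and $K_L/Z_L$, together with the transitivity results established above. Since $I^\delta=\Ind_Z^K\delta$ as a $K$-representation, for $\tau\in\widehat K$ we have $\Hom_K(W_\tau,I^\delta)\cong\Hom_Z(W_\tau,U_\delta)$. By Lemma~\ref{sigmatheta}, $K_L$ acts transitively on $K/Z$ with stabilizer $Z_L$, so as $K_L$-representations $I^\delta\cong\Ind_{Z_L}^{K_L}\delta$. Hence $\Hom_{K_L}(V_\gamma,I^\delta)\cong\Hom_{Z_L}(V_\gamma,U_\delta)$. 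Both (a) and (b) then become statements about branching from $K$ (resp.\ $K_L$) to $Z$ (resp.\ $Z_L$) for spherical types.

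For (b), I would use Cor.~\ref{renoir} ($Z_L\subset M_L$) and factor $\Ind_{Z_L}^{K_L}\delta=\Ind_{M_L}^{K_L}\Ind_{Z_L}^{M_L}\delta$. This gives
$$\Hom_{K_L}(V_\gamma,I^\delta)\cong\bigoplus_{\sigma}\Hom_{Z_L}(V_\sigma,U_\delta)\otimes\Hom_{M_L}(V_\gamma,V_\sigma),$$
which is (\ref{schal}) restricted to $K_L$-types. By Cor.~\ref{wels}(a) only $\sigma$ with $\Hom_{Z_L}(V_\sigma,U_\delta)\ne0$ contribute, and by Cor.~\ref{wels}(b),(c) each such term has dimension $\dim V_{\gamma_\sigma}^{L\cap H}$. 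By Cor.~\ref{wels}(c) and Prop.~\ref{mufree}, the type $\gamma$ is spherical for at most one $\sigma$ with $H^{\sigma,\lambda}\ni\gamma$, and then $\gamma=\gamma_\sigma$ with multiplicity one. This yields $\dim=\dim V_\gamma^{L\cap H}$. The refinements (one-dimensional for $L_{max}$, irreducible $C_2$-module for $L_{min}$) then follow from Prop.~\ref{mufree} together with the identification of the two $Sp(1)$-actions in the proof of Prop.~\ref{staun}.

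For (a), since $\tau\in\widehat K_s$, I would argue via $W_\tau\hookrightarrow C^\infty(K/K_H)$ and $K=K_LK_H$, so that $K/K_H\cong K_L/L\cap H$. Restricting to $K_L$, a $K$-intertwiner $W_\tau\to I^\delta$ is determined by its restriction to the $K_L$-types $\gamma$ with $V_\gamma^{L\cap H}\ne0$ that occur in $W_\tau$. I would pick one such $\gamma$ that generates $W_\tau$ under $K$, for instance a type meeting the $K_H$-fixed vector. The map $\Hom_K(W_\tau,I^\delta)\to\Hom_{K_L}(V_\gamma,I^\delta)$ is then injective and lands in a space that is irreducible under $C_2$ by (b). Since $C_2$ commutes with $K_L$ and acts on $W_\tau$ via $K$, the image is a $C_2$-submodule, and I would use Schur's lemma to get dimension $\le1$.

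I expect this step to be the main obstacle: it needs the $C_2$-action to be compatible with $K$, which requires $C_2\subset K\cap Z_G(L_1)$, and this holds. If the Schur argument fails, the fallback is a case-by-case check on Triples 1–5 and the rank-one group cases. There $K/K_H$ is a sphere or projective space, and multiplicity-freeness of $\Ind_Z^K\delta$ on spherical types follows from the Helgason/Cartan–Helgason theorem for compact symmetric pairs $(K,K_H)$ with $Z=Z_K(\fa_{\fq\cap\fs})$ playing the role of $M$.

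For (c), I would decompose $I^\delta=\bigoplus_\tau\Hom_K(W_\tau,I^\delta)\otimes W_\tau$ and restrict to $K_L$. Then $\Hom_{K_L}(V_\gamma,I^\delta)=\bigoplus_\tau\Hom_K(W_\tau,I^\delta)\otimes\Hom_{K_L}(V_\gamma,W_\tau)$. A spherical $\gamma$ must lie in a spherical $\tau$, since $V_\gamma^{L\cap H}$ pairs with $K_H$-fixed vectors via $K=K_LK_H$. Dimension counting against (b) then forces a single $\tau_\gamma$: each nonzero summand contributes at least $\dim\Hom_{K_L}(V_\gamma,W_\tau)$, which by the same irreducibility under $C_2$ must equal the whole space. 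Uniqueness comes from $C_2$-irreducibility of the total, using that the decomposition into summands is $C_2$-stable.
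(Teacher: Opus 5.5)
Your Frobenius set-up (using Lemma~\ref{sigmatheta} to get $I^\delta\cong\Ind_{Z_L}^{K_L}\delta$) and the factorization through $M_L$ match the paper's starting point. However, the step that carries the whole lemma is not justified. In
\[
\Hom_{K_L}(V_\gamma,I^\delta)\cong\bigoplus_{\sigma\in\widehat M_{L,s}}\Hom_{M_L}(V_\gamma,V_\sigma)\otimes\Hom_{Z_L}(V_\sigma,U_\delta),
\]
Cor.~\ref{wels}(b),(c) already show that every non-zero summand has dimension exactly $\dim V_\gamma^{L\cap H}$. So (b) is \emph{equivalent} to the statement that, for fixed $\gamma$ and $\delta$, at most one $\sigma$ contributes.

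Cor.~\ref{wels}(a),(c) and Prop.~\ref{mufree} only show that $\sigma\mapsto(\gamma_\sigma,\delta_\sigma)$ is a well-defined map. You need it to be injective, and that is not formal. Already $\sigma\mapsto\gamma_\sigma$ is far from injective. In Triple~7 one has $K_L=Spin(7)$, $L\cap H=G_2$, $M_L=Spin(6)\cong SU(4)$ and $M_L\cap H=SU(3)$. The spherical type $\gamma=(\frac k2,\frac k2,\frac k2)$ restricts to $M_L$ as $\bigoplus_{|\mu_3|\le k/2}(\frac k2,\frac k2,\mu_3)$, i.e.\ as $\bigoplus_{p+q=k}\mathcal H^{p,q}(\C^4)$. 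All $k+1$ summands are $SU(3)$-spherical, hence all have $\gamma_\sigma=\gamma$. Only the $\delta$-constraint separates them, and verifying this is the real content of the lemma.

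The paper reduces everything to the upper bound $\dim\Hom_{Z_L}(V_\gamma,U_\delta)\le\dim V_\gamma^{L\cap H}$ (its Claims~1 and 3) and then proves that bound case by case through the classification:
\begin{itemize}
\item uniqueness of $\sigma$ (Claim~4) in the group case and the rank one Triples 1, 3, 5;
\item in Triples 2, 4, Cartan--Helgason descriptions of $\gamma$ and $\delta$ combined with interlacing branching rules;
\item in Triples 6, 7, a bound on $\Hom_Z(W_{\tau_\gamma},U_\delta)$ via the symmetric spaces $K/K_H$ resp.\ $K/Z$, transferred to $Z_L$ by transitivity of $G_2$ on $S^6$ resp.\ $SU(3)$ on $S^5$.
\end{itemize}
Your fallback does not supply this. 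It is aimed at (a) rather than at this bound, and $Z=Z_K(\fa_{\fq\cap\fs})$ is not the $M$-group of the compact pair $(K,K_H)$, so Cartan--Helgason does not apply in the form you state.

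There are further gaps downstream. In (c), the existence of some $\tau\in\widehat K_s$ containing $\gamma$ is clear from $C^\infty(K_L/L\cap H)\cong C^\infty(K/K_H)$. What you need, though, is that such a $\tau$ satisfies $\Hom_Z(W_\tau,U_\delta)\ne\{0\}$ whenever $\Hom_{Z_L}(V_\gamma,U_\delta)\ne\{0\}$; otherwise the unique contributing $K$-type could be non-spherical. This is the paper's Claim~1. It is proved by averaging $\Phi_1\Phi_2\Phi_3$ over $Z_H/(Z_L\cap H)$, using $Z=Z_LZ_H$, and choosing $\Phi_1$ via Cor.~\ref{wels}(b) so that the pairing with a $K_H$-fixed $w_0$ and a $Z_H$-fixed $u_0$ is non-zero.

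In (a), your Schur argument is not well-posed, because $\Phi\mapsto\Phi\circ j$ is not $C_2$-equivariant ($c\circ\Phi\circ j=\Phi\circ(c\circ j)$). It is also unnecessary: your injectivity of $\Phi\mapsto\Phi\circ j$, combined with (b) for $L=L_{max}$ where the target is one-dimensional, already gives (a).

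Finally, in the paper the $C_2$-irreducibility in (b) is a \emph{consequence} of the factorization (c). It comes from $\Hom_{K_L}(V_\gamma,W_{\tau_\gamma})\hookrightarrow V_{\tilde\gamma}^{L\cap H}$ together with the argument of Prop.~\ref{staun}. Using it as the input for uniqueness of $\tau_\gamma$ in (c) is therefore circular unless you establish it independently.
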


\begin{proof}
Let $\gamma\in\widehat K_{L,s}$.  The spherical representation $V_{\gamma}$ can be realized in $C^\infty(K_L/L\cap H)\cong C^\infty(K/K_H)$.
Therefore we can choose
$\tau_\gamma\in\widehat K_s$ with $\Hom_{K_L}(V_\gamma, W_{\tau_\gamma})\ne\{0\}$. (It is in fact unique, `the $\gamma$-minimal $K_H$-spherical $K$-representation', see the arguments proving Claim 3 below.) Any $\tau\in \widehat K_s$  is of the form $\tau_\gamma$ for some $\gamma\in\widehat K_{L,s}$.

Frobenius reciprocity gives  $\Hom_{K_L}(V_\gamma,I^\delta)\cong \Hom_{Z_L}(V_\gamma,U_\delta)$ and $\Hom_K(W_\tau,I^\delta)\cong \Hom_Z(W_\tau,U_\delta)$.
Since $K_L\supset M_L\supset Z_L$ we can decompose $\Hom_{Z_L}(V_\gamma,U_\delta)$ via the decomposition of $V_\gamma$ into $M_L$-isotypic components:
\be\label{kragen}
\Hom_{Z_L}(V_\gamma,U_\delta)\cong \bigoplus_{\sigma\in \widehat M_{L,s}} \Hom_{M_L}(V_\gamma,V_\sigma)\otimes \Hom_{Z_L}(V_\sigma,U_\delta)\ .
\end{equation}
Since $K_L=M_L(L\cap H)$
any non-zero $\Phi\in \Hom_{M_L}(V_\gamma,V_\sigma)$  is injective when restricted to $V_\gamma^{L\cap H}$. This shows that indeed only spherical representations $\sigma$ of $M_L$
can appear in the decomposition.

{\it Claim 1: $\Hom_{Z_L}(V_\gamma,U_\delta)\ne\{0\}\Rightarrow \Hom_Z(W_{\tau_\gamma},U_\delta) \ne\{0\}$.}\\
{\it Proof of Claim 1.}  By assumption and (\ref{kragen}) there exists $\sigma\in \widehat M_{L,s}$ such that $\Hom_{M_L}(V_\gamma,V_\sigma)$ and $\Hom_{Z_L}(V_\sigma,U_\delta)$
are non-zero. We fix non-zero $\Phi_3\in \Hom_{K_L}(W_{\tau_\gamma},V_\gamma)$, $\Phi_2\in \Hom_{M_L}(V_\gamma, V_\sigma)$. As already mentioned above, $\Phi_2$ is  injective when restricted to $V_\gamma^{L\cap H}$. Similarly, the restriction of $\Phi_3$ to $W_{\tau_\gamma}^{K_H}$ is injective since $K=K_LK_H$. Fix non-zero 
$w_0\in W_{\tau_\gamma}^{K_H}$, $u_0\in U_\delta^{Z_H}$. Then $\Phi_2\Phi_3(w_0)\in V_\sigma^{M_L\cap H}$ is also non-zero. Equation (\ref{stand}) implies that elements $\Phi_1\in \Hom_{Z_L}(V_\sigma,U_\delta)$ are uniquely determined by $\Phi_1^*u_0\in V_\sigma^{Z_L\cap H}=V_\sigma^{M_L\cap H}$. By Cor.~\ref{wels}(b) we have $\dim \Hom_{Z_L}(V_\sigma,U_\delta)=\dim V_\sigma^{M_L\cap H}$. This implies that we can choose $\Phi_1$ such that $\langle \Phi_1\Phi_2\Phi_3(w_0),u_0\rangle\ne 0$. Now we consider
$$ \Phi:=\int_{Z/Z_L} \delta(z)\Phi_1\Phi_2\Phi_3\tau_\gamma(z)^{-1} dz = \int_{Z_H/Z_L\cap H} \delta(z)\Phi_1\Phi_2\Phi_3\tau_\gamma(z)^{-1} dz \in  
\Hom_Z(W_{\tau_\gamma},U_\delta) \ .$$
For the second equation we have used (\ref{stand}) again. We see that $\langle \Phi(w_0),u_0\rangle=\langle \Phi_1\Phi_2\Phi_3(w_0),u_0\rangle\ne 0$. Thus $\Phi\ne\{0\}$.

{\it Claim 2: Assume that $Z_L\cap H$ acts transitively on the $Z_H$-orbits on $K/K_H$. Then }
$$\Hom_{Z_L}(W_{\tau_\gamma},U_\delta)=\Hom_Z(W_{\tau_\gamma},U_\delta)\ .$$
{\it Proof of Claim 2.} By (\ref{stand}) any $\Phi\in\Hom_{Z_L}(W_{\tau_\gamma},U_\delta)$ is uniquely determined by the function 
$f_\Phi\in C^\infty(Z_L\cap H\backslash K/K_H)$ given by $f_\Phi(k):=\langle \Phi(\tau_\gamma(k) w_0),u_0\rangle$. Here $w_0,u_0$ are as in the proof of Claim 1.  Our assumption
implies that  $C^\infty(Z_L\cap H\backslash K/K_H)=C^\infty(Z_H\backslash K/K_H)$. Hence $f_\Phi=f_{\overline\Phi}$, where
$$ \overline\Phi:=\int_{Z/Z_L} \delta(z)\Phi\tau_\gamma(z)^{-1} dz = \int_{Z_H/Z_L\cap H} \delta(z)\Phi\tau_\gamma(z)^{-1} dz \in  
\Hom_Z(W_{\tau_\gamma},U_\delta) \ .$$
We conclude that $\Phi=\overline\Phi$.

{\it Claim 3: Assume that for any $\gamma\in\widehat K_{L,s}$, we have $\dim \Hom_{Z_L}(V_\gamma,U_\delta)\le\dim V_\gamma^{L\cap H}$. Then all assertions of the lemma are true.}\\
{\it Proof of Claim 3.}  We have $\{0\}\ne\Hom_{K_L}(V_\gamma, W_{\tau_\gamma})\subset \Hom_{K_L}(V_\gamma,C^\infty(K/K_H))\cong V_{\tilde\gamma}^{L\cap H}$.
It follows from Prop.~\ref{mufree} and the arguments in the proof of Prop.~\ref{staun} that $V_{\tilde\gamma}^{L\cap H}$ and hence also $\Hom_{K_L}(V_\gamma, W_{\tau_\gamma})$
is an irreducible $C_2$-module, whenever $L=L_{min}$. It then follows that in any case $\dim\Hom_{K_L}(V_\gamma, W_{\tau_\gamma})=\dim V_\gamma^{L\cap H}$.
On the other hand, the decomposition of $I^\delta$ into $K$-types yields
$$ \Hom_{K_L}(V_\gamma,I^\delta)\cong\bigoplus_{\tau\in\widehat K} \Hom_{K_L}(V_\gamma, W_\tau)\otimes \Hom_{K}( W_\tau, I^\delta)\cong 
\bigoplus_{\tau\in\widehat K} \Hom_{K_L}(V_\gamma, W_\tau)\otimes \Hom_{Z}( W_\tau,U_\delta)\ .$$
In particular, the natural map
$$  \Hom_{K_L}(V_\gamma, W_{\tau_\gamma})\otimes \Hom_{Z}( W_{\tau_\gamma},U_\delta)\rightarrow \Hom_{Z_L}(V_\gamma,U_\delta)$$
is injective. By Claim 1 it is non-zero whenever $\Hom_{Z_L}(V_\gamma,U_\delta)\ne\{0\}$. Now Claim 3 follows.

{\it Claim 4: Assume that for any $\gamma\in\widehat K_{L,s}$ there is at most one non-trivial summand in the decomposition (\ref{kragen}). Then all assertions of the lemma are true.}\\
{\it Proof of Claim 4.} By Cor.~\ref{wels}(c) we have $\dim\Hom_{M_L}(V_\gamma,V_\sigma)\le 1$ and $\dim\Hom_{Z_L}(V_\sigma,U_\delta)\le \dim V_\gamma^{L\cap H}$. Therefore Claim 3 implies Claim 4.

Using the techniques of the proofs of Prop.~\ref{staun} and Thm.~\ref{aal} we reduce the assertions of the lemma to the case that $(G,H,L)$ is irreducible, $L=L_{min}$.
Furthermore, we may assume without loss of generality that $H$ is connected. 

Let $G/H$ be a group manifold. We use the notation of Table 2. In addition, we denote by $M_1\subset K_1$ the $M$-part of a minimal parabolic of $G_1$.  By our assumptions, $L=L_{max}=G_1\times K_1$. Therefore we have $K=K_L=K_1\times K_1$, $M_L=M_1\times K_1$, and $Z_L=M_1\times M_1$. Now we can apply Claim 4.

It remains to discuss the Cases 1--7 in Table 1, but with $L=L_{min}$. In all cases except 1 and 4, the spaces $X$ and $X_K:=K/K_H$ are simply connected. Thus, it is only in Cases 1 and 4 that we have to discuss what happens if $G$ is replaced by a finite covering of it.

Let $\frak i$ be the largest ideal of $\fk$ contained in $\fk\cap\fh$, and let $I\subset K$ be the corresponding analytic subgroup. For any subgroup $Q\subset K$ we set
$\bar Q:=Q/Q\cap I\subset \bar K=K/I$. All the spherical representations appearing in Claims 1--4 descend to the overlined groups $\bar K, \bar K_L, \bar M_L, \bar Z_L$.
It turns out that some of these overlined groups may coincide: $\bar K=\bar K_L$ except for Cases 5 and 7, $\bar K_L=\bar M_L$ for Cases 1,3,5.

If $\bar K_L=\bar M_L$, i.~e. in the rank one cases 1,3,5, Claim 4 is trivially satisfied. Since the center of $L$ is always contained in $M_L$, the condition $\bar K_L=\bar M_L$
remains true if we replace $G$ by a finite covering. The lemma now follows in these cases. 

For the remaining  rank one case 6 we argue as follows (similar arguments would also work for
Cases 1,3). We have $Z=Z(G)\cdot Z_e$ (where $Z(G)=\{\pm\id\}$), $Z_e=Z_H=SO(7)\times SO(7)$ and $\bar Z_H=\bar K_H=SO(7)$. It follows that $\delta$ is given by a central character of $G$, in particular, $U_\delta$ is one-dimensional. Therefore $\Hom_Z(W_{\tau_\gamma}, U_\delta)\subset \Hom_{\bar Z_H}(W_{\tau_\gamma}, \C)=\Hom_{\bar K_H}(W_{\tau_\gamma}, \C)$.
Here $\C$ denotes the trivial one-dimensional representation. Since $K/K_H$ is symmetric and $W_{\tau_\gamma}$ is  irreducible spherical, the space on the right hand side is one-dimensional. Thus $\Hom_Z(W_{\tau_\gamma}, U_\delta)$ is at most one-dimensional. We have $L\cap H=\{l\in Spin(8)\mid s_+(l)\in SO(7)\}\cong Spin(7)$ and thus
$Z_L\cap H=\{l\in Spin(7)\subset Spin(8)\mid s_-(l)\in SO(7)\}\cong G_2$. Here $s_\pm$ are the half spin representations of $Spin(8)$. We obtain that $\overline{Z_L\cap H}=G_2\subset SO(7)=\bar Z_H$. The non-trivial $\bar Z_H$-orbits on $S^7=K/K_H$ are $6$-dimensional spheres. Since $G_2$ acts transitively on $S^6$ we see that the assumption of Claim 2
is satisfied. It follows that $\dim \Hom_{Z_L}(V_\gamma,U_\delta)=\dim \Hom_{Z_L}(W_{\tau_\gamma}, U_\delta)\le 1$. Eventually, we apply Claim 3.  

In Case 7 we argue similarly as in Case 6. 
We have $Z=SO(2)\times SO(6)\subset SO(8)=K$. Since $SO(6)\subset SO(7)=K_H$ the representation $\delta$ is given by a character of $SO(2)$, in particular it is one-dimensional. We also observe that $K/Z$ is symmetric. Since the algebra
of invariant differential operators acting on sections of line bundles over symmetric spaces is abelian we conclude that $\dim\Hom_Z(W_{\tau_\gamma}, U_\delta)\le 1$.
We have $M_L=Spin(6)\cong SU(4)\subset SO(8)=K$. Thus $Z_L\cap H=M_L\cap H=SU(4)\cap SO(7)=SU(3)$. The non-trivial $Z_H=SO(6)$-orbits on $S^7=K/K_H$ are $5$-dimensional spheres. Since $SU(3)$ acts transitively on $S^5$ we see that the assumption of Claim 2
is satisfied. It follows that $\dim \Hom_{Z_L}(V_\gamma,U_\delta)\le\dim \Hom_{Z_L}(W_{\tau_\gamma}, U_\delta)\le 1$. Eventually, we apply Claim 3. 

The proof for the higher rank cases 2 and 4 will be based on Claim 4. Let us start with Case 2. We have $K_L=SO(2n)$, $M_L=SO(2n-1)$, $Z=S(U(1)\times U(1))\times SO(2n-2)$. Thus $Z_H=S(U(1)\times U(1))\times U(n-1)$, $Z_L=SO(2n-2)$. In addition we have $L\cap H= U(n)$. Thus $\gamma\in\widehat K_{L,s}$ is an irreducible $SO(2n)$-representation possesing
a $U(n)$-invariant vector while $\delta\in \widehat Z_s$ viewed as a representation of $Z_L$ is an irreducible $SO(2n-2)$-representation possesing a $U(n-1)$-invariant vector.
Since $SO(2n)/U(n)$ is symmetric these representations can be described explicitly via highest weights by the Cartan-Helgason theorem (\cite{Hel84}, Ch. V, Thm.~4.1). If $n$ is even,
then $\gamma$ has highest weight
$$ (\gamma_1,\gamma_1,\gamma_2,\gamma_2,\dots,\gamma_{\frac{n}{2}},\gamma_{\frac{n}{2}}),\quad \gamma_i\in\N_0,\ \gamma_1\ge\gamma_2\ge\dots\ge\gamma_{\frac{n}{2}},$$
while $\delta$ (as representation of $SO(2n-2)$) has highest weight
$$  (\delta_1,\delta_1,\delta_2,\delta_2,\dots,\delta_{\frac{n-2}{2}},\delta_{\frac{n-2}{2}},0),\quad \delta_i\in\N_0,\ \delta_1\ge\delta_2\ge\dots\ge\delta_{\frac{n-2}{2}}.$$
For odd $n$ the corresponding highest weights are
$$ (\gamma_1,\gamma_1,\gamma_2,\gamma_2,\dots,\gamma_{\frac{n-1}{2}},\gamma_{\frac{n-1}{2}},0),\quad \gamma_i\in\N_0,\ \gamma_1\ge\gamma_2\ge\dots\ge\gamma_{\frac{n-1}{2}}$$
and
$$  (\delta_1,\delta_1,\delta_2,\delta_2,\dots,\delta_{\frac{n-1}{2}},\delta_{\frac{n-1}{2}}),\quad \delta_i\in\N_0,\ \delta_1\ge\delta_2\ge\dots\ge\delta_{\frac{n-1}{2}}.$$
Here, in order to fix the sign of the last coordinate, we use the following convention for highest weights of representations of $SO(2k)$: The orientation of $\R^{2k}$ induced by the complex structure $e_1+e_2+\dots+e_k\in\fo(2k)$,
where $e_1,\dots,e_k$ is the basis of the Cartan subalgebra with respect to which the highest weights are written, should coincide with the orientation coming from the complex structure
determining the embedding $U(k)\subset SO(2k)$.  

By the classical branching rules for restricting $SO(k)$-representations to $SO(k-1)$ (see e.~g.~\cite{Zhe73} or \cite{GW98}) the only possibility for the highest weight $\mu_\sigma$ of $\sigma\in \widehat M_{L}$ with
$\Hom_{M_L}(V_\gamma, V_{\sigma})\ne\{0\}\ne \Hom_{Z_L}(V_\sigma,U_\delta)$ is
$$ \mu_\sigma=(\gamma_1,\delta_1,\gamma_2,\delta_2,\dots,\delta_{\frac{n-2}{2}},\gamma_{\frac{n}{2}})\ \mbox{ or }\  \mu_\sigma=(\gamma_1,\delta_1,\gamma_2,\delta_2,\dots,\gamma_{\frac{n-1}{2}},\delta_{\frac{n-1}{2}}),$$
respectively. (If the element $\mu_\sigma$ is not a highest weight, i.e. if it is not dominant, then $\Hom_{Z_L}(V_\gamma,U_\delta)=\{0\}$.) Now we apply Claim 4.

For Case 4 the arguments are completely analogous. We have $K=S(U(2)\times U(2n))$, $K_H=Sp(1)\times Sp(n)$, $I=Sp(1)$, $K_L=\{e\}\times S(U(1)\times U(2n))$ ($L=L_{min}$!),
$M_L=\{e\}\times S(\Delta U(1)\times U(2n-1))$, $L\cap H=Sp(n)$.
Moreover, we have $Z=S((\id_2\times \sigma')(\Delta U(2))\times U(2n-2))$, where $\sigma': U(2)\rightarrow U(2)$ is the non-trivial involution leaving the diagonal $U(1)\times U(1)$ pointwise fixed. Hence, $Z_L=\{e\}\times S(\Delta_{2,4}U(1)\times U(2n-2))$, $Z_H=\Delta Sp(1)\times Sp(n-1)$. Now we reduce modulo $I$, i.e.
we take the projections to the $U(2n)$-factor of $K$. We obtain $\bar K=\bar K_L= U(2n)$, $\bar K_H=\overline{L\cap H}=Sp(n)$, $\bar M_L=U(1)\times U(2n-1)$, 
$\bar Z=U(2)\times U(2n-2)$, $\bar Z_H=Sp(1)\times Sp(n-1)$, $\bar Z_L=\{e\}\times U(1)\times U(2n-2)$. We view $\delta\in \widehat Z_{s}$
as representations of $U(2n-2)\subset \bar Z_L\subset\bar Z$ possesing an $Sp(n-1)$-invariant vector. As such, it remains irreducible. Again, we use the Cartan-Helgason theorem
in order to write down the highest weights of the $U(2n)$-representation $\gamma$ and the $U(2n-2)$-representation $\delta$ explicitly:
\begin{eqnarray*} \mu_\gamma&=& (\gamma_1,\gamma_1,\gamma_2,\gamma_2,\dots,\gamma_{n},\gamma_{n}),\quad \gamma_i\in\Z,\ \gamma_1\ge\gamma_2\ge\dots\ge\gamma_{n}\\
\mu_\delta&=&(\delta_1,\delta_1,\delta_2,\delta_2,\dots,\delta_{n-1},\delta_{n-1}),\quad \delta_i\in\Z,\ \delta_1\ge\delta_2\ge\dots\ge\delta_{n-1}.
\end{eqnarray*}
By the classical branching rules for restricting $U(k)$-representations to $U(1)\times U(k-1)$ and $U(k-1)$ (\cite{Zhe73},\cite{GW98}) the only possibility for the highest weight $\mu_\sigma$ of a $U(1)\times U(2n-1)$-representation $\sigma\in \widehat M_{L}$ with
$\Hom_{U(1)\times U(2n-1)}(V_\gamma, V_{\sigma})\ne\{0\}\ne \Hom_{U(2n-2)}(V_\sigma,U_\delta)$ is
$$ \mu_\sigma=(\gamma_n+\sum_{i=1}^{n-1} \gamma_i-\delta_i;\gamma_1,\delta_1,\gamma_2,\delta_2,\dots,\delta_{n-1},\gamma_{n})\ .$$
Now we can apply Claim 4. If we replace $G$  by a finite covering the argument remains unchanged, only the integrality conditions on $\gamma_i$, $\delta_i$ should be relaxed (while
the differences $\gamma_i-\gamma_{i+1}$ and $\delta_i-\delta_{i+1}$ are still required to be integers). 
\end{proof}

Note that spherical triples $(G,H,L)$ satisfying the assumptions of Cor.~\ref{wels}(c) are (at least) very close to be complex spherical. This might be considered as the main reason for the validity of Lemma \ref{schlips}. In fact, Lemma \ref{schlips} could be deduced from Claim 4 and the main result of \cite{KK19} which provides distinguished generators of the
algebra of invariant differential operators for certain complex spherical homogeneous spaces. Since the proof of the latter result also requires an extensive case by case analysis of branching rules we preferred to present a more direct proof here. The proof of Lemma \ref{schlips} also shows the following interesting fact: Let $\delta_\sigma\in\widehat Z_s$ be
the element distinguished by Cor.~\ref{wels}(a). Then the assignment $\sigma \mapsto (\gamma_\sigma,\delta_\sigma)$ defines a bijection between $\widehat M_{L,s}$ and
$\{(\gamma,\delta)\in \widehat K_{L,s}\times \widehat Z_s\mid \Hom_{Z_L}(V_\gamma,U_\delta)\ne\{0\}\}$.

As a first application of Lemma~\ref{schlips} we give a certain generalization of the multiplicity formula $m_\rho(\pi)=\dim V_\pi^{L\cap H}$ of Theorems \ref{aal} and \ref{mainbranching} to irreducible $G$-representations $\rho$
that are only {\em weakly} spherical and even to the multiplicity as a subquotient.

\begin{cor}\label{fliege} Let $(G,H,L)$ be a spherical triple satisfying the assumptions of Cor.~\ref{wels}(c).  Let $V_\pi$ be an irreducible admissible representation of $L$ with $V_\pi^{L\cap H}\ne\{0\}$. Let $W_\rho$ be an irreducible admissible weakly $H$-spherical representation of $G$. By $\widetilde m_\rho(\pi)$ we denote the multiplicity of $V_{\pi,\infty}$ in $W_{\rho,\infty}$
as a subquotient. We assume that $\widetilde m_\rho(\pi)\ne 0$. Then $\widetilde m_\rho(\pi)=\dim V_\pi^{L\cap H}$ and $W_\rho^{K\cap H}\ne\{0\}$. Moreover, if $m_\rho(\pi)\ne 0$, then $\widetilde m_\rho(\pi)=m_\rho(\pi)$.
\end{cor}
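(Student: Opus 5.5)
Since $W_\rho$ is irreducible and weakly $H$-spherical, it is a quotient of some admissible $H$-spherical representation $W_{\rho_1}$. By Delorme's embedding theorem, $W_{\rho_1}$ embeds (via a filtration argument as in the proof of Prop.~\ref{hering}) into a finite sum of principal series $I^{\delta,\lambda}$. I would therefore work directly inside $I^{\delta,\lambda}$. The plan is to show that $W_\rho$ occurs as a subquotient of some $I^{\delta,\lambda}$ with $\delta\in\widehat Z_s$. One route is the dual statement: $W_{\tilde\rho}$ is a subrepresentation of an $H$-spherical representation, hence sits inside some $I^{\delta',\lambda'}$, and dualizing the induced picture gives a quotient map from a principal series for $G/H$ onto $W_\rho$.

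The multiplicity count rests on $K_L$-types. Let $\gamma_\pi$ be the unique $K_L$-type of Cor.~\ref{wels}(c), which satisfies $V_\pi^{L\cap H}=V_\pi(\gamma_\pi)^{L\cap H}$ and occurs in $V_\pi$ with multiplicity one. Each subquotient copy of $V_{\pi,\infty}$ in $W_{\rho,\infty}$ then contributes one copy of $\gamma_\pi$ to $W_\rho$, so
\[ \widetilde m_\rho(\pi)\le \dim\Hom_{K_L}(V_{\gamma_\pi},W_\rho)\le \dim\Hom_{K_L}(V_{\gamma_\pi},I^\delta). \]
By Lemma~\ref{schlips}(b), the right-hand side equals $\dim V_{\gamma_\pi}^{L\cap H}$, which coincides with $\dim V_\pi^{L\cap H}$. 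This gives the upper bound.

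For the matching lower bound and the $K$-spherical claim, I would use Lemma~\ref{schlips}(c). It says $\Hom_{K_L}(V_{\gamma_\pi},I^\delta)$ factors through the single $K$-type $\tau_{\gamma_\pi}\in\widehat K_s$, which occurs in $I^\delta$ with multiplicity at most one by Lemma~\ref{schlips}(a). Since $\widetilde m_\rho(\pi)\ne0$, the type $\gamma_\pi$ occurs in $W_\rho$, so the $K$-type $\tau_{\gamma_\pi}$ must belong to the composition factor $W_\rho$. As $\tau_{\gamma_\pi}$ is $K_H$-spherical, this yields $W_\rho^{K\cap H}\ne\{0\}$. Moreover, the whole $\gamma_\pi$-isotypic part of $I^\delta$ lies inside $\tau_{\gamma_\pi}$, and $\tau_{\gamma_\pi}$ lies entirely in the single composition factor $W_\rho$. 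Hence $\dim\Hom_{K_L}(V_{\gamma_\pi},W_\rho)=\dim V_\pi^{L\cap H}$.

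It remains to see that all these $\gamma_\pi$-vectors really lie in copies of $V_\pi$, i.e.\ to convert the $K_L$-count into the subquotient multiplicity $\widetilde m_\rho(\pi)$. I would use Proposition~\ref{hering}(ii) applied to $W_\rho$, which gives a filtration by irreducible $(\fl,K_L)$-modules. Each $L\cap H$-spherical $\gamma_\pi$-vector generates a subquotient $V_{\pi'}$ with $\gamma_{\pi'}=\gamma_\pi$. Since ${\Cal U}(\fl)^{L\cap H}$ acts on the $\gamma_\pi$-part of $I^\delta$ through $H^{\sigma,\lambda}$-parameters via the decomposition~(\ref{schal}), the correspondence $\sigma\leftrightarrow(\gamma_\sigma,\delta_\sigma)$ following Lemma~\ref{schlips} should force $\pi'\cong\pi$. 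I expect this step to be the main obstacle: ruling out that two different $L$-constituents share $\gamma_\pi$ inside $W_\rho$, possibly using the infinitesimal character of $\pi$ together with Prop.~\ref{staun}. Finally, if $m_\rho(\pi)\ne0$, then Lemma~\ref{sardine} and $m_\rho\le\widetilde m_\rho$ give the last claim. Alternatively, I would pass to the $\pi$-minimal subrepresentation and apply Thm.~\ref{aal}(ii), which gives $m_\rho(\pi)=\dim V_\pi^{L\cap H}=\widetilde m_\rho(\pi)$.
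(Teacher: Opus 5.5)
Your upper bound and your proof of $W_\rho^{K\cap H}\ne\{0\}$ are fine. So is the observation that, by Lemma~\ref{schlips}(a),(c), the whole $\gamma_\pi$-isotypic part of $I^\delta$ sits in the single $K$-type $\tau_{\gamma_\pi}$ and hence inside the composition factor $W_\rho$. (A small correction to your first step: the embedding in the proof of Prop.~\ref{hering} is only $K$-equivariant, and $W_{\tilde\rho}$ lies in an $H$-\emph{co}spherical representation, not an $H$-spherical one. What you need, and what does hold, is that $W_{\rho_1}$ has a filtration whose successive quotients embed into principal series $I^{\delta,\lambda}$. Hence $W_\rho$ is a subquotient of one of them.)

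The genuine gap is the lower bound $\widetilde m_\rho(\pi)\ge\dim V_\pi^{L\cap H}$. Knowing $\dim\Hom_{K_L}(V_{\gamma_\pi},W_\rho)=\dim V_\pi^{L\cap H}$ gives this only if no other $L$-constituent of $W_\rho$ carries $\gamma_\pi$, and you leave exactly that as a hope. Neither the appeal to infinitesimal characters and Prop.~\ref{staun} nor the route through (\ref{schal}) is carried out. The latter would at least need uniqueness of $\sigma$ for the pair $(\gamma_\pi,\delta)$ and multiplicity one of $\gamma_\sigma$ in $H^{\sigma,\lambda}$ for every $\lambda$.

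The missing idea is the finer content of Lemma~\ref{schlips}(b), which you used only for its dimension count. If $L=L_{max}$, the space $\Hom_{K_L}(V_{\gamma_\pi},I^\delta)$ is one-dimensional, so $1\le\widetilde m_\rho(\pi)\le 1$ and you are done. If $L=L_{min}$, that space is an irreducible $C_2$-module. Since $C_2\subset K$ centralizes $L_{min}$ and acts on the subquotient $W_\rho$, the part of the $\gamma_\pi$-multiplicity space contributed by constituents isomorphic to $V_\pi$ is a non-zero $C_2$-invariant piece of an irreducible module, hence all of it. General $L$ then follows from $V_\pi=V_{\pi_1}\otimes F_\tau$, as in the proofs of Prop.~\ref{staun} and Thm.~\ref{aal}.

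The final claim has the same gap. Lemma~\ref{sardine} together with $m_\rho(\pi)\le\widetilde m_\rho(\pi)$ gives only an inequality. Thm.~\ref{aal}(ii) is not available, because it requires $W_\rho$ to be $H$-spherical. Here $W_\rho$ is irreducible and only weakly $H$-spherical, so there is no proper $\pi$-minimal subrepresentation to pass to.

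This is precisely the situation the corollary is meant for. An example is the non-spherical irreducible quotient $W_0$ of a reducible $\pi$-minimal representation in Prop.~\ref{kopp}, and this is how the corollary is used in Prop.~\ref{dark}(ii). The correct argument reruns the $C_2$ argument with the submodule multiplicity. The $\gamma_\pi$-part of the image of $\Hom_L(V_{\pi,\infty},W_{\rho,\infty})\otimes V_{\pi,\infty}$ in $W_{\rho,\infty}$ is a non-zero $C_2$-invariant subspace of the irreducible $C_2$-module found above (for $L_{max}$, the whole one-dimensional space). This forces $m_\rho(\pi)=\dim V_\pi^{L\cap H}=\widetilde m_\rho(\pi)$.
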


\begin{proof}  Delorme's embedding theorem (for not necessarily irreducible admissible spherical representations) \cite{Del85}, Thm.~1, implies that $W_{\rho,\infty}$ appears as an irreducible subquotient of an induced representation
$I^{\delta,\lambda}_\infty$. Hence 
\be\label{malibu}
\widetilde m_\rho(\pi)\le\dim\Hom_{K_L}(V_{\gamma_\pi}, I^\delta)\ . 
\end{equation}
Here $\gamma_\pi$ is  as in Cor.~\ref{wels}(c). More precisely, if $L=L_{min}$ and $\widetilde m_\rho(\pi)\ne 0$, then $\widetilde m_\rho(\pi)$ is the dimension of a non-trivial $C_2$-invariant subspace of $\Hom_{K_L}(V_{\gamma_\pi}, I^\delta)$. By Lemma~\ref{schlips}(b)
we get equality in (\ref{malibu}). Now it is easy to conclude equality for arbirtrary $L$, whenever $\widetilde m_\rho(\pi)\ne 0$. Again by  Lemma~\ref{schlips}(b) and Cor.~\ref{wels}(c)
we obtain $\widetilde m_\rho(\pi)=\dim V_{\gamma_\pi}^{L\cap H}=\dim V_\pi^{L\cap H}$. The same arguments also show that $m_\rho(\pi)\ne 0$ implies 
$m_\rho(\pi)=\dim V_\pi^{L\cap H}$. By Lemma~\ref{schlips}(c) $\Hom_{K_L}(V_{\gamma_\pi}, I^\delta)$ factors through a $K\cap H$-spherical $K$-type $\tau=\tau_{\gamma_\pi}$.
It follows that $W_\rho(\tau)\ne\{0\}$, and hence $W_\rho^{K\cap H}\ne\{0\}$.
\end{proof}

As a second application we will get some information on $L$-minimal $G$-representations and their unique irreducible quotients.
By $W_{\rho^*}$ we denote the conjugate dual of a representation $W_\rho$ on a reflexive Banach space. Note that $W_{\rho,\infty}\cong W_{\rho^*,\infty}$ if and only if $W_{\rho,\infty}$ carries an invariant non-degenerate
(not necessarily definite) continuous Hermitian form. For admissible representations of finite length the latter property only depends on the underlying Harish-Chandra module.

\begin{pro}\label{dark}
Let $(G,H,L)$ be a triple of Type~I. Let $V_\pi$ be an irreducible admissible representation of $L$ with $V_\pi^{L\cap H}\ne\{0\}$, and let $W_\rho$ be the $\pi$-minimal $H$-spherical 
$G$-representation with unique irreducible quotient $W_{\rho_0}$. Then 
\begin{itemize}
\item[(i)]$W_{\rho_0}^{K\cap H}\ne\{0\}$.
\item[(ii)] The natural map $\Hom_L(V_{\pi,\infty},W_{\rho,\infty})\rightarrow \Hom_L(V_{\pi,\infty},W_{\rho_0,\infty})$ is an isomorphism.
\item[(iii)] Assume that $V_{\pi,\infty}\cong V_{\pi^*,\infty}$. Then $W_{\rho_0,\infty}\cong W_{\rho_0^*,\infty}$.
\end{itemize}
\end{pro}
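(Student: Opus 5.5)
The plan is to reduce, as in the proofs of Prop.~\ref{staun} and Thm.~\ref{aal}, to $L=L_{min}$ and to irreducible triples. In the simple case this means a triple satisfying the assumptions of Cor.~\ref{wels}(c), up to replacing $L$ by $L_{max}$. The group case is handled by the explicit description $L_1\cap H=\{e\}$, where everything becomes a statement about $G_1$-representations. The key structural input is the proof of Thm.~\ref{aal}. For $\pi$-minimal $W_\rho$, the space $A=\Hom_{L_1}(V_{\pi_1,\infty},W_{\rho,\infty})$ is an irreducible $C_3$-representation, and every proper closed subrepresentation $W'\subset W_\rho$ has $\Hom_L(V_{\pi,\infty},W'_\infty)=\{0\}$.

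\textbf{Step (ii).} Let $W'$ be the maximal proper subrepresentation, so that $W_{\rho_0}=W_\rho/W'$. Injectivity of $\Hom_L(V_{\pi,\infty},W_{\rho,\infty})\to \Hom_L(V_{\pi,\infty},W_{\rho_0,\infty})$ follows at once, since the kernel is $\Hom_L(V_{\pi,\infty},W'_\infty)=\{0\}$. For surjectivity I would pass to Harish-Chandra modules. By Lemma~\ref{sardine} and Prop.~\ref{hering} (for $L_{max}$), the multiplicity spaces are $(\fl,K_L)$-homomorphism spaces. It then suffices to compare dimensions. By Thm.~\ref{aal}(ii), $m_\rho(\pi)=\dim V_\pi^{L\cap H}$. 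On the other side, $m_{\rho_0}(\pi)\le\widetilde m_{\rho_0}(\pi)$. Here I would use Cor.~\ref{fliege} when its hypotheses hold: $W_{\rho_0}$ is irreducible and weakly $H$-spherical, and $\widetilde m_{\rho_0}(\pi)\neq0$ because the image of $V_\pi$ is nonzero by injectivity. This gives $\widetilde m_{\rho_0}(\pi)=\dim V_\pi^{L\cap H}$, so the injective map between spaces of equal finite dimension is an isomorphism. For general Type~I triples, the $C$-equivariance argument of Lemma~\ref{sardine} reduces the comparison to $C_2$-finite parts. The main obstacle is that Cor.~\ref{fliege} is only available under the assumptions of Cor.~\ref{wels}(c), so other cases must be handled separately. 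Triple 5 with $L_{min}$ reduces to $L_{max}$ via the isomorphism of the two $Sp(1)$-actions shown in the proof of Prop.~\ref{staun}. In the group case, $L$-minimality is explicit: $W_\rho$ restricted to $L_1=G_1$ is essentially a $\pi_1$-isotypic piece.

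\textbf{Step (i).} This comes for free from the same application of Cor.~\ref{fliege}, which yields $W_{\rho_0}^{K\cap H}\neq\{0\}$ once $\widetilde m_{\rho_0}(\pi)\ne0$. In the group case, $K\cap H=\Delta K_1$, and a nonzero $\Delta K_1$-invariant vector exists in any $G_1\times G_1$-representation $W_{\pi_1}\otimes W_{\tilde\pi_1}$ that is $\Delta G_1$-spherical.

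\textbf{Step (iii).} Assume $V_{\pi,\infty}\cong V_{\pi^*,\infty}$. Then $W_{\rho_0^*}$ is irreducible and contains $V_{\pi^*}\cong V_\pi$ as a quotient, by dualizing the nonzero map in (ii). To see that $W_{\rho_0^*}$ is again weakly $H$-spherical, I would use (i): the nonzero $K\cap H$-fixed vector transfers to the conjugate dual, and I would combine this with the uniqueness part of Thm.~\ref{aal}(iii). Concretely, take the $\pi$-minimal representation $W_{\rho'}$ attached to $\pi^*\cong\pi$. By uniqueness, $W_{\rho'}\cong W_\rho$. The remaining task is to show that $W_{\rho_0^*}$ is a quotient of $W_{\rho'}$, i.e. that the $(\fg,K)$-module generated by the image of $V_\pi$ in $W_{\rho_0^*}$ is a quotient of the $\pi$-minimal one. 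I expect this to be the hardest step. I would first try to show that $W_{\rho_0^*}$ is a quotient of some $H$-spherical representation in which $V_\pi$ occurs as a subrepresentation, using Delorme's embedding into $I^{\delta,-\bar\lambda}$ together with the explicit $L$-decomposition (\ref{schal}). Once that holds, minimality and uniqueness force $\rho_0^*\cong\rho_0$.
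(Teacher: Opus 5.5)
Your arguments for (i) and (ii) are the paper's. Injectivity comes from Definition~\ref{spass}. Then $m_\rho(\pi)=\dim V_\pi^{L\cap H}$ (Theorem~\ref{aal}) is compared with $\widetilde m_{\rho_0}(\pi)=\dim V_\pi^{L\cap H}$ (Corollary~\ref{fliege}), and the latter also gives $W_{\rho_0}^{K\cap H}\neq\{0\}$. In the group case it is simpler still: $W_\rho\cong V_\pi\hat\otimes V_{\tilde\pi}$ is irreducible, so $W_{\rho_0}=W_\rho$.

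Part (iii), however, is not proved. Your route via uniqueness of the $\pi$-minimal representation needs $W_{\rho_0^*}$ to be a quotient of an $H$-spherical representation in which $V_\pi$ occurs as an $L$-\emph{sub}representation, and none of your steps delivers this:
\begin{itemize}
\item A nonzero $(K\cap H)$-fixed vector does not make a representation weakly $H$-spherical.
\item Dualizing the map in (ii) only shows that $V_{\pi^*}$ is a \emph{quotient} of $W_{\rho_0^*}$, which is the wrong direction for $m_{\rho_0^*}(\pi)$ and hence for minimality.
\item Dualizing a realization of $W_{\rho_0}$ as a subquotient of $I^{\delta,\lambda}$ only places $W_{\rho_0^*}$ inside $I^{\delta,-\bar\lambda}$. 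Nothing in your proposal relates that representation to $I^{\delta,\lambda}$, and establishing this relation is precisely the content of (iii).
\end{itemize}

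The missing idea is a Weyl-group symmetry transferred from $L$ to $G/H$, combined with multiplicity one of spherical $K$-types. The paper reduces to $G$ simple, so that $L$ and $G/H$ have real rank one. If $\pi$ is tempered, Proposition~\ref{fishy} together with Delorme's morphism into some $I^{\delta,\lambda}$ with $\mathrm{Re}\,\lambda=0$ shows that $W_{\rho_0}$ is unitarizable. If $\pi$ is non-tempered, the argument runs as follows.
\begin{itemize}
\item The Langlands classification gives $V_\pi\subset H^{\sigma,\lambda}$ with $\mathrm{Re}\,\lambda<0$, and the invariant Hermitian form forces $\lambda\in\R$ and $w\sigma\cong\sigma$.
\item Sphericity lets you represent $w$ by some $l_w\in L\cap H$. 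Because $p_\fq(\fa_L)=\fa_{\fq\cap\fs}$, the same $l_w$ represents the nontrivial $w'\in W(\fg,\fa_{\fq\cap\fs})$. Hence the $\delta$ attached to $\sigma$ by Corollary~\ref{wels}(a) satisfies $w'\delta\cong\delta$.
\item Via (\ref{dadamax}) one obtains a nonzero intertwiner $W_{\rho,\infty}\to I^{\delta,\lambda}_\infty$, which realizes $W_{\rho_0}$ as a subquotient of $I^{\delta,\lambda}$.
\item Then $W_{\rho_0^*}$ is a subquotient of $(I^{\delta,\lambda})^*\cong I^{\delta,-\lambda}=I^{w'\delta,w'\lambda}$. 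This has the same character, hence the same composition factors, as $I^{\delta,\lambda}$.
\item Finally, $W_{\rho_0}$ and $W_{\rho_0^*}$ have the same $K$-types, so by (i) they share a $(K\cap H)$-spherical $K$-type. Such a $K$-type has multiplicity one in $I^\delta$ by Lemma~\ref{schlips}(a), which forces the two subquotients to coincide.
\end{itemize}
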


\begin{proof}
If $X=G/H$ is a group manifold, then all three assertions are trivially satisfied. Indeed, $L$-minimal representations are (up to infinitesimal equivalence) of the form $W_\rho=\End_{HS}(V_\pi)\cong V_\pi\hat\otimes V_{\tilde\pi}$, where $HS$ stands for Hilbert-Schmidt and $V_\pi$ is an irreducible admissible Hilbert representation of $L_{min}$. In particular, $W_\rho$ is irreducible.
Therefore we can reduce the assertion to triples $(G,H,L)$ such that $X=G/H$ has no group factors. So we may assume that $(G,H,L)$ is a spherical triple satisfying the assumptions of Cor.~\ref{wels}(c). In this case Assertion (i) is already covered by Cor.~\ref{fliege}. Concerning Assertion (ii) we remark that the injectivity of the map in question
is a direct consequence of Def.~\ref{spass}. Moreover, by Thm.~\ref{aal} and Cor.~\ref{fliege} it is map between vector spaces of the same finite dimension.

We now prove Assertion (iii). By a further reduction we may assume that $G$ is simple. If $V_\pi$ is a discrete series representation, then by Prop.~\ref{fishy} the $\pi$-minimal representation $W_\rho$ is a relative discrete series,
in particular it is irreducible and unitary. Similarly, if $V_\pi$ is tempered, but not square integrable, then by Prop.~\ref{fishy} the representation $W_\rho$ is relatively tempered, but
not a relative discrete series. By \cite{Del85}, Thm.~2, the representation $W_{\rho,\infty}$ allows a non-trivial morphism to some $I^{\delta,\lambda}_\infty$ with $\mathrm {Re}(\lambda)=0$. Here we have used that $X=G/H$ has real rank one. In particular, $W_{\rho_0,\infty}$ is an irreducible subquotient, hence a direct summand, of the unitarizable representation $I^{\delta,\lambda}_\infty$. 

Now we assume that $V_\pi$ is non-tempered and admits an invariant Hermitian form (defined on smooth vectors). Since $L$ has real rank one, by Langlands
classification $V_{\pi,\infty}$ is the unique irreducible submodule of some $H^{\sigma,\lambda}_\infty$ with $\mathrm{Re}(\lambda)<0$. Note that this embedding is given by taking the leading coefficients of the asymptotic expansion of matrix coefficients of $V_\pi$. The existence of an invariant Hermitian form implies that
$\lambda$ is real, and that $w\sigma\cong\sigma$, where $w$ is the nontrivial element of the Weyl group $W(\fl,\fa_L)$, see e.g. \cite{Kn86}, Thm.~16.6 and the remark following it.
In our real rank one situation, that $(G,H,L)$ is spherical just means that $L\cap H$ acts transitively on the unit sphere in $\fl\cap\fs$. In particular, $w$ can be represented by an element 
$l_w\in L\cap H$. Since $p_\fq (\fa_L)=\fa_{\fq\cap\fs}$ we see that $\Ad(l_w)$ acts also on $\fa_{\fq\cap\fs}$ by multiplication by $-1$. Thus $l_w$ represents also
the nontrivial element $w'\in W(\fg,\fa_{\fq\cap\fs})$. Now let $\delta\in\widehat Z_s$ be the element associated to $\sigma\in\widehat M_{L,s}$ by Cor.~\ref{wels}(a).
It follows that then also $w'\delta\cong\delta$. We choose $0\ne\Phi \in  \Hom_{(\fl,K_L)}(V_{\pi,K_L},W_{\rho,K})=  \Hom_{L}(V_{\pi,\infty},W_{\rho,\infty})$.
Using the relation between matrix coeffiicients (\ref{dadamax}) and Cor.~\ref{wels}(a) we find that there is a non-trivial intertwining operator (again given by asymptotics of matrix coefficients) $W_{\rho,\infty}\rightarrow I^{\delta,\lambda}_\infty$. Formally, this could be done by looking at the $Z_LA_L$-map induced by $\Phi$ between the $\fn$-homology
groups $V_{\pi,\infty}/\fn_L V_{\pi,\infty}$ and $W_{\rho,\infty}/\fn_{\sigma\theta}W_{\rho,\infty}$. We find that $W_{\rho_0,\infty}$ can be identified with an irreducible subquotient
of $I^{\delta,\lambda}_\infty$. We have $(I^{\delta,\lambda})^*\cong I^{\delta,-\lambda}$. Since $I^{\delta,-\lambda}\cong I^{w'\delta,w'\lambda}$ the induced representations
$I^{\delta,\lambda}_\infty$ and $I^{\delta,-\lambda}_\infty$ have the same character, and hence the same irreducible subquotients. It follows that also $W_{\rho_0^*,\infty}$ can
be realized as a subquotient of $I^{\delta,\lambda}_\infty$. Note that $W_{\rho,\infty}$ and $W_{\rho^*,\infty}$ are isomorphic as representations of $K$. Now, by Assertion (i) both subquotients
have a common spherical $K$-type. But by Lemma~\ref{schlips}(a) spherical $K$-types have multiplicity one in $I^{\delta,\lambda}_\infty$. We conclude that $W_{\rho,\infty}=W_{\rho^*,\infty}$
(as subquotients of $I^{\delta,\lambda}_\infty$).
\end{proof}

Now we give examples of irreducible unitary representations $V_\pi$ of $L$ such that the corresponding $\pi$-minimal $G$-representation
is not unitary, in some cases even not irreducible. Their existence is the main reason for discussing also non-unitary $G$-representations in this section.

\begin{pro}\label{kopp} Let $(G,H,L)=(SO_e(2,2n), SO_e(1,2n), U(1,n))$. Let $\pi\in\widehat L$ be such that $V_\pi^{L\cap H}\ne\{0\}$. Then, following the notation of  
Section~\ref{detail}, we have $\pi=\pi^k_\lambda$, for some $k\in\Z$, $\lambda\in i[0,\infty)\cup Q_k$. Let $W_\rho$ be the $\pi$-minimal $H$-spherical representation of $G$. Then $W_\rho$ is not unitary if and only if 
\be\label{0kopp}|k|< n, \ \lambda\ne n\end{equation} 
and either 
\be \label{1kopp}k\equiv n (2)\mbox{ and }\lambda\in [0,n-|k|]\end{equation} or 
\be\label{2kopp}k\not\equiv n (2)\mbox{ and }
\lambda\in [1,n-|k|]\ .\end{equation} 
$W_\rho$ is not irreducible if and only if in addition to the above conditions we have $\lambda\in\N_0$, $\lambda\equiv n-|k| \, (2)$.
In the reducible case the irreducible quotient $W_{\rho_0}$ is unitary if and only if $\lambda<n-|k|$, i.e. $\pi^k_\lambda$ is not the end of a complentary series.
\end{pro}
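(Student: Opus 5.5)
The natural route is to make everything explicit in the rank one situation via the $\sigma\theta$-principal series. Here $\fa_{\fq\cap\fs}$ is one-dimensional, $Z=Z_K(\fa_{\fq\cap\fs})\cong SO(2)\times SO(2n-1)$ up to components, and $Z_H\supset SO(2n-1)$, so $\delta\in\widehat Z_s$ is a character of $SO(2)$. I would index these characters by an integer $d$ and write $I^{d,\lambda}:=I^{\delta,\lambda}$. Corollary~\ref{renoir} and (\ref{schal}) give the $L$-decomposition of $I^{d,\lambda}$ into the $H^{k,\lambda}$. By Cor.~\ref{wels}(a) each $k$ determines a unique $d=d(k)$, and the identification $\fa_{\fq\cap\fs}\cong\fa_L$ matches the parameters $\lambda$. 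I expect $d(k)=k$ (up to sign convention), with $M_L\cap Z_L$ acting through $\gamma_k$. For generic $\lambda$, Thm.~\ref{aal} then identifies the $\pi^k_\lambda$-minimal representation $W_\rho$ with $I^{k,\lambda}$, or with its Langlands-type sub/quotient realizing the embedding of $V_{\pi,\infty}$.

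For arbitrary $\lambda$, I would use the realization from the proof of Thm.~\ref{aal}. There $W_\rho$ is the subrepresentation of $E^\infty_\chi(X)$ generated by the image of $V_{\pi}$. Equivalently, by Delorme's theorem, it is the $G$-submodule of $I^{k,\pm\lambda}$ generated by the $L$-summand $H^{k,\pm\lambda}$, or by its irreducible subrepresentation $\pi^k_\lambda$. The sign is chosen so that the asymptotics/embedding matches, exactly as in the proof of Prop.~\ref{dark}(iii). The next input is the composition series and unitarity of $I^{d,\lambda}$ for $SO_e(2,2n)/SO_e(1,2n)$, taken from \cite{Sch87}/\cite{HoTa93}. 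Reducibility occurs precisely at integral $\lambda\equiv n-|d|+1$ or similar parity conditions. The $K$-types are $SO(2)\times SO(2n)$-types indexed by pairs $(a,b)$, and the spherical ones are of the form $(a,b,0,\dots)$ with $a\equiv b$ suitably. The submodules are described by inequalities in $(a,b)$, and the complementary series run over $|\lambda|<n-|d|$ in the appropriate normalization.

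Next I would combine this with Lemma~\ref{schlips}(c). Each spherical $K_L$-type $\gamma$ of $\pi^k_\lambda$ factors through a unique spherical $K$-type $\tau_\gamma$. Thus the $K$-types of $W_\rho$ are exactly the $K$-types generated, inside the module structure of $I^{k,\lambda}$, by the $\tau_\gamma$ with $\gamma$ a $K_L$-type of $\pi^k_\lambda$. Given the explicit $K_L$-types of $\pi^k_\lambda$ from Prop.~\ref{crux} and Lemma~\ref{bummi}, one reads off which constituents of $I^{k,\lambda}$ make up $W_\rho$. There are three cases to check.

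First, for $|k|\ge n$ or $\lambda$ imaginary, $\pi^k_\lambda$ is tempered or discrete. Then $W_\rho$ is relatively tempered or discrete by Prop.~\ref{fishy}, hence unitary and irreducible, as in Prop.~\ref{dark}. Second, for $|k|<n$ and $\lambda$ in the complementary range, $W_\rho$ is the irreducible $I^{k,\lambda}$ wherever the latter is irreducible. Its invariant Hermitian form, from Prop.~\ref{dark}(iii), is then definite precisely for $\lambda$ in the $G$-complementary range. For $k\not\equiv n$ that range is $[0,1)$, and together with the endpoint $\lambda=n$ (trivial representation) this should produce (\ref{1kopp}), (\ref{2kopp}) and the exception $\lambda\ne n$. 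Third, at the reducibility points $\lambda\in\N_0$ with $\lambda\equiv n-|k|\ (2)$, $W_\rho$ is a proper non-split extension with irreducible quotient $W_{\rho_0}$. Since $W_\rho$ is indecomposable but not irreducible, it cannot be unitary. For $W_{\rho_0}$, I would check its $K$-types against the Howe–Tan unitarity list: it should be a unitarizable (e.g.\ ladder/derived-functor) constituent except when $\lambda=n-|k|$, where it sits on the end of the complementary series with non-definite form.

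The main obstacle is the bookkeeping in the reducible case. One must pin down the matching $d(k)$, the correct sign of $\lambda$, and which $K$-type region is generated by the $K_L$-types of $\pi^k_\lambda$. Only then can one show that the result is a non-split two-step module rather than a direct sum, and identify its quotient in the Howe–Tan/Schlichtkrull pictures.
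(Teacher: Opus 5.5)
Your identification of the $\sigma\theta$-principal series is wrong, and the rest of your bookkeeping inherits the error. Here $Z=Z_K(\fa_{\fq\cap\fs})=\{\pm\id_{2n+2}\}\cdot SO(2n-1)$, not $SO(2)\times SO(2n-1)$: an element $(a,b)\in SO(2)\times SO(2n)$ commuting with the boost that spans $\fa_{\fq\cap\fs}$ must have $a=\pm\id_2$. So $\widehat Z_s$ has only two elements. The relevant induced representations are $I^{[k],\lambda}$ with $[k]\in\Z/2\Z$, and by (\ref{schal}) one has $I^{[k],\lambda}\cong\bigoplus_{m\equiv k\,(2)}H^{m,\lambda}$ as $L$-representations. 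There is no family $I^{d,\lambda}$, $d\in\Z$, with $d(k)=k$. Unless $\pi$ is trivial, $W_\rho$ has infinitely many $L$-constituents. The correct inputs from \cite{HoTa93} are the following. $I^{[k],\lambda}$ is reducible iff $\lambda\in\Z$ and $\lambda\equiv n-k\ (2)$. Its $K_H$-spherical $K$-types are the characters $\chi_l$ of $SO(2)$ with $l\equiv k\ (2)$, not types $(a,b,0,\dots)$. Its complementary series has length $1$ for $k\not\equiv n$ and $0$ for $k\equiv n$, independently of $|k|$. The bound $n-|k|$ in the statement is therefore not a $G$-complementary-series bound. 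It comes from $Q_k$ and from the position of $\chi_k$ among the constituents $W_0$ ($|l|<n-\lambda$), $W_1^\pm$ ($n-\lambda\le\pm l<n+\lambda$), $W_2^\pm$ ($\pm l\ge n+\lambda$) of $I^{[k],\pm\lambda}$. Once this is corrected, reading off (non-)unitarity at non-reducibility points from the unitarity of the irreducible $I^{[k],\lambda}$ is a legitimate alternative to the paper's argument. The paper instead restricts a hypothetical unitary $W_\rho$ to $L$: by Thm.~\ref{mainbranching} and Prop.~\ref{fishy} it would be an infinite sum of non-tempered $\pi^m_\lambda$, $m\equiv k\ (2)$. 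Prop.~\ref{crux} allows infinitely many such $m$ only if $\lambda\in(0,1)$ and $k\not\equiv n$.

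Your first case is false. Relative temperedness (Prop.~\ref{fishy}) gives neither unitarity nor irreducibility of $W_\rho$. The proof of Prop.~\ref{dark}(iii) only shows that the irreducible \emph{quotient} $W_{\rho_0}$ is a summand of a unitary $I^{\delta,\lambda}$. Take $\lambda=0$, $|k|<n$, $k\equiv n\ (2)$: then $\pi^k_0=H^{k,0}$ is a tempered unitary principal series. Yet (\ref{1kopp}) says $W_\rho$ is non-unitary and reducible (cf. the row $E^*_{[n-\lambda],\lambda}$, $\lambda=0$, of Table~\ref{fax}), so at these points your first and third cases contradict each other. Also, $\pi^k_\lambda$ with $|k|>n$, $k\not\equiv n$, $\lambda\in(0,1)$ is complementary series, not tempered. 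The paper handles $\lambda=0$ by a refinement: $\pi^m_0$ is not strongly tempered iff $H^{m,0}$ is irreducible. A unitary $W_\rho$ would then require infinitely many such $m\equiv k$, which fails for $k\equiv n$.

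Finally, the reducibility you leave as the main obstacle has a short proof. Non-splitness needs no argument, since $L$-minimal representations are indecomposable. If $W_\rho$ were irreducible, Delorme's theorem would embed it into $I^{[k],\pm\lambda}$ as a spherical irreducible submodule. This submodule contains $\chi_k$ by Lemma~\ref{schlips}(c), because $\pi^k_\lambda$ contains the $K_L$-type $\gamma_k$. But by \cite{HoTa93} and Cor.~\ref{frank} only $W_2^\pm$ are spherical, and $|k|<n\le n+\lambda$. The same bookkeeping decides $W_{\rho_0}$: it contains $\chi_k$. Hence it is $W_0$ (unitarizable) if $|k|<n-\lambda$, and some $W_1^\pm$ (not unitarizable) if $|k|=n-\lambda$.
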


\begin{proof} Let $\pi=\pi^k_\lambda$ be non-trivial and non-tempered (which implies $\lambda\in Q_k$, $\lambda\ne n$)  such that $W_\rho$ is unitary. According to Thm.~\ref{mainbranching} and Prop.~\ref{fishy} there exists an infinite subset 
$$D\subset D_{k,\lambda}:=\{m\in\Z\mid m\equiv k\, (2),\: \lambda\in Q_m,\: \pi_\lambda^m \mbox{ non-tempered}\}$$ 
such that 
$$\rho\cong \widehat{\bigoplus_{m\in D}} \pi_\lambda^m\ .$$
Here we have taken the action of the Casimir $\Omega_G$ and of the element $-\id\in L\subset G$ on $W_\rho$ into account.
On the other hand, the description of $Q_m$ in Prop.~\ref{crux} shows that the sets $D_{k,\lambda}$ are infinite only if $\lambda\in (0,1)$, $k\not\equiv n\,(2)$. This shows
that for $\lambda\ne 0$  the representation $W_\rho$ is not unitary under the Conditions (\ref{0kopp})--(\ref{2kopp}). 

For the case $\lambda=0$ we need a refinement of Prop.~\ref{fishy}.
Note that the representations $\pi_0^k$ are always tempered and that temperedness is equivalent to estimates of $K_L\times K_L$-finite matrix coefficients of the form
$$   |c_{v,\tilde v}(\exp X)|\le C_\ve e^{-(n-\ve)|X|},\quad X\in\fa_L, \ve>0.$$
The exponent $n$ corresponds to $\rho_L\in \fa_L^*$. We call the representation $\pi_0^k$ strongly tempered if the above inequality also holds for $\ve=0$.
Now let  $\pi=\pi^k_0$ be not strongly tempered such that $W_\rho$ is unitary. Theorem~\ref{mainbranching} together with an argument analogous to the proof of Prop.~\ref{fishy} shows that there exists an infinite subset 
$$D\subset D_{k,0}:=\{m\in\Z\mid m\equiv k\, (2),\:  \pi_0^m \mbox{ not strongly tempered}\}$$
such that 
$$\rho\cong \widehat{\bigoplus_{m\in D}} \pi_0^m\ .$$
From the relation between $c$-functions, Knapp-Stein intertwining operators, and the asymptotics of matrix coefficients we see  that  $\pi^m_0$
is not strongly tempered if and only if the family of Knapp-Stein intertwining operators associated to the principal series representations $H^{m,\lambda}$ has a pole  at $\lambda=0$.
The latter condition is equivalent to the irreducibility of $H^{m,0}$
(see e.g. \cite{Kn86}, Thm.~14.16). Thus $\pi^m_0$
is not strongly tempered if and only if $H^{m,0}$ is irreducible. As already mentioned before Prop.~\ref{crux}, this is precisely the case when $m\not\equiv n\, (2)$ or $|m|<n$. 
Thus $D_{k,0}$ is infinite only if $k\not\equiv n\, (2)$. We conclude
that also for $\lambda= 0$  the representation $W_\rho$ is not unitary under the Conditions (\ref{0kopp}) and (\ref{1kopp}). 

Now we assume that $(k,\lambda)$ does not satisfy the conditions (\ref{0kopp})--(\ref{2kopp}). We want to show that $W_\rho$ is unitary. This is clear if $\pi_\lambda^k$ is trivial
or a discrete series representation. In fact, in the latter case $W_\rho$ has to be a relative discrete series representation by Prop.~\ref{fishy} which is unitary.

The unitarity of $W_\rho$ for the remaining $L$-representations as well as the assertions about irreducibility of $W_\rho$ will not follow directly from  Prop.~\ref{fishy}.
We have bring the induced representations $I^{\delta,\lambda}$ of $G$ into play. Note that $Z=\{\pm\id_{2n+2}\}\cdot SO(2n-1)$. $\widehat Z_s$ consists of 2 characters indexed by $[k]$
in $\Z/2\Z$.  We get an embedding of principal series
representations $H^{k,\lambda}\hookrightarrow  I^{[k],\lambda}$. 
Since $ (I^{[k],-\lambda}_{-\infty})^H\ne\{0\}$  at least one irreducible subquotient of  $ I^{[k],\lambda}$ is $H$-spherical.
In addition, if $\lambda\in i(0,\infty)$ or if $k\not\equiv n\, (2)$ and $\lambda\in [0,1)$, then  $I^{[k],\lambda}$ is irreducible and unitary.
Moreover,  $ I^{[k],\lambda}$  is reducible if and only if $\lambda\in\Z$ and $\lambda\equiv n-|k| \, (2)$ (see \cite{HoTa93}). Thus the representation 
$ I^{[k],\lambda}$ is an irreducible unitary $\pi^k_\lambda$-minimal $G$-representation in the first cases and still gives an ireducible $\pi^k_\lambda$-minimal $G$-representation if only the integrality conditions are not satisfied.

In case of reducibility, the irreducible subquotients of $I^{[k],\lambda}$ can be described completely in terms of their $K$-types, see \cite{HoTa93}, pp.~17-18 (one has to 'fold out' the diagrams, see pp.~20--22 of that paper.) In fact, they can be characterized in terms
of their $K_H$-spherical $K$-types. Since $K=SO(2)\times SO(2n)$ these are given by characters $\chi_l$, $l\in\Z$, of $SO(2)\cong U(1)$. Note that $I^{[k]}(\chi_l)\ne\{0\}$ if and only if $l\equiv k   \, (2)$. We are interested in the case $|\lambda|<n$,
only. In this case, $I^{[k],\lambda}$ has precisely $5$ ($3$ for $\lambda=0$ and $k\equiv n\, (2)$) irreducible subquotients: $W_0, W_1^\pm, W_2^\pm$. $W_0$ contains the characters $\chi_l$ with 
$|l|< n-|\lambda|$, $W_1^\pm$ the ones with $n-|\lambda|\le\pm l<n+|\lambda|$, $W_2^\pm$ the ones with $\pm l \ge n+|\lambda|$ ($W_1^\pm=\{0\}$ for $\lambda=0$).
These modules are infinite dimensional. The only subquotients that contain infinitely many characters are $W^\pm_2$. Now it follows from Cor.~\ref{frank} that at least one of these 2 modules, but no other subquotient, is spherical. Since the subquotients $W^\pm_2$ are conjugate via the outer automorphism $\sigma$ of $G$ (fixing $H$), both are spherical.
We also know that $W_2^\pm$ and $W_0$ are unitarizable, $W_1^\pm$ are not (\cite{HoTa93}, p.~30).

Next we discuss the $L$-representations $\pi=\pi^k_0$
with $|k|\ge n$, $k\equiv n\, (2)$, i.e. $\pi$ is a limit of discrete series representations (and is strongly tempered in the sense introduced above). As an irreducible constituent of $H^{k,0}$
the representation $\pi^k_0$ embeds into the unitary $G$-representation $I^{[k],0}$. It contains the $K_L$-type $\gamma_k$ (in the notation of Section~\ref{detail}). It follows that its image in $I^{[k],0}$ contains the character
$\chi_k$, cf. Cor.~\ref{schlips}(c). By the discussion in the previous paragraph it is contained in a spherical irreducible subquotient (namely $W_2^+$ or $W_2^-$) which is in fact an irreducible unitary direct summand giving the $\pi$-minimal $G$-representation. 

Now we show that for $(k,\lambda)$ satisfying the Conditions (\ref{0kopp})--(\ref{2kopp})  with $\lambda\in\N_0$ and 
$\lambda\equiv n-|k| \, (2)$, 
the $\pi^k_\lambda$-minimal $G$-representation $W_\rho$ is reducible. Assume it were irreducible. Then by Delorme's embedding theorem the $G$-representation $W_{\rho,\infty}$ (and therefore also $\pi^k_\lambda$) embeds into $I^{[k],\lambda}$ or  $I^{[k],-\lambda}$ 
(here we have also taken the action of the Casimir and of the center of $G$ into account). As in the previous paragraph we find that the image of $W_\rho$ is a spherical irreducible submodule of $I^{[k],\pm\lambda}$ containing the $K$-character $\chi_k$. Since $|k|<n\le n+\lambda$ this is impossible.

It remains to discuss unitarity of $W_{\rho_0}$ in the reducible situation. We find that $W_{\rho_0}$ is an irreducible subquotient of  $I^{[k],\pm\lambda}$ containing the $K$-character $\chi_k$.
By the above discussion and since $|k|<n$  this subquotient is unitarizable if and only if $|k|<n-\lambda$.
\end{proof}

Analogous lists of unitary $L$-representations $\pi$ having a non-unitary $\pi$-minimal $G$-representation for the other rank one triples are contained in Tables~\ref{super}--\ref{tauto} 
in Section~\ref{eigen}.


\section{Eigendistributions on $\Gamma\backslash G/H$ and $G$-representations}\label{eigen}


The goal of this final section is to bring representation theory of $G$ into the play of the spectral theory on $\Gamma\backslash G/H$. Using the results of the previous
section, this will provide us with interesting reinterpretations and refinements of the results for triples of Type~I obtained in Sections~6--8, whereas for triples of Type~II this opens a new perspective how to understand the expected continuous spectrum.

\subsection{General considerations.}\label{gc} 
Let $X=G/H$ be a semisimple symmetric space, and let $\chi\in\Hom({\bf D}(G/H),\C)$. We consider the eigenspace representations $E_\chi^{\pm\infty}(X)$ of $G$ and their
underlying $(\fg,K)$-module $E_\chi^{\infty}(X)_K=E_\chi^{-\infty}(X)_K$  which is known to be a Harish-Chandra module (\cite{Ban87}, Cor.~3.10).
As a starting point we want to describe its canonical smooth and distribution globalizations, resp., in the sense of Casselman and Wallach. We will conclude that $\Gamma$-invariant
eigendistributions always belong to this distribution globalization, whenever $\Gamma\subset G$ is a discrete subgroup acting properly and cocompactly on $X$.

Our fixed invariant bilinear form together with the choice of a ($\sigma$-stable) maximal compact $K$  provides a norm on $G$ via
$$  \|k\exp(Z)\|:=e^{|Z|}\ , k\in K,\ Z\in\fs.$$
It induces a kind of norm on $X=G/H$ via
$$ \|gH\|_X:=\inf_{h\in H}\|gh\|\ \left (=e^{|Z|}, \mbox{ if } g=k\exp(Z) h \mbox{ for some }  k\in K,\ Z\in\fs\cap \fq,\ h\in H\right ).$$
It is a consequence of non-positive curvature of the Riemannian symmetric space $K\backslash G$ that there is an $s>0$ such that for all $g=k\exp(Z) h\in G$ as above
\be\label{pontius}
\|k\exp(Z) h\|\ge \|gH\|_X^s \|h\|^s\ .
\end{equation}
Similarly, for a properly transitive triple $(G,H,L)$, there is a (different) constant $s>0$ such that for all $l\in L$
\be\label{fixus}   \|l\|^s\le \|lH\|_X\le \|l\|\ .
\end{equation}
The above norms give rise to Schwartz-like spaces of smooth rapidly decreasing functions
$$ {\Cal S}(G):=\{f\in C^\infty(G)\mid \forall Z\in {\Cal U}(\fg), s\in \R,\ \exists C: | l_Zf(g)|\le C \|g\|^{-s}\}$$
and 
$$ {\Cal S}(X):=\{f\in C^\infty(X)\mid \forall Z\in {\Cal U}(\fg), s\in \R,\ \exists C: | l_Zf(x)|\le C \|x\|_X^{-s}\}\ .$$
Inequality (\ref{pontius}) implies that there is a well-defined surjective map ${\Cal S}(G)\ni f\mapsto \bar f\in  {\Cal S}(X)$ given by
$$ \bar f (gH):=\int_H f(gh)\:dh\ .$$
Moreover, it
follows from (\ref{fixus}) that for a properly transitive triple $(G,H,L)$
\be\label{pilatus}  {\Cal S}(X)\cong {\Cal S}(L)^{L\cap H}\ .
\end{equation}
We also consider the spaces of smooth functions of uniform moderate growth
$$ C^\infty_{umg}(X):=\{f\in C^\infty(X)\mid \exists r\in \R\ \forall Z\in {\Cal U}(\fg)\ \exists C: | l_Zf(x)|\le C \|x\|_X^{r}\}$$
and of moderate distributions
$$ {\Cal S}'(X):=({\Cal S}(X))'\subset C^{-\infty}(X)$$
on $X$. Here dualization is with respect to the natural 
Fr\' echet topology on ${\Cal S}(X)$.

\begin{pro}\label{englisch}
Let $W_\rho$ be a continuous $G$-representation on a reflexive Banach space such that $E^{\infty}_\chi(X)_K\subset W_\rho\subset E_\chi^{-\infty}(X)$.
Then $$W_{\rho,\infty}=E^{\infty}_\chi(X)_{umg}:=E^{\infty}_\chi(X)\cap C^\infty_{umg}(X)$$ 
and
$$W_{\rho,-\infty}=E^{-\infty}_\chi(X)_{mod}:=E^{-\infty}_\chi(X)\cap {\Cal S}'(X)\ .$$
Let $\Gamma\subset G$ be a discrete subgroup acting properly and cocompactly on $X=G/H$, and let $Y:=\Gamma\backslash X$. Then
\be\label{klaus1} E^{-\infty}_\chi(Y)=E^{-\infty}_\chi(X)^\Gamma=(E^{-\infty}_\chi(X)_{mod})^\Gamma\ .\nonumber
\end{equation}
\end{pro}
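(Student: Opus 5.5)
The plan rests on Casselman--Wallach globalization theory. The Harish-Chandra module $V:=E^\infty_\chi(X)_K$ (\cite{Ban87}, Cor.~3.10) has a unique smooth globalization of moderate growth and a unique distribution globalization. So $W_{\rho,\infty}$ and $W_{\rho,-\infty}$ do not depend on the choice of $W_\rho$ with $V\subset W_\rho\subset E^{-\infty}_\chi(X)$. It therefore suffices to identify the smooth and distribution globalizations of $V$ with $E^\infty_\chi(X)_{umg}$ and $E^{-\infty}_\chi(X)_{mod}$, respectively.

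\emph{Smooth vectors.} First I would show $W_{\rho,\infty}\subset C^\infty_{umg}(X)$. A vector $f\in W_{\rho,\infty}$ is a function on $X$, and by the Frobenius reciprocity (\ref{plum}) it is a matrix coefficient $x\mapsto\langle \tilde w,\rho(g)^{-1}\cdot\rangle$ against an $H$-invariant distribution vector. The moderate-growth estimate $\|\rho(g)\|\le C\|g\|^{r}$ on the Banach space, combined with the norm inequality (\ref{pontius}) to pass from $G$ to $X$, gives uniform moderate growth for $l_Zf$. For the converse inclusion, $E^\infty_\chi(X)_{umg}$ with its natural Fr\'echet topology (seminorms $\sup|l_Zf|\,\|x\|_X^{-r}$) is a smooth Fr\'echet representation of moderate growth. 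Its $K$-finite vectors are exactly $V$, since $K$-finite joint eigenfunctions have moderate growth by the asymptotic expansions of \cite{Ban87}. It has finite length because ${\bf D}(G/H)$ acts by $\chi$ and $\Cal Z(\fg)$ acts finitely. By the Casselman--Wallach uniqueness theorem, any smooth moderate-growth admissible Fr\'echet globalization of finite length equals the canonical one, which is $W_{\rho,\infty}$.

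\emph{Distribution vectors.} By duality, $W_{\rho,-\infty}=(W_{\tilde\rho,\infty})'$, and $W_{\tilde\rho,\infty}$ is again a canonical globalization. I would realize the pairing through $\Cal S(X)$: the map $\Cal S(X)\ni\vp\mapsto (f\mapsto\int_X f\vp)$ and the surjection $\Cal S(G)\to\Cal S(X)$ identify $\Cal S'(X)$-eigendistributions with continuous functionals on the $\Cal S(G)$-module generated by $V$. The key tool is that the canonical distribution globalization equals $\rho(\Cal S(G))$ applied to it, i.e.\ the Casselman--Wallach statement that $W_{\rho,\infty}=\rho(\Cal S(G))W_{\rho,-\infty}$ and dually. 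Concretely, to show $E^{-\infty}_\chi(X)_{mod}\subset W_{\rho,-\infty}$, I would convolve a moderate eigendistribution $T$ with $\vp\in\Cal S(G)$; the result $T*\vp$ lies in $E^\infty_\chi(X)_{umg}=W_{\rho,\infty}$, continuously in $\vp$. Using a Dixmier--Malliavin factorization of $\delta_e$ as in (\ref{rums}), this places $T$ in the completion, that is, in $W_{\rho,-\infty}$. The reverse inclusion follows because matrix coefficients of distribution vectors against $H$-invariants are moderate, by (\ref{pontius}). I expect this identification of the distribution globalization with moderate eigendistributions to be the main obstacle, chiefly in making the topologies match.

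\emph{The $\Gamma$-invariant statement.} $\Gamma$-invariant distributions on $X$ correspond to distributions on $Y$, so the first equality is formal. For the second, I would show that any $\Gamma$-invariant distribution is moderate. Choose a compact fundamental domain $F\subset X$ and a cutoff $\psi\in C_c^\infty(X)$ with $\sum_\gamma\gamma^*\psi=1$. Then for $\vp\in\Cal S(X)$ we have $\langle T,\vp\rangle=\langle T,\psi\sum_\gamma\gamma^*\vp\rangle$. The sum converges in $C^\infty$ on $\supp\psi$ because $\#\{\gamma:\gamma F\cap B_R\neq\emptyset\}$ grows at most polynomially in $R$, which follows from properness and the norm comparisons (\ref{fixus})-type estimates. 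The resulting functional is continuous in the $\Cal S(X)$ topology, so $T\in\Cal S'(X)$.
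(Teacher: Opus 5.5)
\textbf{The gap is in the $\Gamma$-invariance step, for a general (non-standard) $\Gamma$.}

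Your cutoff argument is the dual form of the paper's push-down map ${\Cal S}(X)\to C^\infty(Y)$, $\vp\mapsto\sum_\gamma\gamma^*\vp$. It works exactly when this series converges in $C^\infty$ on $\supp\psi$, with bounds by Schwartz seminorms of $\vp$.

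You justify this by a polynomial bound on $\#\{\gamma\mid \gamma\,\supp\psi\cap\{\|x\|_X\le R\}\ne\emptyset\}$, which you say follows from properness and (\ref{fixus})-type comparisons. Neither gives it:
\begin{itemize}
\item Properness only gives finiteness of this set for each $R$, with no rate.
\item (\ref{fixus}) is an estimate for elements $l$ of a group $L$ acting properly and \emph{transitively} on $X$. It says nothing about a discrete $\Gamma\subset G$ not contained in such an $L$.
\end{itemize}
Moreover, $C^\infty$-convergence is not just a counting issue. Derivatives of $x\mapsto\vp(\gamma x)$ along the $G$-action involve $\Ad(\gamma)$, producing factors polynomial in $\|\gamma\|$. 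These must be absorbed by powers of $\|\gamma x\|_X$.

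What you actually need is a quantitative properness estimate: for every compact $\Omega\subset X$ there are $c,s_0>0$ with $\|\gamma x\|_X\ge c\|\gamma\|^{s_0}$ for all $x\in\Omega$ and $\gamma\in\Gamma$. For arbitrary proper cocompact actions this is a consequence of sharpness in the sense of Kassel--Kobayashi. Sharpness is a recent and nontrivial theorem of Kassel--Tholozan \cite{KT24}, not a formal consequence of properness, and it is exactly what the paper invokes. With this estimate, plus convergence of $\sum_\gamma\|\gamma\|^{-s}$ for large $s$ (true for any discrete subgroup), your argument goes through.

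\textbf{Standard quotients.} For $\Gamma$ a cocompact lattice in $L$ with $(G,H,L)$ properly transitive, your argument is fine as it stands. Indeed, (\ref{fixus}) gives $\|\gamma lH\|_X\ge\|\gamma l\|^s\ge c\|\gamma\|^s$ for $l$ in a compact subset of $L$. The paper handles this case differently, via Casselman's surjective push-down ${\Cal S}(L)\to C^\infty(\Gamma\backslash L)$ and (\ref{pilatus}).

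\textbf{Smooth vectors.} This part matches the paper: Casselman--Wallach uniqueness applied to $E^\infty_\chi(X)_{umg}$.

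\textbf{Distribution vectors.} Your route differs from the paper's but is workable. You convolve $T$ with test functions to land continuously in $E^\infty_\chi(X)_{umg}=W_{\rho,\infty}$, extend to $C^k_c$, and then use (\ref{rums}). The paper instead shows that ${\Cal S}(X)\to W_{\tilde\rho,\infty}$, $\bar f\mapsto\tilde\rho(f)\delta_{eH}$, is surjective by the closed range theorem, and then dualizes.

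To make your version complete, you must also show that the natural map $W_{\rho,-\infty}\to C^{-\infty}(X)$ is injective. One way: a nonzero closed $G$-invariant subspace of $W_{\rho,-\infty}$ has nonzero $K$-finite vectors, and these lie in $W_{\rho,K}\subset C^\infty(X)$, where the map is the inclusion. In the paper's route this injectivity comes for free from the surjectivity.
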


\begin{proof} The first two assertions are direct consequences of Casselman/Wallach theory (\cite{Cas89b}, \cite{Wa92}, Ch.~11, see also \cite{BK14}) and are probably well-known to experts. Therefore we are rather sketchy here. The theory of asymptotic expansions of elements in $E^{\infty}_\chi(X)_K$ shows that $E^{\infty}_\chi(X)_K\subset E^{\infty}_\chi(X)_{umg}$.
On the other hand, it is easily checked that $E^{\infty}_\chi(X)_{umg}$ is a smooth Fr\' echet representation of moderate growth in the sense of Casselman/Wallach. Since there is only
one such representation (up to equivalence) globalizing a given Harish-Chandra module (\cite{Wa92}, Thm.~11.6.7) the first assertion follows.

For the second assertion we recall that the natural actions of $C_c^\infty(G)$ on $W_\rho$ and $W_{\tilde\rho,\pm\infty}$ extend naturally to actions of ${\Cal S}(G)$.
Moreover, any element of ${\Cal S}(G)$ maps $W_{\tilde\rho,-\infty}$ to $W_{\tilde\rho,\infty}$ (this is completely analogous to Lemma~\ref{stock}, (iii)). The delta distribution 
$\delta_{eH}$
supported at the origin of $X$ defines an element of $W_{\tilde\rho,-\infty}$. Hence for any $f\in  {\Cal S}(G)$ the element $\bar f =\tilde\rho(f)\delta_{eH}\in  {\Cal S}(X)$ (see (\ref{pontius}))
defines an element of $W_{\tilde\rho,\infty}$. We obtain a map ${\Cal S}(X)\rightarrow W_{\tilde\rho,\infty}$. A density argument shows that its image contains $W_{\tilde\rho,K}$.
The closed range result \cite{Wa92}, Thm.~11.8.2 now implies that this map is surjective. Dualizing we obtain that $W_{\rho,-\infty}$ is a closed subspace of ${\Cal S}'(X)$, namely 
$E^{-\infty}_\chi(X)\cap {\Cal S}'(X)$.

Concerning the last assertion it suffices to show that $C^{-\infty}(Y)=C^{-\infty}(X)^\Gamma\subset {\Cal S}'(X)$.
Let us first discuss the case that $\Gamma$ comes from a properly transitive triple $(G,H,L)$, i.e. $\Gamma$ is a cocompact lattice of $L$.
It is well-known (\cite{Cas89}) that the push down map
${\Cal S}(L)\rightarrow C^\infty(\Gamma\backslash L)$ given by averaging over $\Gamma$ is well-defined, continuous, and surjective. By (\ref{pilatus}) we obtain an analogous map
${\Cal S}(X)\rightarrow C^\infty(Y)$. Dualizing we obtain the desired inclusion.

For a general discrete $\Gamma\subset G$ acting properly and cocompactly on $X$ we use a recent result of Kassel and Tholozan \cite{KT24} stating that all
these groups are {\em sharp}. We do not  want to state the precise definition of sharpness here, but only the following easy consequence of it (which can be derived using e.g. \cite{KK2}, Lemma~4.4 and Lemma~4.17): For every compact subset $\Omega\subset X$ there are constants $c,s_0>0$ (we can take $s_0$ independent of $\Omega$) such that for all $x\in\Omega$ and
$\gamma\in \Gamma$
$$ \|\gamma x\|_X\ge c\|\gamma\|^{s_0}\ .$$
Using that for every discrete subgroup $\Gamma\subset G$ the Poincar\' e series $\sum_{\gamma\in\Gamma} \|\gamma\|^{-s}$ converges  for sufficiently large $s$ we
easily conclude that the push-down map ${\Cal S}(X)\rightarrow C^\infty(Y)$ given by averaging over $\Gamma$ is again well-defined, continuous, and surjective. 
\end{proof}

In the remainder of this section we will switch the roles of $\rho$ and $\tilde\rho$. We call an admissible $G$-representation $W_\rho$ of finite length on a reflexive Banach space $H$-cospherical, if 
its dual $W_{\tilde\rho}$ is spherical. This is equivalent to: The $G$-representation $W_{\rho,-\infty}$ has an $H$-invariant cyclic vector, see the discussion at the beginning of Section \ref{branching}. If $W_\rho$ is unitary, then $W_{\tilde\rho}$ is equivalent to the conjugate of $W_\rho$, and hence $W_\rho$ is cospherical if and only if it is spherical.
The last assertion also holds for not necessarily unitary {\em irreducible} admissible $W_\rho$ - this fact lies much deeper, see \cite{Bien90}, p.~39.

For simplicity, from now on we will assume that the natural map ${\Cal Z}(\fg)\rightarrow {\bf D}(G/H)$ is surjective. This is justified by the fact that the exceptional symmetric spaces $G/H$
not having this property do not appear as the underlying symmetric space of  a properly transitive triple $(G,H,L)$, see Thm.~\ref{listck} and the second comment following it.
The assumption implies that for every $H$-cospherical representation $W_\rho$ having an infinitesimal character the algebra ${\bf D}(G/H)$ acts on $(W_{\rho,-\infty})^H$ by a 
single character which is uniquely
determined by the infinitesimal character of $W_\rho$. Note that by Delorme's embedding theorem the set of infinitesimal characters of $H$-cospherical representations coincides with the set of infinitesimal characters of the induced representations $I^{\delta,\lambda}$ introduced before Cor.~\ref{wels}. This determines a {\em proper} subset of $\Hom({\bf D}(G/H),\C)$ whenever
$\rank_\R(G/H)<\rank(G/H)$. Let $\widehat G_H\subset \widetilde G_H$ be the sets of (infinitesimal) equivalence classes of irreducible unitary (resp. admissible) $H$-(co)spherical representations of $G$.
Our assumption provides us with a well-defined finite-to-one map $p: \widetilde G_H\rightarrow\Hom({\bf D}(G/H),\C)$. For $\chi\in\Hom({\bf D}(G/H),\C)$ we consider the finite set $\widetilde G_{H,\chi}:=p^{-1}(\{\chi\})$. Below we call a character $\chi\in\Hom({\bf D}(G/H),\C)$ generic if it is generic among the infinitesimal characters of the induced representations $I^{\delta,\lambda}$.

\begin{lem}\label{otto}
Let $\Gamma\subset G$ be a discrete subgroup acting properly and cocompactly on $X=G/H$, and let $Y:=\Gamma\backslash X$. Let $\chi\in\Hom({\bf D}(G/H),\C)$. 
Let $W_\rho$ be an $H$-cospherical representation with infinitesimal character determined by $\chi$. Then the matrix coefficient map (see Section \ref{dist})
$$  W_{\tilde\rho,-\infty}\otimes W_{\rho,-\infty}\longrightarrow C^{-\infty}(G)\ ,\quad \tilde w\otimes w\mapsto c_{\tilde w,w}$$
descends to a map
\be\label{paul}
 (W_{\tilde \rho,-\infty})^\Gamma\otimes (W_{\rho,-\infty})^H\longrightarrow  E^{-\infty}_\chi(Y)\ .
\end{equation}
Moreover, we have
\begin{itemize}
\item[(i)] If $W_\rho$ is the dual of the eigenspace representation considered in Prop.~\ref{englisch}, then the map (\ref{paul}) is surjective. In particular, every eigendistribution on
$Y$ is a distributional matrix coefficient.
\item[(ii)] The map (\ref{paul}) provides an isomorphism of
$$ \bigoplus_{\rho\in \widetilde G_{H,\chi}} (W_{\tilde \rho,-\infty})^\Gamma\otimes (W_{\rho,-\infty})^H$$
onto a closed subspace of  $E^{-\infty}_\chi(Y)$ which coincides with $E^{-\infty}_\chi(Y)$ for generic $\chi$.
\end{itemize}
\end{lem}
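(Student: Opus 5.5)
The plan has three parts: first show that the map (\ref{paul}) is well defined, then prove (i) by identifying the dual of the eigenspace representation, and finally prove (ii) by a finite direct-sum decomposition.

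\emph{Well-definedness.} For $\tilde w\in (W_{\tilde\rho,-\infty})^\Gamma$ and $w\in (W_{\rho,-\infty})^H$, Lemma~\ref{stein}(i) shows that $c_{\tilde w,w}\in C^{-\infty}(G)$ is left $\Gamma$-invariant and right $H$-invariant. Hence it defines an element of $C^{-\infty}(X)^\Gamma=C^{-\infty}(Y)$. By Prop.~\ref{kunst}, the map $\tilde w\mapsto c_{\tilde w,w}$ is a $G$-intertwiner $W_{\tilde\rho,-\infty}\to C^{-\infty}(X)$. Under our standing assumption that ${\Cal Z}(\fg)\to{\bf D}(G/H)$ is surjective, each $D\in{\bf D}(G/H)$ is induced by some $Z\in{\Cal Z}(\fg)$. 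The infinitesimal character of $W_\rho$, determined by $\chi$, then gives $Dc_{\tilde w,w}=c_{\tilde w,\rho(Z)w}=\chi(D)c_{\tilde w,w}$. The map is therefore well defined with values in $E^{-\infty}_\chi(Y)$.

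\emph{Part (i).} Let $W_{\tilde\rho}$ be a reflexive Banach globalization of $E_\chi^\infty(X)_K$ as in Prop.~\ref{englisch}. Then $W_{\tilde\rho,-\infty}=E^{-\infty}_\chi(X)_{mod}$, and $W_\rho$ is its dual, hence $H$-cospherical. By Prop.~\ref{englisch}, every $f\in E^{-\infty}_\chi(Y)$ lies in $(E^{-\infty}_\chi(X)_{mod})^\Gamma=(W_{\tilde\rho,-\infty})^\Gamma$. Take $w_0:=\delta_{eH}$. As in the proof of Prop.~\ref{englisch}, this is an element of $W_{\rho,-\infty}=(W_{\tilde\rho,\infty})'$, because evaluation at $eH$ is continuous on $E^\infty_\chi(X)_{umg}$, and it is $H$-invariant. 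Unwinding definitions, $c_{f,w_0}(\vp)=\langle f,\rho(\vp)\delta_{eH}\rangle$ gives back $f$ viewed as a distribution on $X$. So $f=c_{f,w_0}$, which proves surjectivity.

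\emph{Part (ii).} Each $W_\rho$ with $\rho\in\widetilde G_{H,\chi}$ is irreducible and cospherical. Apply Prop.~\ref{kunst} with $\Gamma$-invariants to each $w\in (W_{\rho,-\infty})^H$. The map $\tilde w\mapsto c_{\tilde w,w}$ is injective on $W_{\tilde\rho,-\infty}$, because its kernel is a closed $G$-invariant subspace and irreducibility forces it to be $0$. To get injectivity on the whole tensor product $(W_{\tilde\rho,-\infty})^\Gamma\otimes (W_{\rho,-\infty})^H$, choose a basis $w_1,\dots,w_r$ of the finite-dimensional space $(W_{\rho,-\infty})^H$. The intertwiners $\tilde w\mapsto c_{\tilde w,w_i}$ are linearly independent elements of $\Hom_G(W_{\tilde\rho,-\infty},C^{-\infty}(X))$, by the bijectivity in Prop.~\ref{kunst}. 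Since $W_{\tilde\rho,-\infty}$ is irreducible, their combined map $W_{\tilde\rho,-\infty}\otimes\C^r\to C^{-\infty}(X)$ is injective. This is a standard Jacobson-density and Schur argument at the level of Harish-Chandra modules, after which one extends to distribution globalizations. Different $\rho$ give non-isomorphic irreducible $G$-subrepresentations of $C^{-\infty}(X)$, so their images are independent and the sum over $\widetilde G_{H,\chi}$ is direct. Closedness would follow from realizing everything inside $W_{\tilde\rho_\chi,-\infty}=E^{-\infty}_\chi(X)_{mod}$ from (i), whose $\Gamma$-invariants form a closed subspace. The image is the sum of the $\rho$-isotypic parts of the socle of this finite-length representation, and each such part is closed by Casselman--Wallach theory.

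For generic $\chi$, I would argue that $E_\chi^\infty(X)_K$ is semisimple, being a direct sum of the generic irreducible $I^{\delta,\lambda}$ with that infinitesimal character, via the Poisson transform and Delorme's description. Its dual is then $\bigoplus_{\rho\in\widetilde G_{H,\chi}} W_\rho^{\oplus \dim (W_{\rho,-\infty})^H}$. Combined with (i), this gives equality with $E^{-\infty}_\chi(Y)$. I expect this semisimplicity step for generic $\chi$, together with the matching of multiplicities with $\dim(W_{\rho,-\infty})^H$, to be the main obstacle. I would first try to obtain both from the generic-parameter isomorphism of $E_\chi(X)$ with a sum of principal series given by the Poisson transform.
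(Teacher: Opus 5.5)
Your proposal is correct and follows essentially the paper's proof. Part (i) uses the tautological identity $f=c_{f,\delta_{eH}}$. Part (ii) uses the isotypic decomposition of the socle of $E^{-\infty}_\chi(X)_{mod}$, whose multiplicity spaces $\Hom_G(W_{\tilde\rho,-\infty},E^{-\infty}_\chi(X)_{mod})$ are identified with $(W_{\rho,-\infty})^H$ via Prop.~\ref{kunst} and Prop.~\ref{englisch}, followed by taking $\Gamma$-invariants.

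The differences are minor. The multiplicity matching you flag as an obstacle is automatic from that Frobenius reciprocity, since the isotypic component of $\tilde\rho$ in the socle is already $(W_{\rho,-\infty})^H\otimes W_{\tilde\rho,-\infty}$. For generic $\chi$, the paper shows the socle exhausts the eigenspace using asymptotic expansions of $K$-finite eigenfunctions rather than Poisson transforms, and it sketches this step just as briefly as you do.
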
 
\begin{proof} The first assertion is obvious. Item (i) follows from the fact that any $f\in {\Cal S}'(X)$ can be tautologically written as a matrix coeffcient: $f=c_{f,\delta_{eH}}$.
For (ii) we consider the {\em socle} of the admissible representation of finite length $E^{-\infty}_\chi(X)_{mod}$, i.e. the sum of all its irreducible $G$-subrepresentations (on closed subspaces). It is itself a closed subspace of $E^{-\infty}_\chi(X)_{mod}$ and can be decomposed into isotypic components
$$   \bigoplus_{\rho\in \widetilde G_{H,\chi}} \Hom_G(W_{\tilde \rho,-\infty}, E^{-\infty}_\chi(X)_{mod})\otimes W_{\tilde \rho,-\infty}\ ,$$
and there is a Frobenius reciprocity type isomorphism of the first factor to $(W_{\rho,-\infty})^H$, cf. Prop.~\ref{kunst} and Prop.~\ref{englisch}.
Using the asymptotic expansion of $K$-finite eigenfunctions on $X$ it is not difficult to see that this socle coincides with the whole eigenspace for generic $\chi$, namely for all those
$\chi$ such that all the induced representations $I^{\delta,\lambda}$ having infinitesimal character corresponding to $\chi$ are irreducible. The lemma follows by taking $\Gamma$-invariants.
\end{proof}
A decomposition as in Assertion (ii) of the lemma is only useful if one has some understanding of the spaces of $\Gamma$-invariants $(W_{\tilde \rho,-\infty})^\Gamma$, $\rho\in \widetilde G_{H,\chi}$. This is difficult to achieve for general $\Gamma$. But for uniform lattices $\Gamma\subset L$ , $L\subset G$ connected, the classical decomposition formula (\ref{vangogh})  for $C^\infty(\Gamma\backslash L)$ 
gives some information. 
Note that we have the following description via Frobenius reciprocity of the multiplicity spaces $N_\Gamma(\pi)$, $\pi\in\hat L$, appearing in Lemma~\ref{bassa} (here we use essentially the compactness of $\Gamma\backslash L$):
$$ N_\Gamma(\pi)=\Hom_L(V_\pi, L^2(\Gamma\backslash L)=\Hom_L(V_{\pi,\infty}, C^\infty(\Gamma\backslash L))\cong \Hom_\Gamma(V_{\pi,\infty}, \C)
=  (V_{\tilde \pi,-\infty})^\Gamma\ .
$$
In the following we will identify $N_\Gamma(\pi)$ with $ (V_{\tilde \pi,-\infty})^\Gamma$. We also consider consider the space of intertwining operators $\Hom_L(W_{\rho,\infty}, V_{\pi,\infty})$. From Lemma \ref{stock} (iv) we conclude
that taking adjoints $\Phi\mapsto \Phi^t$ provides an isomorphism
\begin{equation}\label{putzi}   \Hom_L(W_{\rho,\infty}, V_{\pi,\infty})\cong \Hom_L( V_{\tilde\pi,-\infty},W_{\tilde\rho,-\infty})\ .
\end{equation}
Recall from Lemma~\ref{sardine} the role of the space on the right hand side as a multiplicity space for the restriction of $W_{\tilde\rho}$ to $L$.
The following lemma is independent of the theory of symmetric spaces and $H$-(co)spherical representations.

\begin{lem}\label{kirchner}
Let $\Gamma\subset L\subset G$ be as above. Let $W_\rho$ be an admissible $G$-representation of finite length on a reflexive Banach space. Then
$$  (W_{\tilde \rho,-\infty})^\Gamma\cong \overline{\bigoplus_{\pi\in\widehat L}}  \Hom_L(W_{\rho,\infty}, V_{\pi,\infty})\otimes N_\Gamma(\pi)\ ,$$
where the embeddings $i_\pi : \Hom_L(W_{\rho,\infty}, V_{\pi,\infty})\otimes N_\Gamma(\pi)\rightarrow (W_{\tilde \rho,-\infty})^\Gamma$ are given by 
$$i_\pi(\Phi\otimes\tilde v)= \Phi^t\tilde v,\quad \Phi\in \Hom_L(W_{\rho,\infty}, V_{\pi,\infty}), \tilde v\in (V_{\tilde\pi,-\infty})^\Gamma.$$
\end{lem}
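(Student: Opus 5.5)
The plan is to realize $(W_{\tilde\rho,-\infty})^\Gamma$ as a space of $L$-equivariant maps into $C^{-\infty}(\Gamma\backslash L)$, and then to apply the decomposition (\ref{vangogh}).

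\textbf{Step 1: Frobenius reciprocity.} A $\Gamma$-invariant $\tilde w\in W_{\tilde\rho,-\infty}=(W_{\rho,\infty})'$ is the same as a continuous $\Gamma$-invariant functional on $W_{\rho,\infty}$. Via $\tilde w\mapsto (w\mapsto (l\mapsto \langle\tilde w,\rho(l)w\rangle))$ this corresponds to an element of
$$\Hom_L\bigl(W_{\rho,\infty}|_L,\,C^\infty(\Gamma\backslash L)\bigr).$$
The inverse map is evaluation at the base point $\Gamma e$. Continuity of $w\mapsto c_{\tilde w,w}|_L$ into $C^\infty(\Gamma\backslash L)$ follows because $W_{\rho,\infty}$ is a smooth $G$-module, hence a smooth $L$-module. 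This mirrors the displayed identification $N_\Gamma(\pi)\cong (V_{\tilde\pi,-\infty})^\Gamma$.

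\textbf{Step 2: project to isotypic pieces.} Compose with the structural maps $p_\pi$ of the decomposition (\ref{vangogh}),
$$C^\infty(\Gamma\backslash L)\cong \overline{\bigoplus}_\pi N_\Gamma(\pi)\otimes V_{\pi,\infty}.$$
Each $L$-map $T$ then yields components $p_\pi\circ T\in \Hom_L(W_{\rho,\infty},N_\Gamma(\pi)\otimes V_{\pi,\infty})$. Since $N_\Gamma(\pi)$ is finite dimensional, this space equals $\Hom_L(W_{\rho,\infty},V_{\pi,\infty})\otimes N_\Gamma(\pi)^*$.

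Identifying $N_\Gamma(\pi)^*$ with $(V_{\tilde\pi,-\infty})^\Gamma$ is precisely what the formula $i_\pi(\Phi\otimes\tilde v)=\Phi^t\tilde v$ expresses. To check this, note that $\Phi^t$ maps $V_{\tilde\pi,-\infty}$ to $W_{\tilde\rho,-\infty}$ by (\ref{putzi}), is $L$-equivariant, and so preserves $\Gamma$-invariance.

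\textbf{Step 3: the decomposition maps.} Define $p_\pi$ and $P_J$ on $(W_{\tilde\rho,-\infty})^\Gamma$ by transporting $P_J$ from $C^\infty(\Gamma\backslash L)$ through Step 1, i.e. $T\mapsto P_J\circ T$. The algebraic relations (a) and (i) of Def.~\ref{kobold} then come for free.

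\textbf{Step 4: convergence, the main obstacle.} The hard part is condition (ii) of Def.~\ref{kobold}: the series $\sum_n P_{J_n}\circ T$ must converge to $P_J\circ T$ in the topology of $W_{\tilde\rho,-\infty}$. That topology is the strong dual topology of $W_{\rho,\infty}$, which means uniform convergence on bounded subsets $B\subset W_{\rho,\infty}$.

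Pointwise convergence in $C^\infty(\Gamma\backslash L)$ is available from (\ref{vangogh}). The plan for uniformity is as follows.
\begin{itemize}
\item $T(B)$ is bounded in the Montel space $C^\infty(\Gamma\backslash L)$, hence relatively compact.
\item The family of partial sums $\{P_J\}$ is equicontinuous on $C^\infty(\Gamma\backslash L)$. This should follow from the Hilbert estimates in Lemma~\ref{deko} together with ${\Cal U}(\fl)$-equivariance.
\item Pointwise convergence of an equicontinuous family is uniform on compact sets, giving uniform convergence on $T(B)$.
\item Evaluation at $\Gamma e$ is continuous, so the convergence passes back to $(W_{\tilde\rho,-\infty})^\Gamma$.
\end{itemize}
Admissibility and finite length of $W_\rho$ enter only through Casselman–Wallach, which guarantees that $W_{\rho,\infty}$ is a Fréchet space with a well-behaved dual.
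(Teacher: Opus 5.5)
Your proposal is correct. Steps 1--3 match the paper: Frobenius reciprocity gives $(W_{\tilde\rho,-\infty})^\Gamma\cong\Hom_L(W_{\rho,\infty},C^\infty(\Gamma\backslash L))$, and the structure maps of (\ref{vangogh}) are transported by composition.

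The genuine difference is in the convergence step, Def.~\ref{kobold}(ii). The paper first obtains weak$^*$ convergence of $\sum_n P_{J_n}(\tilde w)$ directly from convergence of $\sum_n Q_{J_n}(f)$ for each single $f$. It then upgrades this to strong convergence using that $W_{\rho,\infty}$ is a Montel space. That is exactly where admissibility and finite length enter: Casselman--Wallach realizes $W_{\rho,\infty}$ as a closed subspace of smooth sections of a bundle over $G/P$.

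You instead use the Montel property of $C^\infty(\Gamma\backslash L)$, which is automatic since $\Gamma\backslash L$ is compact. You combine it with equicontinuity of the projections $Q_{J'}$, $J'\subset\widehat L$. These have norm $\le 1$ on $L^2(\Gamma\backslash L)$, commute with ${\Cal U}(\fl)$, and the seminorms $f\mapsto\|Xf\|_{L^2}$ define the $C^\infty$ topology. Since the partial sums are themselves of the form $Q_{J_1\cup\dots\cup J_N}$, they converge uniformly on the relatively compact set $T(B)$.

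This works, and it buys generality: your argument never uses admissibility or finite length, so the lemma holds for any continuous representation on a reflexive Banach space. Your closing remark therefore misplaces the role of these hypotheses. $W_{\rho,\infty}$ is Fr\'echet for every Banach representation, and on your route Casselman--Wallach is not needed at all. The paper's route is shorter once the Montel property of $W_{\rho,\infty}$ is granted.

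There is one small slip in Step 2. One has $\Hom_L(W_{\rho,\infty},N_\Gamma(\pi)\otimes V_{\pi,\infty})\cong N_\Gamma(\pi)\otimes\Hom_L(W_{\rho,\infty},V_{\pi,\infty})$, with $N_\Gamma(\pi)$ itself and not its dual. In the paper's convention $N_\Gamma(\pi)$ is identified with $(V_{\tilde\pi,-\infty})^\Gamma$, with embedding $j_\pi(v\otimes\tilde v)=c_{\tilde v,v}$. With that choice the formula for $i_\pi$ is a one-line computation: $c_{\tilde v,\Phi(w)}(\Gamma e)=\langle\tilde v,\Phi(w)\rangle=\langle\Phi^t\tilde v,w\rangle$. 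Observing that $\Phi^t$ preserves $\Gamma$-invariance only shows that $\Phi^t\tilde v$ lands in the right space; it does not by itself give this formula.
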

\begin{proof} By Frobenius reciprocity we have $(W_{\tilde \rho,-\infty})^\Gamma\cong \Hom_L(W_{\rho,\infty},C^\infty(\Gamma\backslash L))$. We now consider the decomposition  (\ref{vangogh}) $C^\infty(\Gamma\backslash L))\cong \overline{\bigoplus}_{\pi\in\widehat L}
 V_{\pi,\infty}\otimes N_\Gamma(\pi)$ with its defining maps $j_\pi$, $q_\pi$, and $Q_J$ for $J\subset\widehat L$. We can choose $j_\pi$ such that $j_\pi(v\otimes \tilde v)=c_{\tilde v, v}$,  $v\in V_{\pi,\infty}$, $\tilde v\in (V_{\tilde\pi,-\infty})^\Gamma=N_\Gamma(\pi)$.  We define continuous maps 
$$i_\pi: \Hom_L(W_{\rho,\infty}, V_{\pi,\infty}\otimes N_\Gamma(\pi))\rightarrow\Hom_L(W_{\rho,\infty},C^\infty(\Gamma\backslash L))\ ,$$
$p_\pi$ in the other direction, and
$$ P_J: \Hom_L(W_{\rho,\infty},C^\infty(\Gamma\backslash L))\rightarrow\Hom_L(W_{\rho,\infty},C^\infty(\Gamma\backslash L))
$$ 
by composition: $i_\pi(\Phi):=j_\pi\circ\Phi$, $p_\pi(\Psi):=q_\pi\circ\Psi$, $P_J(\Psi):=Q_J\circ\Psi$. Composing with Frobenius reciprocity we consider $i_\pi$, $p_\pi$, $P_J$ also as operators
decomposing  $(W_{\tilde \rho,-\infty})^\Gamma$. These operators satisfy the algebraic properties required for such a decomposition. We have to show that in the situation of  Def.~\ref{kobold}(ii) the
Fourier series $\sum_n P_{J_n}(\tilde w)$ converges to $P_J(\tilde w)\in (W_{\tilde \rho,-\infty})^\Gamma$ (in the strong dual topology). From the convergence of the Fourier
series $\sum_n Q_{J_n}(f)$ in $C^\infty(\Gamma\backslash G)$ we immediately get the convergence in the weak$^*$ topology. Now we use admissibility and finite length
of $W_\rho$. By Casselman/Wallach theory $W_{\rho,\infty}$ can be realized as a closed subspace of the space of smooth sections sections of a vector bundle over $G/P$, $P\subset G$
being a minimal parabolic. It follows that $W_{\rho,\infty}$ is a Montel space. Strong convergence follows, see the remark preceding Lemma \ref{deko}. 
\end{proof}

Note that the validity of the above lemma is independent of the behaviour of the $G$-representation $W_{\tilde\rho}$ when restricted to $L$. It is the discrete decomposablity of
$C^\infty(\Gamma\backslash L)$ that matters here. However, if we have discrete decomposability of $W_{\tilde\rho}$ as an $L$-representation as in Thm.~\ref{mainbranching}, then
an analogous decomposition of  $(W_{\tilde \rho,-\infty})^\Gamma$ holds for all discrete subgroups $\Gamma\subset L$, not only for cocompact ones, cf. Lemma~\ref{deko}.

Let $(G,H,L)$ be a properly transitive triple. 
\begin{pro}\label{paulklee}
Let $W_\rho$ and $V_\pi$ be continuous representations on reflexive Banach spaces of $G$ and $L$, respectively.
Then there is a natural ${\bf D}(G/H)$-equivariant map 
\be\label{thomas}   E: \Hom_L(W_{\rho,\infty},V_{\pi,\infty})\otimes (W_{\rho,-\infty})^H\longrightarrow  (V_{\pi,-\infty})^{L\cap H}\end{equation}
characterized by the following equality of distributional matrix coefficients viewed as distributions on $X$:
$$  c^L_{\tilde v,E(\Phi\otimes w)} =c^G_{\Phi^t\tilde v, w},\quad \tilde v\in V_{\tilde\pi,-\infty},\  \Phi\in \Hom_L(W_{\rho,\infty},V_{\pi,\infty}), \ w\in (W_{\rho,-\infty})^H.$$
Moreover, for irreducible admissible $\pi$, we have:
\begin{itemize}
\item[(i)] Let $\chi\in\Hom({\bf D}(G/H),\C)$, and let $W_\rho$ be the dual of the eigenspace representation considered in Prop.~\ref{englisch}. Then the $E$ is a surjection
onto $(V_{\pi,-\infty})^{L\cap H}_\chi$. 
\item[(ii)] Let $\chi\in\Hom({\bf D}(G/H),\C)$. The map (\ref{thomas}) provides an isomorphism of
$$ \bigoplus_{\rho\in \widetilde G_{H,\chi}} \Hom_L(W_{\rho,\infty},V_{\pi,\infty})\otimes (W_{\rho,-\infty})^H$$
onto a closed subspace of  $(V_{\pi,-\infty})^{L\cap H}_\chi $ which coincides with $(V_{\pi,-\infty})^{L\cap H}_\chi $ for generic $\chi$.
\item[(iii)] Let $(G,H,L)$ be of Type~I. Assume that $\tilde\rho$ is $\tilde\pi$-minimal (see Def.~\ref{spass}). Then $\dim (W_{\rho,-\infty})^H=1$ and $(\ref{thomas})$ is an isomorphism.
Moreover, $E$ induces isomorphisms between certain subspaces, namely
$$\Hom_L(W_{\rho,-\infty},V_{\pi,-\infty})\otimes (W_{\rho,-\infty})^H\longrightarrow  (V_{\pi,\infty})^{L\cap H} $$
and, if in addition $\pi$ and $\rho$ are unitary,
\be\label{gauguin}
\Hom_L(W_{\rho},V_{\pi})\otimes (W_{\rho,-\infty})^H\longrightarrow  V_{\pi}^{L\cap H}\ .
\end{equation}
\end{itemize}
\end{pro}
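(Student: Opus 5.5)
The map $E$ will be built from the distributional Frobenius reciprocity of Prop.~\ref{kunst}, applied once for $G$ and once for $L$. Fix $\Phi\in\Hom_L(W_{\rho,\infty},V_{\pi,\infty})$ and $w\in(W_{\rho,-\infty})^H$.

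\emph{Step 1 (definition of $E$).} By Prop.~\ref{kunst} for $G$, $w$ gives an intertwiner $c^G_{\cdot,w}\in\Hom_G(W_{\tilde\rho,-\infty},C^{-\infty}(G/H))$. By (\ref{putzi}), $\Phi^t$ maps $V_{\tilde\pi,-\infty}$ into $W_{\tilde\rho,-\infty}$ $L$-equivariantly. The composite
\[\tilde v\mapsto c^G_{\Phi^t\tilde v,w}\]
then lies in $\Hom_L(V_{\tilde\pi,-\infty},C^{-\infty}(X))$. Since $X\cong L/L\cap H$ as $L$-spaces, and $L\cap H$ is compact (hence unimodular), Prop.~\ref{kunst} applied to $L$ produces a unique element $E(\Phi\otimes w)\in(V_{\pi,-\infty})^{L\cap H}$ with the required identity of matrix coefficients. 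Bilinearity is clear. For ${\bf D}(G/H)$-equivariance, note that $D\in{\bf D}(G/H)$ acts on $c^G_{\cdot,w}$ through $w$ (right action of ${\Cal U}(\fg)^H$) and on $c^L_{\cdot,v}$ through $v$ via $\imath(D)$. Both actions are the same differential operator on $X$, so uniqueness in Prop.~\ref{kunst} gives equivariance.

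\emph{Step 2 (items (i) and (ii)).} I transfer Lemma~\ref{otto} fiberwise. Apply Prop.~\ref{kunst} for $L$ in reverse: $(V_{\pi,-\infty})^{L\cap H}_\chi\cong\Hom_L(V_{\tilde\pi,-\infty},E^{-\infty}_\chi(X))$. Prop.~\ref{englisch} should let me replace $E^{-\infty}_\chi(X)$ by $E^{-\infty}_\chi(X)_{mod}=W_{\tilde\rho,-\infty}$, where $W_{\tilde\rho}$ is the eigenspace representation. The reason is that images of $V_{\tilde\pi,-\infty}$ consist of moderate distributions, by (\ref{fixus}), (\ref{pilatus}) and the admissibility of $\pi$. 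This step, showing that the image lands in ${\Cal S}'(X)$, is the main obstacle. I would prove it by pairing with ${\Cal S}(L)^{L\cap H}$ and using moderate growth of the Casselman--Wallach globalization $V_{\pi,\infty}$.

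Granting this, an intertwiner $V_{\tilde\pi,-\infty}\to W_{\tilde\rho,-\infty}$ is some $\Phi^t$ by (\ref{putzi}). Pairing it with $\delta_{eH}\in(W_{\rho,-\infty})^H$ recovers the given invariant, which gives (i). For (ii), I restrict to the socle decomposition as in the proof of Lemma~\ref{otto}. Injectivity and closedness follow from the direct-sum structure of the socle, and for generic $\chi$ the socle is everything.

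\emph{Step 3 (item (iii)).} $\dim(W_{\rho,-\infty})^H=1$ is Thm.~\ref{aal}(i) applied to $\tilde\rho$. Injectivity of $E$ follows from the injectivity of (\ref{guru}) for $\tilde\rho$ and $\tilde\pi$, after dualizing via (\ref{putzi}). Surjectivity follows by comparing dimensions: Thm.~\ref{aal}(ii) gives $m_{\tilde\rho}(\tilde\pi)=\dim V_\pi^{L\cap H}$. In the spherical case this is finite and everything matches. In the general Type~I case I would use the $C_2$-finite density argument of Lemma~\ref{sardine} together with the irreducibility of $B$ in the proof of Thm.~\ref{aal}.

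For the restricted statements, use the matrix coefficient identity. If $\Phi$ extends to distribution vectors, then $c^G_{\Phi^t\tilde v,w}$ is smooth in $\tilde v$, so $E(\Phi\otimes w)$ is a smooth vector. Conversely, if $E(\Phi\otimes w)$ is smooth, reading the identity backwards shows $\Phi$ extends continuously. The unitary statement (\ref{gauguin}) follows the same pattern with $L^2$-norms in place of smoothness, via Lemma~\ref{sardine}.
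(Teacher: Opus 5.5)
Your construction of $E$, the moderate-growth refinement of Prop.~\ref{kunst} behind (i), the socle argument for (ii), and the spherical case of (iii) agree with the paper. In the spherical case the spaces are finite dimensional, so injectivity together with $m_{\tilde\rho}(\tilde\pi)=\dim V_\pi^{L\cap H}$ suffices.

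The gap is (iii) for Type~I triples that are not spherical. These are the triples with group-manifold factors, e.g.\ Case~11 with $L=L_{min}$, and there every space involved is infinite dimensional.

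First, (\ref{guru}) for $\tilde\rho,\tilde\pi$ concerns $\Hom_L(V_{\tilde\pi,\infty},W_{\tilde\rho,\infty})\cong\Hom_L(W_{\rho,-\infty},V_{\pi,-\infty})$. This is the \emph{smallest} of the three multiplicity spaces, whereas (\ref{putzi}) identifies the domain of $E$ with $\Hom_L(V_{\tilde\pi,-\infty},W_{\tilde\rho,-\infty})$. So (\ref{guru}) gives injectivity with dense range only on a subspace. Injectivity on the full domain is easily repaired: $\tilde w\mapsto c^G_{\tilde w,w}$ is injective on $W_{\tilde\rho,-\infty}$, because $\tilde\rho$ is $H$-spherical and $\dim(W_{\rho,-\infty})^H=1$.

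The real problem is surjectivity. The $C_2$-finite density argument of Lemma~\ref{sardine} and the isomorphism $A_{C_2}\cong B_{C_2}$ from the proof of Thm.~\ref{aal} only show that $E$ matches the $C_2$-finite parts and has dense range. Source and target are possibly different completions of the same irreducible Harish-Chandra module for the non-compact group $C_3=Z_G(L_{min})_0$. Injectivity with dense range cannot give surjectivity there; compare the inclusion of a smooth globalization into a distribution globalization. So you obtain neither that $E$ is onto $(V_{\pi,-\infty})^{L\cap H}$, nor that the two restricted maps hit exactly $(V_{\pi,\infty})^{L\cap H}$ and $V_\pi^{L\cap H}$.

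Your last paragraph does not supply this either. For $\tilde v\in V_{\tilde\pi,\infty}$ the function $c^G_{\Phi^t\tilde v,w}=c^L_{\tilde v,E(\Phi\otimes w)}$ is smooth for \emph{every} $\Phi$, so smoothness of it cannot detect whether $E(\Phi\otimes w)$ is a smooth vector. Likewise, ``reading the identity backwards'' or ``using $L^2$-norms'' gives no argument that $\Phi$ extends to distribution vectors or is bounded.

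The paper closes this with an explicit computation rather than an abstract argument. As in the proof of Thm.~\ref{aal}, one reduces to $L=L_{min}$. Via Prop.~\ref{productck} and tensor product realizations, one then reduces to the product of a spherical triple (already handled) with the minimal group case $(L\times L,\Delta L,L\times\{e\})$, where $L\cap H=\{e\}$.

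There the $\tilde\pi$-minimal representation is $\End_{HS}(V_{\tilde\pi})$. Its smooth vectors are $\Hom(V_{\tilde\pi,-\infty},V_{\tilde\pi,\infty})$, its distribution vectors are $\Hom(V_{\tilde\pi,\infty},V_{\tilde\pi,-\infty})$, and its $H$-invariants are spanned by the identity. By the Schur--Dixmier lemma, every $L$-intertwiner from $V_{\tilde\pi,\infty}$, $V_{\tilde\pi}$, $V_{\tilde\pi,-\infty}$ into the corresponding space of operators takes values in rank-one operators, $\Phi(v_1)(v_2)=\lambda(v_2)v_1$. The assignment $\Phi\mapsto\lambda$ identifies the three transposed Hom spaces canonically with $V_{\pi,\infty}\subset V_\pi\subset V_{\pi,-\infty}$. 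Checking with (\ref{hodler}) that $\Phi\mapsto\lambda$ equals $\Phi\mapsto E(\Phi^t\otimes\id)$ then gives all three isomorphisms of (iii) at once.
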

Evaluating the above characterizing property for $E$ on test functions $\vp\in C_c^\infty(X)$ we get the following alternative characterization of $E$:
\be\label{hodler} \pi(\vp)E(\Phi\otimes w)=\Phi(\rho(\vp) w),\quad \vp\in C_c^\infty(X),\  \Phi\in \Hom_L(W_{\rho,\infty},V_{\pi,\infty}), \ w\in (W_{\rho,-\infty})^H.
\end{equation}
Note that $\rho(\vp):  (W_{\rho,-\infty})^H\rightarrow W_{\rho,\infty}$ is a well-defined operator.
\begin{proof}  In order to construct the map $E$ we employ (\ref{putzi}) and the version of Frobenius reciprocity established in Prop.~ \ref{kunst}. The latter gives
\begin{eqnarray*}(W_{\rho,-\infty})^H&\cong& \Hom_G(W_{\tilde\rho,-\infty},C^{-\infty}(X)),\quad w\mapsto \Phi_w,\  \Phi_w(\tilde w)=c^G_{\tilde w,w},\\
(V_{\pi,-\infty})^{L\cap H}&\cong &\Hom_L(V_{\tilde\pi,-\infty}, C^{-\infty}(X)), \quad v\mapsto \Phi_v,\  \Phi_v(\tilde v)=c^L_{\tilde v,v}.
\end{eqnarray*}
Now we use the natural map
$$ \Hom_L(V_{\tilde\pi,-\infty},W_{\tilde\rho,-\infty})\otimes \Hom_G(W_{\tilde\rho,-\infty}, C^{-\infty}(X))\longrightarrow \Hom_L(V_{\tilde\pi,-\infty},C^{-\infty}(X))\ .$$

The action of ${\Cal S}(L)$ on $V_{\pi,-\infty}$ is given by smoothing operators $\pi(\vp): V_{\pi,-\infty}\rightarrow V_{\pi,\infty}$. Therefore, the distributional matrix coefficients
of $V_\pi$ belong to ${\Cal S}'(L)$. This leads to the following refinement of Prop.~\ref{kunst}: 
$$ (V_{\pi,-\infty})^{L\cap H}\cong \Hom_L(V_{\tilde\pi,-\infty},{\Cal S}'(X))\cong \Hom_L(V_{\tilde\pi,-\infty}, C^{-\infty}(X))\ .$$
In particular, we obtain 
$$ (V_{\pi,-\infty})^{L\cap H}_\chi \cong \Hom_L(V_{\tilde\pi,-\infty}, E^{-\infty}_\chi(X)_{mod})\ .$$
In view of Prop.~\ref{englisch}, Assertion (i) of the present proposition now becomes a tautology. Assertion~(ii) can be obtained along the lines of the proof of
Prop~\ref{otto} (ii). 

Eventually we discuss Assertion (iii). We have already seen in the proof of  Thm.~\ref{aal} that the restriction of $E$ to the subspace $\Hom_L(V_{\tilde \pi,\infty}, W_{\tilde\rho,\infty})\otimes  (W_{\rho,-\infty})^H\cong \Hom_L(W_{\rho,-\infty},V_{\pi,-\infty})\otimes (W_{\rho,-\infty})^H\subset \Hom_L(W_{\rho,\infty},V_{\pi,\infty})\otimes (W_{\rho,-\infty})^H$ - which has a more direct definition - is injective and has dense image in $(V_{\pi,-\infty})^{L\cap H}$, cf. the discussion around (\ref{guru}). Now we assume for a moment that
$(G,H,L)$ is spherical. In this case, all variants of $E$ listed in (iii) coincide and are maps between finite dimensional vector spaces. It follows that they are isomorphisms. Let now $(G,H,L)$
be a general Type~I triple. As in the proof of Thm.~\ref{aal}, we can reduce the assertion to the case $L=L_{min}$. Then, by Lemma \ref{productck}, we have product decompositions $G=G_1\cdot (G_2\times G_2)$, $H= H_1\cdot \Delta G_2$, $L=L_1\cdot (G_2\times \{e\})$ with $(G_1,H_1,L_1)$ spherical. Using corresponding tensor product realisations of the involved representations
we conclude that Assertion~(iii) is true in general if it is true for the 'minimal' group case $(G,H,L)=(L\times L,\Delta L, L\times\{e\})$. Note that here $L\cap H=\{e\}$.
Let $\pi$ be an irreducible admissible representation  of $L$ realized on a Hilbert space $V_\pi$. The $\pi$-minimal  $\Delta L$-spherical $L\times L$-representation is then
given by the space of Hilbert-Schmidt operators 
$W_\rho=\End_{HS}(V_\pi)$. Note that $W_\rho$ is always irreducible, and is unitary if $\pi$ is so. Moreover,
$$ W_{\rho,\infty}=\End_\infty(V_\pi):=\Hom(V_{\pi,-\infty},V_{\pi,\infty}),\ W_{\rho,-\infty}=\End_{-\infty}(V_\pi):=\Hom(V_{\pi,\infty},V_{\pi,-\infty}).$$
Note that $(W_{\rho,-\infty})^H$ is spanned by the identity operator. 
Now let us consider the relevant $\Hom_L$-spaces in its transposed form:
$$ \Hom_L(V_{\tilde\pi,\infty},\End_\infty(V_{\tilde\pi}))\subset \Hom_L(V_{\tilde\pi},\End_{HS}(V_{\tilde\pi}))\subset \Hom_L(V_{\tilde\pi,-\infty},\End_{-\infty}(V_{\tilde\pi}))\ .$$
We claim that this chain of inclusions is canonically isomorphic to 
$V_{\pi,\infty}\subset V_\pi\subset V_{\pi,-\infty}$. Indeed, it is a consequence of the Schur-Dixmier Lemma that every homomorphism in one of three spaces has values
in rank one operators and is of the form
$ \Phi(v_1)(v_2)=\lambda(v_2)v_1$. The canonical isomorphism is now given by $\Phi\mapsto \lambda$. For instance, if $\Phi\in \Hom_L(V_{\tilde\pi,-\infty},\End_{-\infty}(V_{\tilde\pi}))$, then $\lambda\in \Hom(V_{\tilde\pi,\infty},\C)\cong V_{\pi,-\infty}$. It is  easily checked, e.g. using the characterization (\ref{hodler}), that the map $\Phi\mapsto\lambda$ coincides
with $\Phi\mapsto E(\Phi^t\otimes\id_{V_\pi})$. Assertion~(iii) follows.
\end{proof}


\subsection{Results for Type~I triples.}\label{TI} In this subsection we discuss the consequences of Prop.~\ref{paulklee} for triples of Type~I.

Let $\widehat G_H^L$ be the set of (infinitesimal) equivalence classes of $H$-cospherical $G$-representation $W_\rho$ such that $W_{\rho^*}$ is $\pi$-minimal for some $\pi\in \widehat L$ satisfying $V_\pi^{L\cap H}\ne\{0\}$. The set $\widehat G_H^L$ is the disjoint union of its main part $\widehat G_H$ and the two exceptional sets
$$  \widehat G_H':=\{\rho\in\widehat G_H^L\mid \rho\mbox{ irreducible and not unitarizable}\}\ \mbox{ and }\ 
\widehat G_H'':=\{\rho\in\widehat G_H^L\mid \rho\mbox{ not  irreducible}\}. $$ 
Prop.~\ref{kopp} shows that these two sets are non-empty for Triple 1. In contrast, they are empty in the group case 11.

For $\rho\in  \widehat G_H^L$ we set $\widehat L_\rho:=\{\pi\in\widehat L\mid V_\pi^{L\cap H}\ne\{0\},\rho^*\mbox{ is $\pi$-minimal}\}$. Again we have a finite-to-one map $p: \widehat G_H^L\rightarrow \Hom({\bf D}(G/H),\C)$. Its image is contained in the set of $*$-characters $\Xi\subset \Hom({\bf D}(G/H),\C)$. We have $\widehat L_\rho\subset \widehat L_{p(\rho)}$, see Cor.~\ref{hussa} for the definition of  $\widehat L_{p(\rho)}$. Every $\widehat L_\chi$, $\chi\in\Xi$, is a finite (possibly empty) union of sets of the form $\widehat L_\rho$. Our goal is to give a representation
theoretic refinement of the decomposition into joint eigenspaces in Theorem~\ref{selim} indexed by $\rho\in\widehat G_H^L$ instead of $\chi\in\Xi$.

As a first step, if $\rho\in\widehat G_H$, we want to equip the left hand side of (\ref{gauguin}) with a Hilbert space structure  and to understand to what extent (\ref{gauguin}) becomes an isometry of Hilbert spaces.
Since $(W_{\rho,-\infty})^H$ is one-dimensional, the main point is to equip $\Hom_L(W_{\rho},V_{\pi})$ with a suitable Hilbert space structure.
We also want to establish a similar result for general $\rho\in\widehat G_H^L$, even if $\rho$ is not unitary. In the latter case we have to find the correct left hand side of (\ref{gauguin}). 

Let $\rho\in\widehat G_H$. We fix unitary structures on $W_\rho$ and on all $V_\pi$, $\pi\in\widehat L_\rho$.
Now $\Hom_L(W_{\rho},V_{\pi})$
is isomorphic to the multiplicity space $M_{\tilde\rho}(\tilde\pi)$ and carries a natural Hilbert space structure, see (\ref{cezanne}) and Lemma \ref{sardine}. Explicitly, we have
$$ \langle A, B\rangle \id_{V_\pi}:=AB^*,\quad A,B\in \Hom_L(W_{\rho},V_{\pi}).$$

Let now $\rho\in \widehat G_H'\cup \widehat G_H''$. We fix an invariant Hermitian form on the unique irreducible submodule $W_{\rho_0,\infty}\subset W_{\rho,\infty}$ (its existence is provided by Prop.~\ref{dark}(iii)) and unitary structures on all 
$V_\pi$, $\pi\in\widehat L_\rho$. We first discuss the case that the symmetric space $G/H$ has no group factors. In particular, $(G,H,L)$ is spherical. By a slight abuse
of notation, we set 
$$ \Hom_L(W_\rho, V_\pi):=\Hom_L(W_{\rho,\infty}, V_{\pi,\infty})\ .$$
Let $B\in \Hom_L(W_\rho, V_\pi)=\Hom_L(W_{\rho,\infty}, V_{\pi,\infty})$ and let $B_0\in \Hom_L(W_{\rho_0,\infty}, V_{\pi,\infty})$ be its restriction $W_{\rho_0,\infty}$. Let 
$B_0^*\in \Hom_L(V_{\pi,-\infty},W_{\rho_0,-\infty})$ be the adjoint of $B_0$ with respect to the chosen Hermitian forms. 
By Lemma~\ref{sardine} the embedding 
$\Hom_L(V_{\pi,\infty},W_{\rho_0,\infty})\subset\Hom_L(V_{\pi,-\infty},W_{\rho_0,-\infty})$ is an isomorphism. We conclude that $ B_0^*$ maps $V_{\pi,\infty}$ continuously to $W_{\rho_0,\infty}\subset W_{\rho,\infty}$. Thus for all $A,B\in \Hom_L(W_\rho, V_\pi)$ the composition $AB_0^*$ defines an element of $\End_L(V_{\pi,\infty})$. Therefore
we can define a Hermitian form on $\Hom_L(W_\rho, V_\pi)$ by 
$$ \langle A, B\rangle \id_{V_{\pi,\infty}}:=AB_0^*,\quad A,B\in \Hom_L(W_{\rho},V_{\pi}).$$
It follows from the injectivity of the restriction map $\Hom_L(W_{\rho,\infty},V_{\pi,\infty})\rightarrow \Hom_L(W_{\rho_0,\infty},V_{\pi,\infty})$ (see Prop.~\ref{dark}(ii)) and Lemma~\ref{schlips} that  $\langle B, B\rangle=0$ only if $B=0$.

In the general case, we have $G=G_1\cdot(G_2\times G_2)$ with $H=H_1\cdot\Delta G_2$ such that $G_1/H_1$ contains no group factors. Set $L_1:=L\cap G_1$, $K_1:=K\cap G_1$.
We may assume that $W_\rho$ carries a $K_1\cdot(G_2\times G_2)$-invariant Hilbert space structure. Again by a slight abuse of notation we set
$$ \Hom_L(W_\rho, V_\pi):=\Hom_L(W_{\rho,\infty_{G_1}}, V_{\pi,\infty_{L_1}})\ .$$
The statements that follow can be easily checked by decomposing $W_\rho$ and $V_\pi$ into tensor products such that the first factor is a representation of $G_1$ or $L_1$, respectively.
The unitarity of $W_\rho$ as a representation of  $G_2\times G_2$ implies that the invariant Hermitian form on $W_{\rho_0,\infty}$ extends to $W_{\rho_0,\infty_{G_1}}$.
For $B\in\Hom_L(W_\rho, V_\pi)$ let $B_0^*$ be the adjoint of the restriction to $W_{\rho_0,\infty_{G_1}}$. For all $A,B\in \Hom_L(W_\rho, V_\pi)$ the composition $AB_0^*$ defines an element of $\End_L(V_{\pi,\infty_{L_1}})$. We define a non-zero Hermitian form on $\Hom_L(W_\rho, V_\pi)$ by 
$$ \langle A, B\rangle \id_{V_{\pi,\infty_{L_1}}}:=AB_0^*,\quad A,B\in \Hom_L(W_{\rho},V_{\pi}).$$

We remark that at least for some unitary and relatively tempered $\rho$ there are rather canonical normalizations of the scalar product on $(W_{\rho,-\infty})^H$ coming from harmonic analysis on $G/H$ (and depending of course on the normalization of the scalar product on $W_\rho$), see e.g. \cite{vdBFJS}.

\begin{lem}\label{huckepack}
Let $(G,H,L)$ be of Type~I. Let $\rho\in\widehat G^L _H$. We fix a scalar product on the one-dimensional space $(W_{\rho,-\infty})^H$. For $\pi\in\widehat L_\rho$ we consider the space $\Hom_L(W_{\rho},V_{\pi})$ equipped with a non-zero Hermitian form $\langle.,.\rangle$ constructed above.
Then there exists a real number $e_\pi\ne0$ such that
$$  \Hom_L(W_{\rho},V_{\pi})_{e_\pi}:= \left ( \Hom_L(W_{\rho},V_{\pi}),e_\pi\langle.,.\rangle\right )$$
is a Hilbert space and
$$ E: \Hom_L(W_{\rho},V_{\pi})_{e_\pi}\otimes (W_{\rho,-\infty})^H\longrightarrow  V_{\pi}^{L\cap H} $$
is a unitary isomorphism.
\end{lem}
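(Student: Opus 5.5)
The plan is to exploit Schur's lemma twice: once to see that the Hermitian form $\langle.,.\rangle$ on $\Hom_L(W_\rho,V_\pi)$ is well defined up to a scalar, and once to see that $E$ intertwines it with the restriction of the scalar product of $V_\pi$ to $V_\pi^{L\cap H}$ up to a scalar. By Prop.~\ref{paulklee}(iii) (applied with $\rho^*$ being $\pi$-minimal, so $\tilde\rho$ is $\tilde\pi$-minimal after conjugation), $E$ is already a linear bijection
$$\Hom_L(W_{\rho,\infty},V_{\pi,\infty})\otimes (W_{\rho,-\infty})^H\longrightarrow (V_{\pi,-\infty})^{L\cap H},$$
and by Lemma~\ref{sardine} these multiplicity spaces are finite dimensional. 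When $G/H$ has group factors we reduce to the spherical case and the minimal group case exactly as in the proof of Prop.~\ref{paulklee}(iii). For the minimal group case $W_\rho=\End_{HS}(V_\pi)$, and $E$ is explicitly $\Phi\mapsto\lambda$, so the constant can be computed directly: it is the formal-degree-type normalization relating Hilbert--Schmidt and trace pairings, and we may absorb it into $e_\pi$. Thus the main work lies in the spherical case, where everything is finite dimensional and we must identify the image and the metric.

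First, in the spherical case, $V_\pi^{L\cap H}=(V_{\pi,-\infty})^{L\cap H}=(V_{\pi,\infty})^{L\cap H}$ is finite dimensional and carries the restricted scalar product. It is moreover an irreducible module for the commutant algebra. When $\dim V_\pi^{L\cap H}=1$ (Prop.~\ref{mufree}) this is trivial; in the exceptional Sp$(1,n)$ case, or for $L_{min}\ne L_{max}$, it is an irreducible module for $C_2$ (as in the proofs of Thm.~\ref{aal} and Lemma~\ref{schlips}(b)). On the other side, $C_2$ (or $C_3$) acts on $\Hom_L(W_\rho,V_\pi)$ through its action on $V_\pi$ commuting with $L_{min}$, and $E$ is $C_2$-equivariant because the characterization (\ref{hodler}) is natural. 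The form $\langle A,B\rangle\id=AB_0^*$ is $C_2$-invariant, since $C_2$ acts unitarily on $V_\pi$. Both sides are irreducible unitary-type $C_2$-modules, so the transported form $(A,B)\mapsto\langle E(A\otimes w_0),E(B\otimes w_0)\rangle_{V_\pi}$ (with $w_0$ a unit vector in $(W_{\rho,-\infty})^H$) is another invariant Hermitian form. Schur's lemma then yields a real constant $c$ with $\langle E(A\otimes w_0),E(B\otimes w_0)\rangle=c\langle A,B\rangle$. Since $E$ is injective and the target form is definite, $c\ne0$, and setting $e_\pi:=c$ makes $e_\pi\langle.,.\rangle$ positive definite with $E$ unitary.

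The point needing care, and the step I expect to be the main obstacle, is the nondegeneracy and invariance of $\langle.,.\rangle$ when $\rho$ is non-unitary or reducible. There $B_0^*$ is only defined via the invariant Hermitian form on the irreducible submodule $W_{\rho_0,\infty}$ (Prop.~\ref{dark}(iii)), and one has to check that $AB_0^*$ is an $L$-endomorphism of $V_{\pi,\infty}$, hence a scalar by Schur--Dixmier. That this scalar is nonzero for $A=B\ne0$ is exactly what was argued before the lemma using Prop.~\ref{dark}(ii) and Lemma~\ref{schlips}. Granting this, only irreducibility of the commuting group action matters, and the sign of $e_\pi$ may be negative, which is why $e_\pi$ is merely required to be a nonzero real number.
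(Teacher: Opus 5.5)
Your overall strategy is the paper's. $E$ is a linear isomorphism by Prop.~\ref{paulklee}(iii). It intertwines the actions of the connected centralizer of $L_{min}$. Both spaces are irreducible for that action and carry non-zero invariant Hermitian forms, so Schur's lemma yields $e_\pi$.

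The paper organizes this more economically. It first reduces to $L=L_{min}$ and then argues uniformly with $C_3=Z_G(L_{min})_0$; in a group factor, $C_3$ is a copy of the non-compact factor acting irreducibly on $V_\pi$. So it needs neither your case distinction by $\dim V_\pi^{L\cap H}$ nor a separate treatment of the minimal group case. Incidentally, no formal degree enters in that case. There $E(\Phi\otimes\id)$ is just the vector describing the rank-one intertwiner $\Phi$, and $E$ is isometric up to the chosen norm of $\id\in(W_{\rho,-\infty})^H$.

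One step is wrong as you state it, though it is easily repaired. You let $C_2$ act on $\Hom_L(W_\rho,V_\pi)$ ``through its action on $V_\pi$ commuting with $L_{min}$''. You then derive the invariance of $\langle.,.\rangle$ from unitarity on $V_\pi$. But consider the case where the group action is actually needed, e.g.\ Triple~5 with $L=L_{min}=Sp(1,n)$. There $V_\pi$ is irreducible for $L_{min}$, so any operator on it commuting with $L_{min}$ is a scalar (Schur--Dixmier). Such an action could never be irreducible on a space of dimension $>1$.

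The correct action is by pre-composition, $\Phi\mapsto\Phi\circ\rho(c)^{-1}$ with $c\in C_3\subset G$. The invariance of $\langle A,B\rangle\id=AB_0^*$ then comes from the $G$-invariance of the Hermitian form on $W_{\rho_0}$ (resp.\ unitarity of $W_\rho$), not from $V_\pi$.

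Only on the target $V_\pi^{L\cap H}$, where $C_3$ acts through Frobenius reciprocity, is the action realized inside $V_\pi$. It is given by the unitary operators $\pi(l_c)$ with $l_c\in L_{min}$ and $c^{-1}H=l_cH$; for Triple~5, $l_c=p(c)$ as in the proof of Prop.~\ref{staun}. These operators do not commute with $L_{min}$, but they preserve $V_\pi^{L\cap H}$, which gives the invariance of the restricted scalar product there. With this correction your Schur argument goes through.
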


\begin{proof}
We first check that the version of $E$ considered in the lemma is a linear isomorphism. This is clear by Prop.~\ref{paulklee}(iii) if $\rho$ is unitary or if $(G,H,L)$
is spherical (in the latter case $V_\pi^{L\cap H}=(V_{\pi,-\infty})^{L\cap H}$). The general case is easily reduced to these two situations. 
Next we assume that $L=L_{min}$. As in the proof of Thm.~\ref{aal} we see that $E$ is an intertwining operator between irreducible $C_3=Z_G(L)_0$-representations.
The non-zero Hermitian forms on both spaces are $C_3$-invariant. It follows that $E$ respects these forms up to a constant. This proves the lemma for $L=L_{min}$. Again using the techniques of the proof
of Thm.~\ref{aal} one sees that the assertion of the lemma is true for general $L$ if it is true for $L_{min}$.
\end{proof}

The number $e_\pi$ could be viewed as a kind of relative formal dimension of $\pi$ with respect to $\rho$. Indeed, if $\rho$ is a relative discrete series representation and $\pi_1,\pi_2\in\widehat L_\rho$, then $e_{\pi_1}/e_{\pi_2}=d_{\pi_1}/d_{\pi_2}$, where $d_{\pi_i}$ is the formal dimension of the discrete series representation $\pi_i$, $i=1,2$.

Let $\rho\in\widehat G^L _H$ with a fixed scalar product on the one-dimensional space $(W_{\rho,-\infty})^H$. We define subspaces $(W_{\tilde \rho,-\infty})_+^\Gamma\subset (W_{\tilde \rho,-\infty})^\Gamma_0$ of $(W_{\tilde \rho,-\infty})^\Gamma$
as follows. We fix an element $w_0\in (W_{\rho,-\infty})^H$ of norm $1$ and set 
\begin{eqnarray*}
(W_{\tilde \rho,-\infty})_+^\Gamma&:=&\{\tilde w\in (W_{\tilde \rho,-\infty})^\Gamma\mid c_{\tilde w,w_0}\in C^\infty(Y)\}\ ,\\
(W_{\tilde \rho,-\infty})_0^\Gamma&:=&\{\tilde w\in (W_{\tilde \rho,-\infty})^\Gamma\mid c_{\tilde w,w_0}\in L^2(Y)\}\ .\\
\end{eqnarray*}
The second space inherits a Hilbert space structure from $L^2(Y)$. We also get a sesquilinear pairing between $(W_{\tilde \rho,-\infty})^\Gamma$ and $(W_{\tilde \rho,-\infty})_+^\Gamma$. For $\rho\in \widehat G_H''$, let $W_{\rho_1}\subset W_{\tilde\rho}$ be the unique maximal proper $G$-subrepresentation. We  set
$$    (W_{\tilde \rho,-\infty})^\Gamma_{new}:=\left ( (W_{\rho_1,-\infty})^\Gamma\cap  (W_{\rho,-\infty})^\Gamma_+\right )^\perp \subset (W_{\tilde \rho,-\infty})^\Gamma$$
and   $(W_{\tilde \rho,-\infty})^\Gamma_{new,+}:= (W_{\tilde \rho,-\infty})^\Gamma_{new}\cap (W_{\tilde \rho,-\infty})^\Gamma_{+}$,
$(W_{\tilde \rho,-\infty})^\Gamma_{new,0}:= (W_{\tilde \rho,-\infty})^\Gamma_{new}\cap (W_{\tilde \rho,-\infty})^\Gamma_{0}$.
We remark that the quotient map $W_{\tilde\rho}\rightarrow W_{\tilde\rho_0}$ identifies $(W_{\tilde \rho,-\infty})^\Gamma_{new}$ with a closed subspace of
$(W_{\tilde \rho_0,-\infty})^\Gamma$. 

Now we can state the announced refinement of the spectral decomposition given in Theorem~\ref{selim} for triples of Type~I. For Type~I triples, it also extends to all eigenspaces the description given in Lemma~\ref{otto}(ii) of generic eigenspaces.

\begin{thm}\label{gogo}
Let $(G,H,L)$ be a triple of Type~I. Let $\Gamma\subset L$ be a cocompact lattice.
The matrix coefficient map (\ref{paul}) induces a ${\bf D}(G/H)$-equivariant unitary isomorphism of Hilbert spaces
$$ L^2(Y)\cong \widehat{\bigoplus_{\rho\in\widehat G_H\cup \widehat G_H'}} (W_{\tilde \rho,-\infty})_0^\Gamma\otimes  (W_{\rho,-\infty})^H
\oplus \widehat{\bigoplus_{\rho\in\widehat G_H''}} (W_{\tilde \rho,-\infty})_{new,0}^\Gamma\otimes  (W_{\rho,-\infty})^H$$
as well as ${\bf D}(G/H)$-equivariant isomorphisms
\begin{eqnarray*}
C^\infty(Y)&\cong& \overline{\bigoplus_{\rho\in\widehat G_H\cup \widehat G_H'}} (W_{\tilde \rho,-\infty})_+^\Gamma\otimes  (W_{\rho,-\infty})^H
\oplus \overline{\bigoplus_{\rho\in\widehat G_H''}} (W_{\tilde \rho,-\infty})_{new,+}^\Gamma\otimes  (W_{\rho,-\infty})^H\ ,\\
C^{-\infty}(Y)&\cong& \overline{\bigoplus_{\rho\in\widehat G_H\cup \widehat G_H'}} (W_{\tilde \rho,-\infty})^\Gamma\otimes  (W_{\rho,-\infty})^H
\oplus \overline{\bigoplus_{\rho\in\widehat G_H''}} (W_{\tilde \rho,-\infty})_{new}^\Gamma\otimes  (W_{\rho,-\infty})^H\ .
\end{eqnarray*}
The inclusions $(W_{\tilde \rho,-\infty})_+^\Gamma\subset (W_{\tilde \rho,-\infty})_0^\Gamma\subset (W_{\tilde \rho,-\infty})^\Gamma$ have dense images.
These spaces of $\Gamma$-invariants (or rather the corresponding subspaces of `newforms', if $\rho\in\widehat G_H''$) have further decompositions
$$
 \widehat{\bigoplus_{\pi\in\widehat L_\rho}} \Hom_L(W_\rho,V_\pi)_{e_\pi}\otimes N_\Gamma(\pi)\ ,\quad
 \overline{\bigoplus_{\pi\in\widehat L_\rho}} \Hom_L(W_{\rho,\mp\infty},V_{\pi,\mp\infty})\otimes N_\Gamma(\pi)\ .
$$
If $\rho\in\widehat G_H$ is an integrable relative discrete series representation and $\Gamma\subset L$ is torsion free, then $(W_{\tilde \rho,-\infty})_+^\Gamma$ is infinite dimensional.
\end{thm}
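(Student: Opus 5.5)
The plan is to start from the $L$-side decomposition of Lemma~\ref{bassa},
$$L^2(Y)\cong\widehat{\bigoplus_{\pi\in\widehat L}}N_\Gamma(\pi)\otimes V_\pi^{L\cap H},$$
together with its smooth and distributional analogues (\ref{abc}), and to regroup the summands according to the partition of $\{\pi\in\widehat L\mid V_\pi^{L\cap H}\ne\{0\}\}$ into the disjoint sets $\widehat L_\rho$, $\rho\in\widehat G_H^L$. This is indeed a partition: by Thm.~\ref{aal}(iii) each such $\pi$ has a unique $\pi$-minimal $H$-spherical $G$-representation, and we take $\rho$ to be its conjugate dual. For each fixed $\rho$, Lemma~\ref{huckepack} gives a unitary isomorphism
$$\Hom_L(W_\rho,V_\pi)_{e_\pi}\otimes(W_{\rho,-\infty})^H\cong V_\pi^{L\cap H}.$$
Tensoring with $N_\Gamma(\pi)$ and summing over $\pi\in\widehat L_\rho$ yields the Hilbert space $\widehat{\bigoplus}_{\pi\in\widehat L_\rho}\Hom_L(W_\rho,V_\pi)_{e_\pi}\otimes N_\Gamma(\pi)\otimes(W_{\rho,-\infty})^H$. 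The smooth and distributional versions come from Prop.~\ref{paulklee}(iii), namely $E:\Hom_L(W_{\rho,\mp\infty},V_{\pi,\mp\infty})\otimes(W_{\rho,-\infty})^H\cong(V_{\pi,\pm\infty})^{L\cap H}$. The regrouping of the $\overline{\bigoplus}$ decompositions into the blocks indexed by $\rho$ is legitimate by the principle (\ref{matisse}).

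The next step is to identify each $\rho$-block with the advertised space of $\Gamma$-invariants via matrix coefficients. By Lemma~\ref{kirchner}, $(W_{\tilde\rho,-\infty})^\Gamma\cong\overline{\bigoplus}_{\pi\in\widehat L}\Hom_L(W_{\rho,\infty},V_{\pi,\infty})\otimes N_\Gamma(\pi)$ through $\Phi\otimes\tilde v\mapsto\Phi^t\tilde v$. The identity $c^L_{\tilde v,E(\Phi\otimes w)}=c^G_{\Phi^t\tilde v,w}$ from Prop.~\ref{paulklee} shows that the map (\ref{paul}), restricted to the $\pi$-summand, coincides with the embedding $N_\Gamma(\pi)\otimes(V_{\pi,-\infty})^{L\cap H}\hookrightarrow C^{-\infty}(Y)$ of Lemma~\ref{bassa} composed with $E$.

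The key point is that $\Hom_L(W_{\rho,\infty},V_{\pi,\infty})\ne0$ only for $\pi$ with $V_\pi^{L\cap H}\ne0$ whose $\pi$-minimal representation is a subrepresentation of $W_{\tilde\rho}$. When $\rho$ is irreducible, i.e. $\rho\in\widehat G_H\cup\widehat G_H'$, this forces $\pi\in\widehat L_\rho$, using Thm.~\ref{aal}. Thus $(W_{\tilde\rho,-\infty})^\Gamma$ is exactly the $\rho$-block of $C^{-\infty}(Y)$. The subspaces $(\cdot)_+^\Gamma$ and $(\cdot)_0^\Gamma$ are then, by definition and by the smooth/$L^2$ characterization in Prop.~\ref{paulklee}(iii), the corresponding blocks of $C^\infty(Y)$ and $L^2(Y)$. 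Density of the inclusions follows termwise from Lemma~\ref{sardine}.

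The main obstacle is $\rho\in\widehat G_H''$. Here $W_{\tilde\rho}$ contains the proper submodule $W_{\rho_1}$, whose $\Gamma$-invariants already contribute through the blocks of other $\rho'$ with $\widehat L_{\rho'}\cap\widehat L_\rho=\emptyset$. I would show that the matrix coefficients of $(W_{\rho_1,-\infty})^\Gamma$ are exactly those coming from $\pi\notin\widehat L_\rho$, via $\pi$-minimality (Def.~\ref{spass}(iii)) and Prop.~\ref{dark}(ii). Then I would check that the orthogonal complement $(W_{\tilde\rho,-\infty})^\Gamma_{new}$, taken with respect to the pairing with $(W_{\tilde\rho,-\infty})^\Gamma_+$, is precisely $\overline{\bigoplus}_{\pi\in\widehat L_\rho}$. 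Orthogonality of distinct $\pi$-blocks in $L^2(Y)$ should make this work.

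Finally, the infinite-dimensionality statement for integrable relative discrete series is Prop.~\ref{horn}, reread through this identification: every $\pi\in\widehat L_\rho$ is integrable by Prop.~\ref{fishy}, so $N_\Gamma(\pi)\ne0$, and $\widehat L_\rho$ is infinite by Thm.~\ref{mainbranching}(c).
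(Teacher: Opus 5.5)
Your proposal is correct and follows the paper's proof, which consists precisely of combining Lemma~\ref{bassa} with Prop.~\ref{paulklee}(iii), Lemma~\ref{kirchner} and Lemma~\ref{huckepack}, plus Prop.~\ref{horn} for the last assertion. Your use of Thm.~\ref{aal} to partition $\widehat L$ into the sets $\widehat L_\rho$, and your treatment of the newforms for $\rho\in\widehat G_H''$, only spell out what the paper leaves implicit.

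One small correction concerns the last step. Infinite dimensionality should come from the infinite total multiplicity in Thm.~\ref{mainbranching}(c) together with $N_\Gamma(\pi)\ne\{0\}$, not from $\widehat L_\rho$ being infinite. In the group case with $L=L_{min}$, the set $\widehat L_\rho$ consists of a single representation, which occurs with infinite multiplicity.
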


\begin{proof} We combine  Lemma~\ref{bassa} with Prop.~\ref{paulklee}(iii), Lemma~\ref{kirchner}, and Lemma~\ref{huckepack}. The last assertion is a reformulation of Prop.~\ref{horn}.
\end{proof}

The decomposition of $L^2(Y)$ with respect to $G$-representations provides more structure and contains more information than just the decomposition into joint eigenspaces.
The difference is similar to the one between the two  decompositions (\ref{GGPS}) (automorphic representations) and (\ref{GGPSbis}) (automorphic forms) in the introduction.
The new point of view is particularly useful since at least the main part of $\widehat G_H^L$ consists of objects that are very well studied in the context of harmonic analysis on semisimple symmetric spaces, namely relative discrete series and the unitary principal series $I^{\delta,\lambda}$ ($\lambda$ imaginary). When $\rank(G/H)>1$, i.e. for Triples 2,4,7,
we also get significantly more information about the spectrum of ${\bf D}(G/H)$ on $L^2(Y)$ than provided by Cor.~\ref{hussa} and Thm.~\ref{selim} in combination with the results of Section~\ref{pbw}.
Indeed, let   
$\pi\in\widehat L$ such that $V_\pi^{L\cap H}\ne\{0\}$. The results of Section~\ref{pbw} are not sufficient to determine completely the corresponding character $\chi_\pi\in\Xi$ provided by Prop.~\ref{staun}, they only
determine the Casimir eigenvalue $\chi_\pi(\Omega_G)$. On the other hand, it is usually not difficult to determine the $\pi$-minimal $G$-representation, or at least its
infinitesimal character, which in turn determines $\chi_\pi$.

The most extensively studied semisimple symmetric spaces (except Riemannian ones and group manifolds) are the real hyperboloids $X=G/H=SO_e(p,q)/SO_e(p-1,q)$. They
appear as the underlying symmetric spaces for Triples 1,5,6 with $(p,q)=(2,2n)$, $(4,4n)$, and $(8,8)$, respectively. For illustration, let us describe the set $\widehat G_H^L$
in these rank one cases. For all unproven statements below we refer to \cite{HoTa93} and \cite{Sch87}. 

The following description of $H$-spherical $G$-representations is valid for all even $q\ge p\ge 2$.  As in the case $(p,q)=(2,2n)$ already discussed in the proof of Prop.~\ref{kopp} the relevant principal series
representations to consider are of the form $I^{[k],\lambda}$, $[k]\in\Z/2\Z$, $\lambda\in \fa_{\fq\cap\fs,\C}^*\cong\C$. Here we identify $\fa_{\fq\cap\fs,\C}^*$ with $\C$
by sending the unique restricted root to $1$. Requiring this identification to be an isometry fixes a normalization of the invariant bilinear form on $\fg$, and therefore also of the Laplacian 
$\Delta$ on $X$. Set $\rho_\fq:=\frac{1}{2}(p+q-2)$. We also consider the $G$-representations on even or odd eigendistributions
$$ E_{[k],\lambda}:=\{f\in C^{-\infty}(X)\mid \Delta f=(\lambda^2-\rho_\fq^2) f,  l_zf=(-1)^k f, z=-\id\in SO_e(p,q)\}\ .$$
The representations $I^{[k],\lambda}$, $I^{[k],-\lambda}$,  $E_{[k],\pm\lambda}$ (or rather their underlying $(\fg,K)$-modules) all have the same irreducible subquotients. They
are irreducible (and hence mutually equivalent) if and only if $\lambda-\rho_\fq\not\in k+2\Z$. The principal series $I^{[\rho_\fq],\lambda}$ and $I^{[\rho_\fq-1],\lambda}$ are irreducible and unitarizable
if and only if $\lambda$ is non-zero imaginary or $\lambda\in i\R\cup (-1,1)$, respectively. For $\lambda\in\N$, we also have the discrete series
$ D_\lambda:= E_{[\rho-\lambda],\lambda}\cap L^2(X)$, and there is the limit of discrete series $D_0\subset  E_{[\rho_\fq],0}$. Alternatively, $D_\lambda$, $\lambda\in\N_0$, can be characterized as the maximal spherical submodule of $I^{[\rho_\fq-\lambda],\lambda}$, cf. the proof of Prop.~\ref{kopp}. For $p>2$, the representations $D_\lambda$, $\lambda\in\N_0$, are irreducible,
while for $p=2$ they split into two irreducible components $D_\lambda=D_\lambda^+\oplus D_\lambda^-$. Then
$$ \widehat G_H=\left\{I^{[\rho_\fq],\lambda},I^{[\rho_\fq-1],\mu}, D_\nu^{(\pm)},\C\mid \lambda\in i(0,\infty),\mu\in i[0,\infty)\cup (0,1), \nu\in\N_0\right\}\ .$$
The exceptional set $\widehat G_H'$ is a subset of the set of irreducible principal series (isomorphic to the corresponding eigenspace representations) with real induction parameter
strictly smaller than $\rho_\fq$: 
$$\left\{I^{[\rho_\fq],\lambda}, I^{[\rho_\fq-1],\mu}\mid \lambda\in (0,\rho_\fq)\setminus 2\Z,\mu\in (1,\rho_\fq)\setminus 1+2\Z\right\}\ .$$
In case of reduciblity of the eigenspaces, $p>2$, and $\lambda\in \{1,2,\dots,\rho_\fq-1\}$ there is exactly one proper submodule $C_\lambda$ of $E_{[\rho_\fq-\lambda],\lambda}$ that is strictly larger 
than $D_\lambda$. It is isomorphic to the quotient of $I^{[\rho_\fq-\lambda],\lambda}$ by its unique non-spherical submodule. For $p=2$, the analogous module splits again into two irreducible modules $C_\lambda^\pm$. Then $\widehat G_H''$ is contained in 
$$\left\{E_{[\rho_\fq-\lambda],\lambda}^*,(C_\mu^{(\pm)})^*\mid \lambda\in \{0,1,\dots\rho_\fq-1\},\mu\in \{1,2,\dots,\rho_\fq-1\}\right\}\ .$$
Here, as usual, the * stands for the conjugate dual representation. In order to determine $\widehat G_H'\cup\widehat G_H''$ precisely one has to determine which of these representations
(ignoring stars) are generated by a unitary $L$-subrepresentation. The complete result  for Triple 1 together with the determination of $\widehat L_\rho$, $\rho\in\widehat G_H^L$,
can be read off directly from the proof of Prop.~\ref{kopp}. We give it in Table \ref{fax} below. Here $\rho_\fq=n$. In particular, we see that almost all candidates for $\widehat G_H^L$ listed above occur, only the representations $I^{[1],\lambda}$, $\lambda\in (n-1,n)$, are missing. Table~\ref{fax} makes transparent how the results of Section~\ref{detail}, in particular Thm.~\ref{something}, fit into our new scheme.  

\begin{table}[h]
\caption{$(G,H,L)=(SO_e(2,2n),SO_e(1,2n), U(1,n))$}\label{fax}
\begin{center}
\renewcommand{\arraystretch}{1.3}
\begin{tabular}{|l||l||l||l|}
  \hline
  &${\bf W_\rho,\ \rho=\rho_\lambda}$ & ${\bf Parameters}$ & ${\bf \pi^k_\lambda\in\widehat L_{\rho_\lambda}}$\\
  \hline
    \hline
  ${\bf \widehat G_H}$& $I^{[n],\lambda}$ & $\lambda\in i(0,\infty)$ & $k\equiv n\, (2)$\\
    \hline
  &$I^{[n-1],\lambda}$& $\lambda\in i[0,\infty)\cup (0,1)$ & $k\equiv n-1 \, (2)$\\
\hline
&$D_\lambda^\pm$ &$\lambda\in\N_0$&$ k\equiv \lambda-n \, (2), \pm k\ge n+\lambda$ \\
\hline
&$\C$ &$\lambda=n$ & $ k=0$\\
   \hline
    \hline
${\bf \widehat G_H'}$&$I^{[\delta],\lambda}$& $\lambda\in (\ve,n-\delta)\setminus\{\ve+2,\ve+4,\dots,n-\delta-2\}$ & $k\equiv\delta\, (2), |k|<n-\lambda$ \\
&&$\delta\in\{0,1\},\ve\in\{0,1\},\ve\equiv n-\delta\, (2)$&$ $\\
 \hline
    \hline
${\bf \widehat G_H''}$&$E_{[n-\lambda],\lambda}^*$&$\lambda\in\{0,1,\dots,n-1\}$&$k\equiv n-\lambda\, (2), |k|<n-\lambda$ \\
\hline
&$(C^\pm_\lambda)^*$&$\lambda\in\{1,2,\dots,n-1\}$&$k=\pm(n-\lambda)$\\
\hline
\end{tabular}
\end{center}
\end{table}

The results of \cite{HoTa93} and \cite{Sch87} combined with the results of Section \ref{branching} and known information about $\widehat L$  imply a comparable precise description for all other rank one cases including Triple 3 not discussed above. We will present it in Tables~\ref{super}--\ref{tauto} after the proof of Prop.~\ref{wanndenn}, where the involved notation is explained. 
Tables~\ref{fax}--\ref{tauto} also provide a complete solution of the branching problem addressed in Section~\ref{branching} for rank one triples. In view of Prop.~\ref{fishy} this includes a classification 
(independent of Harish-Chandra's theory)
of the discrete series representations of $L$ obeying an $L\cap H$-invariant vector. One just has to read the row of  $D_\lambda$ for $\lambda>0$.

One might expect that the contribution to $L^2(Y)$ of non-unitary $G$-representations, i.e. of the exceptional sets $\widehat G_H'$ and $\widehat G_H''$, is small in some sense.
For $\rank(G/H)=1$ we can make this precise.
\begin{pro}\label{wanndenn}
Let $(G,H,L)$ be a triple of Type~I such that $G$ is linear and $\rank(G/H)=1$.  Let $\Gamma\subset L$ be a cocompact lattice.
Then the space
$$   \Big(\widehat{\bigoplus_{\rho\in\widehat G_H'}} (W_{\tilde \rho,-\infty})_0^\Gamma\otimes  (W_{\rho,-\infty})^H\Big)
\oplus\Big( \widehat{\bigoplus_{\rho\in\widehat G_H''}} (W_{\tilde \rho,-\infty})_{new,0}^\Gamma\otimes  (W_{\rho,-\infty})^H\Big)
$$  
is finite dimensional.
\end{pro}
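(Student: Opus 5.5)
By Theorem~\ref{gogo}, the space in question is isometric to
$$\widehat{\bigoplus_{\rho\in\widehat G_H'\cup\widehat G_H''}}\ \widehat{\bigoplus_{\pi\in\widehat L_\rho}} \Hom_L(W_\rho,V_\pi)_{e_\pi}\otimes N_\Gamma(\pi).$$
Here each $\Hom_L(W_\rho,V_\pi)$ has dimension $\dim V_\pi^{L\cap H}<\infty$, since rank one triples are spherical (Theorem~\ref{aal}), and each $N_\Gamma(\pi)$ is finite dimensional. So it suffices to show that only finitely many pairs $(\rho,\pi)$ with $\rho$ in the exceptional sets, $\pi\in\widehat L_\rho$ and $N_\Gamma(\pi)\ne\{0\}$ occur. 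I will prove this via a spectral-gap argument on the $L$-side.

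First I will identify the $L$-representations $\pi$ that can occur in $\widehat L_\rho$ for exceptional $\rho$. In the rank one cases, Table~\ref{fax} and its analogues (Tables~\ref{super}--\ref{tauto}) show the following. If $\rho\in\widehat G_H'\cup\widehat G_H''$ and $\pi\in\widehat L_\rho$, then $\pi$ is a non-tempered representation of $L$, namely a complementary series or one of its endpoints. Its spectral parameter $\lambda$ lies in a fixed compact interval $[\epsilon_0,\rho_L]$ with $\epsilon_0>0$ (for Triple~1, $\lambda\ge 1$ or $\lambda>\ve\ge0$). Moreover, its $K_L$-type parameter, such as $k$ with $|k|<n-\lambda$ in Table~\ref{fax}, is bounded. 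If I cannot rely on the tables, I would argue directly with Prop.~\ref{fishy}. Tempered or square-integrable $\pi$ force $W_\rho$ to be relatively tempered, and by the real-rank-one argument in the proof of Prop.~\ref{dark}(iii) the irreducible constituents are then unitary. So exceptional $\rho$ come only from non-tempered $\pi$. Next I need the claim that the $L\cap H$-spherical non-tempered $\pi$ with a non-unitary minimal $\rho$ have bounded minimal $K_L$-type. This follows from Cor.~\ref{wels}(c) and Lemma~\ref{schlips}, because $\rho$ has infinitesimal character in a bounded real set, so the spherical $K$-type $\tau_{\gamma_\pi}$ meets an irreducible quotient that is not unitarizable only in finitely many $K$-types.

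Next I need finiteness. Since $L$ has real rank one, the non-tempered part of $\widehat L$ with $K_L$-types from a fixed finite set and parameter $\lambda\ge\epsilon_0$ is compact in the Fell topology away from the tempered dual. For fixed finite $K_L$-type data, the multiplicities $N_\Gamma(\pi)$ can be read off from the spectrum of an elliptic operator on the compact locally symmetric space $\Gamma\backslash L/K_L$ with coefficients in a homogeneous bundle, as in the proof of Thm.~\ref{something}. This is the Casimir acting on sections of the bundle attached to $\gamma_\pi$. Its eigenvalues in the bounded interval corresponding to $\lambda\in[\epsilon_0,\rho_L]$ are finite in number, each with finite multiplicity. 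Summing over the finitely many $K_L$-types then gives finitely many $\pi$ with $N_\Gamma(\pi)\ne0$. Finally, each such $\pi$ lies in $\widehat L_\rho$ for exactly one $\rho$ (Theorem~\ref{aal}(iii)), so the double sum is finite.

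The main obstacle is the uniform bound on the $K_L$-types of $\pi\in\widehat L_\rho$ for exceptional $\rho$. For irreducible $\rho\in\widehat G_H'$, $\widehat L_\rho$ is infinite, and all but finitely many of its members must be shown to be tempered or absent. I would settle this case by case from the composition series in \cite{HoTa93} and \cite{Sch87}, as recorded in the tables, using linearity of $G$ to fix the parity conditions.
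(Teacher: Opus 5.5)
Your overall architecture matches the paper's. Thm.~\ref{gogo} reduces the claim to finiteness of the set of $\pi\in\bigcup_{\rho\in\widehat G_H'\cup\widehat G_H''}\widehat L_\rho$ with $N_\Gamma(\pi)\neq\{0\}$. That finiteness follows from the discreteness of the spectrum of $\Omega_L$ on sections of homogeneous bundles over $\Gamma\backslash L/K_L$, once the relevant $\sigma\in\widehat M_{L,s}$ (equivalently the $K_L$-types $\gamma_\sigma$) are confined to a finite set $A$.

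Two side remarks. First, the rank one group case must be disposed of separately: with $L=L_{min}$ the triple is not spherical and $V_\pi^{L\cap H}$ is infinite dimensional, but there $\widehat G_H'=\widehat G_H''=\emptyset$. Second, your claim that the exceptional $\pi$ are non-tempered with $\lambda\ge\epsilon_0>0$ is false. Table~\ref{fax} puts the tempered $\pi^k_0$, $|k|<n$, $k\equiv n\,(2)$, into $\widehat L_\rho$ for $\rho=E^*_{[n],0}\in\widehat G_H''$. It also puts complementary series with $\lambda\in(0,1)$ into $\widehat L_\rho$ for $\rho=I^{[n],\lambda}\in\widehat G_H'$. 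This is harmless for the counting, which only needs a bounded range of Casimir eigenvalues.

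The genuine gap is the existence of $A$, which is the whole content of the paper's proof, and none of the arguments you sketch for it works.
\begin{itemize}
\item Your fallback via Prop.~\ref{fishy} and the proof of Prop.~\ref{dark}(iii) yields at most unitarity of the irreducible quotient, not irreducibility of the minimal representation. So it cannot keep tempered $\pi$ out of $\widehat G_H''$, as the example $E^*_{[n],0}$ above shows.
\item The argument via Cor.~\ref{wels}(c) and Lemma~\ref{schlips} also fails. By Prop.~\ref{kopp}, the irreducible quotient of the $\pi^k_0$-minimal representation $E_{[n],0}$ is unitarizable. And a non-unitarizable irreducible module such as $I^{[n],\lambda}$, $\lambda\in(0,1)$, contains infinitely many spherical $K$-types $\chi_l$.
\item $\widehat L_\rho$ is finite, not infinite, for $\rho\in\widehat G_H'$: for Triple~1 and $\rho=I^{[\delta],\lambda}$ it consists of the $\pi^k_\lambda$ with $k\equiv\delta\,(2)$, $|k|<n-\lambda$. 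The infinitely many other $L$-constituents of $W_\rho$ are simply not unitarizable, so ``tempered or absent'' is the wrong dichotomy.
\end{itemize}

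What is needed is what the paper does for Triples 5 and 6, and for Triple 3 by passing to $T$-invariants in Triple 5:
\begin{enumerate}
\item Compare the length of the complementary series of $H^{\sigma_k,\lambda}$ (Knapp--Stein for $Spin(1,8)$, Baldoni Silva for $Sp(1,n)$) with that of $I^{[k],\lambda}$ from \cite{HoTa93}.
\item Note that these lengths agree for large $k$, and that there are no isolated unitarizable Langlands quotients there.
\item Use the $K$-type inequality (\ref{marcel}) to show that, for large $k$, the endpoints ($\lambda=1$, $k$ even) and the constituents at $\lambda=0$ ($k$ odd) satisfy $I^{[k],\lambda}(\tau_k)\subset D_\lambda$. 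Hence their minimal representation is the unitary $D_\lambda$.
\end{enumerate}

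Reading $A$ off Tables~\ref{super}--\ref{tauto} does not replace this. Those tables are only stated after the proposition, and their bounded-$k$ feature is exactly what this argument establishes. Only for Triple~1 is the table backed by an earlier result, namely Prop.~\ref{kopp}.
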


\begin{proof} If $G/H$ is a group manifold, then $\widehat G_H'=\widehat G_H''=\emptyset$. Hence there is nothing to show in this case. Thus we may assume that $(G,H,L)$ is among
the Triples 1,3,5,6. In particular, $(G,H,L)$ is spherical. The decomposition of  $(W_{\tilde \rho,-\infty})_0^\Gamma$ given in Thm.~\ref{gogo} shows that the assertion of the proposition is equivalent to finiteness of the set
$$\left\{\pi\in\bigcup_{\rho\in \widehat G_H'\cup\widehat G_H''} \widehat L_\rho\mid N_\Gamma(\pi)\ne\{0\}\right\}\ .$$
The representations $\pi\in\widehat L_\rho$, $\rho\in \widehat G_H'\cup\widehat G_H''$, are necessarily unitarizable Langlands quotients of
principal series representations $H^{\sigma,\lambda}$ with $\lambda\in (0,\rho_L)$ or irreducible summands of $H^{\sigma,0}$, $\sigma\in\widehat M_{L,s}$ (which, in addition obey
an $L\cap H$-invariant vector).
Later on we will identify also $\fa_L^*$ with $\R$ such that $\rho_L=\rho_\fq$.
By the discreteness of the spectrum 
of the operator $\Omega_L$ when acting on sections of finite dimensional locally homogeneous vector bundles over $\Gamma\backslash L/K_L$, we see that it suffices to show that there is a finite subset $A\subset \widehat M_{L,s}$ such that for the representations $\pi$ of $L$ discussed above one has that $\sigma\in A$.
By Table~\ref{fax} we can take $A:=\{\sigma_k\mid k\in\Z,\,  |k|<n\}$ for Triple~1. Here $\sigma_k$ is as in Section~\ref{detail}.

We recall that in general one can associate to $\sigma\in\widehat M_{L,s}$ the spherical representations $\gamma_\sigma\in \widehat K_{L,s}$ (by Cor.~\ref{wels}(c)) and
$\tau_{\gamma_\sigma}\in\widehat K_s$ (by Lemma~\ref{schlips}(c)). Note that for Triples 5 and 6 with $G=SO_e(p,q)$ and $(p,q)=(4,4n)$ or $(8,8)$, respectively, we have
$\widehat K_s=\{\tau_k\mid k\in\N_0\}$, where $\tau_k$ denotes the spherical harmonics for $SO(p)$ of degree $k$. Then, for $\lambda\in\N_0$, the isotypic
component $I^{[\rho_\fq-\lambda],\lambda}(\tau_k)$ is non-zero and belongs to $D_\lambda\subset I^{[\rho_\fq-\lambda],\lambda}$ if and only if $k\equiv \rho_\fq-\lambda\, (2)$ and
$k\ge \frac{1}{2}(q-p+2)+\lambda$,
i.e. 
\be\label{marcel}
k\ge 2n-1+\lambda\quad\mbox{ or }\quad k\ge \lambda+1\ ,
\end{equation} 
respectively (\cite{HoTa93}).

Next we discuss Triple 6 in detail. We have $L=Spin(1,8)$, $M_L=Spin(7)$, $M_L\cap H\cong G_2$, $\rho_\fq=\rho_L=7$. The set $\widehat M_{L,s}$ is described via highest weights as
$$\left\{\sigma_k\cong\left (\frac{k}{2},\frac{k}{2},\frac{k}{2}\right )\mid k\in\N_0\right\}\ .$$
We claim that we can take $A=\{\sigma_0\}$.

We have embeddings $H^{\sigma_k,\lambda}\hookrightarrow I^{[k],\lambda}$. Since $\rho_\fq$ is odd, the length of the complementary series for $I^{\delta,\lambda}$ is 1 for
$\delta=[0]$ and 0 for $\delta=[1]$. It is classically known (\cite{KnS71}) that the length of the complementary series  for $H^{\sigma_k,\lambda}$ is also 1 for even $k\ne 0$
and $0$ for odd $k$, and that there are no unitarizable Langlands quotients isolated from the complementary series. Thus the length of the complementary series are compatible for 
$k\ne 0$. It remains to check that the ends of the complementary series for $L$ and even $k\ne 0$ or the irreducible constituents of $H^{\sigma_k,0}$, $k$ odd, do not belong to $\widehat L_\rho$ for some exceptional $\rho$.

Let us denote the spherical $K_L$- and $K$-types associated to $\sigma_k$ by $\gamma_k$ and $\tau_k$, respectively. In fact, $\tau_k$ is really the representation on spherical
harmonics of degree $k$ as above. It follows from Cor.~\ref{wels}(c) that the only irreducible subquotient of $H^{\sigma_k,\lambda}$ admitting an $L\cap H$-invariant vector is the one
containing the $K_L$-type $\gamma_k$. As in Section~\ref{detail} we denote the irreducible $L$-representation on this subquotient by $\pi^k_\lambda$.  
We have $H^{\sigma_k,\lambda}(\gamma_k)\hookrightarrow I^{[k],\lambda}(\tau_k)$. Now let $k$ be odd and $\lambda=0$, or $k\ne 0$ even and $\lambda=1$. The
inequalities (\ref{marcel}) tell us that then $I^{[k],\lambda}(\tau_k)\subset D_\lambda$ which implies that the unitary irreducible $G$-representation $D_\lambda$ is $\pi^k_\lambda$-minimal, i.e.
$\pi^k_\lambda$ does not contribute to the exceptional set. This finishes the proof of the claim.

We turn to Triple 5. Here $L_{max}=Sp(1,n)\cdot Sp(1)$ and $L_{min}=Sp(1,n)$. It is a consequence of the last assertion of Prop.~\ref{mufree} that the sets $\widehat L_{max,\rho}$
and $\widehat L_{min,\rho}$ are in one-to-one correspondence. Thus we may restrict our considerations to $L=Sp(1,n)$. (As an aside, let us remark that the arithmeticity of lattices in $Sp(1,n)$, $n\ge 2$,
together with the vanishing of the first Betti number of compact quotients of the quaternionic hyperbolic space implies that any cocompact lattice in $Sp(1,n)\cdot Sp(1)$, $n\ge 2$, has a finite index subgroup
contained in $Sp(1,n)$.) Now we have $M_L=\Delta Sp(1)\times Sp(n-1)$, $M_L\cap H=Sp(n-1)$, $\rho_\fq=\rho_L=2n+1$, and 
$\widehat M_{L,s}=\{\sigma_k\mid k\in\N_0\}$, where $\sigma_k$ is the irreducible representation of $M_L$ of dimension $k+1$ factoring through the $Sp(1)$-factor.
We consider the corresponding elements  $\gamma_k\in \widehat K_{L,s}$ and $\tau_k\in \widehat K_s$. The representation $\tau_k$ is given by spherical harmonics for $SO(4)$
of degree $k$.
We claim that we can take $A=\{\sigma_k\mid k\le 2n-2\}$.

Again we have embeddings $H^{\sigma_k,\lambda}\hookrightarrow I^{[k],\lambda}$ sending $H^{\sigma_k,\lambda}(\gamma_k)$ to $I^{[k],\lambda}(\tau_k)$. The length of the complementary series for $I^{\delta,\lambda}$ is 1 for
$\delta=[0]$ and 0 for $\delta=[1]$. We now apply \cite{BS 81}, Thm.~7.1, which gives the classification of the unitary dual of $Sp(1,n)$, $n\ge 2$, in particular of the unitarizable Langlands quotients. The reader should be aware that this theorem is slightly misprinted. The parameters that seem to describe limits of discrete series actually describe part of the complementary
series. We extract the following information: the length of the complementary series  for $H^{\sigma_k,\lambda}$ is also 1 for even $k\ge 2n$
and $0$ for odd $k\ge 2n-1$, and that there are no unitarizable Langlands quotients isolated from the complementary series for $k\ne 0$ (and the length of the complementary series
for $k\le 2n-2$ is $2n-1-k$). For $n=1$, we have $Sp(1,1)\cong Spin(1,4)$, and thus the complentary series for $k>2n-2= 0$ have the same lenghts (for $k=0$ it has length $3=2n+1$).

We define irreducible $L$-representations $\pi^k_\lambda$ as for Triple 6. Now we consider these representations for $\lambda=0$ and odd $k\ge 2n-1$,
or $\lambda=1$ and even $k\ge 2n$.  Again the
inequalities (\ref{marcel}) tell us that then $I^{[k],\lambda}(\tau_k)\subset D_\lambda$ in these cases. Hence $D_\lambda$ is $\pi^k_\lambda$-minimal. The claim follows.

Finally we discuss Triple 3. Again, $L=Sp(1,n)$, $M_L=\Delta Sp(1)\times Sp(n-1)$, $\rho_\fq=\rho_L=2n+1$, but $M_L\cap H=\Delta U(1)\times Sp(n-1)$. It follows that
$\widehat M_{L,s}=\{\sigma_k\mid k\in\N_0\mbox{ even}\}$, where $\sigma_k$ has the same meaning as for Triple 5. We claim that we can take 
$A=\{\sigma_k\mid k\le 2n-2\mbox{ even}\}$.

We have to  introduce some notation. Since $M_{\sigma\theta}\subset H$ (and, equivalently, $Z\subset K_H$), the relevant principal series representations for $G/H$ are induced from the trivial representation
$\delta$. We denote them by $I^\lambda$, $\lambda\in\C$. We also consider the full eigenspaces
$$ E_{\lambda}:=\{f\in C^{-\infty}(X)\mid \Delta f=(\lambda^2-(2n+1)^2) f\}\ .$$
We consider the embedding $G\subset U(2,2n)\subset SO_e(4,4n)$. Let $T\subset U(2,2n)$ be the group of complex multiples of the identity. It is isomorphic to $U(1)$ and commutes with $G$. We decorate the objects associated with Triple 5 by an $\R$. Then $X_\R$ is a principal $T$-bundle over $X$ and
$$  E_\lambda\cong (E_{[0],\lambda}^\R)^T ,\quad I^\lambda\cong  (I_\R^{[0],\lambda})^T .$$
For odd $\lambda\in \N$ we set $D_\lambda:=( D_\lambda^\R)^T$ and $C_\lambda:= ( C_\lambda^\R)^T$ ($\lambda<2n+1$).

Thm.~4.2 and Paragraph 4.4 in \cite{HoTa93} tell us that the irreducibility and unitarity criteria transfer from Triple~5 to Triple~3. In particular, the complementary series for $I^\lambda$
has length 1, and $D_\lambda$ is an irreducible unitary representation. Now the claim follows in the same way as for Triple 5. 
\end{proof}
\vspace*{1cm}

\setlength{\floatsep}{40pt}

\begin{table}[h]
\caption{$(G,H,L)=(SO_e(8,8),SO_e(7,8), Spin(1,8))$}\label{super}
\begin{center}
\renewcommand{\arraystretch}{1.3}
\begin{tabular}{|l||l||l||l|}
  \hline
  &${\bf W_\rho,\ \rho=\rho_\lambda}$ & ${\bf Parameters}$ & ${\bf \pi^k_\lambda\in\widehat L_{\rho_\lambda}}$\\
  \hline
    \hline
  ${\bf \widehat G_H}$& $I^{[0],\lambda}$ & $\lambda\in i[0,\infty)\cup (0,1)$ & $k\in\N_0$ even  \\
    \hline
  &$I^{[1],\lambda}$& $\lambda\in i(0,\infty)$ & $k\in \N$ odd\\
\hline
&$D_\lambda$ &$\lambda\in\N_0$&$ k\not\equiv \lambda \, (2), k\ge \lambda+1$ \\
\hline
&$\C$ &$\lambda=7$ & $ k=0$\\
   \hline
    \hline
${\bf \widehat G_H'}$&$I^{[0],\lambda}$& $\lambda\in (1,7)\setminus\{3,5\}$ & $k=0$ \\
 \hline
    \hline
${\bf \widehat G_H''}$&$C_\lambda^*$&$\lambda\in\{1,3,5\}$&$k=0$\\
\hline
\end{tabular}
\end{center}
\end{table}

\begin{table}[h]
\caption{$(G,H,L)=(SO_e(4,4n),SO_e(3,4n), Sp(1,n))$}\label{fluous}
\begin{center}
\renewcommand{\arraystretch}{1.3}
\begin{tabular}{|l||l||l||l|}
  \hline
  &${\bf W_\rho,\ \rho=\rho_\lambda}$ & ${\bf Parameters}$ & ${\bf \pi^k_\lambda\in\widehat L_{\rho_\lambda}}$\\
  \hline
    \hline
  ${\bf \widehat G_H}$& $I^{[0],\lambda}$ & $\lambda\in i[0,\infty)\cup (0,1)$ & $k\in\N_0$ even  \\
    \hline
  &$I^{[1],\lambda}$& $\lambda\in i[0,\infty)$ & $k\in \N$ odd\\
\hline
&$D_\lambda$ &$\lambda\in\N_0$&$ k\not\equiv \lambda\, (2), k\ge 2n-1+\lambda$ \\
\hline
&$\C$ &$\lambda=2n+1$ & $ k=0$\\
   \hline
$n\ge 2$:\\
    \hline
${\bf \widehat G_H'}$&$I^{[\delta],\lambda}$& $\lambda\in (1-\delta,2n-1-\delta)\setminus\{3-\delta,5-\delta,\dots,2n-3-\delta\}$ & $k\equiv\delta\, (2)$ \\
&&$\delta\in\{0,1\}$&$ 0\le k<2n-1-\lambda$\\
 \hline
    \hline
${\bf \widehat G_H''}$&$E_{[\lambda+1],\lambda}^*$&$\lambda\in\{0,1,\dots,2n-3\}$&$k\not\equiv \lambda\, (2), 0\le k<2n-1-\lambda$ \\
\hline
&$C_\lambda^*$&$\lambda\in\{1,2,\dots,2n-1\}$&$k=2n-1-\lambda$\\
\hline
$n=1$:\\
  \hline
${\bf \widehat G_H'}$&$I^{[0],\lambda}$& $\lambda\in (1,3)$ & $k=0$ \\
 \hline
    \hline
${\bf \widehat G_H''}$&$C_\lambda^*$&$\lambda=1$&$k=0$\\
\hline
\end{tabular}
\end{center}
\end{table}

\begin{table}[h]
\caption{$(G,H,L)=(SU(2,2n),S(U(1)\times U(1,2n)), Sp(1,n))$}\label{tauto}
\begin{center}
\renewcommand{\arraystretch}{1.3}
\begin{tabular}{|l||l||l||l|}
  \hline
  &${\bf W_\rho,\ \rho=\rho_\lambda}$ & ${\bf Parameters}$ & ${\bf \pi^k_\lambda\in\widehat L_{\rho_\lambda}}$\\
  \hline
    \hline
  ${\bf \widehat G_H}$& $I^{\lambda}$ & $\lambda\in i[0,\infty)\cup (0,1)$ & $k\in\N_0$ even  \\
\hline
&$D_\lambda$ &$\lambda\in\N$ odd&$ k\ge2n-1+\lambda$ even \\
\hline
&$\C$ &$\lambda=2n+1$ & $ k=0$\\
   \hline
$n\ge 2$:\\
    \hline
${\bf \widehat G_H'}$&$I^{\lambda}$& $\lambda\in (1,2n-1)\setminus\{3,5,\dots,2n-3\}$ & $ 0\le k<2n-1-\lambda$ even \\
 \hline
    \hline
${\bf \widehat G_H''}$&$E_{\lambda}^*$&$\lambda\in\{1,3,\dots,2n-3\}$&$ 0\le k<2n-1-\lambda$ even \\
\hline
&$C_\lambda^*$&$\lambda\in\{1,3,\dots,2n-1\}$&$k=2n-1-\lambda$\\
\hline
$n=1$:\\
    \hline
${\bf \widehat G_H'}$&$I^{\lambda}$& $\lambda\in (1,3)$ & $k=0$ \\
 \hline
    \hline
${\bf \widehat G_H''}$&$C_\lambda^*$&$\lambda=1  $&$k=0$\\
\hline
\end{tabular}
\end{center}
\end{table}

\newpage

We finish 
the subsection by establishing that for triples of Type~I (except when $G/H$ is a group manifold) there are cocompact lattices $\Gamma\subset L$ such that the exceptional 
subspace of $L^2(Y)$ considered in Prop.~\ref{wanndenn} is non-zero. We treat the contributions of $\widehat G_H'$ and  $\widehat G_H''$ separately.

\begin{pro}\label{nanu}
Let $(G,H,L)$ be a triple of Type~Ia with $L_{min}$ is locally isomorphic to $SO_e(1,m)$ or $SU(1,m)$, $m\ge 2$. In other words, $(G,H,L)$ is equivalent to one of the Triples 1, 2, 4, 6, 7 (with no additional restrictions on $n$) or to Triple 3 or 5 with $n=1$ in Table \ref{fix}. As in Tables~\ref{fax}--\ref{tauto} we consider the $K_L$-spherical
representations of the complementary series $\pi^0_\lambda$ of $L$, $\lambda\in (0,\rho_L)$. 
Then for every $\ve>0$ there exist a cocompact  lattice $\Gamma\subset L$ and $\lambda\in (\rho_L-\ve,\rho_L)$ such that
$N_\Gamma(\pi_\lambda^0)\ne\{0\}$. For $\ve$ sufficiently small, the corresponding $G$-representation belongs to $\widehat G_H'$. In particular, there exists a compact quotient $Y$ of $X$ such that the  contribution of $\widehat G_H'$ to $L^2(Y)$ is non-zero.
\end{pro}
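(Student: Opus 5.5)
The plan has three steps: first produce lattices in $L_{min}$ with small Laplace eigenvalues on $\Gamma\backslash L/K_L$, then transport them to $L$, and finally identify the $G$-representation attached to the resulting complementary series representation via the tables of Section~\ref{eigen}.

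\emph{Step 1 (small eigenvalues).} By Lemma~\ref{bassa} and Frobenius reciprocity, $N_\Gamma(\pi^0_\lambda)\ne\{0\}$ is equivalent to $\Omega_L$ having an eigenvalue on $L^2(\Gamma\backslash L/K_L)$ of the form $\frac{\lambda^2-\rho_L^2}{c}$, with $c$ the normalization constant fixed in Section~\ref{detail}. For $\lambda$ close to $\rho_L$ this is a small nonzero eigenvalue of the Laplacian on the compact locally symmetric space $Z=\Gamma\backslash L_{min}/K_{L_{min}}$, which is a real or complex hyperbolic manifold. I therefore need, for every $\ve>0$, a cocompact torsion-free lattice $\Gamma\subset L_{min}$ whose Laplacian $\Delta_Z$ has a nonzero eigenvalue in $(0,\ve')$, where $\ve'=\ve'(\ve)\to0$. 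This is where the hypothesis that $L_{min}$ is locally $SO_e(1,m)$ or $SU(1,m)$ enters: these groups lack property (T), and I plan to use the results of Bergeron and Clozel cited in the introduction (\cite{BeCl13}, \cite{BeCl17}). The concrete route is to take an arithmetic cocompact lattice with infinite abelianization, which exists by the Millson, Kazhdan and Borel--Wallach type constructions for these groups. Its finite cyclic covers $\Gamma_N\to\Gamma$ have first nonzero eigenvalue tending to $0$, for instance via a Buser--Cheeger estimate for a surjection $\Gamma\to\Z\to\Z/N$. An alternative is to cite Bergeron--Clozel directly for the accumulation of exceptional eigenvalues at $0$. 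For $n=1$ in Triples 3 and 5 one uses $Sp(1,1)\simeq Spin(1,4)$ locally, and Triples 6 and 7 use $Spin(1,8)$ and $Spin(1,7)$.

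\emph{Step 2 (passing to $L$).} Since $L=L_{min}\cdot C$ with $C$ compact and central modulo $L_{min}$, a lattice of $L_{min}$ is a cocompact lattice of $L_{min}\subset L$. The trivial-on-$C$ extension $\pi^0_\lambda$ then occurs, with the multiplicity computed on $L_{min}$, in the induced decomposition. I take $\Gamma\subset L_{min}$, torsion free after Selberg's lemma, so that $Y$ is a manifold. The eigenfunction obtained in Step 1 is a nonconstant $K_L$-invariant vector, so it generates a nontrivial spherical unitary representation with parameter $\lambda\in(\rho_L-\ve,\rho_L)$. This representation lies in the complementary series and hence equals $\pi^0_\lambda$.

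\emph{Step 3 (identifying the $G$-side).} I need the $\pi^0_\lambda$-minimal $G$-representation for $\lambda$ close to $\rho_L$. For the rank one triples this is read off Tables~\ref{fax}--\ref{tauto}, where $\pi^0_\lambda$ with $\lambda\in(\rho_\fq-2,\rho_\fq)$ non-integral lies in $\widehat L_\rho$ for an irreducible non-unitary $I^{\delta,\lambda}$ in $\widehat G_H'$. For Triple 1, for example, take $k=0$, $\delta\equiv n$, and $\lambda\in(n-2,n)$; note that $\ve$ may be shrunk to avoid integers. For the higher rank triples 2, 4 and 7 I would argue via Prop.~\ref{fishy} and the infinitesimal character. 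As $\lambda\to\rho_L$ the infinitesimal character of $\pi^0_\lambda$ tends to that of the trivial representation, so $\chi_{\pi}$ approaches the trivial character of ${\bf D}(G/H)$. The only unitary $H$-spherical representation near that character is $\C$, and unitary principal series stay away from it by a uniform gap, for instance through the Casimir bound from Prop.~\ref{Casimir}. Hence the minimal $\rho$ is non-unitary. It is also irreducible, because for generic $\lambda$ the induced $I^{\delta,\lambda}$ is irreducible, and by Cor.~\ref{wels} and (\ref{schal}) it contains $H^{\sigma,\lambda}$ and so is $\pi$-minimal. Theorem~\ref{gogo} then shows that this $\rho\in\widehat G_H'$ contributes $N_\Gamma(\pi^0_\lambda)\otimes V_\pi^{L\cap H}\ne0$ to $L^2(Y)$.

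I expect Step 1, the existence of small eigenvalues for cocompact lattices in every relevant $L_{min}$, including the exceptional $Spin(1,7)$ and $Spin(1,8)$ and all $SU(1,m)$, to be the main obstacle. It depends on external input (Bergeron--Clozel, or the existence of lattices with positive first Betti number), and I would first try to cite that input uniformly for $SO(1,m)$ and $SU(1,m)$.
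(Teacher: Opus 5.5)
Your Step~1 is essentially the paper's argument. You take a cocompact lattice with positive first Betti number (Millson for $SO_e(1,m)$, Kazhdan for $SU(1,m)$) and pass to the cyclic covers $\psi^{-1}(m\Z)$. The paper gets the small eigenvalue from continuity of the bottom eigenvalue of the Laplacian twisted by the flat characters $\vp_m\to 1$; your Buser--Cheeger (or direct test function) estimate does the same job. Drop the idea of citing Bergeron--Clozel here, though. The results of \cite{BeCl13}, \cite{BeCl17} concern congruence lattices, which have a uniform spectral gap, so they can never give eigenvalues accumulating at $0$. The paper explicitly remarks that the lattices of this proposition cannot be of congruence type.

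The genuine gap is the non-unitarity claim in Step~3 for Triples 2, 4, 7, where no table is available.
\begin{itemize}
\item For $\lambda<\rho_L$ close to $\rho_L$, the $\pi^0_\lambda$-minimal representation is the spherical principal series $I^\lambda$ with real parameter. It is irreducible for all such $\lambda$ because reducibility points on the line are discrete. You cannot invoke ``generic $\lambda$'', since $\lambda$ is dictated by $\Gamma$.
\item What must be shown is therefore exactly that no complementary series of $G/H$ reaches $\rho_\fq$. Your argument does not address this. Unitary principal series have imaginary parameters and are irrelevant here.
\item Prop.~\ref{Casimir} only says that $\chi_\pi(\Omega_G)$ is a multiple of $\lambda^2-\rho_L^2$, which tends to the trivial value. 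It carries no information about unitarity.
\item Your claim that $\C$ is the only unitary $H$-spherical representation near the trivial character is even false at that character. In Table~\ref{fax}, $D_n^\pm$ has the same Laplace eigenvalue $0$ as $\C$.
\end{itemize}

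The missing idea is Kazhdan's property (T). In every triple of the statement, $G$ is simple with $\Rrank(G)\ge 2$. Suppose $I^{\lambda_j}$ were unitary for $\lambda_j\uparrow\rho_\fq$. Then the normalized matrix coefficient of its $K$-fixed vector would be a positive definite spherical function converging to $1$ uniformly on compacta. Nontrivial irreducible unitary representations would then converge to the trivial one, contradicting (T). This is the paper's argument, and it treats all listed triples uniformly. Your table lookup for the rank one cases is therefore unnecessary, though it is correct there; note that for $k=0$ the relevant row of Table~\ref{fax} has $\delta=0$, not $\delta\equiv n$.
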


\begin{proof}
It is well-known that there are compact real and complex hyperbolic manifolds of any dimension that have an arbitrarily small first non-zero Laplace eigenvalue (on functions).
This provides, for every $\ve>0$, the desired lattice $\Gamma\subset L$. Let us give some detail for a possible construction of such manifolds/lattices. We start with a compact real or complex hyperbolic manifold $Z_0=\Gamma_0\backslash L/K_L$ having non-zero first Betti number. Its existence is ensured by Millson \cite{Mill76} in the real and Kazhdan \cite{Kazh77} in the complex case.
In particular, there is a surjective homomorphism $\psi: \Gamma_0\rightarrow \Z$. For $m\in \N$, $m\ge 2$, we consider the non-trivial character $\vp_m:\Gamma_0\rightarrow U(1)$ given by $\vp_m(\gamma):=e^\frac{2\pi i \psi(\gamma)}{m}$, $\gamma\in \Gamma$. Let $\Gamma_m:=\ker \vp_m=\psi^{-1}(m\Z)$. The first Laplace eigenvalues $\mu_1(\vp_m)$ on sections $L^2(Z_0, E_{\vp_m})$ of the corresponding flat vector bundle over $Z_0$ are non-zero. Since for $m\to\infty$ the characters $\vp_m$ converge to the
trivial character,  the eigenvalues $\mu_1(\vp_m)$ converge to $0$, cf. the discussion preceding Prop.~\ref{nottoolate} for all that. We obtain a sequence $\lambda_m\to \rho_L$, $\lambda_m<\rho_L$, with $N_{\Gamma_m}(\pi_{\lambda_m}^0)\ne\{0\}$. Now we set $\Gamma:=\Gamma_m$ and $\lambda:=\lambda_m$ for $m$ sufficiently large.

For $\lambda<\rho_L$ and sufficiently close to $\rho_L$ we consider the principal series representation $I^\lambda$ for $G/H$ induced from the trivial representation of $M_{\sigma\theta}$. Here we consider $\lambda$ as an element of $\fa_{\fq\cap\fs}^*$ as usual. It is an irreducible $H$-spherical representation of $G$ and thus is $\pi_\lambda^0$-minimal.
It cannot be unitary since $G$ is simple of real rank at least two and thus has property $(T)$ (i.e. the trivial representation is isolated in $\widehat G$). This finishes the proof.
\end{proof}

Kassel and Kobayashi conjectured in a much more general context (\cite{KK25}, Conj. 11.2) that any non-zero $L^2$-eigenfunction on $Y$, considered as a $\Gamma$-invariant eigenfunction on $X$, generates a $G$-representation that contains an irreducible {\em unitary} $G$-subrepresentation. This would imply that $\widehat G_H'$ does not contribute to $L^2(Y)$. Thus Proposition \ref{nanu} disproves this conjecture. It also falsifies the weaker Conjecture~11.3 in \cite{KK25}. We remark in addition that the lattices $\Gamma\subset L$ provided by Prop.~\ref{nanu} cannot be of  {\em congruence type}, cf. the discussion below. 

In order to exhibit contributions of $\widehat G_H''$ we need a result of  Bergeron and Clozel \cite{BeCl13},\cite{BeCl17} (in the real hyperbolic case
it goes back to Li \cite{Li93}) on small eigenvalues of  Riemannian locally symmetric spaces of negative curvature that are of congruence type: Let $L$ be among the groups $SO_e(1,m)$, 
$m\ge 4$, $SU(1,m)$, $m\ge 3$, $Sp(1,m)$, $m\ge 1$, and let $B\subset\widehat L$ be the (finite and non-empty) set of the $K_L$-spherical complementary series  representations (without end) and integral infinitesimal character.
Then there exists a cocompact (as well as a non-cocompact) lattice $\Gamma\subset L$ such that
$$  N_\Gamma(\pi) \ne \{0\}\quad\mbox{ for all } \pi\in B\ .$$
In fact, we can take for $\Gamma$ any congruence subgroup of sufficiently high level associated
to a reductive algebraic group $\LL$ defined over $\Q$ such that the Lie group $\LL(\R)$ has only one non-compact simple factor which is, in addition, locally isomorphic to $L$ (for $L=SO_e(1,7)$
some specific groups $\LL$ have to be excluded).

 Let us mention two essential ingredients of the proof of Bergeron and Clozel which are also
of independent interest in our context.
First of all, every representation $\pi\in B$ occurs as a relative discrete series representation for some 
reductive homogeneous
space $L/{H_\pi}$, $H_\pi\subset L$. In fact, by \cite{Sch87},  Thm.~9.1,  we can choose $H_\pi\subset L$ even independent of $\pi$ and symmetric: $H_\pi=SO_e(1,m-1)$, $S(U(1,m-1)\times U(1))$, $Sp(1,m-1)\times Sp(1)$, respectively (for some $\pi$ other choices of $H_\pi$ are possible).

The second ingredient, which is the main theme of \cite{BeCl13} and \cite{BeCl17}, is the vanishing of $N_\Gamma(\pi')$ for all congruence subgroups $\Gamma$ as above and all $K_L$-spherical complementary series representations $\pi'$ with non-integral infinitesimal character and induction parameter
not too close to $0$.

 For more details we refer to \cite{BeCl13}, Prop.~6.8 for the real case, to \cite{BeCl17}, end of Section 2 and \cite{BeCl05}, Thm.~6.5.1,
for the complex case, and to \cite{BeCl17}, end of Section~4.4, for the quaternionic case. We remark that for $L=Sp(1,n)$, $n\ge 2$, the authors claim an analogous result for the end
of the spherical complementary series. The claim is based on a misinterpretation of some results of \cite{Far79}. Indeed, the representation in question does not occur in $L^2(Sp(1,m)/Sp(1,m-1))$.

Now we conclude the following.

\begin{pro}\label{nonu}
Let $(G,H,L)$ be a triple of Type~Ia with $\rank(G/H)=1$ and not equivalent to $(SO_e(2,4),SO_e(1,4),U(1,2))$. In the notation of Tables~\ref{fax}--\ref{tauto}, we consider the $K_L$-spherical representations $\pi^0_\lambda$ of $L$. Depending on the equivalence class of $(G,H,L)$, we set
$$
\Lambda:=\left\{\begin{array}{ll} 
\{1,3,\dots,n-2\}&\mbox{Triple 1 with }n\ge 3\mbox{ odd}\\
\{2,4,\dots,n-2\}&\mbox{Triple 1 with }n\ge 4\mbox{ even}\\
\{1,3,\dots,2n-3\}&\mbox{Triples 3 and 5 with }n\ge 2\\
\{1\}&\mbox{Triples 3 and 5 with }n=1\\
\{1,3,5\}&\mbox{Triple 6}
\end{array}\right. \ . 
$$ 
Then there exists a cocompact (congruence) lattice $\Gamma\subset L$ such that
$N_\Gamma(\pi_\lambda^0)\ne\{0\}$ for all $\lambda\in\Lambda$. In particular, there exists a compact quotient $Y$ of $X$ such that the  contribution of $\widehat G_H''$ to $L^2(Y)$ is non-zero.
\end{pro}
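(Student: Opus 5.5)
The plan has two parts. First, use the Bergeron–Clozel result, recalled just before the statement, to produce the lattice. Second, read off from Tables~\ref{fax}--\ref{tauto} that each $\pi^0_\lambda$ with $\lambda\in\Lambda$ lies in $\widehat L_\rho$ for some $\rho\in\widehat G_H''$.

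\emph{Step 1: the lattice.} Pass to the equivalent triple with $L=L_{min}$; a lattice in $L_{min}$ is also a lattice in $L_{max}$. This gives $L=SU(1,n)$ for Triple~1, $L=Sp(1,n)$ for Triples~3 and~5, and $L=Spin(1,8)$ for Triple~6. For Triple~1 with $n\ge 3$ and for Triple~6, the group $L$ is of the form $SU(1,m)$ with $m\ge 3$, respectively $SO_e(1,8)$ up to covering. For Triples~3 and~5, $L=Sp(1,n)$ with $n\ge 1$. The excluded case $(SO_e(2,4),SO_e(1,4),U(1,2))$ is exactly $SU(1,2)$, which Bergeron--Clozel does not cover.

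In each case the set $B$ of $K_L$-spherical complementary series representations (without end) having integral infinitesimal character is indexed by $\lambda\in(0,\rho_L)$ with $\lambda\equiv\rho_L$ modulo $\Z$, under the normalization $\rho_L=\rho_\fq$. In the normalization of Section~\ref{detail} for Triple~1, the spherical complementary series is $\pi^0_\lambda$ with $\lambda\in(0,n)$. Integrality of the infinitesimal character $\bigl(\tfrac{\lambda}{2},-\tfrac{\lambda}{2},\tfrac n2-1,\dots\bigr)$ modulo the $\rho$-shift forces $\lambda\equiv n\pmod 2$. For Triples~3 and~5 one gets odd $\lambda<2n+1$. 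For Triple~6 one gets odd $\lambda<7$.

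I expect $\Lambda$ to be exactly the subset of $B$ given by Table~\ref{fax}--\ref{tauto}, that is, the $\lambda$ for which the $\pi^0_\lambda$-minimal representation is $E^*$ or $C^*$. Bergeron--Clozel then provides a congruence cocompact $\Gamma\subset L$ with $N_\Gamma(\pi)\ne\{0\}$ for all $\pi\in B$, hence for all $\pi^0_\lambda$ with $\lambda\in\Lambda$. For Triple~6 the excluded groups $\LL$ concern $SO_e(1,7)$ only, so $Spin(1,8)$ poses no problem. If needed, pass to a finite-index torsion-free subgroup. Small-level issues do not arise, since we only need some congruence subgroup of sufficiently high level.

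\emph{Step 2: membership in $\widehat G_H''$.} Check from the tables that for $k=0$ and $\lambda\in\Lambda$ the row containing $\pi^0_\lambda$ is $\widehat G_H''$.
\begin{itemize}
\item Triple~1 (Table~\ref{fax}): $k=0$ lies in the row $E^*_{[n-\lambda],\lambda}$ when $\lambda\equiv n\pmod 2$ and $|k|=0<n-\lambda$, i.e.\ for $\lambda\le n-2$. This matches $\Lambda$; the value $\lambda=0$ is excluded from the complementary series. This case is already proved in Prop.~\ref{kopp}: the minimal representation is reducible when $\lambda\in\N_0$ and $\lambda\equiv n-|k|\pmod 2$.
\item Triple~6 (Table~\ref{super}): the row $C^*_\lambda$ with $\lambda\in\{1,3,5\}$ and $k=0$.
\item Triples~3 and~5, $n\ge 2$ (Tables~\ref{fluous}, \ref{tauto}): the rows $E^*$ with $0\le k<2n-1-\lambda$ and $k=0$ give $\lambda$ odd with $\lambda\le 2n-3$.
\item Triples~3 and~5, $n=1$: the row $C^*_1$.
\end{itemize}
The table entries rest on Prop.~\ref{kopp} for Triple~1 and on the analogous analysis in the proof of Prop.~\ref{wanndenn} via \cite{HoTa93} for the others.

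\emph{Step 3: conclusion.} By Thm.~\ref{gogo}, the summand for $\rho\in\widehat G_H''$ contains $\Hom_L(W_\rho,V_\pi)_{e_\pi}\otimes N_\Gamma(\pi)$ for $\pi\in\widehat L_\rho$. The first factor is non-zero by Lemma~\ref{huckepack}, since $V_\pi^{L\cap H}\neq\{0\}$. The second factor is non-zero by Step~1. Hence the contribution of $\widehat G_H''$ to $L^2(Y)$ is non-zero.

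The main obstacle is Step~2 for Triples~3, 5 and~6. The tables there were only asserted, so one must verify that the $\pi^0_\lambda$-minimal representation is really reducible. Following the proof of Prop.~\ref{kopp}, I would show that the spherical $K$-type $\tau_0$ of $I^{[0],\lambda}$, or of $I^\lambda$, does not lie in the unique spherical irreducible submodule $D_\lambda$. This uses (\ref{marcel}) with $k=0<\lambda+1$. Consequently any $\pi^0_\lambda$-minimal representation cannot be irreducible, and it equals $E^*$ or $C^*$ according to which submodule contains $\tau_0$, as read off from the $K$-type diagrams of \cite{HoTa93}.
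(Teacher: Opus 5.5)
Your proposal is correct and follows the paper's route. You combine the Bergeron--Clozel congruence lattice, which realizes every $K_L$-spherical complementary series representation with integral infinitesimal character, with a reading of Tables~\ref{fax}--\ref{tauto} (Prop.~\ref{kopp} for Triple~1, \cite{HoTa93} otherwise) showing that for $\lambda\in\Lambda$ the $\pi^0_\lambda$-minimal representation is reducible, so the corresponding $\rho$ lies in $\widehat G_H''$.

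One harmless slip in your description of $B$. For $Sp(1,n)$ with $n\ge 2$ the spherical complementary series is $\lambda\in(0,2n-1]$, not $(0,\rho_L)$, and the integrality condition is modulo $2$ rather than modulo $\mathbb{Z}$. Hence $B=\{1,3,\dots,2n-3\}$, and the end $\lambda=2n-1$ is not covered by Bergeron--Clozel. Since you only use $\Lambda\subseteq B$, which holds, the argument is unaffected.
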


It is quite likely that the same construction also produces non-zero contributions of $\widehat G_H''$ to $L^2(Y)$ for the higher rank triples 2,4,7. To establish this,
one has to show that at least a part of the set $B\subset \widehat L$ considered above is not captured by $\widehat G_H$ (and also not by $\widehat G_H'$).

 
As visible from Tables~\ref{fax}--\ref{tauto}, the $\pi_\lambda^0$-minimal $G$-representation for $\lambda\in\Lambda$ is equal to $C_\lambda$ for Triple 6 as well as
for Triples 3 and 5 with $n=1$. Note that the irreducible quotient of $C_\lambda$ is not unitarizable (this follows from the results of \cite{HoTa93} and arguments as in the proof of 
Prop.~\ref{kopp}). Thus in these cases there is even not a weakly $H$-spherical unitary $G$-representation
that can be associated to the corresponding $\Gamma$-invariant eigenfunctions on $X$. In the remaining cases, the minimal representation is $E_{[0],\lambda}$ or $E_\lambda$, respectively, which has a unitarizable irreducible quotient \cite{HoTa93}.

Combining the previous two propositions with the classification results Theorem \ref{listck} and Prop.~\ref{productck} we easily obtain the following interesting result.

\begin{cor}\label{nanunana}
Let $(G,H,L)$ be a triple of Type~I such that $G/H$ is not a group manifold. Then there exists  a standard quotient $Y$ of $G/H$ 
such that the contribution of $\widehat G_H'\cup \widehat G_H''$  to  $L^2(Y)$ is non-zero.
\end{cor}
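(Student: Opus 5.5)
The plan is to reduce to irreducible triples and then split by Table~\ref{fix}, quoting Prop.~\ref{nanu} or Prop.~\ref{nonu} in each case. By Prop.~\ref{productck}, a Type~I triple is equivalent to a finite product of irreducible Type~I triples $(G_i,H_i,L_i)$. Since $G/H$ is not a group manifold, at least one factor, say $i=1$, has $G_1$ simple (Thm.~\ref{listck}). If I find a standard quotient $Y_1=\Gamma_1\backslash G_1/H_1$ with a nonzero exceptional contribution, then taking arbitrary cocompact lattices $\Gamma_i\subset L_i$ for $i\ge 2$ and $\Gamma=\prod\Gamma_i$ gives a product quotient $Y$. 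I would then check that the exceptional contribution of $Y_1$ tensored with a nonzero piece from the other factors (for example the constant functions, which correspond to the trivial representation) is still exceptional. The point to verify is that the $L$-minimal $G$-representation attached to $\pi_1\hat\otimes\pi'$ is the tensor product of the factorwise minimal representations. This follows from the tensor product realisations used in the proofs of Thm.~\ref{aal} and Prop.~\ref{paulklee}(iii). A tensor product with an irreducible unitary factor is then non-unitarizable, respectively reducible, exactly when the first factor is. The same reasoning lets me pass freely between $L$ and $L_{min}$, using the $C$, $C_2$ arguments from Lemma~\ref{sardine}.

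For simple $G$, the Type~I triples are Cases 1--7 of Table~\ref{fix}. Prop.~\ref{nanu} applies directly to Triples 1, 2, 4, 6, 7 and to Triples 3, 5 with $n=1$, since for these $L_{min}$ is locally $SO_e(1,m)$ or $SU(1,m)$. It produces a lattice $\Gamma$ with a nonzero contribution from $\widehat G_H'$. The remaining cases are Triples 3 and 5 with $n\ge 2$, where $L=Sp(1,n)$. Here Prop.~\ref{nonu} applies: the set $\Lambda=\{1,3,\dots,2n-3\}$ is nonempty, and it gives a congruence lattice with a nonzero contribution from $\widehat G_H''$. The exclusion of $(SO_e(2,4),SO_e(1,4),U(1,2))$ in Prop.~\ref{nonu} does no harm, since that triple is Triple~1 with $n=2$ and is already covered by Prop.~\ref{nanu}. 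Small-$n$ coincidences, for example Triple~1 with $n=1$, are group manifolds by the remarks after Thm.~\ref{listck} and are excluded by hypothesis.

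I expect the main obstacle to be the product reduction, namely that exceptionality really survives tensoring. I would handle it by checking Def.~\ref{spass} factorwise. By Thm.~\ref{aal}(iii), the $\pi_1\otimes\pi'$-minimal representation is unique, and the tensor product of the factorwise minimal representations has the required properties, so the two coincide. Non-unitarizability, respectively reducibility, of the tensor product then follows from that of the first factor, because the second factor is irreducible unitary.
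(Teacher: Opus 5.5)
Your proposal is correct and follows the paper's own argument. The paper obtains the corollary in one line by combining Prop.~\ref{nanu} and Prop.~\ref{nonu} with the classification in Thm.~\ref{listck} and the product reduction of Prop.~\ref{productck}, using exactly your case split: Prop.~\ref{nanu} for Triples 1, 2, 4, 6, 7 and for Triples 3, 5 with $n=1$, and Prop.~\ref{nonu} for Triples 3, 5 with $n\ge 2$. Your explicit product step, which tensors the exceptional contribution on the non-group factor with the constant functions on the other factors so that $N_\Gamma(\pi_1\hat\otimes\C)\cong N_{\Gamma_1}(\pi_1)\neq\{0\}$ and exceptionality is preserved, fills in what the paper leaves implicit.
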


\subsection{Results and conjectures for Type~II triples.}\label{TII} Eventually we turn to the discussion of general properly transitive triples $(G,H,L)$ with emphasis on triples of Type~II. We first observe that at least for generic $\chi\in\Hom({\bf D}(G/H),\C)$
we have a description of the distributional eigenspaces $E_\chi^{-\infty}(Y)$ which is completely analogous to the description given in Thm.~\ref{gogo} for Type~I triples.
Indeed, the following statement is an immediate corollary of Lemma~\ref{otto}(ii) and Lemma \ref{kirchner} (alternatively, one can combine Lemma \ref{bassa} with Prop.~\ref{paulklee}(ii)).

\begin{pro}\label{monika}
Let $(G,H,L)$ be a properly transitive triple, and let $\chi\in\Hom({\bf D}(G/H),\C)$ be generic.  Let $\Gamma\subset L$ be a cocompact lattice. Then the map
$$  \Hom_L(W_{\rho,\infty},V_{\pi,\infty})\otimes N_\Gamma(\pi)\otimes (W_{\rho,-\infty})^H\ni \Phi\otimes\tilde v\otimes w \mapsto 
c^G_{\phi^t\tilde v,w}=c^L_{\tilde v, E(\phi\otimes w)}\in C^{-\infty}(Y) $$
induces an isomorphism
$$ \bigoplus_{\rho\in \widetilde G_{H,\chi}}\overline{\bigoplus_{\pi\in\widehat L}}\:  \Hom_L(W_{\rho,\infty},V_{\pi,\infty})\otimes N_\Gamma(\pi)\otimes (W_{\rho,-\infty})^H
\cong E_\chi^{-\infty}(Y)\ .$$
\end{pro}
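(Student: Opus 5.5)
The plan is to combine Lemma~\ref{otto}(ii) with Lemma~\ref{kirchner}, as the sentence preceding the statement indicates. First, since $\chi$ is generic, Lemma~\ref{otto}(ii) shows that the matrix coefficient map (\ref{paul}) gives a linear isomorphism
$$\bigoplus_{\rho\in\widetilde G_{H,\chi}}(W_{\tilde\rho,-\infty})^\Gamma\otimes(W_{\rho,-\infty})^H\cong E^{-\infty}_\chi(Y).$$
The sum over $\rho$ is finite and each $(W_{\rho,-\infty})^H$ is finite dimensional, so the outer sum and the tensor factor $(W_{\rho,-\infty})^H$ pose no topological problems. Second, I would apply Lemma~\ref{kirchner} to each $W_\rho$; this is legitimate because $\rho\in\widetilde G_{H,\chi}$ is irreducible admissible, hence of finite length. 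It decomposes $(W_{\tilde\rho,-\infty})^\Gamma$ as $\overline{\bigoplus}_{\pi\in\widehat L}\Hom_L(W_{\rho,\infty},V_{\pi,\infty})\otimes N_\Gamma(\pi)$, with embeddings $\Phi\otimes\tilde v\mapsto\Phi^t\tilde v$. Tensoring with the finite dimensional space $(W_{\rho,-\infty})^H$ preserves the decomposition structure of Def.~\ref{kobold}: the maps $i_\pi,p_\pi,P_J$ are simply tensored with the identity.

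Composing the two steps sends $\Phi\otimes\tilde v\otimes w$ to $c^G_{\Phi^t\tilde v,w}$, which is exactly the map in the statement. The alternative expression $c^L_{\tilde v,E(\Phi\otimes w)}$ is the defining property of $E$ in Prop.~\ref{paulklee}, now read as distributions on $Y$ rather than on $X$. This is justified because $\tilde v$ is $\Gamma$-invariant and both sides are left $\Gamma$-invariant distributions on $X\cong L/L\cap H$.

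The main point needing care is that the composite really is an ``isomorphism'' in the sense of $\overline{\bigoplus}$. It must be compatible with the topological decomposition, not merely a bijection on the algebraic sums. I expect this to be unproblematic. Lemma~\ref{otto}(ii) provides a topological isomorphism onto a closed subspace, which equals all of $E^{-\infty}_\chi(Y)$ for generic $\chi$. The matrix coefficient map $\tilde w\mapsto c_{\tilde w,w}$ is continuous by Lemma~\ref{stein}(ii). Hence convergent Fourier series in $(W_{\tilde\rho,-\infty})^\Gamma$, which converge strongly by the Montel argument in the proof of Lemma~\ref{kirchner}, are mapped to convergent series in $E^{-\infty}_\chi(Y)$. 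The transported projections $P_J$ are continuous because the isomorphism of Lemma~\ref{otto}(ii) is topological.

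As a cross-check, an alternative route would combine Lemma~\ref{bassa}, which decomposes $E^{-\infty}_\chi(Y)$ as $\overline{\bigoplus}_\pi N_\Gamma(\pi)\otimes(V_{\pi,-\infty})^{L\cap H}_\chi$, with Prop.~\ref{paulklee}(ii). The latter identifies $(V_{\pi,-\infty})^{L\cap H}_\chi$ with $\bigoplus_\rho\Hom_L(W_{\rho,\infty},V_{\pi,\infty})\otimes(W_{\rho,-\infty})^H$ for generic $\chi$. After interchanging the finite $\rho$-sum with the $\pi$-sum, this gives the same result.
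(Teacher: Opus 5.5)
Your proposal is correct and is the paper's own argument: Prop.~\ref{monika} is presented there as an immediate corollary of Lemma~\ref{otto}(ii) together with Lemma~\ref{kirchner}, and your cross-check (Lemma~\ref{bassa} plus Prop.~\ref{paulklee}(ii)) is exactly the alternative route the paper mentions. Your extra discussion of compatibility with the $\overline{\bigoplus}$-structure goes beyond what the paper writes out. Note, however, that Lemma~\ref{otto}(ii) as stated does not say its isomorphism is topological, so that step needs a short justification (e.g.\ via Casselman--Wallach) rather than a citation.
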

Note that in the second sum only those $\pi\in\widehat L$ with $(V_{\pi,-\infty})^{L\cap H}_\chi\ne\{0\}$ can occur. Compared to Type~I triples, our understanding of the crucial 
spaces $\Hom_L(W_{\rho,\infty},V_{\pi,\infty})$ is still quite limited in general. Recall that, for unitary $\rho$, they contain the multiplicity spaces $M_{\tilde\rho}(\tilde\pi)$ for
the restriction of $\tilde\rho$ to $L$, see (\ref{putzi}) and Lemma~\ref{sardine}. Thus information on these spaces is usually only available in those cases where the corresponding restriction problem has been already investigated in some detail. This is the case in particular for those strongly reductive $L\subset G$ such that $\Hom_L(W_{\rho,\infty},V_{\pi,\infty})$ is finite dimensional for all admissible $\rho$ and $\pi$ of finite length. Accorging to \cite{KO13}, such pairs $(G,L)$ are characterized by the existence of an open $P_L$-orbit on $G/P$, where $P$ is the minimal parabolic of $G$. A further discussion of these pairs (also called strongly spherical) including their classification (based on \cite{KM14}, \cite{KKPS}, \cite{KKPSII}) can be found in \cite{Mol17}. 
The underlying pairs $(G,L)$ of the first two triples in Prop.~\ref{knarz} below satisfy this criterion. The same is true for the last triple if $G_1=SO_e(1,n)$, $C_1=SO(n)$.
In our context, we would be interested in a weaker criterion ensuring the finite-dimensionality of $\Hom_L(W_{\rho,\infty},V_{\pi,\infty})$ for $H$-cospherical $\rho$, only.
It should be in terms of the $P_L$-action on $G/P_{\sigma\theta}$. Note that for the triples appearing in  Prop.~\ref{knarz} we have $P_{\sigma\theta}=P$.
The latter condition is also satisfied whenever $G/H$ is a group manifold, but it fails for Triples 1--8 in Table~\ref{fix}. In particular, it fails
for Triple~8, which is of Type~II.

The most striking difference to Type~I triples is not that $\Hom_L(W_{\rho,\infty},V_{\pi,\infty})$ might be infinite dimensional, even for $L=L_{max}$, but that a given $\pi$ may
contribute to non-trivial families of representations $\rho$ and characters $\chi$. 
Most importantly, any kind of analogue of Lemma \ref{huckepack} is missing. In particular, we do not have yet a description of the $L^2$-eigenspaces $E_\chi^{(2)}(Y)$ nor
do we know how the distributional eigenspaces for $\chi\in\Xi$ fit together to provide a spectral decomposition. To answer these questions, one would have to establish Conjectures~\ref{C1} and \ref{C2} below and to make their ingredients more explicit. 


\begin{pro}\label{knarz}
Let $(G,H,L)$ be one of the following triples of Type~II:
\begin{itemize}
\item $(SO_e(4,4),SO(3)\times SO_e(1,4), Spin(3,4))$,
\item $(SO(8,\C),SO_e(1,7),Spin(7,\C))$,
\item $(G_1^4,\Delta_{12}G_1\times\Delta_{34}G_1,G_1\times \Delta_{23}G_1\times C_1)$, $rank_\R G_1=1$, $C_1\subset G_1$ compact.
\end{itemize}
We fix a $K_L$-spherical unitary principal series representation of $L$. It defines an element $\pi\in \widehat L$. Let $I^\lambda$, $\lambda\in\fa_{\fq\cap\fs,\C}^*$, be the principal series for $G$ induced from
the minimal $\sigma\theta$-stable parabolic $P_{\sigma\theta}$ and the trivial $M_{\sigma\theta}$-representation $\delta$. Then there exists a complex affine subspace
$\Lambda\subset \fa_{\fq\cap\fs,\C}^*$ such that $\dim_\R(\Lambda\cap i\fa_{\fq\cap\fs}^*)=\dim_\C\Lambda>0$ and  a non-zero meromorphic family
$\Lambda\ni\lambda\mapsto \Phi_\lambda\in \Hom_L(I^\lambda_\infty, V_{\pi,\infty})$ regular on $\Lambda\cap i\fa_{\fq\cap\fs}^*$. More precisely, for the first 2 triples we can take $\Lambda=\fa_{\fq\cap\fs,\C}^*$,
while for the last triple $\Lambda$ is a one-dimensional subspace depending on $\pi$.
\end{pro}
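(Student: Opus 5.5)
The plan is to build $\Phi_\lambda$ directly as an $L$-invariant integral kernel, using the geometry of $L$-orbits on $G/P_{\sigma\theta}$. The guiding observation, noted just before the proposition, is that for all three triples $P_{\sigma\theta}=P$ is a minimal parabolic of $G$. Also, for the first two triples the pair $(G,L)$ is strongly spherical: $P_L$ has an open orbit on $G/P$. I realize $V_{\pi,\infty}$ as the $K_L$-spherical principal series $\Ind_{P_L}^L(1\otimes e^{\nu}\otimes 1)_\infty$ with $\nu$ imaginary. Then
$$\Hom_L(I^\lambda_\infty,V_{\pi,\infty})\cong \Hom_L\big(I^\lambda_\infty, C^\infty(L/P_L;\nu)\big)\cong \big((I^{\lambda}_{-\infty})^{\sim}\big)^{P_L}_{\nu},$$
i.e. the space of distribution vectors of the dual principal series $I^{-\lambda}$ transforming under $P_L=M_LA_LN_L$ by the character $1\otimes e^{-\nu-\rho_L}\otimes 1$ (a Frobenius reciprocity as in Prop.~\ref{kunst}). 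The task therefore reduces to producing $P_L$-eigendistributions on $G/P$ depending meromorphically on $\lambda$.

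For the first two triples, I would take the open $P_L$-orbit $\mathcal O=P_Lx_0\subset G/P$ and the function
$$f_{\lambda,\nu}(p\,x_0)=\chi_{\nu}(p)\,,\qquad p\in P_L,$$
extended by zero and multiplied by the correct section of the line bundle defining $I^{-\lambda}$. This requires a consistency condition: the stabilizer $S=P_L\cap x_0Px_0^{-1}$ must act compatibly through the characters $e^{-\nu-\rho_L}$ and $e^{\lambda-\rho}$. I would compute $S$ explicitly, using the realization $G_{2(2)}\subset Spin(3,4)$ and the triality picture for $Spin(3,4)\subset SO_e(4,4)$, respectively $Spin(7,\C)\subset SO(8,\C)$. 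I expect $S$ to be essentially compact modulo a part of $A$ on which the characters can be matched for all $\lambda$ once $\nu$ is fixed, which would give $\Lambda=\fa_{\fq\cap\fs,\C}^*$. If instead the matching imposes a linear condition relating $\lambda$ and $\nu$, I would still obtain an affine $\Lambda$. Its real form condition $\dim_\R(\Lambda\cap i\fa_{\fq\cap\fs}^*)=\dim_\C\Lambda$ should follow because the conditions are real-linear with imaginary $\nu$.

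Meromorphic continuation of $f_{\lambda,\nu}$ from the region of convergence would follow Bernstein–Gelfand (or the method in \cite{Mol17}). On the open orbit the function is a product of complex powers of relative invariants (polynomials on a big cell). Analytic continuation then gives a meromorphic family of distributions. Non-vanishing is automatic because the restriction to the open orbit is non-zero. If a pole occurs on $i\fa_{\fq\cap\fs}^*$, I would renormalize by dividing by a suitable product of Gamma factors in $\lambda$ to get a non-zero regular family.

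For the third triple, $G/H\cong G_1\times G_1$, and $I^\lambda$ is a tensor product $I^{\lambda_1}\hat\otimes I^{\lambda_2}$ of $G_1\times G_1$ principal series, with $L$ acting as in Lemma~\ref{bur}. Here I would use the fact that $\Hom_{\Delta G_1}(I^{\lambda_1}\hat\otimes I^{\lambda_2}, V)$ contains the invariant trilinear forms of Bernstein–Reznikov type (\cite{BSKZ14}, \cite{Mol17}). The construction is to take $\pi=\pi_1\otimes\pi_2\otimes\tau$ and pair the first factor $I^{\lambda_1}\to\pi_1$ by a Knapp–Stein operator or the identity. This forces $\lambda_1=\pm\nu_1$, which accounts for $\Lambda$ being one-dimensional and depending on $\pi$. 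The factor $\Delta G_1\times C_1$ is then handled by the meromorphic family of trilinear forms into the $C_1$-isotypic part, with $\lambda_2$ varying freely.

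I expect the main obstacle to be the open-orbit/stabilizer computation and the verification of regularity of the continued family on the imaginary axis for the exceptional triples.
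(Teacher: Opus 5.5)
Your overall route is the paper's. For the first two triples, the paper quotes \cite{Mol17}, Thm.~3.3, which is exactly the open-orbit kernel construction you propose to redo by hand. For the third triple, it builds $\Phi_\lambda$ factorwise from the identity on the first factor, an invariant trilinear form from \cite{BSKZ14}, and a $C_1$-invariant functional. As written, however, the proposal leaves several steps open.

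For the first two triples you hedge about the stabilizer $S$. Your fallback, a linear relation between $\lambda$ and $\nu$, would not prove the stated $\Lambda=\fa_{\fq\cap\fs,\C}^*$. The missing fact is a dimension count: $\dim P_L=\dim G/P$ ($12$, resp.\ $24$, real dimensions). Hence the open $P_L$-orbit has finite stabilizer, its images in $A_L$ and $A$ are trivial, and with trivial $M$-data there is no compatibility condition at all.

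Second, your continuation has to be carried out jointly in $(\lambda,\nu)\in\fa_\C^*\times\fa_{L,\C}^*$ and only then restricted to the slice $\fa_{\fq\cap\fs,\C}^*\times\{\nu_0\}$. On that slice $\lambda$ vanishes on $\fa\cap\fh$ and $\nu$ is frozen, so the slice need not meet the region of convergence. For special $\nu_0$ the slice may lie in the polar divisor, or the normalized family may vanish identically on it. Dividing by Gamma factors in $\lambda$ cannot cure a pole along a hyperplane depending only on $\nu$. The paper's remedy is to take the holomorphic family on the full parameter space and pick a direction $(\lambda_1,\mu_1)$ with $z\mapsto\tilde\Phi_{\lambda+z\lambda_1,\nu_0+z\mu_1}$ not identically zero. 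One then uses its leading Taylor coefficient at $z=0$.

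For the third triple your bookkeeping is off. The $\Delta_{23}G_1$-factor of $L$ mixes the \emph{second} tensor factor of $I^{\lambda_1}$ with the \emph{first} tensor factor of $I^{\lambda_2}$. So the trilinear form lives on these two together with the dual of $\pi_2$. $C_1$ acts only on the fourth tensor factor and is handled by a separate $C_1$-invariant functional (e.g.\ integration over $K_1$), not by a ``$C_1$-isotypic part'' of the trilinear form.

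More importantly, a trilinear form only gives a $\Delta G_1$-map into $(\pi_2)_{-\infty}$, not into smooth vectors. To get an element of $\Hom_L(I^\lambda_\infty,V_{\pi,\infty})$ you need the Casselman--Wallach argument that $\Hom_L(W_{\rho,\infty},V_{\pi,-\infty})=\Hom_L(W_{\rho,\infty},V_{\pi,\infty})$ for admissible $V_\pi$ of finite length. This is Claim~1 in the proof of Lemma~\ref{tensor}; you also need the handling of maps on completed tensor products (Claim~2). With these additions your construction becomes the paper's $\Phi_{(\mu_3,\mu)}(f)=I_{-\mu}(f)\cdot\Psi_{\mu,-\mu_3,-\mu_4}$.
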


\begin{proof} We start with the first two triples. We fix a $\sigma$-stable maximal abelian subspace $\fa\subset\fs$ (it is four-dimensional) and its three-dimensional subspace
$\fa_{\fq\cap\fs}$. The corresponding parabolics $P$ (the minimal parabolic of $G$) and $P_{\sigma\theta}$ coincide. Therefore we can consider the induced representations $I^\lambda$
not only for $\lambda\in \fa_{\fq\cap\fs,\C}^*$, but also for $\lambda\in \fa_{\C}^*$. We also fix a (three-dimensional) maximal abelian subspace $\fa_{L}\subset\fl\cap\fs$. For $\mu\in \fa_{L,\C}^*$ we consider the spherical principal series representations $H^\mu$ of $L$. We have $V_\pi=H^{\mu_0}$ for some $\mu_0\in i\fa_{L}^*$. By triality the pair $(\fg,\fl)$
is isomorphic to $(\fo(4,4), \fo(3,4))$ or $(\fo(8,\C),\fo(7,\C))$, respectively. It follows that the pair $(G,L)$ is strongly spherical 
(it is even a so-called multiplicity one pair).
Hence by \cite{Mol17}, Thm.~3.3, there exists a non-zero holomorphic family
$$\fa_{\C}^*\times \fa_{L,\C}^*\ni (\lambda,\mu)\mapsto \tilde \Phi_{\lambda,\mu}\in \Hom_L(I^\lambda_\infty, H^\mu_{\infty})\ .
$$
For $\lambda\in \fa_{\fq\cap\fs,\C}^*\subset \fa_{\C}^*$ we would like to set $\Phi_\lambda:=  \tilde \Phi_{\lambda,\mu_0}\in \Hom_L(I^\lambda_\infty, H^{\mu_0}_{\infty})$.
This will work if the restriction of the seven-dimensional family $\tilde \Phi_{\lambda,\mu}$ to the three-dimensional affine subspace $\fa_{\fq\cap\fs,\C}^*\times\{\mu_0\}$
is not identically zero. In general, we choose $(\lambda_1,\mu_1)\in \fa_{\C}^*\times \fa_{L,\C}^*$ such that
$$  \fa_{\fq\cap\fs,\C}^*\times \C\ni (\lambda,z)\mapsto \Phi_{\lambda,z}:= \tilde \Phi_{\lambda+z\lambda_1,\mu_0+z\mu_1} $$
is not identically zero and set $\Phi_\lambda:=\lim_{z\to 0} \Phi_{\lambda,z}/z^k$ for the appropriate $k\in\N_0$.

To deal with the last triple, we first derive a description of $\Hom_L(W_{\rho,\infty},V_{\pi,\infty})$ for certain pairs of representations $(\rho,\pi)$ including all elements of
$\widehat G_H\times\widehat L$ in more accessible terms. More precisely, we consider
$ W_\rho=V_1\hat\otimes V_1'\hat\otimes V_2\hat\otimes V_2'$ and $V_\pi=V_3\hat \otimes V_4\otimes F$, where $V_i$ are admissible Hilbert representations of $G_1$ of finite length and $F$ is a finite dimensional $C_1$-representation.  We drop the assumption  $rank_\R G_1=1$ for a moment.

\begin{lem}\label{tensor} Let $V_1$ and $V_3$ be irreducible. If $V_{1,\infty}\not\cong V_{3,\infty}$, then  $\Hom_L(W_{\rho,\infty},V_{\pi,\infty})=\{0\}$. If $V_1=V_3$, then there is
a canonical isomorphism
$  \Hom_L(W_{\rho,\infty},V_{\pi,\infty})\cong\Hom( [V_{2,\infty}'\otimes F']^{C_1}, Z)$, where
$$ Z=\Hom_{G_1}( (V_2\hat\otimes V_3'\hat\otimes V_4')_\infty,\C)\cong \Hom_{G_1}(V_{2,-\infty}, (V_3\hat \otimes V_4)_{-\infty})\ .$$
Here the actions of $G_1$ and $C_1$ on tensor products are the diagonal actions, while smooth and distribution vectors on tensor products are taken with respect to the actions of $G_1\times G_1(\times G_1)$.
\end{lem}

\begin{proof} We first need a number of identities of quite general nature. The first one is related to Lemma~\ref{sardine}.

{\it Claim 1: Let $G$ be connected semisimple with finite center,  $L\subset G$ be strongly reductive, and let $W_\rho$, $V_\pi$  be  representations on reflexive Banach spaces 
of $G$ and $L$, respectively. We require that $V_\pi$ is admissible of finite length.
Then the natural embeddings induce equalities 
$$\Hom_L(W_{\rho,\infty},V_{\pi,\infty})=\Hom_L(W_{\rho,\infty},V_{\pi,-\infty}),\quad \Hom_L(V_{\pi,-\infty}, W_{\rho,-\infty})=\Hom_L(V_{\pi,\infty}, W_{\rho,-\infty}).$$}
Indeed, let $\Phi\in \Hom_L(W_{\rho,\infty},V_{\pi,-\infty})$. We consider the $L$-representation on $W_\Phi:=W_{\rho,\infty}/\ker\Phi$. Since the $G$-representation $W_{\rho,\infty}$ is a smooth
Fr\' echet-representation of moderate growth we find that the $L$-representation $W_\Phi$ is also a smooth Fr\' echet-representation of moderate growth. Its underlying $(\fl,K_L)$-module $W_{\Phi, K_L}$ is isomorphic to a 
submodule of $V_{\pi,K_L}$. Hence $W_\Phi$ has finite length. Now Casselman-Wallach theory tells us that the map $\Phi_0\in\Hom_{\fl,K_L}(W_{\Phi, K_L}, V_{\pi.K_L})$ induced by $\Phi$ extends to
an element $\Phi_1\in \Hom_L(W_{\Phi},V_{\pi,\infty})$. This shows that $\Phi$ comes from an element in $\Hom_L(W_{\rho,\infty},V_{\pi,\infty})$. In view of Lemma~\ref{stock}(iv) the second equality follows from the first by taking adjoints.

{\it Claim 2: For Hilbert representations $V_i$ of Lie groups $G_i$, $i=1,2$, and any locally convex topologically vector space $U$ we have
$$   \Hom((V_1\hat\otimes V_2)_\infty, U)\cong \Hom(V_{1,\infty},\Hom(V_{2,\infty},U))\ .$$}
A straightforward computation shows that the left hand side injects naturally into the right. In order to obtain the inverse map one uses the fact
that separately continuous bilinear maps on Fr\' echet spaces are continuous. 



{\it Claim 3: Let $G_i$ be connected reductive, $i=1,2$. Let $U_i,V_i$ be Hilbert representations of $G_i$. We require $U_1,V_1$ to be irreducible admissible. Then
$$ \Hom_{G_1}((V_1\hat\otimes V_2)_\infty,(U_1\hat\otimes U_2)_\infty)\cong
\left\{\begin{array}{cc} 
\Hom(V_{2,\infty}, U_{2,\infty})&\mbox{if }\;U_{1,\infty}\cong V_{1,\infty}\\
\{0\}&\mbox{otherwise}
\end{array}\right. \ . 
$$ }
This is a rather direct consequence of Claim 2 and the Schur-Dixmier lemma.

Now we compute $\Hom_L(W_{\rho,\infty},V_{\pi,\infty})=\Hom_{G_1\times\Delta_{23}G_1\times C_1}((V_1\hat\otimes V_1'\hat\otimes V_2\hat\otimes V_2')_\infty ,(V_3\hat \otimes V_4)_\infty\otimes F )$. By Claim~3, this is the zero space unless $V_{1,\infty}\cong V_{3,\infty}$, where we have to consider
$$ \Hom_{\Delta_{12}G_1\times C_1}(( V_3'\hat\otimes V_2\hat\otimes V_2')_\infty ,V_{4,\infty}\otimes F )\ .$$
In view of Claims 1 and 2, the latter can be rewritten as $\Hom( [V_{2,\infty}'\otimes F']^{C_1}, Z_0)$ with
\begin{eqnarray*}  Z_0&=& \Hom_{G_1}(( V_3'\hat\otimes V_2)_\infty, V_{4,-\infty})= \Hom_{G_1}(( V_3'\hat\otimes V_2)_\infty,\Hom(V'_{4,\infty},\C))\\
&\cong& \Hom_{G_1}( (V_2\hat\otimes V_3'\hat\otimes V_4')_\infty,\C)=Z\ .
\end{eqnarray*}
For the last isomorphism we have used Claim 2. Using again Claims 2 and 1, we can rewrite $Z$ as
$$  \Hom_{G_1}( V_{2,\infty}, \Hom(( V_3'\hat\otimes V_4')_\infty,\C))= \Hom_{G_1}( V_{2,\infty}, ( V_3\hat\otimes V_4)_{-\infty})
= \Hom_{G_1}( V_{2,-\infty},( V_3\hat\otimes V_4)_{-\infty})\ .$$
This finishes the proof of the lemma.
\end{proof}
Now we specify to spherical principal series representations. Let $\fa_1\subset\fs_1\subset\fg_1$ be maximal abelian (one-dimensional). We have $\fa_{\fq\cap\fs}\cong \fa_1\times\fa_1$. For $\mu\in\fa_{1,\C}^*$, we consider the corresponding spherical principal series representation $H^\mu$ of $G_1$. We have to consider $W_\rho=I^\lambda$ and
$V_\pi$ as above with $V_i=H^{\mu_i}$, $\mu_i\in \fa_{1,\C}^*$, $i=1,\dots,4$,  and the trivial one-dimensional representation $F$. 
 
According to \cite{BSKZ14} there is an open subset of $(\fa_{1,\C}^*)^3$ containing $(i\fa_{1}^*)^3$ and a holomorphic family on this open subset without zeroes 
$$  \mu=(\mu_1,\mu_2,\mu_3)\mapsto \Psi_{\mu_1,\mu_2,\mu_3}\in \Hom_{G_1}( (H^{\mu_1}\hat\otimes H^{\mu_2}\hat\otimes H^{\mu_3})_\infty,\C) $$
that has a meromorphic continuation to $(\fa_{1,\C}^*)^3$ (a distinguished family of invariant trilinear forms on principal series representations). Now we fix $V_\pi$, i.e. we fix $\mu_3, \mu_4\in i\fa_{1}^*$. We consider the affine subspace 
$$ \Lambda:= \{(\mu_3,\mu)\in \fa_{1,\C}^*\times \fa_{1,\C}^*\cong  \fa_{\fq\cap\fs,\C}^*\mid \mu\in \fa_{1,\C}^*\}\subset \fa_{\fq\cap\fs,\C}^*\ .$$
Let $I_\mu: H^\mu\rightarrow \C$ be the functional given by integration over $K_1\subset G_1$. We then define the desired meromorphic family as follows:
$$ \Lambda\ni \lambda= (\mu_3,\mu)\mapsto \Phi_{\lambda}\in \Hom_L(I^\lambda_\infty, V_{\pi,\infty})\cong \Hom ([H^{-\mu}_\infty]^{C_1}, \Hom_{G_1}((H^\mu\hat\otimes H^{-\mu_3}\hat\otimes H^{-\mu_4})_\infty,\C))$$
by
$$  \Phi_{(\mu_3,\mu)}(f):= I_{-\mu}(f)\cdot \Psi_{\mu,-\mu_3,-\mu_4}\ ,\quad f\in [H^{-\mu}_\infty]^{C_1}\ .$$
This finishes the proof of the proposition.
\end{proof}


\begin{cor}\label{knorz}
Let $(G,H,L)$ be as in Prop.~\ref{knarz}. Let $\Gamma\subset L$ be discrete and cocompact.
Then there exist a spherical principal series representation of $L$ with a corresponding subspace $\Lambda\subset \fa_{\fq\cap\fs,\C}^*$ as in Prop.~\ref{knarz} and 
a non-zero continuous family of eigendistributions $\Lambda\cap i\fa_{\fq\cap\fs}^*\ni\lambda\mapsto F_\lambda\in  E_{\chi_\lambda}^{-\infty}(Y)$.
Here $\chi_\lambda\in \Xi\subset\Hom({\bf D}(G/H),\C)$ is determined by the infinitesimal character of $I^\lambda$. 
\end{cor}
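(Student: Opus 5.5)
The strategy is to feed the meromorphic families $\Phi_\lambda$ of Prop.~\ref{knarz} into the matrix coefficient construction of Prop.~\ref{paulklee} and Prop.~\ref{monika}. We need three ingredients: a $K_L$-spherical unitary principal series $V_\pi=H^{\mu_0}$ of $L$ with $N_\Gamma(\pi)\ne\{0\}$, the family $\Phi_\lambda$ on $\Lambda$, and an $H$-invariant distribution vector $w_\lambda\in(I^\lambda_{-\infty})^H$ depending holomorphically on $\lambda$ and nonzero on the imaginary part. We then set
$$F_\lambda:=c^G_{\Phi_\lambda^t\tilde v,w_\lambda}=c^L_{\tilde v,E(\Phi_\lambda\otimes w_\lambda)}\in E^{-\infty}_{\chi_\lambda}(Y)$$
for a fixed nonzero $\tilde v\in N_\Gamma(\pi)=(V_{\tilde\pi,-\infty})^\Gamma$. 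Lemma~\ref{otto} guarantees that this is an eigendistribution on $Y$ with character $\chi_\lambda$, since ${\bf D}(G/H)$ acts on $(I^\lambda_{-\infty})^H$ through the infinitesimal character of $I^\lambda$.

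First, the existence of $\pi$. By (\ref{monet}) and the Weyl law for the Laplacian on $\Gamma\backslash L/K_L$, infinitely many $K_L$-spherical $\pi\in\widehat L$ satisfy $N_\Gamma(\pi)\ne\{0\}$. All but finitely many of these have unbounded Casimir eigenvalue, so they are unitary spherical principal series $H^{\mu_0}$ with $\mu_0\in i\fa_L^*$. We fix one such $\pi$, and Prop.~\ref{knarz} then provides $\Lambda$ and $\Phi_\lambda$.

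Second, the $H$-invariant vectors. We use the standard meromorphic family $j_\lambda\in(I^{\lambda}_{-\infty})^H$ from the theory of the principal series for $G/H$ (\cite{Ban88}), which is supported on open $H$-orbits in $G/P_{\sigma\theta}$. On $i\fa^*_{\fq\cap\fs}$ it can be normalized to be holomorphic and nonvanishing. In the triples of Prop.~\ref{knarz} we have $P_{\sigma\theta}=P$ and $\delta$ trivial, so this is explicit. Multiplying by a suitable scalar meromorphic function keeps the family regular on $\Lambda\cap i\fa^*_{\fq\cap\fs}$.

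Third, and this is the main obstacle, we must show that $F_\lambda\not\equiv0$. By (\ref{wilhelm}) and Lemma~\ref{bassa}, $F_\lambda$ lies in the $\pi$-isotypic summand $N_\Gamma(\pi)\otimes(V_{\pi,-\infty})^{L\cap H}$, embedded injectively. It is therefore nonzero exactly when $E(\Phi_\lambda\otimes w_\lambda)\ne0$. By (\ref{hodler}), this holds as soon as $\Phi_\lambda(I^\lambda(\vp)w_\lambda)\ne0$ for some $\vp\in C_c^\infty(X)$. The vectors $I^\lambda(\vp)w_\lambda$ span a dense subspace of the cyclic $G$-subrepresentation generated by $w_\lambda$. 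For generic imaginary $\lambda$, $I^\lambda$ is irreducible, so this subspace is dense in $I^\lambda_\infty$, and since $\Phi_\lambda\neq 0$ continuity gives nonvanishing. If $\Phi_\lambda$ happens to vanish at some imaginary $\lambda$, we divide by the appropriate power of a local defining function, exactly as in the construction of $\Phi_\lambda$ itself. For the product triple we may instead verify nonvanishing directly from the explicit formula $I_{-\mu}(f)\Psi_{\mu,-\mu_3,-\mu_4}$ together with Lemma~\ref{tensor}.

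Finally, continuity of $\lambda\mapsto F_\lambda$ in $C^{-\infty}(Y)$ follows from the regularity of $\Phi_\lambda$ and $w_\lambda$ on $\Lambda\cap i\fa^*_{\fq\cap\fs}$ and the joint continuity of the matrix coefficient map (Lemma~\ref{stein}). Since $\Lambda$ has positive real dimension on the imaginary part, the resulting family has characters $\chi_\lambda$ that genuinely vary.
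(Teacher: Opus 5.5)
Your construction follows the paper's own proof: choose a $K_L$-spherical unitary principal series $\pi$ with $N_\Gamma(\pi)\neq\{0\}$, take the family $\Phi_\lambda$ from Prop.~\ref{knarz} and a family $w_\lambda\in(I^\lambda_{-\infty})^H$ from \cite{Ban88}, and set $F_\lambda=c^G_{\Phi_\lambda^t\tilde v,w_\lambda}$. Your nonvanishing argument is correct and more detailed than the paper's. It uses injectivity of $v\mapsto c^L_{\tilde v,v}$, then (\ref{hodler}), then irreducibility of $I^\lambda$ for generic imaginary $\lambda$. The corollary only asks for a family that is not identically zero, so you do not need to divide by local defining functions. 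Since $\dim_\R(\Lambda\cap i\fa^*_{\fq\cap\fs})=\dim_\C\Lambda$, the identity theorem already gives $\Phi_\lambda\neq0$ on a dense open subset of $\Lambda\cap i\fa^*_{\fq\cap\fs}$. Intersecting that set with the irreducibility locus is enough, and this is how the paper argues.

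The step that fails as written is the existence of $\pi$. Every $L$ in Prop.~\ref{knarz} has real rank at least $2$: it is $3$ for $Spin(3,4)$ and $Spin(7,\C)$, and $2$ for $G_1\times\Delta_{23}G_1\times C_1$. Your inference ``unbounded Casimir eigenvalue, hence unitary principal series'' assumes that non-tempered spherical unitary representations have bounded Casimir eigenvalues, which is false in higher rank. Unitarily induce a spherical complementary series of a proper Levi subgroup, twisted by a unitary character $e^{i\nu}$ of its split component. The result is spherical, unitary and non-tempered, and its Casimir eigenvalue is unbounded as $|\nu|\to\infty$. So the Weyl law for the Laplacian on $\Gamma\backslash L/K_L$ alone cannot tell the two classes apart.

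This does occur for the third triple. Take $G_1=PSL(2,\R)$ and $\Gamma=\Gamma_1\times\Gamma_2$, where the Riemann surface of $\Gamma_1$ has a small Laplace eigenvalue. Then infinitely many $\pi_{\lambda_1}\hat\otimes\pi_{\lambda_2}$ with a fixed complementary series parameter $\lambda_1\in(0,\frac12)$ occur in $L^2(\Gamma\backslash L)$. The distinction matters because Prop.~\ref{knarz} is only available for unitary principal series $\pi$.

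The correct input is the spectral asymptotics of \cite{DKV79}. Spectral parameters off $i\fa_L^*$ contribute only lower order to the counting function, so infinitely many tempered spherical $\pi$ satisfy $N_\Gamma(\pi)\neq\{0\}$. This is exactly the citation the paper uses, and with it your argument goes through.
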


\begin{proof} It is well-known (\cite{DKV79}) that there is a spherical unitary princpial series representation $\pi$ of $L$ with $N_\Gamma(\pi)\ne\{0\}$ (in fact, there are infinitely many).
We consider the corresponding subspace $\Lambda$ as in Prop.~\ref{knarz}. We fix $0\ne \tilde v\in (V_{\tilde\pi,-\infty})^\Gamma\cong N_\Gamma(\pi)$. By the theory of $H$-spherical distribution vectors \cite{Ban88} we can choose a non-zero continuous family $\Lambda\cap i\fa_{\fq\cap\fs}^*\ni\lambda\mapsto w_\lambda\in (I^\lambda_{-\infty})^H$.  Now let $\Phi_\lambda$ as in Prop.~\ref{knarz}. By the identity theorem, $\Phi_\lambda$ does not vanish identically on any non-empty open subset of 
$\Lambda\cap i\fa_{\fq\cap\fs}^*$.
Eventually, we set $F_\lambda=c^G_{\Phi_\lambda^t\tilde v, w_\lambda}$ (cf. Prop.~\ref{monika}).
\end{proof}

We expect that Prop.~\ref{knarz} and therefore also Cor.~\ref{knorz} hold for all triples $(G,H,L)$ of Type~II.
 
The existence of the families $F_\lambda$ indicates that compact quotients corresponding to the above triples (or more generally to triples of Type~II) have not only discrete but also
various kinds of continuous spectrum. In fact, that this is true for the last case in Prop.~\ref{knarz}  with $G_1=PSL(2,\R)$ is the content of Section~\ref{infinitemulti}, cf. also Prop.~\ref{last} below. Note however 
that in general the existence of the families $F_\lambda$ does not a priori exclude the existence of a discrete spectral decomposition of $H=L^2(Y)$ of the form (\ref{disc}). It does not even exclude the existence of a complete orthonormal
system consisting of joint smooth eigenfunctions of ${\bf D}(G/H)$ as in Prop.~\ref{schwoch}. But it does exclude the existence of such a strong discrete spectral decomposition as in 
Thm.~\ref{selim}
and Thm.~\ref{gogo} which also gives decompositions of  $C^{\pm\infty}(Y)$.

In order to show that families of eigendistributions like $F_\lambda$ above are really the building blocks for a non-discrete spectral decomposition of $L^2(Y)$, one has to associate wave packets to them, show that the wave packets define $L^2$-functions (for suitable choices of $\Phi_\lambda$, $w_\lambda$ - notation as in the proof of Cor.~\ref{knorz}) and that $L^2(Y)$ is spanned by $L^2$-eigenfunctions and wave packets (of course associated to more general $\pi$ than just $K_L$-spherical principal series). Thanks to Lemma~\ref{bassa} and Prop.~\ref{paulklee} we can get rid of $\Gamma$ and need only to produce spectral decompositions for $V_\pi^{L\cap H}$, $\pi\in\widehat L$, via wave packets associated to families such as $E(\Phi_\lambda\otimes w_\lambda)\in (V_{\pi,-\infty})^{L\cap H}$.
We believe that the following conjecture is true, at least for generic tempered representations $\pi\in\widehat L$ having an $L\cap H$-invariant vector.

\begin{con}\label{C1}
Let $(G,H,L)$ be a properly transitive triple. There exist
\begin{itemize} 
\item a topological space $\widehat G_H^L\supset \widehat G_H$ whose points are certain (infinitesimal) equivalence classes of $H$-cospherical $G$-representations of finite
length with infinitesimal character which gives rise to a continuous finite-to-one map $p: \widehat G_H^L\rightarrow \Xi$,
\item for each $\pi\in\widehat L$ with $V_\pi^{L\cap H}\ne\{0\}$, a family of subspaces 
$$\widehat G_H^L\ni\rho\mapsto\left [\Hom_L(W_{\rho,\infty},V_{\pi,\infty})\otimes (W_{\rho,-\infty})^H\right ]_0\subset \Hom_L(W_{\rho,\infty},V_{\pi,\infty})\otimes (W_{\rho,-\infty})^H$$ 
carrying the additional structure of a measurable family of Hilbert spaces,
\item for each $\pi\in\widehat L$ with $V_\pi^{L\cap H}\ne\{0\}$, a $\sigma$-finite Borel measure $\mu_\pi$ on $\widehat G_H^L$
\end{itemize}
such that the natural ${\bf D}(G/H)$-equivariant maps 
$$   E: \Hom_L(W_{\rho,\infty},V_{\pi,\infty})\otimes (W_{\rho,-\infty})^H\longrightarrow  (V_{\pi,-\infty})^{L\cap H}$$
considered in Prop.~\ref{paulklee} induce a unitary equivalence
\be\label{D1}   V_\pi^{L\cap H} \cong \int^\oplus_{\widehat G_H^L}\left [\Hom_L(W_{\rho,\infty},V_{\pi,\infty})\otimes (W_{\rho,-\infty})^H\right ]_0\:d\mu_\pi(\rho)
\end{equation}
for all $\pi\in\widehat L$ with $V_\pi^{L\cap H}\ne\{0\}$.
\end{con}

In view of Lemma~\ref{bassa}, tensoring with $N_\Gamma(\pi)$, employing the embeddings 
$$i_\pi : \Hom_L(W_{\rho,\infty}, V_{\pi,\infty})\otimes N_\Gamma(\pi)\rightarrow (W_{\tilde \rho,-\infty})^\Gamma$$
from Lemma~\ref{kirchner}, and summing up over $\pi\in\widehat L$, the validity of Conjecture~\ref{C1} would readily imply the validity of the
following conjecture.

\begin{con}\label{C2}
Let $(G,H,L)$ be a properly transitive triple, and let $\Gamma\subset L$ be  a cocompact lattice. Let $Y=\Gamma\backslash G/H$ as usual.
Then there exist a family of subspaces
$$ \widehat G_H^L\ni\rho\mapsto\left [(W_{\tilde \rho,-\infty})^\Gamma\otimes  (W_{\rho,-\infty})^H\right ]_0\subset (W_{\tilde \rho,-\infty})^\Gamma\otimes  (W_{\rho,-\infty})^H$$
carrying the structure of a measurable family of Hilbert spaces 
and a $\sigma$-finite Borel measure $\mu_\Gamma$ on $\widehat G_H^L$
such that the matrix coefficient map (\ref{paul}) induces a unitary equivalence
\be\label{D2}  L^2(Y) \cong \int^\oplus_{\widehat G_H^L} \left [(W_{\tilde \rho,-\infty})^\Gamma\otimes  (W_{\rho,-\infty})^H\right ]_0\:d\mu_\Gamma(\rho)\ .
\end{equation}
\end{con}

Note that the push-forward of the direct integral decompositions (\ref{D1}) and (\ref{D2}) with respect to the  finite-to-one map $p: G_H^L\rightarrow \Xi$ give coarser decompositions
which are in fact spectral decompositions in the sense of Def.~\ref{kobold1} of the algebra ${\bf D}(G/H)$ acting on the Hilbert spaces  $V_\pi^{L\cap H}$ and $L^2(Y)$, respectively.
The uniqueness of these spectral decompositions still remains to be investigated, see the discussion in Section \ref{G2}.

Let us emphasize that the conjectures are certainly true for triples of Type~I (discrete decompositions by Lemma \ref{huckepack} and Thm.~\ref{gogo}).
In this case, the measure $\mu_\pi$ is just the Dirac measure supported at the conjugate dual of the $\pi$-minimal $G$-representation.
The conjectures are also true for the last triple in Prop.~\ref{knarz} (even without any restriction on the real rank). In this case, they are consequences of the abstract Plancherel theorem for the restriction of tensor products to the diagonal.

\begin{pro}\label{last}
Let $G_1$ be a connected semisimple Lie group with finite center and without compact factors. Let $C_1\subset G_1$ be compact. We consider 
$$ (G,H,L)=(G_1^4,\Delta_{12}G_1\times\Delta_{34}G_1,G_1\times \Delta_{23}G_1\times C_1)\ .$$
Then Conjecture~\ref{C1} (and hence also Conjecture~\ref{C2}) is true for $(G,H,L)$ with $\widehat G_H^L=\widehat G_H$.
\end{pro}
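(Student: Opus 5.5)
The plan is to reduce Conjecture~\ref{C1} for this triple to the abstract Plancherel decomposition of tensor products of representations of $G_1$ restricted to the diagonal, using Lemma~\ref{tensor} to identify the multiplicity spaces.

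First the setup. Here $L\cap H=\Delta C_1$ sits inside the product of the second and third factors. An irreducible unitary $L$-representation has the form $V_\pi=V_3\hat\otimes V_4\otimes F$, with $V_3,V_4\in\widehat{G_1}$ and $F\in\widehat{C_1}$. Since $L$ acts transitively and $X\cong G_1\times G_1$, Frobenius reciprocity identifies $V_\pi^{L\cap H}$ with the $\Delta C_1$-invariants of $V_4\otimes F$ in the relevant variables. Concretely, the $G_1$-factor acting on $V_3$ (corresponding to $V_1$ in Lemma~\ref{tensor}) plays no role. The whole problem therefore sits in the diagonal action of $G_1$ on $V_3'\hat\otimes V_4'$ together with the $C_1$-part.

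Next I apply the abstract Plancherel theorem (direct integral decomposition (\ref{lachs})) to the unitary $G_1$-representation $V_3\hat\otimes V_4$ restricted to the diagonal:
\[
V_3\hat\otimes V_4\cong\int^\oplus_{\widehat{G_1}}M(\eta)\hat\otimes V_\eta\,d\nu(\eta).
\]
By the Gelfand--Kostyuchenko argument of Remark~\ref{bernstein} and Lemma~\ref{sardine}, $M(\eta)$ embeds into $\Hom_{G_1}(V_{\eta,-\infty},(V_3\hat\otimes V_4)_{-\infty})$. That is exactly the space $Z$ of Lemma~\ref{tensor} with $V_2=V_\eta$. Now take $\rho\in\widehat G_H$ of the form $V_3\hat\otimes V_3'\hat\otimes V_\eta\hat\otimes V_\eta'$, where the $H$-invariant distribution vector is the identity in $\End_{-\infty}$ on each pair. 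By Lemma~\ref{tensor} and Claim~1 in its proof,
\[
\Hom_L(W_{\rho,\infty},V_{\pi,\infty})\cong\Hom\bigl([V'_{\eta,\infty}\otimes F']^{C_1},Z\bigr).
\]
I define the subspace $[\dots]_0$ as $\Hom([V'_\eta\otimes F']^{C_1},M(\eta))$, a finite-dimensional space tensor a Hilbert space, with the Hilbert structure coming from the Plancherel decomposition. The measure is $\mu_\pi:=\nu$, pushed to $\widehat G_H$ via $\eta\mapsto\rho_\eta$. This map is injective and continuous for the Fell topology, and $p$ is finite-to-one because infinitesimal characters determine finitely many classes.

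The main step is to check that the map $E$ of Prop.~\ref{paulklee} realizes the direct integral unitarily. For this I use the characterization (\ref{hodler}): $E(\Phi\otimes w)$ is determined by $\pi(\vp)E(\Phi\otimes w)=\Phi(\rho(\vp)w)$. I would compute this for $\vp$ a product of test functions on the two group factors of $X$. Taking $C_1$-invariants of the decomposition above, the Plancherel isomorphism becomes a unitary equivalence
\[
\bigl[(V_3\hat\otimes V_4)\otimes F\bigr]^{C_1}\cong\int^\oplus[M(\eta)\otimes V_\eta\otimes F]^{C_1}\,d\nu.
\]
The remaining task is to identify $[M(\eta)\otimes V_\eta\otimes F]^{C_1}$ with $\Hom([V'_\eta\otimes F']^{C_1},M(\eta))\otimes(W_{\rho,-\infty})^H$ compatibly with $E$, fibrewise up to a positive normalization. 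This is the Hilbert--Schmidt computation from the proof of Prop.~\ref{paulklee}(iii) in the minimal group case $(L\times L,\Delta L,L\times\{e\})$, applied to the $\eta$-factor. Any normalizing constant can be absorbed into the measurable Hilbert structure.

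The step I expect to be hardest is measurability. One has to show that the fibrewise identifications of $E$ assemble into a measurable field and that the resulting map is isometric and surjective rather than merely densely defined. I would handle this using the pointwise-defined Plancherel maps $\mathcal F_\eta$ on smooth vectors, together with the uniqueness of $E$ given by its characterization. ${\bf D}(G/H)$-equivariance is automatic from Prop.~\ref{paulklee}. Conjecture~\ref{C2} then follows by summing over $\pi$ as explained after Conjecture~\ref{C1}.
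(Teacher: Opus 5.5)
Your proposal follows essentially the paper's proof. Both arguments decompose $V_3\hat\otimes V_4$ under the diagonal $G_1$, take diagonal $C_1$-invariants, embed the fibres via Lemma~\ref{sardine} and Lemma~\ref{tensor} into $\Hom_L(W_{\rho,\infty},V_{\pi,\infty})$ for $\rho=V_3\hat\otimes V_3'\hat\otimes V_\eta\hat\otimes V_\eta'$, and push $\nu$ forward to $\widehat G_H$; the compatibility with $E$ is also left to the reader in the paper.

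Two small corrections are needed. First, $L\cap H=\{(c,c,c,c)\mid c\in C_1\}$ also acts on $V_3$, so your setup remark that $V_3$ ``plays no role'' is wrong, although your later formula $[(V_3\hat\otimes V_4)\otimes F]^{C_1}$ is correct. Second, $[V_\eta'\otimes F']^{C_1}$ need not be finite dimensional (take $C_1=\{e\}$), so the fibre must be the Hilbert--Schmidt space $\Hom_{HS}([V_\eta'\otimes F']^{C_1},M(\eta))\cong M(\eta)\hat\otimes[V_\eta\otimes F]^{C_1}$, exactly as in the paper.
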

\begin{proof}
Let $\pi\in\widehat L$ such that $V_\pi^{L\cap H}\ne\{0\}$. We have $V_\pi=V_3\hat\otimes V_4\otimes F$ for irreducible unitary representations $V_i$ and $F$ of $G_1$ and $C_1$,
respectively. Now we look at $V_3\hat\otimes V_4$ as a representation of the diagonal subgroup of $G_1\times G_1$. As a unitary representation of $G_1$, it has a direct
integral decomposition into irreducible representations
$$  V_3\hat\otimes V_4\cong \int_{\widehat {G_1}}^\oplus M_\pi(\eta)\hat\otimes V_\eta\: d\nu_\pi(\eta)$$
as in (\ref{lachs}). Tensoring with $F$ and taking $C_1$-invariants (with respect to the diagonal action), we obtain
\be\label{klaus2} V_\pi^{L\cap H}\cong \int_{\widehat{G_1}}^\oplus M_\pi(\eta)\hat\otimes [V_\eta\otimes F]^{C_1}\: d\nu_\pi(\eta)\ .
\end{equation}
We consider the continuous injective map $i_\pi: \widehat{G_1}\rightarrow \widehat G_H$ given by $i_\pi (V_\eta):=V_3\hat\otimes V_3'\hat\otimes V_\eta\hat\otimes V_\eta'$.
Let $\mu_\pi:=i_{\pi,*}\nu_\pi$ be the push-forward measure.

We have $M_\pi(\eta)\hat\otimes [V_\eta\otimes F]^{C_1}=\Hom_{HS}( [V_\eta'\otimes F']^{C_1}, M_\pi(\eta))\subset \Hom( [V_{\eta,\infty}'\otimes F']^{C_1}, M_\pi(\eta))$
and, by Lemma~\ref{sardine}, $M_\pi(\eta)\subset  \Hom_{G_1}(V_{\eta,-\infty}, (V_3\hat \otimes V_4)_{-\infty})$. By Lemma~\ref{tensor}, we obtain a canonical
embedding 
$$    M_\pi(\eta)\hat\otimes [V_\eta\otimes F]^{C_1}\hookrightarrow \Hom_L (i_\pi(V_\eta)_\infty, V_{\pi,\infty})=\Hom_L (i_\pi(V_\eta)_\infty, V_{\pi,\infty})\otimes ((i_\pi(V_\eta)_{-\infty})^H \ .$$
For the last equality, we have used that $(W_{\rho,-\infty})^H$ is canonically isomorphic to $\C$ for any $\rho\in\widehat G_H$ (as always for group manifolds $G/H$).
Let $\left [\Hom_L (i_\pi(V_\eta)_\infty, V_{\pi,\infty})\otimes ((i_\pi(V_\eta)_{-\infty})^H\right ]_0$ be the image of this embedding. If $W_\rho$ is not of the form $i_\pi(V_\eta)$ for some $\eta\in \widehat G_1$, we set 
$\left [\Hom_L(W_{\rho,\infty},V_{\pi,\infty})\otimes (W_{\rho,-\infty})^H\right ]_0=\{0\}$. Now we can rewrite (\ref{klaus2}) in the desired way:
$$   V_\pi^{L\cap H} \cong \int^\oplus_{\widehat G_H}\left [\Hom_L(W_{\rho,\infty},V_{\pi,\infty})\otimes (W_{\rho,-\infty})^H\right ]_0\:d\mu_\pi(\rho)\ .$$
We leave it to the reader to check that this isomorphism is indeed induced by the maps $E$ introduced in Prop.~\ref{paulklee}.
\end{proof}

From the explicit branching rules for tensor products of representations of $G_1=PSL(2,\R)$, as recalled in Section~\ref{infinitemulti}, one can make rather explicit
all the ingredients in (\ref{D2}) for the triple considered in Section~\ref{infinitemulti}, which yields a refinement of the spectral decomposition obtained in Thm.~\ref{surprise}.
For explicit branching rules for tensor product representations of groups not locally isomorphic to $SL(2,\R)$ see e.g. \cite{Martin75}, \cite{Spyros25} and the literature cited therein. 

We note that the strategy of the proof
of Prop.~\ref{last} (combined with Lemma \ref{huckepack}) can be applied to the more general triples of the form 
$$(G,H,L)=(L_1\times L_1\times G_1, \Delta_{12}L_1\times H_1, (L_1\times \Delta_{23} L_1)\cdot C_1), $$
where $(G_1,H_1,L_1)$ is
a triple of Type~I with $L_1$ minimal and $C_1\subset Z_{G_1}(L_1)$ compact. However, for  irreducible triples of Type~II such simple arguments will not work. 

Establishing Conjecture~\ref{C1} 
for more triples $(G,H,L)$ and some $\pi\in\widehat L$, as well as making the ingredients of Conjectures~\ref{C1} and \ref{C2} as explicit as possible, is work in progress. 

We expect that the measure $\mu_\Gamma$ in Conj.~\ref{C2} is the sum of countably many measures $\mu_i$ supported on subsets of $\widehat G_H^L$ that can be naturally parametrized (in a finite-to-one way) by
certain real affine spaces (like $\Lambda\cap i\fa_{\fq\cap\fs}^*$ in Cor.~\ref{knorz}) of dimension $c_i\in\{0,1,\dots,\rank_\R(G/H)\}$ and that $\mu_i$ is absolutely continuous with respect to the push forward of the corresponding Lebesgue measure. The maximum $c$ of these dimensions could be considered as the dimension of the most continuous part of the
spectrum of ${\bf D}(G/H)$ on $L^2(Y)$. Looking at the identification of elements in $\Hom_L(W_{\rho,\infty}, V_{\pi,\infty})$ for induced representations $W_\rho$, $V_\pi$ with
certain $P_L$-invariant distributional sections of bundles over $G/P_{\sigma\theta}$ via Schwartz' kernel theorem (as it has been employed in \cite{Mol17})  our guess for a
geometric description of $c$ in terms of the stabilizers of the $P_L$-action on $G/P_{\sigma\theta}$ is the following: Let $r: P_{\sigma\theta}\rightarrow A_\fq$ be the natural projection.
Then 
$$ c=\rank_\R(G/H)-\min_{g\in G} \dim r\left (P_{\sigma\theta} \cap g^{-1}P_Lg\right )\ .$$
This yields $c=0$ for triples of Type~I (by Lemma~\ref{sigmatheta}, Cor.~\ref{renoir}, and the Bruhat decomposition of $L$) and $c=3,3,1$, respectively, for the triples appearing in Prop.~\ref{knarz}, as it should be.

\appendix
\section{}\label{A}
In the notation of Section \ref{pbw}, we compute $\mathrm{Spec}(G,H,L) =\text{Spec}(F)\cap\text{Spec}(\fl\cap\fs)$ for the Triples 5--10 listed in Table 1 in Section \ref{triples}.
To be more precise, we do this for Triples 6--10, while for Triple 5 we replace $L_{max}$ by $L_{min}$. By the observations made in Section~\ref{pbw}, we  know in advance
that for the spherical triples~5--7 the representation of $L\cap H$ on $\fl\cap \fs$ is irreducible, and thus $\text{Spec}(\fl\cap\fs)$ reduces to a singleton.
\vspace*{0.3cm}


\underline{$\text{\bf Triple 5}$}: $G=SO_e(4,4n)$, $H=SO_e(3,4n)$, $L=Sp(1,n),\;\; n\geq 1$.\\
We identify the pseudo-Euclidean vector space $\R^{4,4n}$ with $\HH^{1+n}$, which carries the structure of a right vector space over the quaternions $\HH$.
Then the Lie algebra $\fl=\fsp(1,n)$ is just the subalgebra of all elements of $\fo(4,4n)$ which are quaternionic linear.
We have identifications
$$  \fl\cap \fs=\Hom_\HH(\HH,\HH^n)\subset \Hom_\R(\HH,\HH^n)=\fs\ .$$
The trace form on $\fo(4,4n)$ corresponds to the bilinear form $\IP{A}{B}=2\Tr_\R(A^t B)$ on $\Hom_\R(\HH,\HH^n)$. Here transposition is with respect to the standard
Euclidean structures on $\HH$ and $\HH^n$. The involution $\sigma$ corresponds to right multiplication with the involution $\sigma_0:p\mapsto -\bar p$ on $\HH$.
The standard embedding $i_1: \HH=\HH^1\hookrightarrow\HH^n$ is quaternionic linear, and thus defines an element of $\fl\cap\fs$.
We have $\IP{i_1}{i_1}=8$ and $\IP{i_1\circ \sigma_0}{i_1}=4$.
We deduce that $\mu=\frac{1}{2}$ is the only ``non-compact eigenvalue''. 

We have $\fl\cap \fk=\fsp(1)\oplus\fsp(n)$ and $\fl\cap \fh=\fsp(n)$. It follows that $F=\fsp(1)$. We observe that $F$ is not only an ideal of $\fl\cap\fk$ but also of
$\fk$: We have the chain of ideals $\fsp(1)\subset\fo(4)\subset\fo(4)\oplus \fo(4n)=\fk$. We also observe that $F\subset \fk$ is not $\sigma$-stable. Since it is a simple
ideal we conclude that $F\cap \sigma(F)=\{0\}$ and thus $F\perp \sigma(F)$. Hence $\lambda=0$ is the only ``compact eigenvalue''. (We have just obtained the classical decomposition $\fo(4)=\fsp(1)\oplus\fsp(1)=F\oplus\sigma(F)$ corresponding to left and right multiplication on $\HH\cong\R^4$.)
\vspace*{0.3cm}

\underline{$\text{\bf Triple 6}$}: $G=SO_{e}(8,8)$, $H=SO_{e}(7,8)$, $L=Spin(1,8)$.\\ 
Let $\{e_0,e_1,\cdots,e_8\}$ be an orthonormal basis of ${\mathbb R}^{1,8}$ equipped with the inner product of signature $(-,+,\cdots,+)$ and consider the associated 
real Clifford algebra ${\bf Cl}({\mathbb R}^{1,8})$ such that $e_0^2=1$, $e_i^2=-1$, $i\ge 1$. This Clifford algebra admits a unique, up to isomorphism, $16$-dimensional real module ${\bf S}\simeq{\mathbb R}^{16}$ (see, for instance, \cite{De99}). Under the action of $e_0$, ${\bf S}$ splits into $\pm 1$-eigenspaces: ${\bf S}={\bf S}^+\oplus {\bf S}^-$. The spin group $L=Spin(1,8)\subset{\bf Cl}({\mathbb R}^{1,8})$ leaves an inner product with signature $(8,8)$ on ${\mathbb R}^{16}$ invariant which makes the splitting orthogonal.
Moreover, the restriction of the inner product to either summand is definite. We will fix isometric (up to sign) identifications ${\bf S}^+\cong\R^8\cong {\bf S}^-$. Among other things, this fixes an embedding $Spin(1,8)\hookrightarrow SO_e(8,8)$. In fact, ${\bf S}$ is the $16$-dimensional real spin representation of $Spin(1,8)$, while ${\bf S}^\pm \simeq{\mathbb R}^8$ are the $8$-dimensional half-spin representations of $L\cap K\simeq Spin(8)\subset Spin(1,8)$. 
The involution $\sigma$ is given as the mapping $(\sigma^+,\text{Id}_{\R^8})$ on $\R^8\oplus\R^8\cong {\bf S}^+\oplus {\bf S}^-$, where $\sigma^+$ is the reflection with respect to $\R^7\subset \R^8$. 

We can view the half spin representations of the Lie algebra of $Spin(8)$ as  homomorphisms $s^\pm:\fo(8)\rightarrow\fo(8)$, which are in fact (outer) automorphisms. This
gives us the following description of $\fl\cap\fk$ as a subalgebra of $\fk=\fo(8)\oplus\fo(8)$:
$$ \fl\cap \fk =\{(s^+(X),s^-(X))\mid X\in\fo(8)\}=\{(Y,\gamma(Y))\mid Y\in\fo(8)\}\subset\fo(8)\oplus\fo(8), $$
where $\gamma:=s^-(s^+)^{-1}$. It follows that
$$ \fl\cap\fh=\{(Y,\gamma(Y))\mid Y\in\fo(7)\}=\{(Y,\gamma(Y))\mid Y\in\fo(8),\sigma^+(Y)=Y\}$$
and that 
$$ F= \{(Y,\gamma(Y))\mid Y\in\fo(8),\sigma^+(Y)=-Y\}\ .$$
For the latter equality we have used that the automorphism $\gamma$ leaves the restriction of $\IP{}{}$ to the second $\fo(8)$-summand invariant (as it does any automorphism with a multiple of the Killing form). Again using this invariance, we obtain for $(Y_i,\gamma(Y_i))\in F$, $i=1,2$:
$$ \langle \sigma(Y_1,\gamma(Y_1)),(Y_2,\gamma(Y_2)\rangle=-\langle Y_1,Y_2\rangle + \langle Y_1,Y_2\rangle= 0.$$
We conclude that there is only one {\it compact} eigenvalue, namely $\lambda=0$.

Finally, observing that elements in ${\mathfrak l}\cap{\mathfrak s}$ are spanned by $X_i=e_0e_i\in \fspin(1,8)\subset{\bf Cl}({\mathbb R}^{1,8})$ for $i=1,\cdots,8$, we have (using the trace form on $\fo({\bf S})$): 
\begin{eqnarray*}
\langle X_i,X_i\rangle&=&\text{Tr}_{{\bf S}}\big(e_0e_i\big)^2\\
&=&\text{Tr}_{{\bf S}}\big(-e_0^2e_i^2\big)\\
&=&16
\end{eqnarray*}
and
\begin{eqnarray*}
\langle \sigma(X_i),X_i\rangle&=&\text{Tr}_{{\bf S}^+}\big(\sigma^+\circ \underbrace{(e_0 e_i e_0 e_i)}_{\text{Id}}\big)+\text{Tr}_{{\bf S}^-}\big((e_0 e_i )\circ\sigma^+\circ\underbrace{(e_0 e_i)}_{(e_0 e_i)^{-1}}\big)\\
&=&2\text{Tr}_{{\bf S}^+}\big(\sigma^+\big)\\
&=&12.
\end{eqnarray*}
We deduce that the only {\it non-compact} eigenvalue is $\mu=\frac{3}{4}$.
\vspace*{0.3cm}

\underline{$\text{\bf Triple 10}$}: $G=SO(8,\mathbb{C})$, $H=SO_e(1,7)$, $L=Spin(7,{\mathbb C})$.\\
We first note that the representation of $L\cap H\cong G_2$ on $\fspin(7)/\fg_2\cong F$ is the seven-dimensional {\it irreducible} representation of $G_2$.
Thus there is only one {\it compact} eigenvalue $\lambda$. To compute it we have to exhibit at least one non-zero element in $F$. We do it using a $\sigma$-stable Cartan algebra $\ft$
of $\fk=\fo(8)$ containing a Cartan subalgebra of $\fspin(7)\subset\fo(8)$. According to the discussion around (\ref{cartanprinciple}) we have $(\fl\cap\fh\cap\ft)^{\perp_{\fl\cap\ft}}\subset F$.
In fact, we consider the standard Cartan subalgebra ${\mathfrak t}\cong\R^4$ of $\fo(8)$ consisting of block-diagonal matrices:
\begin{equation*}
[a_1,a_2,a_3,a_4]:=\left(\begin{array}{cccccc}
\left[\begin{array}{cc}
0 & a_1\\
-a_1 & 0
\end{array}\right] &  &  &  \\
 & \ddots\\
 &  & \ddots \\
 &  &  & \left[\begin{array}{cc}
0 & a_4\\
-a_4 & 0
\end{array}\right]
\end{array}\right)
\end{equation*}
with $a_j\in{\mathbb R}$. It is $\sigma$-stable: $\sigma([a_1,a_2,a_3,a_4])=[-a_1,a_2,a_3,a_4]$. In particular, ${\mathfrak h}\cap {\mathfrak t}=\Big\{[0,a_2,a_3,a_4]\Big\}$.
We may assume that the restriction of $\IP{}{}$ to $\ft$ coincides with the standard Euclidean scalar product on $\R^4$. 

The subalgebra $\fl\cap\fk=\fspin(7)\subset \fo(8)$ can be seen as the image of the spin representation $s_7:\fh\cap\fk=\fo(7)\rightarrow \fo(8)$. We are free to compose it with inner automorphisms
of $\fo(8)$. We use this freedom to adapt it to our choice of $\ft$. We have $s_7={s^+}_{|\fo(7)}$, where $s^+:\fo(8)\rightarrow \fo(8)$ is the half-spin representation
viewed as an (outer) automorphism of $\fo(8)$ as in the discussion of Triple 6. Changing $s^+$ by an inner automorphism, if necessary, we may assume that $s^+$
preserves $\ft$ and the standard positive Weyl chamber $\ft^+\subset\ft$. It follows that $s^+$ sends $\fh\cap\ft$ to $\fl\cap\ft$ and the highest weight vector of
the standard representation $[1,0,0,0]$ to the highest weight vector $[\frac{1}{2},\frac{1}{2},\frac{1}{2},\frac{1}{2}]$ of $s^+$. We conclude that
$$ \fl\cap\ft = \Big\{[a_1,a_2,a_3,a_4]\in{\mathfrak t}\mid\sum_{j=1}^{4}a_j=0\Big\}
\mbox{ and }
\fl\cap\fh\cap \ft = \Big\{[0,a_2,a_3,a_4]\mid\sum_{j=2}^4a_j=0\Big\}.$$
In particular, $\fl\cap\ft$ is three-dimensional and thus a Cartan subalgebra of $\fl\cap\fk$.
Moreover, the orthogonal complement of $\fl\cap\fh\cap \ft$ in $\fl\cap \ft$ is spanned by $[-3,1,1,1]$.
We deduce that $\lambda=-\frac{1}{2}$. 

On the other hand, since ${\mathfrak l}$ is complex, one has: 
$${\mathfrak l}=({\mathfrak l}\cap{\mathfrak k})\oplus i({\mathfrak l}\cap{\mathfrak k})=\big(({\mathfrak l}\cap{\mathfrak h})\oplus F\big)\oplus \big(i({\mathfrak l}\cap{\mathfrak h})\oplus iF\big).$$ 
Now, $\sigma$ being conjugate linear, we get two {\it non-compact} eigenvalues $\mu_1=-(-\frac{1}{2})$ and $\mu_2=-1$. Recall that one has ${\mathfrak l}\cap{\mathfrak h}={\mathfrak l}(1)$ from Lemma \ref{basislem}.
\vspace*{0.3cm}

\underline{$\text{\bf Triple 7}$}: $G=SO(8,\mathbb{C})$, $H=SO(7,\mathbb{C})$, $L=Spin(1,7)$.\\
This triple is locally isomorphic to the triple that arises if we switch the roles of $H$ and $L$ in Triple 10. We already know that there is only one non-compact eigenvalue $\mu$.
Again we have that $F\cong \fspin(7)/\fg_2$ is an irreducible representation of $L\cap H\cong G_2$. Thus the cardinality of the spectrum is at most $2$, in contrast to Triple 10, where it is
3. Now Lemma \ref{basislem}(8) implies:
$$\text{Spec}(SO(8,{\mathbb C}),SO(7,{\mathbb C}),Spin(1,7))=\text{Spec}(SO(8,\mathbb{C}),SO_e(1,7), Spin(7,{\mathbb C}))\setminus\{-1\}\ .$$
We obtain from the results for Triple 10:  $\lambda=-\frac{1}{2}$, $\mu=\frac{1}{2}$.
\vspace*{0.3cm}

\underline{$\text{\bf Triple 9}$}: $G=SO_{e}(4,4)$, $H=SO(3)\times SO_{e}(1,4)$, $L=Spin(3,4)$.\\
The triple is a subtriple of Triple 10: All ingredients arise by intersection of the corresponding object in Triple 10 with the real form $SO_e(4,4)$ of $SO(8,{\mathbb C})$.
In particular, the involution $\sigma$ is inherited from the involution of Triple 10.
We realize $SO(8,{\mathbb C})$ such that it leaves the standard bilinear form on $\C^8$ invariant (as we did implicitly already in the discussion of Triple 10). 
Then we consider two different realizations of  $SO_e(4,4)\subset SO(8,{\mathbb C})$:
\begin{itemize}
\item[(a)] elements in $SO(8,{\mathbb C})$ which leave $i{\mathbb R}^4\oplus{\mathbb R}^4$ invariant,
\item[(b)]  elements in $SO(8,{\mathbb C})$ which leave $\big({i\mathbb R}\oplus{\mathbb R}\big)^4$ invariant.
\end{itemize}
In the first case, the subspace ${\mathfrak t}$ defined for Triple $10$ is still a $\sigma$-stable Cartan subalgebra of ${\mathfrak k}$ containing a Cartan subalgebra of $\fl\cap \fk$. In this way we see that $\lambda=-\frac{1}{2}$ is a {\it compact} eigenvalue for Triple 9 as well. On the other hand, in the realization (b), the subspace $i{\mathfrak t}$ is a $\sigma$-stable maximal  abelian subspace of $\fs$ containing a corresponding space for ${\mathfrak l}\cap{\mathfrak s}$. We exhibit $\mu=-1$, $\frac{1}{2}$ as {\it non-compact} eigenvalues. 
To see that there are not more eigenvalues we again use some irreducibility argument. $\fl\cap\fk$ is the direct sum of three simple ideals of type $A_1$, one of which belonging to $\fl\cap\fh$. The remaining part of $\fl\cap\fh$ is again an ideal of type $A_1$ diagonally embedded in the other two ideals of $\fl\cap\fk$. It follows that $F$ is an irreducible $3$-dimensional representation of $\fl\cap\fh$.  For the non-compact part we observe that the non-compact real Lie algebra $\fg_{2(2)}$ is a subalgebra of $\fl$ with maximal compact subalgebra $\fl\cap\fh$
(in fact $\fg_{2(2)}=\fl\cap \sigma(\fl)$). Therefore $\fg_{2(2)}\cap \fs\subset \fl\cap\fs$ is an irreducible ($8$-dimensional) representation of $\fl\cap\fh$.
One checks that its $4$-dimensional orthogonal complement is also irreducible.
\vspace*{0.3cm}

\underline{$\text{\bf Triple 8}$}: $G=SO_{e}(3,4)$, $H=SO(2)\times SO_{e}(1,4)$, $L=G_{2(2)}$.\\
The triple is a subtriple of Triple 9 in the same sense as Triple 9 is a subtriple of Triple 10. We consider again the realizations (a) and (b) above of $SO_e(4,4)$. Let us embed $SO_e(3,4)$ as the corresponding subgroup leaving the basis vector $e_4$ invariant. In the first realization, $\ft_0:=\ft\cap\fo(3,4)$ serves as a useful Cartan subalgebra of $\fk$, while
$i\ft_0\subset\fs$ is a maximal abelian subspace in the second realization. Note that $\ft_0$ consists of all elements of the form
$[b_1,0,b_2, b_3]=:[b_1,b_2,b_3]$ in $\ft$. Now we see that $F\cap\ft_0$ is spanned by the element $[-2,1,1]$. We exhibit the {\it compact} eigenvalue $\lambda=-\frac{1}{3}$
and the non-compact eigenvalues $\mu=\frac{1}{3}, -1$. It remains to check that $E$ decomposes into only three irreducible $L\cap H$-representations.
$F$ is two-dimensional irreducible. The role of $\fg_{2(2)}$ for Triple 9 is now played by $\fs\fu(1,2)\subset\fl$. This leads to two 4-dimensional irreducible submodules of $\fl\cap\fs$.


\appendix
\setcounter{section}{1}
\section{}\label{B}

We retain the notation of the proof of Proposition \ref{crux}. It turns out to be convenient to parametrize regular integral infinitesimal characters for $L=U(1,n)$ by highest weights
of the corresponding finite dimensional representations of  $L$:
$$ \mu =(m_0,m_1,\dots,m_n),\ m_0\ge m_1\ge\dots\ge m_n,\ m_i\in\Z\ .$$
We fix $\mu$. The minimal $K_L$-types of the corresponding discrete series representations with Harish-Chandra parameters $\Lambda_r\in C_r$, $r=0,\dots,n$, are given by
$$ \nu_{r}=(m_r+n-2r, m_0+1,\dots,m_{r-1}+1,m_{r+1}-1,\dots, m_n-1)\ .$$
In particular,
$$ \nu_0=(m_0+n,m_1-1,\dots,m_n-1)\ .$$ 
We denote the corresponding representation space by $F_{\nu_0}$. Let
$$\fu:=\bigoplus_{\alpha\in\Sigma_0}\fl_{-\alpha}\subset \fl_\C .$$
For any $(\fl,K_L)$-module $V$, we set
$$ C^j_\mu(V):=\Hom_{K_L}(F_{\nu_0}\otimes \Lambda^j\fu, V)$$
and, if $V$ is admissible,
$$ \ind_\mu(V)=\sum_{j=0}^{n}(-1)^j\dim C^j_\mu(V)\ .$$
The number $\ind_\mu(V)$ gives the virtual multiplicity of the $K_L$-type $\nu_0$ in the Euler characteristic of the $\fu$-cohomology of $V$.

\begin{lem}\label{hansi}
\begin{enumerate}
\item[(a)]
If $V$ is a principal series module, then $\ind_\mu(V)=0$.
\item[(b)]
If $V$ has an infinitesimal character that does not correspond to $\mu$, then $\ind_\mu(V)=0$.
\item[(c)]
Let $D_r$ be the discrete series module with 
minimal $K_L$-type $\nu_r$. Then $C^j_\mu(D_r)=\{0\}$ unless $j=r=0$. Moreover,
$\dim C^0_\mu(D_0)=1$. In particular,  $\ind_\mu(D_r)=\left\{\begin{array}{cc} 0&r>0\\1&r=0\end{array} \right.$.
\end{enumerate}
\end{lem}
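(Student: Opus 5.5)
For (a) the plan is to compute $\ind_\mu$ on the $K_L$-side. A principal series module $V=H^{\sigma,\lambda}_{K_L}$ is, as a $K_L$-module, $\Ind_{M_L}^{K_L}\sigma$; this uses Lemma \ref{frob} and the discussion preceding it. Frobenius reciprocity then gives
$$\ind_\mu(V)=\sum_j(-1)^j\dim\Hom_{M_L}\big((F_{\nu_0}\otimes\Lambda^j\fu)|_{M_L},V_\sigma\big),$$
which is the multiplicity of $\sigma$ in the virtual $M_L$-module $F_{\nu_0}\otimes\sum_j(-1)^j\Lambda^j\fu$. As a $K_L$-module, $\fu\cong(\fl\cap\fs)_\C$ modulo the choice of the holomorphic half, namely $\fu\cong\C^n$ with a twist by $U(1)$-characters. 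Restricted to $M_L\cong\Delta U(1)\times U(n-1)$, this space contains a one-dimensional $M_L$-submodule on which $M_L$ acts trivially, up to a character compatible with the twist; this is the complexified root line $\fa_L$-direction, i.e. the $\fl_{-\alpha}$ for the noncompact root lying over $\fa_L$. Writing $\fu|_{M_L}=\chi\oplus\fu'$, we get $\sum_j(-1)^j\Lambda^j\fu=(1-\chi)\cdot\sum_j(-1)^j\Lambda^j\fu'$. I will check that $\chi$ is the trivial $M_L$-character. This holds because $M_L$ centralizes $\fa_L$, and the corresponding vector $X_\alpha+\theta X_\alpha$-type element in $\fl\cap\fs$ is $M_L$-fixed. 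The Euler characteristic then vanishes identically, which proves (a). The main obstacle is this step, verifying that the $M_L$-fixed line of $(\fl\cap\fs)_\C$ actually lies in $\fu$ and not only in $\fu\oplus\bar\fu$. Since $\fu$ and $\bar\fu$ are only $K_L$-stable, I would instead use a rotation by an element of $K_L$ (the $U(1)$ center) to see that the $M_L$-fixed vector in $\fl\cap\fs$ projects nontrivially to $\fu$. This suffices because $M_L$ acts on the projection $\fl\cap\fs\to\fu$ equivariantly (the projection is $K_L$-, hence $M_L$-equivariant).

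For (b), I will use the standard Casselman--Osborne argument. The $\fu$-cohomology of $V$, computed by the complex $\Hom(\Lambda^\bullet\fu,V)$, has Euler characteristic equal to that of the chain spaces $C^j$ as virtual $K_L$-modules. Casselman--Osborne forces $\fl_\C\cap\fk_\C$-infinitesimal characters of $H^j(\fu,V)$ to be $W$-conjugates of the infinitesimal character of $V$, shifted by $\rho(\fu)$. The $K_L$-type $\nu_0$ has infinitesimal character $\nu_0+\rho_c$ which, shifted by $\rho_0$, equals $\Lambda_0$; this is the parameter attached to $\mu$. Hence the $\nu_0$-isotypic part of every $H^j(\fu,V)$ vanishes when $V$ has a different infinitesimal character. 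Taking the alternating sum of the chain-level $\nu_0$-multiplicities equals the cohomology-level one, giving (b).

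For (c), I will use Blattner's formula as in the proof of Prop.~\ref{crux}. The $K_L$-types of $\Lambda^j\fu$ are sums of $j$ distinct roots from $-\Sigma_0$. The $K_L$-types of $D_r$ are of the form $\nu_r+$(nonnegative combinations of $\Sigma_r$). So $\Hom_{K_L}(F_{\nu_0}\otimes\Lambda^j\fu,D_r)\neq0$ requires some highest weight $\nu_0-\beta$ to equal $\nu_r+\gamma$, with $\beta$ a sum of $j$ roots of $\Sigma_0$ and $\gamma\in\N\Sigma_r$. I will compare first coordinates: $\nu_r-\nu_0$ has first coordinate $m_r-m_0-2r\le-2r$. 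Roots of $\Sigma_0$ decrease the first coordinate by $1$, and roots of $\Sigma_r$ change it with sign pattern depending on $r$. A coordinate-sum and dominance check then forces $r=0$, $j=0$, $\beta=\gamma=0$. The multiplicity of $\nu_0$ in $D_0$ is $1$, so $\dim C^0_\mu(D_0)=1$.
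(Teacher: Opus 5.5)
Parts (a) and (b) follow the paper's argument. In (a), Frobenius reciprocity reduces $\ind_\mu(V)$ to the multiplicity of $\sigma$ in the virtual $M_L$-module $F_{\nu_0}\otimes\sum_j(-1)^j\Lambda^j\fu$, which vanishes because $\fu|_{M_L}$ contains the trivial representation. Part (b) is Casselman--Osborne applied to the $\nu_0$-isotypic part of the $K_L$-equivariant cochain complex, which is finite-dimensional by admissibility.

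The step you flag as the main obstacle in (a) is not one. A nonzero real $X\in\fa_L\subset\fl\cap\fs$ cannot lie in $\bar\fu$, since otherwise $X=\bar X\in\fu\cap\bar\fu=\{0\}$. So its $\fu$-component is nonzero, and it is $M_L$-fixed because the projection is equivariant; no rotation is needed. Also, $X_\alpha+\theta X_\alpha$ lies in $\fk$, not in $\fs$; the $M_L$-fixed vector you want is simply an element of $\fa_L$.

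The gap is in (c). Your set-up is right: constituents of $F_{\nu_0}\otimes\Lambda^j\fu$ have highest weights $\nu_0-\beta$ with $\beta$ a sum of $j$ distinct elements of $\Sigma_0$, and $K_L$-types of $D_r$ are $\nu_r+\gamma$ with $\gamma\in\N\Sigma_r$. However, the comparison you propose does not finish the argument.

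\begin{enumerate}
\item The coordinate-sum check is vacuous: every root has coordinate sum $0$, and $|\nu_0|=|\nu_r|=\sum_i m_i$.
\item The first coordinate settles only $r=0$, where it forces $j=0$ and $\gamma=0$, and $r=n$, where $\gamma$ can only lower it.
\item For $0<r<n$, the first coordinate of $\nu_r+\gamma$ can be raised at will by the roots $e_0-e_i$ with $i>r$. So no contradiction arises there, and ``dominance'' does not supply one.
\end{enumerate}

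The information sits in the coordinates $1,\dots,n$. Because the roots in $\beta$ are distinct, $(\nu_0-\beta)_i=m_i-1+\epsilon_i$ with $\epsilon_i\in\{0,1\}$ and $\sum_i\epsilon_i=j$. In other words, $l_i\le m_i$, with equality for exactly $j$ indices. On the other hand, $\gamma$ can only raise the coordinates $i\le r$ and lower those with $i>r$. Hence every $K_L$-type of $D_r$ has $l_i\ge m_{i-1}+1>m_i$ for $1\le i\le r$, and $l_i\le m_i-1$ for $i>r$.

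The first set of inequalities excludes $r\ge 1$. The second then forces all $\epsilon_i=0$, i.e.\ $j=0$. This is exactly the paper's argument. With it in place your proof is complete, and $\dim C^0_\mu(D_0)=1$ follows from the multiplicity one of the minimal $K_L$-type, as you say.
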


\begin{proof}
Let $V$ be a principal series module induced from an $M$-representation $E_\sigma$. Then by Frobenius reciprocity $C^j_\mu(V)\cong\Hom_{M}(F_{\nu_0}\otimes \Lambda^j\fu, E_\sigma)$. On the other hand, $\fu$ considered as an $M$-module contains the trivial representation, which implies that the virtual $M$-representation $\sum_{j=0}^{n}(-1)^j \Lambda^j\fu$ is zero. Now, (a) follows.

Assertion (b) is a consequence of the Casselman-Osborne theorem for $\fu$-cohomology \cite{CasO75}.

We now prove (c). The $K_L$-representation $\Lambda^j\fu$ is irreducible with highest weight  $(-j,\underbrace{1,\dots,1}_j,0,\dots,0)$. Thus the highest weights of the irreducible 
components of $F_{\nu_0}\otimes \Lambda^j\fu$ are of the form $(m_0+n-j, l_1,\dots,l_n)$ with
\be\label{kreische}
l_i\le m_i
\end{equation}
with equality for exactly $j$ elements $i\in\{1,\dots,n\}$. If a $K_L$-type $(l_0,l_1,\dots, l_n)$ occurs in $D_r$ (we have to add non-negative linear combinations of elements of $\Sigma_r$
to $\nu_r$), then
\be\label{walter}
l_i\ge m_{i-1}+1>m_i,\ i=1,\dots, r;\quad l_i<m_i,\ i=r+1,\dots,n\ .
\end{equation}
The (in)equalities (\ref{kreische}) and (\ref{walter}) can only hold at the same time if $r=0$ and $j=0$. We have $\dim C^0_\mu(D_0)=1$, since the minimal $K$-type $\nu_0$
has multiplicity one in $D_0$.
\end{proof}

The principal series modules with infinitesimal character given by $\mu$ and $\lambda>0$ are naturally parametrized by pairs of integers $(p,q)$, $p,q\ge 0$, $p+q\le n-1$. The corresponding
principal series parameters are
$$ \sigma_{p,q}=(m_p+m_{n-q}-(p-q); m_0+1,\dots,m_{p-1}+1,m_{p+1},\dots,m_{n-q-1},m_{n-q+1}-1,\dots,m_n-1)$$
and
$$ \lambda_{p,q}=m_p-m_{n-q}+n-(p+q)\ .$$
By $H_{p,q}$ we denote the corresponding principal series module. Let $I_{p.q}$ be its unique irreducible quotient. By Langlands classification the modules $I_{p,q}$ and $D_r$ exhaust the irreducible $(\fl,K_L)$-modules of our fixed infinitesimal character.
\begin{lem}\label{fritzsch}
$$ \ind_\mu(I_{p,q})=\left\{\begin{array}{cc} 0&p>0\\(-1)^{n-q}&p=0\end{array} \right. \ .$$
\end{lem}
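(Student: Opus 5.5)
The plan is to compute $\ind_\mu(I_{p,q})$ from the additivity of $\ind_\mu$ in short exact sequences of admissible modules. This holds because $\ind_\mu(V)$ is an alternating sum of dimensions of $\Hom_{K_L}$-spaces, and $K_L$-isotypic decomposition is exact. Lemma~\ref{hansi}(a) gives $\ind_\mu(H_{p,q})=0$ for every principal series module. Hence, if the composition series of $H_{p,q}$ is known, we get
$$\ind_\mu(I_{p,q})=-\sum_{V\neq I_{p,q}}[H_{p,q}:V]\,\ind_\mu(V),$$
where the sum runs over the remaining composition factors. Lemma~\ref{hansi}(c) gives the values on the discrete series: only $D_0$ contributes, with index $1$. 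So the recursion runs only over the Langlands quotients $I_{p',q'}$ and the single discrete series $D_0$.

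Step 1 is to determine the composition factors of $H_{p,q}$. This uses the real rank one structure of $U(1,n)$ and the standard description of the reducibility of principal series with regular integral infinitesimal character. Alternatively it can be read off from $K_L$-types, with the same combinatorics as in the proof of Prop.~\ref{crux} and the Blattner-type arguments. I expect $H_{p,q}$ to have length at most three: its Langlands quotient $I_{p,q}$, a bottom piece, and possibly a discrete series. Concretely, I expect the following:
\begin{itemize}
\item For $p,q\ge1$, $H_{p,q}$ has factors $I_{p,q}$, $I_{p-1,q}$ (or $I_{p,q-1}$), and the associated pieces, arranged so that the pattern is a ``staircase'' in $(p,q)$.
\item On the boundary $p+q=n-1$ the discrete series $D_{p'}$ with $p'=p$ or $p+1$ appear as subrepresentations.
\end{itemize}
The cleanest organization is probably through the Hecht--Schmid type character identities. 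Equivalently, one can use the BGG-type resolution of the finite-dimensional representation $F_\mu$ (which equals $I_{0,0}$, or the appropriate one) by principal series and discrete series, restricted to our integral infinitesimal character.

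Step 2 is an induction on $p$ (and then on $q$), using the relations from Step 1. For the base case I would take the finite-dimensional module. Its $\fu$-cohomology is given by Kostant's theorem, so $\ind_\mu(F_\mu)$ equals $\pm1$, coming from the single Kostant weight that matches $\nu_0$ in degree $j=n$ (with sign $(-1)^n$). This should be the case $p=q=0$. The induction then pushes the sign through the chain $I_{0,q}$, each step flipping sign via $H_{0,q}$, which should give $(-1)^{n-q}$. For $p>0$, the two-step relation among $I_{p,q}$, $I_{p-1,q}$ and $I_{p,q\pm1}$ should produce cancellation, giving $0$. The discrete series $D_0$ enters only on the $p=0$ line and is what shifts the parity there.

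An alternative, more direct route would be to compute $C^j_\mu(I_{p,q})$ with the inequality argument of Lemma~\ref{hansi}(c). One would describe the $K_L$-types of $I_{p,q}$ by interlacing conditions analogous to (\ref{walter}): roughly $l_i>m_i$ for $i\le p$, $l_i<m_i$ for $i>n-q$, and $l_i=m_i$ forced in between. Comparing with (\ref{kreische}) would then show that $C^j_\mu(I_{p,q})=0$ unless $p=0$ and $j=n-q$, where the space is one-dimensional. I would try this second route first, since it parallels Lemma~\ref{hansi}. The main obstacle, in either route, is to pin down the $K_L$-types of $I_{p,q}$, that is, the precise composition structure of $H_{p,q}$, and to check the multiplicity-one claim in degree $n-q$.
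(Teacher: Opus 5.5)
\textbf{Gap: the composition series of $H_{p,q}$ is missing and guessed wrongly, so neither of your inductions closes.}

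Your overall strategy is the one the paper uses: additivity of $\ind_\mu$, Lemma~\ref{hansi}(a),(c), and the composition series of $H_{p,q}$. But that composition series is the real content of the argument, and you neither prove it nor guess it correctly. The paper imports it from Collingwood \cite{Co85}:
\begin{itemize}
\item for $p+q=n-1$ there is an exact sequence $0\to D_p\oplus D_{p+1}\to H_{p,q}\to I_{p,q}\to 0$;
\item for $p+q\le n-2$ the kernel of $H_{p,q}\to I_{p,q}$ has composition factors $I_{p,q+1}$, $I_{p+1,q}$, $I_{p+1,q+1}$, the last replaced by $D_{p+1}$ when $p+q=n-2$.
\end{itemize}
So away from the boundary $H_{p,q}$ has length four, not at most three. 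Its other factors have \emph{larger} indices, not $I_{p-1,q}$ or $I_{p,q-1}$.

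This forces a \emph{downward} induction on $p+q$. At $p+q=n-1$ only discrete series occur, and Lemma~\ref{hansi}(c) gives $\ind_\mu(I_{0,n-1})=-1$ and $\ind_\mu(I_{p,q})=0$ for $p>0$. At every later step all other factors are already known. This gives $0$ for $p>0$ and $\ind_\mu(I_{0,q})=-\ind_\mu(I_{0,q+1})$.

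Your upward induction from $I_{0,0}=F_\mu$ cannot work on its own, even though your Kostant value $(-1)^n$ is consistent with the lemma. The identity $\ind_\mu(H_{0,0})=0$ is a single relation among three unknowns, $\ind_\mu(I_{0,1})$, $\ind_\mu(I_{1,0})$, $\ind_\mu(I_{1,1})$. The ``cancellation'' you expect for $p>0$ is not derived from anything.

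\textbf{Your direct $K_L$-type route has the same hole, but only at $p=0$.}

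For $p\ge 1$ it is immediate. By Frobenius reciprocity and $U(n)\downarrow U(n-1)$ interlacing with $\sigma_{p,q}$, every $K_L$-type of $H_{p,q}$, hence of $I_{p,q}$, has $l_1\ge m_0+1>m_1$. This contradicts (\ref{kreische}), so $C^j_\mu(I_{p,q})=0$ for all $j$.

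For $p=0$, the same interlacing combined with (\ref{kreische}) leaves exactly two candidate $K_L$-types in $H_{0,q}$, each occurring once:
\begin{itemize}
\item $l_i=m_i$ for $i<n-q$ and $l_i=m_i-1$ for $i>n-q$;
\item $l_{n-q}=m_{n-q}$ (contributing in degree $n-q$) or $l_{n-q}=m_{n-q}-1$ (contributing in degree $n-q-1$).
\end{itemize}
This is exactly how $\ind_\mu(H_{0,q})=0$ comes about.

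Your claim that $l_{n-q}=m_{n-q}$ is ``forced'' in $I_{0,q}$ amounts to saying the second candidate lies in the kernel of $H_{0,q}\to I_{0,q}$. Concretely, it would lie in the factor $I_{0,q+1}$, or in $D_0$ when $q=n-1$. Nothing in your proposal establishes this. It is again the composition structure of $H_{0,q}$, which you would have to cite or prove.
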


\begin{proof}
We make a downward induction on $p+q$ using Lemma \ref{hansi} and the additivity of $\ind_\mu$ with respect to exact sequences. First, let $p+q=n-1$.
There is an exact sequence (see e.g. \cite{Co85})
$$ 0\rightarrow D_p\oplus D_{p+1}\longrightarrow H_{p,q}\longrightarrow I_{p,q}\rightarrow 0\ .$$
We obtain 
$$ \ind_\mu(I_{p,q})=-\ind_\mu(D_p)-\ind_\mu(D_{p+1})=\left\{\begin{array}{cc} 0&p>0\\-1&p=0\end{array} \right.$$
This proves our assertion for $p+q=n-1$. Now assume that the assertion is proved for $p+q>l$, $l\le n-2$. We prove it for $p+q=l$. We consider the module $M_{p,q}$ defined
by the exact sequence
$$ 0\rightarrow M_{p,q}\longrightarrow H_{p,q}\longrightarrow I_{p,q}\rightarrow 0\ .$$
The irreducible subquotients of $M_{p,q}$ are precisely $I_{p,q+1}$, $I_{p+1,q}$, and $I_{p+1,q+1}$, see again \cite{Co85}. If $l=n-2$ the latter module has to be 
replaced by $D_{p+1}$.
By induction hypothesis and Lemma \ref{hansi} all these modules except $I_{p,q+1}$ for $p=0$ have vanishing index $\chi_\mu$ and
$$ \ind_\mu(I_{p,q})=-\ind_\mu (M_{p,q})= \left\{\begin{array}{cc} 0&p>0\\-\ind_\mu(I_{0,q+1})&p=0\end{array} \right\}=\left\{\begin{array}{cc} 0&p>0\\(-1)^{n-q}&p=0\end{array} \right.\ .$$
\end{proof}

It is known (\cite{KnS71}) which of the modules $I_{p,q}$ are unitarizable, i.e. which of them are ends of complementary series. In the above parametrization this has been worked out in Proposition 9.7 in \cite{Ol02}. The result depends strongly
on the infinitesimal character given by $\mu=(m_0,\dots,m_n)$.

\begin{lem}\label{riedel}
The module $I_{p,q}$ is unitarizable if and only if $m_p=m_{n-q}$.
\end{lem}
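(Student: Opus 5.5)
The plan is to reduce the statement to the classical description of the unitary dual of $U(1,n)$ due to Knapp--Stein \cite{KnS71}, as made explicit in \cite{Ol02}, Prop.~9.7. The argument is essentially a translation of parameters: by Langlands classification $I_{p,q}$ is the Langlands quotient of $H_{p,q}$ with real parameter $\lambda_{p,q}>0$. A non-tempered irreducible module for a real rank one group is unitarizable only if it lies in (the closure of) a complementary series. We therefore need to know, for the fixed $M$-type $\sigma_{p,q}$, the length of the complementary series of $H^{\sigma_{p,q},\lambda}$, and whether $\lambda_{p,q}$ equals its endpoint.

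\emph{Step 1 (Hermitian condition).} By \cite{Kn86}, Thm.~16.6, $I_{p,q}$ carries an invariant Hermitian form iff $w\sigma_{p,q}\cong\sigma_{p,q}$, where $w$ is the nontrivial Weyl element. Here $w$ acts on the first (the $\Delta U(1)$) coordinate of the $M$-parameter, and for $U(1,n)$ this amounts to the symmetry of the central character. Since $\lambda_{p,q}$ is already real, this step is routine bookkeeping on the coordinates of $\sigma_{p,q}$.

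\emph{Step 2 (length of the complementary series).} From \cite{KnS71} (Section II), the complementary series for $H^{\sigma,\lambda}$ with $\sigma=(a;b_1,\dots,b_{n-1})$ has an explicit length. It is the distance from $\rho_L=n$ down to the first reducibility point, and it is governed by how far $\sigma$ is from being ``$U(n-1)$-trivial''; compare Prop.~\ref{crux}(a), where $Q_k=(0,n-|k|]$. I will express this length in terms of $\sigma_{p,q}$. The gap $m_p-m_{n-q}$ enters exactly through the difference of the first coordinate from the neighbouring entries $m_{p+1},\dots,m_{n-q-1}$.

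\emph{Step 3 (comparison).} I will show that $\lambda_{p,q}=m_p-m_{n-q}+n-(p+q)$ lies in the closed complementary series interval iff $m_p-m_{n-q}=0$. In that case $\lambda_{p,q}=n-(p+q)$ is precisely its endpoint. When $m_p>m_{n-q}$, $\lambda_{p,q}$ exceeds the endpoint, and irreducibility on the open complementary interval, followed by loss of positivity past the first reducibility point, kills unitarity. I will also use that there are no isolated unitary Langlands quotients for $U(1,n)$ with regular integral infinitesimal character outside the complementary series closure; this again comes from \cite{KnS71}.

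The main obstacle is Step 2: extracting from Knapp--Stein the exact endpoint for a general (non-spherical) $M$-type $\sigma_{p,q}$ and matching normalizations of $\lambda$ between $H_{p,q}$ and their conventions. If that bookkeeping becomes unwieldy, I will simply cite \cite{Ol02}, Prop.~9.7, where this translation is carried out in exactly the present parametrization.
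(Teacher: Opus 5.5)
Your plan is essentially the paper's own argument. The paper also just appeals to Knapp--Stein \cite{KnS71} for which Langlands quotients of $U(1,n)$ are unitarizable (namely the ends of complementary series), and to \cite{Ol02}, Prop.~9.7, for the translation into the $(p,q)$-parametrization, which is exactly your fallback.

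Two side remarks need correcting, though neither changes the outcome since you end on the same citation:
\begin{itemize}
\item In Step~1, the nontrivial Weyl element is represented by $\mathrm{diag}(1,-1,1,\dots,1)\in K_L$, which centralizes $M$. So $w\sigma_{p,q}\cong\sigma_{p,q}$ always, and no condition on the $\Delta U(1)$-coordinate arises. A condition there would wrongly exclude the unitary characters $\det^m$, $m\neq 0$, which are $I_{0,0}$ for $\mu=(m,\dots,m)$.
\item Your opening claim about arbitrary real rank one groups is false: the trivial representation of $Sp(1,n)$, $n\ge 2$, is non-tempered, unitary and isolated. So the $U(1,n)$-specific fact you invoke in Step~3 is genuinely needed.
\end{itemize}
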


Now we specify the infinitesimal character to that of the discrete series $\pi_\lambda^k$, $k>n$, $\lambda=k-n-2\ell$, $\ell=0,1,\dots,[\frac{k-n-1}{2}]$.
The corresponding minimal $K_L$-type is $\nu_0=(k-\ell,\ell,0,\dots,0)$ which says that 
$$\mu=(k-\ell-n, \ell+1,1,\dots,1)\ .$$
We study the unitarizability of the modules $I_{0,q}$ having the same infinitesimal character as $\pi_\lambda^k$. The following is now easily deduced from Lemma \ref{riedel}
and the definition of $I_{0,n-1}$.

\begin{cor}\label{heidler}
The module $I_{0,q}$ is unitarizable if and only if $\lambda=1$ and $q=n-1$, or $\lambda=1$ and $k=n+1$.
If $\lambda=1$, then $I_{0,n-1}$ is the underlying $(\fl,K_L)$-module of the representation $\pi'$ in Proposition \ref{nichts}.
\end{cor}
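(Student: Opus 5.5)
The plan is to combine Lemma~\ref{riedel} with an explicit reading of the parameters of $I_{0,q}$ for the specific $\mu=(m_0,\dots,m_n)=(k-\ell-n,\ell+1,1,\dots,1)$. By Lemma~\ref{riedel} (with $p=0$), $I_{0,q}$ is unitarizable iff $m_0=m_{n-q}$, where $q$ ranges over $0\le q\le n-1$, so $n-q\in\{1,\dots,n\}$. I first compute $m_0-m_{n-q}$ in the two cases $n-q=1$ and $n-q\ge2$.

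For $q=n-1$ (i.e. $n-q=1$) we get $m_0-m_1=k-\ell-n-\ell-1=\lambda-1$, using $\lambda=k-n-2\ell$. Hence $I_{0,n-1}$ is unitarizable iff $\lambda=1$. For $q\le n-2$ we get $m_0-m_{n-q}=k-\ell-n-1$. Since $\ell\le[\frac{k-n-1}{2}]$ and $\lambda=k-n-2\ell\ge1$, we have $k-\ell-n-1=\ell+\lambda-1$, which vanishes iff $\ell=0$ and $\lambda=1$, i.e. $k=n+1$ (then automatically $\lambda=1$). This gives exactly the stated dichotomy: unitarizable iff ($\lambda=1$, $q=n-1$) or ($\lambda=1$, $k=n+1$). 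One must be slightly careful that $m_0\ge m_1$ holds (dominance), i.e. $\lambda\ge1$, which is guaranteed by the range of $\ell$; this is the only bookkeeping point.

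For the identification with $\pi'$, I would read off the principal series parameters of $H_{0,n-1}$ from the formulas for $\sigma_{p,q}$ and $\lambda_{p,q}$: $\lambda_{0,n-1}=m_0-m_1+n-(n-1)=\lambda$, and $\sigma_{0,n-1}=(m_0+m_1+n-1;m_2-1,\dots,m_n-1)=(k;0,\dots,0)$, which is the restriction $\sigma_k$ of $\gamma_k$ to $M$. So $H_{0,n-1}=H^{k,\lambda}$ with $\lambda=1$, and its Langlands quotient $I_{0,n-1}$ is the irreducible quotient of $H^{k,1}$ containing $\gamma_k$, unitarizable by the first part; by Prop.~\ref{crux}(d) the parameter $\lambda=1$ for $|k|>n$, $k\not\equiv n\,(2)$, is exactly the end of the complementary series excluded from $Q_k$, i.e. $\pi'$. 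The main (mild) obstacle is matching conventions: checking that the Langlands quotient of $H_{0,n-1}$ is the constituent carrying the $L\cap H$-invariant vector (the $K_L$-type $\gamma_k$), which I would verify from the $K_L$-type description of $I_{0,n-1}$ versus the discrete series $D_0,D_1$ in the exact sequence from \cite{Co85}, using that $\gamma_k$ does not satisfy the inequalities (\ref{walter}).
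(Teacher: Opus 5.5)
The unitarizability criterion is handled correctly and exactly as in the paper. You apply Lemma~\ref{riedel} with $p=0$ to $\mu=(k-\ell-n,\ell+1,1,\dots,1)$. You get $m_0-m_1=\lambda-1$, and $m_0-m_{n-q}=\ell+\lambda-1$ for $q\le n-2$, which gives the stated dichotomy. Your parameter computation $\sigma_{0,n-1}=(k;0,\dots,0)=\sigma_k$, $\lambda_{0,n-1}=\lambda$, i.e.\ $H_{0,n-1}=H^{k,\lambda}$, is also the right core of the second assertion.

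The identification with $\pi'$, however, rests on a false claim, and the verification you propose would fail. You assert that $I_{0,n-1}$ is the constituent of $H^{k,1}$ containing $\gamma_k$, i.e.\ the one carrying the $L\cap H$-invariant vector. The opposite is true.

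Since $H^{k,1}\cong\Ind_M^{K_L}\sigma_k$ as a $K_L$-module, $\gamma_k$ occurs in it with multiplicity one. The Blattner computation in the proof of Prop.~\ref{crux} shows that $\gamma_k$ lies in the discrete series $D_0=\pi^k_1$, whose minimal $K_L$-type is $\nu_0=(k-\ell,\ell,0,\dots,0)$. This $D_0$ sits in the subrepresentation of the sequence $0\to D_0\oplus D_1\to H_{0,n-1}\to I_{0,n-1}\to 0$. Consistently, $\gamma_k$ does satisfy the inequalities (\ref{walter}) for $r=0$, since $0<\ell+1$ and $0<1$; it violates them only for $r=1$. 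Hence the quotient $I_{0,n-1}$ contains no $\gamma_k$.

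This is precisely why $\pi'$ is the end of the complementary series excluded from $Q_k$. You also misread Prop.~\ref{crux}(d): the number $1$ does belong to $Q_k$, but it labels the discrete series $\pi^k_1$, not the end of the complementary series. If $I_{0,n-1}$ contained $\gamma_k$, it would be a second element of $\widehat L_k$ with parameter $1$, contradicting the uniqueness of the parametrization in Lemma~\ref{bummi}.

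The fix is to drop the $\gamma_k$ claim, which is neither needed nor true. By definition, $I_{0,n-1}$ is the Langlands quotient of $H_{0,n-1}=H^{k,1}$. For $\lambda=1$ it is unitarizable by the first part. Hence it is the end of the complementary series $H^{k,\lambda}$, $\lambda\in(0,1)$, i.e.\ $\pi'$. This is the paper's ``definition of $I_{0,n-1}$'' argument.
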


Now we can finish the proof of Prop.~\ref{nichts}. We keep the assumption $k>n>0$. 
We express the ingredients of the Dolbeaut complex associated to the complex vector bundle $E\rightarrow Z$ in terms of the multiplicity spaces $N_\Gamma(\pi)$ as follows:
\begin{eqnarray*}
\Omega^{0,j}(Z,E)&=&\left[ C^\infty(\Gamma\backslash L)\otimes F_{\nu_0}\otimes\Lambda^j\bar\fu^* \right]^{K_L}\\
&\cong& \overline{\bigoplus_{\pi\in\widehat L}}N_\Gamma(\pi)\otimes \left[V_{\pi,\infty}\otimes F_{\nu_0}\otimes\Lambda^j\fu \right]^{K_L}\\
&\cong& \overline{\bigoplus_{\pi\in\widehat L}}N_\Gamma(\bar\pi)\otimes \Hom_{K_L}(V_\pi, F_{\nu_0}\otimes\Lambda^j\fu)\\
&\cong& \overline{\bigoplus_{\pi\in\widehat L}}N_\Gamma(\pi)\otimes C^j_\mu(V_{\pi,K})\ .
\end{eqnarray*}
The last isomorphism is conjugate-linear. Note that only finitely many of the above summands can contribute to cohomology (namely those corresponding to harmonic forms).
Now Lemma \ref{hansi}, (b) implies that
$$ \chi(Z,E)=\sum_\pi \dim N_\Gamma(\pi) \chi_\mu(V_{\pi,K}) \ ,$$
where the sum is over all $\pi\in \widehat L$ having the infinitesimal character corresponding to $\mu$. For $\lambda\ge 2$ we obtain by Lemma \ref{hansi}, (c) and Cor.~\ref{heidler}
$$\chi(Z,E)= \dim N_\Gamma(\pi_\lambda^k)=\dim N_\Gamma(\pi_\lambda^{-k})\ ,$$
while for $\lambda=1$ and $k\ge n+2$ (or $n=1$) we obtain using in addition Lemma~\ref{fritzsch}
$$\chi(Z,E)= \dim N_\Gamma(\pi_\lambda^k)-\dim N_\Gamma(\pi')=\dim N_\Gamma(\pi_\lambda^{-k})-\dim N_\Gamma(\overline{\pi'})\ .$$


\begin{thebibliography}{99}

\baselineskip=12pt

\bibitem{Spyros25} S. Afentoulidis-Almpanis and G. Liu, {\it Tensor product decompositions for rank-one spin groups I: Unitary principal series representations.} Preprint arXiv:2502.19968. To appear in Israel J. of Math.

\bibitem{BS 81} M. W. Baldoni Silva, {\em The unitary dual of $Sp(n,1)$.}
 Trans. Amer. Math. Soc. {\bf 48}  (1981), 549--583.

\bibitem{Ban87} E. van den Ban, {\it Invariant differential operators on a semisimple symmetric space 
and finite multiplicities in a Plancherel formula.} Ark. Mat. {\bf 25} (1987), 175--187. 

\bibitem{Ban88} E. van den Ban, {\it The principal series for a reductive symmetric space I. $H$-fixed distribution vectors.} Ann. Sci.\' Ecole Morm. Sup. {\bf 21} (1988), 359--412. 

\bibitem{vdBFJS} E. van den Ban, M. Flensted-Jensen, and H. Schlichtkrull, {\it Harmonic analysis on semisimple symmetric spaces: a survey of some general results.} In: Representation theory and automorphic forms (Edinburgh 1996), pp. 191--217. Sympos. Pure Math. {\bf 61}, AMS, Providence 1997. 

\bibitem{BMM18} W. Ballmann, H. Matthiessen, and S.Mondal,  {\it Small eigenvalues of surfaces: old and new.} ICCM Not. {\bf 6} (2018), 9--24.

\bibitem{BSKZ14} S. Ben Said, K. Koufany, and G. Zhang, {\it Invariant trilinear forms on spherical principal series of real rank one Lie groups.} Internat. J. Math. {\bf 25} (2014), 35 pp.

\bibitem{BeCl05} N. Bergeron and L. Clozel, {\it Spectre automorphe des vari\' et\' es hyperboliques et applications topologiques.} Ast\' erisque {\bf 303} (2005), 218 pp. 

\bibitem{BeCl13} N. Bergeron and L. Clozel, {\it Quelques cons\' equences des travaux d' Arthur pour le spectre et la topologie des vari\' et\' es hyperboliques.} Invent. Math. {\bf 192} (2013), 505--532. 

\bibitem{BeCl17} N. Bergeron and L. Clozel, {\it Sur le spectre et la topologie des vari\' et\' es hyperboliques de congruence: les cas complexe et quaternionien.} Math. Ann. {\bf 368} (2017), 1333--1358. 

\bibitem{Ber88} J. Bernstein, {\it On the support of the Plancherel measure.} J. Geom. Phys. {\bf 5} (1987), 663--710. 

\bibitem{BK14}  J. Bernstein and B. Kr\"otz, {\it Smooth globalizations of Harish-Chandra modules.} Israel J. Math. {\bf 199} (2014), 45--111. 

\bibitem{Bien90} F. Bien, {\em ${\Cal D}$-modules and spherical representations}. 
Mathematical Notes 39. Princeton University Press, 1990.

\bibitem{BoTr25}M. Boche\' nski and A. Tralle, {\it Standard compact Clifford-Klein forms and Lie algebra decompositions.} Transform. Groups {\bf 30} (2025), 1535--1552. 

\bibitem{Bo63} A. Borel, {\it Compact Clifford-Klein forms of symmetric spaces.} Topology {\bf 2} (1963), 111--122. 

\bibitem{BW80} A. Borel and N. Wallach, {\em Continuous cohomology, discrete subgroups, and representations of reductive groups}. 
Second edition. Mathematical Surveys 67. American Mathematical Society, Providence, R.I., 2000.

\bibitem{BO00} U. Bunke and M. Olbrich, {\it The spectrum of Kleinian manifolds.}
J. Funct. Anal. {\bf 172} (2000), 76--164.

\bibitem{Buser} P. Buser, {\em Geometry and spectra of compact Riemann surfaces.}
Progress in Mathematics 106. Birkh\"auser, 1992.





\bibitem{CM62} E. Calabi and L. Markus, {\em Relativistic space forms.} Ann. Math. (2) {\bf 75} (1962), 63--76.

\bibitem{Car89} Y. Carri\`ere, {\em Autour de la conjecture de L. Markus sur les vari\'et\'es affines.} Invent. Math. {\bf 95} (1989), 615--628.

\bibitem{Cas89} W. Casselman, {\it Introduction to the Schwartz space of $\Gamma\backslash G$.} Canad. J. Math. {\bf 41} (1989), 285--320.

\bibitem{Cas89b} W. Casselman, {\it Canonical extensions of {H}arish-{C}handra modules to representations of {$G$}.} Canad. J. Math. {\bf 41} (1989), 385--438.

\bibitem{CasO75} W. Casselman and M. S. Osborne, {\it The $\fn$-cohomology of representations with infinitesimal character.} Compositio Math. {\bf 31} (1975), 219--227.

\bibitem{CL22} Y. Colin de Verdi\`ere and C. Le Bihan, {\it On essential-selfadjointness of differential operators on closed manifolds.}  Ann. Fac. Sci. Toulouse Math. {\bf 31} (2022), 1287--1302.

\bibitem{Co85} D. H. Collingwood,  {\em Representations of rank one Lie groups.} Pitman, Boston 1985.

\bibitem{DGM25} B. Delarue, C. Guillarmou, and D. Monclair, {\em Spectra of Lorentzian quasi-Fuchsian manifolds.} Preprint arXiv:2504.21762.

\bibitem{De99} P. Deligne, {\it Notes on spinors.} Quantum fields and strings: a course for mathematicians, Vol. {\bf 1, 2}, 99--135, Amer. Math. Soc., Providence, RI, 1999.  

\bibitem{Del85} P. Delorme, {\it Injection de modules sph\'eriques pour les espaces sym\'etriques r\'eductifs dans certaines repr\'esentations induites.} 
With an appendix by E. van den Ban and P. Delorme. Lecture Notes in Math., {\bf 1243}, Noncommutative harmonic analysis and Lie groups (Marseille-Luminy, 1985), 
108--143, Springer, Berlin, 1987.

\bibitem{DKKS} P. Delorme, F. Knop, B. Kr\" otz, and H. Schlichtkrull, {\em Plancherel theory for real spherical spaces: construction of the Bernstein morphisms.}
 J. Amer. Math. Soc. {\bf 34}  (2021), 815--908.

\bibitem{Di} J. Dixmier,  {\em Les $C^*$-alg\`ebres et leurs repr\' esentations.} Gauthier-Villars \& Cie, Paris 1964.

\bibitem{DKV79} J.J. Duistermaat, J.A.C.  Kolk, and V.S. Varadarajan, {\it Spectra of compact locally symmetric manifolds of negative curvature.} Invent. Math. {\bf 52} (1979), 27--93.

\bibitem{DG16} S. Dyatlov and C. Guillarmou, {\it Pollicott-Ruelle resonances for open systems.} Ann. Henri Poincar\' e {\bf 17} (2016), 3089--3146. 

\bibitem{Far79} J. Faraut,  {\em Distributions sph\' eriques sur les espaces hyperboliques.} J. Math. Pures App. {\bf 58} (1979), 369--344.

\bibitem{FSSS72} M. Flato, J. Simon, H. Snellman, and D. Sternheimer, {\it Simple facts about analytic vectors and integrability.} Ann. Sci. \'{ E}cole Norm. Sup. {\bf 5} (1972), 423--434. 

\bibitem{Mol17} J. Frahm, {\it Symmetry breaking operators for strongly spherical reductive pairs.}
Publ. Res. Int. Math. Sci. {\bf 59} (2023), 259--337.


\bibitem{GGPS} I. M.~Gel'fand, M. I.~Graev, and I. I.~Pyatetskii-Shapiro, {\em Representation theory and automorphic functions.} Translated from the Russian by K. A. Hirsch. W. B. Saunders Company, Philadelphia, 1969.

\bibitem{Ghy87}E.~Ghys, {\em Flots d'Anosov dont les feuilletages stables sont diff\' erentiables.} Ann. Sci. \' Ecole Norm. Sup. {\bf 20} (1987), 251--270.

\bibitem{Gol85} W. M.~Goldman, {\em Nonstandard Lorentz space forms.} J. Differential Geometry {\bf 21} (1985), 301--308.

\bibitem{GW98} R.~Goodman and N.~R.~Wallach,  {\em Representations and invariants of the classical groups.} Encyclopedia of Mathematics and its Applications, {\bf 68}, Cambridge University Press, Cambridge 1998.
 
\bibitem{GS77} V. Guillemin and S. Sternberg,  {\em Geometric asymptotics.} Mathematical Surveys {\bf 14}. American Mathematical Society, Providence, R.I., 1977.

\bibitem{He14} H. He,  {\em Generalized matrix coefficients for infinite dimensonal unitary representations.} J. Ramanujan Math. Soc. {\bf 29} (2014), 253--272.

\bibitem{HS94} G. Heckman and H. Schlichtkrull, 
{\it Harmonic Analysis and special functions on symmetric spaces.} Perspectives in Mathematics 14, Academic Press, 1994.


\bibitem{Hel84} S. Helgason, {\it Groups and geometric analysis.} Academic Press, 1984.

\bibitem{HoPa74} R. Hotta and R. Parthasarathy,  {\em Multiplicity formulae for discrete series.} Invent. Math. {\bf 26} (1974), 133--178.

\bibitem{HoTa93} R. Howe and E. Tan,  {\em Homogeneous functions on light cones: the infinitesimal structure of some degenerate principal series representations.} Bull. Amer. Math. Soc. {\bf 28} (1993), 1--74.

\bibitem{HuPa06} J.-S. Huang and P. Pand\v zi\' c, {\em Dirac operators in representation theory.} 
Mathematics: Theory and Apllications. Birkh\"auser, 2006.
 
\bibitem{Ja} K. J\" anich,  {\em Differenzierbare $G$-Mannigfaltigkeiten.} Lecture Notes in Mathematics {\bf 59}. Springer-Verlag, Berlin-New York 1968.

\bibitem{KaK25} K. Kannaka and T. Kobayashi, {\em Deformations of standard locally homogeneous spaces.} Preprint arXiv:2507.14832.

\bibitem{Ka12} F. Kassel, {\em Deformations of proper actions on reductive homogeneous spaces.} Math. Ann. {\bf 353} (2012), 
599--632.

\bibitem{KK11} F. Kassel and T. Kobayashi, {\em Stable spectrum for pseudo-Riemannian locally symmetric spaces.} C. R. Math. Acad. Sci. Paris {\bf 349} (2011), 29--33.

\bibitem{KK2} F. Kassel and T. Kobayashi, {\em Poincar\' e series for non-Riemannian locally symmetric spaces.} Adv. Math. {\bf 287} (2016), 
123--236.

\bibitem{KK19} F. Kassel and T. Kobayashi, {\em Invariant differential operators on spherical homogeneous spaces with overgroups.} J. Lie Theory {\bf 29} (2019), 663--754.

\bibitem{KK20} F. Kassel and T. Kobayashi, {\em Spectral analysis on pseudo-Riemannian locally symmetric spaces.} Proc. Japan Acad. Ser. A Math. Sci. {\bf 96} (2020), 69--74.

\bibitem{KK25} F. Kassel and T. Kobayashi, {\em Spectral Analysis on Standard Locally Homogeneous Spaces.} Lecture Notes in Mathematics {\bf 2367}, Springer 2025. (Also available as arXiv:1912.12601.)

\bibitem{KT24} F. Kassel and N. Tholozan, {\em Sharpness of proper and cocompact actions on reductive homogeneous spaces.} Preprint arXiv:2410.08179.

%
\bibitem{Kazh77} D. Kazhdan, {\it Some applications of the Weil representation.}  J. Analyse Math. {\bf 32} (1977), 235--248.

\bibitem{KSa09} D. Kelmer and P. Sarnak, {\it Strong spectral gaps for compact quotients of products of $PSL(2,\R)$.}  J. Eur. Math. Soc. {\bf 11} (2009), 283--313.

\bibitem{Kli96} B. Klingler, {\em Compl\' etude des vari\' etes lorentziennes \` a courbure constante.} Math. Ann. {\bf 306} (1996), 353--370.

\bibitem{Kn86} A. W. Knapp,  {\em Representation theory of semisimple groups. An overview based on examples.} Princeton University Press, 1986.

\bibitem{KnS71} A. W. Knapp and E. M. Stein, {\it Intertwining operators for semisimple groups.} Ann. of Math. {\bf 93} (1971), 489--578.


\bibitem{KKPS} F. Knop, B. Kr\" otz, T. Pecher, and H. Schlichtkrull, {\em Classification of reductive real spherical pairs I. The simple case.}
 Transform. Groups  {\bf 24}  (2019), 67--114.

\bibitem{KKPSII} F. Knop, B. Kr\" otz, T. Pecher, and H. Schlichtkrull, {\em Classification of reductive real spherical pairs II. The semisimple case.}
 Transform. Groups  {\bf 24}  (2019), 467--510.

\bibitem{KKS} F. Knop, B. Kr\" otz, and H. Schlichtkrull, {\em The tempered spectrum of a real spherical space.}
 Acta math. {\bf 218}  (2017), 319--383.

 \bibitem{Kos89} T. Kobayashi, {\it Proper action on a homogeneous space of reductive type.} Math. Ann. {\bf 285} (1989), 249--263.

 \bibitem{Kos96} T. Kobayashi, {\it Discontinuous groups and Clifford-Klein form of pseudo-Riemannian homgeneous manifolds.} In: Algebraic and Analytic Methods in Representation Theory  (H. Schlichtkrull and B. Orsted, eds.), pp. 99--165. Perspectives in Math 17, Academic Press 1996. 


\bibitem{KM14} T. Kobayashi and T. Matsuki, {\it Classification of finite-multiplicity symmetric pairs.} Transform. Groups {\bf 19} (2014), 457--493. 

\bibitem{KO13} T. Kobayashi and T. Oshima, {\it Finite multiplicity theorems for induction and restriction.} Advances in Math. {\bf 248} (2013), 921--944. 

\bibitem{KY05} T. Kobayashi and T. Yoshino, {\it Compact Clifford-Klein forms of symmetric spaces - Revisited.} Pure Appl. Math. Q. {\bf 1} (2005), 591--663. 

\bibitem{KS16} B. Kr\" otz, and H. Schlichtkrull, {\em Multiplicity bounds and the subrepresentation theorem for real spherical spaces.}
 Trans. Amer. Math. Soc. {\bf 368}  (2016), 2749--2762.

\bibitem{Ku81}R. S. Kulkarni, {\it Proper actions and pseudo-Riemannian space forms.}  Adv. in Math. {\bf 40}  (1981),  10--51. 

\bibitem{La66} R. P. Langlands, {\it Dimensions of spaces of automorphic forms.} 
Proc. Sympos. Pure Math. {\bf 9} (1966), 253--257. 

\bibitem{La76} R. P. Langlands, {\it On the functional equations satisfied by Eisenstein series.} 
Lecture Notes in Mathematics. {\bf 544}, Springer, 1976. 

\bibitem{Li93}J.-S. Li, {\it Theta series and construction of automorphic forms.} In: Representation theory of groups and algebras, Contemp. Math. {\bf 145} (1993), 237--248. 

\bibitem{Martin75}R. P. Martin, {\it On the decomposition of tensor products of principal series representations for real-rank one semisimple Lie groups.}  Trans. Amer. Math. Soc. {\bf 201}  (1975),  177--211. 

\bibitem{MO21} S. Mehdi and M. Olbrich, {\em Spectrum of semisimple locally symmetric spaces and admissibility of spherical representations.} 
Adamovi\'c, Dujella et al. (eds.), Lie groups, number theory, and vertex algebras. Representation theory XVI. Dubrovnik, 2019. 
Contemp. Math. {\bf 768} (2021), 55--63.

\bibitem{MP21} S. Mehdi and P. Pand\v zi\'c, {\it Representation theoretic embeddings of Dirac operators.} Represent. Theory {\bf 25} (2021), 760-779. 

\bibitem{Mill76} J.~J.~Millson, {\it On the first Betti number of a constant negatively curved manifold.}  
Ann. of Math. {\bf 104} (1976), 235--247.

\bibitem{MST23} D. Monclair, J.-M. Schlenker, and N. Tholozan, {\em Gromov-Thurston manifols and anti-de Sitter geometry.} Preprint arXiv:2310.12003.

\bibitem{Moo66} C. C. Moore, {\it Ergodicity of flows on homogeneous spaces.} American J. Math. {\bf 88} (1966), 154--178. 

\bibitem{Mos55} G. D. Mostow, {\it Self-adjoint groups.}  
Ann. of Math. {\bf 62} (1955), 44--55.

\bibitem{Mos57} G. D. Mostow, {\it Equivariant embeddings in Euclidean space.}  
Ann. of Math. {\bf 65} (1957), 432--446.

\bibitem{Nel59} E. Nelson, {\it Analytic vectors.}  
Ann. of Math. {\bf 70} (1959), 572--615.

\bibitem{Ol02} M. Olbrich, {\it Cohomology of convex cocompact groups and invariant distributions on limit sets.} Preprint arXiv:0207301.

 \bibitem{Oni62} A. L. Oni\v s\v cik, {\it Inclusion relations between transitive compact transformation
 groups.} (Russian)   Trudy  Moskov. Mat. Ob\v s\v c.  {\bf 11} (1962), 199--242.

 \bibitem{Oni69} A. L. Oni\v s\v cik, {\it Decompositions of reductive Lie groups.} Math. USSR-Sb. {\bf 9} (1969), 515--554.

 \bibitem{Pou72} N. S. Poulsen, {\it On $C^\infty$-vectors and intertwining bilinear forms for representations of Lie groups.}  
J. Funct. Anal. {\bf 9} (1972), 87--120.

\bibitem{Pow74} R. T. Powers, {\it Selfadjoint algebras of unbounded operators II.}  
Trans. Amer. Math. Soc. {\bf 187} (1974), 261--293.

\bibitem{Rag72} M. S. Raghunathan, {\em Discrete subgroups of Lie groups.} Ergebnisse der Mathematik und ihrer Grenzgebiete 68. Springer, Berlin Heidelberg, 1972.

 \bibitem{Rat06} J. G. Ratcliffe, {\it Foundations of hyperbolic manifolds.} 
 Second edition. Graduate Texts in Mathematics 149. Springer, New York, 2006.

 \bibitem{RSI} M. Reed and B. Simon, {\it Methods of modern mathematical physics.I. Functional analysis.} 
 Second edition. Academic Press, New York - London, 1980.

 \bibitem{RSII} M. Reed and B. Simon, {\it Methods of modern mathematical physics.II. Fourier analysis, self-adjointness.} 
Academic Press, New York - London, 1975.

\bibitem{Rep1} J. Repka, {\it Tensor products of unitary representations of $SL_2({\mathbb R})$.} Bull. Amer. Math. Soc. {\bf 82} (1976), 930--932.

\bibitem{Rep2} J. Repka, {\it Tensor products of unitary representations of $SL_2({\mathbb R})$.} Amer. J.  Math. {\bf 100} (1978), 747--774.


\bibitem{SV} Y. Sakellaridis and A. Venkatesh, {\it Periods and harmonic analysis on spherical varieties.} Asterisque {\bf 396} (2017), 360 pp.

\bibitem{Sch71} H. H. Sch\" afer, {\it Topological vector spaces.} Graduate Texts in Mathematics 3, Springer, Third printing, 1971.

\bibitem{Sch87} H. Schlichtkrull, {\it Eigenspaces of the Laplacian on hyperbolic spaces: composition series and integral transforms.} J. Funct. Anal. {\bf 70} (1987), 194--219.

\bibitem{STV18} H. Schlichtkrull, P. Trapa, and D. A. Vogan, {\it Laplacians on spheres.} Sao Paulo J. Math. Sci. {\bf 12} (2018), 295--358.


\bibitem{Sh94} N. Shimeno, {\it The Plancherel formula for spherical functions with a one-dimensional K-type on a simply connected simple Lie group of Hermitian type.} J. Funct. Anal.  {\bf 121} (1994), 330--388.


\bibitem{To19} K. Tojo, {\it Classification of irreducible symmetric spaces which admit standard compact Clifford-Klein forms.}
 Proc. Japan Acad. Ser. A Math. Sci.  {\bf 95}  (2019),  11--15.

\bibitem{Vi01} E. B. Vinberg, {\it Commutative homogeneous spaces and coisotropic symplectic actions.} Russian Math. Surveys {\bf 56} (2001), 1--60.

\bibitem{Wa76} N. R. Wallach, {\em On the Selberg trace formula in the case of a compact quotient.} Bull. Amer. Math. Soc. {\bf 82}  (1976),  171--195.

\bibitem{Wa88} N. R. Wallach, {\em Real reductive groups I.} Pure and Applied Mathematics 132. Academic Press, 1988.

 \bibitem{Wa92} N. R. Wallach, {\it Real reductive groups II.} Pure and Applied Mathematics 132. Academic Press, 1992.



\bibitem{Ze98} A. Zeghib, {\it On closed anti-de Sitter spacetimes.} Math. Ann.  {\bf 310} (1998), 695--716.

 \bibitem{Zhe73} D.~P.~Zhelobenko, {\it Compact Lie groups and their representations.} Transl. Math. Monogr. 40, Amer. Math. Soc., Providence 1973.

\end{thebibliography}
\end{document}